\definecolor{lightgray}{gray}{0.65}
\definecolor{darkgray}{gray}{0.5}
\definecolor{MyDarkBlue}{rgb}{0,0.1,1}
\newcounter{sarrow}
\newcommand\xrsquigarrow[1]{%
\stepcounter{sarrow}%
\mathrel{\begin{tikzpicture}[baseline= {( $ (current bounding box.south) + (0,-0.5ex) $ )}]
\node[inner sep=.5ex] (\thesarrow) {$\scriptstyle #1$};
\path[draw,<-,decorate,
  decoration={zigzag,amplitude=0.7pt,segment length=1.2mm,pre=lineto,pre length=4pt}]
    (\thesarrow.south east) -- (\thesarrow.south west);
\end{tikzpicture}}%
}
\newcommand\xxrsquigarrow[1]{%
\stepcounter{sarrow}%
\mathrel{\begin{tikzpicture}[baseline= {( $ (current bounding box.south) + (0,-0.5ex) $ )}]
\node[inner sep=.5ex] (\thesarrow) {$\scriptstyle #1$};
\path[draw,<-,decorate,
  decoration={zigzag,amplitude=0.7pt,segment length=1.2mm,pre=lineto,pre length=4pt}]
  (0.5,-.5) -- (0,0);
\end{tikzpicture}}%
}
\newcommand\xxlsquigarrow[1]{%
\stepcounter{sarrow}%
\mathrel{\begin{tikzpicture}[baseline= {( $ (current bounding box.south) + (0,-0.5ex) $ )}]
\node[inner sep=.5ex] (\thesarrow) {$\scriptstyle #1$};
\path[draw,<-,decorate,
  decoration={zigzag,amplitude=0.7pt,segment length=1.2mm,pre=lineto,pre length=4pt}]
  (0,-.5) -- (0.5,0);
\end{tikzpicture}}%
}
\numberwithin{equation}{section}
\newtheorem{thm}{Theorem}[section]
\newtheorem{theoremalpha}{Theorem}
\newtheorem{convention}[thm]{}
\newtheorem{prop}[thm]{Proposition}
\newtheorem{coro}[thm]{Corollary}
\newtheorem{lemma}[thm]{Lemma}
\newtheorem{question}[thm]{Question}
\newtheorem{problemalpha}{Problem}
\newtheorem{question*}[]{Question}
\newtheorem{fact}[thm]{Fact}
\theoremstyle{definition}
\newtheorem{rmk}[thm]{Remark}
\newtheorem{cons}[thm]{Construction}
\newtheorem{defi}[thm]{Definition}
\newenvironment{sis}{\left\{\begin{aligned}}{\end{aligned}\right.}
\def\w{\widetilde}
\def\wt{\widetilde}
\def\wh{\widehat}
\def\mbb{\mathbb}
\def\mcl{\mathcal}
\def\un{\underline}
\def\ov{\overline}
\def\inj{\hookrightarrow}
\def\surj{\twoheadrightarrow}
\def\A{\mbb A}
\def\bZ{\mbb Z}
\def\Z{\mbb Z}
\def\N{\mbb N}
\def\cC{\mcl C}
\def\cO{\mcl O}
\def\cX{\mcl X}
\def\cY{\mcl Y}
\def \cF{\mcl F}
\def \cZ{\mcl Z}
\def\cP{\mcl P}
\def\cL{\mcl L}
\def\cN{\mcl N}
\def \cI{\mcl I}
\def \cJ{\mcl J}
\def\cM{\mcl M}
\def\cE{\mcl E}
\def\deg{{\rm deg}}
\def\lra{\longrightarrow}
\def \Id{{\rm Id}}
\def \id{{\rm id}}
\def \In{{\rm in}}
\DeclareMathOperator{\proj}{Proj}
\def\ord{\tu{ord\,}}
\def \Ext{{\rm Ext}}
\def \Def{{\rm Def}}
\def \ord{{\rm ord}}
\def \Stab{{\rm Stab}}
\def\Jac{\rm{Jac}}
\def\SET{\rm SET}
\def\SCH{\rm SCH}
\def \Sing{{\rm Sing}}
\def\P{\mathbb{P}}
\def \Mg{\bar{M}_g}
\def\Mgp{{\overline{M}}^{\rm p}_g}
\def \MMgwp {{\overline{\mathcal M}}_g^{\rm wp}}
\def \MMg {{\overline{\mathcal M}}_g}
\def \MMgp {{\overline{\mathcal M}}_g^{\rm p}}
\def \Hilb{{\rm Hilb}}
\def \Chow{{\rm Chow}}
\def \Ch{{\rm Ch}}
\def \Gr{{\rm Gr}}
\def\SL{{\rm{SL}}}
\def\GL{{\rm{GL}}}
\def\Pic{{\rm{Pic}}}
\def\PGL{{\rm{PGL}}}
\def\Gm{\mathbb{G}_m}
\def\Ga{\mathbb{G}_a}
\def\Aut{{\rm Aut}}
\def \Orb{{\rm Orb}}
\def \OO{\mcl O}
\def \Sym{{\rm Sym}}
\def \Im{{\rm Im}}
\def \ker{{\rm Ker}}
\DeclareMathOperator{\ps}{ps}
\DeclareMathOperator{\wps}{wps}
\DeclareMathOperator{\s}{s}
\def \exc{X_{\rm exc}}
\def \can{{\rm can}}
\def \B{\w{B}}
\def \sp{\rightsquigarrow}
\def \spel{\stackrel{{\rm el}}{\rightsquigarrow}}
\def \JJst{\ov{\mathcal J}_{d,g}}
\def \Jst{{\ov{J}}_{d,g}}
\def \JJwp{\ov{\mathcal J}_{d,g}^{\rm wp}}
\def \Jwp{{\ov{J}}_{d,g}^{\rm wp}}
\def \JJps{\ov{\mathcal J}_{d,g}^{\rm ps}}
\def \Jps{{\ov{J}}_{d,g}^{\rm ps}}
\def \SSst{\ov{\mathcal S}_{d,g}}
\def \SSwp{\ov{\mathcal S}_{d,g}^{\rm wp}}
\def \SSps{\ov{\mathcal S}_{d,g}^{\rm ps}}
\def\ra{\rightarrow}
\def\bar{\overline}
\begin{document}

\normalsize


\title[GIT for polarized curves]{GIT for polarized curves}

\author{Gilberto Bini, Fabio Felici, Margarida Melo, Filippo Viviani}

\email{gilberto.bini@unimi.it} \curraddr{{\sc Dipartimento di Matematica \\ Universit\`a degli Studi di Milano \\ Via C. Saldini 50 \\ 20133 Milano \\ Italy.}}

\email{felici@mat.uniroma3.it} \curraddr{{\sc Dipartimento di Matematica \\ Universit\`a Roma Tre \\ Largo S. Leonardo Murialdo 1 \\ 00146 Roma \\ Italy.} }

\email{mmelo@mat.uc.pt}\curraddr{{\sc Departamento de Matem\'atica \\ Universidade de Coimbra\\
Largo D. Dinis, Apartado 3008 \\ 3001 Coimbra \\Portugal.}}

\email{viviani@mat.uniroma3.it} \curraddr{{\sc Dipartimento di Matematica \\ Universit\`a Roma Tre \\ Largo S. Leonardo Murialdo 1 \\ 00146 Roma \\ Italy.} }



\subjclass[2010]{14L24, 14H10, 14H40, 14H20, 14C05, 14D23.}

\begin{abstract}
We investigate the GIT quotients of the Hilbert and Chow schemes of curves of degree $d$ and genus $g$ in a projective space of dimension $d-g$, as the degree $d$ decreases with respect to the genus $g$. We prove that the first three values of $d$ at which the GIT quotients change are given by $d=4(2g-2)$, $d=\frac{7}{2}(2g-2)$ and $d=2(2g-2)$.
In the range $d>4(2g-2)$, we show that the previous results of L. Caporaso hold true both for the Hilbert and Chow semistability.
In the range $\frac{7}{2}(2g-2)< d <4(2g-2)$, the Hilbert semistable locus coincides with the Chow semistable locus and it maps to the moduli stack of weakly-pseudo-stable curves.
In the range $2(2g-2)<d<\frac{7}{2}(2g-2)$, the Hilbert and Chow semistable loci coincide and they map to the moduli stack of pseudo-stable curves.
We also analyze in detail the first two critical values $d=4(2g-2)$ and $d=\frac{7}{2}(2g-2)$, where the Hilbert semistable locus is strictly smaller than the Chow semistable locus.
As an application of our results, we get two new compactifications of the universal Jacobian over the moduli space of weakly-pseudo-stable and pseudo-stable curves, respectively.

\end{abstract}


\maketitle

\tableofcontents

\section{Introduction}

\subsection{Motivation and previous related works}

One of the first successful applications of Geometric Invariant Theory (GIT for short), and perhaps one of the major motivations for its development by Mumford and his co-authors
(see \cite{GIT}),  was the construction of the moduli space $M_g$ of smooth curves of genus $g\geq 2$ and its compactification $\Mg$ via \emph{stable curves} (i.e. connected nodal projective curves with finite automorphism group), carried out by Mumford (\cite{Mum}) and Gieseker (\cite{Gie}).
Indeed, the moduli space of stable curves was constructed as a GIT quotient of a locally closed subset of a suitable Hilbert scheme
(as in \cite{Gie}) or  Chow scheme (as in \cite{Mum})
parametrizing $n$-canonically embedded curves, for $n$ sufficiently large.  More precisely, Mumford in \cite{Mum} works under the assumption that $n\geq 5$ and
Gieseker in \cite{Gie} requires the more restrictive assumption that $n\geq 10$. However, it was later discovered that Gieseker's approach can also be extended to the case $n\geq 5$ (see
\cite[Chap. 4, Sec. C]{HM} or \cite[Sec. 3]{Mo}).

Recently, there has been a lot of interest in extending the above GIT analysis to smaller values of $n$, especially in connection with the so called Hassett-Keel program whose ultimate goal
is to find the minimal model of $M_g$ via successive constructions of modular birational models of $\Mg$ (see \cite{FS} and \cite{AH} for nice overviews).

The first work in this direction is due to Schubert, who described in \cite{Sch}  the GIT quotient of the locus of $3$-canonically embedded curves (of genus $g\geq 3$) in the Chow scheme
as the coarse moduli space $\Mgp$ of pseudo-stable curves (or \emph{p-stable curves} for short). These are connected projective curves with finite automorphism group, whose only singularities are nodes and cusps, and which have no elliptic tails (i.e. connected subcurves of arithmetic genus one meeting the rest of the curve in one point).
Since the GIT quotient analyzed by Schubert is geometric (i.e. there are no
strictly semistable objects), one gets exactly the same description working with $3$-canonically embedded curves inside the Hilbert scheme
(see \cite[Prop. 3.13]{HH2}).
Later, Hassett-Hyeon constructed in \cite{HH1} a modular map $T:\Mg\to \Mgp$ which on geometric points sends a stable curve onto the p-stable curve obtained by contracting
all its elliptic tails to cusps. Moreover, the authors of loc. cit. identified the map $T$ with the first contraction in the Hassett-Keel program for $\Mg$.

The case of $4$-canonical curves was worked out by Hyeon-Morrison in \cite{HMo}. The Hilbert GIT-semistable points turn out to correspond again to p-stable curves, while the Chow GIT-semistable locus is strictly bigger and it consists of
weakly-pseudo-stable curves (or \emph{wp-stable curves} for short), which are connected projective curves with finite automorphism group, whose only singularities are nodes and cusps (and having possibly elliptic tails).
However, Hyeon-Morrison also proved that the GIT quotient for the Chow scheme turns out to be again isomorphic to
the moduli space $\Mgp$ of p-stable curves, a fact that can be reinterpreted as saying that the non-separated stack of wp-stable curves and its open and proper substack of p-stable curves have the same moduli space (see \S~\ref{S:wp-stable} for more details).

Finally, the case of $2$-canonical curves was studied by Hassett-Hyeon in \cite{HH2}, where the authors described the Hilbert GIT quotient  $\ov{M}_g^h$ and the Chow GIT quotient
$\ov{M}_g^c$
(they are now different), as moduli spaces of $h$-semistable (resp. $c$-semistable) curves; see loc. cit. for the precise description. Moreover, they constructed a small contraction
$\Psi:\Mgp\to \ov{M}_g^c$ and identified the natural map 
$\Psi^+:\ov{M}_g^h\to \ov{M}_g^c$ as the flip of $\Psi$.
These maps are then interpreted as further steps in the Hassett-Keel program for $\Mg$.

For some partial results on the GIT
quotient for the Hilbert scheme of $1$-canonically  embedded curves, we refer the reader to the work of Alper, Fedorchuk and Smyth (see \cite{AFS}).

\vspace{0.2cm}

From the point of view of constructing new projective birational models of $\Mg$, it is of course natural to restrict the GIT analysis to the locally closed subset inside the Hilbert or Chow scheme
parametrizing $n$-canonical embedded curves. However, the problem of describing the whole GIT quotient seems very natural and interesting too. The first result in this direction is the
pioneering work of Caporaso \cite{Cap}, where the author describes the GIT quotient of the Hilbert scheme of connected curves of genus $g\geq 3$ and degree $d\geq 10(2g-2)$ in $\P^{d-g}$. The GIT quotient
obtained by Caporaso in loc. cit. is indeed a modular compactification of the universal Jacobian  $J_{d,g}$, which is
the moduli scheme parametrizing pairs $(C,L)$ where $C$ is a smooth curve of genus $g$ and $L$ is a line bundle on $C$
of degree $d$. Note that recently Li and Wang in \cite{LW} have studied Chow (semi-)stability of polarized nodal curves of sufficiently high degree, giving in particular a different proof of Caporaso's result for $d\gg 0$\footnote{Notice that Li-Wang worked more generally with polarized pointed weighted nodal curves.}.

Our work is motivated by the following

\vspace{0,2cm}
\noindent \textbf{Problem:}
\emph{Describe the GIT quotient for the Hilbert and Chow scheme of curves of genus $g$ and degree $d$ in $\P^{d-g}$,
as $d$ decreases with  respect to $g$.}

\subsection{Our results}


In order to describe our results, we need to introduce some notation. Fix an integer $g\geq 2$.  For any natural number  $d$, denote by $\Hilb_d$ the Hilbert scheme of curves
of degree $d$ and arithmetic genus $g$ in $\P^{d-g}$; denote by $\Chow_d$ the Chow scheme of $1$-cycles of degree $d$ in $\P^{d-g}$ and by
$$\Ch:\Hilb_d\to \Chow_d$$
the map sending a one dimensional subscheme $[X\subset \P^{d-g}]\in \Hilb_d$ to its $1$-cycle. The linear algebraic group $\SL_{d-g+1}$ acts naturally on
$\Hilb_d$ and $\Chow_d$ so that $\Ch$ is an equivariant map; moreover, these actions are naturally linearized (see Section \ref{Sec:Hilb-Chow} for details
\footnote{In particular, when working with $\Hilb_d$, we will always consider the $m$-linearization for $m \gg 0$;
see Section \ref{Sec:Hilb-Chow} for details.}), so it makes
sense to talk about (GIT) (semi-,poly-)stability of a point in $\Hilb_d$ and $\Chow_d$.

\vspace{0.2cm}

The aim of this work is to give a complete characterization of the (semi-,poly-)stable points $[X\subset \P^{d-g}]\in \Hilb_d$ or of its image $\Ch([X\subset \P^{d-g}])\in \Chow_d$, provided that $d>2(2g-2)$.
Our characterization  of Hilbert or Chow (semi-, poly-)stability will require some conditions on the singularities of $X$ and some conditions on the multidegree of the line bundle $\OO_X(1)$.
Let us introduce the relevant definitions.

A curve $X$ is said to be \emph{quasi-stable}  if it is obtained from a stable curve $Y$ by bubbling some of its nodes, i.e. by taking the partial normalization
of $Y$ at some of its nodes and inserting a $\P^1$ connecting the two branches of each node.
A curve $X$ is said to be \emph{quasi-p-stable} (resp. \emph{quasi-wp-stable}) if it is obtained from a p-stable curve
(resp. a wp-stable curve) $Y$ by bubbling some of its nodes (as before) and bubbling  some of its cusps, i.e.
by taking the partial normalization  of $Y$ at some of its cusps and inserting a $\P^1$ tangent to the branch point of each cusp (the singularity that one gets by bubbling  a cusp is called \emph{tacnode with a line}). Note that quasi-stable and quasi-p-stable curves are special cases of quasi-wp-stable curves: the quasi-stable curves are exactly the quasi-wp-stable curves without cusps nor tacnodes with a line; the quasi-p-stable curves are exactly the quasi-wp-stable curves without elliptic tails.
Given a quasi-wp-stable curve $X$, we call the $\P^1$'s inserted by bubbling  nodes or cusps of $Y$ the \emph{exceptional components}, and we denote by $\exc\subset X$
the union of all of them.

A line bundle $L$ of degree $d$ on a quasi-wp-stable curve $X$ of genus $g$ is said to be \emph{balanced} if for each subcurve $Z\subset X$ the following inequality (called the basic inequality) is satisfied
\begin{equation*}
\left| \deg_ZL - \frac{d}{2g-2}\deg_Z(\omega_X)\right|\leq \frac{|Z\cap Z^c|}{2},\tag{*}
\end{equation*}
where $|Z\cap Z^c|$ denotes the length of the $0$-dimensional subscheme of $X$ obtained as the scheme-theoretic intersection of $Z$ with the complementary subcurve $Z^c:=\ov{X\setminus Z}$. A balanced line bundle
$L$ on $X$ is said to be \emph{properly balanced} if  the degree of $L$ on each exceptional component of $X$ is $1$.
Moreover, a properly balanced line bundle $L$
is said to be \emph{strictly balanced} (resp. \emph{stably balanced}) if the basic inequality (*) is strict except possibly  for the subcurves $Z$ such that $Z\cap Z^c\subset \exc$
(resp. such that $Z$ or $Z^c$ is entirely contained in $\exc$).

The last definition concerns the behavior of irreducible elliptic tails of $X$ (i.e. irreducible components of $X$ of arithmetic genus one and meeting the rest of the curve in one point) with respect to a line bundle on $X$. Let $F$ be an irreducible elliptic tail of $X$ and let $p$ denote the intersection point between $F$ and the complementary subcurve.
Given a line bundle $L$ on $X$, we can write $L_{|F}=\OO_F((d_F-1)p+q)$, where $d_F=\deg_F L$ denotes the degree of $L$ on $F$, for a uniquely determined smooth point $q$ of $F$.
We say that $F$ is \emph{special} with respect to $L$ (or simply special when the line bundle $L$ is clear from
the context) and \emph{non-special} (with respect to $L$) otherwise.

\vspace{0.2cm}

Now, we can state the main theorems proved in this manuscript.
Our first main result extends the description of semistable (resp. polystable, resp. stable) points  $[X\subset \P^{d-g}]\in \Hilb_d$ given by Caporaso in \cite{Cap} to the case $d>4(2g-2)$ and also to the Chow scheme.

\begin{theoremalpha} \label{T:MainThm1}
Consider a point $[X\subset \P^{d-g}]\in \Hilb_d$ with $d>4(2g-2)$; assume moreover that $X$ is connected.
Then the following conditions are equivalent:
\begin{enumerate}[(i)]
\item $[X\subset \P^{d-g}] $ is semistable (resp. polystable, resp. stable);
\item $\Ch([X\subset \P^{d-g}])$ is semistable (resp. polystable, resp. stable);
\item $X$ is quasi-stable and $\OO_X(1)$ is balanced (resp. strictly balanced, resp. stably balanced).
\end{enumerate}
In each of the above cases, $X\subset \P^{d-g}$ is non-degenerate and linearly normal, and $\OO_X(1)$ is  non-special.

\end{theoremalpha}

Theorem \ref{T:MainThm1} follows by combining Theorem \ref{T:semistable}\eqref{T:semistable1}, Corollary \ref{C:polystable}\eqref{C:polystable1} and Corollary \ref{C:stable}\eqref{C:stable1}.

When $d=4(2g-2)$, the description of the semistable locus in Theorem \ref{T:MainThm1} breaks down and we get that the Hilbert and Chow semistable loci admit a different description.

\begin{theoremalpha}\label{T:1crit}
Consider a point $[X\subset \P^{d-g}]\in \Hilb_d$ with $d=4(2g-2)$ and $g\geq 3$;
assume moreover that $X$ is connected. Then the following holds:
\begin{enumerate}[(i)]
\item \label{T:1crit1} $[X\subset \P^{d-g}] $ is semistable if and only if $X$ is quasi-wp-stable without tacnodes nor special elliptic tails (with respect to $\OO_X(1)$) and $\OO_X(1)$ is balanced.
\item \label{T:1crit2} $\Ch([X\subset \P^{d-g}])$ is semistable if and only if $X$ is quasi-wp-stable without tacnodes  and $\OO_X(1)$ is balanced.
\end{enumerate}
In each of the above cases, $X\subset \P^{d-g}$ is non-degenerate and linearly normal, and $\OO_X(1)$ is  non-special.
\end{theoremalpha}

Theorem \ref{T:1crit} follows from Theorem \ref{T:semistable5}. For a description of the Hilbert or Chow
polystable (resp. stable) locus, we refer the reader to Corollary \ref{C:polystable5} (resp. Corollary
\ref{C:stable5}).

The next range where the Hilbert and Chow GIT-semistable loci coincide and stay constant is the interval
$\frac{7}{2}(2g-2)<d<4(2g-2)$, where we have the following description.

\begin{theoremalpha} \label{T:MainThm2}
Consider a point $[X\subset \P^{d-g}]\in \Hilb_d$ with $\frac{7}{2}(2g-2)<d<4(2g-2)$ and $g\geq 3$; assume moreover that $X$ is connected.
Then the following conditions are equivalent:
\begin{enumerate}[(i)]
\item $[X\subset \P^{d-g}] $ is semistable (resp. polystable, resp. stable);
\item $\Ch([X\subset \P^{d-g}])$ is semistable (resp. polystable, resp. stable);
\item $X$ is quasi-wp-stable without tacnodes nor special elliptic tails (with respect to $\OO_X(1)$) and $\OO_X(1)$ is balanced.
\end{enumerate}
In each of the above cases, $X\subset \P^{d-g}$ is non-degenerate and linearly normal, and $\OO_X(1)$ is  non-special.
\end{theoremalpha}

Theorem \ref{T:MainThm2} follows by combining Theorem \ref{T:semistable4}, Corollary \ref{C:polystable4} and Corollary \ref{C:stable4}.

When $d=\frac{7}{2}(2g-2)$, the description of the Hilbert or Chow semistable locus in Theorem \ref{T:MainThm2} breaks down again and we get that the Hilbert and Chow semistable loci admit a different description, similarly to the case $d=4(2g-2)$.

\begin{theoremalpha}\label{T:2crit}
Consider a point $[X\subset \P^{d-g}]\in \Hilb_d$ with $d=\frac{7}{2}(2g-2)$ and $g\geq 3$;
assume moreover that $X$ is connected. Then the following holds:
\begin{enumerate}[(i)]
\item \label{T:2crit1} $[X\subset \P^{d-g}] $ is semistable if and only if $X$ is quasi-p-stable and $\OO_X(1)$ is balanced.
\item \label{T:2crit2} $\Ch([X\subset \P^{d-g}])$ is semistable if and only if $X$ is quasi-wp-stable without special elliptic tails (with respect to $\OO_X(1)$) and $\OO_X(1)$ is balanced.
\end{enumerate}
In each of the above cases, $X\subset \P^{d-g}$ is non-degenerate and linearly normal, and $\OO_X(1)$ is  non-special.
\end{theoremalpha}

Theorem \ref{T:2crit} follows from Theorem \ref{T:semistable3}. For a description of the Hilbert or Chow
polystable (resp. stable) locus, we refer the reader to Corollary \ref{C:polystable3} (resp. Corollary
\ref{C:stable3}).

The next range where the Hilbert and Chow semistable loci coincide and stay constant is the interval
$2(2g-2)<d<\frac{7}{2}(2g-2)$, where we have the following description.

\begin{theoremalpha} \label{T:MainThm3}
Consider a point $[X\subset \P^{d-g}]\in \Hilb_d$ with $2(2g-2)<d<\frac{7}{2}(2g-2)$ and $g\geq 3$; assume moreover that $X$ is connected.
Then the following conditions are equivalent:
\begin{enumerate}[(i)]
\item $[X\subset \P^{d-g}] $ is semistable (resp. polystable, resp. stable);
\item $\Ch([X\subset \P^{d-g}])$ is semistable (resp. polystable, resp. stable);
\item $X$ is quasi-p-stable and $\OO_X(1)$ is balanced (resp. strictly balanced, resp. stably balanced).
\end{enumerate}
In each of the above cases, $X\subset \P^{d-g}$ is non-degenerate and linearly normal, and $\OO_X(1)$ is  non-special.

\end{theoremalpha}

The above Theorem \ref{T:MainThm3} follows by combining Theorem \ref{T:semistable}\eqref{T:semistable2}, Corollary \ref{C:polystable}\eqref{C:polystable2} and Corollary \ref{C:stable}\eqref{C:stable2}.
Note that Theorem \ref{T:MainThm3} breaks down for $d=2(2g-2)$ since, for this value of $d$,  there are stable points $[X\subset \P^{d-g}]\in \Hilb_d$ (hence semistable points $\Ch([X\subset \P^{d-g}])\in \Chow_d$) with $X$ having arbitrary tacnodal singularities and not just tacnodes with a line (see Remark \ref{R:potpseudo-sharp}).

\vspace{0.2cm}

Let us now briefly comment on the assumptions of the above theorems.
First of all, with the exception of Theorem \ref{T:MainThm1}, the other four theorems require that $g\geq 3$.
The reason for this assumption is that the moduli stack of p-stable curves of genus $g$ is not separated for $g=2$
(see \S~\ref{S:wp-stable}) and this causes some extra-difficulties in the GIT analysis. In particular, we use the hypothesis that $g\geq 3$ (whenever p-stable or wp-stable curves are involved) in a crucial way in Theorem \ref{T:auto-grp}, Propositions \ref{P:deg-strata} and \ref{P:completeness}.
Therefore, for simplicity, we restrict in this manuscript to the case $g\geq 3$ whenever dealing with p-stable or
wp-stable curves (i.e. for $d\leq 4(2g-2)$); the GIT analysis for $g=2$ and the missing values of $d$ (i.e. $d=5,6,7,8$) will be dealt with in a future work.


Another hypothesis that is present in all the above theorems is the connectivity of the curve $X$.
Indeed, under the assumption that $d>2(2g-2)$, the locus of connected curves in the Hilbert or Chow semistable locus is a connected and irreducible component (see the beginning of Section \ref{S:stratifica} and Corollary \ref{C:irr-quot}), that we call the main component (see Section \ref{S:map-pstable}).
In Section \ref{sec:extra}, we prove that there are no other components in the Hilbert or Chow semistable locus if and only if $\gcd(d, g-1)=1$. More generally, we prove in Theorem \ref{T:conn-comp} that the number of connected components (which are also irreducible) of the Hilbert or Chow semistable locus is equal to the number of partitions of  $\gcd(d, g-1)$.

\vspace{0.2cm}

Now let us make some comments on the strategy of the proof. The approach to the problem of determining the semistable locus is the same as that developed by Mumford, Gieseker and Caporaso: firstly we use Hilbert-Mumford numerical criterion in order to find necessary conditions for a point $[X\subset \P^{d-g}]$ in the Hilbert scheme to be semistable (see Fact 4.20, Corollary 9.4 and Corollary 9.7)
and finally we characterize the entire semistable locus using combinatorial properties of the multidegree of $\OO_X(1)$ and separateness property of suitable stacks of curves. For $d\geq 4(2g-2)$ and $2(2g-2)<d<\displaystyle\frac{7}{2}(2g-2)$ this strategy does work because the semistable locus consists only of quasi-stable and quasi-pseudo-stable curves respectively, thus in the second step it suffices to work with separated stacks like $\MMg$ and $\MMgp$ respectively (for $\MMgp$ it is necessary to suppose that $g\geq 3$, because $\overline{\mathcal M}_2^{\rm p}$ is not separated).

Unfortunately for $\displaystyle\frac{7}{2}(2g-2)\leq d \leq 4(2g-2)$ it is not very hard to prove the existence of semistable curves admitting cusps and elliptic tails (see Remark 11.4 and Corollary 12.3),
so that we have to work with the stack $\MMgwp$ of weakly-pseudo-stable curves, which is not separated. For this reason it is necessary to use other techniques. A very naive idea is to apply again Hilbert-Mumford numerical criterion. We recall that the Hilbert-Mumford criterion states that given a curve $X\subset \P^{d-g}$
$$
[X\subset \P^{d-g}] \text{ is semistable }\Longleftrightarrow \mu([X\subset \P^{d-g}],\rho)\geq 0\text{ for each 1ps }\rho:\Gm\lra \SL_{d-g+1}
$$
(see \cite{Dol} for the definition of $\mu([X\subset \P^{d-g}],\rho)$). ``Unfortunately'' this criterion is easier to apply when we would like to prove the instability of curves rather than the semistability.

One way to solve this difficulty is to apply Tits' results about the parabolic group associated to a fixed one-parameter subgroup (see for more details \cite[Sec. 9.5]{Dol} or \cite[Chap. 2, Sec. 2]{GIT}). These results allowed G. Kempf to prove that if $[X\subset \P^{d-g}]$ is unstable, then there exists a unique one-parameter subgroup which in some sense is responsible for the instability of $[X\subset \P^{d-g}]$. The idea, hence, is to use the properties of the parabolic group to study the behavior of curves having elliptic tails under the action of one parameter subgroups: we prove that, if $[X\subset \P^{d-g}]$ has an elliptic tail, i. e. $X$ is the union of an elliptic curve $F$ and another curve $C$ such that $F$ and $C$ intersect each other in one node, the GIT analysis can be restricted to 1ps $\rho:\Gm\lra \SL_{d-g+1}$ diagonalized by bases of $\P^{d-g}$ that come out from the union of bases of the linear spans $\langle F \rangle$ and $\langle C \rangle$ in $\P^{d-g}$. In other words, we can study the semistability of $X$ by analyzing the subcurves $F$ and $C$ in their linear spans separately. Essentially, this is the content of the Criterion of stability of tails (see Proposition 8.3). 

Motivated by this criterion, we study the behavior of polarized elliptic curves $F\subset \P^r$ for some suitable $r$ under the action of one parameter subgroups and we prove that for $\displaystyle\frac{7}{2}(2g-2)<d<4(2g-2)$ there are semistable curves $[X\subset \P^{d-g}]$ that admit non-special elliptic tails (see Remark 11.4)
for all models of non-special elliptic tail (see Corollary 12.3).

The final part of the GIT analysis is based on a nice numerical trick. We will explain this trick briefly in the case $\displaystyle\frac{7}{2}(2g-2)< d < 4(2g-2)$. Given a quasi-wp-stable curve $[X\subset \P^{d-g}]\in \Hilb_d$, as above with $F$ non-special, we define a new polarized curve $X'$ by replacing the polarized subcurve $F$ with a polarized smooth curve $Y$ of genus $g$ and degree $d-d_F$ so that $Y$ and $C$ intersect again in one node. If we denote by $d'$ and $g'$ respectively the degree of the new line bundle $L'$ and the genus of $X'$, one can consider the Hilbert point $[X'\subset \P^{d'-g'}]\in \Hilb_{d'}$. It can be easily checked that
$$
\frac{d'}{2g'-2}=\frac{d}{2g-2}
$$
and
$$
\OO_X(1)\text{ is balanced }\Longleftrightarrow \OO_{X'}(1) \text{ is balanced.}
$$
Applying our criterion, one proves that
$$
[X'\subset \P^{d'-g'}] \text{ is semistable } \Longrightarrow [X\subset \P^{d-g}]\text{ is semistable},
$$
so that the GIT analysis can be completed by an induction argument on the number of non-special elliptic tails of $X$. The proof of the base of induction requires the separateness of $\MMgp$, so that we need to suppose again that $g\geq 3$.

\hspace{0.2cm}

Let us now comment on the origin of the two \emph{critical values} $d=4(2g-2)$ and $d=\frac{7}{2}(2g-2)$, at which the Hilbert and Chow semistable loci change. It turns out that the existence of these two critical values is related to the presence in the Chow semistable locus of a point $\Ch([X \subset \P^r])$ whose stabilizer subgroup in $\PGL_{d-g+1}$ contains a copy of the multiplicative subgroup $\Gm$. This resembles very much what happens in the Hassett-Keel program for $\MMg$ where the variations of the log canonical models of $\MMg$ are expected to be accounted for by curves with a $\Gm$-automorphism; see \cite{AFS0}.

The first critical value $d=4(2g-2)$ is due to the presence of Chow semistable points $\Ch([X_0\subset \P^{d-g}])\in \Chow_ d$ such that $X_0$ has a cuspidal elliptic tail which is special with respect to $\OO_{X_0}(1)$. Such a point
has a non-trivial copy of the multiplicative group $\Gm$ in its stabilizer subgroup inside $\PGL_{d-g+1}$ (see Lemma \ref{L:aut-stab} and Theorem \ref{T:auto-grp}). With respect to a suitable one-parameter subgroup $\rho:\Gm\to \GL_{d-g+1}$, whose image in $\PGL_{d-g+1}$ is contained in the stabilizer subgroup of
$[X_0\subset \P^{d-g}]$ (as in the proof of Theorem \ref{T:spec-ell}), we prove in Theorem \ref{T:basin-cusps} that the basins of attraction of $[X_0\subset \P^{d-g}]$ with respect to $\rho$ and $\rho^{-1}$ are the ones depicted in Figure \ref{F:1crit} below.
\vspace{0.5cm}

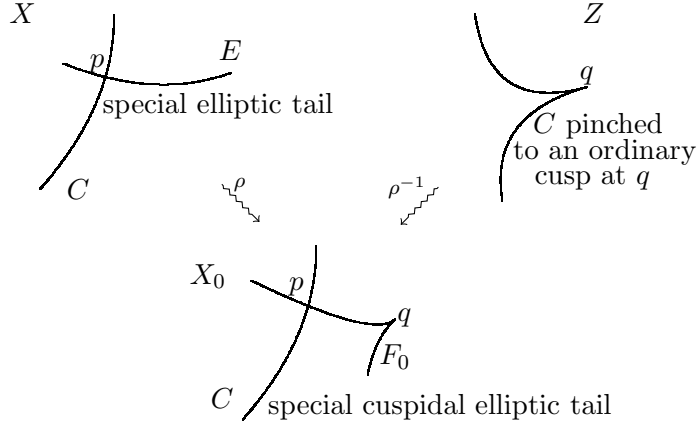
\begin{figure}[!h]
\begin{center}
\unitlength .5mm 
\linethickness{0.4pt}
\ifx\plotpoint\undefined\newsavebox{\plotpoint}\fi 
\begin{picture}(172,100)(20,80)
\qbezier(81.75,119.5)(114.625,103.625)(120,109.25)
\qbezier(120,109.25)(114.625,104.125)(112.75,94.5)
\qbezier(79.5,82.5)(99.25,105)(99,128.5)
\qbezier(25.75,144)(45.5,166.5)(45.25,190)
\qbezier(31.75,177.25)(56.5,167.5)(76.25,174.75)
\qbezier(141.25,190.5)(144.875,163.75)(171,171)
\qbezier(171,171)(144.75,163.625)(148.5,140.75)
\put(21,190.75){\makebox(0,0)[cc]{$X$}}
\put(172.75,190.75){\makebox(0,0)[cc]{$Z$}}
\put(70.25,120.75){\makebox(0,0)[cc]{$X_0$}}
\put(76.25,180.75){\makebox(0,0)[cc]{$E$}}
\put(73,166){\makebox(0,0)[cc]{special elliptic tail}}
\put(40.75,177.25){\makebox(0,0)[cc]{$p$}}
\put(122.5,109.75){\makebox(0,0)[cc]{$q$}}
\put(171,174){\makebox(0,0)[cc]{$q$}}
\put(132,86){\makebox(0,0)[cc]{special cuspidal elliptic tail}}
\put(35.75,143.75){\makebox(0,0)[cc]{$C$}}
\put(174,160.75){\makebox(0,0)[cc]{$C$ pinched}}
\put(177,153.75){\makebox(0,0)[cc]{to an ordinary }}
\put(172.5,146.75){\makebox(0,0)[cc]{cusp at $q$}}
\put(120,99.25){\makebox(0,0)[cc]{$F_0$}}
\put(93.75,117.75){\makebox(0,0)[cc]{$p$}}
\put(74,88.25){\makebox(0,0)[cc]{$C$}}
\put(75,141){\makebox(0,0)[cc]{$\xxrsquigarrow{\phantom{aaa}\rho}$}}
\put(122.5,140.75){\makebox(0,0)[cc]{$\xxlsquigarrow{\phantom{a}\rho^{-1}}$}}
\end{picture}
\caption{The basin of attraction of a curve $X_0$ with a special cuspidal elliptic tail $F_0$.}
 \label{F:1crit}
\end{center}
\end{figure}
This implies that, in crossing the critical value $d=4(2g-2)$ (i.e. as $\frac{d}{2g-2}$ passes from $4+\epsilon$ to $4-\epsilon$ for a small $\epsilon$), special elliptic tails become (Hilbert or
Chow) unstable and they get replaced by cusps. Moreover, Hilbert semistability for $d=4(2g-2)$ behaves like
Hilbert (or Chow) semistability for $\frac{7}{2}(2g-2)<d<4(2g-2)$;
hence Hilbert semistability is strictly stronger than Chow semistability for $d=4(2g-2)$.


The second critical value $d=\frac{7}{2}(2g-2)$ is due to the presence of Chow semistable points
$\Ch([X_0\subset \P^{d-g}])\in \Chow_ d$ such that $X_0$ has a tacnodal elliptic tail. Such a point
has a non-trivial copy of the multiplicative group $\Gm$ into its stabilizer subgroup with respect to $\PGL_{d-g+1}$ (see Lemma \ref{L:aut-stab} and Theorem \ref{T:auto-grp}). With respect to a suitable one-parameter subgroup $\rho:\Gm\to \GL_{d-g+1}$ whose image in $\PGL_{d-g+1}$ is contained in the stabilizer subgroup of
$[X_0\subset \P^{d-g}]$ (as in the proof of Theorem \ref{T:ell-curves}),
the basins of attraction of $[X_0\subset \P^{d-g}]$ with respect to $\rho$ and $\rho^{-1}$ are depicted in Figure \ref{F:2crit} below (see Theorem \ref{T:basintacn} for the proof).
\begin{figure}[!h]
\begin{center}
\unitlength .5mm 
\linethickness{0.35pt}
\ifx\plotpoint\undefined\newsavebox{\plotpoint}\fi 
\begin{picture}(176,95)(0,87)
\qbezier(77.5,87.75)(97.25,110.25)(97,133.75)
\qbezier(25.75,144)(45.5,166.5)(45.25,190)
\qbezier(31.75,177.25)(56.5,167.5)(76.25,174.75)
\put(21,183.75){\makebox(0,0)[cc]{$X$}}
\put(172.75,183.75){\makebox(0,0)[cc]{$Z$}}
\put(74.25,186){\makebox(0,0)[cc]{$F$}}
\put(80,167){\makebox(0,0)[cc]{non-special elliptic tail}}
\put(40.75,177.25){\makebox(0,0)[cc]{$p$}}
\put(35.75,143.75){\makebox(0,0)[cc]{$Y$}}
\put(91.75,123){\makebox(0,0)[cc]{$p$}}
\put(72,93.5){\makebox(0,0)[cc]{$Y$}}
\put(73,142){\makebox(0,0)[cc]{$\xxrsquigarrow{\phantom{aaa}\rho}$}}
\put(120.5,144){\makebox(0,0)[cc]{$\xxlsquigarrow{\rho^{-1}}$}}
\qbezier(133.25,184.25)(176,165.75)(128.75,149.25)
\put(153.5,186.25){\line(0,-1){39}}
\put(188,169.25){\makebox(0,0)[cc]{tacnode with a line}}
\put(105,94){\makebox(0,0)[cc]{$F_0$}}
\put(117.5,130){\makebox(0,0)[cc]{$E$}}
\put(151,110.5){\makebox(0,0)[cc]{tacnodal elliptic tail}}
\put(161,148.25){\makebox(0,0)[cc]{$E$}}
\put(134.75,90.75){\makebox(0,0)[cc]{$X_0$}}
\qbezier(82.25,125.75)(90.625,117.25)(101.5,120.75)
\qbezier(101.5,120.75)(113.875,124.375)(114.75,116.5)
\qbezier(114.75,116.5)(116.125,109.625)(107,107.25)
\qbezier(107,107.25)(99,106.375)(97,103)
\put(115.25,127.5){\line(0,-1){28.25}}
\end{picture}
\caption{The basin of attraction of a curve $X_0$ with a tacnodal elliptic tail $F_0$.}
\label{F:2crit}
\end{center}
\end{figure}
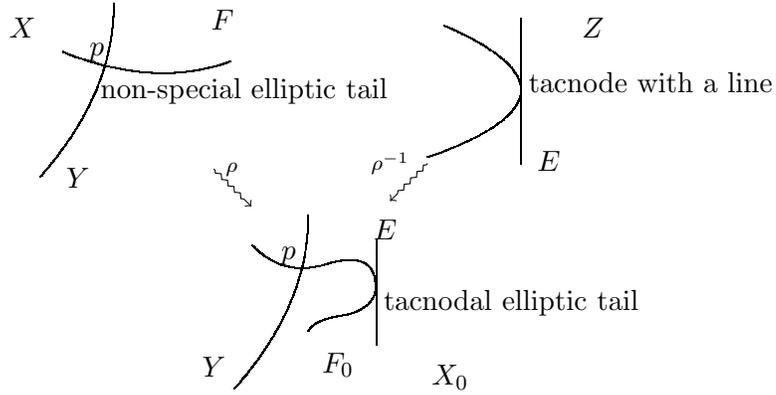
This implies that, in crossing the critical value $d=\frac{7}{2}(2g-2)$ (i.e. as $\frac{d}{2g-2}$ passes from $\frac{7}{2}+\epsilon$ to $\frac{7}{2}-\epsilon$ for a small $\epsilon$), non-special elliptic tails become (Hilbert or
Chow) unstable and they get replaced by tacnodes with a line.
Moreover, Hilbert semistability for $d=\frac{7}{2}(2g-2)$ behaves like Hilbert (or Chow) semistability for $2(2g-2)<d<\frac{7}{2}(2g-2)$; hence Hilbert semistability is strictly stronger than Chow semistability for $d=\frac{7}{2}(2g-2)$.

To conclude, observe that the basins of attraction of Figure \ref{F:1crit} are already visible in the $4$-canonical locus inside ${\rm Hilb}_{4(2g-2)}$ or ${\rm Chow}_{4(2g-2)}$ (because all the elliptic tails are special with respect to the canonical line bundle!) and indeed they were already considered by Hyeon-Morrison in \cite{HMo};
on the other hand, the basins of attraction of Figure \ref{F:2crit} are clearly not visible inside the pluricanonical locus (because they occur for a fractional value of $\frac{d}{2g-2}$!).

\vspace{0.2cm}

Finally, one last comment on the orbit identifications that occur in the GIT quotient. It is well-known the GIT quotient of the (Hilbert or Chow) semistable locus parametrizes polystable orbit (i.e. semistable orbits that are closed inside the semistable locus) and each semistable orbit contains a unique polystable orbit in its closure. If $d>2(2g-2)$ but $d\neq \frac{7}{2}(2g-2)$ or $4(2g-2)$, then Theorems \ref{T:MainThm1}, \ref{T:MainThm2}, \ref{T:MainThm3} imply that
the polystable orbits correspond to the orbits of Hilbert semistable points $[X\subset \P^{d-g}]$  such that moreover $\OO_X(1)$ is strictly balanced (and similarly for Chow semistable points). Indeed, we prove in Section \ref{S:extreme-inequ} that if a Hilbert semistable point $[X\subset \P^{d-g}]$ is such that $\OO_X(1)$ achieves one of the extremes of the basic inequality at a subcurve $Z\subset X$ such that $Z\cap Z^c\subsetneq X_{\rm exc}$, then there is an isotrivial specialization of $[X\subset \P^{d-g}]$ to a Hilbert semistable point $[X'\subset \P^{d-g}]$ such that $X'$ is obtained from $X$ by bubbling  the nodes of $(Z\cap Z^c)\setminus X_{\rm exc}$ (see Theorem \ref{T:spec-clos-orb}); hence the orbit of $[X\subset \P^{d-g}]$ contains the orbit of $[X'\subset \P^{d-g}]$ in its closure. The same thing happens for Chow semistable points. Therefore, Theorems \ref{T:MainThm1}, \ref{T:MainThm2} and \ref{T:MainThm3} say that these are the only orbit identifications that occur in the Hilbert or Chow GIT quotients outside of the critical values $d= \frac{7}{2}(2g-2)$ or $4(2g-2)$. Moreover, an easy
combinatorial argument (see \cite[Lemma 6.3]{Cap}) shows that the extreme of the basic inequalities can be achieved if and only if $\gcd(d+1-g, 2g-2)\neq 1$; therefore if $\gcd(d+1-g, 2g-2)=1$ and $d\neq \frac{7}{2}(2g-2)$ or $4(2g-2)$
then the Hilbert or Chow GIT quotients that we get are geometric, i.e. semistable points are also stable.

On the other hand, if $d$ is equal to one of the two critical values $\frac{7}{2}(2g-2)$ or $4(2g-2)$, then the orbits identifications in the Hilbert and Chow GIT quotient are different. Indeed, while in the Hilbert GIT quotient $\ov{Q}_{d,g}^h$ it is still true that the unique orbits identifications are given by the isotrivial specializations described above, in the Chow GIT quotient $\ov{Q}_{d,g}^c$ there are new isotrivial specializations that correspond to the basins of attraction depicted in Figure \ref{F:1crit} for $d=4(2g-2)$ and Figure \ref{F:2crit} for $d=\frac{7}{2}(2g-2)$. Note that there is a natural morphism $\Xi: \ov{Q}_{d,g}^h\to \ov{Q}_{d,g}^c$ from the Hilbert
GIT quotient to the Chow GIT quotient (because a Hilbert semistable point is also Chow semistable) and we prove in Section \ref{S:map-pstable} that $\Xi$ is an isomorphism if $d=\frac{7}{2}(2g-2)$ (see Proposition \ref{P:fib3.5})
while it is not an isomorphism if $d=4(2g-2)$ (see Proposition \ref{P:fib4}).

\subsection{Application: compactifications of the universal Jacobian}

As an application of Theorems \ref{T:MainThm1}, \ref{T:MainThm2} and \ref{T:MainThm3}, one gets three compactifications of the universal Jacobian stack $\cJ_{d,g}$, i.e. the moduli stack of pairs $(C,L)$ where $C$ is a smooth projective curve of genus $g$ and $L$ is a line bundle of degree $d$ on $C$, and of its coarse moduli space $J_{d,g}$.

To this aim, denote by $\JJst$ (resp. $\JJps$) the category fibered in groupoids over the category of $k$-schemes, whose fiber over a $k$-scheme $S$ is the groupoid of pairs $(f:\cX\to S, \cL)$ where $f:\cX\to S$ is a family of quasi-stable curves (resp. quasi-p-stable curves) of genus $g$ and $\cL$ is a line bundle on $\cX$ of relative degree $d$ over $S$
whose restriction to the geometric fibers of $f$ is properly balanced.
Moreover, denote by $\JJwp$ the category fibered in groupoids over the category of $k$-schemes, whose fiber over a $k$-scheme $S$ is the groupoid of pairs $(f:\cX\to S,\mathcal L)$ where $f$ is a family of quasi-wp-stable curves of genus $g$ and $\cL$ is a line bundle on $\cX$ of relative degree $d$ that is properly balanced on the geometric fibers of $f$ and such that the geometric fibers of $f$ do not contain tacnodes with a line nor special elliptic tails with respect to $\cL$.

In the following theorem, we summarize the properties of $\JJst$, $\JJwp$ and $\JJps$ that will be proved
in Section \ref{S:comp-Jac}.

\begin{theoremalpha}\label{T:New-Comp-Jac}
Let $g\geq 3$ and $d\in \Z$.
\begin{enumerate}
\item $\JJst$ (resp. $\JJwp$, $\JJps$) is a smooth,  irreducible and universally closed Artin stack of finite type over $k$ and dimension $4g-4$, containing $\cJ_{d,g}$ as a dense open substack.

\item $\JJst$ (resp. $\JJwp$, $\JJps$) admits an adequate moduli space $\Jst$ (resp. $\Jwp$, resp. $\Jps$), which is a normal integral projective variety  of dimension $4g-3$ containing $J_{d,g}$ as a dense open subvariety.\\
Moreover, if ${\rm char}(k)=0$, then $\Jst$ (resp. $\Jwp$, resp. $\Jps$) has rational singularities, hence it is Cohen-Macauly.

\item  Denote by $\wt{H}_d$ the main component of the semistable locus of $\Hilb_d$, i.e. the open subset of $\Hilb_d$ consisting of all the points $[X\subset \P^{d-g}]$ that are semistable and such that $X$ is connected. Then it holds:
\begin{enumerate}[(i)]
\item $\JJst\cong [\wt{H}_d/\GL_{d-g+1}]$ \: and \: $\Jst\cong \wt{H}_d/\!\!/ \GL_{d-g+1}$ \:
      if  \: $d> 4(2g-2)$, \\
\item $\JJwp\cong [\wt{H}_d/\GL_{d-g+1}]$ \: and \: $\Jwp\cong \wt{H}_d/\!\!/ \GL_{d-g+1}$ \:
      if  \: $\frac{7}{2}(2g-2)<d\leq 4(2g-2)$, \\
\item $\JJps\cong [\wt{H}_d/\GL_{d-g+1}]$ \: and \: $\Jps\cong \wt{H}_d/\!\!/ \GL_{d-g+1}$ \:
      if  \: $2(2g-2)<d\leq \frac{7}{2}(2g-2)$.
\end{enumerate}

\item We have the following commutative diagrams
$$\xymatrix{
\JJst \ar[r] \ar[d]_{\Psi^{\rm s}} & \Jst \ar[d]^{\Phi^{\rm s}} && \JJwp \ar[r] \ar[d]_{\Psi^{\rm wp}} & \Jwp \ar[d]^{\Phi^{\rm wp}} && \JJps \ar[r] \ar[d]_{\Psi^{\rm ps}} & \Jps \ar[d]^{\Phi^{\rm ps}}\\
\MMg \ar[r] & \Mg && \MMgwp \ar[r] & \Mgp && \MMgp \ar[r] & \Mgp
}$$
where $\Psi^{\rm s}$ (resp. $\Psi^{\rm wp}$, $\Psi^{\rm ps}$) is universally closed and surjective and
$\Phi^{\rm s}$ (resp. $\Phi^{\rm wp}$, resp. $\Phi^{\rm ps}$) is projective and surjective. Moreover:
\begin{enumerate}[(i)]
\item The morphisms $\Phi^{\rm s}: \Jst\to \Mg$ and $\Phi^{\rm ps}: \Jps\to \Mgp$ have equidimensional fibers of dimension $g$; moreover, if ${\rm char}(k)=0$, $\Phi^{\rm s}$ and $\Phi^{\rm ps}$ are flat over the smooth locus of $\Mg$ and $\Mgp$, respectively.

\item The fiber of the morphism $\Phi^{\rm wp}:\Jwp\to \Mgp$ over a p-stable curve $X\in \Mgp$ has dimension equal to the sum of $g$ with the number of cusps of $X$.
\end{enumerate}

\item Let $\JJst^{\star}$ be equal to either $\JJst$ or $\JJwp$ or $\JJps$. Denote by $\JJst^{\star}\fatslash \Gm$
the rigidification of $\JJst^{\star}$ by $\Gm$ and by  $\wh{\Psi}^{\star}:\JJst^{\star} \to \MMg^{\star}$ the associated morphism, where $\MMg^{\star}$ is equal to either $\MMg$ or $\MMgwp$ or $\MMgp$.
Then the following conditions are equivalent:
\begin{enumerate}[(i)]
\item  $\gcd(d+1-g,2g-2)=1$;
\item The stack $\JJst^{\star}\fatslash \Gm$  is a DM-stack;
\item The stack $\JJst^{\star}\fatslash \Gm$  is proper;
\item The morphism $\wh{\Psi}^{\star}: \JJst^{\star}\fatslash \Gm\to \MMg^{\star}$  is representable.
\end{enumerate}

\item  If ${\rm char}(k)=0$, then it holds
\begin{enumerate}[(i)]
\item $(\Phi^{\rm st})^{-1}(X)\cong \ov{\Jac_d}(X)/\Aut(X)$ \: for any $X\in \Mg$,
\item $(\Phi^{\rm ps})^{-1}(X)\cong \ov{\Jac_d}(X)/\Aut(X)$ \: for any $X\in \Mgp$,
\end{enumerate}
where $\ov{\Jac_d}(X)$ is the moduli space of of rank-$1$, torsion-free sheaves on $X$ of degree $d$ that are slope-semistable with respect to $\omega_X$ (and it is called the canonical compactified Jacobian of $X$ in degree $d$).

\end{enumerate}
\end{theoremalpha}

The stack (resp. variety) $\JJst$ (resp. $\Jst$) was introduced by Caporaso in \cite{Cap} and \cite{capneron} and is therefore called the \emph{Caporaso's compactified universal Jacobian stack} (resp. \emph{variety}).
The properties of $\JJst$ and $\Jst$ stated in the above theorem were indeed already known (also for $g=2$),
by the work of Caporaso \cite{Cap}, \cite{capneron} and the third author \cite{melo1}.


In Section \S\ref{S:sheaves}, we provide also an alternative description of the stack $\JJst$ (resp. $\JJwp$, resp.
$\JJps$) via certain rank-$1$, torsion-free sheaves on stable (resp. wp-stable, resp. p-stable) that are semistable with respect to the canonical line bundle (see Theorem \ref{T:new-descr}).

\subsection{Open problems}

This work leaves unsolved some natural problems for further investigation, that we briefly discuss here.

As we observed above, Theorem \ref{T:MainThm3} does not hold for $d=2(2g-2)$.
The first problem is thus the following.

\begin{problemalpha}\label{ProA}
\noindent
\begin{enumerate}[(i)]
\item Describe the (semi-,poly-)stable points of $\Hilb_d$ and $\Chow_d$ in the case $d=2(2g-2)$.
\item Describe the (semi-,poly-)stable points of $\Hilb_d$ and $\Chow_d$ in the case $d=2(2g-2)-\epsilon$ (for small $\epsilon$).
\item What is the next critical value of $\frac{d}{2g-2}<2$ at which the GIT quotients change?
\end{enumerate}
\end{problemalpha}

As an output of the GIT analysis proposed in Problem \ref{ProA}, one expects to find new compactifications of the universal Jacobian over the Hassett-Hyeon \cite{HH2} moduli spaces $\ov{M}_g^h$ and $\ov{M}_g ^c$ of c-semistable and h-semistable curves, respectively.

\vspace{0.2cm}

In order to understand the relation between the three compactifications $\JJst$, $\JJwp$ and $\JJps$ of the universal
Jacobian stack $\cJ_{d,g}$, the following problem seems natural.


\begin{problemalpha}\label{ProB}
Describe the birational maps fitting into the following commutative diagram
$$\xymatrix{
\JJst \ar@{-->}[r] \ar[d]_{\Psi^{\rm s}} & \JJwp \ar[d]^{\Psi^{\rm wp}}& \JJps \ar[d]^{\Psi^{\rm ps}}\ar@{-->}[l] \\
\MMg \ar@{^{(}->}[r]& \MMgwp & \MMgp \ar@{_{(}->}[l].
}$$
\end{problemalpha}
More generally, one would like to set up a Hassett-Keel program for
the Caporaso's compactified universal Jacobian stack $\JJst$ and give an interpretation of the alternative compactifications $\JJwp$ and $\JJps$ of $\cJ_{d,g}$ as the first two steps in this program. Moreover, it would be interesting to
study how the new settled Hassett-Keel program for $\JJst$ relates with the classical Hassett-Keel program for $\MMg$.

\subsection{Outline of the manuscript}
We now give a detailed outline of the manuscript.

In Section \ref{S:sing-curves}, we discuss the singular curves that will appear throughout the manuscript: stable, wp-stable and p-stable curves together with their associated stacks in \S\ref{S:wp-stable}; quasi-stable, quasi-wp-stable and quasi-p-stable curves in \S\ref{S:quasi-wp-stable}.
Moreover, we introduce two operations on families of curves: the p-stable reduction that contracts elliptic tails of wp-stable curves to cusps (see Proposition \ref{P:p-stab}) and the wp-stable reduction that contracts exceptional components of quasi-wp-stable curve to either nodes or cusps (see Proposition \ref{P:wp-stab}).

In Section \ref{S:combinato}, we first collect in \S\ref{S:baldeg} several combinatorial results on balanced multidegrees and on the degree class group of Gorenstein curves;
then, we introduce and study in \S\ref{S:stbal} stably and strictly balanced multidegrees on quasi-wp-stable curves.

In Section \ref{S:GIT}, we collect all the general results from GIT that we will need in this work.
In \S\ref{Sec:Hilb-Chow} we set up our GIT problem for $\Hilb_d$ and $\Chow_d$. In \S\ref{Sec:num-crit} we recall the Hilbert-Mumford numerical criterion for $m$-th Hilbert and Chow (semi)stability. Next,  we recall several classical results that will be used in our GIT analysis: basins of attraction (\S\ref{Sec:bas-attra}); flat limits and Gr\"obner basis (\S\ref{S:limGrob});
the parabolic subgroup associated to a one-parameter subgroup (\S\ref{sec:ParabGroup}). We end this section by recalling in \S\ref{Sec:pot-stab} two classical results due to Mumford and Gieseker: the Chow (or Hilbert) stability of smooth curves of genus $g$ embedded by line bundles of degree $d\geq 2g+1$; and the Potential stability Theorem giving necessary conditions for a point of $\Hilb_d$ or of $\Chow_d$ to be semistable, provided that $d>4(2g-2)$.


In Section \ref{S:pot-pseudo}, we prove the Potential pseudo-stability Theorem \ref{teo-pstab} which gives necessary conditions for a point of $\Hilb_d$ or of $\Chow_d$ to be semistable, provided that $d>2(2g-2)$.

In Section \ref{S:automo}, we compute the stabilizer subgroup of a point of $\Hilb_d$, under the assumption that $d>2(2g-2)$.

In Section \ref{S:extreme-inequ}, we investigate the isotrivial specializations that arise when one of the extremes of the basic inequalities is achieved.

In Section \ref{S:crit-tails}, we give a criterion for the (semi-, poly-)stability of a point of $\Hilb_d$ or $\Chow_d$ whose underlying curve has a tail.

In Section \ref{S:ell-tails}, we deal with the Hilbert or Chow semistability of curves having an elliptic tail (special or not) or having a tacnode with a line. We prove that special elliptic tails become Chow unstable for $d<4(2g-2)$
(see Theorem \ref{T:spec-ell}), ordinary elliptic tails become Chow unstable for $d<\frac{7}{2}(2g-2)$ (see Theorem \ref{T:ell-curves}), tacnodes with a line are Chow unstable for $d>\frac{7}{2}(2g-2)$ (see Theorem \ref{T:tac-line}).
Moreover, we examine the basins of attraction of the curves in Figure \ref{F:1crit} and \ref{F:2crit} (see Theorem \ref{T:basin-cusps} and \ref{T:basintacn}).

In Section \ref{S:stratifica}, we introduce a stratification of the Chow semistable locus by fixing the isomorphism class of a curve and the multidegree of the line bundle that embeds it.
We study the closure of the strata in \S\ref{SS:specia-str} and we prove a completeness
result for these strata in \S\ref{SS:complete}.

In Section \ref{S:semistab1} we characterize (semi, poly)-stable points in $\Hilb_d$ and $\Chow_d$
if either $4(2g-2)<d$ or $2(2g-2)<d\leq \frac{7}{2}(2g-2)$ and $g\geq 3$, thus proving Theorems
\ref{T:MainThm1}, \ref{T:2crit} and \ref{T:MainThm3}.

In Section \ref{S:stab-elltails}, we study the stability of elliptic tails in the range $\frac{7}{2}(2g-2)<d\leq 4(2g-2)$.

In Section \ref{S:semistab2}, we characterize (semi, poly)-stable points in $\Hilb_d$ and $\Chow_d$
in the range $\frac{7}{2}(2g-2)<d\leq 4(2g-2)$, thus proving Theorems
\ref{T:1crit} and \ref{T:MainThm2}.

In Section \ref{S:map-pstable}, we study the geometric properties of the Hilbert and Chow GIT quotients and of their
modular map towards the moduli space of p-stable curves.

In Section \ref{sec:extra}, we determine when the Hilbert or Chow semistable locus admits extra-components made  entirely of non-connected curves.

In Section \ref{S:comp-Jac}, we first recall in \S\ref{S:capcomp} the properties of the Caporaso's compactified universal Jacobian stack $\JJst$ over the moduli stack of stable curves and its moduli space $\Jst$.
Then, in \S\ref{S:2newcomp}, we define and study the two new compactifications $\JJwp$ and $\JJps$ of the universal Jacobian stack $\cJ_{d,g}$ over the moduli stack of wp-stable curves and p-stable curves, respectively. In \S\ref{S:2comp-var}, we prove that $\JJwp$ and $\JJps$
 admit projective moduli spaces $\Jwp$ and $\Jps$, respectively, and we study their properties.
Finally, in \S\ref{S:sheaves}, we provide an alternative description of the stack $\JJst$ (resp. $\JJwp$, resp.
$\JJps$) and of its moduli space via certain rank-$1$, torsion-free sheaves on stable (resp. wp-stable, resp. p-stable) curves that are semistable with respect to the canonical line bundle (see Theorem \ref{T:new-descr}).

The Appendix \ref{S:appendix} contains some positivity results for balanced line bundles on Gorenstein curves, which are used throughout the manuscript and that we find  interesting in their own.

\vspace{0.3cm}

Some of the results of this manuscript (more precisely, Theorems \ref{T:MainThm1} and \ref{T:MainThm3} and Theorem \ref{T:New-Comp-Jac} for $\JJps$) were originally obtained by the first, third and fourth author and then announced in
\cite{BMV}. However, the GIT analysis in the range $\frac{7}{2}(2g-2)\leq d\leq 4(2g-2)$ was left as
an open question (see \cite[Question A]{BMV}). The second author solved this open problem in his PhD thesis \cite{Fel}, by proving Theorems \ref{T:1crit}, \ref{T:MainThm2}, \ref{T:2crit} and Theorem \ref{T:New-Comp-Jac} for $\JJwp$ and then
became a coauthor of this work. Moreover, the presence of extra-components in the Hilbert or Chow semistable locus
made of non-connected curves was also left as an open question in loc. cit. (see \cite[Question C]{BMV}); this was also
solved by the second author and resulted in  Section \ref{sec:extra} of the present work.

\subsection*{Acknowledgements}
The last two authors would like to warmly thank Lucia Caporaso for the  many conversations on topics related to her PhD thesis \cite{Cap}, which were crucial in this work.
The second author would like to thank the referees of his PhD thesis \cite{Fel} for useful comments and suggestions.


We would like to thank Silvia Brannetti and Claudio Fontanari, who shared with us some of their ideas during the first phases of this work. We thank Marco Franciosi for discussions on the results of the Appendix.
Finally, we would like to thank Ian Morrison for his interest in this work.

G. Bini has been partially supported by ``FIRST'' Universit\`a di Milano, by MIUR--PRIN \textit{Variet\`a algebriche: geometria, aritmetica e strutture di Hodge} and by the MIUR--FIRB project  \textit{Spazi di moduli e applicazioni}. 
M. Melo has been partially  supported by CMUC (funded by the European Regional Development Fund through the program COMPETE and by FCT under the project PEst-C/MAT/UI0324/2013), and by the FCT-grants PTDC/MAT/111332/2009,  PTDC/MAT-GEO/0675/2012 and EXPL/MAT-GEO/1168/2013.
F. Viviani has been partially  supported by  the MIUR--FIRB project \textit{Spazi di moduli e applicazioni}, by CMUC (funded by the European Regional Development Fund through the program COMPETE and by FCT under the project PEst-C/MAT/UI0324/2013), and by the FCT-grants PTDC/MAT/111332/2009,  PTDC/MAT-GEO/0675/2012 and EXPL/MAT-GEO/1168/2013.


 \subsection*{Conventions}
\begin{convention}
 	$k$ will denote an algebraically closed field (of arbitrary characteristic). All \textbf{schemes} are $k$-schemes, and all morphisms are implicitly assumed to respect the $k$-structure.
\end{convention}

\begin{convention}
	A \textbf{curve} is a complete, reduced  and separated scheme (over $k$) of pure dimension $1$ (not necessarily connected). The \textbf{genus} $g(X)$ of a curve $X$ is $g(X) := h^1(X,
	\mathcal O_X)$. The set of \textbf{singular points} of a curve $X$ is denoted by $X_{\rm sing}$.
\end{convention}

\begin{convention}
	A \textbf{subcurve} $Z$ of a curve $X$ is a closed $k$-scheme $Z \subseteq X$ that is reduced  and of pure dimension $1$.  We say that a subcurve $Z\subseteq X$ is proper if
	$Z\neq \emptyset, X$.
	
	Given two subcurves $Z$ and $W$ of $X$ without common irreducible components, we denote by $Z\cap W$ the $0$-dimensional subscheme of $X$ that is obtained as the
	scheme-theoretic intersection of $Z$ and $W$ and we denote by $|Z\cap W|$ its length.
	
	Given a subcurve $Z\subseteq X$, we denote by $Z^c:=\ov{X\setminus Z}$ the \textbf{complementary subcurve} of $Z$ and we set $k_Z=k_{Z^c}:=|Z\cap Z^c|$.
 \end{convention}

\begin{convention}\label{Con:singu}
Let $X$ be a curve. A point $p$ of $X$ is said to be
\begin{itemize}
\item a \textbf{node}  if $\widehat{\OO_{X,p}}\cong k[[x,y]]/(y^2-x^2)$, where $\widehat{\OO_{X,p}}$ is the completion of the local ring $\OO_{X,p}$ of $X$ at $p$;
\item a \textbf{cusp} if $\widehat{\OO_{X,p}}\cong k[[x,y]]/(y^2-x^3)$;
\item a \textbf{tacnode} if $\widehat{\OO_{X,p}}\cong k[[x,y]]/(y^2-x^4)$.
\end{itemize}
A \textbf{tacnode with a line}  of a curve $X$ is a tacnode $p$ of $X$ at which  two irreducible components $D_1$ and $D_2$ of $X$
meet with a simple tangency so that $D_1\cong \P^1$ and $k_{D_1}=2$ (or equivalently $p$ is the set-theoretical intersection of $D_1$ and $D_1^c$).
\end{convention}

\begin{convention}\label{Con:tails}
An \textbf{elliptic tail} of a curve $X$ is a connected subcurve $F$ of genus 1 meeting the rest of the curve in one point; i.e. a connected subcurve $F\subseteq X$ such that
$g(F)=1$ and $k_F=|F\cap F^c|=1$. Moreover, we say that $F$ is
\begin{itemize}
	\item \textbf{nodal} if F is an irreducible rational curve with one node;
	\item \textbf{cuspidal} if F is an irreducible rational curve with one cusp;
	\item \textbf{reducible nodal} if F consists of two smooth rational subcurves meeting in two nodes;
	\item \textbf{tacnodal} if F consists of two smooth rational subcurves meeting in a tacnode.
\end{itemize}
Moreover we define the \textbf{elliptic locus}, which we denote by $X_{\rm{ell}}$, as the union of all the elliptic tails of $X$.
\end{convention}

\begin{convention}
A curve $X$ is called \textbf{Gorenstein} if its dualizing sheaf $\omega_X$ is a line bundle.
\end{convention}

\begin{convention}\label{N:lps}
A curve $X$ has \textbf{locally planar singularities at $p\in X$} if  the completion $\wh{\cO}_{X,p}$ of the local ring of $X$ at $p$ has embedded dimension two, or equivalently if it can be written as
$$\wh{\cO}_{X,p}=k[[x,y]]/(f),$$
for a reduced series $f=f(x,y)\in k[[x,y]]$. A curve $X$ has locally planar singularities if $X$ has locally planar singularities at every $p\in X$. Clearly, a curve with locally planar singularities is Gorenstein. A (reduced) curve has locally planar singularities if and only if it can be embedded in a smooth projective surface (see \cite{AK}).
\end{convention}

\begin{convention}
	A \textbf{family of curves} is a proper, flat morphism $X \to T$ whose geometric fibers are curves. Given a class $\mathcal C$ of curves, a family of curves of $\mathcal C$ is a family of curves
	$X\to T$ whose geometric fibers belong to the class $\mathcal C$. For example: if $\mathcal C$ is the class of nodal curves of genus $g$,
	then a family of nodal curves of genus $g$ is a family of curves whose geometric  fibers are nodal curves of genus $g$.
\end{convention}

\section{Singular curves}\label{S:sing-curves}

The aim of this section is to collect the definitions and basic properties of the curves that we will deal with throughout the manuscript.

\subsection{Stable, p-stable and wp-stable curves}\label{S:wp-stable}

We begin by recalling the definition of stable curves (\cite{DM}), pseudo-stable curves (\cite{Sch}) and weakly-pseudo-stable curves (\cite[Pag. 8]{HMo}) of genus $g\geq 2$.

\begin{defi}\label{D:stable}
A connected  curve $X$ of arithmetic genus $g\geq 2$ is
\begin{enumerate}[(i)]
\item \label{D:DM-stable} {\em stable} if
\begin{enumerate}
\item $X$ has only nodes as singularities;
\item the canonical sheaf $\omega_X$ is ample.
\end{enumerate}
\item \label{D:p-stable} {\em p-stable} (or pseudo-stable) if
\begin{enumerate}
\item $X$ has only nodes and cusps as singularities;
 \item $X$ does not have elliptic tails, i.e. $X_{\rm ell}=\emptyset$;
 \item the canonical sheaf $\omega_X$ is ample.
\end{enumerate}
\item \label{D:wp-stable}
{\em wp-stable} (or weakly-pseudo-stable) if
\begin{enumerate}
\item $X$ has only nodes and cusps as singularities;
\item the canonical sheaf $\omega_X$ is ample.
\end{enumerate}
\end{enumerate}
Note that, in each of the three cases, $\omega_X$ is ample if and only if each connected subcurve $Z$ of $X$ of genus zero is such that $k_Z=|Z\cap Z^c|\geq 3$.
\end{defi}

\begin{rmk}\label{R:st-p-wp}
Note that stable curves and p-stable curves are wp-stable. More precisely:
\begin{enumerate}[(i)]
\item stable curves are exactly those wp-stable curves without cusps.
\item p-stable curves are exactly those wp-stable curves without elliptic tails.
\end{enumerate}
\end{rmk}

We will work throughout the manuscript with the following stacks.

\begin{defi}\label{D:stack-curves}
Let $g\geq 2$. We denote by  $\MMg$ (resp. $\MMgp$, resp. $\MMgwp$)  the stack parametrizing families of stable (resp. p-stable, resp. wp-stable) curves of genus $g$.
\end{defi}

The properties of the above stacks can be summarized in the following

\begin{thm}\label{T:stacks-curves}
Let $g\geq 2$.
\begin{enumerate}[(i)]
\item \label{T:stacks-curves1} $\MMgwp$ is a smooth, irreducible algebraic stack of dimension $3g-3$,
containing $\MMg$ and $\MMgp$ as open  substacks.
\item \label{T:stacks-curves2} $\MMg$ is a proper stack;  $\MMgp$ is a proper stack if $g\geq 3$ and a weakly proper stack if $g=2$; $\MMgwp$ is a weakly proper stack.
 \item \label{T:stacks-curves3} $\MMg$ admits a coarse moduli space $\Mg$; $\MMgp$ admits a coarse moduli space $\Mgp$ for $g\geq 3$ and an adequate moduli
 space $\Mgp$ for $g=2$. $\Mgp$ is also an adequate moduli space for $\MMgwp$.

Moreover,  $\Mg$ and $\Mgp$ are irreducible projective varieties of dimension $3g-3$.
\end{enumerate}
\end{thm}
\begin{proof}
Part \eqref{T:stacks-curves1}: $\MMgwp$  is an algebraic stack since it is an open substack of the stack of all genus $g$ curves, which is well known to be algebraic (see Appendix B of \cite{Smy} by J. Hall, or also \cite{hall}).
By \cite[Prop. 2.4.8]{Ser}, an obstruction space for the deformation functor $\Def_X$ of a wp-stable curve $X$ is the vector space $\Ext^2(\Omega_X^1, \OO_X)$, which is zero according to \cite
[Lemma 1.3]{DM} since $X$ is a reduced curve with locally complete intersection singularities. This implies that $\Def_X$ is formally smooth, hence that $\MMgwp$ is smooth at $X$. Moreover,
from \cite[Thm. 2.4.1]{Ser} and \cite[Cor. 3.1.13]{Ser}, it follows that a reduced curve with locally complete intersection singularities can always be smoothened; therefore the open substack $
\mathcal{M}_g\subset
\MMgwp$ of smooth curves is dense. Since $\mathcal{M}_g$ is irreducible of dimension $3g-3$ (see \cite{DM}), we deduce that $\MMgwp$ is irreducible of dimension $3g-3$ as well. Clearly, $\MMg$ and $\MMgp$ are open substacks of $\MMgwp$ because the
condition of having no cusps or no elliptic tails is an open condition.

Part \eqref{T:stacks-curves2}:
for any $m\geq 2$, denote by $\Chow_{m,\can}^{ss}$ the locally closed sub-locus of the Chow scheme of $1$-cycles of degree
$m(2g-2)$ in $\P^N$ (where $N:=m(2g-2)-g$) consisting of curves which are embedded by the $m$-pluricanonical map and
semistable (see Section \ref{Sec:Hilb-Chow} for more details). It is known that: $\Chow_{m,\can}^{ss}$
consists of stable curves if $m\geq 5$ (see \cite{Mum}); $\Chow_{4,\can}^{ss}$ consists of wp-stable curves
(see \cite{HMo}); $\Chow_{3,\can}^{ss}$ consists of p-stable curves (see \cite{Sch} for $g\geq 3$ and \cite{HL} for
$g=2$). Now, a standard argument (see \cite[Thm. 3.2]{Edi} and \cite[Chap. XII, Thm. 5.6]{ACG}) yields the following isomorphisms of stacks:
\begin{equation}\label{E:stacks-quot}
\begin{aligned}
&  \MMg\cong [\Chow_{m,\can}^{ss}/\PGL_{N+1}] \text{ for any } m\geq 5, \\
&  \MMgwp\cong [\Chow_{4,\can}^{ss}/\PGL_{N+1}], \\
& \MMgp\cong [\Chow_{3,\can}^{ss}/\PGL_{N+1}].
 \end{aligned}
\end{equation}
In particular, it follows that all the above stacks are weakly proper (see \cite[Section 2]{ASvdW}). Moreover, it is well known that there are no
strictly semistable points in $\Chow_{m,\can}^{ss}$ for $m\geq 5$ (see \cite{Mum}) and in $\Chow_{3,\can}^{ss}$ for $g\geq 3$ (see \cite{Gie}).
This yields that $\MMg$ and $\MMgp$ for $g\geq 3$ are proper stacks (see \cite[Section 2]{ASvdW}).

Part \eqref{T:stacks-curves3}: define the GIT quotients
\begin{equation}\label{E:spaces-quot}
\begin{aligned}
&  \Mg:= \Chow_{5,\can}^{ss}/\!\!/ \PGL_{N+1}, \\
&  \Mgp:= \Chow_{3,\can}^{ss}/\!\!/\PGL_{N+1}. \\
 \end{aligned}
\end{equation}
By combining \eqref{E:stacks-quot}, \eqref{E:spaces-quot} and what said above on the strictly semistable points, it follows that $\Mg$ is a coarse moduli
for $\MMg$ and $\Mgp$ is a coarse (resp. adequate) moduli space of $\MMgp$ for $g\geq 3$ (resp. $g=2$), see \cite{alper2}. It was proved in \cite{HMo} that
$$\Mgp\cong \Chow_{4,\can}^{ss}/\!\!/ \PGL_{N+1},$$
which -- combined with \eqref{E:stacks-quot} --  implies that $\Mgp$ is an adequate moduli space for $\MMgwp$.

The fact that $\Mg$ and $\Mgp$ are irreducible projective varieties of dimension $3g-3$ is well-known (see \cite{DM} and \cite{HH1}).

\end{proof}

Note that our stacks $\MMg$, $\MMgp$ and $\MMgwp$ correspond to the stacks $\Mg(A_2^-)$, $\Mg(A_2^+)$ and $\Mg(A_2)$ in \cite{ASvdW}, respectively.

\begin{rmk}\label{R:3stacks}
\noindent
\begin{enumerate}[(i)]
\item \label{R:3stacks1} The stack $\MMgwp$ of wp-stable curves is not proper since in $\Chow_{4,\can}^{ss}$ there are strictly semistable points. Indeed, Hyeon-Morrison proved in \cite{HMo} that the unique orbit specializations occurring in  $\Chow_{4,\can}^{ss}$ (for $g\geq 3$) are the ones depicted in figure \ref{Spec-wpstable} below:

\vspace{0.3cm}

\begin{figure}[h]
\begin{center}
\unitlength .5mm 
\linethickness{0.4pt}
\ifx\plotpoint\undefined\newsavebox{\plotpoint}\fi 
\begin{picture}(172,100)(20,83)
\qbezier(81.75,119.5)(114.625,103.625)(120,109.25)
\qbezier(120,109.25)(114.625,104.125)(112.75,94.5)
\qbezier(79.5,82.5)(99.25,105)(99,128.5)
\qbezier(25.75,144)(45.5,166.5)(45.25,190)
\qbezier(31.75,177.25)(56.5,167.5)(76.25,174.75)
\qbezier(141.25,190.5)(144.875,163.75)(171,171)
\qbezier(171,171)(144.75,163.625)(148.5,140.75)
\put(21,190.75){\makebox(0,0)[cc]{$X$}}
\put(172.75,190.75){\makebox(0,0)[cc]{$Z$}}
\put(70.25,120.75){\makebox(0,0)[cc]{$Y$}}
\put(76.25,180.75){\makebox(0,0)[cc]{$E$}}
\put(65,165){\makebox(0,0)[cc]{elliptic tail}}
\put(40.75,177.25){\makebox(0,0)[cc]{$p$}}
\put(122.5,109.75){\makebox(0,0)[cc]{$q$}}
\put(171,174){\makebox(0,0)[cc]{$q$}}
\put(124.5,88){\makebox(0,0)[cc]{cuspidal elliptic tail}}
\put(35.75,143.75){\makebox(0,0)[cc]{$C$}}
\put(174,160.75){\makebox(0,0)[cc]{$C$ pinched}}
\put(176,153.75){\makebox(0,0)[cc]{to an ordinary }}
\put(172.5,146.75){\makebox(0,0)[cc]{cusp at $q$}}
\put(120,99.25){\makebox(0,0)[cc]{$R$}}
\put(93.75,117.75){\makebox(0,0)[cc]{$p$}}
\put(74,88.25){\makebox(0,0)[cc]{$C$}}
\put(75,141){\makebox(0,0)[cc]{$\xxrsquigarrow{\phantom{aaa}}$}}
\put(122.5,140.75){\makebox(0,0)[cc]{$\xxlsquigarrow{\phantom{a}}$}}
\end{picture}
\caption{Orbit specializations in $\Chow_{4,\can}^{ss}$, i.e. isotrivial specializations in $\MMgwp$.}
\label{Spec-wpstable}
\end{center}
\end{figure}

\noindent The above orbit specializations correspond to isotrivial specializations in the stack $\MMgwp$ (see \cite{ASvdW}). Therefore, the closed points of $\MMgwp$ are the wp-stable curves $X$ such that every elliptic tail of $X$ is cuspidal and every cusp of $X$ is contained in an elliptic tail.

\item \label{R:3stacks2} If ${\rm char}(k)=0$, then the adequate moduli spaces appearing in the above Theorem \ref{T:stacks-curves} are indeed good moduli spaces (see \cite[Prop. 5.1.4]{alper2}).
\end{enumerate}
\end{rmk}


Given a wp-stable curve $Y$, it is possible to obtain a p-stable curve, called its p-stable reduction and denoted by $\ps(Y)$, by contracting the elliptic tails of $Y$ to cusps.
The p-stable reduction  works even for families.

\begin{prop}\label{P:p-stab}
Let  $v:\cY\to S$ be a family of wp-stable curves of genus $g\geq 2$.
There exists a commutative diagram
$$\xymatrix{
\cY \ar^{\psi}[rr] \ar_v[dr]& & \ps(\cY) \ar^{{\ps}(v)}[dl]\\
& S & \\
}$$
where ${\ps}(v):\ps(\cY)\to S$ is a family of p-stable curves of genus $g$, called
 the  \emph{p-stable reduction} of $v:\cY\to S$.
For every geometric point $s\in S$, the morphism $\psi_{s}: \cY_s\to \ps(\cY)_s$ contracts
the elliptic tails of $\cY_s$ to cusps of $\ps(\cY)_s$.
Moreover, the formation of the p-stable reduction commutes with base change.

This defines a morphism of stacks $\ps:\MMgwp\to \MMgp$.

\end{prop}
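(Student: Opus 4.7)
My plan is to globalize the fiberwise contraction of elliptic tails via a relative $\mathrm{Proj}$ construction on $S$, using a suitable semi-ample line bundle on $\cY$.

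I would first settle the absolute case. Given a wp-stable curve $Y$ with elliptic tails $E_1,\dots,E_k$ attached to the rest of $Y$ at smooth points $p_i \in Y^c_i := \ov{Y \setminus E_i}$, define $\ps(Y)$ as the ringed space whose underlying topological space is obtained by collapsing each $E_i$ to a single point, and whose structure sheaf at the image of $p_i$ is the cuspidal subring $k + \mfk m_{p_i}^2 \subset \cO_{Y^c_i, p_i}$. A direct check shows that $\ps(Y)$ is p-stable of the same arithmetic genus $g$ (since a cusp and a node-plus-elliptic-tail contribute equally to the arithmetic genus above the normalization), and the tautological map $\psi : Y \to \ps(Y)$ satisfies $\psi^{\ast}\omega_{\ps(Y)} \cong \omega_Y(E_1 + \cdots + E_k)$. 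Indeed, adjunction yields $\cO_Y(E_i)|_{E_i} \cong \cO_{E_i}(-p_i)$, so $\omega_Y(\sum_i E_i)|_{E_i} \cong \cO_{E_i}$ is trivial, and $\omega_Y(\sum_i E_i)|_{Y^c_i} \cong \omega_{Y^c_i}(2p_i)$ matches the pullback of $\omega_{\ps(Y)}$ through the normalization of the cusp at $\psi(p_i)$. Thus $\omega_Y(\sum_i E_i)$ is semi-ample on $Y$ with $\psi$ as its associated contraction.

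I would then construct the family over $S$ as a relative $\mathrm{Proj}$. \'Etale-locally on $S$, the union of the fiberwise elliptic tails extends to a Cartier divisor $\cE \subset \cY$; setting $\cL := \omega_{\cY/S}(\cE)$, the fiberwise computation shows that $\cL$ is relatively semi-ample with $\psi$ as contraction on each geometric fiber. For $n \geq 3$, Serre vanishing on the p-stable geometric fibers gives $R^1 v_{\ast} \cL^{\otimes n} = 0$, so $v_{\ast} \cL^{\otimes n}$ is locally free of constant rank $(2n-1)(g-1)$ and its formation commutes with arbitrary base change. Defining $\ps(\cY) := \underline{\mathrm{Proj}}_S \bigoplus_{n \geq 0} v_{\ast} \cL^{\otimes n}$, the induced morphism $\psi : \cY \to \ps(\cY)$ realizes the fiberwise p-stable reduction, and $\ps(v)$ is a flat, projective family of p-stable curves of genus $g$.

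The main obstacle is the global existence of the elliptic-tail divisor $\cE$: in a family where an elliptic tail smooths out into a higher-genus component, $\cE$ fails to be Cartier across that specialization, so the construction must be carried out étale-locally and then descended. I would handle this by a canonicality argument: on the overlap of any two étale charts, both local constructions realize the (unique) fiberwise p-stable reduction, so the resulting relative $\mathrm{Proj}$s are canonically isomorphic, and the local pieces glue to a global $\ps(\cY) \to S$. Compatibility of the construction with arbitrary base change then follows from the $\mathrm{Proj}$ definition combined with the cohomology-and-base-change criterion, and the fiberwise description of $\psi_s$ as the contraction of elliptic tails to cusps is read off directly from the absolute case.
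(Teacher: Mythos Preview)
Your overall strategy---twist $\omega$ by the elliptic-tail divisor and take relative $\mathrm{Proj}$---is exactly the right one, and it is what the paper does. The gap is in how you obtain the Cartier divisor $\cE$. You identify the obstacle correctly but your proposed fix does not work: passing to an \'etale cover of $S$ does nothing to improve the situation at a point $s_0\in S$ where an elliptic tail smooths out. Over any \'etale neighborhood of $s_0$ the fiberwise elliptic tails still fail to form a Cartier divisor on $\cY$, because Cartier-ness is a property of the total space $\cY$ (which may be singular along the node connecting the tail to the rest), not of the base. So you cannot even begin your local construction near such $s_0$, and there is nothing to glue.

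The paper circumvents this by working universally. One shows that the moduli stack $\overline{\mathcal M}_g^{\rm wp}$ of wp-stable curves is smooth (the obstructions $\Ext^2(\Omega^1_X,\cO_X)$ vanish for reduced l.c.i.\ curves), and hence so is the universal curve $\overline{\mathcal M}_{g,1}^{\rm wp}$. On this smooth stack the elliptic-tail locus $\delta_1$ is automatically Cartier. For an arbitrary family $v:\cY\to S$, one pulls back $\delta_1$ along the classifying morphism $\mu:\cY\to\overline{\mathcal M}_{g,1}^{\rm wp}$ and sets $L:=\omega_{\cY/S}(\mu^*\delta_1)$; this is a global line bundle on $\cY$, with no local construction or gluing needed. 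The rest of your argument (vanishing of $R^1$, local freeness of pushforwards, relative $\mathrm{Proj}$, compatibility with base change) then goes through as you wrote it. In short: the missing idea is to produce $\cE$ by pullback from the universal situation rather than by attempting to build it on $\cY$ directly.
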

\begin{proof}
If $v:\cY\to S$ is a family of stable curves, the statement was proved by Hassett-Hyeon in \cite[Sec. 3]{HH1} under the assumption that $g\geq 3$ and then
extended to $g=2$ with a similar argument by Heyon-Lee in \cite[Sec. 4]{HL}.
In what follows, we will show how to adapt the argument of loc. cit. in order to work out in our case.

First of all, if $S=k$, then the statement follows from Proposition 3.1 in \cite{HH1}, which asserts that given a stable curve $C$, there is a replacement morphism $\xi_C:C\to \mathcal T(C)$, where
$\mathcal T(C)$ is a pseudo-stable curve of genus $g$, which is an isomorphism away from the loci of elliptic tails and that replaces elliptic tails with cusps. The argumentation is local on the
nodes connecting each genus-one subcurve meeting the rest of the curve in a single node. Since in a wp-stable curve all elliptic tails are connected to the rest of the curve via a single node,  the
same argumentation works also in our case with no further modifications.

Now, we have to prove the statement over an arbitrary base $S$.
The approach of Hassett-Hyeon is to  consider a faithfully flat atlas $V\to \overline{\mathcal M}_g$
and define the p-stable reduction for the family of stable curves over $V$ induced by the above morphism.
The case of a family over an arbitrary base will follow by base-change from $V\to \overline{\mathcal M}_g$ to $S$.

In our case, we consider a faithful atlas $\rho_\pi:U\to \MMgwp$  of the stack $\MMgwp$ of wp-stable curves and we let $\pi:Z\to U$ be the associated (universal) family of wp-stable curves.
The idea is now to consider an invertible sheaf $L$ on $Z$, which will be a twisted version of the relative dualizing sheaf of $\pi$, such that $L$ is very ample away from the locus of elliptic tails,
and instead has relative degree $0$ over all elliptic tails. Then use $L$ to define an $S$-morphism from $Z$ to a family of p-stable curves which coincides with the previous one over all
geometric fibers of $\pi$.

To be precise, denote by
$\delta_{1}\subset \overline{\mathcal M}_{g,1}^{\rm wp}$ the boundary divisor of elliptic tails on the universal
 stack $\overline{\mathcal M}_{g,1}^{\rm wp}$ over $\MMgwp$. An argument similar to the proof of Theorem \ref{T:stacks-curves}\eqref{T:stacks-curves1} shows that
 $\overline{\mathcal M}_{g,1}^{\rm wp}$ is smooth; hence $\delta_1$ is a Cartier divisor.
Let $\mu_\pi:Z\to\overline{\mathcal M}_{g,1}^{\rm wp}$ be the classifying morphism corresponding to the family
$\pi:Z\to U$ and  set $L:=\omega_\pi(\mu_\pi^*\delta_{1})$.
The whole point is now to prove that $\pi_*(L^n)$ is locally free and that $L^n$ is relatively globally generated for $n>0$ and that the associated morphism factors through
$$Z\stackrel{\xi_Z}{\to} \mathcal T(Z)\hookrightarrow \mathbb P(\pi_*L^n)$$
where $ \mathcal T(Z)$ is a family of p-stable curves and $\xi_Z$ coincides with the replacement morphism $\xi_C$ for all geometric fibers $C$ of $\pi$.
By browsing carefully through Hassett-Hyeon's argumentation, we easily conclude that everything holds also in our case.

\end{proof}

\begin{rmk}\label{R:contr-HH}
From the above Proposition, we get the existence of a morphism of stacks
\begin{equation}\label{E:mor-stack}
\ps:\MMgwp\to \MMgp,
\end{equation}
which, passing to the adequate moduli spaces, induces the morphism
$T:\Mg\to \Mgp$  studied by Hassett-Hyeon in \cite{HH1} for $g\geq 3$ and by Hyeon-Lee  in \cite{HL} for $g=2$. Indeed, it is proved in loc. cit.  that $T$ is the contraction
of the divisor $\Delta_1\subset \Mg$ of curves having an elliptic tail.
\end{rmk}

\subsection{Quasi-wp-stable curves and wp-stable reduction} \label{S:quasi-wp-stable}

The most general class of singular curves that we will meet throughout this work is the one given in the following:

\begin{defi}\label{D:pre-sing}
\noindent
\begin{enumerate}[(i)]
\item \label{pre-wp-stable} A connected curve $X$  is said to be \emph{pre-wp-stable}
if the only singularities of $X$ are nodes, cusps or tacnodes with a line.
\item \label{pre-p-stable} A connected curve $X$  is said to be \emph{pre-p-stable} if it is pre-wp-stable and it does not have
elliptic tails.
\item \label{pre-stable} A connected curve $X$  is said to be \emph{pre-stable}
if the only singularities of $X$ are nodes.
\end{enumerate}
\end{defi}

Note that wp-stable (resp. p-stable, resp. stable) curves are pre-wp-stable (resp. pre-p-stable, resp. pre-stable) curves.
Moreover, if $p\in X$ is a tacnode with a line lying in $D_1\cong \P^1$ and $D_2$ as in \ref{Con:singu}, then
$(\omega_X)_{|D_1}=\OO_{D_1}$, hence $\omega_X$ is not ample. From this, we get easily that

\begin{rmk}
$X$ is wp-stable (resp. p-stable, resp. stable) if and only if $X$ is pre-wp-stable (resp. pre-p-stable, resp. pre-stable) and $\omega_X$ is ample.
\end{rmk}

The pre-wp-stable curves that we will meet in this manuscript, even when non wp-stable, will satisfy
a very strong condition on connected subcurves where the restriction of the canonical line bundle is not ample, i.e.,
on connected subcurves of genus zero that meet the complementary subcurve in less than three points. This justifies the following

\begin{defi}\label{D:quasi-wp-stable}
A pre-wp-stable curve $X$ is said to be
\begin{enumerate}[(i)]
\item \label{D:quasi1} \emph{quasi-wp-stable} if  every connected subcurve $E\subset X$ such that $g_E=0$ and $k_E\leq 2$
satisfies $E\cong \P^1$ and $k_E=2$ (and therefore it meets the complementary subcurve $E^c$
either in two distinct nodal points of $X$ or in one tacnode of $X$).
\item \label{D:quasi2}\emph{quasi-p-stable} if it is quasi-wp-stable and pre-p-stable.
\item \label{D:quasi3}\emph{quasi-stable} if it is quasi-wp-stable and pre-stable.
\end{enumerate}
The irreducible components $E$ such $E\cong \P^1$ and $k_E=2$ are called
\emph{exceptional} and the subcurve of $X$ given by the union of the exceptional components is denoted by $\exc$. The complementary subcurve
$\exc^c=\ov{X\setminus \exc}$ is called the \emph{non-exceptional} subcurve and is denoted by $\w{X}$.
\end{defi}

Equivalently, a quasi-wp-stable curve is a pre-wp-stable $X$ such that $\omega_X$ is nef (i.e. it has non-negative degree on every subcurve of $X$)
and such that all the connected subcurves $E\subseteq X$ such that $\deg_E \omega_X=0$ (which are called exceptional subcurves) are irreducible.
Note that the term quasi-stable curve was introduced in  \cite[Sec. 3.3]{Cap}.

We summarize the different types of curves that we have introduced so far in Table \ref{Tab:singular}.

\begin{table}[h]
\begin{center}
\begin{tabular}[5cm]{|p{6,5cm}|p{5cm}|c|}
\hline
SINGULARITIES & $\omega_X$ NEF + IRREDUCIBLE EXCEPTIONAL SUBCURVES & $\omega_X$ AMPLE \\
\hline
\hline
\textbf{pre-wp-stable} = nodes, cusps, tacnodes with a line & \hspace{1cm} \textbf{quasi-wp-stable} & \textbf{wp-stable} \\
\hline
\textbf{pre-p-stable} = pre-wp-stable without elliptic tails &  \hspace{1,1cm} \textbf{quasi-p-stable} & \textbf{p-stable} \\
\hline
\textbf{pre-stable} =  nodes & \hspace{1,2cm} \textbf{quasi-stable} & \textbf{stable} \\
\hline
\end{tabular}
\end{center}
\caption{Singular curves}\label{Tab:singular}
\end{table}

\hspace{0,3cm}

Given a quasi-wp-stable curve $Y$, it is possible to contract all the exceptional components in order to obtain a wp-stable curve, which is called the wp-stable reduction of $Y$ and is denoted by
$\wps(Y)$. This construction indeed works for families.


\begin{prop}\label{P:wp-stab}
Let $S$ be a scheme and $u:\cX\to S$ a family of quasi-wp-stable curves.
Then there exists a commutative diagram
$$\xymatrix{
\cX \ar^{\phi}[rr] \ar_u[dr]& & \wps(\cX) \ar^{\wps(u)}[dl]\\
& S & \\
}$$
where $\wps(u):\wps(\cX)\to S$ is a family of wp-stable curves, called  the \emph{wp-stable reduction} of $u$.

For every geometric point $s\in S$, the morphism $\phi_{s}: \cX_s\to \wps(\cX)_s$ contracts
the exceptional components $E$ of $\cX_s$ so that
\begin{enumerate}
\item If  $E\cap E^c$ consists of two distinct nodal points of $X$, then $E$ is contracted to a node;
\item If $E\cap E^c$ consists of one tacnode of $X$, then $E$ is contracted to a cusp.
\end{enumerate}
The formation of the wp-stable reduction commutes with base change.
Furthermore, if $u$ is a family of quasi-p-stable (resp. quasi-stable) curves then $\wps(u)$ is a family of p-stable (resp. stable) curves.
\end{prop}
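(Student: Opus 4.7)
The plan is to construct the wp-stable reduction fiberwise via the relative dualizing sheaf, globalize using cohomology and base change, and then verify the local structure of the resulting contractions. The guiding observation is that, by the definition of quasi-wp-stability, $\omega_X$ is nef on every geometric fiber and its zero locus is exactly $\exc$, with each connected component of $\exc$ irreducible; therefore a sufficiently high power $\omega_X^{\otimes n}$ should be semiample and induce precisely the desired contraction of the exceptional components.

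First I would work out the fiberwise picture. Fix a quasi-wp-stable curve $X$ of genus $g\geq 2$ and consider $L_n=\omega_X^{\otimes n}$ for $n\geq 3$. Using Serre duality one has $h^1(X,L_n)=h^0(X,\omega_X^{1-n})$, and a section of $\omega_X^{1-n}$ must vanish on every non-exceptional component (where $\omega_X^{1-n}$ has negative degree) and be constant on each exceptional $\P^1$; connectedness of $X$ then forces it to be zero, so $H^1(X,L_n)=0$. A Castelnuovo-type argument, together with the fact that $\omega_X^{\otimes n}$ has positive degree on each non-exceptional component and degree zero on each exceptional component, shows that $L_n$ is globally generated for $n$ large, and that the induced morphism $\phi_X\colon X\to \P(H^0(X,L_n)^\vee)$ is an isomorphism on $\w X\setminus(\w X\cap \exc)$ and contracts each exceptional component $E$ to a single point. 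The image $\wps(X)$ is then analyzed locally at each contracted point: if $E\cong\P^1$ meets $E^c$ at two distinct nodes $p,q$, the morphism $\phi_X$ identifies $p$ and $q$ and produces a node; if instead $E\cap E^c$ is a single tacnode with a line (with $E$ playing the role of $D_1$ in Convention \ref{Con:singu}), a direct local computation with $\widehat{\cO_{X,p}}\cong k[[x,y]]/(y^2-x^4)$ shows that the image is analytically $k[[u,v]]/(v^2-u^3)$, i.e.\ a cusp. Since all other singularities are left untouched and $\omega_{\wps(X)}$ is ample by construction, $\wps(X)$ is wp-stable.

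Next I would globalize. Setting $L_n:=\omega_u^{\otimes n}$, the fiberwise vanishing $H^1(X_s,\omega_{X_s}^{\otimes n})=0$ together with Grauert's theorem implies that $u_*L_n$ is locally free, of rank $(2n-1)(g-1)$, and that its formation commutes with arbitrary base change; in particular the natural map $u^*u_*L_n\to L_n$ is surjective. This defines an $S$-morphism $\cX\to \P(u_*L_n)$; let $\wps(\cX)$ be its scheme-theoretic image and $\wps(u)$ the induced projection to $S$. Because base change is compatible with the formation of $u_*L_n$ and with taking images of morphisms arising from globally generated line bundles, the geometric fibers of $\wps(u)$ are exactly the fiberwise contractions $\wps(\cX_s)$ constructed above. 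Equidimensionality of the fibers together with the fact that $S$ and $\wps(\cX)$ are integral over the base-change gives flatness of $\wps(u)$ (using that a finite, fiberwise-surjective map between flat schemes whose target has constant fiber dimension is flat, applied after reducing to affine pieces). The compatibility with base change then follows formally from the same for $u_*L_n$.

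Finally, the preservation statements are immediate from the fiberwise analysis: contracting exceptional components can only kill elliptic tails and tacnodes-with-a-line, never create new ones. So if every fiber of $u$ is quasi-p-stable (i.e.\ pre-p-stable and hence has no elliptic tails to begin with) the fibers of $\wps(u)$ are p-stable; if every fiber of $u$ is quasi-stable, then the only exceptional components are $\P^1$'s meeting $E^c$ in two distinct nodes, so only nodes appear in $\wps(\cX_s)$ and the fibers are stable. The main obstacle I expect is the local verification at the tacnode with a line—namely identifying the image scheme structure as a cusp—and the careful setup of cohomology and base change to ensure $\wps(\cX)\to S$ is genuinely flat with the prescribed fibers; once these are in place, everything else is formal.
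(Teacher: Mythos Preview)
Your strategy is the same as the paper's: contract via a high power of the relative dualizing sheaf. The fiberwise non-speciality and the description of the contraction (node vs.\ cusp) are handled in the paper by the Appendix (Corollary \ref{pospropdualizing}, which rests on the Catanese--Franciosi--Hulek--Reid criteria) and by a reference to \cite[Rmk.~1.20]{Cat}, respectively; your Serre-duality argument and local computation are acceptable substitutes at that level of detail.

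The gap is in the globalization step. Your flatness argument invokes ``$S$ and $\wps(\cX)$ are integral over the base-change,'' but $S$ is an \emph{arbitrary} scheme in the statement, so the miracle-flatness/equidimensionality criterion is unavailable; and your parenthetical about a ``finite, fiberwise-surjective map between flat schemes'' does not apply, since $\cX\to\wps(\cX)$ has $\P^1$-fibers over the contracted points. Likewise, ``scheme-theoretic image commutes with base change'' is exactly what fails in general without more input: a priori the fiber of the image need not be the image of the fiber. The paper fixes both issues in one stroke by using the \emph{normal generation} of $\omega_{\cX_s}^{\otimes i}$ for $i\geq 3$ (your proposal never invokes this). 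Normal generation gives surjectivity of $(u_*\omega_u^3)^{\otimes i}\otimes u_*\omega_u^3\to u_*\omega_u^{3(i+1)}$, which identifies the image of $\cX\to\P(u_*\omega_u^3)$ with the relative $\mathcal{P}roj\big(\bigoplus_{i\geq 0}u_*\omega_u^i\big)$. Since each $u_*\omega_u^i$ is $S$-flat (by the vanishing of $R^1$) and its formation commutes with base change, flatness of $\wps(u)$ and compatibility with base change are then formal consequences of the $\mathcal{P}roj$ description, over any $S$. You should replace your flatness/base-change paragraph with this $\mathcal{P}roj$ argument (this is the Knudsen framework \cite[Cor.~1.5]{Knu} used in the paper).
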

\begin{proof}

We will follow the same ideas as in the proof of \cite[Prop. 2.1]{Knu} and of \cite[Prop. 6.6]{melo}.
Consider the relative dualizing sheaf $\omega_u:=\omega_{\cX/S}$ of the family $u:\cX\to S$.
 It is a line bundle because
the geometric fibers of $u$ are Gorenstein curves by our assumption.  From Corollary \ref{pospropdualizing} in the Appendix we get that for all $i\geq 2$, the restriction of $\omega_u^i$ to a geometric fiber $\cX_s$ is non-special, globally generated and, if $i\geq 3$, normally generated.
Then, we can apply \cite[Cor. 1.5]{Knu} to get the following properties for $\omega_u$:
\begin{enumerate}[(a)]
\item $R^1u_*(\omega_{u}^i)=(0)$ for all $i\geq 2$;
\item $u_*(\omega_{u}^i)$ is $S$-flat for all $i\geq 2$;
\item for any morphism $T\to S$ there are canonical isomorphisms
$$u_*(\omega_{u}^i)\otimes_{\mathcal O_S}\mathcal O_T\to (u\times1)_*(\omega_{u}^i\otimes_{\mathcal O_S}\mathcal O_T)$$
for any $i\geq 2$;
\item the canonical map $u^*u_*(\omega_{u}^i)\to \omega_{u}^i$ is surjective for all $i\geq 3$;
\item the natural maps $(u_*{\omega_{u}^3})^i\otimes u_*\omega_{u}^3\to (u_*{\omega_{u}^3})^{i+1}$ are surjective for $i\geq 1$.
\end{enumerate}
Define now
$$\mathcal S_i:=u_*(\omega_u^i), \text{ for all } i\geq 0.$$
By (a) and (b) above, we know that $\mathcal S_i$ is locally free and flat on $S$ for $i\geq 2$.
Consider $$\mathbb P(\mathcal S_3):=\proj(\Sym(\mathcal S_3)) \to S.$$
Since, by (d) above, the natural map
 $$u^*u_*(\omega_u^3)\to \omega_u^3$$
is surjective, we get that there is a natural $S$-morphism
$$\xymatrix{
\cX \ar^{q}[rr] \ar_u[dr]& & \mathbb P(\mathcal S_3) \ar[dl]\\
& S & \\
}$$
Denote by $\mathcal Y$ the image of $\cX$ via $q$ and by $\phi$ the (surjective) $S$-morphism from $\cX$ to $\mathcal Y$.
By (e) above, we get that $$\mathcal Y=\proj(\oplus_{i\geq 0}\mathcal S_i).$$ So, $\phi:\mathcal Y\to S$ is flat since the $\mathcal S_i$'s are flat for $i\geq 2$. To conclude that $\mathcal Y\to S$ is a family of wp-stable curves note that the restriction of $\omega_u^3$ to the geometric fibers of $u$ has positive degree in all irreducible components except the exceptional ones, where it has degree $0$.
Indeed, it is easy to see (see for example \cite[Rmk. 1.20]{Cat}) that, on each geometric fiber $\cX_s$, $\phi$ contracts an exceptional component $E\subseteq \cX_s$ to a node if $E$ meets the complementary curve in two distinct nodal points and
to a cusp if $E$ meets the complementary subcurve in one tacnode.
Moreover, $\Phi$ is an isomorphism outside the non exceptional locus.
We conclude that  $\mathcal Y\to S$ is a family of wp-stable curves, so we set $\wps(\cX):=\mathcal Y$ and $\wps(u):=\mathcal Y\to S$.

Property (c) above implies that forming $\wps$ is compatible with base-change.

The last assertion is clear from the above geometric description of the contraction $\phi_s:\cX_s\to \wps(\cX)_s$ on each geometric point of $u$.



\end{proof}

\begin{rmk}\label{R:wp-stab}
If $u:\cX\to S$ is a family of quasi-stable curves
then the wp-stable reduction $\wps(u):\wps(\cX)\to S$
coincides with the usual stable reduction $\s(u)$ of $u$ (see e.g. \cite{Knu}).
\end{rmk}

The wp-stable reduction allows us to give a more explicit description of the quasi-wp-stable curves.

\begin{coro}\label{C:quasi-vs-wp}
A curve $X$ is quasi-wp-stable (resp. quasi-p-stable, resp. quasi-stable) if and only if it can be obtained from a wp-stable (resp. p-stable, resp. stable) curve $Y$ via an iteration of the
following construction:
\begin{enumerate}[(i)]
\item \label{I:blow1} Normalize $Y$ at a node $p$ and insert a $\P^1$ meeting the rest of the curve in the two  branches of the node.
\item \label{I:blow2} Normalize $Y$ at a cusp and insert a $\P^1$ tangent to the rest of the curve at the branch of the cusp.
\end{enumerate}
In this case, $Y=\wps(X)$.
In particular, given a wp-stable (resp. p-stable, resp. stable) curve $Y$ there exists only a finite number of quasi-wp-stable (resp. quasi-p-stable, resp. quasi-stable) curves $X$
such that $\wps(X)=Y$, which we call  \emph{quasi-wp-stable} (resp. \emph{quasi-p-stable}, resp. \emph{quasi-stable}) \emph{models} of $Y$.
\end{coro}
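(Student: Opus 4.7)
The plan is to prove the two implications separately and then observe finiteness. For the forward direction, assume $X$ is quasi-wp-stable. Applying Proposition \ref{P:wp-stab} to the constant family $X\to\Spec k$ produces a wp-stable curve $Y:=\wps(X)$ together with a birational contraction $\phi:X\to Y$; by the local description recorded there, $\phi$ contracts each exceptional component $E\subseteq X$ (which by definition is a $\mathbb{P}^1$ with $k_E=2$) either to a node of $Y$ (when $E\cap E^c$ consists of two distinct nodes) or to a cusp of $Y$ (when $E\cap E^c$ is a tacnode). Reversing $\phi$ one exceptional component at a time then exhibits $X$ as being recovered from $Y$ by applying construction (i) at each node of $Y$ in $\phi(\exc)$ and construction (ii) at each cusp of $Y$ in $\phi(\exc)$. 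In the quasi-p-stable or quasi-stable cases, the last assertion of Proposition \ref{P:wp-stab} guarantees that $Y=\wps(X)$ is p-stable or stable, as required.

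For the backward direction, let $Y$ be wp-stable and let $X$ be the curve obtained by applying constructions (i) and (ii) at a chosen subset of nodes and cusps of $Y$. A local inspection at each blow-up shows that $X$ has only nodes, cusps, and tacnodes with a line as singularities: operation (i) replaces a node by two nodes lying on the inserted $\mathbb{P}^1$, and operation (ii) replaces a cusp by a tacnode where the inserted $E$ meets the curve, automatically ``with a line'' since $E\cong\mathbb{P}^1$ and $k_E=2$. Hence $X$ is pre-wp-stable, and the construction furnishes a natural contraction $\phi:X\to Y$ whose non-trivial fibers are precisely the pairwise disjoint inserted $\mathbb{P}^1$'s. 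To verify the condition on subcurves, I plan to argue via the dualizing sheaf. A direct computation gives $\omega_X|_E=\OO_E$ for every inserted $E$, and a bookkeeping of how each blow-up alters $g_Z$ and $k_Z$ of a connected subcurve $Z$ (an internal blown-up singularity of $Y$ contributes $+2$ to $k$ and $-1$ to $g$; a boundary blown-up node contributes $0$ to both) yields the identity $\deg_Z\omega_X=\deg_{Z_Y}\omega_Y$ for $Z_Y:=\phi(Z)$. If $Z\subseteq X$ is connected with $g_Z=0$ and $k_Z\leq 2$, then either $Z\subseteq\exc$, in which case the pairwise disjointness of the exceptional $\mathbb{P}^1$'s forces $Z$ to be a single inserted $\mathbb{P}^1$ with $k_Z=2$; or $Z$ contains a non-exceptional component, whence $Z_Y$ is positive-dimensional and the ampleness of $\omega_Y$ gives $\deg_Z\omega_X=\deg_{Z_Y}\omega_Y>0$, contradicting $2g_Z-2+k_Z\leq 0$. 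This proves $X$ is quasi-wp-stable. The quasi-p-stable and quasi-stable refinements follow from parallel bookkeeping: an elliptic tail $T\subseteq X$ would produce, via the same formulas relating $(g_T,k_T)$ to $(g_{T_Y},k_{T_Y})$, an elliptic tail of $Y$, contradicting its p-stability (resp. stability).

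Finally, a quasi-wp-stable model $X$ of $Y$ is uniquely determined by the subset of nodes and cusps of $Y$ at which we perform the blow-ups, so the number of such models is at most $2^{n+c}$, where $n$ and $c$ are the numbers of nodes and cusps of $Y$. The main technical subtlety I anticipate is precisely the verification of the identity $\deg_Z\omega_X=\deg_{Z_Y}\omega_Y$, in particular tracking the length-$2$ contribution of tacnodes and carefully distinguishing between blown-up singularities that are internal versus on the boundary of $Z_Y$.
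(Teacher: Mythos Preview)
Your argument follows the same route as the paper's proof, which is extremely terse: it invokes Proposition~\ref{P:wp-stab} for the forward direction exactly as you do, and for the converse simply asserts that ``clearly $X$ is quasi-wp-stable and $Y=\wps(X)$''. Your added detail for the backward direction (checking singularities and using $\deg_Z\omega_X=\deg_{Z_Y}\omega_Y$ via $\omega_X\cong\phi^*\omega_Y$) is a legitimate way to unpack that ``clearly''.

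Two small corrections are in order. First, the quasi-stable refinement has nothing to do with elliptic tails: stable curves may well have elliptic tails, and the definition of quasi-stable only requires pre-stability (nodal singularities). Since a stable $Y$ has no cusps, only operation~(i) is ever applied, and it introduces only nodes; that is the whole argument. Your parenthetical ``(resp.\ stability)'' is therefore misplaced. Second, for the quasi-p-stable case your degree identity gives $2g_{T_Y}-2+k_{T_Y}=1$, which by itself allows $(g_{T_Y},k_{T_Y})=(0,3)$ as well as $(1,1)$. To rule this out you must first normalize the choice of $T$: if the unique node $p\in T\cap T^c$ lies on an exceptional $E$, replace $T$ by $\overline{T\setminus E}$ or $T\cup E$ (both are again elliptic tails) until $p$ avoids $\exc$; then $\phi$ is a local isomorphism near $p$, so $k_{T_Y}=1$ and, since all exceptional components of $T$ are now internal, $g_{T_Y}=g_T=1$.
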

Note that the above operation \eqref{I:blow2} cannot occur for quasi-stable curves. We call the above operation \eqref{I:blow1} (resp. \eqref{I:blow2}) the
 \textbf{bubbling} of a node (resp. of a cusp).

\begin{proof}
We will prove the Corollary only for quasi-wp-stable curves. The remaining cases are dealt with in the same way.

Let $X$ be a quasi-wp-stable curve and set $Y:=\wps(X)$. By Proposition \ref{P:wp-stab}, the wp-stablization $\phi:X\to Y=\wps(X)$ contracts
each exceptional component $E$ of $X$ to a node or a cusp according to whether $E\cap E^c$ consists of two distinct points or one point with multiplicity two.
Therefore, $X$ is obtained from $Y$ by a sequence of the two operations  \eqref{I:blow1} and \eqref{I:blow2}.

Conversely, if $X$ is obtained from a wp-stable curve $Y$ by a sequence of  operations  \eqref{I:blow1} and \eqref{I:blow2}, then clearly $X$ is quasi-wp-stable and $Y=\wps(X)$.

The last assertion is now clear.
\end{proof}

We end this section with an extension of the p-stable reduction of Proposition \ref{P:p-stab} to families of quasi-wp-stable curves.

\begin{defi}\label{D:p-stab}
Let $S$ be a scheme and $u:\cX\to S$ be a family of quasi-wp-stable curves of genus $g\geq 3$.
 Then there exists a commutative digram
$$\xymatrix{
\phi: \cX\ar^{\phi}[r] \ar_u[dr]& \wps(\cX) \ar^<<<<{\psi}[r] \ar[d]^{{\wps}(u)}& \ps(\wps(\cX)):=\ps(\cX) \ar[dl]^{{\ps}(u):={\ps(\wps}(u))} \\
& S & & \\
}$$
where the family $\wps(u)$ is the wp-stable reduction of the family $u$ (see Proposition \ref{P:wp-stab}) and the family $\ps(\wps(u))$ is the p-stable reduction of the family
$\wps(u)$ (see Proposition \ref{P:p-stab}).

We set $\ps(u):=\ps(\wps(u))$ and we call it the  \emph{p-stable reduction}  of $u$.

\end{defi}

\section{Combinatorial results}\label{S:combinato}

The aim of this section is to collect all the combinatorial results that will be used in the sequel.

\subsection{Balanced multidegree and the degree class group}\label{S:baldeg}

In this subsection, we will study balanced multidegrees and their relationship with the degree class group for a curve $X$ with locally planar singularities (see \ref{N:lps}), generalizing the results of \cite[Sec. 4]{Cap} for nodal curves.


Fix a connected curve $X$ with locally planar singularities of genus $g\geq 2$ and we denote by $C_1,\ldots, C_{\gamma}$ the irreducible components of  $X$.
A \emph{multidegree} on $X$ is an ordered $\gamma$-tuple of integers
$$\un d=(\un d_{C_1}, \ldots, \un d_{C_{\gamma}})\in \bZ^{\gamma}.$$
We denote by $|\un d|=\sum_{i=1}^{\gamma} \un d_{C_i}$ the total degree of $\un d$. Given a subcurve $Z\subseteq X$, we set $\un d_Z:=\sum_{C_i\subseteq Z} \un d_{C_i}$.
The term multidegree comes from the fact that every line bundle $L$ on $X$ has a multidegree $\un \deg L$ given by $\un \deg L:=(\deg_{C_1}L,\ldots, \deg_{C_{\gamma}}L)$
whose total degree $|\un \deg L|$ is the degree $\deg L$ of $L$.

Balanced multidegree are defined by mean of a numerical inequality as it follows.


\begin{defi}\label{D:bala-multdeg}
Let $\un d$ be a multidegree of total degree $|\un d|=d$. We say that $\un d$ is \emph{balanced} if it satisfies the inequality (called \emph{basic inequality})
\begin{equation}\label{E:basineq-multideg}
\left|\un d_Z-\frac{d}{2g-2}\deg_Z \omega_X\right|\leq \frac{k_Z}{2},
\end{equation}
for every subcurve $Z\subseteq X$,
where $k_Z:=|Z\cap Z^c|$ denotes the length of the scheme theoretic intersection $Z\cap Z^c$ between $Z$ and the complementary subcurve $Z^c:=\ov{X\setminus Z}$.

We denote by $\B_X^d$ the set of all balanced multidegrees on $X$ of total degree $d$.
\end{defi}
For later use, we denote the two extremes of the basic inequality relative to $Z$ by
\begin{equation}\label{E:extr-bas-ineq}
\begin{sis}
& m_Z:=\frac{d}{2g-2}\deg_Z \omega_X-\frac{k_Z}{2},\\
& M_Z:=\frac{d}{2g-2}\deg_Z \omega_X+\frac{k_Z}{2}.\\
\end{sis}
\end{equation}


Following \cite[Sec. 4.1]{Cap}, we now define an equivalence relation on the set of multidegrees on $X$.
For every irreducible component $C_i$ of $X$, consider the multidegree $\un{C_i}=((\un{C_i})_1,\ldots, (\un{C_i})_{\gamma} )$ of total degree $0$ defined by
$$(\un{C_i})_j=\un{C_i}_{C_j}=\begin{sis}
&|C_i\cap C_j| &  \text{ if } i\neq j,\\
&-\sum_{k\neq i} |C_i\cap C_k| & \text{ if } i=j,
\end{sis}
$$
where, as usual, $|C_i\cap C_j|$ denotes the length of the scheme-theoretic intersection $C_i\cap C_j$ between $C_i$ and $C_j$.
More generally, for any subcurve $Z\subseteq X$, we set $\un Z:=\sum_{C_i\subseteq Z} \un{C_i}$.

\begin{rmk}\label{R:bilin}
From the hypothesis that $X$ has locally planar singularities, it follows that for any two subcurves
$Z,W \subseteq X$ with no common irreducible components we have that
\begin{equation}\label{E:bil-mult1}
(\un Z)_W=\sum_{C_i\subseteq Z}\sum_{C_j\subseteq W} |C_i\cap C_j|=|Z\cap W|.
\end{equation}
\begin{equation}\label{E:bil-mult2}
(\un Z)_Z=-\sum_{C_i\subseteq Z}\sum_{C_j\not\subseteq Z} |C_i\cap C_j|=-|Z\cap Z^c|=-k_Z.
\end{equation}
Indeed, the first equalities in \eqref{E:bil-mult1} and \eqref{E:bil-mult2} follow from the definition of $\un Z$; while the last equality in \eqref{E:bil-mult2} follows from the definition of $k_Z$.
For the second equalities in \eqref{E:bil-mult1} and \eqref{E:bil-mult2}, observe that if $X$ is embedded inside a smooth projective surface $S$ (which is possible by \ref{N:lps}), then $|Z\cap W|$ is equal to the intersection product of the two divisors $Z=\sum_{C_i\subseteq Z} C_i$ and $W=\sum_{C_j\subseteq W}C_j$ of $S$ (and similarly for $|Z\cap Z^c|$). Since the intersection product of divisors on $S$ is bilinear, the second equalities in \eqref{E:bil-mult1} and \eqref{E:bil-mult2} follow.
\end{rmk}

Denote by $\Lambda_X\subseteq \bZ^{\gamma}$ the subgroup of $\bZ^{\gamma}$ generated by the multidegrees $\un{C_i}$ for $i=1,\ldots,\gamma$.
It is easy to see that $\sum_i \un{C_i}=0$ and this is the only relation among the multidegrees $\un{C_i}$. Therefore,
$\Lambda_X$ is a free abelian group of rank $\gamma-1$.

\begin{defi}\label{D:equiv-mult}
Two multidegrees $\un d$ and $\un d'$ are said to be \emph{equivalent}, and we write $\un d\equiv \un d'$,  if $\un d-\un d'\in \Lambda_X$.
In particular, if $\un d\equiv \un d'$ then $|\un d|=|\un d'|$.

For every $d\in \bZ$, we denote by $\Delta_X^d$ the set of equivalence classes of multidegrees of total degree $d=|\un d|$.
Clearly, $\Delta_X^0$ is a finite group under component-wise addition of multidegrees (called the \emph{degree class group }  of $X$) and each
$\Delta_X^d$ is a torsor under $\Delta_X^0$.
\end{defi}

Every element in the degree class group $\Delta_X^d$ admits a (not necessarily unique) balanced representative.


\begin{prop}\label{P:bala-rep}
For every multidegree $\un d$ on $X$ of total degree $d=|\un d|$, there exists $\un d'\in \B_X^d$ such that $\un d\equiv \un d'$.
\end{prop}
\begin{proof}
Our proof is a generalization of \cite[Prop. 4.1]{Cap}, where the result is proved for a nodal curve $X$ (see also \cite[Prop. 2.8]{MV}).

We introduce two rational numbers measuring how far is the multidegree $\un{d}$ from being balanced. For any subcurve $W\subseteq Z$, set
\begin{equation}
\begin{sis}
& \epsilon(\un{d},W):=\un{d}_W-M_W,\\
& \eta(\un{d},W):=-\un{d}_W+m_W.
\end{sis}
\end{equation}
Using the fact that $\un d_W+\un d_{W^c}=d$ and $M_W=d-m_{W^c}$, we get that
\begin{equation}\label{sym-inv}
\epsilon(\un{d},W)=\eta(\un{d},W^c).
\end{equation}
We also set
\begin{equation}
\begin{sis}
& \epsilon(\un{d}):=\max_{W\subseteq X} \epsilon(\un{d},W),\\
& \eta(\un{d}):=\max_{W\subseteq X} \eta(\un{d},W).\\
\end{sis}
\end{equation}
Using \eqref{sym-inv}, we get the relation
\begin{equation}\label{equ-inv}
\epsilon(\un{d})=\eta(\un{d}).
\end{equation}
From (\ref{equ-inv}) and the fact that $\epsilon(\un{d},X)=\eta(\un{d},X)=\epsilon(\un{d},\emptyset)=\eta(\un{d},\emptyset)=0$,
we get  that $\epsilon(\un{d})=\eta(\un{d})\geq 0$. On the other hand, by Definition \ref{D:bala-multdeg}, the multidegree $\un d$ is balanced if and only if $ \epsilon(\un{d}, W),\eta(\un{d}, W)\leq 0$
for any subcurve $W\subseteq X$. Combining these two facts, we get that $\un d$ is balanced if and only if $\epsilon(\un{d})=\eta(\un{d})=0$.

The invariants $\epsilon$ and $\eta$ satisfy the following additive formula: for any two subcurves $W_1, W_2\subseteq X$ with common irreducible components, it holds that
\begin{equation}\label{add-for}
\begin{sis}
& \epsilon(\un{d},W_1\cup W_2)=\epsilon(\un{d},W_1)+\epsilon(\un{d},W_2)+|W_1\cap W_2|, \\
& \eta(\un{d},W_1\cup W_2)=\eta(\un{d},W_1)+\eta(\un{d},W_2)+|W_1\cap W_2|.
\end{sis}
\end{equation}
Let us prove the second additive formula; the proof of the first one is similar and left to the
reader. Using Remark \ref{R:bilin}, we compute:
$$\eta(\un{d},W_1\cup W_2)=-\un{d}_{W_1\cup W_2}+\frac{d}{2g-2}\deg_{W_1\cup W_2}(\omega_X)
-\frac{k_{W_1\cup W_2}}{2}=-\un{d}_{W_1}-\un{d}_{W_2}+$$
$$+\frac{d}{2g-2}(\deg_{W_1}(\omega_X)+\deg_{W_2}(\omega_X))
-\frac{k_{W_1}+k_{W_2}-2|W_1\cap W_2|}{2}=$$
$$= \eta(\un{d},W_1)+\eta(\un{d},W_2)+|W_1\cap W_2|.$$

Consider now the following collections of subcurves of $X$
\begin{equation*}
\begin{sis}
& S^+_{\un{d}}:=\{W\subseteq X\: :\: \epsilon(\un{d},W)=\epsilon(\un{d})\},\\
& S^-_{\un{d}}:=\{W\subseteq X\: :\: \eta(\un{d},W)=\eta(\un{d})\}.
\end{sis}
\end{equation*}
From formula (\ref{sym-inv}) and the equality $\epsilon(\un{d})=\eta(\un{d})$, it follows easily
that
\begin{equation}\label{comp-ext}
W\in S^+_{\un{d}} \Leftrightarrow W^c\in S^-_{\un{d}}.
\end{equation}
The sets $S^{\pm}_{\un{d}}$ are stable under intersection:
\begin{equation}\label{stab-inter}
W_1, W_2 \in S^{\pm}_{\un{d}} \Rightarrow W_1\cap W_2\in S^{\pm}_{\un{d}}.
\end{equation}
We will prove this for $S_{\un{d}}^+$; the proof for $S_{\un{d}}^-$ works
exactly in the same way. Let $\Pi_1:=\ov{W_1\setminus (W_1\cap W_2)}$. Using the additivity
formula (\ref{add-for}) for  the pair $(W_2, \Pi_1)$ and the fact that $W_2\in S^+_{\un{d}}$, we get that
\begin{equation*}
0=\epsilon(\un{d})-\epsilon(\un{d},W_2)\geq \epsilon(\un{d},\Pi_1\cup W_2)-\epsilon(\un{d},W_2)= \epsilon(\un{d}, \Pi_1)+|\Pi_1\cap W_2|.
\end{equation*}
Using this inequality, the additivity formula (\ref{add-for}) for the  pair $(W_1\cap W_2, \Pi_1)$
and the fact that $W_1\in S_{\un{d}}^+$, we get that
$$\epsilon(\un{d})=\epsilon(\un{d}, W_1)=\epsilon(\un{d}, (W_1\cap W_2)\cup \Pi_1)=\epsilon(\un{d},W_1\cap W_2)
+\epsilon(\un{d},\Pi_1)+|\Pi_1\cap(W_1\cap W_2)|$$
$$ \leq  \epsilon(\un{d},W_1\cap W_2)+\epsilon(\un{d},\Pi_1)+|\Pi_1\cap W_2|
\leq \epsilon(\un{d},W_1\cap W_2).$$
 By the maximality of $\epsilon(\un{d})$, we
conclude that $\epsilon(\un{d})=\epsilon(\un{d},W_1\cap W_2)$, i.e. that $W_1\cap W_2 \in S_{\un{d}}^+$.

Since the sets $S_{\un{d}}^{\pm}$ are stable under intersection, they admit minimum elements:
\begin{equation}
\Omega^{\pm}(\un{d}):=\bigcap_{W\in S_{\un{d}}^{\pm}} W\subseteq X.
\end{equation}
Note that (\ref{comp-ext}) implies that $\Omega^+(\un{d})^c\in S_{\un{d}}^-$.  Since $\Omega^-(\un{d})$ is the minimum element of $S_{\un{d}}^-$, we get that $\Omega^-(\un{d})\subseteq \Omega^+(\un{d})^c$,
or in other words that $\Omega^+(\un{d})$ and $\Omega^-(\un{d})$ do not have common irreducible components.  We set
\begin{equation*}
\Omega^0(\un{d}):= (\Omega^+(\un{d})\cup \Omega^-(\un{d}))^c\subseteq X,
~\end{equation*}
so that X is the disjoint union of $\Omega^+(\un{d})$, $\Omega^-(\un{d})$ and $\Omega^0(\un{d})$. Observe that
\begin{equation}\label{proper-Omega}
\un{d}\in \B_X^d  \Leftrightarrow \epsilon(\un{d}) \text{ or } \eta(\un{d})=0 \Leftrightarrow
\Omega^{+}(\un{d}) \text{ or } \Omega^-(\un{d})=\emptyset.
\end{equation}
Now, if $\un d$ is not balanced, then we consider the new multidegree
\begin{equation}\label{new-el}
\un{e}:=\un{d}+\un{\Omega^+(\un{d})}\equiv \un d.
\end{equation}

\un{Claim:} The multidegree $\un{e}$ satisfies one of the two following properties:
\begin{enumerate}[(i)]
\item $\epsilon(\un{e})<\epsilon(\un{d})$,
\item $\epsilon(\un{e})=\epsilon(\un{d})$ and
$\Omega^+(\un{e})\supsetneq \Omega^+(\un{d})$.
\end{enumerate}

Let us show first how, using the Claim, we can  conclude the proof of the Lemma. Indeed, if $\un{e}$ satisfies condition (ii),
we can iterate the substitution (\ref{new-el}) until we reach an element $\un{e'}$
which satisfies condition (i), i.e. $\epsilon(\un{e'})<\epsilon(\un{d})$,
and such that $\un{e'}\equiv \un{d}$.
Now observe that $\epsilon({\un f})\in \frac{\Z}{(2g-2)\Z}$ for any multidegree $\un f$,  because the denominators appearing in $M_W$ and $m_W$ are divisors of $2g-2$.
Therefore, by iterating the substitution (\ref{new-el}),
we will finally reach a multidegree  $\un{e''}$ such that $\epsilon(\un{e''})=0$, i.e.
$\un{e''}\in \B_X^d$, and such that $\un{e''}\equiv \un{d}$, q.e.d.

Let us now prove the Claim. Take any subcurve $W\subseteq X$ and decompose it as
a disjoint union
\begin{equation*}
W=W^+\coprod W^-\coprod W^0,
\end{equation*}
where $W^{\pm}=W\cap \Omega^{\pm}(\un{d})$ and $W^0=W\cap \Omega^0(\un{d})$.
Note that
\begin{equation}\label{ine1}
\epsilon(\un{d},W^+)\leq \epsilon(\un{d}),
\end{equation}
with equality if and only if $W^+=\Omega^+(\un{d})$ because of the minimality property
of $\Omega^+(\un{d})$. Applying (\ref{add-for}) to the pair $(\Omega^+(\un{d}), W^0)$,
we get
\begin{equation}\label{ine2}
\epsilon(\un{d}, W^0)=\epsilon(\un{d},W^0\cup \Omega^+(\un{d}))-\epsilon(\un{d}, \Omega^+(\un{d}))
-|W^0\cap \Omega^+(\un{d})| \leq -|W^0\cap \Omega^+(\un{d})|,
\end{equation}
where we used that $\epsilon(\un{d}, W^0\cup \Omega^+(\un{d}))\leq
\epsilon(\un{d})=\epsilon(\un{d}, \Omega^+(\un{d}))$. Applying once more formula
(\ref{add-for}) to the pair $(W^-, \Omega^+(\un{d})\cup\Omega^0(\un{d}))$, we get
$$\epsilon(\un{d}, W^-)=\epsilon(\un{d},W^-\cup \Omega^+(\un{d})\cup\Omega^0(\un{d}))-
\epsilon(\un{d}, \Omega^+(\un{d})\cup\Omega^0(\un{d}))-$$
\begin{equation}\label{ine3}
-|W^-\cap (\Omega^+(\un{d})\cup\Omega^0(\un{d}))|
\leq -|W^-\cap (\Omega^+(\un{d})\cup\Omega^0(\un{d}))|,
\end{equation}
where we used that (see (\ref{equ-inv}) and (\ref{sym-inv}))
$$\epsilon(\un{d},W^-\cup \Omega^+(\un{d})\cup \Omega^0(\un{d}))\leq \epsilon(\un{d})= \eta(\un{d})=
\eta(\un{d}, \Omega^-(\un{d}))= \epsilon(\un{d}, \Omega^-(\un{d})^c)=
\epsilon(\un{d}, \Omega^+(\un{d})\cup\Omega^0(\un{d})).$$
Moreover, if the equality holds in (\ref{ine3}), then by (\ref{sym-inv})
$$\eta(\un{d})=\epsilon(\un{d},W^-\cup \Omega^+(\un{d})\cup \Omega^0(\un{d}))=
\eta(\un{d}, \Omega^-(\un{d})\setminus W^-),$$
which implies that $\Omega^-(\un{d})\setminus W^-\in S^-_{\un{d}}$ and hence that $W^-=\emptyset$
because of the minimality property of $\Omega^-(\un{d})$. Using the formula
$$\epsilon(\un{e},W)=\epsilon(\un{d},W)+ \un{\Omega^+(\un{d})}_W$$
and Remark \ref{R:bilin}, the above inequalities (\ref{ine1}), (\ref{ine2}), (\ref{ine3})
give:
\begin{equation}\label{ine4}
\begin{sis}
& \epsilon(\un{e},W^+)=\epsilon(\un{d},W^+)-|W^+\cap \Omega^+(\un{d})^c|
\leq  \epsilon(\un{d})-|W^+\cap \Omega^+(\un{d})^c|,\\
& \epsilon(\un{e},W^0)=\epsilon(\un{d},W^0)+ |W^0\cap \Omega^+(\un{d})|\leq 0 ,\\
& \epsilon(\un{e},W^-)=\epsilon(\un{d},W^-)+ |W^-\cap \Omega^+(\un{d})|
\leq -|W^-\cap \Omega^0(\un{d})|.\\
\end{sis}
\end{equation}
Using twice the additive formula (\ref{add-for}) for the disjoint union
$W=W^+\coprod W^0\coprod W^-$ and the above inequalities (\ref{ine4}),  we compute
$$\epsilon(\un{e},W)=\epsilon(\un{e},W^+)+\epsilon(\un{e},W^0)+\epsilon(\un{e},W^-)
+|W^+\cap W^0| +|W^+\cap  W^-|+|W^0\cap  W^-|\leq$$
\begin{equation}\label{ine5}
\leq \epsilon(\un{d})-
|W^+\cap (\Omega^0(\un{d})\setminus W^0)| -
|W^+\cap  (\Omega^-(\un{d})\setminus W^-)|-|W^-\cap (\Omega^0(\un{d})\setminus W^0)|\leq
\epsilon(\un{d}).
\end{equation}
In particular, we have that $\epsilon(\un{e})\leq \epsilon(\un{d})$. If
the inequality in (\ref{ine5}) is attained for some subcurve $W\subseteq X$,
i.e. if $\epsilon(\un{e})=\epsilon(\un{d})$,
 then also the inequalities
in (\ref{ine1}) and (\ref{ine3}) are attained for $W$, and we observed before that this
implies that
\begin{equation}\label{sharp1}
\begin{sis}
& W^+=\Omega^+(\un{d}), \\
& W^-=\emptyset.
\end{sis}
\end{equation}
Moreover, all the inequalities in (\ref{ine5}) are attained for $W$ and, substituting
(\ref{sharp1}), this implies that
\begin{equation}\label{sharp2}
\begin{sis}
&|\Omega^+(\un{d})\cap (\Omega^0(\un{d})\setminus W^0)|=0, \\
&|\Omega^+(\un{d})\cap \Omega^-(\un{d})|=0.
\end{sis}
\end{equation}
Since $X$ is connected by hypothesis and $\Omega^+(\un{d})$ is a proper subcurve of $X$
 because we assumed $\un{d}\not \in \B_X^d $ (see \eqref{proper-Omega}), we deduce that  (using (\ref{sharp2})):
$$0<k_{\Omega^+(\un{d})}=|\Omega^+(\un{d})\cap (\Omega^-(\un{d})\cup \Omega^0(\un{d}))|=|(\Omega^+(\un{d})\cap  W^0|.$$
This gives that $W^0\neq \emptyset$, which implies that $W=W^+\cup W^0\supsetneq W^+=\Omega^+(\un{d})$
by (\ref{sharp1}). Since this holds for all subcurves $W\subseteq X$ such that $\epsilon(\un e,W)=\epsilon (\un d) (=\epsilon (\un e))$, it holds in particular for $\Omega^+(\un e)$. Therefore, we get that $\Omega^+(\un{e})\supsetneq \Omega^+(\un{d})$ and the claim is proved.

\end{proof}

The next result describes the relation between two balanced multidegrees that have the same class in the degree class group.

\begin{prop}\label{P:diff-bal}
Let $\un d, \un d'\in \B^d_X$. Then $\un d\equiv \un d'$ if and only if there exist subcurves $Z_1\subseteq\ldots\subseteq Z_m$ of $X$ such that
$$\begin{sis}
& \un d_{Z_k}=M_{Z_k} \text{ and } \: \: \un d'_{Z_k}=m_{Z_k}  \text{ for } 1\leq k\leq m,\\
& \un d'=\un d+\sum_{k=1}^m \un{Z_k}.\\
\end{sis}$$
Moreover, the subcurves $Z_k$ can be chosen so that $Z_k^c\cap Z_h=\emptyset$ for $k>h$.
\end{prop}
\begin{proof}
The proof is a generalization of \cite[Lemma 4.1 and p. 625]{Cap}, where the result is stated for DM-semistable curves.

The if implication is clear; let us prove the only if implication.
By the hypothesis $\un d\equiv \un d'$  together with Definition \ref{D:equiv-mult}, we can write
\begin{equation}\label{E:exp1}
\un d-\un d'=\sum_{i=1}^{\gamma} \alpha_i \un{C_i},
\end{equation}
for some $\alpha_i\in \Z$. Up to adding a suitable multiple of $\sum_{i=1}^{\gamma} \un{C_i}=0$ on the right hand side, we can normalize \eqref{E:exp1} in such a way that $\min_i \{\alpha_i\}=0$.
Set $m:=\max_i\{\alpha_i\}$ and consider the following subcurves of $X$
$$W_l=\bigcup_{\alpha_i=l} C_l\subseteq X \text{ for any } 0\leq l\leq q.$$
Note that $X=\bigcup_l W_l$ and that $W_l$ and $W_k$ do not have common irreducible components
if $k\neq l$.

We will prove that the subcurves $Z_k:=\bigcup_{0\leq l\leq k} W_l\subseteq X$ (for $1\leq k\leq m$) satisfy the desired properties. We also set $Z_0=W_0$ for convenience.
 Note that, by construction, we have that $Z_0\subseteq Z_1\subseteq Z_2\subseteq \ldots \subseteq Z_m=X$.
In terms of these subcurves, the expression \eqref{E:exp1} is equivalent to
\begin{equation}\label{E:exp2}
\un d-\un d'=\sum_{l=0}^m l\cdot \un{W_l}=\sum_{k=1}^m \un{Z_k}.
\end{equation}
We will now prove, by induction on $k$,  the following

\un{Claim:} For every $0\leq k\leq m$, we have that
\begin{equation}\label{E:indprop}
\begin{sis}
& \un d_{Z_k}=M_{Z_k}, \\
& \un d'_{Z_k}=m_{Z_k}, \\
& |W_k\cap W_l|=0 \text{ for any } k+2\leq l\leq m.
\end{sis}
\end{equation}
Let us first prove the base case $k=0$. Applying the basic inequality for $\un d$ and $\un d'$ relative to the subcurve $Z_0=W_0$ and using Remark \ref{R:bilin} and the expression \eqref{E:exp2}, we compute
\begin{equation*}
\sum_{l\geq 1} |W_0\cap W_l|=k_{W_0}\geq (\un d-\un d')_{W_0}=\sum_{l\geq 1} l\cdot \un{W_l}_{W_0}=\sum_{l\geq 1} l\cdot |W_0\cap W_l|.
\end{equation*}
Therefore, we must have that $|W_0\cap W_l|=0$ for $l\geq 2$ and the first inequality must be achieved, which happens if and only if $\un d_{Z_0}=M_{Z_0}$ and $\un d'_{Z_0}=m_{Z_0}$.

Assume now that the claim is proved for $0,\ldots, k-1$ and let us prove it for $k$.
Applying the basic inequality for $\un d$ and $\un d'$ relative to the subcurve $Z_{k}=Z_{k-1}\cup W_{k}$ and using Remark \ref{R:bilin} and the expression \eqref{E:exp2}, we get
\begin{equation*}
k_{Z_{k}}\geq (\un d-\un d')_{Z_{k-1}\cup W_{k}}=k_{Z_{k-1}}+ (k-1)|W_{k-1}\cap W_k|-k |W_k\cap W_k^c| +\sum_{h\geq k+1} h\cdot |W_k\cap W_h|=
\end{equation*}
$$=k_{Z_{k-1}}- |W_{k-1}\cap W_k| +\sum_{h\geq k+1} (h-k) \cdot |W_k\cap W_h|=k_{Z_k}+\sum_{h\geq k+1} (h-k-1) \cdot |W_k\cap W_h|.
$$
Therefore, we must have that $|W_k\cap W_l|=0$ for $k+2\leq l\leq m$ and the first inequality must be achieved, which happens if and only if $\un d_{Z_k}=M_{Z_k}$ and $\un d'_{Z_k}=m_{Z_k}$, q.e.d.

\vspace{0.2cm}

In order to conclude the proof, it remains to observe that the third condition in \eqref{E:indprop} is equivalent to the fact that $Z_k^c\cap Z_h=\emptyset$ for $k>h$.
\end{proof}

\subsection{Stably and strictly balanced multidegrees on quasi-wp-stable curves}\label{S:stbal}

We now specialize to the case where $X$ is a quasi-wp-stable curve of genus $g\geq 2$ (see Definition \ref{D:quasi-wp-stable})
\footnote{Actually, the reader can easily check that all the results of this  subsection are valid more in general if $X$ is a G-quasistable curve of genus $g\geq 2$ (in the sense of
Definition \ref{D:G-sing}) with locally planar singularities.}.

Given a balanced multidegree $\un d$ on $X$, the basic inequality \eqref{E:basineq-multideg} gives  that
$\un d_E=-1, 0, 1$ for every exceptional component $E\subset X$. The multidegrees such that $\un d_E=1$ on each exceptional component $E\subset X$ will play a special role in the sequel;
hence they deserve a special name.

\begin{defi}\label{D:prop-bal}
We say that a multidegree $\un d$ on $X$ is \emph{properly balanced}
 if $\un d$ is balanced and $\un d_E=1$ for every exceptional component $E$ of $X$.

We denote by $B_X^d$ the set of all properly balanced multidegrees on $X$ of total degree $d$.
\end{defi}




The aim of this subsection is to investigate the behavior of properly balanced multidegrees  on a quasi-wp-stable curve $X$, which
attain the equality in the basic inequality \eqref{E:basineq-multideg} relative to some subcurve $Z\subseteq X$. With this in mind, we introduce the following definitions.

\begin{defi} \label{D:balanced}
A properly balanced multidegree $\un d$ on $X$ is said to be
\begin{enumerate}[(i)]
\item \label{D:bala1}
\emph{strictly balanced} if any proper subcurve $Z\subset X$ such that $\un d_Z=M_Z$ satisfies  $Z\cap Z^c\subset \exc$.
\item \label{D:bala2}
\emph{stably balanced} if any proper subcurve $Z\subset X$ such that $\un d_Z=M_Z$ satisfies $Z\subseteq \exc$.
\end{enumerate}
\end{defi}

When $X$ is a quasi-stable curve, the above Definition \ref{D:balanced}\eqref{D:bala1}
coincides with the definition of {\rm extremal} in \cite[Sec. 5.2]{Cap}, while Definition
\ref{D:balanced}\eqref{D:bala2} coincides with the definition of G-stable in \cite[Sec. 6.2]{Cap}.
Here we adopt the terminology of \cite[Def. 2.3]{BFV}.

\begin{defi}\label{D:bal-lb}
We will say that a line bundle $L$ on $X$ is balanced if and only if its multidegree $\un{\deg} L$ is balanced, and similarly for properly balanced, strictly balanced, stably balanced.
\end{defi}

\begin{rmk}\label{R:bala}
In order to check that a multidegree $\un d$ on $X$ is balanced (resp. strictly balanced, resp. stably balanced), it is enough to check the conditions of Definitions \ref{D:bala-multdeg} and
\ref{D:balanced} only for the subcurves $Z\subset X$ such that $Z$ and $Z^c$ are \emph{connected}. This follows easily from the following facts. If $Z$ is a subcurve of $X$ and we denote by
$\{Z_1,\ldots, Z_c\}$ the connected components of $Z$, then the following hold:
\begin{enumerate}[(i)]
\item The upper (resp. lower) inequality in \eqref{E:basineq-multideg} is achieved for $Z$ if and only if the upper (resp. lower) inequality in \eqref{E:basineq-multideg} is achieved for every $Z_i$.
This follows from the (easily checked) additivity relations
$$\begin{sis}
& \deg_Z L=\sum_i \deg_{Z_i} L, \\
& \deg_Z \omega_X =\sum_i \deg_{Z_i} \omega_X, \\
& k_Z=\sum_i k_{Z_i}.
\end{sis}
$$
\item $Z\cap Z^c \subseteq \exc$ if and only if $Z_i\cap Z_i^c\subseteq \exc$ for every $i$. Similarly, $Z\subseteq \exc $ if and only if $Z_i\subseteq \exc$ for every $Z_i$.
\item If $Z^c$ is connected, then $\displaystyle Z_i^c=\cup_{j\neq i} Z_j \cup Z^c$ is connected for every $Z_i$.
\end{enumerate}

\end{rmk}




The next result explains the relation between stably balanced and strictly balanced line bundles.

\begin{lemma}\label{compa-bal}
A multidegree $\un d$ on a quasi-wp-stable curve $X$ of genus $g\geq 2$
is stably balanced  if and only if $\un d$ is strictly balanced and $\w{X}=\ov{X\setminus \exc}$ is connected.
\end{lemma}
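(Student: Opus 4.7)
My plan is to split the equivalence into three implications: the trivial one, stably balanced $\Rightarrow$ strictly balanced (immediate from the definitions, since $Z\subseteq \exc$ implies $Z\cap Z^c\subseteq \exc$), the direction stably balanced $\Rightarrow \w X$ connected, and the converse strictly balanced plus $\w X$ connected $\Rightarrow$ stably balanced.

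For the converse, I would start from a proper subcurve $Z\subsetneq X$ with $\un d_Z=M_Z$ and aim to show $Z\subseteq\exc$. Strict balance gives $Z\cap Z^c\subseteq \exc$. The key structural observation is that two distinct irreducible components of $\w X$ meeting in $X$ must meet at a node lying on no exceptional component, because cusps are unibranch while tacnodes with a line involve an exceptional branch by the very definition of pre-wp-stable. Hence if $F\subset Z$ and $F'\subset Z^c$ are two such meeting components of $\w X$, one produces a point in $Z\cap Z^c\setminus\exc$, contradicting strict balance. Propagating this along chains of components in the connected curve $\w X$, I would conclude that either $\w X\subseteq Z$ or $Z$ contains no irreducible component of $\w X$ (i.e., $Z\subseteq\exc$). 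The first case is ruled out using that exceptional components in a quasi-wp-stable curve cannot meet each other (otherwise their union would be a connected genus-$0$ subcurve with $k\leq 2$ which fails to be irreducible), so both endpoints of every $E\subseteq Z^c$ lie in $\w X\subseteq Z$; a short count then gives $\un d_Z<M_Z$, a contradiction.

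For the direction stably balanced $\Rightarrow \w X$ connected, I would argue by contrapositive. Assume $\w X=A\sqcup B$ with both parts nonempty (grouping connected components as needed) and classify the exceptional components by type AA, BB or AB according to the location of their endpoints, with respective counts $a,b,c$; tacnodal exceptionals are necessarily of type AA or BB, since their single set-theoretic intersection point lies on one non-exceptional component. A direct count gives $k_A=2a+c$ and $k_B=2b+c$, hence $m_A+m_B=d-(a+b+c)$. Combining this with proper balance ($\un d_A+\un d_B=d-|\exc|=d-(a+b+c)$) and the basic inequalities $\un d_A\geq m_A$, $\un d_B\geq m_B$ forces equalities $\un d_A=m_A$ and $\un d_B=m_B$. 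Then I would consider $Z:=B\cup(\text{all BB and all AB exceptionals})$, a proper subcurve containing $B$ and hence not contained in $\exc$; a direct computation using $k_Z=c$ and $\un d_B=m_B$ gives $\un d_Z=M_Z$, contradicting stably balanced.

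The main obstacle I anticipate is the careful combinatorial bookkeeping of the intersection lengths $k_A,k_B,k_Z$, especially the treatment of tacnodal exceptionals (which contribute length $2$ at a single set-theoretic point) and the verification that exceptional components are pairwise disjoint. Once the key identity $k_A+k_B=2|\exc|$ is in place, the argument reduces to a clean double application of the basic inequality.
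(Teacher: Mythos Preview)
Your proposal is correct. The paper does not give a self-contained argument but simply asserts that the proof of \cite[Lemma~2.7]{BFV} (written for quasi-stable, i.e.\ nodal, curves) extends verbatim to quasi-wp-stable curves; your write-up is exactly that extension made explicit. The two structural facts you isolate---that distinct exceptional components are pairwise disjoint (otherwise their union would be a reducible genus-$0$ subcurve with $k\le 2$, violating Definition~\ref{D:quasi-wp-stable}) and that two non-exceptional components can meet only at a node not lying on $\exc$ (cusps being unibranch, tacnodes forcing one branch to be exceptional)---are precisely what is needed to carry the nodal argument over, and your combinatorial counts ($k_A=2a+c$, $k_B=2b+c$, hence $m_A+m_B=d-(a+b+c)=\un d_A+\un d_B$, and then $\un d_Z=M_Z$ for $Z=B\cup(\text{BB and AB exceptionals})$) are all correct.
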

\begin{proof}
The proof is an easy adaptation of  \cite[Lemma 2.6]{BFV}  from  quasi-stable curves  to quasi-wp-stable curves. However, we include it here for the reader's convenience.

Assume first that $\un d$ is strictly balanced and that $\w{X}$ is connected. Let $Z$ be a proper subcurve of $X$ such that
$\un d_Z=M_Z$. Then $Z\cap Z^c\subset X_{\rm exc}$ because $\un d$ is
strictly balanced by hypothesis. Therefore the non-exceptional subcurve $\w{X}$ can be written as a disjoint union of the two subcurves
$Z\cap \w{X}$ and $Z^c\cap \w{X}$. Since $\w{X}$ is connected by hypothesis, we must have that either $Z\cap \w{X}=\emptyset$
or $Z^c\cap \w{X}=\emptyset$, which implies that either $Z\subseteq X_{\rm exc}$ or $Z^c\subseteq X_{\rm exc}$, respectively.
However, only the first case can occur because $\un d_Z=M_Z$ (in the second case, we would have $\un d_Z=m_Z$).
 This shows that $\un d$ is stably balanced.

Conversely, assume that $\un d$ is stably balanced. Clearly, this implies that $\un d$ is strictly balanced. Assume, by contradiction, that
$\w{X}$ is not connected. Then we can find two proper disjoint subcurves $D_1$ and  $D_2$ of $X$ that are not contained in
$X_{\rm exc}$ and such that  $E:=(D_1\cup D_2)^c$ is the union of $r\geq 1$ exceptional components of $X$.
It is easily checked that
\begin{equation*}
\left\{\begin{aligned}
 & \deg_{D_1\cup E}(\omega_X)=\deg_{D_1}(\omega_X), \\
 & k_{D_1\cup E}=k_{D_1}=r, \\
 & \un d_{D_1\cup E} =\un d_{D_1}  +r.
\end{aligned}\tag{*}
\right.
\end{equation*}
Applying the inequality \eqref{E:basineq-multideg}  to the subcurves $D_1$ and $D_1\cup E$ and using (*), we get
\begin{equation*}
\frac{r}{2}=\frac{k_{D_1}}{2}\geq \un d_{D_1\cup E}-\frac{d}{2g-2}\deg_{D_1\cup E}(\omega_X)=r+\un d_{D_1}-\frac{d}{2g-2}\deg_{D_1}(\omega_X)\geq r-\frac{k_{D_1}}{2}=\frac{r}{2}.
\end{equation*}
Therefore, we have that $\un d_{D_1\cup E}=M_{D_1\cup E}$ and this contradicts the fact that
$\un d$ is strictly balanced, since $\emptyset \not\neq D_1\cup E\not\subseteq X_{\rm exc}$ by construction.

\end{proof}

The next result addresses the problem of whether all properly balanced line bundles of degree $d$ are stably balanced for every quasi-wp-stable curve of genus $g$.

\begin{lemma}\label{L:coprime}
Fix two integers $d$ and $g\geq 2$. The following conditions are equivalent:
\begin{enumerate}[(i)]
\item \label{L:coprime1} $\gcd(d+1-g,2g-2)=1$.
\item \label{L:coprime2} For every quasi-wp-stable (resp. quasi-stable, resp. quasi-p-stable) curve $X$ of genus $g$, every properly balanced line bundle on $X$ of degree $d$ is stably balanced.
\item \label{L:coprime3} For every quasi-wp-stable curve (resp. quasi-stable, resp. quasi-p-stable) $X$ of genus $g$, every properly balanced line bundle on $X$ of degree $d$ is strictly balanced.
\item \label{L:coprime4} For every quasi-wp-stable curve (resp. quasi-stable, resp. quasi-p-stable) $X$ of genus $g$, every strictly balanced line bundle on $X$ of degree $d$ is stably balanced.
\end{enumerate}
\end{lemma}
\begin{proof}
The proof is a generalization of \cite[Prop. 6.2, Lemma 6.3]{Cap}.

Let us first prove the implication \eqref{L:coprime1}$\Rightarrow$\eqref{L:coprime2}. First of all, notice that the numerical condition in \eqref{L:coprime1} is equivalent to the two numerical conditions:
\begin{equation*}
\gcd(d, g-1)=1, \tag{*}
\end{equation*}
\begin{equation*}
d\not\equiv g-1 \mod 2. \tag{**}
\end{equation*}
Let $X$ be a quasi-wp-stable curve of genus $g$ and let $L$ be any properly balanced line bundle on $X$ of degree $d$. Call $\un d$ the multidegree of $L$. In order to show that $L$ is stably balanced we have to show, using Remark \ref{R:bala}, that any connected proper subcurve $Z\subset X$ with connected complementary subcurve $Z^c$ and such that
\begin{equation}\label{E:eq1}
\un d_Z=\frac{d}{2g-2}\deg_Z \omega_X+\frac{k_Z}{2} \text{ or, equivalently, } (2g-2)\un d_Z= d\cdot \deg_Z(\omega_X)+(g-1)k_Z
\end{equation}
is an exceptional component of $X$.
Equation \eqref{E:eq1} implies, using our assumption (*), that $g-1$ divides $\deg_Z(\omega_X)$. Since $\omega_X$ is nef by assumption, we must have that $0\leq \deg_Z(\omega_X)\leq 2g-2$; hence, the only possibilities are
$\deg_Z(\omega_X)=0, g-1, 2g-2$.
\begin{itemize}
\item  If $\deg_Z(\omega_X)=0$ then $Z$ is an exceptional component of $X$
(see Definition \ref{D:quasi-wp-stable}) and we are done.
\item If $\deg_Z(\omega_X)=2g-2$ then $\deg_{Z^c}(\omega_X)=2g-2$ which implies that $Z^c$ is an exceptional component. This yields that $k_Z=2$ and $\un d_Z=d-\un d_{Z^c}=d-1$ (since $\un d$ is properly balanced), which contradicts \eqref{E:eq1}.
\item If $\deg_Z(\omega_X)=g-1$ then, dividing \eqref{E:eq1} by $g-1$ and taking congruence modulo $2$, we obtain that $d\equiv k_Z \mod 2$. On the other hand, using the formula $\deg_Z(\omega_X)=2g(Z)-2+k_Z$, we have that $g-1=\deg_Z(\omega_X)\equiv k_Z \mod 2$. By putting these two congruences together, we obtain that $d\equiv k_Z \equiv g-1 \mod 2$, which contradicts our assumption (**).
\end{itemize}

The implications \eqref{L:coprime2}$\Rightarrow$\eqref{L:coprime3} and \eqref{L:coprime2}$\Rightarrow$\eqref{L:coprime4} are clear.

Let us now prove the implication \eqref{L:coprime3}$\Rightarrow$\eqref{L:coprime1}. By contradiction, we will assume that the numerical condition $\gcd(2g-2,d+1-g)\neq 1$ is not satisfied  and we will construct a curve $X$ of genus $g$ which is both stable and p-stable (hence in particular wp-stable) together with a line bundle $L$ on $X$ of degree $d$ which is properly balanced but not strictly balanced. We will distinguish two (overlapping) cases, according to whether condition (*) or condition (**) is not satisfied.

\un{Case 1}: $d\equiv g-1\mod 2$.

Let $X$ be a curve made of two smooth irreducible components $Y_1$ and $Y_2$ of genera $g(Y_1)=g(Y_2)=0$ meeting in $k:=g+1\geq 3$ nodal points. Clearly, $X$ is a stable and p-stable curve of genus $g$, so that $\exc=\emptyset$.
 Let $L$ be a line bundle on $X$ of multidegree
$$(\deg_{Y_1}(L),\deg_{Y_2}(L))=\left(\frac{d+g+1}{2}, \frac{d-g-1}{2}\right).$$
It is easily checked that
$$\begin{aligned}
& M_{Y_1}=\frac{d}{2g-2}\deg_{Y_1}(\omega_X)+\frac{k_{Y_1}}{2}=\frac{d}{2g-2}(g-1)+\frac{g+1}{2}= \frac{d+g+1}{2}=\deg_{Y_1}(L),\\
& m_{Y_2}=\frac{d}{2g-2}\deg_{Y_2}(\omega_X)-\frac{k_{Y_2}}{2}=\frac{d}{2g-2}(g-1)-\frac{g+1}{2}=\frac{d-g-1}{2}=\deg_{Y_2}(L).\\
\end{aligned}$$
Therefore, $L$ is properly balanced but not strictly balanced.

\un{Case 2}: $\gcd(d, g-1)\neq 1$.

Let $X$ be a curve made of two smooth irreducible components $Y_1$ and $Y_2$ of genera, respectively, $g(Y_1)=0$ and $g(Y_2)=(g-1)-\frac{2g-2}{\gcd(d,g-1)}\geq 0$ meeting in $k:=\frac{2g-2}{\gcd(d,g-1)}+2\geq 3$ nodal points. Clearly, $X$ is a stable and p-stable curve of genus $g$, so that $\exc=\emptyset$.
 Let $L$ be a line bundle on $X$ of multidegree
$$(\deg_{Y_1}(L),\deg_{Y_2}(L))=\left(\frac{d+g+1}{\gcd(d,g-1)}+1,d-\frac{d+g+1}{\gcd(d,g-1)}-1\right).$$
It is easily checked that
$$M_{Y_1}=\frac{d}{2g-2}\deg_{Y_1}(\omega_X)+\frac{k_{Y_1}}{2}=\frac{d}{2g-2}\frac{2g-2}{\gcd(d,g-1)}+\frac{2g-2}{2\gcd(d,g-1)}+1=\deg_{Y_1}(L),$$
which also implies that $m_{Y_2}=\deg_{Y_2}(L)$.
Therefore, $L$ is properly balanced but not strictly balanced.

Finally, let us now prove the implication \eqref{L:coprime4}$\Rightarrow$\eqref{L:coprime1}. By contradiction, we will assume that the numerical condition $\gcd(2g-2,d+1-g)\neq 1$
 is not satisfied,  i.e. that either $d\equiv g-1\mod 2$ or $\gcd(d, g-1)\neq 1$,  and we will construct a curve $\wt X$ of genus $g$ which is both quasi-stable and quasi-p-stable (hence in particular quasi-wp-stable) together with a line bundle $\wt L$ on $\wt X$ of degree $d$ which is strictly balanced but not stably balanced.
Indeed, let  $\wt X$ be the curve obtained from the curve $X$ constructed above (in Case 1 and in Case 2) by bubbling all the nodes. In other words, $\wt X$ is made by two smooth irreducible components $Y_1$ and $Y_2$ of genera $g(Y_1)$ and $g(Y_2)$ (as specified above) joined by $k$ exceptional components $\{E_1, \ldots, E_k\}$. In particular, $\wt X$ is a quasi-stable and quasi-p-stable curve of genus $g$ with the property that
$\wt X_{\rm sing}\subset \wt X_{\rm exc}=\bigcup_{i=1}^k E_i$.
By the above computation, it follows easily that $m_{Y_1}, m_{Y_2}\in \Z$ and that $m_{Y_1}+m_{Y_2}+k=d$.  Let $\wt L$ be any line bundle on $X$ having degree $1$ on each exceptional component $E_j$ of $\wt{X}$ and such that
$\deg_{Y_i}(\wt L)=m_{Y_i}$ for $i=1,2$. Then, it follows easily that $\wt L$ is a properly balanced line bundle of degree $d$, which is moreover strictly balanced since  $\wt X_{\rm sing}\subset \wt X_{\rm exc}$.
However, $\wt L$ is not stably balanced since if we set $Z:=\bigcup_{i=1}^k E_i \cup Y_1$ then $\deg_{Z}(L)=m_{Y_1}+k=M_{Z}$ and $Z\not \subset \wt X_{\rm exc}$.

\end{proof}

The importance of strictly balanced multidegrees is that they are unique in their equivalence class in $\Delta_X^d$, at least among the properly balanced multidegrees.

\begin{lemma}\label{L:equiv-strict}
Let $\un d, \un d'\in B_X^{\un d}$ be two properly balanced multidegrees of total degree $d$ on a quasi-wp-stable curve $X$ of genus $g\geq 2$.
If $\un d \equiv \un d'$ and $\un d$ is strictly balanced, then $\un d=\un d'$.
\end{lemma}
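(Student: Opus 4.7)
The plan is to apply Fact \ref{F:diffe-equiv-multideg} to the equivalence $\un{d}\equiv \un{d'}$, which produces a chain of subcurves $Z_1\subseteq \cdots \subseteq Z_m$ of $X$ with $\un{d}_{Z_i}=M_{Z_i}$ for every $i$ and $\un{d'}=\un{d}+\sum_{i=1}^{m}\un{Z_i}$. The goal is to show that $m$ can be taken equal to $0$, equivalently $\sum_{i=1}^m\un{Z_i}=0$. Since $\un{d}$ is strictly balanced and $\un{d}_{Z_i}=M_{Z_i}$, Definition \ref{D:balanced} forces $Z_i\cap Z_i^c\subseteq \exc$ for every $i$; equivalently, the non-exceptional part $\widetilde{Z_i}:=Z_i\cap \widetilde{X}$ is a union of connected components of $\widetilde{X}$.

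First I would carry out the key degree computation on each $Z_k$. Using $(\un{Z})_C=-|C\cap Z^c|$ if $C\subseteq Z$ and $(\un{Z})_C=|Z\cap C|$ if $C\not\subseteq Z$, together with the nesting $Z_i\subseteq Z_k$ for $i\leq k$ and the disjoint scheme-theoretic decomposition $Z_i\cap Z_i^c=(Z_i\cap(Z_k\setminus Z_i))\sqcup(Z_i\cap Z_k^c)$, a direct computation gives
\[
\un{d'}_{Z_k}-\un{d}_{Z_k}\;=\;-\sum_{i\leq k}|Z_i\cap Z_k^c|\;-\;\sum_{i>k}|Z_k\cap Z_i^c|.
\]
Since $\un{d}_{Z_k}=M_{Z_k}$ and $\un{d'}$ is balanced so that $\un{d'}_{Z_k}\geq m_{Z_k}=M_{Z_k}-k_{Z_k}$, the right-hand side has absolute value at most $k_{Z_k}$. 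But the single $i=k$ term already contributes $|Z_k\cap Z_k^c|=k_{Z_k}$, so every remaining term must vanish, yielding the crucial disjointness relations $|Z_i\cap Z_k^c|=0$ for $i<k$ and $|Z_k\cap Z_i^c|=0$ for $i>k$, together with $\un{d'}_{Z_k}=m_{Z_k}$ for every $k$.

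Next I would exploit the proper-balance hypothesis: since $\un{d}_E=\un{d'}_E=1$ on every exceptional $E$, one has $\sum_{i=1}^m(\un{Z_i})_E=0$. Recall from Definition \ref{D:quasi-wp-stable} that the exceptional components of a quasi-wp-stable curve are pairwise disjoint, so both endpoints of every exceptional $E$ lie on non-exceptional components. The case $Z_1\subseteq \exc$ is immediately excluded: for any exceptional $E\subseteq Z_1\subseteq Z_i$, both endpoints of $E$ lie in $Z_1^c$ (all non-exceptional components being in $Z_1^c$), giving $(\un{Z_1})_E=-|E\cap Z_1^c|=-2$ while each $(\un{Z_i})_E=-|E\cap Z_i^c|\leq 0$ for $i\geq 2$, contradicting $\sum_i(\un{Z_i})_E=0$. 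A symmetric argument excludes $Z_m^c\subseteq \exc$, so both $\widetilde{Z_1}$ and $\widetilde{Z_m^c}$ are non-empty unions of connected components of $\widetilde{X}$.

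The main obstacle is to upgrade these partial conclusions into $\sum_i \un{Z_i}=0$. This requires combining the disjointness relations with the exceptional constraints. Expanding $(\sum_i\un{Z_i})_E=0$ in terms of endpoint-counts $n_i^+(E):=|\widetilde{Z_i}\cap(\text{endpoints of }E)|$ gives $\sum_{i=1}^m n_i^+(E)=2\,|\{i:E\subseteq Z_i\}|$, while the disjointness $Z_i\cap Z_k^c=\emptyset$ applied to an exceptional $E\subseteq Z_i$ with $i<k$ forces $n_k^+(E)=2$, and applied to $E\subseteq Z_k^c$ with $i<k$ forces $n_i^+(E)=0$. The goal is to show that each exceptional $E$ must be aligned with the chain (both endpoints in $\widetilde{Z_{i^*(E)}}$), which forces $Z_i\cap Z_i^c=\emptyset$, incompatible with $X$ connected and $Z_i$ proper. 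I expect this final alignment step to be the most delicate part of the proof: a priori the solution $(n_{i^*-1}^+,n_{i^*}^+)=(1,1)$ is not ruled out by the sum condition alone, so the argument must exploit the global chain structure simultaneously for all exceptional components of $X$.
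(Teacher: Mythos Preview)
Your proof is incomplete, as you yourself acknowledge in the last paragraph: you cannot rule out the $(1,1)$ split and you defer the ``alignment step'' without carrying it out. The difficulty is real for the route you chose, but it disappears if you localize the argument to a single well-chosen exceptional component instead of trying to control all of them simultaneously.

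Two remarks on the setup. First, the disjointness relations $Z_i\cap Z_k^c=\emptyset$ for $i<k$ that you spend a paragraph deriving are already part of Fact~\ref{F:diffe-equiv-multideg} (the ``moreover'' clause), so you can simply invoke them. Second, you never need the conclusion $\un{d'}_{Z_k}=m_{Z_k}$ nor the exclusion of $Z_1\subseteq\exc$ or $Z_m^c\subseteq\exc$.

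The missing idea is this. Assume $\un{d}\neq\un{d'}$, so $Z:=Z_1$ is a proper subcurve; since $\un{d}_Z=M_Z$ and $\un{d}$ is strictly balanced, $Z\cap Z^c\subseteq\exc$, so pick one exceptional $E$ containing a point of $Z\cap Z^c$. The crucial step you are lacking is to show that necessarily $E\subseteq Z$. Suppose instead $E\subseteq Z^c$, and look at $Z\cup E$: one has $\deg_{Z\cup E}\omega_X=\deg_Z\omega_X$, $\un{d}_{Z\cup E}=\un{d}_Z+1=M_Z+1$, and $k_{Z\cup E}\leq k_Z$, so $\un{d}_{Z\cup E}>M_{Z\cup E}$, violating the basic inequality for $\un{d}$. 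Hence $E\subseteq Z=Z_1$. Now the disjointness $Z_1\cap Z_i^c=\emptyset$ for $i>1$ gives $(\un{Z_i})_E=-|E\cap Z_i^c|=0$ for every $i>1$, while $(\un{Z_1})_E=-|E\cap Z_1^c|\in\{-1,-2\}$. Therefore
\[
\un{d'}_E=\un{d}_E+\sum_{i=1}^m(\un{Z_i})_E=1+(\un{Z_1})_E\in\{0,-1\},
\]
contradicting the proper balance of $\un{d'}$. This is exactly the paper's argument; the single application of the basic inequality to $Z_1\cup E$ is what replaces your unfinished global alignment analysis.
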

\begin{proof}
According to Proposition \ref{P:diff-bal}, there exist subcurves $Z_1\subseteq\ldots\subseteq Z_m$ of $X$ such that
\begin{equation}\label{E:hypo1}
\un d'=\un d+\sum_{i=1}^m \un{Z_i},
\end{equation}
\begin{equation}\label{E:hypo2}
\un d_{Z_i}=M_{Z_i}=\frac{d}{2g-2}\deg_{Z_i}\omega_X+\frac{k_{Z_i}}{2} \text{ for } 1\leq i\leq m,
\end{equation}
\begin{equation}\label{E:hypo3}
Z_i^c\cap Z_j=\emptyset \text{ for } i>j.
\end{equation}
Assume, by contradiction,  that $\un d\not\equiv \un d'$; hence, using \eqref{E:hypo1}, we can assume that  $Z:=Z_1$ is a proper subcurve of $X$.
 From \eqref{E:hypo2} and the fact that $\un d$ is strictly balanced, we deduce that
$Z\cap Z^c\subset \exc$. Therefore, there exists an exceptional component $E\subseteq \exc$ such that one of the following four possibilities occurs:
$$\begin{aligned}
& \text{Case (I): } E\subseteq Z  \: \text{Êand }\:Ê|E\cap Z^c|=1,\\
& \text{Case (II): } E\subseteq Z \: \text{Êand }Ê\: |E\cap Z^c|=2,\\
& \text{Case (III): } E\subseteq Z^c \: \text{Êand }\:Ê|E\cap Z|=1,\\
& \text{Case (IV): } E\subseteq Z^c \: \text{Êand }\: Ê|E\cap Z|=2.\\
\end{aligned}
$$
Note that in Cases (II) or (IV), the intersection of $E$ with $Z$ or $Z^c$ consists either of two distinct points or of one point of multiplicity two.

\un{Claim}: Cases (III) and (IV) cannot occur.

By contradiction, assume first that case (III) occurs. Consider the subcurve $Z\cup E$ of $X$. We have clearly that
$$\begin{sis}
& \un d_{Z\cup E}=\un d_Z +1,\\
& \deg_{Z\cup E}\omega_X=\deg_Z\omega_X, \\
& k_{Z\cup E}=k_Z.
\end{sis}
$$
Therefore, using \eqref{E:hypo2}, we have that
$$\un d_{Z\cup E}=\un d_Z+1=\frac{d}{2g-2}\deg_{Z}\omega_X+\frac{k_{Z}}{2}+1=\frac{d}{2g-2}\deg_{Z\cup E}\omega_X+\frac{k_{Z\cup E}}{2}+1,$$
which contradicts the basic inequality \eqref{E:basineq-multideg} for $\un d$ with respect to the subcurve $Z\cup E\subseteq X$.

Assume now that case (IV) occurs. For the subcurve $Z\cup E\subseteq X$, we have that
$$\begin{sis}
& \un d_{Z\cup E}=\un d_Z +1,\\
& \deg_{Z\cup E}\omega_X=\deg_Z\omega_X, \\
& k_{Z\cup E}=k_Z-2.
\end{sis}
$$
Therefore, using \eqref{E:hypo2}, it follows that
$$\un d_{Z\cup E}=\un d_Z+1=\frac{d}{2g-2}\deg_{Z}\omega_X+\frac{k_{Z}}{2}+1=\frac{d}{2g-2}\deg_{Z\cup E}\omega_X+\frac{k_{Z\cup E}}{2}+2,$$
which contradicts the basic inequality \eqref{E:basineq-multideg} for $\un d$ with respect to the subcurve $Z\cup E\subseteq X$. The claim is now proved.

Therefore, only cases (I) or (II) can occur. Note that
\begin{equation}\label{E:equat1}
\un{Z}_{E}=-|E\cap Z^c|=\begin{cases}
-1 & \text{ if case (I) occurs,} \\
-2 & \text{ if case (II) occurs.}\\
\end{cases}
\end{equation}
Note also that, in any case, we must have that $E\subseteq Z=Z_1$. Using \eqref{E:hypo3}, we get that $E\cap Z_i^c=\emptyset$ for any $i>1$, which implies that
\begin{equation}\label{E:equat2}
\un{Z_i}_{E}=0 \: \text{   for any }   i>1.
\end{equation}
We now evaluate \eqref{E:hypo1} at the subcurve $E$: using that $\un d_E=1$ because $\un d$ is strictly balanced and equations \eqref{E:equat1} and \eqref{E:equat2}, we
conclude that
$$\un d'_E=\begin{cases}
0 & \text{ if case (I) occurs,}\\
-1 & \text{   if case (II) occurs.}
\end{cases}
$$
In both cases, this contradicts the assumption that $\un d'$ is properly balanced.
\end{proof}

We conclude this subsection with the following Lemma, which will be used several times in what follows.

\begin{lemma}\label{L:equiv-lb}
Let $X$, $Y$ and $Z$ be quasi-wp-stable curves of genus $g\geq 2$.
Let $\sigma:Z\to X$ and $\sigma':Z\to Y$ be two surjective maps given by contracting some of the exceptional components of $Z$.
Let $\un d$ (resp. $\un d'$) be a properly balanced multidegree on $X$ (resp. on $Y$). Denote by $\w{\un d}$ the pull-back of $\un d$ on $Z$ via $\sigma$, i.e., the multidegree on $Z$ given
on a subcurve $W\subseteq Z$ by
$$\w{\un d}_W=
\begin{cases}
\un d_{\sigma(W)} & \text{ if } \sigma(W) \text{ is a subcurve of }  X,\\
0 & \text{   if }   W \: \text{ is contracted by } \sigma \: \text{ to a point.}
\end{cases}$$
In a similar way, we define the pull-back $\w{\un d'}$ of $\un d'$ on $Z$ via $\sigma'$. The following is true:
\begin{enumerate}[(i)]
\item \label{L:equiv-lb1} $\w{\un d}$ and $\w{\un d'}$ are balanced multidegrees.
\item \label{L:equiv-lb2} If $\un d$ is strictly balanced and $\w{\un d}\equiv \w{\un d'}$ then there exists a map $\tau: X\to Y$ such that the following diagram commutes
$$
\xymatrix{
& Z \ar[ld]_{\sigma} \ar[rd]^{\sigma'}& \\
X \ar[rr]^{\tau} & & Y
}
$$
\end{enumerate}
\end{lemma}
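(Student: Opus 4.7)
The overall strategy is to push the basic inequality through the contractions $\sigma$ and $\sigma'$, keeping careful track of how the length $k_W$ of the intersection $W\cap W^c$ changes when an exceptional component of $Z$ gets contracted. For part (ii), the aim is to show that every exceptional component of $Z$ contracted by $\sigma$ is also contracted by $\sigma'$; once this is established, $\sigma'$ will be constant on the fibres of $\sigma$, and since $\sigma_\ast\OO_Z=\OO_X$ (as one checks directly at the two local models: contracting an exceptional $\P^1$ to a node, and contracting it to a cusp through a tacnode), the universal property of the proper birational morphism $\sigma$ produces the desired $\tau\colon X\to Y$ uniquely.

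For part (i), fix a subcurve $W\subseteq Z$ and denote by $W_{ne}$ the union of those irreducible components of $W$ which are not contracted by $\sigma$; set $W':=\sigma(W_{ne})\subseteq X$. The definition of $\w{\un d}$, together with the fact that contracted exceptional components have $\omega$-degree zero and that $\sigma$ is an isomorphism outside of them, gives immediately
$$
\w{\un d}_W=\un d_{W'}\qquad\text{and}\qquad \deg_W\omega_Z=\deg_{W'}\omega_X.
$$
A short local analysis at each exceptional component $E$ of $Z$ contracted by $\sigma$ shows that $E$ contributes a quantity in $\{0,+2\}$ to $k_W-k_{W'}$: the value $+2$ arises precisely when $E$ is ``separated'' from its adjacent components by $W$ (e.g.\ $E\subseteq W$ while all its adjacent components lie in $W^c$, or vice versa). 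Hence $k_W\geq k_{W'}$, and the basic inequality for $\un d$ at $W'$ then implies the basic inequality for $\w{\un d}$ at $W$. The same argument applies verbatim to $\w{\un d'}$.

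For part (ii), apply Fact \ref{F:diffe-equiv-multideg} to $\w{\un d}\equiv \w{\un d'}$: there exist subcurves $Z_1\subseteq\cdots\subseteq Z_m$ of $Z$, with $Z_i^c\cap Z_j=\emptyset$ for $i>j$, such that $\w{\un d'}=\w{\un d}+\sum_{i=1}^m \un{Z_i}$ and $\w{\un d}_{Z_i}=M_{Z_i}$ for every $i$. Setting $Z_i':=\sigma((Z_i)_{ne})$, the chain of inequalities from part (i) is forced to be an equality throughout, giving simultaneously
$$
\un d_{Z_i'}=M_{Z_i'}\qquad\text{and}\qquad k_{Z_i}=k_{Z_i'}\qquad\text{for every }i.
$$
Strict balancedness of $\un d$ then yields $Z_i'\cap (Z_i')^c\subseteq\exc$. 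Now fix an exceptional component $E$ of $Z$ contracted by $\sigma$: since $\w{\un d}_E=0$ and $\un d'$ is properly balanced on $Y$, proving that $E$ is also contracted by $\sigma'$ reduces to proving $s_E:=\sum_{i=1}^m \un{Z_i}_E=0$. Using the nested structure, $s_E$ can be expressed in terms of the ``entry times'' at which $E$ and its adjacent component(s) enter the chain $Z_\bullet$: in the node case one finds $s_E=2t_E-t_1-t_2$ and in the cusp case $s_E=2(t_E-t_C)$. The condition $k_{Z_i}=k_{Z_i'}$ rules out the extreme ``$+2$'' configurations at every $i$, constraining $t_E$ to lie between $t_1$ and $t_2$ in the node case and forcing $t_E=t_C$ in the cusp case; strict balancedness, applied to the node $\sigma(E)\in X$ in the node case, forces the two branches of this node to lie on the same side of $Z_i'$ for every $i$, whence $t_1=t_2$. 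Together these give $t_E=t_1=t_2$ and $s_E=0$, concluding the proof.

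The main obstacle is the coordinated use of the two constraints at the end of part (ii): each of them alone only narrows the admissible configurations of $E$ and its neighbours inside the chain $Z_\bullet$, and the vanishing $s_E=0$ requires combining them, with slightly different outcomes in the node and cusp cases. Part (i), by contrast, is comparatively routine bookkeeping once the key local inequality $k_W\geq k_{W'}$ has been extracted by a case analysis on how a contracted exceptional component meets $W$.
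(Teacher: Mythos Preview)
Your argument is correct, and it takes a genuinely different route from the paper's.

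\textbf{Part (i).} You are actually more precise than the paper. The paper asserts $k_W = k_{\sigma(W)}$ for a connected $W$ mapping to a subcurve, but this is not quite true (e.g.\ if a contracted $E$ has both neighbours in $W$ while $E\subseteq W^c$). What is true---and what you correctly isolate---is the inequality $k_W\geq k_{W'}$, and this is exactly what is needed for the basic inequality to transfer.

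\textbf{Part (ii).} Here the two proofs diverge. The paper applies Fact~\ref{F:diffe-equiv-multideg} in the form $\w{\un d}=\w{\un d'}+\sum\un{W_i}$ with $\w{\un d'}_{W_i}=M_{W_i}$, then argues by contradiction: assuming an $E$ is contracted by $\sigma$ but not by $\sigma'$, it locates a single critical index $q$ and shows that $\un d_{\sigma(W_q)}=m_{\sigma(W_q)}$ together with $\sigma(E)\in\sigma(W_q)\cap\sigma(W_q)^c\setminus\exc(X)$, contradicting strict balancedness. You instead reverse the roles, writing $\w{\un d'}=\w{\un d}+\sum\un{Z_i}$ with $\w{\un d}_{Z_i}=M_{Z_i}$. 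This is a good move: pushing down to $X$ gives $\un d_{Z_i'}=M_{Z_i'}$ \emph{and} $k_{Z_i}=k_{Z_i'}$ simultaneously for every $i$, so strict balancedness applies to all $Z_i'$ at once rather than to a single cleverly chosen subcurve. Your entry-time bookkeeping then packages the nested structure neatly and makes the combination of the two constraints transparent.

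One small point worth making explicit in both approaches: the step ``strict balancedness forces the two branches of the node $\sigma(E)$ to lie on the same side of $Z_i'$\,'' uses that $\sigma(E)\notin\exc(X)$. This holds because in a quasi-wp-stable curve no two exceptional components are adjacent (else their union would be a connected rational subcurve with $k=2$ not isomorphic to $\P^1$), so the neighbours $C_1,C_2$ of $E$ in $Z$ are non-exceptional; and since $\deg_{C_j}\omega_Z=\deg_{\sigma(C_j)}\omega_X>0$, the images $\sigma(C_j)$ remain non-exceptional in $X$. Neither you nor the paper spells this out, but it is the reason the argument closes.
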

\begin{proof}
Part \eqref{L:equiv-lb1}: let us prove that $\w{\un d}$ is balanced; the proof for $\w{\un d'}$ being analogous.
Consider a connected subcurve $W\subseteq Z$ and let us show that $\w{\un d}$ satisfies the basic inequality \eqref{E:basineq-multideg} with respect to the subcurve $W\subseteq Z$.
If $W$ is contracted by $\sigma$ to a point, then $W$ must be an exceptional component of $Z$. In this case, we have that $\w{\un d}_Z=0$, $k_{W}=2$ and $\deg_W(\omega_Z)=0$
so that \eqref{E:basineq-multideg} is satisfied. If $\sigma(W)$ is a subcurve of $X$, then $\w{\un d}_W=\un d_{\sigma(W)}$ and, since $\sigma$ contracts only exceptional components of $Z$, it is easy to see that $\deg_W(\omega_Z)=\deg_{\sigma(W)}(\omega_X)$   and
that $|W\cap W^c|=|\sigma(W)\cap \sigma(W)^c|$.
 Therefore, in this case, the basic inequality
for $\w{\un d}$ with respect to $W$ follows from the basic inequality for $\un d$ with respect to $\sigma(W)$.

Part \eqref{L:equiv-lb2}:
start by noticing that if every exceptional component $E\subset Z$ which is contracted
by $\sigma$ is also contracted by $\sigma'$, then $\sigma'$ factors through $\sigma$, so the map $\tau$ exists. Let us now prove that in order for the map $\tau$ to exist, it is also necessary  that every exceptional component $E\subset Z$ which is contracted
by $\sigma$ is also contracted by $\sigma'$. By contradiction, assume that $\tau$ exists and that there exists an exceptional component $E\subset Z$ which is contracted by $\sigma$ but not by $\sigma'$.
Then we have that
\begin{equation}\label{E:twist0}
\begin{sis}
& \w{\un d}_E=0, \\
& \w{\un d'}_E=\un d'_{\sigma(E)}=1,
\end{sis}
\end{equation}
where in the last equation we have used that $\sigma(E')$ is an exceptional component of $Y$ and that $\un d'$ is properly balanced.

Since $\w{\un d}$ is equivalent to $\w{\un d'}$ by assumption, Proposition \ref{P:diff-bal} implies that we can find subcurves $W_1\subseteq\ldots\subseteq W_m\subseteq Z$ such that
\begin{equation}\label{E:twist1}
\w{\un d}=\w{\un d'}+\sum_{i=1}^m \un{W_i},
\end{equation}
\begin{equation}\label{E:twist2}
\w{\un d'}_{W_i}=\frac{d}{2g-2}\deg_{W_i}\omega_Z+\frac{k_{W_i}}{2} \text{ for } 1\leq i\leq m,
\end{equation}
\begin{equation}\label{E:twist3}
W_i^c\cap W_j=\emptyset \text{ for } i>j.
\end{equation}
From \eqref{E:twist0} and \eqref{E:twist1}, we get that
\begin{equation}\label{E:twist4}
\sum_{i=1}^m \un{W_i}_E=-1.
\end{equation}
Denote by $C_1$ and $C_2$ the irreducible components of $Y$ that intersect $E$, with the convention
that $C_1=C_2$ if there is only one such irreducible component of $Y$ that meets $E$ in two distinct points or
in one point with multiplicity $2$.
It follows  from Remark \ref{R:bilin}  that
for any subcurve $W\subseteq Z$ with complementary subcurve $W^c$ we have that
$$\un W_E=
\begin{sis}
2 & \text{ if } E\subseteq W^c \text{ and }  C_1\cup C_2 \subseteq W,\\
1 & \text{ if } E\subseteq W^c \text{ and exactly one among }  C_1 \text{ and } C_2 \text{ is a subcurve of } W,\\
0 & \text{ if } E\cup C_1\cup C_2 \subseteq W^c \text{ or }  E\cup C_1\cup C_2 \subseteq W,\\
-1& \text{ if } E\subseteq W \text{ and exactly one among }  C_1 \text{ and } C_2 \text{ is a subcurve of } W, \\
-2& \text{ if } E\subseteq W \text{ and }  C_1\cup C_2 \subseteq W^c. \\
\end{sis}
$$
Using this formula, together with \eqref{E:twist4} and \eqref{E:twist3}, it is easy to see that $C_1$ must be different from $C_2$ and that, up to exchanging $C_1$ with $C_2$, there exists an integer $1\leq q\leq m$ such that
\begin{equation}\label{E:twist5}
\begin{sis}
& E\cup C_1\cup C_2 \subseteq  W_i^c & \text{ if } i<q,\\
& E\cup C_1\subset W_q \text{ and } C_2\subseteq W_q^c,  \\
& E\cup C_1\cup C_2 \subseteq  W_i & \text{ if } i>q.\\
\end{sis}
\end{equation}
Let us now compute $\w{\un d}_{W_q}$. From \eqref{E:twist3}, we get that
$$\un{W_i}_{W_q}=\begin{cases}
- k_{W_q} & \text{ if } i=q, \\
0 & \text{ if } i\neq q.
\end{cases}$$
Combining this with \eqref{E:twist1} and \eqref{E:twist2}, we get that
\begin{equation}\label{E:twist6}
\w{\un d}_{W_q}=\w{\un d'}_{W_q}-k_{W_q}=\frac{d}{2g-2}\deg_{W_q}\omega_Z-\frac{k_{W_q}}{2}.
\end{equation}
Consider now the subcurve $\sigma(W_q)$ of $X$. By \eqref{E:twist6}, we have that
$$\un d_{\sigma(W_q)}=\w{\un d}_{W_q}=\frac{d}{2g-2}\deg_{W_q}\omega_Z-\frac{k_{W_q}}{2}=\frac{d}{2g-2}\deg_{\sigma(W_q)}\omega_X-\frac{k_{\sigma(W_q)}}{2},$$
and by \eqref{E:twist5} we have that
$$\sigma(W_q)\cap \sigma(W_q)^c\not\subseteq X_{\rm exc}.$$
This contradicts the fact that $\un d$ is strictly balanced.

\end{proof}

\section{Preliminaries on GIT}\label{S:GIT}

\subsection{Hilbert and Chow schemes of curves}\label{Sec:Hilb-Chow}

Fix, throughout   this manuscript, two integers $d$ and $g\geq 2$ and write $d:=v(2g-2)=2v(g-1)$ for some (uniquely determined) rational number $v$. Set $r+1:=d-g+1=(2v-1)(g-1)$.

Let $\Hilb_{d,g}$ (or $\Hilb_d$ when $g$ is clear from the context) be the Hilbert scheme parametrizing subschemes of $\P^r=\P(V)$ having Hilbert polynomial
 $P(m):=md+1-g$, i.e., subschemes of $\P^r$ of dimension $1$, degree $d$ and arithmetic genus $g$.
An element $[X\subset \P^r]$ of $\Hilb_d$ is thus a $1$-dimensional scheme $X$ of arithmetic genus $g$ together with an embedding $X\stackrel{i}{\hookrightarrow} \P^r$ of degree $d$.  We let $\OO_X(1):=i^*\OO_{\P^r}(1)\in \Pic^d(X)$. The group $\GL(V)\cong \GL_{r+1}$ (hence its subgroup $\SL(V)\cong \SL_{r+1}$) acts on $\Hilb_d$ via its natural action on $\P^r=\P(V)$.
Given an element $[X\subset \P^r]\in \Hilb_d$, we will denote by $\Orb([X\subset \P^r])$ its \emph{orbit} with respect to the above action of $\GL(V)$ (or equivalently of $\SL(V)$).

It is well-known (see \cite[Lemma 2.1]{MS})  that for any  $m\geq M:=\binom{d}{2}+1-g$ and
any $[X\subset \P^r]\in \Hilb_d$ it holds that:
\begin{itemize}
\item $\OO_X(m)$ has no higher cohomology;
\item The natural map
$$
\Sym^m V^{\vee} \rightarrow \Gamma(\OO_X(m))=H^0(X, \OO_X(m))
$$
is surjective.
\end{itemize}
Under these hypotheses, the $m$-th Hilbert point of $[X\subset \P^r]\in \Hilb_d$
is defined to be
$$[X\subset \P^r]_m := \left[ \Sym^mV^{\vee} \surj \Gamma(\cO_X(m)) \right] \in
\Gr(P(m),\Sym^mV^{\vee}) \inj \P\left(\bigwedge^{P(m)} \Sym^m V^{\vee}\right),
$$
where $\Gr(P(m),\Sym^m V^{\vee})$ is the Grassmannian variety parametrizing $P(m)$-dimensional quotients of
$\Sym^m V^{\vee}$, which lies naturally in $\P\left(\bigwedge^{P(m)} \Sym^m V^{\vee}\right)$ via the Pl\"ucker embedding.

\noindent For any $m\geq M$, we get a closed $\SL(V)$-equivariant embedding (see \cite[Lect. 15]{MumCAS}):
$$
\begin{array}{rcl}
j_m: \Hilb_d & \inj & \Gr(P(m),\Sym^m V^{\vee})  \inj \P(\bigwedge^{P(m)} \Sym^m V^{\vee}):=\P \\
 \ [X\subset \P^r] & \mapsto & [X\subset \P^r]_m.
\end{array}
$$
Therefore, for any $m\geq M$, we get an ample $\SL(V)$-linearized line bundle
$\Lambda_m:=j_m^*\OO_{\P}(1)$ and we denote by
$$\Hilb_d^{s,m}\subseteq \Hilb_d^{ss,m} \subseteq \Hilb_d$$
the locus of points that are stable and semistable   with respect to
$\Lambda_m$, respectively.
 If $[X\subset \P^r]\in \Hilb_d^{s,m}$ (resp. $[X\subset \P^r]\in \Hilb_d^{ss,m}$), we say that
$[X\subset \P^r]$ is {\em $m$-Hilbert stable} (resp. {\em semistable}).

The ample cone of $\Hilb_d$ admits a
finite decomposition into locally-closed cells, such that the
stable and the semistable locus are constant for linearizations taken from a given
cell \cite[Theorem 0.2.3(i)]{DolHu}.  In particular, $\Hilb_d^{s,m}$
and $\Hilb_d^{ss,m}$ are constant for $m\gg 0$. We set
$$\begin{sis}
& \Hilb_d^s:=\Hilb_d^{s,m} \text{ for } m\gg 0,\\
& \Hilb_d^{ss}:=\Hilb_d^{ss,m} \text{ for } m\gg 0.
\end{sis}$$
If $[X\subset \P^r]\in \Hilb_d^{s}$ (resp. $[X\subset \P^r]\in \Hilb_d^{ss}$, $[X\subset \P^r]\in \Hilb_d^{ss}\setminus \Hilb_d^{s}$), we say that  $[X\subset \P^r]$ is {\em  Hilbert stable} (resp. {\em  semistable}, {\em  strictly semistable}).
If $[X\subset \P^r]\in \Hilb_d^{ss}$ is such that the $\SL(V)$-orbit $\Orb([X\subset \P^r])$ of $[X\subset \P^r]$ is closed inside $\Hilb_d^{ss}$ then we say that $[X\subset \P^r]$ is
{\em Hilbert polystable}.

Let $\Chow_d \stackrel{j}{\inj}\P(\otimes^{2} \Sym^d V^{\vee}):=\P'$ the Chow scheme parametrizing $1$-cycles of
$\P^r$ of degree $d$ together with its natural $\SL(V)$-equivariant embedding $j$ into the projective space $\P(\otimes^2 \Sym^d V^{\vee})$ (see \cite[Lect. 16]{MumCAS}). Therefore, we have an ample $\SL(V)$-linearized line bundle $\Lambda:=j^*\OO_{\P'}(1)$
 and we denote by
$$\Chow_d^s\subseteq \Chow_d^{ss} \subseteq \Chow_d$$
the locus of points of $\Chow_d$ that are, respectively, stable and semistable with respect to $\Lambda$.

There is an $\SL(V)$-equivariant Hilbert-Chow morphism (see \cite[\S 5.4]{GIT}):
$$
\begin{array}{rcl}
\Ch: \Hilb_d & \ra & \Chow_d  \\
      \   [X\subset \P^r] & \mapsto & \Ch([X\subset \P^r]).
\end{array}
$$
We say that $[X\subset \P^r]\in \Hilb_d$ is {\em Chow stable} (resp. \emph{semistable}, {\em  strictly semistable}) if $\Ch([X\subset \P^r])\in \Chow_d^s$ (resp. $\Chow_d^{ss}$, $\Chow_d^{ss}\setminus\Chow_d^s$).
We say that $[X\subset \P^r]\in \Hilb_d$ is {\em Chow polystable} if $\Ch([X\subset \P^r])\in  \Chow_d^{ss}$ and its $\SL(V)$-orbit  is closed inside $\Chow_d^{ss}$.
Clearly, this is equivalent to asking that $[X\subset \P^r]\in \Ch^{-1}(\Chow_d^{ss})$ and that the $\SL(V)$-orbit $\Orb([X\subset \P^r])$ of $[X\subset \P^r]$ is closed inside
 $\Ch^{-1}(\Chow_d^{ss})$.

The relation between asymptotically Hilbert (semi)stability and Chow (semi)stability is given by the following (see \cite[Prop. 3.13]{HH2})
\begin{fact}  \label{HilbtoChow}
There are inclusions
$$\Ch^{-1}(\Chow_d^s)\subseteq \Hilb_d^s\subseteq \Hilb_d^{ss}\subseteq \Ch^{-1}(\Chow_d^{ss}).$$
In particular, there is a natural morphism of GIT-quotients
$$\Hilb_d^{ss}/SL(V)\to \Ch^{-1}(\Chow_d^{ss})/SL(V).$$
\end{fact}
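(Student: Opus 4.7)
The plan is to establish the three inclusions via the Hilbert--Mumford numerical criterion, exploiting the classical asymptotic comparison between the Hilbert weight and the Chow weight of a one-parameter subgroup. The middle inclusion $\Hilb_d^{s}\subseteq \Hilb_d^{ss}$ is tautological, so the content lies entirely in the two outer inclusions, and the morphism of GIT quotients will be automatic at the end.

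First I would recall (following Mumford, \cite[Lect.~17]{MumCAS}) the fundamental asymptotic formula: for any non-trivial one-parameter subgroup $\rho:\Gm\to \SL(V)$ and any $[X\subset \P^r]\in\Hilb_d$, the Hilbert--Mumford weights with respect to $\Lambda_m$ and $\Lambda$ satisfy
$$
\mu^{\Lambda_m}([X\subset \P^r],\rho)\;=\;\frac{\mu^{\Lambda}(\Ch([X\subset \P^r]),\rho)}{2}\,m^{2}+O(m)\qquad\text{as }m\to\infty.
$$
This is the curve case ($\dim X=1$) of Mumford's expansion of the weight of $\rho$ on the Pl\"ucker line $\det H^{0}(X,\OO_X(m))$, whose leading coefficient equals (up to the combinatorial factor $1/(n+1)!$) the weight of $\rho$ on the Chow form of $X$.

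For $\Ch^{-1}(\Chow_d^{s})\subseteq \Hilb_d^{s}$, suppose that $\Ch([X\subset \P^r])$ is Chow stable. By Hilbert--Mumford applied to $\Chow_d$, $\mu^{\Lambda}(\Ch([X]),\rho)>0$ for every non-trivial 1-PS $\rho$; combined with the displayed asymptotic, this forces $\mu^{\Lambda_m}([X],\rho)>0$ for all sufficiently large $m$, so $[X\subset \P^r]\in \Hilb_d^{s,m}$ for every such $m$. The required uniformity in $\rho$ (a single $m_{0}$ that works simultaneously for every 1-PS) is automatic from the variation-of-GIT picture: by \cite[Thm.~0.2.3]{DolHu}, $\Hilb_d^{s,m}$ assumes only finitely many values as $m$ ranges in the $\SL(V)$-ample cone of $\Hilb_d$, and by definition it stabilizes for $m\gg 0$ to $\Hilb_d^{s}$; hence $[X\subset \P^r]\in \Hilb_d^{s}$.

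For $\Hilb_d^{ss}\subseteq \Ch^{-1}(\Chow_d^{ss})$, if $[X\subset \P^r]\in \Hilb_d^{ss}$ then $[X\subset \P^r]\in \Hilb_d^{ss,m}$ for all $m\gg 0$, so $\mu^{\Lambda_m}([X],\rho)\geq 0$ for every 1-PS $\rho$ and every such $m$. Dividing the displayed asymptotic by $m^{2}/2$ and letting $m\to\infty$ yields $\mu^{\Lambda}(\Ch([X]),\rho)\geq 0$ for every non-trivial $\rho$, whence $\Ch([X\subset \P^r])\in \Chow_d^{ss}$ by Hilbert--Mumford on $\Chow_d$. The induced morphism $\Hilb_d^{ss}/\SL(V)\to \Ch^{-1}(\Chow_d^{ss})/\SL(V)$ is then formal: both sides are good $\SL(V)$-quotients of $\SL(V)$-invariant open subschemes, $\Ch$ is $\SL(V)$-equivariant, and the universal property of good GIT quotients produces the map. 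The main obstacle is pinning down the sign and leading coefficient in the asymptotic expansion in a way compatible with the specific Pl\"ucker embedding of $\Gr(P(m),\Sym^{m}V^{\vee})$ defining $\Lambda_m$ and with the embedding of $\Chow_d$ into $\P(\otimes^{2}\Sym^{d}V^{\vee})$ defining $\Lambda$; this is however classical and worked out in detail in \cite[Lect.~17]{MumCAS}.
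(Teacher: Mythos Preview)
Your proposal is correct and follows essentially the same approach as the paper. The paper's argument (given in the Remark following Fact~\ref{Chow-crit}) is exactly the leading-coefficient comparison you describe, phrased concretely via the functions $W_{X,\rho}(m)$ and $e_{X,\rho}$ of Section~\ref{Sec:num-crit}: since $2d\cdot\frac{w(\rho)}{r+1}$ is the normalized leading coefficient of $\frac{w(\rho)}{r+1}mP(m)$ and $e_{X,\rho}$ that of $W_{X,\rho}(m)$, the Chow numerical criterion (Fact~\ref{Chow-crit}) is the leading-term version of the Hilbert one (Fact~\ref{Hilb-crit}), and the inclusions follow immediately for $m\gg 0$.
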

Note also that in general there is no obvious relation  between Hilbert and Chow polystability.

\subsection{Hilbert-Mumford numerical criterion for $m$-Hilbert and Chow (semi)stability} \label{Sec:num-crit}

Let us now recall the Hilbert-Mumford numerical criterion for the $m$-Hilbert (semi)stability and Chow
(semi)stability of a point $[X\subset \P^r]\in \Hilb_d$, following \cite[Sec. 0.B]{Gie} and \cite[Sec. 2]{Mum}
(see also \cite[Chap. 4.B]{HM}).
Although the criterion in its original form involves one-parameter subgroups (in short 1ps) of $\SL(V)$, it is technically convenient to
work with 1ps of $\GL(V)$ (see \cite[pp. 9-10]{Gie} for an explanation on how to pass from 1ps of $\SL(V)$ to 1ps of
$\GL(V)$, and conversely).

Let $\rho: \Gm \to \GL(V)$ be a 1ps  and let $x_0,\ldots, x_{r}$ be
coordinates of $V$ that diagonalize the action of $\rho$, so that for $i=0,\ldots, r$ we have
$$\rho(t)\cdot x_i = t^{w_i}x_i \:  \text{ with } w_i\in \Z.$$
 The total weight of $\rho$ is by definition
$$w(\rho):=\sum_{i=0}^r w_i.$$
Given a monomial $B=x_0^{\beta_0}\ldots x_r^{\beta_r}$, we define the weight of $B$ with respect to $\rho$ to be
$$w_{\rho}(B)=\sum_{i=0}^r \beta_i w_i.$$
For any $m\geq M$ as in Section \ref{Sec:Hilb-Chow} and
any 1ps $\rho$ of $\GL(V)$, we introduce the following function
\begin{equation}\label{func-w}
W_{X,\rho}(m):=\min \left\{\sum_{i=1}^{P(m)} w_{\rho}(B_i)
\right\},
\end{equation}
where the minimum runs over all the collections of $P(m)$ monomials $\{B_1,\ldots,B_{P(m)} \} \in \Sym^mV^{\vee}$
which restrict to a basis of $H^0(X, \OO_X(m))$. It is easy to check that $W_{X,\rho}(m)$ coincide with the filtered Hilbert function of \cite[Def. 3.15]{HH2}. In the sequel, we will often write $W_{\rho}(m)$ instead of $W_{X,\rho}(m)$ when there is no danger of confusion.

The Hilbert-Mumford numerical criterion for $m$-th Hilbert (semi)stability translates into the following (see \cite[p. 10]{Gie} and also \cite[Prop. 4.23]{HM}).

\begin{fact}[{\bf Numerical criterion for $m$-Hilbert (semi)stability}]\label{Hilb-crit}
Let $m\geq M$ as before. A point $[X\subset \P^r]\in \Hilb_d$ is $m$-Hilbert stable (resp. semistable) if and only if for every 1ps $\rho:\Gm\to \GL(V)$ of total weight $w(\rho)$
we have that
\begin{equation*}
\mu([X\subset \P^r]_m,\rho):=\frac{w(\rho)}{r+1}m P(m)-W_{X,\rho}(m) > 0
\end{equation*}
(resp. $\geq$).
\end{fact}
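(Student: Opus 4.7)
The plan is to apply the standard Hilbert--Mumford numerical criterion to the closed $\SL(V)$-equivariant embedding $j_m:\Hilb_d\hookrightarrow \P:=\P(\bigwedge^{P(m)}\Sym^m V^\vee)$ and then to compute the Mumford weight $\mu^{\Lambda_m}([X\subset \P^r]_m,\rho)$ explicitly in Pl\"ucker coordinates. Since $\Lambda_m=j_m^*\OO_\P(1)$ and $j_m$ is a closed embedding, GIT (semi)stability of $[X\subset \P^r]_m$ with respect to $\Lambda_m$ agrees with GIT (semi)stability of its image in $\P$ with respect to $\OO_\P(1)$; hence the whole problem reduces to a weight calculation for one point in a fixed projective representation.

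First, I would recall (cf.\ \cite[Ch.\ 2]{GIT}) the general criterion: a point $x=[v]\in \P(W)$ with an $\SL(V)$-linearization is semistable (resp.\ stable) if and only if $\mu^{\OO(1)}(x,\sigma)\geq 0$ (resp.\ $>0$) for every non-trivial one-parameter subgroup $\sigma:\Gm\to \SL(V)$, where, writing a lift $v=\sum v_i$ with $\sigma(t)v_i=t^{e_i}v_i$, one has $\mu^{\OO(1)}(x,\sigma)=-\min\{e_i:v_i\neq 0\}$. I would apply this to $W=\bigwedge^{P(m)}\Sym^m V^\vee$ and $x=[X\subset \P^r]_m$. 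Fixing coordinates $x_0,\ldots,x_r$ of $V$ diagonalizing $\rho$, the degree-$m$ monomials in the $x_i$ give a $\rho$-eigenbasis of $\Sym^m V^\vee$ with eigenvalues $w_\rho(B)$, and the wedges $B_{i_1}\wedge\cdots\wedge B_{i_{P(m)}}$ of $P(m)$ distinct such monomials give a $\rho$-eigenbasis of $\bigwedge^{P(m)}\Sym^m V^\vee$ with eigenvalue $\sum_j w_\rho(B_{i_j})$. The key identification is that the Pl\"ucker coordinate of $[X\subset \P^r]_m$ at such a wedge is non-zero precisely when the associated monomials restrict to a basis of $H^0(X,\OO_X(m))$; this follows from the description of the Hilbert point as (proportional to) the determinant of the restriction map $\Sym^m V^\vee \twoheadrightarrow H^0(X,\OO_X(m))$, which is surjective by the assumption $m\geq M$. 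Consequently, the smallest $\rho$-weight of a non-zero weight component of (any lift of) $[X\subset \P^r]_m$ is exactly $W_{X,\rho}(m)$.

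To convert the criterion from $\SL(V)$- to $\GL(V)$-1ps, I would write any 1ps $\rho:\Gm\to \GL(V)$ as a product $\rho=\sigma\cdot \tau$, where $\tau(t):=t^{w(\rho)/(r+1)}\cdot\Id$ is a scalar 1ps (replacing $\rho$ by a suitable power to clear denominators, which does not affect the sign of $\mu$) and $\sigma:=\rho\tau^{-1}$ lies in $\SL(V)$. Since $\tau$ acts on every vector of $\Sym^m V^\vee$ by $t^{mw(\rho)/(r+1)}$ and therefore on every vector of $\bigwedge^{P(m)}\Sym^m V^\vee$ by $t^{mP(m)w(\rho)/(r+1)}$, the $\rho$- and $\sigma$-weights on $W$ differ by the constant $\frac{w(\rho)}{r+1}mP(m)$. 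In particular
$$W_{X,\sigma}(m)=W_{X,\rho}(m)-\frac{w(\rho)}{r+1}mP(m),$$
and combining with the $\SL$-criterion gives
$$\mu^{\Lambda_m}([X\subset \P^r]_m,\sigma)=-W_{X,\sigma}(m)=\frac{w(\rho)}{r+1}mP(m)-W_{X,\rho}(m),$$
which is the stated formula. Since every $\SL(V)$-1ps arises from a $\GL(V)$-1ps in this way (trivially, by taking $w(\rho)=0$), the equivalence between the two formulations of the criterion is immediate.

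The only real technical obstacle is the identification of the Pl\"ucker coordinates of the Hilbert point with the minors of the restriction map $\Sym^m V^\vee\twoheadrightarrow H^0(X,\OO_X(m))$; once this is in hand, the entire statement follows by combining the classical Hilbert--Mumford criterion with the elementary decomposition of a $\GL(V)$-1ps into its $\SL$- and scalar parts, and the rest is weight bookkeeping.
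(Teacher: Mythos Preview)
Your proposal is correct and follows exactly the standard approach that the paper has in mind: the paper does not give a proof of this Fact but simply observes that the displayed quantity coincides with the Hilbert--Mumford index of the $m$-th Hilbert point in $\P\bigl(\bigwedge^{P(m)}\Sym^m V^\vee\bigr)$, referring to \cite[p.~10]{Gie} and \cite[Prop.~4.23]{HM} for details. Your write-up supplies precisely those details (Pl\"ucker weight computation plus the $\GL/\SL$ normalization), so there is nothing to add.
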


Indeed, the function $\mu([X\subset \P^r]_m,\rho)$ introduced above coincides with the Hilbert-Mumford index of $[X\subset \P^r]_m\in \P\left(\bigwedge^{P(m)} \Sym^m V^{\vee}\right)$
relative to the 1ps $\rho$ (see \cite[2.1]{GIT}).

The function $W_{X,\rho}(m)$ also allows one to state the numerical criterion for Chow (semi)stability.
According to \cite[Prop. 2.11]{Mum} (see also \cite[Prop. 3.16]{HH2}), the function $W_{X,\rho}(m)$ is an integer valued polynomial of degree $2$  for $m\gg 0$.  We define
$e_{X,\rho}$ (or $e_{\rho}$ when there is no danger of confusion) to be the normalized leading coefficient of $W_{X,\rho}(m)$, i.e.,
\begin{equation}
\left|W_{X,\rho}(m)-e_{X,\rho}\frac{m^2}{2}\right|< C m,
\end{equation}
for $m\gg 0$ and for some constant $C$.

The Hilbert-Mumford numerical criterion for Chow (semi)stability translates into the following (see \cite[Thm. 2.9]{Mum}).

\begin{fact}[{\bf Numerical criterion for Chow (semi)stability}]\label{Chow-crit}
A point $[X\subset \P^r]\in \Hilb_d$ is Chow stable (resp. semistable)
if and only if for every 1ps $\rho:\Gm\to \GL(V)$ of total weight
$w(\rho)$ we have that
\begin{equation*}
e_{X,\rho} < 2 d \cdot \frac{w(\rho)}{r+1}
\end{equation*}
(resp. $\leq$).
\end{fact}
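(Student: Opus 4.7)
The plan is to apply the Hilbert--Mumford numerical criterion (\cite[Thm.~2.1]{GIT}) to the embedding $j: \Chow_d \hookrightarrow \P(\otimes^2 \Sym^d V^\vee) = \P'$ with the natural $\SL(V)$-linearization $\Lambda = j^*\OO_{\P'}(1)$. Thus $\Ch([X \subset \P^r])$ is stable (resp.\ semistable) if and only if the Hilbert--Mumford index $\mu^{\Lambda}(\Ch([X]), \rho)$ is positive (resp.\ non-negative) for every non-trivial $1$-ps $\rho$ of $\SL(V)$; passing from $\SL(V)$ to $\GL(V)$ as in \cite[pp. 9--10]{Gie} introduces the normalization factor $w(\rho)/(r+1)$.

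First I would compute $\mu^{\Lambda}(\Ch([X]), \rho)$ by hand. Diagonalize $\rho$ on $V$ with weights $w_0 \geq \ldots \geq w_r = 0$, so that the induced action on $\Sym^d V^\vee \otimes \Sym^d V^\vee$ has weights $-\sum_i (\beta_i + \gamma_i) w_i$ on a basis of bi-monomials of bi-degree $(d,d)$. Write the Chow form $R_X$ in this weight decomposition, and let $e'_{X,\rho}$ denote the maximum $\rho$-weight occurring among the nonzero components of $R_X$ (equivalently, the $\Gm$-weight of the initial form of $R_X$ under $\rho(t)$ as $t \to 0$, up to sign). The $\GL(V)$-index of $\Ch([X])$ equals $-e'_{X,\rho}$, and the $\SL(V)$-normalization yields
\[
\mu^{\Lambda}(\Ch([X]), \rho) \;=\; 2d \cdot \frac{w(\rho)}{r+1} - e'_{X,\rho},
\]
where the factor $2d$ reflects the bi-degree $(d,d)$ of the Chow form in $V^\vee$.

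The crucial step, and the one I expect to be the main obstacle, is the identification $e'_{X,\rho} = e_{X,\rho}$, i.e.\ that the maximum $\rho$-weight of the Chow form agrees with the normalized leading coefficient of the filtered Hilbert polynomial $W_{X,\rho}(m)$. This is the content of \cite[Prop.~2.11]{Mum}. My approach would be to compare the two expressions asymptotically as $m \to \infty$: the quantity $W_{X,\rho}(m)$ is the minimal total $\rho$-weight of $P(m)$ monomials of degree $m$ restricting to a basis of $H^0(X, \OO_X(m))$, and by a Newton-polytope argument this equals $m^2/2$ times the $\rho$-weight carried by the ``leading term'' of $X$ under the one-parameter degeneration defined by $\rho$. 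Since the Chow form of $X$ is exactly this leading invariant (the Chow point being recovered from the Hilbert point in the limit), its maximal weight under $\rho$ must coincide with $e_{X,\rho}$. Concretely, one encodes $X$ via the ideal-theoretic initial degeneration $\mathrm{in}_\rho(I_X)$, identifies $e'_{X,\rho}$ with a weighted intersection number against the flag determined by $\rho$, and uses Hilbert--Samuel asymptotics to match this with $e_{X,\rho}$.

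Once this identification is in hand, the Hilbert--Mumford condition $\mu^{\Lambda}(\Ch([X]), \rho) > 0$ (resp.\ $\geq 0$) for every non-trivial $1$-ps reads exactly
\[
e_{X,\rho} \;<\; 2d \cdot \frac{w(\rho)}{r+1} \qquad (\text{resp.}\ \leq),
\]
which is the stated criterion. The only remaining bookkeeping is that extending from 1-ps of $\SL(V)$ to those of $\GL(V)$ is harmless because both sides of the inequality are invariant under translating $\rho$ by a central $1$-ps of $\GL(V)$ (which shifts $w(\rho)$ and $e_{X,\rho}$ compatibly, since $\OO_X(1)$ has degree $d$ and $\dim V = r+1$).
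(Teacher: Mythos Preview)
The paper does not prove this statement at all: it is recorded as a \emph{Fact} with the single line ``For a proof, see \cite[Thm.~2.9]{Mum}.'' So there is no in-paper argument to compare against; the authors simply import the result from Mumford.

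Your proposal is a reasonable outline of how that proof goes, and you correctly isolate the nontrivial ingredient: the identification of the maximal $\rho$-weight appearing in the Chow form with the normalized leading coefficient $e_{X,\rho}$ of $W_{X,\rho}(m)$, which is \cite[Prop.~2.11]{Mum}. The surrounding bookkeeping (Hilbert--Mumford applied to the Chow embedding, the $\SL$-to-$\GL$ normalization producing the $w(\rho)/(r+1)$ factor, the bi-degree $(d,d)$ of the Chow form giving the $2d$) is all standard and in line with the references the paper cites. Your sketch of why $e'_{X,\rho}=e_{X,\rho}$ via asymptotics of the initial degeneration is more heuristic than a proof, but since the paper itself defers entirely to Mumford here, you are already providing more than the paper does.
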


\begin{rmk}
Observe that $2 d \cdot \frac{w(\rho)}{r+1} $ is the normalized leading coefficient of the polynomial $\frac{w(\rho)}{r+1}m P(m)=
\frac{w(\rho)}{r+1}m (dm+1-g)$. Therefore, combining Fact \ref{Chow-crit} and Fact \ref{Hilb-crit} for $m\gg 0$, one gets a proof
of Fact \ref{HilbtoChow}.
\end{rmk}

The following definition is very natural.

\begin{defi}\label{D:stab-rho}
Let $[X\subset \P^r]\in \Hilb_d$ and let $\rho$ be a one-parameter subgroup of $\GL_{r+1}$. We say that
\begin{enumerate}[(i)]
\item $[X\subset \P^r]$ is {\em Hilbert semistable} (resp. {\em Chow semistable}) {\em with respect to $\rho$} if
$$
W_{X,\rho}(m)\leq \frac{w(\rho)}{r+1}\,mP(m)\quad\text{for }m\gg 0\quad\bigg(\text{resp. }e_{X,\rho}\leq\frac{2d}{r+1}\,w(\rho)\bigg);
$$
Moreover, we say that $[X\subset \P^r]$ is {\em Hilbert strictly semistable} (resp. {\em Chow strictly semistable}) {\em with respect to $\rho$} if
$$
W_{X,\rho}(m)=\frac{w(\rho)}{r+1}\,mP(m)\quad\text{for }m\gg 0\quad\bigg(\text{resp. }e_{X,\rho}=\frac{2d}{r+1}\,w(\rho)\bigg);
$$
\item $[X\subset \P^r]$ is {\em Hilbert stable} (resp. {\em Chow stable}) {\em with respect to $\rho$} if
$$
W_{X,\rho}(m)<\frac{w(\rho)}{r+1}\,mP(m)\quad\text{for }m\gg 0\quad\bigg(\text{resp. }e_{X,\rho}<\frac{2d}{r+1}\,w(\rho)\bigg);
$$
\item $[X\subset \P^r]$ is {\em Hilbert polystable} (resp. {\em Chow polystable}) {\em with respect to $\rho$} if one of the following conditions is satisfied:
\begin{enumerate}
	\item $[X\subset \P^r]$ is Hilbert stable (resp. Chow stable) with respect to $\rho$;
	\item $[X\subset \P^r]$ is Hilbert strictly semistable (resp. Chow strictly semistable) with respect to $\rho$ and
$$
\lim_{t\ra 0}\rho(t)[X\subset\P^r]\in \Orb([X\subset\P^r]).
$$
\end{enumerate}
\end{enumerate}
\end{defi}

\begin{rmk}\label{R:poly-1ps}
Let $[X\subset \P^r]\in \Hilb_d$ and $\rho$ be a one-parameter subgroup of $\GL_{r+1}$. Applying Definition \ref{D:stab-rho}, Fact \ref{Hilb-crit} and Fact \ref{Chow-crit}, we have that $[X\subset \P^r]$ is Hilbert semistable (resp. polystable, stable) if and only if $[X\subset \P^r]$ is Hilbert semistable (resp. polystable, stable) with respect to any one-parameter subgroup of $\GL_{r+1}$. The same holds for the Chow semistability (resp. polystability, stability).
\end{rmk}


Let $[X\subset \P^r]\in \Hilb_d$. If $C$ is a subscheme of $X$ of arithmetic genus $g_C$, we can consider the new point $[C\subset \P^r]\in \Hilb_{\deg\OO_{C}(1),g_C}$ and also $W_{C,\rho}(m)$ and $e_{C,\rho}$ with respect to a one-parameter subgroup $\rho:\Gm\ra \GL_{r+1}$. The next result says that we can estimate or compute $e_{X,\rho}$ in terms of the weights of the subschemes of $X$.

\begin{prop}\label{prop:sumsub}
Let $[X\subset \P^r]\in \Hilb_d$ and let $\rho$ be a one-parameter subgroup of $\GL_{r+1}$.
\begin{enumerate}[(i)]
	\item\label{prop:sumsub1} If $Y$ is a subscheme of $X$ and the weights of $\rho$ are non-negative, then $W_{X,\rho}(m)\geq W_{Y,\rho}(m)$ (in particular $e_{X,\rho}\geq e_{Y,\rho}$).
	\item\label{prop:sumsub2} If $X$ is reduced (possibly non connected), has pure dimension 1 and $\{X_i\}_{i=1,\ldots,n}$ is a collection of subcurves of $X$ such that
$$
(X_i)^c=\bigcup_{k\neq i} X_{k}
$$
for each $i=1,\ldots,n$, then
$$
e_{X,\rho}=\sum_{i=1}^n e_{X_i,\rho}.
$$
\end{enumerate}
\end{prop}
\begin{proof}
Let us prove \eqref{prop:sumsub1}. Denote by $P_X$ and $P_Y$ the Hilbert polynomials  of $X$ and $Y$, respectively, and consider a monomials basis $\{B_1,\ldots,B_{P_X(m)}\}$ of $H^0(X,\OO_X(m))$ such that
$$
W_{X,\rho}(m)=\sum_{i=1}^{P_X(m)}w_{\rho}(B_i).
$$
Since the restriction map $H^0(X,\OO_X(m))\lra H^0(Y,\OO_Y(m))$ is onto for $m\gg 0$, up to reordering the monomials, we can assume that $\{B_1,\ldots,B_{P_Y(m)}\}$ is a monomial basis of $H^0(Y,\OO_Y(m))$. Hence
$$
W_{Y,\rho}(m)\leq \sum_{i=1}^{P_Y(m)}w_{\rho}(B_i)\leq \sum_{i=1}^{P_X(m)}w_{\rho}(B_i)=W_{X,\rho}(m)
$$
and \eqref{prop:sumsub1} is proved.

Now we will prove \eqref{prop:sumsub2}. We can assume that $n=2$. Let $x_1,\ldots,x_{r+1}$ be the coordinates of $V$ that diagonalize $\rho$ and denote by $w_1,\ldots,w_{r+1}\in \Z$ the weights of $\rho$. Consider the exact sequence of sheaves
\begin{equation}\label{eq:XX1X2}
0 \lra \OO_{X} \lra \OO_{X_1}\oplus \OO_{X_2}\lra \OO_{X_1\cap X_2}\lra 0
\end{equation}
and the other ones obtained by tensoring \eqref{eq:XX1X2} by $\OO_{X}(m)$ with $m\in \Z$. For $m\gg 0$ we get the exact sequence
$$
H^0(X,\OO_{X}(m)) \hookrightarrow H^0(X_1,\OO_{X_1}(m))\oplus H^0(X_2,\OO_{X_2}(m))\twoheadrightarrow H^0(X_1\cap X_2,\OO_{X_1\cap X_2}(m)).
$$
Since $X_1\cap X_2$ is a  $0$-dimensional scheme of length $k:=k_{X_1}=k_{X_2}$,  we have
$h^0(X_1\cap X_2,\OO_{X_1\cap X_2}(m))=k$ for each $m\in \Z$. Denote by $P(m),P_1(m),P_2(m)$ the Hilbert polynomials of $X$, $X_1$, and $X_2$ respectively
(satisfying $P_1(m)+P_2(m)=P(m)+k$ by the last exact sequence) and let $\{B_1,\ldots,B_{P(m)}\}$ be a monomial basis of $H^0(X,\OO_X(m))$ such that
$$
W_{X,\rho}(m)=\sum_{i=1}^{P(m)}w_{\rho}(B_i).
$$
Now,  consider the linear independent vectors obtained by restricting the above basis to $X_1$ and to $X_2$:
$$
C_1=({B_1}_{|X_1},{B_1}_{|X_2}),\ldots, C_{P(m)}=({B_{P(m)}}_{|X_1},{B_{P(m)}}_{|X_2})\in H^0(X_1,\OO_{X_1}(m))\oplus H^0(X_2,\OO_{X_2}(m))
$$
Adding other vectors $C_{P(m)+j}=(B_{1j},B_{2j})$ for $j=1,\ldots,k$, we can complete the linear independent set $\{C_1,\ldots,C_{P(m)}\}$ to a basis of
$H^0(X_1,\OO_{X_1}(m))\oplus H^0(X_2,\OO_{X_2}(m))$. Now,  it is easy to check that, up to reordering the vectors, $\pi_1(C_1),\ldots,\pi_1(C_{P_1(m)})$ are linear independent in
$H^0(X_1,\OO_{X_1}(m))$ and $\pi_2(C_{P_1(m)+1}),\ldots,\pi_2(C_{P(m)+k})$ are linear independent in $H^0(X_2,\OO_{X_2}(m))$, where we denote by $\pi_i$ the projection of
$H^0(X_1,\OO_{X_1}(m))\oplus H^0(X_2,\OO_{X_2}(m))$ onto the $i$-th factor. This implies that, up to reordering the vectors again, there exists $k_1\in \Z$ with $k_1\leq k$ such that ${B_1}_{|X_1},\ldots,{B_{P_1(m)-k_1}}_{|X_1}$ are linear independent in
 $H^0(X_1,\OO_{X_1}(m))$ and  ${B_{P_1(m)-k_1+1}}_{|X_2},\ldots,{B_{P(m)}}_{|X_2}$ are linear independent in $H^0(X_2,\OO_{X_2}(m))$. Finally, setting $k_2:=k-k_1$, we can consider other monomials $B'_1,\ldots,B'_{k_1},B''_{1},\ldots, B''_{k_2}$ so that
$$
\{B_1,\ldots,B_{P_1(m)-k_1},B'_1,\ldots,B'_{k_1}\}\quad\text{is a monomial basis for }H^0(X_1,\OO_{X_1}(m)),
$$
$$
\{B_{P_1(m)-k_1+1},\ldots,B_{P(m)},B''_{1},\ldots, B''_{k_2}\}\quad\text{is a monomial basis for }H^0(X_2,\OO_{X_2}(m)).
$$
Denoting by $\widetilde{w}=\max_i\{w_i\}$, we have
\begin{eqnarray}
W_{X,\rho}(m) & = & \sum_{i=1}^{P(m)}w_{\rho}(B_i)
= \sum_{i=1}^{P_1(m)-k_1}w_{\rho}(B_i)+\sum_{i=P_1(m)-k_1+1}^{P(m)}w_{\rho}(B_i)\nonumber \\
&\geq & W_{X_1,\rho}(m)-\sum_{i=1}^{k_1}w_{\rho}(B'_i)+
W_{X_2,\rho}(m)-\sum_{i=1}^{k_2}w_{\rho}(B''_i)\nonumber\\
& \geq & W_{X_1,\rho}(m)+W_{X_2,\rho}(m)-k\widetilde{w}\,m=\bigg(\frac{e_{X_1,\rho}+e_{X_2,\rho}}{2} \bigg) m^2+O(m)\nonumber,
\end{eqnarray}
which implies that
$$
e_{X_\rho}\geq e_{X_1,\rho}+e_{X_2,\rho}.
$$
Now,  we will prove the reverse inequality. Let $F$ be a homogeneous polynomial of degree $h\geq 1$ vanishing identically on $X_1$ and regular on $X_2$.
Let $\{B_1,\ldots,B_{P_1(m)}\}$ be a monomial basis of $H^0(X_1,\OO_{X_1}(m))$ and $\{B'_1,\ldots,B'_{P_2(m-h)}\}$ a monomial basis of $H^0(X_2,\OO_{X_2}(m-h))$ such that
$$
W_{X_1,\rho}(m)=\sum_{i=1}^{P_1(m)}w_{\rho}(B_i)\qquad\text{and}\qquad W_{X_2,\rho}(m-h)=\sum_{i=1}^{P_2(m-h)}w_{\rho}(B'_i).
$$
It is easy to check that $B_1,\ldots,B_{P_1(m)},FB'_1,\ldots,FB'_{P_2(m-h)}$ are linearly independent in $H^0(X,\OO_X(m))$, so that, setting $d_2=\deg X_2$, we have
\begin{equation*}
\begin{aligned}
\dim \left\langle B_1,\ldots,B_{P_1(m)},FB'_1,\ldots,FB'_{P_2(m-h)}\right\rangle  & =  P_1(m)+P_2(m-h)= \\
=P_1(m)+P_2(m)-d_2h= &P(m)+k-d_2h\leq P(m).
\end{aligned}
\end{equation*}
Adding possibly other monomials $B''_{1},\ldots,B''_{d_2h-k}$, we get a basis of $H^0(X,\OO_X(m))$.
Actually we would like to work with a monomial basis in order to apply the Hilbert-Mumford numerical criterion (Fact \ref{Chow-crit}), so suppose that $F=M_1+\ldots+M_p$, where $M_1, \ldots,M_p$ are monomials of degree $h$. It is an easy exercise to prove that for $j=1,\ldots, P_2(m-h)$ we can choose monomials $M_{i_j}$ such that
$$
B_1,\ldots,B_{P_1(m)},M_{i_1}B'_1,\ldots,M_{i_{P_2(m-h)}}B'_{P_2(m-h)},B''_{1},\ldots,B''_{d_2h-l}
$$
are linearly independent. For each $m\gg 0$ we get
\begin{eqnarray}
W_{X,\rho}(m)& \leq & \sum_{j=1}^{P_1(m)}w_{\rho}(B_j)+\sum_{j=1}^{P_2(m-h)}w_{\rho}(M_{i_j}B'_j)+\sum_{j=1}^{d_2h-k}w_{\rho}(B''_j)\nonumber\\
& \leq & W_{X_1,\rho}(m)+W_{X_2,\rho}(m-h)+h\widetilde{w}P_2(m-h)+(d_2h-k)\widetilde{w}m \nonumber \\
& = & \bigg(\frac{e_{X_1,\rho}+e_{X_2,\rho}}{2} \bigg) m^2+O(m).\nonumber
\end{eqnarray}
This implies that
$$
e_{X_\rho}\leq e_{X_1,\rho}+e_{X_2,\rho}
$$
and we are done.
\end{proof}
\begin{rmk}
Proposition \ref{prop:sumsub}\eqref{prop:sumsub2} improves the estimate of \cite[Chap. 4, Ex. 4.49]{HM}, which however holds even for non-reduced
1-dimensional complete subschemes of $\P^r$.
\end{rmk}

Proposition \ref{prop:sumsub}\eqref{prop:sumsub2} holds only for the Chow weight. Later on, we will see a class of examples with $n=2$ (see Lemma \ref{lem:sumhilb}), which in general do not satisfy the equality $W_{X,\rho}(m)= W_{X_1,\rho}(m)+W_{X_2,\rho}(m)$.

We conclude this subsection by recalling two technical lemmas which are very useful to estimate $e_{X,\rho}$. 

\begin{lemma}\label{lem:2wdeg}
Let $[X\subset \P^r]\in \Hilb_d$ and consider a 1ps $\rho$ of $\GL_{r+1}$ diagonalized by a system of coordinates $\{x_1,\ldots,x_{r+1}\}$ with weights $w_1,\ldots,w_{r+1}$. Suppose that for some $1\leq n\leq r$
\begin{enumerate}
\item $x_1,\ldots,x_{n}$ vanish on $X_{\rm red}$;
\item $w_1=\ldots=w_{n}=0$ and $w:=w_{n+1}=\ldots=w_{r+1}$.
\end{enumerate}
Then
$$e_{X,\rho}=2\,w\,\deg\OO_X(1).$$
\end{lemma}
\begin{proof}
We use some ideas from [Sch91, Lemma 1.2]. Since $x_1,\ldots,x_{n}$ vanish on $X_{\rm red}$, for each $j=1,\ldots,n$ we can define
$$
m_j=\max_{m}\{x_j^m\text{ does not vanish identically on }X\}.
$$
If $\{B_1,\ldots,B_{P(m)}\}$ is a monomial basis of $H^0(X,\OO_{X}(m))$, then
$$
w\bigg(m-\sum_{j=1}^n m_j\bigg)\leq w(B_i)\leq wm \hspace{0.3cm} \text{ for every } 1\leq i\leq P(m), �
$$
which implies that
$$
w\bigg(m-\sum_{j=1}^n m_j\bigg)(dm-g+1)\leq \sum_{i=1}^{P(m)}w(B_i)\leq wm(dm-g+1).
$$
We deduce that
$$
e_{X,\rho}=2\,w\,\deg\OO_X(1).
$$
\end{proof}

For a proof of the following , see [Sch91, Lemma 1.4].

\begin{lemma}\label{L:schub} Let $[X\subset \P^r]\in \Hilb_d$ be a reduced curve and let $\nu: X^{\nu}\lra X$ be its normalization. Consider a 1ps $\rho$ of $\GL_{r+1}$ diagonalized by a system of coordinates $\{x_1,\ldots,x_{r+1}\}$ with weights $w_1,\ldots,w_{r+1}$. Given a set of points $\{p_1,\ldots,p_k\}\subset X$ and its inverse image $\nu^{-1}({p_1,\ldots,p_k})=\{q_1,\ldots,q_n\}$, suppose that
\begin{enumerate}
	\item for each $j=1,\ldots,n$, there exists $i$ such that $\ord_{q_j}(\nu^*(x_i))=0$ 
	and $w_i=0$;
	\item there exist positive integers $a_1,\ldots,a_n$ such that $\ord_{q_j}(\nu^*(x_i))+w_i\geq a_j$ for each $i=1,\ldots,r+1$ and $j=1,\ldots,n$.
\end{enumerate}
Then
$$
e_{X,\rho}\geq \sum_{i=1}^n a_n^2
$$
\end{lemma}

\subsection{Basins of attraction}\label{Sec:bas-attra}

Basins of attraction represent a useful tool in the  study of the orbits which are identified in a GIT quotient.
We review the basic definitions,
following the presentation in \cite[Sec. 4]{HH2}.

\begin{defi}\label{D:bas-attra}
Let $[X_0\subset \P^r]\in \Hilb_d$ and $\rho:\Gm\to \GL_{r+1}$ a 1ps of $\GL_{r+1}$ that stabilizes $[X_0\subset \P^r]$.
The $\rho$-\emph{basin of attraction} of $[X_0\subset \P^r]$ is the subset
$$A_{\rho}([X_0\subset \P^r]):=\{[X\subset \P^r]\in \Hilb_d\: :\: \lim_{t\to 0} \rho(t)\cdot [X\subset \P^r]=[X_0\subset \P^r] \}.$$
\end{defi}
Clearly, if $[X\subset \P^r]\in A_{\rho}([X_0\subset \P^r])$ then $[X_0\subset \P^r]$ belongs to the closure of the $\SL_{r+1}$-orbit $O([X\subset \P^r])$ of $[X\subset \P^r]$.
Therefore, if $[X_0\subset \P^r]$ is Hilbert semistable
(resp. Chow semistable) then every $[X\subset \P^r]\in A_{\rho}([X_0\subset \P^r])$
is Hilbert semistable (resp. Chow semistable) and is identified with $[X_0\subset \P^r]$ in the GIT quotient $\Hilb_d^{ss}/SL_{r+1}$ (resp. $\Ch^{-1}(\Chow_d^{ss})/SL_{r+1}$).

The following well-known properties of the basins of attraction (see e.g. \cite[p. 24-25]{HH2}) will be used in the sequel.

\begin{fact}\label{F:weight-basin}
Same notation as in Definition \ref{D:bas-attra} and let $m\geq M$ as in Section \ref{Sec:Hilb-Chow}.
\begin{enumerate}[(i)]
\item If $\mu([X_0\subset \P^r]_m,\rho)<0$ (resp. $e_{X_0,\rho} > 2 d \cdot \frac{w(\rho)}{r+1}$) then every $[X\subset \P^r]\in A_{\rho}([X_0\subset \P^r])$ is not $m$-Hilbert semistable
(resp. not Chow semistable).
\item If $\mu([X_0\subset \P^r]_m,\rho)=0$ (resp. $e_{X_0,\rho} = 2 d \cdot \frac{w(\rho)}{r+1}$)  then $[X_0\subset \P^r]$ is $m$-Hilbert semistable (resp. Chow semistable)
if and only if every $[X\subset \P^r]\in A_{\rho}([X_0\subset \P^r])$ is $m$-Hilbert semistable (resp. Chow semistable).
\end{enumerate}
\end{fact}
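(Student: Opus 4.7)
The plan is to reduce both parts to a single key observation: the functions $W_{X,\rho}(m)$, $\mu([X]_m,\rho)$ and $e_{X,\rho}$ are all \emph{constant} along the $\rho$-basin of attraction $A_\rho([X_0\subset\P^r])$. Once this is established, parts (i) and (ii) follow from the Hilbert--Mumford numerical criteria (Facts \ref{Hilb-crit} and \ref{Chow-crit}) together with openness of the semistable locus.

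First I would prove the constancy. Write $[X]_m\in\P\bigl(\bigwedge^{P(m)}\Sym^m V^\vee\bigr)$ as a sum of $\rho$-weight eigenvectors, $[X]_m=\sum_i v_i$ with $v_i$ of $\rho$-weight $w_i$, and let $w_{\min}=\min\{w_i:v_i\neq 0\}$. Then
\[
\rho(t)\cdot[X]_m=\sum_i t^{w_i}v_i,
\]
and in projective coordinates the limit as $t\to 0$ is the class of $v_{\min}$. By hypothesis this limit equals $[X_0]_m$, so $[X_0]_m$ is proportional to $v_{\min}$, hence is itself a $\rho$-eigenvector of weight $w_{\min}$. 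Unwinding the definition \eqref{func-w}, the weight $w_{\min}$ is exactly $W_{X,\rho}(m)$, so
\[
W_{X,\rho}(m)=W_{X_0,\rho}(m)\quad\text{for every } m\geq M.
\]
Taking the leading coefficient then gives $e_{X,\rho}=e_{X_0,\rho}$, and the formula for $\mu$ in Fact \ref{Hilb-crit} gives $\mu([X]_m,\rho)=\mu([X_0]_m,\rho)$.

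Part (i) is now immediate: the hypothesis $\mu([X_0]_m,\rho)<0$ (resp.\ $e_{X_0,\rho}>2d\,w(\rho)/(r+1)$) transfers to $[X]$, and the numerical criterion Fact \ref{Hilb-crit} (resp.\ Fact \ref{Chow-crit}), applied to the single 1ps $\rho$, certifies $[X]$ as not $m$-Hilbert semistable (resp.\ not Chow semistable). For part (ii), the direction ``$\Leftarrow$'' is trivial because $[X_0]\in A_\rho([X_0])$ (the point is $\rho$-fixed). For ``$\Rightarrow$'', I would invoke openness and $\SL(V)$-invariance of $\Hilb_d^{ss,m}$ (respectively of $\Ch^{-1}(\Chow_d^{ss})$): although $\rho$ is a $\GL(V)$-1ps, its action on $\Hilb_d$ descends through $\PGL(V)$ (the center acts trivially), so each $\rho(t)$ preserves the semistable locus. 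Since $[X_0]$ lies in this open locus and $\rho(t)\cdot[X]\to[X_0]$, for $t$ sufficiently close to $0$ the point $\rho(t)\cdot[X]$ is semistable, whence so is $[X]$ by invariance.

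The only real subtlety is the bookkeeping around the $\GL$-versus-$\SL$ adjustment $w(\rho)/(r+1)$ in the definition of $\mu$, and checking that the openness-plus-invariance argument in part (ii) genuinely applies to the $\GL(V)$-1ps $\rho$ via the fact that its action on $\Hilb_d$ factors through $\PGL(V)$; aside from this, the proof is essentially a direct application of the numerical criteria to the constancy statement above.
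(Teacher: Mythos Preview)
Your argument is correct. The paper does not supply its own proof of this Fact but merely cites \cite[p.~24--25]{HH2}, so there is no in-paper argument to compare against; the approach you take---constancy of the Hilbert--Mumford weight $W_{X,\rho}(m)$ (hence of $\mu$ and $e_{X,\rho}$) along the basin, followed by the numerical criterion for part~(i) and openness plus $\PGL(V)$-invariance of the semistable locus for part~(ii)---is exactly the standard one and is what the cited reference does.

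One small remark: your proof of part~(ii) does not actually use the hypothesis $\mu([X_0]_m,\rho)=0$ (resp.\ $e_{X_0,\rho}=2d\,w(\rho)/(r+1)$), and indeed the equivalence holds without it. This hypothesis is automatic once $[X_0]$ is semistable and $\rho$-fixed (applying the criterion to both $\rho$ and $\rho^{-1}$ forces equality), so its presence in the statement is for emphasis rather than logical necessity.
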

\subsection{Flat limits and Gr\"obner bases}
\label{S:limGrob}
A useful technique for computing the limit $\lim_{t\ra 0}\rho(t)[X\subset \P^r]$ is based on the theory of Gr\"obner bases (see \cite{HeHi} for the general theory and \cite{HHL}
and for its applications to GIT). Let $\rho: \Gm \to \GL(V)$ be a 1ps  and let $\{x_1,\ldots, x_{r+1}\}$ be
coordinates of $V$ that diagonalize the action of $\rho$, so that for $i=1,\ldots, r+1$ we have
$$\rho(t)\cdot x_i = t^{w_i}x_i \:  \text{ for  some } w_i\in \Z.$$
If $a=(a_1,\ldots,a_{r+1})\in \N^{r+1}$, we define the monomial
$$
x^{a}:=x_1^{a_1}x_{2}^{a_2}\ldots x_{r+1}^{a_{r+1}}\in S:=k[x_1,\ldots,x_{r+1}].
$$
Let us define the following order $\prec_{\rho}$ (called the \emph{$\rho$-weighted graded order}) on the set of monomials of $S$. If $x^{a}$ and $x^b$ are monomials, we say that $x^{a} \prec_{\rho} x^{b}$ if
\begin{enumerate}
	\item $\deg\, x^a<\deg\, x^b$ or
	\item $\deg\, x^a=\deg\, x^b$ and $w_{\rho}(x^a)<w_{\rho}(x^b)$.
\end{enumerate}
It is easy to notice that the order $\prec_{\rho}$ is not total, in general. In order to have a total order  $\prec$ (also called monomial order) that refines $\prec_{\rho}$, it suffices to fix a lexicographical order $<$ on the set of monomials of
$S$, for example the one induced by declaring that $x_1<x_2<\ldots < x_{r+1}$, and to say that $x^{a} \prec x^{b}$ if
\begin{enumerate}
	\item $x^{a} \prec_{\rho} x^{b}$ or
	\item $\deg\, x^a=\deg\, x^b$, $w_{\rho}(x^a)=w_{\rho}(x_b)$ and $x^a<x^b$.
\end{enumerate}
We call the above monomial order $\prec$ a \emph{$\rho$-weighted lexicographic order}. Moreover,  if $f=\sum c_a x^a\in S$ and $I$ is an ideal of $S$, we denote by
\begin{enumerate}
	\item $\In_{\prec_{\rho}}(f)$ the sum of the terms of $f$ of maximal order with respect to $\prec_{\rho}$;
	\item $\In_{\prec_{\rho}}(I)=\langle \In_{\prec_{\rho}}(f)\,|\,f\in I\rangle$;
	\item $\In_{\prec}(f)$ the monomial (hence without coefficient) of maximal order with respect to $\prec$;
	\item $c_{\prec}(f)$ the coefficient of $\In_{\prec}(f)$ in $f$;
	\item $\In_{\prec}(I)=\langle \In_{\prec}(f)\,|\,f\in I\rangle$;
	\item $w(f)=\max\{w_{\rho}(x^a)\,|\,c_a\neq 0\}$ and $\widetilde{f}(x_1,\ldots,x_{r+1},t)=t^{w(f)}f(t^{-w_1}x_1,\ldots,t^{-w_{r+1}}x_{r+1})$;
	\item $\widetilde{I}=\langle \widetilde{f},f\,|\,f\in I\rangle\subset S[t]$.
\end{enumerate}
Now, we recall the definition of Gr\"obner basis with respect to a monomial order (see \cite[Definition 2.1.5]{HeHi}).
\begin{defi}
Let $I$ be an ideal of $S$ and $\prec$ a monomial order. A system of generators $\{f_1,\ldots,f_{n}\}$ of $I$ is said to be a \textbf{Gr\"obner basis} for $I$ with respect to $\prec$ if $\In_{\prec}(I)=\langle \In_{\prec}(f_1),\ldots,\In_{\prec}(f_n)\rangle$.
\end{defi}
In the sequel, we will use some facts about Gr\"obner bases. First of all, we recall a famous criterion to determine whether a system of generators of an ideal is a Gr\"obner basis or not (cf. \cite[Theorem 2.3.2]{HeHi}). Let $f_1,f_2\in S$ be two homogeneous polynomial and define
$$
S(f_1,f_2)=\frac{\text{l.c.m.}(\In_{\prec}(f_1),\In_{\prec}(f_2))}{c_{\prec}(f_1)\,\In_{\prec}(f_1)}\, f_1-\frac{\text{l.c.m.}(\In_{\prec}(f_1),\In_{\prec}(f_2))}{c_{\prec}(f_2)\,\In_{\prec}(f_2)}\, f_2.
$$
where $\text{l.c.m.}(\In_{\prec}(f_1),\In_{\prec}(f_2))$ is the least common multiple of $\In_{\prec}(f_1)$ and $\In_{\prec}(f_2)$.
\begin{fact}\label{F:buch}\rm{(Buchberger's criterion)}
Let $I=\langle f_1,\ldots, f_{n}\rangle$ be an ideal in $S$ and $\prec$ a monomial order. The system of generators $\{f_1,\ldots, f_{n}\}$ is a Gr\"obner basis with respect to $\prec$ if and only if
$$
\In_{\prec}(S(f_i,f_j))\in \langle \In_{\prec}(f_1),\ldots, \In_{\prec}(f_n)\rangle
$$
for each $i,j\in \{1,\ldots,n\}$.
\end{fact}
Now,  we recall a basic fact about the relation between Gr\"obner bases and flat limits (see \cite[Theorem 3]{HHL} or for more details \cite[Sec. 3.2]{HeHi}).
\begin{fact}\label{T:flatlimit}
If $I\subset S$ is an ideal, then the $k[t]$-algebra $S[t]/\widetilde{I}$ is free as a $k[t]$-algebra. Moreover, the following hold:
\begin{equation}\label{eq:flatlimit}
S[t]/\widetilde{I}\otimes_{k[t]}k[t,t^{-1}]\cong (S/I)[t,t^{-1}]\quad \text{and}\quad S[t]/\widetilde{I}\otimes_{k[t]}k[t]/(t)\cong S/\In_{\prec_{\rho}}(I).
\end{equation}
\end{fact}
We obtain a useful corollary.
\begin{coro}\label{C:flatlimit}
Let $[X\subset\P^r]\in \Hilb_d$ and let $\rho$ be a one-parameter subgroup of $\GL_{r+1}$.
Denote by $I$ the homogeneous ideal of $X$. Then $[V(\In_{\prec_{\rho}}(I))\subset \P^r]=\lim_{t \ra 0}\rho(t)[X\subset \P^r]$.
\end{coro}
\begin{proof}
By Fact \ref{T:flatlimit} we have a family of curves $\cX\ra \A^1$ whose central fiber is $V(\In_{\prec_{\rho}}(I))\subset\P^r$. This yields a map $\beta:\A^1\ra\Hilb_d$ which coincide away from $0\in \A^1$ with the map $\alpha:\A^1\ra \Hilb_d$ induced by $\rho$. Since $\Hilb_d$ is projective, the maps $\alpha$ and $\beta$ coincides everywhere, and we are done.
\end{proof}
Finally, the following fact allows us to compute explicitly the ideal $\In_{\prec_{\rho}}(I)$ (see \cite[Theorem 3]{HHL} or, for more details, \cite[Sec. 3.2]{HeHi}).
\begin{fact}\label{T:groebflatlimit}
Let $\{f_1,\ldots,f_{n}\}$ be a Gr\"obner basis for $I$ with respect to a $\rho$-weighted lexicographical order $\prec$ that refines $\prec_{\rho}$. Then
\begin{enumerate}[(i)]
	\item $\widetilde{f_1},\ldots,\widetilde{f_n}$ generate $\widetilde{I}$;
	\item $\In_{\prec_{\rho}}(f_1),\ldots,\In_{\prec_{\rho}}(f_n)$ generate $\In_{\prec_{\rho}}(I)$.
\end{enumerate}
\end{fact}

\subsection{The parabolic group}
\label{sec:ParabGroup}

Here we recall a classical result due to J. Tits (see for more details \cite[Sec. 9.5]{Dol} or \cite[Chap. 2, Sec. 2]{GIT}), which is very useful to study the semistable locus of the action of a reductive group $G$ on an algebraic variety. Let $X\subset \P(V)$ be a projective variety and $G$ a reductive group that acts on $X$ via a linear representation in $V$. For the sake of simplicity, we assume that $G=\text{GL}(W)$ for some vector space $W$. By the Hilbert-Mumford criterion, $x\in X$ is semistable if and only if for every one-parameter subgroup $\rho:\Gm \ra \text{GL}(W)$ we have that $\mu(x,\rho)\geq 0$. We know that every one-parameter subgroup is diagonalized by some basis of $V$. A priori, it does not suffice to check the condition of the Hilbert-Mumford criterion for all one-parameter subgroups, which are diagonalized by a fixed basis of $V$: this represents the main difficulty in characterizing the semistable locus. Tit's result allows one to identify the one-parameter subgroups, which give the ``worst'' weights so that the research of a destabilizing one-parameter subgroup is less intricate.

\begin{defi}
We define the \textbf{parabolic group} with respect to a one-parameter subgroup $\rho$ by setting
$$
P(\rho)=\big\{\,g\in \text{GL}(W)\,|\,\text{ there exists }\lim_{t\rightarrow 0}\rho(t) g \rho(t)^{-1}\big\}\subset \text{GL}(W).
$$
\end{defi}

\begin{fact}\label{F:parabolic}
The group $P(\rho)$ is a parabolic subgroup of $\GL(W)$, i.e. it contains a Borel subgroup. Moreover, if $x\in X$, then $\mu(x,\rho)=\mu(x,A^{-1}\rho A)$ for each $A\in P(\rho)$.
\end{fact}
For a proof see \cite[Lemma 9.2, Lemma 9.3]{Dol} or \cite[Def. 2.3/Prop. 2.6]{GIT}. It is not difficult to show that when we consider the action of $\GL_{r+1}$ on $\Hilb_{d}$, if the weights of the 1ps $\rho$ with respect to a diagonalizing basis $\{x_1,\ldots,x_{r+1}\}$ of $V$ satisfy the inequalities $w_1\geq \ldots\geq w_{r+1}$, then $P(\rho)$ contains the group of the upper triangular matrices with respect to the coordinates $\{x_1,\ldots,x_{r+1}\}$. This fact has a useful consequence.

\begin{coro}\label{cor:base}
Let $[X\subset \P^r]\in \Hilb_d$ and  let $Y:=(y_1,\ldots, y_{r+1})^t$ be an arbitrary basis of $V$.
\begin{enumerate}[(i)]
\item \label{C:base1} Let $\rho:\Gm\ra \GL_{r+1}$ be a 1ps diagonalized by the basis coordinates
$X=(x_1,\ldots,x_{r+1})^t$ with weights $w_1, \ldots, w_{r+1}$, respectively.
Then there exist a lower unitriangular matrix $A=(a_{ij})$ and a one-parameter subgroup $\rho':\Gm\ra \GL_{r+1}$ diagonalized by the new coordinates $(z_1,\ldots,z_{r+1})^t=:Z=AY$
such that
$$
\rho'(t)z_i=t^{w_{\sigma(i)}}z_i\quad \text{for some } \sigma\in S_{r+1}\quad \text{ and } \quad W_{X,\rho}(m)=W_{X,\rho'}(m)\: \text{ for } \: m\gg 0.
$$
\item \label{C:base2} $[X\subset \P^r]$ is Hilbert semistable (resp. polystable, stable) if and only if it is Hilbert semistable (resp. polystable, stable) with respect to all the one-parameter subgroups which are diagonalized by $Z=AY$ for every lower unitriangular matrix $A$. The same holds for the Chow semistability (resp. polystability, stability).
\end{enumerate}
\end{coro}
\begin{proof}
In order to prove \eqref{C:base1}, it suffices to assume that $w_1\geq \ldots\geq w_{r+1}$ and that
$$
y_1= x_1,\ldots,\text{ }y_{l-1}= x_{l-1},\text{ }y_{l}= y=\sum_{i=1}^{r+1}\lambda_i x_i,\text{ }y_{l+1}= x_{l+1},\ldots,y_{r+1}= x_{r+1},
$$
where $\lambda_1,\ldots,\lambda_{r+1}\in k$.
Now,  we define the following basis of coordinates:
$$
z_i=x_i \: \text{ if } \: i\neq l \: \text{ and }\:
z_l=y-\sum_{i=1}^{l-1}\lambda_i x_i=\sum_{i=l}^{r+1}\lambda_i x_i.
$$
Let $A$ and $B$ be the matrices such that $Z=AY=BX$. By construction, $A$ is lower unitriangular and $B$ is upper unitriangular, hence $B\in P(\rho)$ by Fact \ref{F:parabolic} and
\begin{equation}\label{eq:WrhoWrhop}
W_{X,\rho}(m)=W_{X,B^{-1}\rho B}(m)
\end{equation}
for $m\gg 0$. Now,  if we define $\rho'=B^{-1}\rho B$, then $\rho'$ is diagonalized by the coordinates $Z$.

It remains to prove \eqref{C:base2}.
The ``only if'' implication follows from Remark \ref{R:poly-1ps}. In order to prove the ``if'' direction, consider a 1ps $\rho$ of $\GL_{r+1}$ diagonalized by a basis $X=(x_1,\ldots,x_{r+1})$.
Using \eqref{C:base1}, we can find a lower unitriangular matrix $A$ such that the 1ps $\rho':=A^{-1}\rho A$ is diagonalized by the basis $Z=AY$ and is such that
\begin{equation}\label{E:rho1}
w(\rho')=w(\rho) \: \: \text{ and } \: W_{X,\rho'}(m)=W_{X;\rho}(m)\: \: \text{ for } \gg 0.
\end{equation}
The equalities \eqref{E:rho1} imply \eqref{C:base2} for the (semi)stability. Now,  let us prove \eqref{C:base2} for the polystability. Since $A\in P(\rho)$, there exists $\lim_{t\ra 0} (\rho(t)A\rho(t)^{-1})$: call it $B$. We have
\begin{eqnarray}\label{E:rho2}
\lim_{t\ra 0}\rho'(t)[X\subset \P^r] &=& \lim_{t\ra 0}(A^{-1}\rho(t) A[X\subset \P^r])= \lim_{t\ra 0}A^{-1}(\rho(t)A\rho(t)^{-1})(\rho(t)[X\subset \P^r])\nonumber \\
&=& A^{-1} \cdot \lim_{t\ra 0} (\rho(t)A\rho(t)^{-1})\cdot \lim_{t\ra 0}\rho(t)[X\subset \P^r]= A^{-1}B\cdot \lim_{t\ra 0}\rho(t)[X\subset \P^r].
\end{eqnarray}
Combining \eqref{E:rho1} and \eqref{E:rho2}, we see that $[X\subset \P^r]$ is Hilbert (resp. Chow) polystable with respect to $\rho'$ if and only if it Hilbert (resp. Chow)
polystable with respect to $\rho$; combined with Remark \ref{R:poly-1ps}, this concludes the proof of \eqref{C:base2}.
\end{proof}

\subsection{Stability of smooth curves and Potential stability}\label{Sec:pot-stab}

Here we recall two basic results due to Mumford and Gieseker: the stability of smooth curves of high degree and the
(so-called) potential stability theorem.

\begin{fact}\label{F:stab-smooth}
If $[X\subset \P^r]\in \Hilb_d$ is connected and smooth and $d\geq 2g+1$, then $[X\subset \P^r]$ is Chow stable.
\end{fact}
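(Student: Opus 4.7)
The plan is to verify the Hilbert--Mumford numerical criterion for Chow stability (Fact \ref{Chow-crit}): I must show that for every non-trivial 1ps $\rho:\Gm\to \GL(V)$ with total weight $w(\rho)$,
$$e_{X,\rho} \;<\; \frac{2d\cdot w(\rho)}{r+1}.$$
This is a classical result of Mumford (see \cite{Mum}, \cite[Chap.~4]{HM}), and my plan follows his strategy.

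First I would establish the geometric input. Since $d\geq 2g+1$, Riemann--Roch gives that $\OO_X(1)$ is non-special with $h^0(X,\OO_X(1))=d-g+1=r+1$. Combined with non-degeneracy of $X\subset \P^r$, this forces $V \xrightarrow{\sim} H^0(X,\OO_X(1))$. Moreover, by Mumford's projective normality theorem for smooth curves embedded by line bundles of degree $\geq 2g+1$, the multiplication maps $\Sym^m V \twoheadrightarrow H^0(X,\OO_X(m))$ are surjective for all $m\geq 1$, so the filtered Hilbert function $W_{X,\rho}(m)$ computes genuine weight sums on the full space of sections.

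Next, I would diagonalize $\rho$ by a basis $x_0,\dots,x_r$ of $V$ with weights $w_0\geq \dots\geq w_r=0$ and introduce the flag $V^{(i)}:=\langle x_0,\dots,x_i\rangle$. For each $i$, let $L_i\subseteq \OO_X(1)$ be the subsheaf generated by the sections in $V^{(i)}$ and set $d_i:=\deg L_i$, so $d_r=d$. For $m\gg 0$, the image of $\Sym^m V^{(i)}$ in $H^0(X,\OO_X(m))$ is contained in $H^0(X,L_i^{\otimes m})$, whose dimension is $m\,d_i-g+1$ by Riemann--Roch (since $mL_i$ is non-special for large $m$). Building a monomial basis of $H^0(\OO_X(m))$ adapted to this filtration and tracking weights layer by layer yields the asymptotic bound
$$W_{X,\rho}(m) \;\leq\; \sum_{i=0}^{r-1}(w_i-w_{i+1})\cdot (m\,d_i) \;+\; O(m),$$
and hence on leading coefficients
$$e_{X,\rho} \;\leq\; 2\sum_{i=0}^{r-1}(w_i-w_{i+1})\,d_i.$$

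The crucial quantitative input is a lower bound on the $d_i$. Because $L_i$ has at least $i+1$ independent global sections, Clifford's inequality (when $L_i$ is special) combined with Riemann--Roch (when non-special) forces $d_i\geq i$, with strict inequality as soon as $i\geq 1$ and $X$ is smooth (since equality in Clifford would force $X$ to be hyperelliptic with $L_i$ a multiple of the hyperelliptic pencil, which is excluded once the degree grows past $g$). Substituting $d_i\geq i$ into the bound and performing Abel summation against $w_0\geq\cdots\geq w_r=0$, one obtains
$$2\sum_{i=0}^{r-1}(w_i-w_{i+1})\,d_i \;\leq\; \frac{2d}{r+1}\sum_{i=0}^r w_i \;=\; \frac{2d\cdot w(\rho)}{r+1},$$
with strict inequality whenever $\rho$ is non-trivial (i.e.\ $w_0>w_r$). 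This yields Chow stability by Fact \ref{Chow-crit}.

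The main obstacle is upgrading the numerical bound to a \emph{strict} inequality for every non-trivial $\rho$: one must rule out the boundary case where all $d_i$ achieve their minimal values simultaneously. This is exactly where smoothness of $X$ and the hypothesis $d\geq 2g+1$ are essential, since they both guarantee projective normality (needed to make $W_{X,\rho}(m)$ coincide with the weight function on $H^0(\OO_X(m))$) and prevent the $L_i$ from collapsing to multiples of a special pencil.
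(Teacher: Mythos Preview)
The paper does not give its own proof of this Fact; it simply cites \cite[Thm.~4.15]{Mum}. Your proposal follows the broad outline of Mumford's argument (numerical criterion, weight filtration, Riemann--Roch/Clifford bounds), which is the right strategy.

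However, there is a genuine error in your execution: the Abel summation step does not work as written. You filter by the \emph{high}-weight subspaces $V^{(i)}=\langle x_0,\dots,x_i\rangle$, set $d_i=\deg L_i$, and claim $d_i\geq i$ via Clifford/Riemann--Roch. But substituting a \emph{lower} bound for $d_i$ into $\sum_{i}(w_i-w_{i+1})d_i$ (with $w_i-w_{i+1}\geq 0$) yields a \emph{lower} bound on this sum, whereas to conclude $e_{X,\rho}<\frac{2d}{r+1}w(\rho)$ you need an \emph{upper} bound. Concretely, rewriting $\sum_i w_i=\sum_j(w_j-w_{j+1})(j+1)$, the desired inequality is equivalent to $\sum_i(w_i-w_{i+1})d_i<\sum_i(w_i-w_{i+1})\frac{d(i+1)}{r+1}$, which would require $d_i<\frac{d(i+1)}{r+1}$, the opposite direction from what you establish. (For instance $d_r=d$, so the last term alone does not satisfy such an upper bound.)

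Mumford's actual argument orients the filtration the other way: one works with the \emph{low}-weight subspaces $\langle x_j,\dots,x_r\rangle$ and the degrees $e_j$ of their base loci on $X$, proving \emph{upper} bounds $e_j\leq j$ (from Riemann--Roch, since $h^0(\OO_X(1)(-B_j))\geq r+1-j$ and $\OO_X(1)$ is non-special). The passage from these bounds to the estimate on $e_{X,\rho}$ is also more delicate than a single Abel summation; see \cite[Chap.~4.B]{HM} or \cite[Sec.~2.4]{Mo} for a careful treatment. Your obstacle paragraph correctly identifies that strictness is the crux, but the inequality you need to make strict is not the one you have written.
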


For a proof, see \cite[Thm. 4.15]{Mum}.
In \cite[Thm. 1.0.0]{Gie}, a weaker form of the above Fact is proved:
if $[X\subset \P^r]\in \Hilb_d$ is connected and smooth and $d\geq 10(2g-2)$ then $[X\subset \P^r]$ is Hilbert stable.
See also \cite[Chap. 4.B]{HM} and \cite[Sec. 2.4]{Mo} for an overview of the proof.


\begin{fact}[Potential stability]\label{F:GM2}
If $d>4(2g-2)$ and $[X\subset \P^r] \in \Ch^{-1}(\Chow_d^{ss})\subset \Hilb_d$ (with $X$ possibly non connected) then:
\begin{enumerate}[(i)]
\item $X$ is reduced of pure dimension one
and has at most nodes as singularities. In particular, $X$ is a pre-stable curve whenever it is connected.
\item $X\subset \P^r$ is non-degenerate, linearly normal (i.e., $X$ is
embedded by the complete linear system $|\OO_X(1)|$) and $\OO_X(1)$ is non-special (i.e., $H^1(X, \OO_X(1))=0$).
\item The line bundle $\OO_X(1)$ on $X$ is balanced (see Definition \ref{D:bal-lb}).
\end{enumerate}
\end{fact}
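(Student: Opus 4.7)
The plan is to apply the Hilbert--Mumford numerical criterion for Chow semistability (Fact \ref{Chow-crit}) through a sequence of cleverly chosen 1-parameter subgroups of $\GL(V)$, each tailored to detect a particular pathology of $X \subset \P^r$. The general template is: given a putative defect of $X$ (a hyperplane containing it, a subcurve of unbalanced degree, a singularity worse than a node, etc.), choose coordinates $x_0,\ldots,x_r$ of $V^\vee$ adapted to this defect and a 1ps $\rho$ acting diagonally with non-negative weights; then estimate $W_{X,\rho}(m)$ from below by bounding the total weight of any system of $P(m)$ monomials whose restrictions form a basis of $H^0(X,\OO_X(m))$. The hypothesis $e_{X,\rho}\le 2d\cdot w(\rho)/(r+1)$ then translates, after comparing leading coefficients in $m$, into a numerical constraint which either rules out the defect outright or reproduces the basic inequality.

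I would dispose first of the ``global'' statements. \emph{Non-degeneracy:} if $X\subset \{x_r=0\}$, taking $\rho$ with weights $(0,\dots,0,1)$ kills every monomial involving $x_r$ on $X$, and the resulting estimate for $W_{X,\rho}(m)$ violates Chow semistability. \emph{Non-speciality:} once reducedness is established, since $\deg \omega_X = 2g-2 < d$ one has $H^1(X,\OO_X(1))=0$, and Riemann--Roch gives $h^0(\OO_X(1)) = d-g+1 = r+1$. \emph{Linear normality:} combining non-speciality with the fact that the restriction $V^\vee \to H^0(\OO_X(1))$ has a $(r+1)$-dimensional target and is injective by non-degeneracy, it must be an isomorphism. \emph{Balancedness:} for a proper subcurve $Z\subseteq X$ with complement $Z^c$, choose a basis of $V^\vee$ with the first $h^0(Z,\OO_Z(1))$ generators restricting to a basis on $Z$ and the remainder vanishing on $Z$, and let $\rho$ weight them by $0$ and $1$ respectively. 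A direct filtration argument shows
\[
e_{X,\rho} = 2\bigl(\deg_{Z^c}\OO_X(1)\bigr) + O(1),
\]
with corrections from $H^0(Z^c,\OO_{Z^c}(1))$ versus the restriction of $V^\vee$; plugging into the Chow inequality $e_{X,\rho}\le 2d\cdot w(\rho)/(r+1)$, and applying the analogous 1ps with the roles of $Z$ and $Z^c$ reversed, yields both sides of the basic inequality
$\left|\deg_Z\OO_X(1) - \tfrac{d}{2g-2}\deg_Z\omega_X\right| \le k_Z/2$.

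The hard part, and the only place where the sharp bound $d>4(2g-2)$ enters, is showing that the only singularities of $X$ are nodes. Let $p\in X$ be a singular point. Working on the partial normalization of $X$ at $p$ and choosing coordinates $x_0,\ldots,x_r$ so that $x_1,\ldots,x_s$ vanish to prescribed orders at $p$ (dictated by a parametric description of the singularity), one takes $\rho$ with weights concentrated on these coordinates. The combinatorics of monomials vanishing at $p$ to a given order, together with an $m\to\infty$ estimate of $W_{X,\rho}(m)$, converts the Chow inequality into an inequality of the shape $d \le c(\text{singularity})\cdot(2g-2)$, where the constant $c$ depends on the analytic type of $p$. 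For a cusp the critical value is exactly $c=4$, and any more complicated unibranch or tacnodal singularity yields a strictly smaller $c$; thus $d>4(2g-2)$ excludes every singularity worse than a node and, a fortiori, every non-reduced or embedded component (which would give an even smaller $c$). This singularity-by-singularity numerical analysis, carefully matching a destabilizing 1ps to each bad analytic type and estimating $W_{X,\rho}(m)$ to leading order in $m$, is the main obstacle and is where one follows the detailed computations of Gieseker~\cite{Gie} and Mumford~\cite{Mum}; with it in hand, the four items of the statement follow as outlined.
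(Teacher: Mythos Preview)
The paper does not actually prove this statement: it is recorded as a Fact and the reader is referred to \cite[Prop.~4.5]{Mum} (and to \cite{Gie}, \cite{HM}, \cite{Mo} for variants and overviews). So there is no in-paper proof to compare against; your outline is a sketch of the Mumford--Gieseker argument itself. The overall template---construct a destabilizing 1ps adapted to each defect and read off a numerical constraint from Fact~\ref{Chow-crit}---is indeed the right one.

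There is, however, a genuine gap in your treatment of non-speciality. The claim ``once reducedness is established, since $\deg\omega_X=2g-2<d$ one has $H^1(X,\OO_X(1))=0$'' is false for reducible curves. By Serre duality $H^1(X,L)\cong H^0(X,\omega_X\otimes L^{-1})^\vee$, and on a reducible curve a line bundle of negative total degree may well have sections supported on a subcurve. The actual argument (visible in the paper's proof of Theorem~\ref{teo-pstab}, which adapts Mumford's) proceeds component by component: for each irreducible component $C_1\subseteq X_{\rm red}$ one assumes $H^1(C_1,\OO_{C_1}(1))\neq 0$, invokes a Clifford-type bound $h^0(C_1,\OO_{C_1}(1))\le \tfrac{1}{2}\deg_{C_1}\OO(1)+1$, and combines it with a Mumford inequality (his (5.7), itself derived from the Chow criterion via a suitable 1ps) to force a contradiction. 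Non-speciality on $X$, and with it linear normality and the basic inequality, then follow. This componentwise vanishing is the step you are missing, and it cannot be replaced by a global degree count.

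A smaller point: your description of the singularity thresholds is slightly off. Ordinary cusps and tacnodes with a line are the singularities excluded precisely at $d>4(2g-2)$ (cf.\ Remark~\ref{R:potstab-sharp}); the ``more complicated'' singularities (non-ordinary cusps, arbitrary tacnodes, triple points) are already excluded at $d>2(2g-2)$ or below, as in the proof of Theorem~\ref{teo-pstab}. So it is not that tacnodes give a strictly smaller constant than cusps across the board; rather, generic tacnodes do, while tacnodes with a line sit at the same threshold as cusps.
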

\begin{proof}
For the connected case, see \cite[Prop. 4.5]{Mum}. In \cite[Thm. 1.0.1, Prop. 1.0.11]{Gie}, the same conclusions are shown to hold under the stronger hypothesis that $[X\subset \P^r]\in \Hilb_d^{ss}$
 and $d\geq 10(2g-2)$.
See also \cite[Chap. 4.C]{HM} and \cite[Sec. 3.2]{Mo} for an overview of the proof. If $X$ is not connected, the argument is analogous to Theorem \ref{teo-pstab} below.
\end{proof}

\begin{rmk}\label{R:potstab-sharp}
The hypothesis that $d> 4(2g-2)$ in Fact \ref{F:GM2} is sharp: in \cite{HMo} it is proved that all
the $4$-canonical p-stable curves (which in particular may have cusps) belong to $\Hilb_{4(2g-2)}^s$.

\end{rmk}

\section{Potential pseudo-stability theorem}\label{S:pot-pseudo}

The aim of this section is to generalize the Potential stability theorem (see Fact \ref{F:GM2}) for
smaller values of $d$. The main result is the following theorem, which we call Potential pseudo-stability
Theorem for the relations with the pseudo-stable curves (see Definition \ref{D:stable}\eqref{D:p-stable}).

\begin{thm}\label{teo-pstab} {\rm (Potential pseudo-stability theorem)}
If $d>2(2g-2)$ and $[X\subset \P^r] \in  \Ch^{-1}(\Chow_d^{ss})$ $\subset \Hilb_d$ (with $X$ possibly not connected), then
\begin{enumerate}[(i)]
\item \label{teo-pstab1}$X$ is a pre-wp-stable curve, i.e. it is reduced and its singularities are at most nodes, cusps and tacnodes with a line.
\item \label{teo-pstab2} $X\subset \P^r$ is non-degenerate, linearly normal (i.e., $X$ is
embedded by the complete linear system $|\OO_X(1)|$) and $\OO_X(1)$ is non-special (i.e., $H^1(X, \OO_X(1))=0$);
\item \label{teo-pstab3}
The line bundle $\OO_X(1)$ on $X$ is balanced (see Definition \ref{D:bal-lb}).
\end{enumerate}

\end{thm}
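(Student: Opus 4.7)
The plan is to follow the strategy of Mumford and Gieseker used to prove Fact \ref{F:GM2}, while keeping careful track of the bounds in the sharper regime $d > 2(2g-2)$. The main tool is the Hilbert--Mumford numerical criterion for Chow semistability (Fact \ref{Chow-crit}): for every 1ps $\rho\colon \Gm \to \GL(V)$ one must have $e_{X,\rho} \leq 2d \cdot w(\rho)/(r+1)$. By choosing 1ps adapted to various geometric features of $X$ and computing the asymptotic weight $e_{X,\rho}$, we translate this into constraints on the singularities of $X$ and on the multidegree of $\OO_X(1)$.

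First I would dispose of the ``soft'' parts (i.e.~reducedness, non-degeneracy, linear normality, non-speciality in (ii)). These follow as in \cite{Gie}, \cite{Mum} and \cite{HM}: non-reducedness or embedded/isolated components are ruled out by 1ps supported on the bad locus, and then the standard cohomology argument --- which only needs $d$ reasonably larger than $2g-2$, in particular needs nothing stronger than $d>2(2g-2)$ --- upgrades the embedding to a complete, non-special linear system. Already at this stage one sees that $\OO_X(1)$ is non-special on every subcurve $Z\subseteq X$, which is crucial for the subsequent computations of $W_{X,\rho}(m)$.

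Next I would attack the balanced condition (iii). For a connected subcurve $Z\subseteq X$ with connected complement (it suffices to check this case by Remark \ref{R:bala}), choose a basis of $V^\vee$ adapted to the splitting $H^0(X,\OO_X(1)) \twoheadrightarrow H^0(Z,\OO_Z(1))$ and let $\rho$ act with weights $(1,\dots,1,0,\dots,0)$ accordingly. A Riemann--Roch computation using non-speciality of $\OO_X(m)|_Z$ and $\OO_X(m)|_{Z^c}$ expresses $e_{X,\rho}$ in terms of $\deg_Z\OO_X(1)$, $\deg_Z\omega_X$ and $k_Z$; applying the inequality $e_{X,\rho}\leq 2d\cdot w(\rho)/(r+1)$ (and its dual obtained by swapping $Z$ with $Z^c$, or equivalently by $\rho \leftrightarrow \rho^{-1}$) yields exactly the basic inequality \eqref{E:basineq-multideg}. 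This part is essentially identical to Gieseker's argument and uses the degree hypothesis only through the non-speciality secured above.

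The core of the theorem is the singularity classification (i). For a singular point $p\in X$ that is not a node, one picks local analytic coordinates adapted to the branches of $X$ at $p$ and builds a 1ps $\rho$ supported on the tangent directions at $p$. The leading coefficient $e_{X,\rho}$ then acquires a local contribution at $p$ measured by an invariant of the singularity (related to $\delta_p=\dim \widetilde\OO_{X,p}/\OO_{X,p}$ and the multiplicity). Comparing with the Chow bound, Mumford's threshold for nodes only is $d>4(2g-2)$; in our sharper regime $d>2(2g-2)$ the threshold weakens, and one must redo each computation to see exactly which singularities survive. Nodes and ordinary cusps cause no trouble; all planar $A_n$ with $n\geq 3$, as well as non-planar or non-Gorenstein singularities, still get destabilized. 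The main obstacle --- and the technically most delicate point --- is the tacnode case: a tacnode $p$ at which both branches are ``large'' (i.e.~neither branch is an exceptional $\P^1$ with $k=2$) is destabilized by a suitable weight-one 1ps, but if one of the branches is a smooth rational component $D_1$ with $k_{D_1}=2$ meeting the rest of the curve only at $p$, then the potential destabilization gets absorbed into the basic inequality applied to $D_1$, and the configuration survives. This is precisely the ``tacnode with a line'' of Convention \ref{Con:singu}. Carrying out this dichotomy carefully, in the sharper numerical regime than in \cite{Mum}, is where the bulk of the work of the proof will lie.
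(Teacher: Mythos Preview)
Your overall strategy coincides with the paper's: adapt the Mumford--Gieseker--Schubert analysis of destabilizing one-parameter subgroups, keeping track of the sharper bound $d>2(2g-2)$. However, your logical ordering is inverted and this hides the one genuinely new ingredient. In the paper (and in Mumford's original argument) the singularity constraints come \emph{first}: one shows $X$ is generically reduced, has no triple points, no higher cusps, and no tacnodes other than tacnodes with a line, each by citing or mildly extending specific computations in \cite{Sch}, \cite{Mum}, \cite{Gie}, \cite{HH2}. Only after these bounds on $\Sing(X_{\mathrm{red}})$ are in place does one run Mumford's argument \cite[Prop.~5.5]{Mum} to get reducedness, non-degeneracy, linear normality, non-speciality and the balancedness. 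You propose to do non-speciality first and then use it in the later weight computations, but non-speciality on each irreducible component $C_1$ is exactly the step of Mumford's argument that \emph{fails} in the regime $2(2g-2)<d\le 4(2g-2)$ unless one already knows the singularity type.

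Concretely, the paper's proof shows that the only place where extra work is needed is the vanishing $H^1(C_1,\OO_{C_1}(1))=0$ for every irreducible component $C_1\subseteq X_{\mathrm{red}}$. This is obtained by combining Mumford's inequality \cite[(5.7), p.~96]{Mum} with the Clifford bound $h^0(C_1,\OO_{C_1}(1))\le \tfrac12\deg_{C_1}\OO(1)+1$; but for reduced curves with cusps and tacnodes this Clifford inequality is not classical and requires the extension proved in \cite[Thm.~7.7]{HH2}. Your proposal treats the ``soft'' parts as routine and never mentions this input, so as written it has a gap precisely at the step the paper flags as nontrivial. (A minor point: your phrase ``all planar $A_n$ with $n\ge 3$ get destabilized'' is off by one, since $A_3$ is the tacnode and the tacnode-with-a-line configuration survives; you want $n\ge 4$ there, with $A_3$ handled by the dichotomy you describe.)
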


\begin{proof}
To prove the claim, we adapt various results in \cite{Mum}, \cite{Gie}, \cite{Sch}, \cite[Chap. 4]{HM}
and \cite[Sec. 7]{HH2}. Let us indicate the different steps of the proof. Suppose that $[X\subset \P^r]\in  \Ch^{-1}(\Chow_d^{ss})\subset \Hilb_d$ (with $X$ possibly non connected).
We will denote by $X'\subset X$ the union of the connected components of $X$ of dimension $1$.

$\bullet$ $X'_{\rm red}$ is non-degenerate. 

Under the assumption that $X=X'$, this follows from \cite[Prop. 1.0.2]{Gie} (see also the step 1 of the proof of \cite[Chap. 4, Thm. 4.45]{HM}). We will include a  proof that works also in our setting. 

Suppose, by contradiction, that $X'_{\rm red}$ is degenerate. Then, there exists a section $s\in H^0(\P^1,\OO_{\P^1}(1))$ that vanishes identically on $X'_{\rm red}$. Let $\{x_1,\ldots,x_{r+1}\}$ be a system of coordinates with $x_1=s$ and consider a 1ps diagonalized by $\{x_1,\ldots,x_{r+1}\}$ with weights $w_1 = 0$ and $w_2 =\ldots = w_{r+1} = 1$. By Lemma \ref{lem:2wdeg}, we get $e_{X',\rho}=2d$.
Now we apply Proposition \ref{prop:sumsub}\eqref{prop:sumsub1} and we have
$$
e_{X,\rho}\geq e_{X',\rho}=2d>\frac{2d}{r+1}\,r.
$$
This implies that $[X\subset \P^r]$ is Chow unstable.

$\bullet$ For each subcurve $Z$ of $X'$ it holds that
\begin{equation}\label{eq:basineqgrezzak0}
2\,\deg_{Z}\OO(1)\leq \frac{2d}{r+1}\,h^0(Z_{\rm red},\OO_{Z_{\rm red}}(1)).
\end{equation}

Indeed, consider the restriction map $\pi:H^0(\P^r,\OO_{\P^r}(1))\lra H^0(Z_{\rm red},\OO_{Z_{\rm red}}(1))$ and choose a system of coordinates $\{x_1,\ldots,x_{r+1}\}$ such that $(x_1,\ldots,x_{r_1})$ is a basis of the kernel $K$ of $\pi$. Now let $\rho$ be the
1ps that, in the above coordinates, has the diagonal form $\rho(t)\cdot x_i = t^{w_i}x_i$ where
\begin{equation}\label{eq:wbasinegrezzak0}
w_i=
\begin{cases}
0 \quad \text{ if }\,\, 1\leq i\leq r_1,\\
1 \quad \text{ if }\,\, r_1+1\leq i\leq r+1.
\end{cases}
\end{equation}
By Lemma \ref{lem:2wdeg} we have $e_{Z,\rho}=2\deg_Z\OO(1)$, so that applying Proposition \ref{prop:sumsub}\eqref{prop:sumsub1} and \ref{prop:sumsub}\eqref{prop:sumsub2} we obtain
$$
e_{X,\rho}\geq e_{Z,\rho}=2\,\deg_Z\OO(1).
$$
Since $[X\subset \P^r]$ is Chow semistable and $w(\rho)\leq h^0(Z_{\rm red},\OO_{Z_{\rm red}}(1))$, we deduce the inequality \eqref{eq:basineqgrezzak0}.

$\bullet$ $X'=Y\sqcup Z$, where $Y$ is generically reduced and $Z$ is a disjoint union of lines of multiplicity 2.


Indeed, let $C$ be an irreducible component of $X'$  that is not generically reduced and denote by $n$ its multiplicity. There are two cases:
\begin{enumerate}
	\item \label{nonred1} $k_C\neq 0$;
	\item \label{nonred2} $k_C=0$.
\end{enumerate}

Suppose that case \eqref{nonred1} occurs and set $D=\overline{X'\setminus C}$. Choose $p\in C_{\rm red}\cap D_{\rm red}$ and a system of coordinates $\{x_1\,\ldots,x_{r+1}\}$ such that $x_2,\ldots,x_{r+1}$ vanish at $p$. If $\rho$ is a 1ps diagonalized by $\{x_1\,\ldots,x_{r+1}\}$ with weights $w_1=1$ and $w_2=\ldots=w_{r+1}=0$, then by Lemma \ref{L:schub} we get $e_{C_{\rm red}, \rho}\geq 1$ and $e_{D_{\rm red},\rho}\geq 1$, hence
$$
e_{X',\rho}\geq n\,e_{C_{\rm red},\rho}+e_{D_{\rm red},\rho}\geq 2+1=3.
$$
By Proposition \ref{prop:sumsub}\eqref{prop:sumsub1}, we have
$$
e_{X,\rho}\geq e_{X',\rho}\geq 3>\frac{2d}{r+1} \quad\text{if}\quad \frac{d}{r+1}<\frac{3}{2} \quad\bigg(\Longleftrightarrow d>\frac{3}{2}(2g-2)\bigg)
$$
hence, under our assumption on $d$, $[X\subset \P^r]$ is Chow unstable.

It remains to analyze case \eqref{nonred2}. Consider the exact sequence
\begin{equation}\label{eq:Cred}
0\lra \OO_{C_{\rm red}}\lra \OO(1)_{|C_{\rm red}}\lra \OO_{D}\lra 0,
\end{equation}
where $D$ is a divisor associated to $\OO(1)_{|C_{\rm red}}$ with support on the smooth locus of $C_{\rm red}$. From the cohomology exact sequence associated to (\ref{eq:Cred}) it follows that
\begin{equation}\label{eq:Cred2}
h^0(C_{\rm red},\OO_{C_{\rm red}}(1))\leq h^0(C_{\rm red},\OO_{C_{\rm red}})+h^0(C_{\rm red},\OO_{D})=\deg (C_{\rm red})+1.
\end{equation}
By \eqref{eq:Cred2} and \eqref{eq:basineqgrezzak0} we have
$$
2n\,\deg\, \OO_{C_{\rm red}}(1)\leq \frac{2d}{r+1}h^0(C_{\rm red},\OO_{C_{\rm red}}(1))\leq\frac{2d}{r+1}(\deg\, \OO_{C_{\rm red}}(1)+1). 
$$
Since $d>2(2g-2)$ if and only if $\frac{d}{r+1}<\frac{4}{3}$, we obtain the inequality
$$
n< \frac{4(\deg\, \OO_{C_{\rm red}}(1)+1)}{3\,\deg\, \OO_{C_{\rm red}}(1)}.
$$
Suppose, by contradiction, that $\deg\, \OO_{C_{\text{red}}}(1)\geq 2$. This implies that
$$
n< \frac{4(\deg\, \OO_{C_{\rm red}}(1)+1)}{3\,\deg\, \OO_{C_{\rm red}}(1)}\leq\frac{4}{3}\cdot\frac{3}{2}=2,
$$
so that $n=1$, which is absurd. If $\deg\, \OO_{C_{\rm red}}(1)=1$ (i. e. $C_{\text{red}}$ is a line), we obtain
$$
n<\frac{8}{3},
$$
hence $n\leq 2$. We deduce that if $C$ is a non-reduced connected component of $X'$, then $C$ is a $\P^1$ with multiplicity 2.

$\bullet$ $Y$ does not have triple points. 

This follows from \cite[Prop. 3.1, p. 69]{Mum} or \cite[Prop. 1.0.4]{Gie}, both of which are easily seen, by direct inspection, to work under the assumption that $d>\frac{3}{2}(2g-2)$. We will give a sketch of the proof.

Given a triple point $p$, we can choose $\{x_1,\ldots,x_{r+1}\}$ such that $p=[1,0,\ldots,0]$. Consider the 1ps $\rho$ diagonalized by $\{x_1,\ldots,x_{r+1}\}$ with weights $w_1=1$ and $w_2=\ldots=w_{r+1}=0$. 
By \cite[Prop. 3.1, p. 69]{Mum} or \cite[Prop. 1.0.4]{Gie}, we get $e_{Y,\rho}\geq 3$. By Proposition \ref{prop:sumsub}\eqref{prop:sumsub1} we have
$$
e_{X,\rho}\geq e_{Y,\rho}\geq 3>\frac{2d}{r+1}\quad\text{if}\quad \frac{d}{r+1}<\frac{3}{2} \quad\bigg(\Longleftrightarrow d>\frac{3}{2}(2g-2)\bigg),
$$
hence $[X\subset \P^r]$ is Chow unstable.

$\bullet$ $Y$ does not have non-ordinary cusps. 

This follows from \cite[Lemma 2.3]{Sch} or \cite[Lemma 7.2]{HH2}. We will include a proof for completeness.

Let $p$ be a non-ordinary cusp. Consider the normalization map $\nu:Y^{\nu}\lra Y$ and set $q=\nu^{-1}(p)$. Since $p$ is a non-ordinary cusp, there exists a system of coordinates $\{x_1\ldots,x_{r+1}\}$ such that
$$
\begin{cases}
\ord_{q}(\nu^*(x_1))=0 \\
\ord_{q}(\nu^*(x_2))=2 \\
\ord_{q}(\nu^*(x_3))=4\\
\ord_{q}(\nu^*(x_i))\geq\, 5 \quad \text{ for any }\,\, 4\leq i\leq r+1. \\
\end{cases}
$$
If $\rho$ is a 1ps diagonalized by $\{x_1\ldots,x_{r+1}\}$ with weights $w_1=5$, $w_2=3$, $w_3=1$ and $w_4=\ldots=w_{r+1}=0$, then 
$\ord_{q}(\nu^*(x_i))+w_i\geq 5$ for each $i=1,\ldots,r+1$, so that applying Lemma \ref{L:schub} we get $e_{Y,\rho}\geq 5^2=25$.
By Proposition \ref{prop:sumsub}\eqref{prop:sumsub1} we have
$$
e_{X,\rho}\geq e_{Y,\rho}\geq 25>\frac{2d}{r+1}\,9 \quad\text{if}\quad \frac{d}{r+1}<\frac{25}{18} \quad\bigg(\Longleftrightarrow d>\frac{25}{14}(2g-2)\bigg).
$$
Since $\frac{25}{14}(2g-2)<2(2g-2)$, $[X\subset \P^r]$ is Chow unstable.

$\bullet$ $Y$ does not have higher order tacnodes or tacnodes in which one of the two branches does not belong to a line. 

By contradiction, let $p\in Y$ be a tacnode which contradicts our claim and suppose that $C$ and $D$ are the two branches. There are two cases:
\begin{enumerate}
	\item \label{tacn1} Neither $C_{\rm red}$ nor $D_{\rm red}$ are lines in $\P^r$;
	\item \label{tacn2} $C_{\rm red}$ or $D_{\rm red}$ is a line in $\P^r$ and $p$ is a non-ordinary tacnode.
\end{enumerate}

Case \eqref{tacn1} follows from \cite[Lemma 2.2]{Sch} and \cite[Lemma 7.3]{HH2}. We will include a proof for completeness.

As above, consider the normalization map $\nu$ and set $\nu^{-1}(p)=\{q_1,q_2\}$. We can choose a system of coordinates $\{x_1\ldots,x_{r+1}\}$ such that
$$
\begin{cases}
\ord_{q_j}(\nu^*(x_1))=0 \\
\ord_{q_j}(\nu^*(x_2))=1 \\
\ord_{q_j}(\nu^*(x_i))\geq 2\quad \text{ for any }\,\, 3\leq i\leq r+1, \\
\end{cases}
$$
for any $j=1,2$. Let $\rho$ the 1ps diagonalized by $\{x_1\ldots,x_{r+1}\}$ with weights $w_1=2$, $w_2=1$ and $w_3=\ldots=w_{r+1}=0$.
Since $\ord_{q_j}(\nu^*(x_i))+w_i\geq 2$ for each $j=1,2$ and $i=1,\ldots,r+1$, by Lemma \ref{L:schub} we have
$$
e_{Y,\rho}\geq 2\cdot 2^2=8.
$$
Applying Proposition \ref{prop:sumsub}\eqref{prop:sumsub1} we get
$$
e_{X,\rho}\geq e_{Y,\rho}\geq 8>\frac{2d}{r+1}\,3 \quad\text{if}\quad \frac{d}{r+1}<\frac{4}{3} \quad\bigg(\Longleftrightarrow d>2(2g-2)\bigg)
$$
and $[X\subset \P^r]$ is Chow unstable.

Suppose that case \eqref{tacn2} occurs. We can assume that $D_{\rm red}$ is a line in $\P^r$. Since the intersection multiplicity of $C_{\rm red}$ and $D_{\rm red}$ at $p$ is greater or equal than 3, if a section $s\in H^0(\P^r,\OO_{\P^r}(1))$ vanishes identically on $D_{\rm red}$, then $\ord_{p}(s_{|C_{\rm red}})\geq 3$. Therefore, there exists a system of coordinates $x_1,\ldots,x_{r+1}$ such that
$$
\begin{cases}
\ord_p({x_1}_{|C_{\rm red}}) = 0 \\
\ord_p({x_2}_{|C_{\rm red}}) = 1 \\
\ord_p({x_i}_{|C_{\rm red}}) \geq 3 \quad \text{ for any }\,\, 3\leq i\leq r+1. \\
\end{cases}
$$
Let $\rho$ be a 1ps diagonalized by $x_1,\ldots,x_{r+1}$ with weights $w_1=3$, $w_2=2$ and $w_3=\ldots=w_{r+1}=0$. 
We notice that $\ord_{p}({x_i}_{C_{\rm red}})+w_i\geq 3$ for each $i=1,\ldots,r+1$, hence by Lemma \ref{L:schub} we have
$$e_{C_{\rm red},\rho}\geq 3^2=9.$$
It is very easy to compute $e_{D_{\rm red},\rho}$ because $\{x_1^m,x_1^{m-1}x_2,\ldots,x_1x_2^{m-1},x_2^{m}\}$ is the unique monomial basis of $H^0(D_{\rm red},\OO_{D_{\rm red}}(m))$ with respect to $x_1,\ldots,x_{r+1}$: we have 
$$
W_{D_{\rm red}, \rho}(m)=\sum_{j=0}^m w_{\rho}(x_1^j x_2^{m-j})=\sum_{j=0}^m (2m+j)=\frac{5}{2}m^2+\frac{5}{2}m,
$$
hence $e_{D_{\rm red}, \rho}=5$. 
Applying Proposition \ref{prop:sumsub}\eqref{prop:sumsub1} and Proposition \ref{prop:sumsub}\eqref{prop:sumsub2} we get
$$
e_{X,\rho}\geq e_{Y_{\rm red},\rho}=e_{C_{\rm red},\rho}+e_{D_{\rm red},\rho}=9+5=14.
$$
Since $w(\rho) = 5$, we deduce
$$
e_{X}\geq 14 > \frac{2d}{r+1}\,5 \quad\text{if}\quad \frac{d}{r+1}<\frac{7}{5} \quad\bigg(\Longleftrightarrow d>\frac{7}{4}(2g-2)\bigg)
$$
and again $[X\subset \P^r]$ is Chow unstable.

$\bullet$ $H^1(X_{\text{red}},\OO_{X_{\text{red}}}(1))=0$.

   A crucial ingredient in the proof is the Clifford's theorem \cite[Thm. 7.7]{HH2} for reduced curves with nodes, cusps and tacnodes
 (generalizing the Clifford's theorem of Gieseker-Morrison for nodal curves in \cite[Thm. 0.2.3]{Gie}). 
\begin{fact}[{\bf Clifford's theorem}]\label{F:Cliff-thm}
Let $X$ be a reduced connected curve with nodes, cusps and tacnodes and let $L$ be a line bundle on $X$ generated by global sections. Assume that $H^1(X,L)\neq 0$ and consider a non-zero
section $s\in H^0(X, \omega_X\otimes L^{-1})\cong H^1(X,L)^{\vee}$. Let $C$ be the subcurve of $X$ which is the union of all the irreducible components of $X$, where $s$ is not identically zero.
Then
\begin{equation}\label{Cliff}
h^0(C,L_{|C})\leq \frac{\deg_{C}L}{2}+1.
\end{equation}
\end{fact}
Now we proceed following an argument similar to the one used by Gieseker in the Claim of \cite[Prop. 1.0.8]{Gie}, with some modifications. Observe that, since it is obvious that $H^1(X_{\text{red}},\OO_{X_{\text{red}}}(1))=H^1(X'_{\text{red}},$ $ \OO_{X'_{\text{red}}}(1))$ and
    $H^1(Z_{\text{red}},\OO_{Z_{\text{red}}}(1))=0$, it suffices to prove that $H^1(Y_{\text{red}},$ $ \OO_{Y_{\text{red}}}(1))=0$. 
 Suppose then, by contradiction, that $H^1(Y_{\rm red},$ $\OO_{Y_{\rm red}}(1))\neq 0$: there exists a connected component $W\subset Y_{\rm red}$ such that $H^1(W,\OO_{W}(1))\neq 0$. Choose a non-zero section
$$
0\neq s\in H^0(W,\omega_{W} \otimes \OO_{W}(-1))\cong H^1(W,\OO_{W}(1))^{\vee}.
$$
Let $C$ be the subcurve of $W$ which is the union of all the
irreducible components of $W$ where $s$ is not identically zero. Fact \ref{F:Cliff-thm} implies that
$$
h^0(C,\OO_C(1))\leq \frac{\deg_C\OO(1)}{2}+1.
$$
By the inequality \eqref{eq:basineqgrezzak0}, we obtain
$$
2\,\deg_{C}\OO(1)\leq \frac{2d}{r+1}h^0(C,\OO_{C}(1))\leq\frac{2d}{r+1}\left(\frac{\deg_C\OO(1)}{2}+1\right).
$$
Using our assumption $d>2(2g-2)$, which is equivalent to the inequality
$\displaystyle \frac{d}{r+1}<\frac{4}{3},$
we get that
$$
2\,\deg_{C}\OO(1)<\frac{4}{3}\left(\deg_C\OO(1)+2\right) \Longleftrightarrow \deg_C\OO(1)<4
\Longleftrightarrow \deg_C\OO(1)=1, \text{ } 2\text{ or }3.
$$
First, suppose that $\deg_C\OO_C(1)=1$ or 2. If $C$ is irreducible, then $C\cong \P^1$ and we get a contradiction since  $H^1(\P^1,\OO_{\P^1}(1))=0$ and $H^1(\P^1,\OO_{\P^1}(2))=0$. If $C$ is reducible, then $\deg_C\OO(1)=2$ and we can write $C=C_1\cup C_2$ where $C_1\cong C_2\cong \P^1$, $\deg_{C_1}\OO(1)=\deg_{C_2}\OO(1)=1$ (i.e. $C_1$ and $C_2$ are lines) and $|C_1\cap C_2|=1$. This gives the exact sequence
$$
0 \longrightarrow \OO_{C_1} \oplus \OO_{C_2} \longrightarrow \OO_C (1) \longrightarrow \OO_{C_1\cap C_2} \longrightarrow 0.
$$
From the exact sequence of cohomology we get that $H^1(C,\OO_C (1))=0$ and again we have a contradiction.

Now suppose that $\deg_C\OO_C(1)=3$. If $C$ is irreducible, then either $C\cong \P^1$
or $C$ is an elliptic curve (smooth, nodal or cuspidal) in $\langle C\rangle\cong\P^2$, so we obtain that $H^1(C,\OO_{C}(1))=0$, which is absurd.
Finally assume that $C$ is reducible. If $C$ has 2 irreducible components $C_1$ and $C_2$, then $C_1\cong C_2\cong \P^1$ and, up to reordering, we can assume that $\deg_{C_1}\OO(1)=1$ (i.e. $C_1$ is a line) and $\deg_{C_2}\OO(1)=2$ (i.e. $C_2$ is a conic). There are two cases: either $|C_1\cap C_2|=2$ (which happens if and only if $C_1$ and $C_2$ lie in the same plane) or $|C_1\cap C_2|=1$.
In the former case, we have the following exact sequence
$$
0 \longrightarrow \OO_{C_1}(-1) \oplus \OO_{C_2} \longrightarrow \OO_C (1) \longrightarrow \OO_{C_1\cap C_2} \longrightarrow 0.
$$
Again, using the exact sequence of cohomology, we obtain that $H^1(C,\OO_C (1))=0$, absurd.
The latter case is dealt with similarly and it is left to the reader.
If $C$ has 3 irreducible components $C_1$, $C_2$ and $C_3$, then $C_1\cong C_2\cong C_3\cong\P^1$ and $\deg_{C_1}\OO(1)=\deg_{C_2}\OO(1)=\deg_{C_3}\OO(1)=1$ (i.e $C_1$, $C_2$ and $C_3$ are lines).
There are two cases: either each of the $C_i$'s intersects all the others (which happens if and only if the $C_i$'s
lie on the same plane) or the $C_i$'s form a chain.
In the former case, we have the exact sequence
$$
0 \longrightarrow \OO_{C_1}(-1) \oplus \OO_{C_2}(-1) \oplus \OO_{C_3}(-1)\longrightarrow \OO_C (1) \longrightarrow \OO_{p}\oplus \OO_{q}\oplus \OO_{r} \longrightarrow 0
$$
from which we obtain again $H^1(C,\OO_{C}(1))=0$, which is absurd. The latter case is similar and left to the reader.

$\bullet$ $X'$ is generically reduced, i.e. $Z=\emptyset$. 

Indeed, suppose, by contradiction, that $Z\neq\emptyset$ and let $E\subset Z$ be a connected component of $Z$. By hypothesis, $E$ is a double line. Setting $I=I(E)$, consider a primary decomposition
$$
I=J_1\cap \ldots\cap J_k,
$$
where $J_1$ is $I(E_{\rm red})$-primary. We notice that $J_1$ is uniquely determined by \cite[Thm. 6.8(iii)]{Mat} and there exists a system of coordinates $\{x_1\ldots,x_{r+1}\}$ in $\P^r$ such that
$$
J_1\subset \langle x_3,\ldots,x_r,x_{r+1}^2\rangle:=J.
$$
Denote by $E_0$ the subscheme of $E$ defined by $J$ and set $W:=E^c$, $n:=h^0(Z,\OO_Z)$ and $m:=h^0(Y,\OO_Y)$. Consider the exact sequence
\begin{equation}\label{eq:Wred}
0\lra \OO_{W_{\rm red}}\lra \OO(1)_{|W_{\rm red}}\lra \OO_{D}\lra 0,
\end{equation}
where $D$ is a divisor associated with $\OO(1)_{|W_{\rm red}}$ and having support on the smooth locus of $W_{\rm red}$. Observing that $g(E)\leq g(E_0)= 0$, it is easy to check that
$$
h^0(\OO_{W_{\rm red}})=m+n-1,\text{ } h^1(\OO_{W_{\rm red}})\geq g+1+m+n-2 \text{ }\text{ and } h^0(\OO_{D})=d-1-n,
$$
so that from the exact sequence of cohomology associated to \eqref{eq:Wred} we obtain that
\begin{eqnarray}
h^0(W_{\rm red},\OO(1)_{|W_{\rm red}})&=&h^0(W_{\rm red},\OO_{W_{\rm red}})+h^0(W_{\rm red},\OO_{D})-h^1(W_{\rm red},\OO_{W_{\rm red}})\nonumber\\
&=&d-g-n-1<d-g+1=h^0(\P^r,\OO_{\P^r}(1)).\nonumber
\end{eqnarray}
This implies that the restriction map $\pi:H^0(\P^r,\OO_{\P^r}(1))\lra H^0(W_{\rm red},\OO_{\P^r}(1)_{|W_{\rm red}})$ has kernel $K\neq 0$. Since $[X\subset \P^r]$ is Chow semistable, $X$ is non-degenerate in $\P^r$, hence there exists a non-zero section $s\in K$ which is regular on $E$. Let $\{x_1\ldots,x_{r+1}\}$ be a system of coordinates such that $x_1=s$ and
$$
E_{\text{red}}=\bigcap_{i=3}^{r+1}\{x_i=0\}.
$$
Consider a 1ps $\rho$ diagonalized by $\{x_1\ldots,x_{r+1}\}$ with weights $w_1=0$ and $w_2=\ldots=w_{r+1}=1$. It is not difficult to check that $e_{E_0,\rho}=2$ and $e_{W,\rho}=2(d-2)$. By \cite[Chap. 4, Ex. 4.49]{HM} and Proposition \ref{prop:sumsub}\eqref{prop:sumsub1} we get that
$$
e_{X,\rho}\geq e_{E_0}+e_{W,\rho}=2d-2>2d-\frac{2d}{r+1}=\frac{2d}{r+1}\,r \quad \text{if}\quad \frac{d}{r+1}>1\quad (\Longleftrightarrow g\geq 2),
$$
hence $[X\subset \P^r]$ is Chow unstable.

$\bullet$ $X$ is reduced of pure dimension 1 and $\ref{teo-pstab}(\ref{teo-pstab2})$ holds. 

Indeed, denote by $\mathcal{I}$ the ideal sheaf of nilpotents in $\OO_{X'}$ and consider the exact sequence
\begin{equation}\label{eq:ExSeqNilp}
0 \lra \mathcal{I}\otimes \OO_{X'}(1) \lra \OO_{X'}(1) \lra \OO_{X_{\rm red}'}(1) \lra 0.
\end{equation}
From the previous steps we know that $h^1(X_{\rm red}',\OO_{X_{\rm red}'}(1)) = 0$. Moreover, since $X'$ is generically reduced, $\mathcal{I}$ has finite support, hence $h^1(X',\mathcal{I}\otimes \OO_{X'}(1))=0$. From the exact sequence of cohomology associated
to \eqref{eq:ExSeqNilp}, we deduce that $h^1(X',\OO_{X'}(1))=0$, i. e. $\OO_{X'}(1)$ in non-special, and $h^0(X',\OO_{X'}(1))=d-g_{X'}+1$, where $g_{X'}$ is the arithmetic genus of $X'$.

Now we are ready to prove that $X'$ is reduced of pure dimension 1. Firstly, notice that, by definition of $X'$, we have that $g_{X'}\geq g$ with equality if and only if $X=X'$. Since the restriction map $\pi:H^0(\P^r,\OO_{\P^r}(1))\lra H^0(X_{\rm red}',\OO_{X_{\rm red}'}(1))$ is injective, we have
\begin{eqnarray}
d-g+1 & = & h^0(\P^{r},\OO_{\P^r}(1))\leq h^0(X_{\rm red}',\OO_{X_{\rm red}'}(1)) \nonumber\\& = & h^0(X',\OO_{X'}(1))-h^0(X',\mathcal{I}\otimes \OO_{X'}(1))\nonumber\\ & = & d-g_{X'}+1-h^0(X',\mathcal{I}\otimes \OO_{X'}(1)),\nonumber
\end{eqnarray}
hence $h^0(X',\mathcal{I}\otimes \OO_{X'}(1))=0$ and $g=g_{X'}$. Since $\mathcal{I}$ has finite support, then $\mathcal{I}$ is the zero sheaf, i. e. $X'$ is reduced. Moreover $g=g_{X'}$ implies that $X=X'$ has pure dimension 1. Finally, $X\subset \P^r$ is linearly normal because the restriction map $\pi:H^0(\P^r,\OO_{\P^r}(1))\lra H^0(X,\OO_{X}(1))$ is injective between vector spaces of the same dimension, hence it is also surjective. 

$\bullet$ $\OO(1)$ is balanced, i.e. $\ref{teo-pstab}(\ref{teo-pstab3})$ holds.

First of all, observe that is enough to prove that  
\begin{equation}\label{E:half-basic}
\deg_{Z}\OO(1)\geq \frac{d}{2g-2}\deg_Z\omega_X-\frac{k_Z}{2}.
\end{equation}
 for any subcurve $Z\subset X$. Indeed, applying \eqref{E:half-basic} to $Z^c$ and using that  $d=\deg_Z\OO(1)+\deg_{Z^c}\OO(1)$ and $2g-2=\deg_Z\omega_X+\deg_{Z^c}\omega_X$, we get 
\begin{equation*}
\deg_Z \OO(1)=d-\deg_{Z^c}\OO(1)\leq d-\frac{d}{2g-2}\deg_{Z^c}\omega_{X}+\frac{k_{Z^c}}{2}= \frac{d}{2g-2}\deg_Z\omega_X+\frac{k_Z}{2},
\end{equation*}
which combined with \eqref{E:half-basic} gives that $\OO(1)$ satisfies the basic inequality \eqref{E:basineq-multideg}.

Inspired by \cite[Prop. 1.0.7 and Prop. 1.0.10]{Gie}, we will divide the proof of \eqref{E:half-basic} in two steps.

\un{Step I:} \eqref{E:half-basic} holds for the subcurves $Z\subseteq X$ for which $2\,\deg_{Y_j}\OO(1) \geq |Z\cap Y_j|$, where $\{Y_1,\ldots, Y_n\}$ are the irreducible components of $Z^c$. 

Indeed, set $k_j=|Z\cap Y_j|$ and let $\rho$ be the 1ps defined as in the proof of the inequality \eqref{eq:basineqgrezzak0}. By Lemma \ref{lem:2wdeg} we have $e_{Z,\rho}=2\deg_Z\OO(1)$. Now we will estimate $e_{Y_j,\rho}$ for each $j$. With the notation of the proof of \eqref{eq:basineqgrezzak0}, there are two cases:
\begin{enumerate}
  \item \label{basineqgrezza1} Each section in $K$ vanishes identically on $Y_j$.
	\item \label{basineqgrezza2} There exists a section $x_i\in K$ that does not vanish identically on $Y_j$.
\end{enumerate}
The case \eqref{basineqgrezza1} is an easy application of Lemma \ref{lem:2wdeg}: we get $e_{Y_j,\rho}=2\,\deg_{Y_j}\OO(1)$.

Now suppose that case \eqref{basineqgrezza2} occurs. Denote by $\{q_1,\ldots,q_{k_j}\}$ the inverse image via the normalization map $\nu$ of the set $Z\cap Y_j$. We notice that $\ord_{q}(x_i)+w_i\geq 1$ for each $q\in\{q_1,\ldots,q_{k_j}\}$ and $i=1,\ldots,r+1$, hence applying Lemma \ref{L:schub} we get $e_{Y_j,\rho}\geq k_j$.

Observe that in both cases $e_{Y_j,\rho}\geq k_j$, by our additional hypothesis. Applying Proposition \ref{prop:sumsub}\eqref{prop:sumsub2} we obtain
$$
e_{X,\rho}=e_{Z,\rho}+\sum_{j=1}^{n}e_{Y_j,\rho}\geq 2\,\deg_Z\OO(1)+\sum_{j=1}^{n}k_j=2\,\deg_Z\OO(1)+k_Z.
$$
Since $[X\subset \P^r]$ is Chow semistable and $w(\rho)\leq h^0(Z,\OO_{Z}(1))$, we deduce that 
\begin{equation}\label{eq:basineqgrezza}
2\,\deg_{Z}\OO(1)+k_Z\leq \frac{2d}{r+1}\,h^0(Z,\OO_{Z}(1)).
\end{equation}
Now, the line bundle $\OO(1)$ is non-special on $X$ by what proved before; this implies that also the restriction $\OO_Z(1)$ is non special on $Z$, so that 
\begin{equation}\label{E:h0-deg}
h^0(Z,\OO_{Z}(1))= \deg_Z \OO(1)+1-g_Z.
\end{equation}
Substituting \eqref{E:h0-deg} into \eqref{eq:basineqgrezza} and using that $r+1=d-g+1$, we get 
\begin{equation*}
\deg_{Z}\OO(1)\geq \frac{d}{2g-2}[2g_Z-2+k_Z]-\frac{k_Z}{2}=\frac{d}{2g-2}\deg_Z\omega_X-\frac{k_Z}{2},
\end{equation*}
which concludes the proof of Step I.

\un{Step II:} \eqref{E:half-basic} holds for any subcurve $Z\subseteq X$.

By contradiction, suppose that \eqref{E:half-basic} does not hold for some (proper) subcurve of $X$. Take a proper subcurve $W\subset X$  for which \eqref{E:half-basic} does not hold, i.e. such that 
\begin{equation}\label{E:formu1}
\deg_{W}\OO(1)< \frac{d}{2g-2}\deg_W\omega_X-\frac{k_W}{2},
\end{equation}
and which is maximal with this property, i.e. \eqref{E:half-basic} holds  for any subcurve $Z\supsetneq W$. By Step I, there should exist an irreducible component $Y$ of $W^c$ such that 
\begin{equation}\label{E:formu2}
2 \deg_Y\OO(1)<|W\cap Y|. 
\end{equation}
Since $\deg_Y \OO(1)\geq 1$ and $|Y\cap W|\leq k_Y$, the above inequality implies that 
\begin{equation}\label{E:formu3}
k_Y\geq 3.
\end{equation}
The subcurve $W\cup Y$ contains $W$ and it is not equal to $W$; therefore, by maximality of $W$, \eqref{E:half-basic} holds for $W\cup Y$, i.e. 
 \begin{equation}\label{E:formu4}
\deg_{W}\OO(1)+\deg_{Y}\OO(1)= \deg_{W\cup Y}\OO(1)\geq \frac{d}{2g-2}\deg_{W\cup Y}\omega_X-\frac{k_{W\cup Y}}{2}=
\end{equation}
$$= \frac{d}{2g-2}[\deg_{W}\omega_X+\deg_{Y}\omega_X]-\frac{k_{W}+k_Y}{2}+|W\cap Y|,
$$
where we used the formula $k_{W\cup Y}=k_W+k_Y-2|W\cap Y|$. Substituting \eqref{E:formu1}� and \eqref{E:formu2} into inequality \eqref{E:formu4}, we get 
\begin{equation}\label{E:formu5}
\frac{|W\cap Y|}{2}>\frac{d}{2g-2}\deg_{Y}\omega_X-\frac{k_Y}{2}+|W\cap Y|=\frac{d}{2g-2}[2g_Y-2+k_Y] -\frac{k_Y}{2}+|W\cap Y|.
\end{equation}�
Using that $g_Y\geq 0$ since $Y$ is connected and that $d>2(2g-2)$ by assumption, we get 
$$\frac{|W\cap Y|}{2}>2(k_Y-2)-\frac{k_Y}{2}+|W\cap Y|=\frac{3}{2}k_Y-4+|W\cap Y|.$$
Using the obvious fact that $|W\cap Y|\geq 0$, we deduce that $8>3k_Y$ which contradicts \eqref{E:formu3}.� 


\end{proof}

\begin{rmk}\label{R:potpseudo-sharp}
\noindent
The hypothesis that $d> 2(2g-2)$ in the above Theorem (\ref{teo-pstab}) is sharp: in \cite[Thm. 2.14]{HH2} it is proved that all the $2$-canonical h-stable curves in the sense of \cite[Def. 2.5, Def. 2.6]{HH2} (which in particular can have
arbitrary tacnodes and not only tacnodes with a line) belong to $\Hilb_{2(2g-2)}^s$.
\end{rmk}

\subsection{Balanced line bundles and quasi-wp-stable curves}

The aim of this subsection is to study the following

\begin{question}\label{Q:bal-quasi}
Given a pre-wp-stale curve $X$, what kind of restrictions does the existence of an \emph{ample} balanced line bundle $L$
impose on $X$?
\end{question}

The following result gives an answer to the above question.

\begin{prop}\label{P:bal-quasi}
Let $X$ be a pre-wp-stable curve of genus $g\geq 2$.
If there exists an ample balanced line bundle $L$ on $X$ of degree $d\geq g-1$, then
$X$ is quasi-wp-stable and $L$ is properly balanced.
\end{prop}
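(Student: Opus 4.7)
\medskip

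\noindent\textbf{Proof plan.} The plan is to test the basic inequality $(*)$ satisfied by $L$ against the candidate ``bad'' connected subcurves $E \subseteq X$, namely those with $g_E = 0$ and $k_E \leq 2$, and derive the required constraints from the ampleness of $L$ together with the numerical hypothesis $d \geq g-1$.

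First I would record two elementary observations that will be used throughout. Since $X$ is connected by the definition of pre-wp-stable, any proper subcurve $E \subsetneq X$ has $k_E \geq 1$, so the case $k_E = 0$ is impossible (a subcurve with $g_E = 0$ and $k_E = 0$ would force $E = X$, contradicting $g \geq 2$). Secondly, for any connected subcurve $E \subseteq X$ with $g_E = 0$ and $k_E \leq 2$, adjunction gives
\[
\deg_E \omega_X = 2g_E - 2 + k_E = k_E - 2,
\]
and since $L$ is ample, $\deg_C L \geq 1$ for every irreducible component $C$ of $E$, so $\deg_E L$ is at least the number of irreducible components of $E$; in particular $\deg_E L \geq 1$.

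Next I would handle the case $k_E = 1$. Here the basic inequality $(*)$ applied to $E$ reads
\[
\left|\deg_E L + \frac{d}{2g-2}\right| \leq \frac{1}{2}.
\]
The hypothesis $d \geq g-1$ gives $\tfrac{d}{2g-2} \geq \tfrac{1}{2}$, so the upper bound forces $\deg_E L \leq 0$, contradicting $\deg_E L \geq 1$. Hence no such $E$ exists.

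For the case $k_E = 2$, the basic inequality degenerates to $|\deg_E L| \leq 1$. Combined with $\deg_E L \geq 1$, this forces $\deg_E L = 1$, and in particular $E$ must be irreducible (otherwise $\deg_E L$ would be at least $2$). Then $E$ is an irreducible curve with $p_a(E) = 0$; applying the standard relation $p_a(E) = p_g(E^\nu) + \delta(E)$ between arithmetic and geometric genus (where $\delta(E) \geq 0$ measures the singularities) gives $p_g(E^\nu) = 0$ and $\delta(E) = 0$, so $E$ is smooth and $E \cong \P^1$. Putting this together, every connected subcurve $E$ with $g_E = 0$ and $k_E \leq 2$ satisfies $E \cong \P^1$ and $k_E = 2$, which is exactly the definition of quasi-wp-stable in Definition \ref{D:quasi-wp-stable}. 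Moreover, the same computation shows $\deg_E L = 1$ on every exceptional component, so $L$ is properly balanced in the sense of Definition \ref{D:prop-bal}.

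The argument is essentially a direct numerical exercise; there is no substantial obstacle. The only slightly delicate point is the implication ``irreducible plus $p_a = 0$ implies $\cong \P^1$'', which relies on the fact that all singularities contribute strictly positively to the delta invariant, valid in particular for the nodes, cusps, and tacnodes allowed on a pre-wp-stable curve.
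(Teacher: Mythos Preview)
Your proof is correct and follows essentially the same approach as the paper: test the basic inequality against connected rational subcurves $E$ with $k_E\leq 2$, rule out $k_E=1$ using $d\geq g-1$ and ampleness, and in the case $k_E=2$ use ampleness to force $\deg_E L=1$ and hence irreducibility. The only cosmetic difference is that the paper observes at the outset that any connected rational subcurve of a pre-wp-stable curve is a chain of $\P^1$'s (so irreducible immediately gives $E\cong\P^1$), whereas you derive $E\cong\P^1$ from irreducibility via the $\delta$-invariant; both work equally well here.
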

\begin{proof}
Let $Z$ be a connected rational subcurve of $X$ (equivalently $Z$ is a chain of $\P^1$'s) such that $k_Z\leq 2$.  Clearly, $k_Z\geq 1$ since
$X$ is connected and $Z\neq X$ because $g\geq 2$.

If $k_Z=1$ then $\deg_Z(\omega_X)=-1$ and the basic inequality (\ref{E:basineq-multideg})
together with the hypothesis that $d\geq g-1$ gives that
$$\deg_Z(L)\leq \frac{d}{2g-2}\deg_Z(\omega_X)+\frac{k_Z}{2}=-\frac{d}{2g-2}+\frac{1}{2} \leq 0.$$
This contradicts the fact that $L$ is ample.

If $k_Z=2$ then $\deg_Z(\omega_X)=0$ and the basic inequality (\ref{E:basineq-multideg}) gives that
$$\deg_Z(L)\leq \frac{d}{2g-2}\deg_Z(\omega_X)+\frac{k_Z}{2}=1.$$
Since $L$ is ample, it has positive degree on each irreducible component of $Z$; therefore, $Z$
must be irreducible  which implies that $Z\cong \P^1$ and $\deg_Z L=1$.
\end{proof}

Combining the previous Proposition \ref{P:bal-quasi} with the potential stability Theorem (see Fact \ref{F:GM2})
and the Potential pseudo-stability Theorem \ref{teo-pstab}, we get the following

\begin{coro}\label{C:quasi-wp-stable}
\noindent
\begin{enumerate}[(i)]
\item \label{C:quasi-wp-stable1} If $d>2(2g-2)$ and $[X\subset \P^r]\in \Ch^{-1}(\Chow_d^{ss})\subset \Hilb_d$ with $X$ connected then $X$ is a quasi-wp-stable curve and $\OO_X(1)$
is properly balanced.
\item \label{C:quasi-wp-stable2} If $d>4(2g-2)$ and $[X\subset \P^r]\in \Ch^{-1}(\Chow_d^{ss})\subset \Hilb_d$ with $X$ connected then $X$ is a quasi-stable curve  and $\OO_X(1)$
is properly balanced.
\end{enumerate}
\end{coro}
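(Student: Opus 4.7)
The plan is to deduce both parts directly by combining the potential (pseudo-)stability theorems just proved with Proposition \ref{P:bal-quasi}, which was tailor-made for exactly this kind of argument.

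For part \eqref{C:quasi-wp-stable1}, I would first invoke Theorem \ref{teo-pstab}: since $d>2(2g-2)$ and $X$ is connected, the curve $X$ is pre-wp-stable and the line bundle $\mathcal O_X(1)$ is balanced. Moreover $\mathcal O_X(1)$ is ample, being the restriction of $\mathcal O_{\mathbb P^r}(1)$ under the embedding $X \hookrightarrow \mathbb P^r$. Since $g\geq 2$, the hypothesis $d>2(2g-2)$ gives in particular $d\geq g-1$ (indeed $d>2(2g-2)\geq 2g-2\geq g-1$). Therefore the hypotheses of Proposition \ref{P:bal-quasi} are all met, and we conclude that $X$ is quasi-wp-stable and $\mathcal O_X(1)$ is properly balanced.

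For part \eqref{C:quasi-wp-stable2}, the argument is formally identical but with the stronger input Fact \ref{F:GM2}: under $d>4(2g-2)$ the curve $X$ is pre-stable (i.e.\ reduced with at worst nodal singularities) and $\mathcal O_X(1)$ is balanced. In particular, a pre-stable curve is pre-wp-stable, $\mathcal O_X(1)$ is again ample, and $d\geq g-1$ holds a fortiori. Proposition \ref{P:bal-quasi} then produces a quasi-wp-stable curve $X$ with $\mathcal O_X(1)$ properly balanced. The final observation is that a quasi-wp-stable curve whose singularities are all nodes is automatically quasi-stable in the sense of Definition \ref{D:quasi-wp-stable}\eqref{D:quasi3}: the only exceptional components allowed by quasi-wp-stability are rational curves $E\cong \mathbb P^1$ meeting $E^c$ in either two distinct nodes or one tacnode, and the tacnodal case is excluded by Fact \ref{F:GM2}(i).

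There is no real obstacle here, since the hard technical content is absorbed into the potential (pseudo-)stability theorems and into Proposition \ref{P:bal-quasi}; the only thing to check is the elementary numerical inequality $d\geq g-1$ needed to apply the latter, and the identification of the restricted class of quasi-wp-stable curves with quasi-stable ones in the nodal case.
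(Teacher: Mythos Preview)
Your proposal is correct and follows exactly the same approach as the paper, which simply states that the corollary follows by combining Proposition~\ref{P:bal-quasi} with the potential stability theorem (Fact~\ref{F:GM2}) and the potential pseudo-stability theorem (Theorem~\ref{teo-pstab}). You have supplied the small verifications (ampleness of $\mathcal{O}_X(1)$, the inequality $d\geq g-1$, and the identification of nodal quasi-wp-stable curves with quasi-stable curves) that the paper leaves implicit.
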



Note that, by Proposition \ref{nefample}\eqref{ample} of the Appendix, we have the following Remark, which can be seen as a partial converse to Proposition \ref{P:bal-quasi}.

\begin{rmk}\label{R:propbal-ample}
A balanced line bundle of degree $d>\frac{3}{2}(2g-2)$ on a quasi-wp-stable curve $X$ is properly balanced if and only
if it is ample.
Therefore, for $d>\frac{3}{2}(2g-2)$, the set $B_X^d$ is the set of all the multidegrees of ample balanced line bundles on $X$.
\end{rmk}

\section{Stabilizer subgroups}\label{S:automo}

Let $[X\subset \P^r]$ be a Chow semistable point of $\Hilb_d$ with $X$ connected and  $d>2(2g-2)$.
Note that $X$ is a quasi-wp-stable curve by Corollary \ref{C:quasi-wp-stable}\eqref{C:quasi-wp-stable1}, $L:=\cO_X(1)$ is balanced
and $X$ is non-degenerate and linearly normal in $\P^r$ by the Potential pseudo-stability Theorem \ref{teo-pstab}.


The aim of this section is to describe the stabilizer subgroup of an element $[X\subset \P^r]\in \Hilb_d$ as above.
We denote by ${\rm Stab}_{\GL_{r+1}}([X\subset \P^r])$ the stabilizer subgroup
of $[X\subset \P^r]$ in $\GL_{r+1}$, i.e. the subgroup of $\GL_{r+1}$ fixing $[X\subset \P^r]$. Similarly, ${\rm Stab}_{\PGL_{r+1}}([X\subset \P^r])$ is the stabilizer
subgroup of $[X\subset \P^r]$ in $\PGL_{r+1}$. Clearly, ${\rm Stab}_{\PGL_{r+1}}([X\subset \P^r])={\rm Stab}_{\GL_{r+1}}([X\subset \P^r])/\Gm$, where $\Gm$ denotes the diagonal subgroup of $\GL_{r+1}$ which clearly belongs
to ${\rm Stab}_{\GL_{r+1}}([X\subset \P^r])$.


It turns out that the stabilizer subgroup of $[X\subset \P^r]\in \Hilb_d$ is related to the automorphism group of the pair $(X,\OO_X(1))$, which is defined as follows.

Given a variety $X$ and a line bundle $L$ on $X$, an automorphism of $(X, L)$ is given by a pair $(\sigma, \psi)$ such that $\sigma\in \Aut(X)$ and
$\psi$ is an isomorphism between the line bundles $L$ and $\sigma^*(L)$. The group of automorphisms of $(X,L)$  is naturally an algebraic group denoted
by $\Aut(X,L)$. We get a natural forgetful homomorphism
\begin{equation}\label{homo-auto}
\begin{aligned}
F: \Aut(X,L)&\to \Aut(X)\\
(\sigma,\psi)& \mapsto \sigma,\\
\end{aligned}
\end{equation}
whose kernel is the  multiplicative group $\Gm$,  acting
 as fiberwise multiplication on $L$, and whose image is the subgroup of $\Aut(X)$ consisting of automorphisms $\sigma$
 such that $\sigma^*(L)\cong L$.
The quotient $\Aut(X,L)/\Gm$ is denoted by $\ov{\Aut(X,L)}$
and is called the {\it reduced} automorphism group of $(X, L)$.

The relation between the stabilizer subgroup of an embedded variety $X\subset \P^{r}$ and the automorphism group of the pair $(X,\OO_X(1))$ is provided by the following (probably well-known)
result.

\begin{lemma}\label{L:aut-stab}
Given a projective embedded variety $X\subset \P^r$ which is non-degenerate and linearly normal, there are isomorphisms
of algebraic groups
$$\begin{sis}
& {\rm Aut}(X,\OO_X(1))\cong {\rm Stab}_{\GL_{r+1}}([X\subset \P^r]), \\
& \ov{\Aut(X,\OO_X(1))}\cong  {\rm Stab}_{\PGL_{r+1}}([X\subset \P^r]).
\end{sis}$$
\end{lemma}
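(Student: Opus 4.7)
The isomorphisms will follow from the standard principle that a linearly normal, non-degenerate embedding of $X$ into projective space is canonically recovered from the pair $(X, \OO_X(1))$. Concretely, set $W := H^0(\P^r, \OO_{\P^r}(1))$ and $L := \OO_X(1)$. Non-degeneracy gives injectivity, and linear normality surjectivity, of the restriction map $W \to H^0(X, L)$, so this map is an isomorphism. The contragredient action of $\GL_{r+1} = \GL(V)$ on $W \cong V^\vee$ realizes $\GL_{r+1}$ faithfully as $\GL(W)$.

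The plan is to produce mutually inverse group homomorphisms between the two sides. Given $g \in \Stab_{\GL_{r+1}}([X \subset \P^r])$, the induced automorphism of $\P^r$ preserves the subscheme $X$, hence restricts to $\sigma := g|_X \in \Aut(X)$; moreover the canonical $\GL_{r+1}$-equivariant structure on $\OO_{\P^r}(1)$ supplies an isomorphism $g^*\OO_{\P^r}(1) \xrightarrow{\sim} \OO_{\P^r}(1)$, which restricts to the desired $\psi : L \xrightarrow{\sim} \sigma^*L$. This defines $\Phi(g) := (\sigma, \psi)$. Conversely, given $(\sigma, \psi) \in \Aut(X, L)$, the composition
$$W \xrightarrow{\sim} H^0(X, L) \xrightarrow{H^0(\psi)} H^0(X, \sigma^*L) \xrightarrow{(\sigma^*)^{-1}} H^0(X, L) \xrightarrow{\sim} W$$
is an automorphism of $W$, defining an element $\Psi(\sigma, \psi) \in \GL(W) = \GL_{r+1}$. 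This element fixes $X \subset \P^r$ scheme-theoretically because the embedding is intrinsically reconstructed from the data $(L, H^0(X, L))$ by the complete linear system $|L|$, and $\Psi(\sigma, \psi)$ transports this reconstruction along $\sigma$.

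The remaining verifications are formal: that $\Phi$ and $\Psi$ are mutually inverse group homomorphisms is a direct unwinding of the above constructions. For the projective version, observe that the kernel of the forgetful map $F$ from \eqref{homo-auto}, namely the subgroup of pairs $(\id_X, \lambda \cdot \id_L)$ with $\lambda \in \Gm$, corresponds under $\Psi$ to the central $\Gm$ of scalar matrices in $\GL_{r+1}$; passing to the quotients on both sides then yields $\ov{\Aut(X, L)} \cong \Stab_{\PGL_{r+1}}([X \subset \P^r])$. The principal bookkeeping difficulty---indeed the only non-trivial aspect of the argument---is tracking the duality between $V$ and $W = V^\vee$ consistently, so that $\Phi \circ \Psi$ and $\Psi \circ \Phi$ are literally the identity rather than merely the identity up to canonical isomorphism.
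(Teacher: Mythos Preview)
Your proposal is correct and follows essentially the same approach as the paper: both arguments exploit the isomorphism $H^0(\P^r,\OO_{\P^r}(1))\cong H^0(X,L)$ coming from non-degeneracy and linear normality, construct the map $\Aut(X,L)\to\GL_{r+1}$ via the induced action on global sections and its inverse via restriction of the ambient automorphism together with the equivariant structure on $\OO_{\P^r}(1)$, and then pass to the $\PGL$ statement by quotienting out scalars. Your explicit remark about the duality bookkeeping is a useful clarification that the paper also has to navigate (it works on $V^\vee$ rather than $V$), but otherwise the two proofs are the same.
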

\begin{proof}

Observe first that the natural restriction map $H^0(\P^r,\OO_{\P^r}(1))\to H^0(X,\OO_X(1))$ is an isomorphism because by assumption the embedding $X\subset \P^r$
is non-degenerate and linearly normal.
Therefore, we identify the above two vector spaces and we denote them by $V$. Note that $\P^r=\P(V^{\vee})$ and that
the standard coordinates on $\P^r$ induce a basis of $V$, which we call the standard basis of $V$.

Let us now define a homomorphism
\begin{equation}\label{E:isom1}
\eta:\Aut(X, \OO_X(1))\to \Stab_{\GL_{r+1}}([X\subset \P^r])\subseteq \GL_{r+1}=\GL(V^{\vee}).
\end{equation}
Given $(\sigma, \psi)\in \Aut(X,\OO_X(1))$, where $\sigma\in \Aut(X)$ and $\psi$ is an isomorphism between $\OO_X(1)$ and $\sigma^*\OO_X(1)$, we define $\eta((\sigma,\psi))\in
 \GL(V^{\vee})$ as the composition
$$\eta((\sigma,\psi)): V^{\vee}=H^0(X, \OO_X(1))^{\vee}\xrightarrow[\cong]{\widehat{\psi^{-1}}} H^0(X, \sigma^{*}\OO_X(1))^{\vee}\xrightarrow[\cong]{\widehat{\sigma^*}} H^0(X,\OO_X(1))^{\vee}=V^{\vee},
$$
where $\widehat{\psi^{-1}}$ is the dual of the isomorphism induced by $\psi^{-1}$ and $\widehat{\sigma^*}$ is the dual of the isomorphism induced by $\sigma^*$. Let us denote by $\phi_{|\OO_X(1)|}$ (resp. $\phi_{|\sigma^*\OO_X(1)|}$) the embedding
of $X$ in ${\mathbb P}^r$ given by the complete linear series $|\OO_X(1)|$ (resp. by $|\sigma^*\OO_X(1)|$)
with respect to the basis of $H^0(X, \OO_X(1))$ (resp. $H^0(X, \sigma^*\OO_X(1))$) induced by the standard basis of $V$ via the above isomorphisms. By construction, the following diagram commutes:
\begin{equation}
\label{autom}
\xymatrix{
X  \ar@{^{(}->}[rrr]^{\phi_{{|\OO_X(1)|}}} \ar[dd]^{\sigma}\ar@{_{(}->}[rrrd]_{\phi_{{|\sigma^*\OO_X(1)|}}}&&& \P(H^0(X, \OO_X(1))^{\vee})
\ar[d]^{\widehat{\psi^{-1}}}\\
&&& \P(H^0(X, \sigma^{*}\OO_X(1))^{\vee})\ar[d]^{\widehat{\sigma^*}}\\
X \ar@{^{(}->}[rrr]^{\phi_{{|\OO_X(1)|}}}&&& \P(H^0(X,\OO_X(1))^{\vee}).
}
\end{equation}
Thus we get that $\eta((\sigma,\psi))$ belongs to $\Stab_{\GL_{r+1}}([X\subset \P^r])\subseteq \GL(V^{\vee})$ and $\eta$ is well-defined.

Conversely, we define a homomorphism
\begin{equation}\label{E:isom2}
\tau: \Stab_{\GL_{r+1}}([X\subset \P^r]) \to \Aut(X,L)
 \end{equation}
 as follows. An element $g\in \Stab_{\GL_{r+1}}([X\subset \P^r])\subseteq \GL_{r+1}=\GL(V^{\vee})$ will send $X$ isomorphically onto itself, and thus induces an automorphism $\sigma\in \Aut(X)$. Consider now the isomorphism
$$\w{\psi}:V=H^0(X,\OO_X(1))\xrightarrow[\cong]{\widehat{g^{-1}}} V=H^0(X,\OO_X(1)) \xrightarrow[\cong]{\sigma^*} H^0(X,\sigma^*\OO_X(1)),$$
where $\widehat{g^{-1}}$ is the dual of $g^{-1}$ and $\sigma^*$ is the isomorphism induced by $\sigma$. The isomorphism $\w{\psi}$ induces an isomorphism $\psi$
between $\OO_X(1)$ and $\sigma^*\OO_X(1)$ making the following diagram commutative
$$\xymatrix{
H^0(X, \OO_X(1))\otimes \OO_X \ar@{->>}[r] \ar[d]_{\w{\psi}} & \OO_X(1) \ar[d]^{\psi} \\
H^0(X, \sigma^*\OO_X(1))\otimes \OO_X \ar@{->>}[r] & \sigma^*\OO_X(1).  \\
}
$$
We define $\tau(g):=(\sigma, \psi)\in \Aut(X, \OO_X(1))$.

We leave to the reader the task of checking that the homomorphisms $\eta$ and $\tau$ are induced by
morphisms of algebraic groups and that they are one the inverse of the other.

The map $\eta$ sends the subgroup $\Gm\subseteq \Aut(X,\OO_X(1))$ of scalar multiplications on $\OO_X(1)$ into the diagonal subgroup $\Gm\subset \GL_{r+1}$ and therefore it induces an isomorphism $\ov{\Aut(X,\OO_X(1))}\cong \Stab_{\PGL_{r+1}}([X\subset \P^r])$.
\end{proof}

In Theorem \ref{T:auto-grp} below, we describe the connected component $\Aut(X, L)^0$ of $\Aut(X,L)$ containing the identity for the pairs we will be interested in.
By Definition \ref{D:quasi-wp-stable}, recall that for a quasi-wp-stable curve $X$ we denote by
$\exc \subset X$ the subcurve of $X$ consisting of the union of the exceptional components $E$ of $X$, i.e., the subcurves $E\subset X$
such that $E\cong \P^1$ and $k_E=2$. We denote by $\w{X}:=\exc^c$ the complementary subcurve of $\exc$ and by $\gamma(\w{X})$ the number of connected components of $\w{X}$.
Certain elliptic tails of $X$ will play a special role in what follows; see \ref{Con:tails} for the relevant  terminology on elliptic tails.

\begin{defi}\label{D:elltails-types}
Let $F$ be an irreducible elliptic tail of $X$ (i.e., an irreducible subcurve of $X$ such that $g_F=1$ and $k_F=1$) and let $p$ denote the intersection point between $F$ and the complementary subcurve $F^c$.
Given an ample line bundle $L$ on $X$, we can write $L_{|F}=\OO_F((d_F-1)p+q)$, where $d_F=\deg_F L$ denotes the degree of $L$ on $F$, for a uniquely determined smooth point $q$ of $F$.
We say that $F$ is \emph{special} with respect to $L$ if $q=p$ and \emph{non-special} otherwise. We denote by $\epsilon(X,L)$ the number of cuspidal elliptic tails of $X$ that are special with respect to $L$.
\end{defi}

\begin{rmk}
If $F$ is a reducible elliptic tail of $X$ (for example, reducible nodal or tacnodal), $F$ cannot be special. Indeed, using the same notation as in Definition \ref{D:elltails-types}, if $L_{|F}=\OO_F(d_F p)$, there exists an irreducible component $E\subset F$ such that $\deg\, L_{|E}=0$, hence $L$ is not ample.
\end{rmk}

Before stating Theorem \ref{T:auto-grp}, we introduce the following notation: we denote by $\tau(X)$ the number of tacnodal elliptic tails of $X$.

\begin{thm}\label{T:auto-grp}
Let $X$ be  either a quasi-stable curve of genus $g\geq 2$ or a quasi-wp-stable curve of genus $g\geq 3$ and let $L$ be a properly balanced line bundle of degree $d\in \Z$ on $X$.
Then the connected component ${\rm Aut}(X,L)^0$ of $\Aut(X,L)$ containing the identity is isomorphic to $\Gm^{\gamma(\w{X})+\epsilon(X,L)+\tau(X)}$.
\end{thm}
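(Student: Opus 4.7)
Plan: The strategy is to analyse the short exact sequence
\[
1 \longrightarrow \Gm \longrightarrow \Aut(X,L)^0 \longrightarrow G \longrightarrow 1,
\]
where the central $\Gm$ is scalar multiplication on $L$ and $G$ is the image of $\Aut(X,L)^0$ inside $\Aut(X)^0$. First I would determine $\Aut(X)^0$. Because the wp-stable reduction $\wps(X)$ has ample dualizing sheaf, its automorphism group is finite, so the functorial contraction $X \to \wps(X)$ (Proposition \ref{P:wp-stab}) induces a morphism $\Aut(X)^0 \to \Aut(\wps(X))^0 = \{1\}$. Consequently every $\sigma \in \Aut(X)^0$ is the identity on $\wps(X)$, and the birational factorization $\widetilde{X} \to \wps(X)$ forces $\sigma|_{\widetilde{X}} = \mathrm{id}$; so $\sigma$ is determined by its restriction to each exceptional component, where it must fix the attachment data. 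A nodal exceptional $E \cong \P^1$ attached at two distinct points $p_1,p_2$ contributes $\mathrm{Aut}(\P^1,p_1,p_2)^0 = \Gm$, while for a tacnodal $E$ (as in the local model $k[[x,y]]/(y(y-x^2))$, which appears only in the quasi-p-stable case) a local calculation shows that the condition of being the identity on the other branch forces the derivative at the tacnode point to be $1$, yielding the unipotent $\Ga \subset \mathrm{Aut}(\P^1,p)$. Hence $\Aut(X)^0 \cong \Gm^n \times \Ga^t$ with $n$ and $t$ counting the nodal and tacnodal exceptional components.

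Next I would determine which factors lift by the usual weight/linearization calculation. For $\sigma_\lambda \in \Gm_E$ with $E$ nodal and attachment points $p_1,p_2 \in \widetilde{X}$, a $\Gm_E$-linearization of $L$ assigns a constant integer weight on $L$ over each connected component of $\widetilde{X}$ (where the action is trivial) and weights at $p_1,p_2$ on $L|_E = \OO_E(1)$ which differ by $\deg_E L = 1$ since $L$ is properly balanced. These constraints are compatible if and only if $p_1$ and $p_2$ lie in different connected components of $\widetilde{X}$, equivalently if and only if $E$ becomes a bridge after contracting the other exceptional components. For $\sigma_c \in \Ga_E$ with $E$ tacnodal, the only character of $\Ga$ is trivial so all weights are forced to be $0$; but the nontrivial $\Ga$-equivariant structure forced on $L|_E = \OO_E(1)$ is not matched by the trivial structure on the other branch once one compares $2$-jets at the tacnode (equivalently, $\Ga_E$ acts by a nontrivial translation on the unipotent $\Ga$-factor of $\mathrm{Pic}^0(X)$ coming from the tacnode, so no element of $\Ga_E \setminus \{0\}$ fixes $[L]$). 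Hence the image $G$ equals $\prod_{E \text{ bridge nodal}} \Gm_E \cong \Gm^b$.

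Finally, the extension $1 \to \Gm \to \Aut(X,L)^0 \to \Gm^b \to 1$ is an extension of tori, hence splits (and in fact the bridge $\Gm_E$'s lift explicitly by choosing compatible weight data, providing a section), giving $\Aut(X,L)^0 \cong \Gm^{b+1}$. It remains to match this with $\gamma(\widetilde{X})$ combinatorially: $\widetilde{X}$ is obtained from the connected curve $\wps(X)$ by partially normalizing at each node coming from a nodal exceptional component (while tacnodal exceptional components correspond to cusps of $\wps(X)$, whose removal leaves one smooth point and does not disconnect). Partial normalization at an edge of the dual graph of $\wps(X)$ disconnects precisely at bridges, so $\gamma(\widetilde{X}) = 1 + b$. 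I expect the main obstacle to be the tacnodal obstruction step: matching the $\Ga$-equivariant structure of $L|_E$ with the trivial structure along the other branch requires a careful $2$-jet computation or equivalently a direct analysis of the $\Ga_E$-action on the local Picard group of the tacnode, which is where the quasi-p-stable case genuinely differs from the quasi-stable one.
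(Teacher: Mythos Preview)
Your overall strategy --- factoring through $\Aut(X)^0$, identifying $\Aut(X)^0 \cong \Gm^n \times \Ga^t$, and then determining which part lifts to $\Aut(X,L)$ --- is sound, and your treatment of the tacnodal $\Ga$-obstruction is essentially correct and matches the paper's argument. However, your analysis of the nodal $\Gm$-factors contains a genuine error.

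The problem is that you test each $\Gm_E$ \emph{individually} for liftability and then take the product of those that lift. But the image of $\Aut(X,L)^0$ in $\prod_E \Gm_E$ is the kernel of the orbit map $\sigma \mapsto [\sigma^*L \otimes L^{-1}]$ into the toric part of $\Pic^0(X)$; this kernel is a subtorus, not in general a coordinate subtorus. Concretely, take $\wps(X)$ to be two smooth components $C_1,C_2$ meeting in two nodes, and let $X$ be the blow-up at both nodes, with exceptional components $E_1,E_2$. Then $\widetilde X = C_1 \sqcup C_2$, so $\gamma(\widetilde X)=2$. Your criterion ``$p_1,p_2$ lie in different components of $\widetilde X$'' is satisfied by both $E_1$ and $E_2$, giving $b=2$ and predicting $\Gm^3$; your ``equivalent'' criterion ``$E$ is a bridge after contracting the other exceptional components'' is satisfied by neither, giving $b=0$ and predicting $\Gm^1$. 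In fact the two conditions are \emph{not} equivalent, and neither is correct: a direct computation shows the map $\Gm_{E_1}\times\Gm_{E_2}\to \Pic^0(X)$ has one-dimensional image, so the lifting locus is a diagonal $\Gm$ and $\Aut(X,L)^0=\Gm^2$, matching $\gamma(\widetilde X)$. The underlying mistake in your weight argument is that when linearizing the $\Gm_E$-action you must also assign a constant weight to each of the \emph{other} exceptional components $E'$, and these impose further compatibility constraints linking the weights on different components of $\widetilde X$.

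Relatedly, your combinatorial identity $\gamma(\widetilde X)=1+b$ fails for the same example: deleting a set $S$ of edges from a connected graph can increase the number of components by more than the number of bridges in $S$ (the correct excess is $|S|$ minus the rank of $S$ in the cycle space). The paper avoids this whole computation by invoking \cite[Lemma~2.13]{BFV}, which directly establishes $\ker(G_1)=\Gm^{\gamma(\widetilde X)}$ for the contraction $X\to Y$ of the nodal exceptional components; the paper then handles the tacnodal part separately, as you do. To repair your argument you would need to compute the full kernel of $\prod_E \Gm_E \to \Pic^0(X)$ rather than test factors one at a time.
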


\begin{proof}

Consider the wp-stable reduction $X\to \wps(X)$ of $X$ (see Proposition \ref{P:wp-stab}).
Note that since $\wps(X)=\proj\oplus_{i\geq 0}H^0(X,\omega_X^i)$, an automorphism of $X$ naturally induces an automorphism of $\wps(X)$, so by composing the homomorphism
$F$ (see \eqref{homo-auto}) with the homomorphism $\Aut(X)\to \Aut(\wps(X))$
induced by the wp-stable reduction, we get a homomorphism
\begin{equation}\label{E:map-G}
G: \Aut(X,L)\longrightarrow \Aut(\wps(X)).
\end{equation}
We will determine the connected component $\ker(G)^0$ of the kernel of $G$ and  the connected component $\Im(G)^0$ of the image of $G$ in the two claims below.

\underline{CLAIM 1}: $\ker(G)=\ker(G)^0=\Gm^{\gamma(\w{X})}.$

Recall from Proposition \ref{P:wp-stab} that the wp-stable reduction $X\to \wps(X)$ is the contraction of every exceptional component $E\cong \P^1$ of $X$ to a node or a cusp if $E\cap E^c$ consists of two nodes or one tacnode, respectively. We can factor the wp-stable reduction of $X$ as
$$X\to Y \to \wps(X),$$
where $c:X\to Y$ is obtained by contracting all the exceptional components $E$ of $X$ such that $E\cap E^c$ consists of two nodes and $Y\to \wps(X)$ is obtained by contracting all the exceptional components $E$ of $Y$ such that
$E\cap E^c$ consists of a tacnode. Now, since an automorphism of $X$ must send exceptional components of $X$ meeting the rest of $X$ in two distinct points to exceptional components of the same type, we can factor the map $G$ of \eqref{E:map-G} as
$$G:\Aut(X,L)\stackrel{G_1}{\longrightarrow}\Aut(Y)\stackrel{G_2}{\longrightarrow} \Aut(\wps(X)).$$
This gives an exact sequence
\begin{equation}\label{E:seq-ker-G}
0\to \ker(G_1)\to \ker(G) \xrightarrow{{G_1}_{|\ker(G)}} \ker(G_2).
\end{equation}
The same proof of \cite[Lemma 2.11]{BFV} applied to the contraction map $X\to Y$ gives that
\begin{equation}\label{E:ker-G_1}
\ker(G_1)=\Gm^{\gamma(\w{X})}.
\end{equation}
Using \eqref{E:seq-ker-G} and \eqref{E:ker-G_1}, Claim 1 follows if we prove that
\begin{equation}\label{E:Im-ker}
\Im(G_1)\cap \ker(G_2)=\{\id\}.
\end{equation}
In order to prove \eqref{E:Im-ker}, we need first to describe explicitly $\ker(G_2)$.
Recall that, by construction, all the exceptional components $E\cong \P^1$ of $Y$ are such that $E\cap E^c$ consists of a tacnode of $Y$ and all of them are contracted to a cusp of $\wps(X)$ by the map $Y\to \wps(X)$.
Therefore, $\ker(G_2)$ consists of all the automorphisms $\gamma\in \Aut(Y)$ such that $\gamma$ restricts to the identity on $\ov{Y\setminus \cup E}$ where the union runs over all the exceptional subcurves $E$ of $Y$.
Consider one of these exceptional components $E\subset Y$ and let
$\{p\}=E\cap E^c$. Since $p$ is a tacnode of $Y$, there is an isomorphism (see \cite[Sec. 6.2]{HH2})
$$i:T_pE\stackrel{\cong}{\longrightarrow} T_pE^c,$$
where $T_pE$ is the tangent space of $E$ at $p$ and similarly for $T_p E^c$.
Any $\gamma\in \Aut(Y)$ preserves the isomorphism $i$. If moreover $\gamma\in \ker(G_2)\subseteq \Aut(Y)$
then $\gamma$ acts trivially on the irreducible component of $E^c$ containing $p$, hence it acts trivially also on
$T_p E^c$. Therefore, the restriction of $\gamma\in \ker(G_2)$ to $E$ will be an element $\phi\in \Aut(E)$  that fixes $p$ and induces the identity on $T_pE$. Fix the identification $(E,p)\cong (\P^1,0)$ and consider the transformations in $\Aut(\P^1)=\PGL_2$ of the form
\begin{equation}\label{E:Moebius}
\phi_{\lambda}(z)=\frac{z}{\lambda z+1}\qquad \psi_{\mu}(z)=\mu z
\end{equation}
for $\lambda\in k$ and $\mu\in k^*$.
All the elements that fix $p$ and induce the identity on $T_pE$ form a subgroup of $\Aut(E)$, which is isomorphic to the additive subgroup
$\Ga$ of $\Aut(\P^1)=\PGL_2$ given by all the transformations $\phi_{\lambda}$ (for $\lambda\in k$). Conversely, every such $\phi$ extends to an automorphism of $\Aut(Y)$, which is the identity on
$E^c$ and therefore lies in $\ker(G_2)$. From this discussion, we deduce that
\begin{equation}\label{E:ker-G2}
\ker(G_2)=\prod_E \Ga,
\end{equation}
where the product runs over all the exceptional components $E$ of $Y$.

We can now prove \eqref{E:Im-ker}. Take an element $(\sigma,\psi)\in \Aut(X,L)$ such that $G_1(\sigma,\psi)\in \ker(G_2)$. Consider an exceptional component $E$ of $Y$; let $\{p\}=E\cap E^c$
and let $C$ be the irreducible component of $E^c$ containing $p$.
By \eqref{E:ker-G2} and the discussion preceding it, we get that $G_1(\sigma,\psi)_{|E}=\phi_{\lambda}$ for some $\lambda\in k$ (as in \eqref{E:Moebius}) and $G_1(\sigma,\psi)_{|C}=\id_C$.
By construction, the map $c: X\to Y$ is an isomorphism in a neighborhood of $E\subset Y$. Therefore, by abuse of notation, we identify $E$ with its inverse image via $c$, similarly for $p$, and we
call $C'$ the irreducible component of $X$ such that $\{p\}=E\cap C'$. From the above properties of $G_1(\sigma,\psi)$, we deduce that  $\sigma_{|E}=\phi_{\lambda}$ and $\sigma_{|C}=\id_C$.
Consider now $\widehat{X}\cong E \coprod E^c$ the partial normalization of $X$ at $p$ and let $\nu:\widehat{X}\to X$ be the natural map. We have an exact sequence
$$0\to \Ga\to \Pic(X)\stackrel{\nu^*}{\longrightarrow} \Pic(\widehat{X})=\Pic(E)\times \Pic(E^{c})\to 0.$$
By looking at the gluing data defining line bundles on $X$, it is easy to check  that the above automorphism $\sigma\in \Aut(X)$ acts as the identity on $\Pic(\widehat{X})$ and that it acts on $\Ga$
by sending $\mu$ in $\mu+\lambda$.
Since, by assumption, there exists an isomorphism $\psi$ between $\sigma^*(L)$ and $L$, we must have that $\lambda=0$, or in other words that $\sigma_{|E}=\phi_0=\id_E$. Since this is true for
all the exceptional components $E$ of $Y$, from \eqref{E:ker-G2} we get that $G_1(\sigma,\psi)=\id$ and \eqref{E:Im-ker} is finally proved.

\underline{CLAIM} 2: $\Im(G)^0=\Gm^{\epsilon(X,L)+\tau(X)}.$

If $X$ is quasi-stable of genus $g\geq 2$, then $\wps(X)$ is stable of genus $g\geq 2$ and if $X$ is quasi-p-stable of genus $g\geq 3$, then $\wps(X)$ is p-stable of
genus $g\geq 3$. In both cases, $\Aut(\wps(X))$ is a finite group (see \cite{DM} for stable curves and  \cite[Proof of Lemma 5.3]{Sch} for p-stable curves); hence
$\Im(G)^0=\{\id\}$ and Claim 2 is proved.

In the general case, consider the p-stable reduction $\wps(X)\to \ps(\wps(X)):=\ps(X)$ of $\wps(X)$ (see Definition \ref{D:p-stab}) and the induced map
$$
H:\Aut(\wps(X))\to \Aut(\ps(X)).
$$
As recalled before, $\Aut(\ps(X))$ is a finite group if $g\geq 3$; hence we get that
\begin{equation}\label{E:aut0ker0}
\Aut(\wps(X))^0=\ker(H)^0.
\end{equation}
The p-stable reduction $\wps(X)\to \ps(X)$ contracts all the elliptic tails of $\wps(X)$ to cusps of $\ps(X)$. This easily implies that
\begin{equation}\label{E:ker0}
\ker(H)^0=\prod_F\Aut(F,p)^0,
\end{equation}
where the product is over all the elliptic tails $F$ of $\wps(X)$, $\{p\}=F\cap F^c$ and $\Aut(F,p)^0$ is the connected component of the automorphism group of the pointed curve $(F,p)$. There are 3 possibilities for the elliptic tails of the quasi-wp-stable $\wps(X)$ according to Figure 1 below.
\begin{figure}[h]
\begin{center}
\begin{picture}(268.5,30)(0,23)
\qbezier(113,22)(121.125,28.5)(121.75,33)
\qbezier(208.5,22)(216.625,28.5)(217.25,33)
\qbezier(21.5,22)(29.625,28.5)(30.25,33)
\qbezier(121.75,33)(120.25,37.5)(125.75,37)
\qbezier(217.25,33)(215.75,37.5)(221.25,37)
\qbezier(30.25,33)(28.75,37.5)(34.25,37)
\qbezier(125.75,37)(141.375,37.125)(130.5,52.75)
\qbezier(221.25,37)(236.875,37.125)(226,52.75)
\qbezier(34.25,37)(49.875,37.125)(39,52.75)
\qbezier(119.75,47)(159.875,47.125)(155.5,58.75)
\qbezier(155.75,58.25)(152.625,65)(148,57.75)
\qbezier(148,57.75)(144,48.5)(169,49.25)
\qbezier(35.75,45.75)(46.25,53.625)(52.75,55)
\qbezier(52.75,55)(65.75,58)(74.75,63)
\qbezier(222.75,47.25)(248.5,52.75)(244.25,61.25)
\qbezier(244.25,61.25)(248.125,53.125)(268.5,59.5)
\put(73.75,55.25){\makebox(0,0)[cc]{$F$}}
\put(266.75,53){\makebox(0,0)[cc]{$F$}}
\put(168,42){\makebox(0,0)[cc]{$F$}}
\put(49,10){\makebox(0,0)[cc]{Type I}}
\put(145.75,10){\makebox(0,0)[cc]{Type II}}
\put(246.5,10){\makebox(0,0)[cc]{Type III}}
\end{picture}
\end{center}
\caption{All the possible elliptic tails of a wp-stable curve.}
\label{codell}
\end{figure}
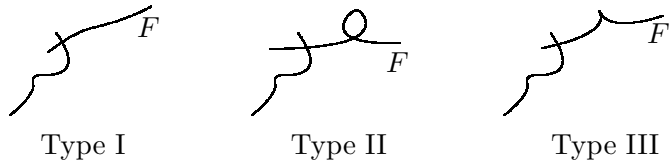

We claim that, for an elliptic tail $F$ of $\wps(X)$, the following holds
\begin{equation}\label{E:Aut0}
\Aut(F,p)^0=
\begin{cases}
\{\id\} & \text{ if $F$ is smooth or nodal (type I or II),} \\
\mathbb{G}_m & \text{ if $F$ is cuspidal (type III)}.
\end{cases}
\end{equation}
If $F$ is of type I, this follows from the well-known fact that a 1-pointed smooth curve of genus 1 has only finitely many automorphisms. If $F$ is of type II (resp. of type III), this follows from the identification of $\Aut(F,p)^0$ with the subgroup of automorphism of $F^{\nu}\cong \mathbb{P}^1$ fixing three points (resp. two points), namely the inverse image and that of $p$ and of the singular locus of $F$ via the normalization map $\nu: F^{\nu}\to F$.

Now, suppose that $F$ is an elliptic tail of type III. Obviously $F$ is the image of an elliptic tail $F'$ of X via the wp-stable reduction. Since the wp-stable reduction contracts the exceptional
subcurves of $X$, $F'$ can be chosen is such a way that $F'$ is cuspidal irreducible or tacnodal with two irreducible components. Using (\ref{E:aut0ker0}), (\ref{E:ker0}) and (\ref{E:Aut0}), Claim 2 follows if we prove that $\Aut(F,p)^0=\mathbb{G}_m \subset \Im(G)$ if and only if one of the following cases is satisfied:
\begin{enumerate}[(i)]
	\item $F'$ is cuspidal and special with respect to $L$,
	\item $F'$ is tacnodal.
\end{enumerate}
%

If $F'$ is cuspidal, we can identify $F$ with $F'$ and clearly $\Aut(F,p)^0\subset \Im(G)$ if and only if $L_{|F}\in \Pic^{d_F}(F)$ is fixed by $\mathbb{G}_m$. Consider the $\mathbb{G}_m$-equivariant isomorphism $\rho:F_{\rm sm}\stackrel{\cong}{\longrightarrow} \Pic^{d_F}(F)$ which maps $r$ to $\mathcal{O}_F((d_F-1)p+r)$. The unique $\mathbb{G}_m$-fixed point is the point $p$, which is sent to $\mcl{O}_F(d_F p)$ by $\rho$. Therefore, $L_{|F}$ is fixed by $\Aut(F,p)^0=\mbb{G}_m$ if and only if $L_{|F}=\mathcal{O}_F(d_F p)$, or in other words when $F$ is special with respect to $L$.

Now, suppose that $F'$ is tacnodal, i.e. $F'$ is the union of two smooth rational subcurve $E_1$ and $E_2$ meeting in a tacnode. Let $\{p\}=F'\cap (F')^c$ and let $q$ be the tacnode; assume that $p\in E_2$. Consider
$$
\widehat{X}=(F')^{\nu} \coprod (F')^c=E_1 \coprod E_2 \coprod (F')^c
$$
the partial normalization of $X$, $\nu:\widehat{X}\to X$ the natural map and $\{q_1,q_2\}$ the inverse image of $q$ via $\nu$, where we assume that $q_1\in E_1$ and $q_2\in E_2$. The following holds:
$$
\Aut((F')^{\nu},q_1,q_2,\nu^{-1}(p))^0=\Aut(E_1,q_1)^o\times \Aut(E_2,q_2,\nu^{-1}(p))^0\cong(\mbb{G}_m \ltimes \mbb{G}_a) \times \mbb{G}_m.
$$
Indeed, if we fix the identifications $(E_1,q_1)\cong (\mbb{P}^1,0)$ and $(E_2,q_2,\nu^{-1}(p))\cong (\mbb{P}^1,0,\infty)$, we can consider the transformations of the form (\ref{E:Moebius}) and it is well-known that
\begin{enumerate}
	\item $\Aut(\mbb{P}^1,0)^0$ is generated by the automorphisms $\phi_{\lambda}$, $\psi_{\mu}\in \PGL_2$ for $\lambda\in k$ and $\mu\in k^*$,
	\item $\Aut(\mbb{P}^1,0,\infty)^0$ is generated by $\psi_{\mu}\in \PGL_2$ for $\mu\in k^*$.
\end{enumerate}
As explained in the proof of Claim 1, every $\gamma\in\Aut(X)$ preserves the isomorphism
$i:T_q E_1 \stackrel{\cong}{\longrightarrow} T_q E_2$, so that there is an identification of $\Aut(F',p)^0$ with the subgroup of $\Aut(E_1,q_1)\times \Aut(E_2,q_2,\nu^{-1}(p))^0$ corresponding to the elements
$(\psi_{\mu_1},\phi_{\lambda},\psi_{\mu_2})$ such that $\mu_1=\mu_2$. Hence
$$
\Aut(F',p)^0\cong\mbb{G}_m \ltimes \mbb{G}_a.
$$
Now, the wp-stable reduction $F'\to F$ induces a surjective map $\Aut(F',p)^0\to \Aut(F,p)^0$ and $(\psi_{\mu},\phi_{\lambda},\psi_{\mu})\in \Aut(F',p)^0$ is mapped to the identity
if and only if its restriction to $E_2$ is the identity, i.e. if and only if $\mu=1$.
We obtain an exact sequence
$$
0\to \Ga\to \Aut(F',p)^0\cong \mbb{G}_m \ltimes \mbb{G}_a\to \Aut(F,p)^0\cong \Gm \to 0,
$$
which allows one to identify $\Aut(F,p)^0$ with the subgroup of $\Aut(F',p)$ consisting of all the elements of the form
$(\psi_{\mu},{\rm id}, \psi_{\mu})$.
 If $L\in \Pic(X)$, for any such $\gamma=(\psi_{\mu},\id,\psi_{\mu})\in \Aut(F',p)^0\subset \Aut(X)$ we have that $\gamma^*L\cong L$ since, given the exact sequence
$$
0\to \Ga\to \Pic(X)\stackrel{\nu^*}{\longrightarrow} \Pic(\widehat{X})=\Pic(E_1)\times \Pic(E_2)\times \Pic((F')^c)\to 0,
$$
the automorphism $\gamma$ acts as the identity both on $\Pic(\widehat{X})\cong \mbb{Z}^2 \times \Pic((F')^c)$ and on the gluing data $\Ga$. Hence $\Aut(F,p)^0\subset \Im(G)$ and the claim 2 is completely proven.

\end{proof}



\section{Behavior at the extremes of the basic inequality}\label{S:extreme-inequ}

Recall from Corollary \ref{C:quasi-wp-stable}\eqref{C:quasi-wp-stable1} that if $[X\subset \P^r]\in \Hilb_d$ is Chow semistable  with $X$ connected and $d>2(2g-2)$,
then $X$ is quasi-wp-stable and $\OO_X(1)$ is properly balanced.

The aim of this section is to investigate the properties of the Chow semistable points $[X\subset \P^r]\in \Hilb_d$ such that $\OO_X(1)$ is stably balanced or strictly balanced
(see Definition \ref{D:bal-lb}).




Our fist result is the following

\begin{thm}\label{T:stab-bal}
If $d>2(2g-2)$ and $[X\subset \P^r]\in \Hilb_d^s\subseteq \Hilb_d$ with $X$ connected, then $\OO_X(1)$ is stably balanced.
\end{thm}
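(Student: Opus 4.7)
The plan is to argue by contradiction. By Fact \ref{HilbtoChow} and Corollary \ref{C:quasi-wp-stable}\eqref{C:quasi-wp-stable1}, the hypotheses $[X \subset \P^r] \in \Hilb_d^s$ and $d > 2(2g-2)$ already force $X$ to be quasi-wp-stable and $L := \OO_X(1)$ to be properly balanced. Assuming $L$ is not stably balanced, by Definition \ref{D:balanced}\eqref{D:bala2} together with the additivity in Remark \ref{R:bala} I would pick a connected proper subcurve $Z \subsetneq X$ with $Z \not\subseteq \exc$ satisfying $\un d_Z = M_Z$, and then aim to produce a non-trivial 1-ps of $\GL(V)$ (with $V := H^0(X,L)$) along which the Hilbert-Mumford index fails to be strictly positive, contradicting Fact \ref{Hilb-crit}.

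The 1-ps I would use is the following. Let $W := H^0(X, I_{Z^c/X} \otimes L) \subseteq V$ be the subspace of sections vanishing along $Z^c$, and pick an ordered basis $\{x_1, \ldots, x_{r+1}\}$ of $V$ whose first $s := \dim W$ elements span $W$. Define $\rho : \Gm \to \GL(V)$ by giving weight $1$ to $x_1, \ldots, x_s$ and weight $0$ to $x_{s+1}, \ldots, x_{r+1}$, so that the total weight is $w(\rho) = s$. Non-triviality of $\rho$, i.e.\ $0 < s < r+1$, is straightforward: $s = r+1$ would force $L|_{Z^c}=0$, absurd; while $s = 0$ together with the non-speciality and linear normality of $L$ from Theorem \ref{teo-pstab}\eqref{teo-pstab2} would force $\un d_Z \le 0$, contradicting the ampleness of $L$ guaranteed by Remark \ref{R:propbal-ample}.

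The main computation is to evaluate the filtered Hilbert function $W_{X,\rho}(m)$ of \eqref{func-w} for $m \gg 0$. Using the non-speciality and linear normality of $L$ (Theorem \ref{teo-pstab}\eqref{teo-pstab2}), the normal generation of $L^m$ for $m$ large (Corollary \ref{pospropdualizing} in the Appendix), and the control over the singularities of $X$ along $Z \cap Z^c$ coming from Theorem \ref{teo-pstab}\eqref{teo-pstab1}, one expects the subspace of $H^0(X, L^m)$ spanned by images of degree-$m$ monomials of $\rho$-weight $\ge k$ to agree with $H^0(X, I_{Z^c/X}^k \otimes L^m)$. Riemann-Roch applied separately on $Z$ and $Z^c$ then yields an expansion
\[
\mu([X \subset \P^r]_m, \rho) = \frac{w(\rho)}{r+1} m P(m) - W_{X,\rho}(m) = a \cdot (M_Z - \un d_Z) \cdot m^2 + O(m)
\]
for a positive constant $a$ depending only on $d$, $g$, and $Z$, paralleling the calculations in \cite[\S 5]{Mum} and \cite[\S 4]{Cap}. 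Since $\un d_Z = M_Z$ by construction, the leading coefficient vanishes, $\mu$ is only $O(m)$, and hence cannot be strictly positive for all $m \gg 0$, the desired contradiction. The hardest step will be the bookkeeping for the thickenings $I_{Z^c/X}^k$ near the tacnodal points of $Z \cap Z^c$: the ideal is not principal there, but since any tacnode of $X$ is a tacnode \emph{with a line} in the sense of Convention \ref{Con:singu} (by Theorem \ref{teo-pstab}\eqref{teo-pstab1}), the rational branch can be separated off and the relevant local computation reduces to the Cartier setting.
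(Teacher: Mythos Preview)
Your overall strategy matches the paper's: assume $\OO_X(1)$ is not stably balanced, pick a subcurve witnessing this, build a 1ps from the associated subspace of $V$, and violate the Hilbert--Mumford criterion. Two points, however, are genuine gaps.

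\textbf{The linear term.} Your final step ``$\mu$ is only $O(m)$, and hence cannot be strictly positive for all $m \gg 0$'' does not follow: a polynomial of degree $\le 1$ in $m$ can perfectly well be positive for all large $m$. To contradict Hilbert stability (Fact~\ref{Hilb-crit}) you must show $\mu([X\subset\P^r]_m, \rho) \le 0$ for $m \gg 0$, so the $m$-coefficient has to be pinned down too, not just the $m^2$-coefficient. The paper does exactly this: after bounding $W_\rho(m)$ via \eqref{E:Fi}, it verifies \emph{two} separate identities \eqref{E:1case} and \eqref{E:2case}, one for the $m^2$-term and one for the $m$-term, both relying on $\deg_Y L = m_Y$ (equivalently $\un d_Z = M_Z$ in your notation). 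Only together do they yield $W_\rho(m) \ge \frac{w(\rho)}{r+1} m P(m)$, hence $\mu\le 0$.

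\textbf{The direction of the weights.} Your weights (weight $1$ on $W = H^0(X, I_{Z^c} \otimes L)$, weight $0$ elsewhere) are the \emph{reverse} of the paper's, which puts weight $0$ on this same subspace and weight $1$ on a complement. This matters for the inequality you need. With your convention, the easy containment is that the span of monomials of $\rho$-weight $\ge k$ sits inside $H^0(X, I_{Z^c}^k \otimes L^m)$, and that yields only an \emph{upper} bound on $W_{X,\rho}(m)$, i.e.\ a lower bound on $\mu$ --- the wrong direction for destabilizing. With the paper's weights, a monomial of weight $\le i$ contains at least $m-i$ factors vanishing on $Z^c$, so $F^i$ injects into $H^0(Z, L^m_{|Z}((i-m)D_Z))$; this \emph{upper}-bounds $\dim F^i$ and hence \emph{lower}-bounds $W_\rho(m)$. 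The substantive input is then the vanishing $H^1\big(Z, L^m_{|Z}((i-m)D_Z)\big)=0$ for $m\gg 0$ (the CLAIM in the paper's proof), and it is precisely here that the hypotheses $d>2(2g-2)$ and $Z\not\subseteq\exc$ are used. Your sketch has no analogue of this step; the tacnode bookkeeping you flag is not where the real difficulty lies.
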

\begin{proof}
The proof uses some ideas from \cite[Prop. 1.0.7]{Gie} and \cite[Lemma 3.1]{Cap}.


Let $[X\subset \P^r]\in \Hilb_d^s\subseteq \Hilb_d$ with $X$ connected and assume that $d>2(2g-2)$.
By the Potential pseudo-stability Theorem \ref{teo-pstab} and Corollary \ref{C:quasi-wp-stable}\eqref{C:quasi-wp-stable1}, we get that  $X$ is a
quasi-wp-stable curve and $L:=\OO_X(1)$ is properly balanced and non-special.

By contradiction, suppose that $\OO_X(1)$ is not stably balanced. Then, by Definition \ref{D:balanced} and Remark \ref{R:bala}, we can
find a connected subcurve $Y$ with connected complementary subcurve $Y^c$ such that
\begin{equation}\label{E:init-assum}
\begin{sis}
& Y^c\not\subset \exc \: \: \text{ or equivalently } g_{Y^c}=0\Longrightarrow k_{Y^c}=k_Y\geq 3, \\
&\deg_{Y^c} L=M_Y=\frac{d}{2g-2}\deg_{Y^c} \omega_X+\frac{k_{Y^c}}{2}=\frac{d}{2g-2}(2g_{Y^c}-2+k_{Y^c})+\frac{k_{Y^c}}{2},\\
&\deg_Y L=m_Y=\frac{d}{2g-2}\deg_Y \omega_X-\frac{k_Y}{2}=\frac{d}{2g-2}(2g_Y-2+k_Y)-\frac{k_Y}{2}.\\
\end{sis}
\end{equation}

In order to produce the desired contradiction,  we will use the numerical criterion for Hilbert stability (see Fact \ref{Hilb-crit}).
Let $V:=H^0(\P^r,\OO_{\P^r}(1))=H^0(X, \OO_X(1))$ and consider the vector subspace
$$
U:=\ker \left\{H^0(\P^r,\OO_{\P^r}(1))\to H^0(Y,L_{|Y})\right\}\subseteq V.
$$
Set $N+1:=\dim U$. Choose a basis $\{x_0, \ldots, x_N, \ldots, x_r\}$ of $V$ relative to the filtration
$U\subseteq V$, i.e., $x_i\in U$ if and only if $0\leq i\leq N$. Define a 1ps $\rho$ of $\GL_{r+1}$ by
$$\rho(t)\cdot x_i=
\begin{cases}
x_i & \text{ if } 0\leq i\leq N, \\
t x_i & \text{ if } N+1\leq i\leq r.
\end{cases}
$$
We will estimate the two polynomials appearing in  Fact \ref{Hilb-crit} for the 1ps $\rho$.

First of all, the total weight $w(\rho)$ of $\rho$ satisfies $w(\rho)=r-N=\dim V-\dim U\leq h^0(Y,L_{|Y})$.
Since $L$ is non-special and $H^0(X,L)\twoheadrightarrow H^0(Y,L_{|Y})$ because $X$ is a curve, we get that $h^0(Y,L_{|Y})=\deg_Y L+1-g_Y$.
Therefore, we conclude that
\begin{equation}\label{E:RHS}
\frac{w(\rho)}{r+1}mP(m)\leq \frac{h^0(Y,L_{|Y})}{r+1}m(dm+1-g)=\frac{\deg_Y L+1-g_Y}{d+1-g}
\left[d m^2+ (1-g) m\right].
\end{equation}
In order to compute the polynomial $W_{\rho}(m)$ for $m\gg 0$, consider the filtration of $H^0(\P^r, \OO_{\P^r}(m))$:
$$0\subseteq U^m\subseteq U^{m-1}V\subseteq \ldots \subseteq U^{m-i}V^i \subseteq \ldots \subseteq V^m=H^0(\P^r, \OO_{\P^r}(m)),$$
where  $U^{m-i}V^i$ is the subspace of $H^0(\P^r, \OO_{\P^r}(m))$ generated by the monomials containing at least
$(m-i)$-terms among the variables $\{x_0,\ldots,x_N\}$. Note that for a monomial $B$ of degree $m$, it holds that
\begin{equation}\label{E:monom}
B\in U^{m-i}V^i\setminus U^{m-i+1}V^{i-1} \Longleftrightarrow w_{\rho}(B)=i.
\end{equation}
Via the surjective restriction map $\mu_m:H^0(\P^r,\OO_{\P^r}(m))\twoheadrightarrow H^0(X, L^m)$, the above filtration on $H^0(\P^r,\OO_{\P^r}(m))$ induces a filtration
$$0\subseteq F^0 \subseteq F^{1}\subseteq \ldots \subseteq F^i \subseteq \ldots \subseteq F^m=H^0(X, L^m),
$$
where $F^i:=\mu_m(U^{m-i}V^i)$. Using \eqref{E:monom}, we get that
\begin{equation}\label{E:polyW}
W_{\rho}(m)=\sum_{i=1}^m i\left[ \dim(F^i)-\dim(F^{i-1})\right]=m \dim(F^m)-\sum_{i=1}^{m-1} \dim(F^i)=
\end{equation}
$$ =m(dm+1-g)-\sum_{i=0}^{m-1} \dim(F^i).$$
It remains to estimate $\dim F^i$ for $0\leq i\leq m-1$. To this aim, consider the partial normalization
$\tau:\hat{X}\to X$ of $X$ at the nodes laying on $Y\cap Y^c$. Observe that $\hat{X}$ is the disjoint union of $Y$ and $Y^c$.
We denote by $\w{D}$ the inverse image of $Y\cap Y^c$ via $\tau$. Since $Y\cap Y^c$
consists of $k_{Y}$ nodes of $X$, $\w{D}$ is the disjoint union of $D_Y$ and $D_{Y^c}$, where
$D_Y$ consists of $k_Y$ smooth points on $Y$ and $D_{Y^c}$ consists of $k_{Y}$ smooth points on $Y^c$.
Consider now the injective pull-back morphism
$$\tau^*:H^0(X, L^m)\hookrightarrow H^0(\hat{X},\tau^*L^m)=
H^0(Y,L^m_{|Y})\oplus H^0(Y^c,L^m_{|Y^c}),
$$
which clearly coincides with the restriction maps to $Y$ and $Y^c$.

Note that if $B$ is a monomial belonging to $U^{m-i}V^{i}\subseteq H^0(\P^r, \OO_{\P^r}(m))$ for some $i\leq m-1$, then $B$ contains at least $m-i\geq 1$ variables among the $x_j$'s such that $x_j\in U$; hence the order of vanishing of $B$ along the subcurve $Y$ is at least equal to $m-i$ . This implies that any $s\in F^{i}\subseteq H^0(X, L^m)$ with $i\leq m-1$ vanishes identically on $Y$ and vanishes on the points of $D_{Y^c}$ with order at least $(m-i)$.
We deduce that
\begin{equation}\label{E:incl}
\tau^*(F^i)\subseteq H^0(Y^c, L^m_{|Y^c}((i-m)D_{Y^c})) \: \text{ for } 0\leq i\leq m-1.
\end{equation}
\underline{CLAIM:}  $H^1(Y^c, L^m_{|Y^c}((i-m)D_{Y^c}))=0 \: \text{ for } 0\leq i\leq m-1 \text{ and } m\gg 0. $

Let us prove the claim. Clearly, if the claim is true for $i=0$, then it is true for every $i>0$; so we can assume that $i=0$. According to Fact \ref{numcrit}\eqref{numcrit1} of the Appendix,
it is enough to prove that for any connected subcurve $Z\subseteq Y^c$, we have that
\begin{equation}\label{E:cond1}
\deg_Z(L_{|Z}^m(-mD_Z)) >2g_Z-2 \: \text{ for } \: m\gg 0,
\end{equation}
where $D_Z:=D_{Y^c}\cap Z$. Indeed, \eqref{E:cond1} is equivalent to
\begin{equation}\label{E:cond2}
\deg_Z L \geq  |D_Z| \text{ with strict inequality if } g_Z\geq 1.
\end{equation}
Observe that, since each point of $D_Z$ is the intersection of $Z$ with $Y=\ov{X\setminus Y^c}$ and $Z\cap \ov{Y^c\setminus Z}\neq \emptyset$ unless $Z=Y^c$ because $Y^c$ is connected, the following holds:
\begin{equation}\label{E:D-k}
|D_Z|\leq k_Z \text{ with equality if and only if } Z=Y^c,
\end{equation}
where $k_Z$ is, as usual, the length of the schematic intersection of $Z$ with the complementary subcurve $\ov{X\setminus Z}$ in $X$. In order to prove \eqref{E:cond2}, we consider
different cases.

If $g_Z\geq 1$, then using the basic inequality \eqref{E:basineq-multideg} for $L$ relative to the subcurve $Z$ and the assumption
$d>2(2g-2)$, we compute
$$ \deg_Z L\geq \frac{d}{2g-2}\deg_Z \omega_X-\frac{k_Z}{2}>2(2g_Z-2+k_Z)-\frac{k_Z}{2}\geq \frac{3k_Z}{2} \geq \frac{3|D_Z|}{2}\geq |D_Z|,
$$
which shows that \eqref{E:cond2} holds in this case.

If $g_Z=0$ and $Z=Y^c$ then,  using that $\deg_{Y^c}L =M_{Y^c}$ and $k_{Y^c}\geq 3$ by \eqref{E:init-assum},
we get
$$ \deg_{Y^c} L=M_{Y^c}= \frac{d}{2g-2}(2g_{Y^c}-2+k_{Y^c})+\frac{k_{Y^c}}{2}>2(k_{Y^c}-2)+\frac{k_{Y^c}}{2}> k_{Y^c}=|D_{Y^c}|,$$
which shows that \eqref{E:cond2} holds also in this case.

It remains to consider the case $g_Z=0$ and $Z \subsetneq Y^c$.
If  $k_Z\leq 2$ then, since $X$ is quasi-wp-stable and $Z$ is connected,
we must have that $Z$ is an exceptional component of $X$, i.e., $Z\cong \P^1$ and $k_Z=2$. By Proposition \ref{P:bal-quasi}, it follows that $\deg_Z L=1$.
Since $|D_Z|\leq 1$ by \eqref{E:D-k}, we deduce that \eqref{E:cond2} is satisfied
also in this case.  Finally, assume that $k_Z\geq 3$. Consider the subcurve $W:=Z^c\cap Y^c\subset Y^c$.
It is easy to check that
\begin{equation}\label{E:kW}
k_{Y^c}-k_W=|Z\cap Y|-|W\cap Z|=|D_Z|-(k_Z-|D_Z|)=2|D_Z|-k_Z.
\end{equation}
Using the basic inequality of $L$ with respect to $W$ together with \eqref{E:init-assum}, \eqref{E:kW} and $k_Z\geq 3$, we get
$$\deg_Z L=\deg_{Y^c}L-\deg_W L\geq \frac{d}{2g-2}\deg_{Y^c} \omega_X+\frac{k_{Y^c}}{2}-\frac{d}{2g-2}\deg_W\omega_X-\frac{k_W}{2}=
$$
$$=\frac{d}{2g-2}\deg_Z \omega_X+|D_Z|-\frac{k_Z}{2}>2(k_Z-2)+|D_Z|-\frac{k_Z}{2}>|D_Z|.
$$
The claim is now proved.

Using the claim above, we get from \eqref{E:incl} that
\begin{equation}\label{E:Fi}
\dim F^i=\dim \tau^*(F^i)\leq m \deg_{Y^c}L+(i-m)k_Y+1-g_{Y^c} \: \text{ for } 0\leq i\leq m-1 \text{ and } m\gg 0.
\end{equation}
Combining \eqref{E:Fi} and \eqref{E:polyW}, we get that
$$W_{\rho}(m)\geq  m(dm+1-g)-\sum_{i=0}^{m-1} \left[ m \deg_{Y^c}L+(i-m)k_Y+1-g_{Y^c} \right]=
$$
$$= m(dm+1-g)-m \left[ m \deg_{Y^c}L-mk_Y+1-g_{Y^c} \right]-k_Y \frac{m(m-1)}{2}=$$
\begin{equation}\label{E:LHS}
= m^2\left[\deg_Y L+\frac{k_Y}{2} \right]+ m\left[1-g_Y-\frac{k_Y}{2} \right],
\end{equation}
where in the last equality we have used $d=\deg L=\deg_YL+\deg_{Y^c}L$ and $g=g_{Y}+g_{Y^c}+k_Y-1$.

Using that  $\deg_YL=m_Y$ by \eqref{E:init-assum}, we easily check that
\begin{equation}\label{E:1case}
\deg_YL+\frac{k_Y}{2}=d\: \frac{\deg_YL+1-g_Y}{d+1-g}
\end{equation}
and
\begin{equation}\label{E:2case}
1-g_Y-\frac{k_Y}{2} = (1-g)\: \frac{\deg_YL+1-g_Y}{d+1-g}.
\end{equation}
By combining \eqref{E:RHS}, \eqref{E:LHS}, \eqref{E:1case}, \eqref{E:2case}, we get for $m\gg 0$:
\begin{equation}\label{E:inequa}
W_{\rho}(m)\geq m^2\left[\deg_Y L+\frac{k_Y}{2} \right]+ m\left[1-g_Y-\frac{k_Y}{2} \right] =
\end{equation}
$$
= \frac{\deg_Y L+1-g_Y}{d+1-g}\left[d m^2+ (1-g) m\right]\geq \frac{w(\rho)}{r+1}mP(m),
$$
which contradicts the numerical criterion for Hilbert stability (see Fact \ref{Hilb-crit}).

\end{proof}

 \subsection{Closure of orbits}

Given a point $[X\subset \P^r]\in \Hilb_d$, denote by $\Orb([X\subset \P^r])$ the orbit of $[X\subset \P^r]$ under the action of $\SL(V)=\SL_{r+1}$. Clearly, $\Orb([X\subset \P^r])$ depends only on $X$ and on the line bundle $L:=\OO_X(1)$ and not on the chosen embedding $X\subset \P^r$.

The aim of this subsection is to investigate the following

 \begin{question}\label{Q:clos-orb}
 Given two points $[X\subset \P^r]$, $[X'\subset \P^r] \in \Ch^{-1}(\Chow_d^{ss})$ with $X$ and $X'$ connected, when does it hold that
 $$[X'\subset \P^r]\in \ov{\Orb([X\subset \P^r])}? $$
 \end{question}

We start by introducing an order relation on the set of pairs $(X,L)$ where $X$ is a quasi-wp-stable curve and $L$ is a properly balanced line bundle on $X$ of degree $d$.

\begin{defi}\label{D:speciali}
Let $(X',L')$ and $(X,L)$ be two pairs consisting of a quasi-wp-stable curve together with a properly balanced line bundle of degree $d$ on it.
\begin{enumerate}[(i)]
\item  \label{D:speciali1} We say that $(X',L')$ is an \emph{elementary isotrivial specialization} of $(X,L)$, and we write $(X,L)\spel (X',L')$,  if there exists a proper connected subcurve $Z\subset X'$ with $\deg_{Z}L'=m_{Z}$, $Z^c$ connected and $Z\cap Z^c\subseteq \exc'$
such that $(X,L)$ is obtained from $(X',L')$ by smoothing some nodes of $Z\cap Z^c$, i.e., there exists a smooth pointed  curve $(B,b_0)$ and a flat projective morphism $\cX\to B$ together with a line bundle $\cL$ on $\cX$ such
that $(\cX,\cL)_{b_0}\cong (X',L')$  and $(\cX,\cL)_{b}\cong (X,L)$ for every $b_0\neq b\in B$.

\item \label{D:speciali2} We say that $(X',L')$ is an \emph{isotrivial specialization} of $(X,L)$, and we write $(X,L)\sp (X',L')$ if
$(X',L')$ is obtained from $(X,L)$ via a sequence of elementary isotrivial specializations.
\end{enumerate}
\end{defi}

There is a close relation between the existence of isotrivial specializations and strictly balanced line bundles,
as explained in the following

\begin{lemma}\label{L:specia}
Notation as in Definition \ref{D:speciali}.
\begin{enumerate}[(i)]
\item \label{L:specia1} If $(X, L)\sp (X',L')$ then $L$ is not strictly balanced.
\item \label{L:specia2} If $L$ is not strictly balanced, then there exists an isotrivial specialization $(X,L)\sp
    (X',L')$ such that $L'$ is strictly balanced.
\end{enumerate}
\end{lemma}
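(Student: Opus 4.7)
For part (i), I first reduce by induction to the case of a single elementary isotrivial specialization $(X,L) \spel (X', L')$ given by a family $\cX \to B$, a subcurve $Z \subset X'$ with $\deg_Z L' = m_Z$, $Z^c$ connected, $Z \cap Z^c \subseteq \exc'$, and a nonempty subset $S \subseteq Z \cap Z^c$ of smoothed nodes. I will exhibit a proper subcurve $W \subset X$ with $\deg_W L = M_W^X$ and $W \cap W^c \not\subseteq \exc$, showing that $L$ is not strictly balanced. The plan is to form an enlargement $Z^+ \supseteq Z$ inside $X'$ by adjoining every exceptional component in $Z^c$ that is adjacent to $Z$ through a node of $S$, and let $W \subset X$ be the generic fiber of the flat subscheme $\mathcal W \subseteq \cX$ formed by the union of irreducible components of $\cX$ whose special fibers meet $Z^+$ in dimension one. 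Using $\deg_Z L' = m_Z$ and $\deg_E L' = 1$ on each adjoined exceptional $E$, a direct computation of $\deg_{Z^+}\omega_{X'}$ and $k_{Z^+}^{X'}$ shows $\deg_{Z^+} L' = M_{Z^+}^{X'}$. Flatness preserves Euler characteristics of $\cL$ and $\omega_{\cX/B}$ along $\mathcal W$, and since the smoothed nodes of $S$ lie interior to $Z^+$ we have $k_W^X = k_{Z^+}^{X'}$; combining gives $\deg_W L = M_W^X$.

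The crucial observation is that the boundary of $Z^+$ in $X'$ consists of nodes lying on the adjoined exceptional components (hence in $\exc'$), but in $X$ each such exceptional component has been absorbed, through the smoothings of $S$, into a non-exceptional merged component together with its $Z$-side neighbor. Therefore the corresponding boundary nodes of $W$ in $X$ now sit on non-exceptional components of $X$, yielding $W \cap W^c \not\subseteq \exc$ and completing part (i).

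For part (ii), my plan is an iterative blow-up. Using Definition \ref{D:balanced}\eqref{D:bala1} together with Remark \ref{R:bala}, I pick a connected subcurve $Z \subset X$ with $Z^c$ connected, $\deg_Z L = m_Z$, and a non-exceptional node $p \in Z \cap Z^c$ with branches on components $C_Z \subseteq Z$ and $C_{Z^c} \subseteq Z^c$. I form $X'$ by replacing $p$ with an exceptional $\P^1$ $E$ attached at $p_1 = E \cap C_Z$ and $p_2 = E \cap C_{Z^c}$, and define $L'$ on $X'$ by $\deg_E L' = 1$, $\deg_{C_Z} L' = \deg_{C_Z} L - 1$, and $\deg_C L' = \deg_C L$ for every other component $C$. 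A case-by-case verification of the basic inequality (splitting according to whether the test subcurve of $X'$ contains $E$, $C_Z$, $C_{Z^c}$) shows that $L'$ is properly balanced, and the family smoothing $p_1$, witnessed by the subcurve $Z' = C_Z \subset X'$ (which achieves $\deg_{Z'} L' = m_{Z'}^{X'}$ since $k_{C_Z}^{X'} = k_{C_Z}^X$), realizes $(X,L) \spel (X',L')$.

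The principal obstacle is termination. If $L'$ is strictly balanced we are done; otherwise we iterate on $(X',L')$, and each iteration strictly decreases the number of non-exceptional nodes of the underlying curve, so the process halts after finitely many steps. More conceptually, the set of pairs $(Y,M)$ with $(X,L) \sp (Y,M)$ is finite up to isomorphism (by the finiteness of quasi-wp-stable models in Corollary \ref{C:quasi-vs-wp} combined with the finiteness of properly balanced multidegree classes of given total degree), so one can pick a maximal element; by part (i), if its line bundle were not strictly balanced a further elementary specialization would exist, contradicting maximality.
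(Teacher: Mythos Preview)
Your argument for part (i) contains a computational error. You set $Z^+ = Z \cup \bigcup_{i:\, n_i \in S} E_i$, adjoining only the exceptionals through which a node is smoothed. With this choice one finds $\deg_{Z^+} L' = m_Z + |S|$, whereas $k_{Z^+} = k_Z$ (each adjoined $E_i$ trades the boundary node $n_i$ for the opposite node $E_i \cap W$, and the unsmoothed nodes $n_j \in Z \cap Z^c \setminus S$ remain on the boundary of $Z^+$), so $M_{Z^+} = m_Z + k_Z$. Hence $\deg_{Z^+} L' = M_{Z^+}$ only when every node of $Z \cap Z^c$ is smoothed, which Definition \ref{D:speciali} does not require. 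The paper works on the complementary side: writing $W := \overline{Z^c \setminus \bigcup_i E_i}$ with the union over \emph{all} exceptionals of $Z^c$ meeting $Z$, one gets $\deg_W L' = m_W$; since $W$ is disjoint from every smoothed node it persists unchanged in $X$ as $\widetilde W$ with $\deg_{\widetilde W} L = m_{\widetilde W}$, and at least one boundary node of $\widetilde W$ now lies on a merged (hence non-exceptional) component. Your approach can be salvaged by adjoining all $E_1,\ldots,E_{k_Z}$, which yields exactly the complement of the paper's $\widetilde W$.

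Part (ii) has more serious gaps. First, your multidegree on $X'$ need not be balanced: subtracting $1$ from $\deg_{C_Z} L$ gives $\deg_{C_Z} L' = \deg_{C_Z} L - 1$, and nothing prevents $\deg_{C_Z} L = m_{C_Z}$ (which is compatible with $\deg_Z L = m_Z$), in which case $\deg_{C_Z} L' < m_{C_Z}^{X'}$. The paper subtracts on the $Z^c$-side (where the degree achieves $M$) and even there needs the nontrivial estimate \eqref{basineqZ}. Second, your witness $Z' = C_Z$ for Definition \ref{D:speciali}\eqref{D:speciali1} generally fails all three requirements: $(C_Z)^c$ need not be connected, $C_Z \cap (C_Z)^c$ contains the nodes of $C_Z$ with the rest of $Z$, which have no reason to lie in $\exc'$, and the asserted equality $\deg_{C_Z} L' = m_{C_Z}^{X'}$ would require $\deg_{C_Z} L = m_{C_Z} + 1$, which you never establish. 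Blowing up a single non-exceptional node is not enough; one has to blow up all of them at once so that the proper transform of $Z$ has its entire boundary in $\exc'$, as in the paper. Finally, your alternative termination via a maximal element invokes ``by part (i), if its line bundle were not strictly balanced a further elementary specialization would exist'', but part (i) gives the opposite implication; what you need here is precisely the inductive step of part (ii), so this is circular.
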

\begin{proof}
Part \eqref{L:specia1}: clearly, it is enough to consider the case where $(X, L)\spel (X',L')$ is an elementary isotrivial specialization as in Definition \ref{D:speciali}\eqref{D:speciali1}. For $Z\subseteq X'$ as in Definition \ref{D:speciali}\eqref{D:speciali1}, decompose $Z^c$ as the union of all the exceptional components $\{E_i\}_{i=1,\cdots,k_Z}$ of $X'$ that meet $Z$ and a subcurve $W$. By applying Remark \ref{R:bala}(i) to the subcurve $E_1\cup\dots\cup E_{k_Z}$, where the basic inequality achieves its maximal value, it is easy to see that $\deg_W L'=m_{W}$. Let now $\w{W}$ be the subcurve of $X$ given by the union of the irreducible components of $X$ that specialize to an irreducible component of $W\subset X'$. Since $(X, L)$ is obtained from
$(X',L')$ by smoothing some nodes which belong to $Z\cap \cup_i E_i$ and therefore are not in $W$,
we clearly have that $\w{W}\cong W$, $k_{\w{W}}=k_W$ and $L_{\w{W}}\cong L'_W$.
Hence $\deg_{\w{W}}L'=m_{\w{W}}$ and, since
$\w{W}\cap \w{W}^c\not\subset \exc$, we conclude that $L$ is not strictly balanced.

Part \eqref{L:specia2}: if $L$ is not strictly balanced, we can find a subcurve $Y\subset X$ such that
$\deg_Y L=M_Y$ and $Y\cap Y^c \subsetneq \exc$. Using that $\deg_{Y}L=M_Y$, or equivalently that $\deg_{Y^c}L=m_{Y^c}$,  it is easy to check that if $n\in Y\cap Y^c \cap \exc$ then there exists a unique exceptional component $E$ of $X$ such that $n\in E\subset Y$.

Let us denote by $\{n_1,\ldots,n_r\}$ the points belonging to $Y\cap Y^c\setminus \exc$.
Let $X'$ be the bubbling of $X$ at $\{n_1,\ldots,n_r\}$ and let $E_Y:=E_1\cup\dots\cup E_r$ be the new exceptional components of $X'$. Given a subcurve $Z\subseteq X$ denote by $Z'$ the strict transform of $Z$ via the bubbling morphism and define $k^Y_{Z'}:=|Z'\cap E_Y\cap Y|$.
\setlength{\unitlength}{1.9pt}

\begin{picture}(205.25,50.75)(10,124)
\qbezier(56.75,156)(43,145.875)(57.25,134.25)
\qbezier(67.25,133.25)(81,143.375)(66.75,155)
\qbezier(57.25,134.25)(68.25,125.625)(57.25,126.5)
\qbezier(151.25,134.25)(162.25,125.625)(151.25,126.5)
\qbezier(66.75,155)(55.75,163.625)(66.75,162.75)
\qbezier(57.25,126.5)(43.625,128.25)(38.5,135)
\qbezier(151.25,126.5)(137.625,128.25)(132.5,135)
\qbezier(66.75,162.75)(80.375,161)(85.5,154.25)
\qbezier(38.5,135)(31.875,144)(37.75,154)
\qbezier(132.5,135)(125.875,144)(131.75,154)
\qbezier(85.5,154.25)(92.125,145.25)(86.25,135.25)
\qbezier(179.5,154.25)(186.125,145.25)(180.25,135.25)
\qbezier(37.75,154)(41.125,160.375)(51,162.25)
\qbezier(86.25,135.25)(82.875,128.875)(73,127)
\qbezier(180.25,135.25)(176.875,128.875)(167,127)
\qbezier(57,156.25)(68.375,165.125)(51.25,162.5)
\qbezier(67,133.25)(55.625,124.375)(72.75,127)
\qbezier(161,133.25)(149.625,124.375)(166.75,127)
\thicklines
\put(61.25,160){\line(0,1){2}}
\put(61.25,162){\line(0,-1){1.75}}
\put(61.25,159.5){\line(0,1){2.5}}
\multiput(61.25,162)(.03125,-.21875){8}{\line(0,-1){.21875}}
\put(37.25,159.75){\makebox(0,0)[cc]{$Y$}}
\put(87.75,159.25){\makebox(0,0)[cc]{$Y^c$}}
\put(61.25,166.5){\makebox(0,0)[cc]{$\{n_1,\dots,n_r\}$}}
\thinlines
\qbezier(166,151.25)(171.25,145.625)(161.5,133.5)
\qbezier(179.5,154)(176.625,159.375)(170.25,160.25)
\qbezier(170.25,160.25)(162.875,159.625)(166,151.5)
\qbezier(146.25,149.75)(139.25,144)(151.25,134.25)
\qbezier(146.5,161)(154.125,157.125)(146.25,149.75)
\put(184.5,150.5){\line(1,0){.25}}
\put(166.5,150.5){\line(-1,0){19.25}}
\put(168.25,159.75){\line(-1,0){20.5}}
\put(155.5,164.5){\makebox(0,0)[cc]{$E_1$}}
\put(156,146.75){\makebox(0,0)[cc]{$E_r$}}
\put(184,158.75){\makebox(0,0)[cc]{${Y^c}'$}}
\put(130,157.5){\makebox(0,0)[cc]{$Y'$}}
\put(106.75,146.75){\makebox(0,0)[cc]{$\sp$}}
\put(156,155.5){\makebox(0,0)[cc]{$\vdots$}}
\qbezier(131.75,153.75)(138.875,164.25)(146.5,160.75)
\put(29.5,125.25){\makebox(0,0)[cc]{$X$}}
\put(123.75,125.25){\makebox(0,0)[cc]{$X'$}}
\end{picture}

Define a multidegree $\underline d$ on $X'$ such that $\underline d_{E_i}=1$, for $i=1,\dots, r$ and, given an irreducible component $C$ of $X$, set $$\underline d_{C'}=
\deg_{C}L-k^Y_{C'}.$$
From  \cite[Important Remark 5.1.1]{Cap} we know that there is a flat and proper family $\mathcal X \to B$ over a pointed curve $(B,b_0)$ and a line bundle $\mathcal L$ over $\mathcal X$ such that $(\mathcal X_b,\mathcal L_{|{\mathcal X_b}})\cong(X,L)$ for $b\neq b_0$ and $(\mathcal X_{b_0},\mathcal L_{|\mathcal X_{b_0}})\cong(X',L')$ where $X'$ is the bubbling of $X$ at $\{n_1,\dots, n_r\}$ and $\underline\deg L'=\underline d$.

Let us check that $L'$ is properly balanced.
It is clear that the degree of $L'$  is equal to $1$ on all the exceptional components of $X'$.
Let $W\subseteq X'$ and let us check that $L'$ satisfies the basic inequality \eqref{E:basineq-multideg}. Start by assuming that  $W=Z'$ for some $Z\subseteq Y$. Then we have that
\begin{equation}\label{basineqZ}
\deg_{Z'}L'=\deg_ZL-k^Y_{Z'}=\deg_YL-\deg_{\ov{Y\setminus Z}}L-k^Y_{Z'}=M_Y-\deg_{\ov{Y\setminus Z}}L-k^Y_{Z'} \geq  M_Y-M_{\ov{Y\setminus Z}}-k^Y_{Z'}
\end{equation}
$$ =M_Z-|Z\cap\ov{Y\setminus Z}|-k^Y_{Z'}=M_Z-k_Z+|Z'\cap {Y^c}'|=m_Z+|Z'\cap {Y^c}'|.$$
Suppose now that $W=Z_{Y^c}'\cup Z_Y'\cup E_W$ where $Z_{Y^c}\subseteq Y^c$, $Z_Y\subseteq Y$ and $E_W\subseteq E_Y$. Then,
$\deg_WL'={\deg}_{Z'_{Y^c}}L'+\deg_{Z'_Y}L'+|E_W|$ and,
by (\ref{basineqZ}), it follows that
$$\deg_WL'={\deg}_{Z_{Y^c}}L+m_Z+|Z'_Y\cap{Y^c}'|+|E_W|\geq$$ $$\frac{d\omega_W}{2g-2}-\frac{k_{Z_{Y^c}}} 2-\frac{k_{Z_Y}}2+|Z'_Y\cap{Y^c}'|+|E_W|
=m_W+|E_W|-|E_W\cap Z'_Y\cap Z'_{Y^c}|\geq m_W$$
Analogously, we can show that $\deg_WL'\leq M_W$, so we conclude that $L'$ is properly balanced.

Now, if $L'$ is strictly balanced we are done. If not, we repeat the same procedure and, after a finite number of steps, we will find the desired pair $(X'',L'')$ with $L''$ strictly balanced.

\end{proof}

We can now give a partial answer to Question \ref{Q:clos-orb}.

\begin{thm}\label{T:spec-clos-orb}
Let $[X\subset \P^r], [X'\subset \P^r] \in \Hilb_d$ and assume that $X$ and $X'$ are quasi-wp-stable curves
and $\OO_X(1)$ and $\OO_{X'}(1)$ are properly balanced and non-special. Suppose that
$(X,\OO_X(1))\sp (X', \OO_{X'}(1))$.  Then
\begin{enumerate}[(i)]
\item \label{T:spec-clos-orb1} $[X'\subset \P^r]\in \ov{\Orb([X\subset \P^r])}. $
\item \label{T:spec-clos-orb2} $[X \subset \P^r]\in \Ch^{-1}(\Chow_d^{ss})$ (resp. $\Hilb_d^{ss}$) if and only if $[X'\subset \P^r]\in \Ch^{-1}(\Chow_d^{ss})$ (resp. $\Hilb_d^{ss}$).
\end{enumerate}
\end{thm}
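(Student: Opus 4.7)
The plan is to reduce to a single elementary isotrivial specialization, realize it both as a 1-parameter family over a pointed curve and as a $\Gm$-action on $\Hilb_d$ via a suitable 1-parameter subgroup of $\GL_{r+1}$, and then invoke the basin of attraction technique to transfer semistability.

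Since $\sp$ is by definition a finite composition of elementary specializations (Definition \ref{D:speciali}\eqref{D:speciali2}) and both conclusions \eqref{T:spec-clos-orb1}--\eqref{T:spec-clos-orb2} are transitive, I first reduce to the case $(X,L)\spel (X',L')$. I thus have a proper connected subcurve $Z\subset X'$ with $\deg_Z L'=m_Z$, $Z^c$ connected, $Z\cap Z^c\subseteq \exc'$, together with a flat projective family $\pi:\cX\to B$ on a smooth pointed curve $(B,b_0)$ and a line bundle $\cL$ on $\cX$ whose central fiber is $(X',L')$ and whose other fibers are $(X,L)$. Since $L$ and $L'$ are non-special of the same degree $d$, the sheaf $\pi_*\cL$ is locally free of rank $r+1$; after shrinking $B$ and trivializing $\pi_*\cL$ I obtain a closed immersion $\cX\hookrightarrow \P^r\times B$ and hence a morphism $\phi:B\to \Hilb_d$ with $\phi(b_0)\in G\cdot [X'\subset \P^r]$ and $\phi(b)\in G\cdot [X\subset \P^r]$ for $b\neq b_0$ (any two embeddings of the same polarized curve differ by an element of $\GL_{r+1}$, whose scalar subgroup acts trivially on $\Hilb_d$). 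This yields part \eqref{T:spec-clos-orb1} immediately: $[X'\subset \P^r]\in G\cdot\phi(b_0)\subseteq \ov{G\cdot\phi(B\setminus\{b_0\})}=\ov{\Orb_G([X\subset \P^r])}$.

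For part \eqref{T:spec-clos-orb2}, I will realize the specialization through a 1-parameter subgroup $\rho:\Gm\to \GL_{r+1}$, following the construction of \cite[Important Remark 5.1.1]{Cap} already used in the proof of Lemma \ref{L:specia}\eqref{L:specia2}: the family $\pi:\cX\to B$ may be arranged to arise as the restriction of a $\Gm$-equivariant family over $\A^1$. Concretely I choose a basis of $V=H^0(\P^r,\OO_{\P^r}(1))\cong H^0(X',L')$ adapted to the filtration $U\subseteq V$ with $U:=\ker\left(V\to H^0(Z,L'_{|Z})\right)$, and let $\rho$ act with weight $0$ on $U$ and weight $1$ on a chosen complement; the assumption $Z\cap Z^c\subseteq \exc'$ ensures that $[X'\subset \P^r]$ is a $\rho$-fixed point and that $[X\subset \P^r]$ (up to a $G$-translate) lies in the basin of attraction $A_\rho([X'\subset \P^r])$.

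The key step is then the numerical computation showing $\mu([X'\subset \P^r]_m,\rho)=0$ for $m\gg 0$, and in parallel $e_{X',\rho}=2d\cdot w(\rho)/(r+1)$ for the Chow weight. I will establish these by rerunning the estimate of the filtered Hilbert function from the proof of Theorem \ref{T:stab-bal}, this time with $Z$ playing the role of $Y$ and $(X',L')$ replacing $(X,L)$, and verifying that the extremality $\deg_Z L'=m_Z$ combined with $Z\cap Z^c\subseteq \exc'$ and the non-speciality of $L'$ forces every inequality in that argument to become an equality: namely the surjectivity of $V\to H^0(Z,L'_{|Z})$, the vanishing $H^1(Z^c,(L'_{|Z^c})^m((i-m)D_{Z^c}))=0$ for $0\leq i\leq m-1$ and $m\gg 0$ (which goes through verbatim as in the Claim of that proof, using that the subcurves of $Z^c$ have enough degree by the basic inequality), and the matching of the leading coefficients of the two polynomials involved in the numerical criterion. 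Once $\mu([X'\subset \P^r]_m,\rho)=0$ is in hand, Fact \ref{F:weight-basin}(ii) yields the equivalence of $m$-Hilbert (resp.\ Chow) semistability between $[X'\subset \P^r]$ and every point of $A_\rho([X'\subset \P^r])$, in particular $[X\subset \P^r]$, completing part \eqref{T:spec-clos-orb2}. I expect this equality computation to be the main technical obstacle, since it requires tracing through each inequality in the proof of Theorem \ref{T:stab-bal} to rule out any slack at every step.
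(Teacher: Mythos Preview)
Your overall architecture is the same as the paper's: reduce to an elementary specialization, build a 1ps $\rho$ fixing $[X'\subset\P^r]$, and invoke Fact \ref{F:weight-basin}. Your treatment of part \eqref{T:spec-clos-orb1} via the family $\phi:B\to\Hilb_d$ is fine and in fact more direct than the paper, which extracts it from the basin-of-attraction statement.

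However, there are two genuine issues in your plan for part \eqref{T:spec-clos-orb2}.

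\textbf{Direction of the basin.} With your choice of weights (weight $0$ on $U=\ker(V\to H^0(Z,L'_{|Z}))$ and weight $1$ on a complement), the deformation parameter $a_i$ smoothing a node $n_i\in Z\cap E_i$ acquires \emph{negative} $\rho$-weight. In the paper's coordinates ($u_i=x_{i-1}/x_i$, $v_i=x_{i+1}/x_i$, with $x_{i-1},x_i$ restricting nontrivially to $Z$ and $x_{i+1}\in U$), your $\rho$ gives $u_i$ weight $0$ and $v_i$ weight $-1$, so $\rho(t)\cdot a_i=t^{-1}a_i$: the smoothing $[X\subset\P^r]$ lies in $A_{\rho^{-1}}([X'\subset\P^r])$, not $A_{\rho}([X'\subset\P^r])$. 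The paper takes the opposite convention (weight $1$ on $H^0(W,\OO_W(1))$ and weight $0$ on $H^0(Z,\OO_Z(1))$, where $Z^c=\bigcup_i E_i\cup W$), precisely so that $a_i$ has positive weight. If you do succeed in proving $\mu=0$ then $\rho^{-1}$ also has $\mu=0$ and the issue is harmless, but you should be aware that your basin claim as written is false.

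\textbf{The main technical obstacle is misplaced.} You identify ``tracing equalities'' in the filtered Hilbert function as the hard step. In fact the paper only needs $\mu([X'\subset\P^r]_m,\rho)\leq 0$, which follows verbatim from the estimate \eqref{E:inequa} in Theorem \ref{T:stab-bal}; no equality tracing is required. (If either point is semistable one gets $\mu\geq 0$ as well---for $[X'\subset\P^r]$ directly from the numerical criterion, for $[X\subset\P^r]$ by the contrapositive of Fact \ref{F:weight-basin}(i)---forcing $\mu=0$ and then Fact \ref{F:weight-basin}(ii) applies.) The real work is what you dismiss in a clause: showing that $[X\subset\P^r]$ lies in the basin of attraction, and that $\rho$ fixes $[X'\subset\P^r]$. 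The paper handles the fixed-point claim by exhibiting $\Im\rho$ inside $\Aut(X',\OO_{X'}(1))$ via the $\Gm^2$ coming from the connected components $Z$ and $W$ of $\w{X'}$ (using the proof of Theorem \ref{T:auto-grp}). For the basin claim, the paper's Claim 3 uses that the natural maps $H_{X'}^{\P^r}\to \Def_{X'}\to\prod_i\Def_{(X',n_i)}$ are formally smooth and $\rho$-equivariant, reducing to the explicit weight computation on each local smoothing parameter $a_i$. Your sentence ``the family may be arranged to arise as the restriction of a $\Gm$-equivariant family over $\A^1$'' is exactly what needs to be proved, and neither Definition \ref{D:speciali} nor \cite[Remark 5.1.1]{Cap} supplies this equivariance---you need the deformation-theoretic argument or an equivalent.
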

\begin{proof}
It is enough, in view of  Fact \ref{F:weight-basin}, to find a 1ps $\rho:\Gm\to \GL_{r+1}$ that stabilizes
$[X'\subset \P^r]$ and such that $\mu([X'\subset \P^r]_m,\rho)\leq 0$ for $m\gg 0$ and $[X\subset \P^r]\in A_{\rho}([X'\subset \P^r])$.

We can clearly assume that $(X,\OO_X(1))\spel (X', \OO_{X'}(1))$. Using the notation of Definition \ref{D:speciali}\eqref{D:speciali1}, this means that there exists a connected subcurve $Z\subset X'$ with $Z^c$ connected and $Z\cap Z^c\subset \exc'$ and $\deg_Z L'=m_{Z}$ such that $(X,\OO_X(1))$ is obtained from $(X',\OO_{X'}(1))$
by smoothing some of the nodes of $Z\cap Z^c$.
Moreover, we can decompose the connected complementary subcurve $Z^c$ as
$$Z^c=\bigcup_{1\leq i\leq k_Z} E_i\cup W,$$
where the $E_i$'s are the exceptional subcurves of $X'$ that meet the subcurve $Z$ and $W:=\ov{Z^c\setminus \cup_i E_i}$ is clearly connected as well. Since $\deg_{E_i}L'=1$, it follows from Remark \ref{R:bala} applied to the subcurve $E_1\cup\dots\cup E_{k_Z}$ that $\deg_W L'=m_W$.

The required 1ps $\rho$ of $\GL_{r+1}$ is similar to the 1ps
considered in the proof of Theorem \ref{T:stab-bal}.
More precisely, consider the restriction map
$${\rm res}: H^0(X',\OO_{X'}(1))\longrightarrow H^0(Z, \OO_Z(1))\oplus H^0(W, \OO_W(1)).
$$
The map ${\rm res}$ is injective because the complementary subcurve of $Z\cup W$ is made up of the exceptional components $E_i\cong \P^1$, each of which meets both $Z$ and $W$ in one point.
Moreover, since $\OO_{X'}(1)$ is non-special by assumption, which implies that also $\OO_Z(1)$ and $\OO_W(1)$
are non-special, we have that
$$\dim H^0(Z, \OO_Z(1))+ \dim H^0(W, \OO_W(1))=\deg_Z\OO_{X'}(1)-g_Z+1+\deg_W\OO_{X'}(1)-g_W+1= $$
$$=
m_Z-g_Z+1+m_W-g_W+1=d-g+1=\dim H^0(X',\OO_{X'}(1)),
$$
where we have used that $m_Z+m_W=d-k_Z$ and $g=g_W+g_Z+k_Z-1$.
This implies that ${\rm res}$ is an isomorphism.
Define now the 1ps $\rho:\Gm\to \GL_{r+1}$ so that
$$\begin{sis}
& \rho(t)_{|H^0(W,\OO_W(1))}=t \cdot \Id, \\
& \rho(t)_{|H^0(Z,\OO_Z(1))}= \Id.\\
\end{sis}
$$
Let us check that the above 1ps $\rho$ satisfies all the desired properties.

\un{CLAIM 1}: $\mu([X'\subset \P^r]_m,\rho)\leq 0$ for $m\gg 0$.

This is proved exactly as in Theorem \ref{T:stab-bal}: see \eqref{E:inequa} and the equation for $\mu([X\subset \P^r]_m,\rho)$
given in Fact \eqref{Hilb-crit}.

\un{CLAIM 2}: $\rho$ stabilizes $[X'\subset \P^r]\in \Hilb_d$.

Using Lemma \ref{L:aut-stab}, it is enough to check that
$$\Im \rho\subseteq \Aut(X',\OO_{X'}(1))\cong \Stab_{\GL_{r+1}}([X'\subset \P^r])\subseteq \GL_{r+1}.$$
Since the non exceptional subcurve $\w{X'}\subset X'$ is contained in  $Z\coprod W$, it follows from the proof of Theorem \ref{T:auto-grp} that $\Aut(X',\OO_{X'}(1))$ contains a subgroup $H$ isomorphic to $\Gm^2$
and such that $(\lambda,\mu)\in H\cong \Gm^2$ acts via multiplication by $\lambda$ on
$H^0(W,\OO_W(1))$ and by $\mu$ on $H^0(Z,\OO_Z(1))$. By construction, it follows that $\Im \rho\subseteq H$ and we are done.

\un{CLAIM 3}: $[X\subset \P^r]\in A_{\rho}([X'\subset \P^r])$.

Recall that, by assumption, $(X, \OO_X(1))$ is obtained from $(X',\OO_{X'}(1))$ by smoothing some of the nodes of
$Z\cap Z^c=\cup_i (Z\cap E_i)$. Denote by $n_i$ the node given by the intersection of $Z$ with $E_i$ and
by $\Def_{(X',n_i)}$ the functor of infinitesimal deformations of the complete local ring $\widehat{\OO}_{X', n_i}$ (see \cite[Sec. 2.4]{Ser}).
According to \cite[Cor. 3.1.2, Exa. 3.1.4(a)]{Ser},  if we write $\widehat{\OO}_{X',n_i}=k[[u_i,v_i]]/(u_iv_i)$, then
$\Def_{(X',n_i)}$ has a semiuniversal ring equal  to $k[[a_i]]$ with universal family  given by  $ k[[u_i,v_i,a_i]]/(u_iv_i-a_i)$.

Now, consider the local Hilbert functor $H_{X'}^{\P^r}$ parametrizing infinitesimal deformations of $X'$ in $\P^r$ (see \cite[Sec. 3.2.1]{Ser}).
Clearly, $H_{X'}^{\P^r}$ is pro-represented by the complete local ring of $\Hilb_d$ at $[X\subset \P^r]$.
Since $X'$ is a curve with locally complete intersection singularities and $\OO_{X'}(1)$ is non-special, from \cite[I.6.10]{Kol} we get  that
the natural morphism of functors
\begin{equation}\label{E:for-sm1}
H_{X'}^{\P^r}\longrightarrow  \Def_{X'}
\end{equation}
is formally smooth, where $\Def_{X'}$ is the functor of infinitesimal deformations of $X'$. It follows easily from
\cite[Thm. 2.4.1]{Ser} that the natural morphism of functors
\begin{equation}\label{E:for-sm2}
\Def_{X'}\longrightarrow \prod_i \Def_{(X', n_i)}
\end{equation}
is also formally smooth.
Moreover, since $\rho$ stabilizes $[X'\subset \P^r]$ by Claim 2, the above morphisms \eqref{E:for-sm1} and \eqref{E:for-sm2} are equivariant under the
natural action of $\rho$ on each functor.
Therefore, in order to prove that $[X\subset \P^r]\in A_{\rho}([X'\subset \P^r])$, it is enough to prove that $\rho$
acts on each $k[[a_i]]$ with positive weight (compare also with the proof of \cite[Lemma 4]{HMo} and of \cite[Cor. 7.9]{HH2}).

Fix a node $n_i=E_i\cap Z$ for some $1\leq i\leq k_Z$.
We can choose coordinates $\{x_1,\ldots, x_{r+1}\}$ of $V=H^0(\P^r,\OO_{\P^r}(1))=H^0(X',\OO_{X'}(1))$ so
that $x_i$ is the unique coordinate that does not vanish at $n_i$ and the exceptional component $E_i$ is given by the
linear span $\langle x_i, x_{i+1}\rangle$, as well as the tangent $T_{Z,n_i}$ of $Z$ at $n_i$ is given by the linear span
$\langle x_{i-1}, x_i\rangle$. Then the completion of the local ring $\OO_{X',n_i}$ is equal to $k[[u_i, v_i]]/(u_iv_i)$ where $u_i=x_{i-1}/x_i$ and $v_i=x_{i+1}/x_i$. Since $T_{Z, n_i}$ is contained in the linear span
$\langle Z\rangle$ of $Z$ and $\rho(t)_{|H^0(W,\OO_W(1))}=\Id$ by construction,
we have that $\rho(t)\cdot x_i= x_i$ and $\rho(t)\cdot x_{i-1}=x_{i-1}$; hence $\rho(t)\cdot u_i=u_i$. On the other hand, the point $q_i$ defined by $x_k=0$ for every $k\neq i+1$
is clearly the node given by the intersection of $E_i$ with $W$. Since $\rho(t)_{|H^0(W,\OO_W(1))}=t\cdot \Id$ by construction, we have that $\rho(t)\cdot x_{i+1}=tx_{i+1}$; hence
$\rho(t)\cdot v_i=t v_i$. Since the equation of the universal family over $k[[a_i]]$ is given by $u_iv_i-a_i=0$ and $\rho$ acts on this universal family, we deduce that
$\rho(t)\cdot a_i=ta_i$, which concludes our proof.



\end{proof}

From the above theorem, we deduce now the following
\begin{coro}\label{C:poly-strictly-bal}
Let $[X\subset \P^r]\in \Hilb_d$ with $X$ connected and $d>2(2g-2)$.
If $[X\subset \P^r]$ is Chow polystable or Hilbert polystable
then $\OO_X(1)$ is strictly balanced.
\end{coro}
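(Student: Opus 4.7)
The plan is to argue by contradiction: assume $[X\subset \P^r]$ is Chow (resp.\ Hilbert) polystable but $\OO_X(1)$ is not strictly balanced, and derive a contradiction by exhibiting a distinct semistable orbit inside the closure of $\Orb_G([X\subset \P^r])$.

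First I would invoke Corollary \ref{C:quasi-wp-stable}\eqref{C:quasi-wp-stable1} together with the potential pseudo-stability Theorem \ref{teo-pstab} to ensure that $X$ is quasi-wp-stable and that $\OO_X(1)$ is properly balanced and non-special. Since $\OO_X(1)$ is assumed not strictly balanced, Lemma \ref{L:specia}\eqref{L:specia2} produces an isotrivial specialization $(X,\OO_X(1))\sp (X',L')$ with $L'$ strictly balanced; inspecting the construction carried out in its proof, $X'$ is again quasi-wp-stable and $L'$ is properly balanced of the same degree $d$.

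The next step is to promote $(X',L')$ to an honest point $[X'\subset \P^r]\in \Hilb_d$. For this I need $L'$ to be non-special and very ample, so that the embedding via the complete linear system $|L'|$ produces a subscheme of $\P(H^0(X',L')^\vee)\cong \P^r$ with $\OO_{X'}(1)=L'$. Since $L'$ is properly balanced of degree $d>2(2g-2)$ on a (Gorenstein) quasi-wp-stable curve, the positivity results collected in the Appendix yield the desired non-speciality and very ampleness. With both pairs now satisfying the hypotheses of Theorem \ref{T:spec-clos-orb}, part \eqref{T:spec-clos-orb1} gives $[X'\subset \P^r]\in \overline{\Orb_G([X\subset \P^r])}$, while part \eqref{T:spec-clos-orb2} ensures that $[X'\subset \P^r]$ remains Chow (resp.\ Hilbert) semistable.

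To conclude, I would observe that whether a properly balanced line bundle is strictly balanced is an intrinsic property of the pair (abstract curve, line bundle), hence constant on $\SL_{r+1}$-orbits. Since $L'$ is strictly balanced and $\OO_X(1)$ is not, the two orbits are distinct, so $\Orb_G([X\subset \P^r])$ fails to be closed inside the semistable locus, contradicting polystability. The main technical hurdle I anticipate is the verification that $(X',L')$ genuinely defines a point of $\Hilb_d$: the isotrivial specialization of Lemma \ref{L:specia}\eqref{L:specia2} is built somewhat abstractly, and non-speciality of the limit must be propagated by independent positivity arguments for properly balanced line bundles, rather than by semicontinuity of $h^1$ (which runs the wrong way).
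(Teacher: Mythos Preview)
Your proposal is correct and follows essentially the same route as the paper's proof: produce an isotrivial specialization $(X,\OO_X(1))\sp(X',L')$ with $L'$ strictly balanced, embed $(X',L')$ via $|L'|$, and use Theorem \ref{T:spec-clos-orb} to contradict closedness of the orbit.

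The one point you flag as a hurdle, and which the paper treats explicitly, is the very ampleness of $L'$. Theorem \ref{bal-pos}\eqref{bal-va1} only applies for $d>\tfrac{5}{2}(2g-2)$; in the remaining range $2(2g-2)<d\le \tfrac{5}{2}(2g-2)$ you must invoke Theorem \ref{bal-pos}\eqref{bal-va2}, which requires that $X'$ have no elliptic tails. The paper secures this by observing that $X$ has no elliptic tails in this range (Theorem \ref{T:ell-curves}) and that the isotrivial specialization of Lemma \ref{L:specia}\eqref{L:specia2} only blows up nodes, so it cannot create elliptic tails on $X'$. With that detail supplied (and ampleness of $L'$ via Remark \ref{R:propbal-ample}), your argument goes through exactly as the paper's does.
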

\begin{proof}
Let us prove the statement for the Chow polystability;
 the Hilbert polystability being analogous.

Let $[X\subset \P^r]\in \Hilb_d$ for $d>2(2g-2)$ with $X$ connected and assume that $[X\subset \P^r]$ is Chow-polystable.
Recall that $X$ is quasi-wp-stable by Corollary \ref{C:quasi-wp-stable}\eqref{C:quasi-wp-stable1} and $\OO_X(1)$ is properly balanced by Theorem \ref{teo-pstab} and Proposition \ref{P:bal-quasi}. By Lemma \ref{L:specia}, we can find a pair $(X',L')$ consisting of a quasi-wp-stable curve $X'$ and a strictly balanced line bundle $L'$ on $X'$ such that
$(X,\OO_X(1)) \sp (X', L')$. Note that $L'$ is ample by Remark \ref{R:propbal-ample}; moreover $X'$ does not have elliptic
tails if $d <5/2(2g-2)$ because otherwise, by the basic inequality \eqref{E:basineq-multideg}, $L'$ would have
degree at most $2$ on each elliptic tail, hence it would not be very ample.
Therefore, we can apply Theorem \ref{bal-pos} which allows us to conclude that $L'$ is non-special and very ample; we get a point $[X'\stackrel{|L'|}{\hookrightarrow}\P^r]\in \Hilb_d$.
The above Theorem \ref{T:spec-clos-orb} gives that $[X'\subset \P^r]\in \ov{\Orb([X\subset \P^r])}$ and
$[X'\subset \P^r]\in \Ch^{-1}(\Chow_d^{ss})$. Since $[X\subset \P^r]$ is Chow polystable, we must have that
$[X'\subset \P^r]\in \Orb([X\subset \P^r])$; hence $X'=X$ and $\OO_X(1)=\OO_{X'}(1)=L'$ is strictly balanced.

\end{proof}


\section{A criterion of stability for tails}\label{S:crit-tails}

In this section we would like to state a criterion of stability for tails based on the Hilbert-Mumford criterion and the parabolic group. Let $[X\inj \P^{r}]\in \Hilb_{d}$ with $d> 2(2g-2)$, where $X$ is the union of two curves $X_1$ and $X_2$ (of degrees $d_1,d_2$ and genus $g_1,g_2$) that intersect each other transversally in a single point $p$. By the Potential pseudo-stability Theorem \ref{teo-pstab}\eqref{teo-pstab2}, we can assume that $h^1(X,\OO_{X}(1))=0$, which implies that $h^0(X_i,\OO_{X_i}(1))=d_i+1-g_i=:r_i+1$. Hence, denoting by $\langle X_1\rangle$ and $\langle X_2\rangle$ respectively the linear spans of $X_1$ and $X_2$, we can find a system of coordinates $\{x_1,\ldots,x_{r+1}\}$ such that
\begin{eqnarray}\label{eq:coorcoda}
\langle X_1\rangle=\bigcap_{i=r_1+2}^{r+1}\{x_i=0\}\quad \textit{and}\quad \langle X_2\rangle=\bigcap_{i=1}^{r_1}\{x_i=0\}.
\end{eqnarray}
Using this type of coordinates to find destabilizing one-parameter subgroups is very convenient, because we can study the two subcurves separately, as the results below show.

Let $\rho$ be a 1ps of  $\GL_{r+1}$. By Proposition \ref{prop:sumsub}, we know that $e_{X,\rho}=e_{X_1,\rho}+e_{X_2,\rho}$, but in general we cannot say something similar for the Hilbert weight $W_{X,\rho}(m)$. If $\rho$ is diagonalized by coordinates of type \eqref{eq:coorcoda}, we can do it.
\begin{lemma}\label{lem:sumhilb}
Let $[X\subset \P^r]\in \Hilb_d$ as above and let $\rho$ be a 1ps of $\GL_{r+1}$ diagonalized by coordinates of type \eqref{eq:coorcoda}. 
Then
\begin{equation}\label{eq:WW1W2}
W_{X,\rho}(m)= W_{X_1,\rho}(m)+W_{X_2,\rho}(m)-w_{r_1+1}m.
\end{equation}
\end{lemma}
\begin{proof}
Let $m$ be a positive integer and consider a monomial basis $\{B_1,\ldots,B_{P_1(m)}\}$ of $H^0(X_1,\OO_{X_1}(m))$. Since the point
$p=[x_1=0,\ldots,x_{r_1}=0,x_{r_1+1}=1,x_{r_1+2}=0,\ldots,x_{r+1}=0]$ belongs to $X_1$, there exists a monomial (for example $B_{P_1(m)}$) such that $B_{P_1(m)}=x_{r_1+1}^m$. The same holds for each monomial basis $\{B'_1,\ldots,B'_{P_2(m)}\}$ of $H^0(X_2,\OO_{X_2}(m))$ (for example $B'_{P_2(m)}=x_{r_1+1}^m$). By the exact sequence
$$
0\ra H^0(X,\OO_{X}(m)) \stackrel{(|_{X_1},|_{X_2})}{\lra} H^0(X_1,\OO_{X_1}(m))\oplus H^0(X_2,\OO_{X_2}(m))\lra H^0(X_1\cap X_2,\OO_{X_1\cap X_2}(m))\ra 0,
$$
we obtain that $\{B_1,\ldots,B_{P_1(m)},B'_1,\ldots,B'_{P_2(m)-1}\}$ is a monomial basis of $H^0(X,\OO_{X}(m))$.
Therefore, if we choose the monomial basis $\{B_1,\ldots,B_{P_1(m)}\}$  and $\{B'_1,\ldots,B'_{P_2(m)}\}$ so that
$$
W_{X_1,\rho}(m)=\sum_{i=1}^{P_1(m)}w_{\rho}(B_i)\quad \text{and}\quad
W_{X_2,\rho}(m)=\sum_{i=1}^{P_2(m)}w_{\rho}(B'_i),
$$
then we get
\begin{eqnarray}
W_{X,\rho}(m) &\leq& \sum_{i=1}^{P_1(m)}w_{\rho}(B_i)+\sum_{i=1}^{P_2(m)-1}w_{\rho}(B'_i)
=\sum_{i=1}^{P_1(m)}w_{\rho}(B_i)+\sum_{i=1}^{P_2(m)}w_{\rho}(B'_i)-w_{\rho}(B'_{P_2(m)})=\nonumber\\
&=&  W_{X_1,\rho}(m)+W_{X_2,\rho}(m)-w_{r_1+1}m.\nonumber
\end{eqnarray}
Now, we will prove the reverse inequality. Choose a monomial basis $\{B_1,\ldots,B_{P(m)}\}$ of $H^0(X,\OO_X(m))$ such that
$$
W_{X,\rho}(m)=\sum_{i=1}^{P(m)}w_{\rho}(B_i).
$$
The same argument used to prove the inequality $\geq$ of Proposition \ref{prop:sumsub} shows that for each monomial basis $\{B_1,\ldots,B_{P(m)}\}$ of $H^0(X,\OO_{X}(m))$,
we can reorder the monomials so that
\begin{enumerate}
	\item $\{B_1,\ldots,B_{P_1(m)}\}$ is a monomial basis of $H^0(X_1,\OO_{X_1}(m))$,
	\item $\{B_{P_1(m)},\ldots,B_{P(m)}\}$ is a monomial basis of $H^0(X_2,\OO_{X_2}(m))$,
	\item $B_{P_1(m)}=x^m_{r_1+1}$.
\end{enumerate}
We obtain
\begin{eqnarray}
W_{X,\rho}(m) &=&\sum_{i=1}^{P(m)}w_{\rho}(B_i)=\sum_{i=1}^{P_1(m)}w_{\rho}(B_i)+\sum_{i=P_1(m)}^{P(m)}w_{\rho}(B_i)-w_{\rho}(B_{P_1(m)})\nonumber\\
&\geq& W_{X_1,\rho}(m)+W_{X_2,\rho}(m)-w_{r_1+1}m\nonumber,
\end{eqnarray}
and we are done.
\end{proof}
Let $I$, $I_1$ and $I_2$ be the ideals of $X$, $X_1$ and $X_2$, respectively.
If $\rho$ is diagonalized by coordinates of type \eqref{eq:coorcoda}, we can compute easily the flat limit
$$
\lim_{t\ra 0}\rho(t)[X\subset \P^r]
$$
by computing the flat limits of $X_1$ and $X_2$ separately.
\begin{lemma}\label{lem:flatlimitcode}
Let $X=X_1\cup X_2\subset \P^r$ be a connected (possibly not reduced) curve and let $\{x_1,\ldots,x_{r+1}\}$ coordinates such that
$$
\{x_{i}\,|\,r_1+2\leq i\leq r+1\}\subset I_1,\quad \{x_{i}\,|\,1\leq i\leq r_1\}\subset I_2\quad\textit{and}\quad I_1+I_2=\langle x_i\,|\,i\neq r_1+1\rangle.
$$
Let $\rho$ be a 1ps of $\GL_{r+1}$  diagonalized by $\{x_1,\ldots,x_{r+1}\}$ and denote by $\prec$ a $\rho$-weighted lexicographical order in $k[x_1,\ldots,x_{r+1}]$ that refines $\prec_{\rho}$.
\begin{enumerate}[(i)]
\item \label{L:flatlim1} If $\{f_1,\ldots,f_n\}\subset k[x_1,\ldots,x_{r_1+1}]$ is a system of generators for $I_1\cap k[x_1,\ldots,x_{r_1+1}]$ and $\{g_1,\ldots,g_m\}\subset k[x_{r_1+1},\ldots,x_{r+1}]$ is a system of generators for $I_2\cap k[x_{r_1+1},\ldots,x_{r+1}]$, then
$$
I=\langle f_1,\ldots,f_n,g_1,\ldots,g_m,x_ix_j\,|\,1\leq i\leq r_1\text{ and } r_1+2\leq j\leq r+1\rangle.
$$
\item \label{L:flatlim2} Moreover, if $\{f_1,\ldots,f_n\}$ and $\{g_1,\ldots,g_m\}$ are Gr\"obner bases with respect to $\prec$, then
\begin{enumerate}[(a)]
\item $\{f_1,\ldots,f_n,x_{r_1+2},\ldots,x_{r+1}\}\quad\text{and}\quad \{x_1,\ldots,x_{r_1},g_1,\ldots,g_m\}$
are Gr\"obner bases respectively for $I_1$ and $I_2$;
\item $\{f_1,\ldots,f_n,g_1,\ldots,g_m,x_ix_j\,|\,1\leq i\leq r_1\text{ and } r_1+2\leq j\leq r+1\}$
is a Gr\"obner basis for $I$.
\end{enumerate}
\item \label{L:flatlim3}
We have that
$$
\In_{\prec}(I)=\In_{\prec}(I_1)\cap\In_{\prec}(I_2)\quad\text{and}\quad \In_{\prec_{\rho}}(I)=\In_{\prec_{\rho}}(I_1)\cap\In_{\prec_{\rho}}(I_2).
$$
\end{enumerate}
\end{lemma}
\begin{proof}
Let us first prove part \eqref{L:flatlim1}.
Consider $f\in I=I_1\cap I_2$. Since $f\in I_2$, there exist $p_1,\ldots,p_{r_1}\in k[x_1,\ldots,x_{r+1}]$ and $q_1,\ldots,q_{m}\in k[x_{r_1+1},\ldots,x_{r+1}]$ such that
$$
f=\sum_{i=1}^{r_1}x_ip_i+\sum_{k=1}^{m}q_kg_k.
$$
Let $\widetilde{p}_i\in k[x_1,\ldots,x_{r_1+1}]$ for $i=1,\ldots,r_1$ such that each monomial of $p_i-\widetilde{p}_i$ contains one among the coordinates $x_{r_1+2},\ldots,x_{r+1}$. Analogously, let $\widetilde{q}_k\in k[x_1,\ldots,x_{r_1+1}]$ for $k=1,\ldots,m$ such that each monomial of $q_k-\widetilde{q}_k$ contains one among the coordinates $x_{1},\ldots,x_{r_1}$. In this way for $i=1,\ldots,r_1$ and $j=r_1+2,\ldots,r+1$ there exist polynomials $l_{ij}$ which satisfy
$$
f=\sum_{i=1}^{r_1}x_i\widetilde{p}_i+\sum_{i=1}^{r_1}\sum_{j=r_1+2}^{r+1}x_ix_jl_{ij}+\sum_{k=1}^{m}\widetilde{q_k} g_k.
$$
Since in each term of the above summation the monomial $x_{r_1+1}^a$ does not appear, we get that
$$
\sum_{i=1}^{r_1}\sum_{j=r_1+2}^{r+1}x_ix_jl_{ij}+\sum_{k=1}^{m}\widetilde{q_k}g_k\in I_1.
$$
Moreover, since by assumption $f\in I_1$, we also get that
$$
\sum_{i=1}^{r_1}x_i\widetilde{p_i}\in I_1,
$$
hence there exist $h_1,\ldots,h_n\in k[x_1,\ldots,x_{r_1+1}]$ such that
$$
\sum_{i=1}^{r_1}x_i\widetilde{p_i}=\sum_{i=1}^{n}h_i f_i.
$$
Substituting into the above expression of $f$, we get
\begin{equation}\label{eq:fgen}
f=\sum_{i=1}^{n}h_i f_i+\sum_{i=1}^{r_1}\sum_{j=r_1+2}^{r+1}x_ix_jl_{ij}+\sum_{k=1}^{m}\widetilde{q}_kg_k,
\end{equation}
which shows that  $\{f_1,\ldots,f_n,g_1,\ldots,g_m,x_ix_j\,|\,1\leq i\leq r_1\text{ and } r_1+2\leq j\leq r+1\}$ is a system of generators for $I$, q.e.d.

Now, suppose that $\{f_1,\ldots,f_n\}$ and $\{g_1,\ldots,g_m\}$ are Gr\"obner bases with respect to $\prec$ and let us prove part \eqref{L:flatlim2}.
The assertion (a) follows easily from the  Buchberger's criterion (see Fact \ref{F:buch}).
In order to prove the assertion (b), consider an element $f\in I_1\cap I_2$ and write it as in \eqref{eq:fgen}.
By definition, the three polynomials
$$
F:=\sum_{i=1}^{n}h_i f_i,\quad G:=\sum_{i=1}^{r_1}\sum_{j=r_1+2}^{r+1}x_ix_jl_{ij}\quad \text{and}\quad H:=\sum_{k=1}^{m}\widetilde{q}_kg_k
$$
have no common similar monomials, so that
$$
\In_{\prec}(f)=\In_{\prec}(\In_{\prec}(F)+\In_{\prec}(G)+\In_{\prec}(H)).
$$
Obviously $\In_{\prec}(G)\in \langle x_ix_j\,|\,1\leq i\leq r_1\text{ and } r_1+2\leq j\leq r+1\rangle$. We know that $\{f_1,\ldots,f_n\}$ is a Gr\"obner basis, hence $\In_{\prec}(F)\in \langle \In_{\prec}(f_1),\ldots,\In_{\prec}(f_n)\rangle$. Similarly, $\In_{\prec}(H)\in \langle \In_{\prec}(g_1),\ldots,\In_{\prec}(g_n)\rangle$, hence
$$
\{f_1,\ldots,f_n,g_1,\ldots,g_m,x_ix_j\,|\,1\leq i\leq r_1\text{ and } r_1+2\leq j\leq r+1\}
$$
is a Gr\"obner basis for $I$, q.e.d.

Let us now prove part \eqref{L:flatlim3}.
According to \eqref{L:flatlim2}(a), the ideals $\In_{\prec}(I_1)$ and $\In_{\prec}(I_2)$ satisfy the hypothesis of \eqref{L:flatlim1} with respect to the generators $\{ \In_{\prec_{\rho}}(f_1),\ldots,\In_{\prec_{\rho}}(f_n)\}$ of $\In_{\prec}(I_1)\cap k[x_1,\ldots,x_{r_1+1}]$ and $\{\In_{\prec_{\rho}}(g_1),\ldots,\In_{\prec_{\rho}}(g_m)\}$ of $\In_{\prec}(I_2)\cap k[x_{r_1+1},\ldots,x_{r+1}]$.
Therefore, part \eqref{L:flatlim1} gives that
$$\{ \In_{\prec_{\rho}}(f_1),\ldots,\In_{\prec_{\rho}}(f_n), \In_{\prec_{\rho}}(g_1),\ldots,\In_{\prec_{\rho}}(g_m),x_ix_j\,|\,1\leq i\leq r_1\text{ and } r_1+2\leq j\leq r+1\}
$$
is a system of generators of $\In_{\prec}(I_1)\cap \In_{\prec}(I_2)$.
However, the above elements generate also $\In_{\prec}(I)$ by \eqref{L:flatlim2}(b), and the first assertion of part \eqref{L:flatlim3} follows.
The second assertion follows in a similar way, once we apply Fact \ref{T:groebflatlimit} and \eqref{L:flatlim2} to get
$$
\begin{sis}
& \In_{\prec_{\rho}}(I_1)=\langle \In_{\prec_{\rho}}(f_1),\ldots,\In_{\prec_{\rho}}(f_n),x_{r_1+2},\ldots,x_{r+1}\rangle,\\
& \In_{\prec_{\rho}}(I_2)=\langle \In_{\prec_{\rho}}(g_1),\ldots,\In_{\prec_{\rho}}(g_m),x_1, \ldots, x_{r_1} \rangle,\\
& \In_{\prec_{\rho}}(I)=\langle \In_{\prec_{\rho}}(f_1),\ldots,\In_{\prec_{\rho}}(f_n),\In_{\prec_{\rho}}(g_1),\ldots,\In_{\prec_{\rho}}(g_m),x_ix_j\,|\,1\leq i\leq r_1\text{ and } r_1+2\leq j\leq r+1\rangle.
\end{sis}
$$
\end{proof}

We are going to state a criterion of stability for tails, according to which coordinates of type \eqref{eq:coorcoda} diagonalize the one-parameter subgroups that give the ``worst'' weights.
\begin{prop}\label{prop:stab-tail}\textbf{\emph{(Criterion of stability for tails.)}}
Let $[X\subset \P^{r}]\in \rm{Hilb}_{d}$ as above. The following conditions are equivalent:
\begin{enumerate}
	\item $[X\subset \P^{r}]$ is Hilbert semistable (resp. polystable, stable);
	\item $[X\subset \P^{r}]$ is Hilbert semistable (resp. polystable, stable)
	with respect to any one-parameter subgroup $\rho:\Gm\ra \GL_{r+1}$ diagonalized by coordinates of type (\ref{eq:coorcoda});
	\item $[X\subset \P^{r}]$ is Hilbert semistable (resp. polystable, stable)
	with respect to any one-parameter subgroup $\rho:\Gm\ra \GL_{r+1}$ diagonalized by coordinates of type (\ref{eq:coorcoda}) with weights $w_1,\ldots,w_{r+1}$ such that
$$
w_{1}=w_{2}=\ldots=w_{r_1+1}=0\quad\text{or}\quad w_{r_1+1}=w_{r_1+2}=\ldots=w_{r+1}=0.
$$
\end{enumerate}
The same holds for the Chow semistability (resp. polystability, stability).
\end{prop}
\begin{proof}
The implications $(1) \Longrightarrow (2) \Longrightarrow (3)$ are clear for each type of stability.

Let us now prove the implication $(2)\Longrightarrow (1)$.  Let $X=(x_1,\ldots,x_{r+1})^t$ be a basis of coordinates of type (\ref{eq:coorcoda}).
By Corollary \ref{cor:base} applied to $(x_1,\ldots,x_{r_1},x_{r_1+2},\ldots,x_{r+1},x_{r_1+1})$, it is enough to consider a 1ps
$\rho:\Gm\ra \GL_{r+1}$ that  is diagonalized by the coordinates
\begin{equation}\label{eq:zax}
(z_1,\ldots,z_{r+1})^t=Z=AX
\end{equation}
where
\begin{equation}\label{eq:A}
A=
\left(
\begin{array}{ccccccccc}
1 & 0 & \cdots & 0 & 0 & 0 & 0 &\cdots & 0\\
a_{21} & 1 & \cdots & 0 & 0 & 0 & 0 &\cdots & 0\\
\vdots & \vdots & \ddots & \vdots & \vdots & \vdots & \vdots & \vdots & \vdots\\
a_{r_1,1} & a_{r_1,2} & \cdots & 1 & 0 & 0 & 0 & \cdots & 0\\
a_{r_1+1,1} & a_{r_1+1,2} & \cdots & a_{r_1+1,r_1}  & 1 & a_{r_1+1,r_1+2} & a_{r_1+1,r_1+3} & \cdots & a_{r_1+1,r+1}\\
a_{r_1+2,1} & a_{r_1+2,2} & \cdots & a_{r_1+2,r_1}  & 0 & 1 & 0 & \cdots & 0\\
a_{r_1+3,1} & a_{r_1+3,2} & \cdots & a_{r_1+3,r_1}  & 0 & a_{r_1+3,r_1+2} & 1 & \cdots & 0\\
\vdots & \vdots & \vdots & \vdots & 0 & \vdots & \vdots & \ddots & \vdots\\
a_{r+1,1} & a_{r+1,2} & \cdots & a_{r+1,r_1}  & 0 & a_{r+1,r_1+2} & a_{r+1,r_1+3} & \cdots & 1\\
\end{array}
\right).
\end{equation}
Define the new matrix $A'=(a'_{ij})$ as follows
$$
a'_{ij}=
\left\{
\begin{array}{ll}
a_{ij} & \text{if }i\leq r_1+1\text{ or }j\geq r_1+1\\
0 & \text{if }i\geq r_1+2\text{ and }j\leq r_1\\
\end{array}
\right.
$$
so that\\
\begin{equation}\label{eq:AA}
A'=
\left(
\begin{array}{ccccccccc}
1 & 0 & \cdots & 0 & 0 & 0 & 0 &\cdots & 0\\
a_{21} & 1 & \cdots & 0 & 0 & 0 & 0 &\cdots & 0\\
\vdots & \vdots & \ddots & \vdots & \vdots & \vdots & \vdots & \vdots & \vdots\\
a_{r_1,1} & a_{r_1,2} & \cdots & 1 & 0 & 0 & 0 & \cdots & 0\\
a_{r_1+1,1} & a_{r_1+1,2} & \cdots & a_{r_1+1,r_1}  & 1 & a_{r_1+1,r_1+2} & a_{r_1+1,r_1+3} & \cdots & a_{r_1+1,r+1}\\
0 & 0 & \cdots & 0 & 0 & 1 & 0 & \cdots & 0\\
0 & 0 & \cdots & 0 & 0 & a_{r_1+3,r_1+2} & 1 & \cdots & 0\\
\vdots & \vdots & \vdots & \vdots & 0 & \vdots & \vdots & \ddots & \vdots\\
0 & 0 & \cdots & 0 & 0 & a_{r+1,r_1+2} & a_{r+1,r_1+3} & \cdots & 1\\
\end{array}
\right).
\end{equation}
\\
Now, set $(z'_1,\ldots,z'_{r+1})^t=:Z'=A'X$; the coordinates $Z'$ are of type (\ref{eq:coorcoda}). Consider the one-parameter subgroup $\rho'$ diagonalized by the coordinates $Z'$ with the same weights of $\rho$ (in particular $w(\rho)=w(\rho')$). Since $z'_i=z_i$ for $i=1,\ldots,r_1+1$, if $\{B_1(Z'),\ldots,B_{P_1(m)}(Z')\}$ is a monomial basis of $H^0(X_1,\OO_X(m))$, then $\{B_1(Z),\ldots,B_{P_1(m)}(Z)\}$ is again a monomial basis of $H^0(X_1,\OO_X(m))$, hence
\begin{equation}\label{eq:rho1}
W_{X_1,\rho}(m)\leq W_{X_1,\rho'}(m)\quad \text{and} \quad e_{X_1,\rho}\leq e_{X_1,\rho'}.
\end{equation}
Similarly, the set of monomial bases of the subcurve $X_2$ with respect to $Z$ and the one with respect to $Z'$ are the same, so that
\begin{equation}\label{eq:rho2}
W_{X_2,\rho}(m)= W_{X_2,\rho'}(m)\quad \text{and} \quad e_{X_2,\rho}=e_{X_2,\rho'}.
\end{equation}
Suppose that $[X\subset \P^r]$ is Chow semistable (resp. stable) with respect to $\rho'$, i.e.
$$
e_{X,\rho'}\leq \frac{2d}{r+1}\,w(\rho')\quad\text{(resp. $<$)}.
$$
Combining the formulas \eqref{eq:rho1} and \eqref{eq:rho2} with Proposition \ref{prop:sumsub}, we get
$$
e_{X,\rho}=e_{X_1,\rho}+e_{X_2,\rho}\leq e_{X_1,\rho'}+e_{X_2,\rho'}=e_{X,\rho'}\leq\frac{2d}{r+1}\,w(\rho')=\frac{2d}{r+1}\,w(\rho)\quad\text{(resp. $<$)}
$$
and the implication (2) $\Longrightarrow$ (1) for Chow semistability (resp. stability) follows.
We notice that this last step does not work for the Hilbert semistability (resp. stability) because in general
$$
W_{X,\rho}(m)\neq W_{X_1,\rho}(m)+W_{X_2,\rho}(m),
$$
as Lemma \ref{lem:sumhilb} shows. But the argument used to prove the part $\leq$ of \ref{eq:WW1W2} can be applied to $\rho$, so that
\begin{equation}\label{eq:WW1W2min}
W_{X,\rho}(m)\leq W_{X_1,\rho}(m)+W_{X_2,\rho}(m)-w_{r+1}m,
\end{equation}
By \eqref{eq:rho1}, \eqref{eq:rho2}, \eqref{eq:WW1W2min} and Lemma \ref{lem:sumhilb} we obtain
$$
W_{X,\rho}(m)\leq W_{X_1,\rho}(m)+W_{X_2,\rho}(m)-w_{r_1+1}m\leq W_{X_1,\rho'}(m)+W_{X_2,\rho'}(m)-w_{r_1+1}m =W_{X,\rho'}(m).
$$
and the implication (2) $\Longrightarrow$ (1) for the Hilbert semistability (resp. stability) follows.

Now, we will prove the implication (2) $\Longrightarrow$ (1) for the Chow polystability (for the Hilbert polystability the argument is analogous using Lemma \ref{lem:sumhilb} instead of
Proposition \ref{prop:sumsub}). By what proved above, we get that $[X\subset \P^r]$ is Chow semistable. By Corollary \ref{cor:base}, it is enough to prove that $[X\subset \P^r]$ is Chow polystable
with respect a 1ps $\rho:\Gm\to \GL_{r+1}$ that is diagonalized by coordinates $Z$ as in \eqref{eq:zax}. We can assume that
\begin{equation}\label{eq:erhoeq}
e_{X,\rho}=\frac{2d}{r+1}\,w(\rho),
\end{equation}
because, if  $e_{X,\rho}>\frac{2d}{r+1}\,w(\rho)$ then there is nothing to prove.
As before,  consider the one-parameter subgroup $\rho'$ with the same weights of $\rho$ and diagonalized by the coordinates $Z'=A'X$, which are of type (\ref{eq:coorcoda}).
Denoting by $B=(b_{ij})$ the matrix $A(A')^{-1}$,  we have that
\\
$$
B=
\left(
\begin{array}{ccccccccc}
1 & 0 & \cdots & 0 & 0 & 0 & 0 & \cdots & 0\\
0 & 1 & \cdots & 0 & 0 & 0 & 0 & \cdots & 0\\
\vdots & \vdots & \ddots & \vdots & \vdots & \vdots & \vdots & \vdots & \vdots\\
0 & 0 & \cdots & 1 & 0 & 0 & 0 & \cdots & 0\\
0 & 0 & \cdots & 0 & 1 & 0 & 0 & \cdots & 0\\
b_{r_1+2,1} & b_{r_1+2,2} & \cdots & b_{r_1+2,r_1}  & 0 & 1 & 0 & \cdots & 0\\
b_{r_1+3,1} & b_{r_1+3,2} & \cdots & b_{r_1+3,r_1}  & 0 & 0 & 1 & \cdots & 0\\
\vdots & \vdots & \vdots & \vdots & 0 & \vdots & \vdots & \ddots & \vdots\\
b_{r+1,1} & b_{r+1,2} & \cdots & b_{r+1,r_1}  & 0 & 0 & 0 & \cdots & 1\\
\end{array}
\right)
$$
\\
and $Z=BZ'$.

\un{CLAIM}: If $r_1+2\leq j\leq r+1$, $1\leq i\leq r_1$ and $b_{ji}\neq 0$ then $w_j\geq w_i$.

Suppose by contradiction that $w_j<w_i$. Define a one-parameter subgroup $\widetilde{\rho}$ diagonalized by the new coordinates $Y=(y_1,\ldots,y_{r+1})^t$ where
\begin{equation}\label{eq:yz}
y_k=
\left\{
\begin{array}{ll}
z'_k & \text{if }k\neq i,\\
\displaystyle \sum_{l=1}^{r_1}b_{jl}z'_l & \text{if }k=i\\
\end{array}
\right.
\end{equation}
with weights
\begin{equation}\label{eq:wwtilde}
\widetilde{w}_{k}=
\left\{
\begin{array}{ll}
w_{k} & \text{if }k\neq i,\\
w_j & \text{if }k=i.\\
\end{array}
\right.
\end{equation}
Notice that $Y$ is a set of coordinates of type \eqref{eq:coorcoda} and $w(\widetilde{\rho})<w(\rho)$. Let $B_1,\ldots,B_{P_1(m)}$ be a monomial basis of $H^0(X_1,\OO_{X_1}(m))$ with respect to $Y$ such that
$$
e_{X_1,\widetilde{\rho}}=\text{n.l.c.}\bigg(\sum_{l=1}^{P_1(m)}w_{\rho}(B_l)\bigg),
$$
where n.l.c denotes the normalized leading coefficient.
The set of coordinates $Y$ is of type \eqref{eq:coorcoda}, hence $B_l\in k[y_1,\ldots,y_{r_1+1}]$ for $l=1,\ldots,P_1(m)$. Notice that ${y_i}_{|X_1}={z_j}_{|X_1}$ and $y_k=z_k$ for $k=1,\ldots,i-1,i+1,\ldots,r_1+1$.
Therefore, if
$$
\{B_1(y_1,\ldots,y_{r_1+1}),\ldots,B_{P_1(m)}(y_1,\ldots,y_{r_1+1})\}
$$
is a monomial basis of $H^0(X_1,\OO_{X_1}(m))$, then the same holds for
$$
\{B_1(z_1,\ldots,z_{i-1},z_j,z_{i+1}\ldots,z_{r_1+1}),\ldots,B_{P_1(m)}(z_1,\ldots,z_{i-1},z_j,z_{i+1}\ldots,z_{r_1+1})\};$$
hence
\begin{equation}\label{eq:eroerotild1}
e_{X_1,\rho}\leq\,\text{n.l.c.}\bigg(\sum_{l=1}^{P_1(m)}w_{\rho}(B_l)\bigg)= e_{X_1,\widetilde{\rho}}.
\end{equation}
Moreover, since ${y_l}_{|X_2}={z_{l}}_{|X_2}$ for $r_1+1\leq l\leq r+1$, the monomial bases of $H^0(X_2,\OO_{X_2}(m))$ with respect to $Y$ and $Z$ are the same, hence
\begin{equation}\label{eq:eroerotild2}
e_{X_1,\rho}=e_{X_1,\widetilde{\rho}}.
\end{equation}
Since we already know that $[X\subset \P^r]$ is Chow semistable, Fact \ref{Chow-crit} gives that
\begin{equation}\label{eq:rotildsemistab}
e_{X,\widetilde{\rho}}\leq \frac{2d}{r+1}\,w(\widetilde{\rho}).
\end{equation}
Combining \eqref{eq:eroerotild1}, \eqref{eq:eroerotild2}, \eqref{eq:rotildsemistab} and Proposition \ref{prop:sumsub}, we get
$$
e_{X,\rho}=e_{X_1,\rho}+e_{X_2,\rho}\leq e_{X_1,\widetilde{\rho}}+e_{X_2,\widetilde{\rho}}=e_{X,\widetilde{\rho}}\leq \frac{2d}{r+1}\,w(\widetilde{\rho})<\frac{2d}{r+1}\,w(\rho),
$$
which contradicts \eqref{eq:erhoeq} and the Claim is proved.

Consider the weighted graded orders $\prec_{\rho}$, $\prec_{\rho'}$ and two weighted graded lexicographical orders $\prec$ and $\prec'$ that refine respectively $\prec_{\rho}$ and $\prec_{\rho'}$ and are induced by the lexicographical orders $z_1<z_2<\ldots<z_{r+1}$ and $z'_1<z'_2<\ldots<z'_{r+1}$. Denote by $I$, $I_1$ and $I_2$ the ideals of $X$, $X_1$ and $X_2$ respectively. Let $f_1,\ldots,f_n\in k[z'_1,\ldots,z'_{r_1+1}]$ and $g_1,\ldots,g_m\in k[z'_{r_1+1},\ldots,z'_{r+1}]$ such that $\{f_1,\ldots,f_n,z'_{r_1+2},\ldots,z'_{r+1}\}$ and $\{z'_{1},\ldots,z'_{r_1},g_1,\ldots,g_m\}$ are Gr\"obner bases respectively of $I_1$ and $I_2$ with respect to $\prec'$. Fact \ref{T:groebflatlimit} implies that
$$\begin{sis}
& I'_1:=\In_{\prec_{\rho'}}(I_1)=\langle\In_{\prec_{\rho'}}(f_1),\ldots,\In_{\prec_{\rho'}}(f_n),z'_{r_1+2},\ldots,z'_{r+1}\rangle, \\
& I'_2:=\In_{\prec_{\rho'}}(I_2)=\langle z'_{1},\ldots,z'_{r_1},\In_{\prec_{\rho'}}(g_1),\ldots,\In_{\prec_{\rho'}}(g_m)\rangle.
\end{sis}$$
Applying Lemma \ref{lem:flatlimitcode}, we obtain that
$$
\{\In_{\prec_{\rho'}}(f_1),\ldots,\In_{\prec_{\rho'}}(f_n),\In_{\prec_{\rho'}}(g_1),\ldots,\In_{\prec_{\rho'}}(g_m),z'_{i}z'_{j}\,|\,1\leq i\leq r_1\text{ and }r_1+2\leq j\leq r+1\}
$$
is a system of generators of $I':=\In_{\prec_{\rho}}(I)$. Denoting by $X':=V(I')\subset \P^r$ and applying Fact \ref{T:flatlimit}, we get
$$
[X'\subset\P^r]=\lim_{t\ra 0}\rho'(t)[X \subset \P^r].
$$
Now, define
$$
[X''\subset \P^r]=\lim_{t\ra 0}\rho(t)[X \subset \P^r]
$$
and consider the matrix $B'=(b'_{ji})$ defined as follows:
$$
b'_{ji}=
\left\{
\begin{array}{lll}
b_{ji} & \text{if }1\leq j\leq r_1+1 \text{ or }r_1+1\leq i\leq r+1,\\
b_{ji} & \text{if }r_1+2\leq j\leq r+1 \text{, }1\leq i\leq r_1\text{ and }w_j=w_i,\\
0 & \text{if }r_1+2\leq j\leq r+1 \text{, }1\leq i\leq r_1\text{ and }w_j>w_i.\\
\end{array}
\right.
$$
By the above CLAIM, we have
$$
\In_{\prec_{\rho}}\bigg(z_k-\sum_{i=1}^{r_1}b_{ki}z_{i}\bigg)=z_k-\sum_{i=1}^{r_1}b'_{ki}z_{i}
$$
for $k=r_1+2,\ldots,r+1$. Since $z'_i=z_i$ for $i=1,\ldots,r_1+1$, we get that $f_i(Z)=f_i(B^{-1}Z)$, hence, by Buchberger's criterion (see Fact \ref{F:buch}), the system of generators
$$
\bigg\{f_1(B^{-1}Z),\ldots,f_n(B^{-1}Z),z_{r_1+2}-\sum_{k=1}^{r_1}b_{r_1+2,k}z_k,\ldots,z_{r+1}-\sum_{k=1}^{r_1}b_{r+1,k}z_k\bigg\}
$$
is a Gr\"obner basis of $I_1$ with respect to $\prec$, so that
$$
\In_{\prec}(I_1)=\langle\In_{\prec}(f_1(B^{-1}Z)),\ldots,\In_{\prec}(f_n(B^{-1}Z)),z_{r_1+2}\ldots,z_{r+1}\rangle.
$$

Now, consider $I_2$. For each $j=1,\ldots,m$ there exists $h_j\in k[z_1,\ldots,z_{r+1}]$ such that each of its monomials contains one of the coordinates $z_1,\ldots,z_{r_1}$ and the following holds:
\begin{equation}\label{eq:gjBZ}
g_j(B^{-1}Z)=g_j(Z)+h_j(Z);
\end{equation}
hence
$$
\langle z_{1},\ldots,z_{r_1},g_1(B^{-1}Z),\ldots,g_m(B^{-1}Z)\rangle=
\langle z_{1},\ldots,z_{r_1},g_1(Z),\ldots,g_m(Z)\rangle.
$$
Applying $\In_{\prec}$ to \eqref{eq:gjBZ}, we obtain $\In_{\prec}(g_j(B^{-1}Z))=\In_{\prec}(g_j(Z))$; hence
$$
\begin{aligned}
\In_{\prec}(\langle z_{1},\ldots,z_{r_1},g_1(B^{-1}Z),\ldots,g_m(B^{-1}Z)\rangle)& =
\In_{\prec}(\langle z_{1},\ldots,z_{r_1},g_1(Z),\ldots,g_m(Z)\rangle)=\\
=\langle z_{1},\ldots,z_{r_1},\In_{\prec}(g_1(Z)),\ldots,\In_{\prec}(g_m(Z))\rangle
& =\langle z_{1},\ldots,z_{r_1},\In_{\prec}(g_1(B^{-1}Z)),\ldots,\In_{\prec}(g_m(B^{-1}Z))\rangle.
\end{aligned}
$$
By definition $\{z_{1},\ldots,z_{r_1},g_1(B^{-1}Z),\ldots,g_m(B^{-1}Z)\}$ is a Gr\"obner basis of $I_2$ with respect to $\prec$.
We notice that $\In_{\prec}(I)\subset \In_{\prec}(I_1)\cap \In_{\prec}(I_2)$.
Applying Lemma \ref{lem:flatlimitcode} to the ideals $\In_{\prec}(I_1)$ and $\In_{\prec}(I_2)$ we deduce that
$$
\{\In_{\prec}(f_1(B^{-1}Z)),\ldots,\In_{\prec}(f_n(B^{-1}Z)),\In_{\prec}(g_1(B^{-1}Z)),\ldots,\In_{\prec}(g_m(B^{-1}Z)),z_iz_j\}_{1\leq i\leq r_1,r_1+2\leq j\leq r+1}
$$
is a system of generators for $\In_{\prec}(I_1)\cap \In_{\prec}(I_2)$; hence
$$
\bigg\{f_1(B^{-1}Z),\ldots,f_n(B^{-1}Z),g_1(B^{-1}Z),\ldots,g_m(B^{-1}Z),z_i\bigg(z_{j}-\sum_{k=1}^{r_1}b_{jk}z_k\bigg)\bigg\}_{1\leq i\leq r_1, r_1+2\leq j\leq r+1}
$$
is a Gr\"obner basis for $I$ with respect to $\prec$. By Fact \ref{T:groebflatlimit}, we obtain that
$$
\bigg\{\In_{\prec_{\rho}}(f_1(B^{-1}Z)),\ldots,\In_{\prec_{\rho}}(f_n(B^{-1}Z)),\In_{\prec_{\rho}}(g_1(B^{-1}Z)),
\ldots,\In_{\prec_{\rho}}(g_m(B^{-1}Z)),z_i\bigg(z_{j}-\sum_{k=1}^{r_1}b'_{jk}z_k\bigg)\bigg\}
$$
generate $\In_{\prec_{\rho}}(I)$
for $1\leq i\leq r_1$ and $r_1+2\leq j\leq r+1$. Let $M_i$ be the monomials in $Z'$ such that
$$
g_j=\sum M_i
$$
and the sum is not redundant. Denoting by $\widetilde{w}=\max_i\{w_{\rho'}(M_i)\}$, we have that
\begin{eqnarray}\label{eq:ingB'}
\In_{\prec_{\rho}}(g_j(B^{-1}Z))&=&\In_{\prec_{\rho}}\bigg(\sum M_i(B^{-1}Z)\bigg)\nonumber\\
&=& \sum_{i\,|\,w_{\rho'}(M_i)=\widetilde{w}} \In_{\prec_{\rho}}(M_i(B^{-1}Z))\nonumber\\
&=& \sum_{i\,|\,w_{\rho'}(M_i)=\widetilde{w}} M_i((B')^{-1}Z))\nonumber\\
&=& \In_{\prec_{\rho'}}(g_j)((B')^{-1}Z)
\end{eqnarray}
Moreover, as pointed out before, $B$ and $B'$ do not change the first $r_1+1$ coordinates, hence
\begin{equation}\label{eq:infB'}
\In_{\prec_{\rho'}}(f)((B')^{-1}Z)=\In_{\prec_{\rho}}(f((B)^{-1}Z)).
\end{equation}
Combining \eqref{eq:ingB'} and \eqref{eq:infB'}, we deduce that $[X''\subset \P^r]
\in \Orb([X'\subset \P^r])$. By our hypothesis, $[X\subset \P^r]$ is Chow polystable with respect to $\rho'$;  thus, there exists $C\in \GL_{r+1}$ such that
\begin{equation}\label{eq:X'CX}
[X'\subset \P^r]=C[X\subset \P^r]
\end{equation}
We deduce that $[X''\subset\P^r]\in \Orb([X\subset \P^r])$, and we are done.

Let us finally prove the implication (3) $\Longrightarrow$ (2). Consider $\rho'$ as above (which we will rename $\rho$). Up to translating the weights, we can assume that $w_{r_1+1}=0$. Define $\rho_1$ and $\rho_2$ with weights respectively $w_1^1,\ldots,w_{r+1}^1$ and $w_1^2,\ldots,w_{r+1}^2$ so that
\begin{equation}\label{eq:w1iw2i}
w_{i}^1=
\left\{
\begin{array}{ll}
w_{i} & \text{if }i\leq r_1\\
0 & \text{if }i\geq r_1+1\\
\end{array}
\right.
\quad \text{and}\quad
w_{i}^2=
\left\{
\begin{array}{ll}
0 & \text{if }i\leq r_1\\
w_{i} & \text{if }i\geq r_1+1\\
\end{array}
\right.
\end{equation}
so that $w_{i}^1+w_{i}^2=w_{i}$ for all $i$ and $w(\rho_1)+w(\rho_2)=w(\rho)$. Now, notice that
\begin{equation}\label{eq:xirhoi}
W_{X_1,\rho}(m)=W_{X,\rho_1}(m),\quad W_{X_2,\rho}(m)=W_{X,\rho_2}(m), \quad e_{X_1,\rho}=e_{X,\rho_1} \quad\text{and}\quad e_{X_2,\rho}=e_{X,\rho_2}.
\end{equation}
If $[X\subset \P^r]$ is Chow semistable (resp. stable) with respect to $\rho_1$ and $\rho_2$, i.e.,
$$
e_{X,\rho_1}\leq \frac{2d}{r+1}\,w(\rho_1)\quad\text{(resp. $<$)} \quad \text{and} \quad
e_{X,\rho_2}\leq \frac{2d}{r+1}\,w(\rho_2)\quad\text{(resp. $<$)};
$$
then, applying Proposition  \ref{prop:sumsub} and \eqref{eq:xirhoi}, we get that
$$
e_{X,\rho}=e_{X_1,\rho}+e_{X_2,\rho}=e_{X,\rho_1}+e_{X,\rho_2}\leq \frac{2d}{r+1}\,w(\rho_1)+\frac{2d}{r+1}\,w(\rho_2)=\frac{2d}{r+1}\,w(\rho)\quad\text{(resp. $<$)},
$$
or, in other words, that $[X\subset \P^r]$ is Chow semistable (resp. stable) with respect to $\rho$.
The same argument goes through for the Hilbert semistability (resp. stability) replacing Proposition \ref{prop:sumsub} with Lemma \ref{lem:sumhilb}.

It remains to prove  the implication (3) $\Longrightarrow$ (2) for the polystability. Suppose that there exist matrices $A_1,A_2\in \GL_{r+1}$ such that
$$
\lim_{t\ra 0}\rho_1(t)[X\subset \P^r]=A_1[X\subset \P^r]\quad\text{and}\quad \lim_{t\ra 0}\rho_2(t)[X\subset \P^r]=A_2[X\subset \P^r].
$$
By Lemma \ref{lem:flatlimitcode}, the following holds:
$$
\lim_{t\ra 0}\rho(t)[X\subset \P^r] = \lim_{t\ra 0}\rho_2(t)\bigg(\lim_{t\ra 0}\rho_1(t)[X\subset \P^r]\bigg).
$$
Moreover,  each point of $X_2$ is fixed by $A_1$, hence $\rho_2(t)A_1=A_1 \rho_2(t)$. We deduce that
\begin{eqnarray}
\lim_{t\ra 0}\rho(t)[X\subset \P^r] &=& \lim_{t\ra 0}\rho_2(t)\bigg(\lim_{t\ra 0}\rho_1(t)[X\subset \P^r]\bigg)=\lim_{t\ra 0}\rho_2(t)(A_1[X\subset \P^r])\nonumber\\
&=& A_1 \lim_{t\ra 0}\rho_2(t)[X\subset \P^r]=A_1A_2[X\subset \P^r]\nonumber
\end{eqnarray}
and we are done.
\end{proof}

The proof of this criterion suggests us an important remark. First, we need a definition.

\begin{defi}\label{D:replace}
Let $X$ be a quasi wp-stable curve such that $X=X_1\cup X_2$, $k_{X_{1}}=1$, and denote by $p$ the intersection point of $X_1$ and $X_2$. Let $(X'_1,q)$ be a pointed curve, where $q$ is a smooth point, and define the new curve $X'$ by gluing $X'_1$ and $X_2$ so that $p$ is identified with $q$ and represents a separating node of the new curve.
We say that $X'$
\textbf{{is obtained from}} $X$ \textbf{{by replacing}} $X_1$ \textbf{{with}} $(X_1',q)$. Since $\text{Pic}(X)=\text{Pic}(X_1)\times \text{Pic}(X_2)$, it makes sense to introduce another definition. Let $(X,L)$ and $(X'_1,q,L'_1)$ be a couple and a triple where $X$, $X'$ and $q$ are as above, $L\in \text{Pic}(X)$ and $L'_1\in \text{Pic}(X'_1)$. Consider the new curve $X'$ as above and the line bundle $L'$ defined as follows:
$$
L':=(L_1,L_{|X_2})\in\text{Pic}(X'_1)\times \text{Pic}(X_2)=\text{Pic}(X').
$$
We say that the couple $(X',L')$ \textbf{{is obtained from}} $(X,L)$ \textbf{{by replacing}} $X_1$ \textbf{{with}} $(X_1',q,L'_1)$. If $L$ is very ample we will identify $(X,L)$ and $[X\stackrel{|L|}{\inj}\P^{\dim|L|}]$.
\end{defi}

\begin{rmk}\label{rmk:replweight}
Let $X$, $\rho_1$ and $\rho_2$ be as in the proof of the implication (3) $\Longrightarrow$ (2) in Proposition \ref{prop:stab-tail}, and denote by $L:=\OO_X(1)$ (we keep the same notation). Suppose that the system of coordinates $\{z_1,\ldots,z_{r+1}\}$ is of type \eqref{eq:coorcoda} and that it diagonalizes $\rho_1$ and $\rho_2$. By formulas (\ref{eq:xirhoi}), $W_{X,\rho_1}(m)$ (hence $e_{X,\rho_1}$) depends only on the curve $X_1$ and the embedding $L_1:=L_{|X_1}$ in $\cap_{i=r_1+2}^{r+1}\{x_i=0\}$. In other words, if we replace $X_2$ with another curve $X'_2$ so that the embedding $L_1$ in $\cap_{i=r_1+2}^{r+1}\{x_i=0\}$ is the same, then $W_{X,\rho_1}(m)$ does not change.
\end{rmk}

We can use this remark in order to prove a useful result.

\begin{coro}\label{C:stab-repl}
Let $[X_1\subset\P^{r_1}]\in \Hilb_{d_1,g_1}$, $[X_2\subset\P^{r_2}]\in \Hilb_{d_2,g_2}$ and $[X_3\subset\P^{r_3}]\in \Hilb_{d_3,g_3}$ such that $X_i=C_i\cup D_i$ and $k_{C_i}=1$ for $i=1,2,3$. Denote by
$$
\{p_i\}=C_i\cap D_i\quad\text{,}\quad L_i=\OO_{X_i}(1)\quad \text{and}\quad \nu_i=\frac{d_i}{2g_i-2}
$$
for $i=1,2,3$. Suppose that $\nu_1=\nu_2=\nu_3$ and $(X_3,L_3)$ is obtained from $(X_1,L_1)$ by replacing $D_1$ with $(D_2,p_2,{L_2}_{|D_2})$. If $[X_1\subset\P^{r_1}]$ and $[X_2\subset\P^{r_2}]$ are Chow semistable (resp. polystable, stable), then $[X_3\subset\P^{r_3}]$ is Chow semistable (resp. polystable, stable). The same holds for the Hilbert semistability (resp. polystability, stability).
\end{coro}
\begin{proof}
We will first prove the statement for the Chow (semi-, poly-) stability; the case of Hilbert (semi-, poly-) stability is completely analogous. Denoting by $s_i=h^0(C_i,{L_i}_{|C_i})-1$ for $i=1,2,3$, we have that $s_1=s_3$. By Proposition \ref{prop:stab-tail}, it suffices to consider one-parameter subgroups $\rho_1$ and $\rho_2$ diagonalized by coordinates $(x_1,\ldots,x_{r_3+1})$ of $\P^{r_3}$ such that
$$
\langle C_3\subset \P^{r_3}\rangle=\bigcap_{i=s_3+2}^{r_3+1}\{x_i=0\}\quad\text{and}\quad\langle D_3\subset \P^{r_3}\rangle=\bigcap_{i=1}^{s_3}\{x_i=0\}
$$
with weights
$$
\rho_1(t)\cdot x_i=
\left\{
\begin{array}{ll}
t^{w_{i}}x_i & \text{if }i\leq s_3\\
x_i & \text{if }i\geq s_3+1\\
\end{array}
\right.
\quad\text{and}\quad
\rho_2(t)\cdot x_i=
\left\{
\begin{array}{ll}
x_i & \text{if }i\leq s_3+1\\
t^{w_{i}}x_i & \text{if }i\geq s_3+2\\
\end{array}
\right.
$$
By hypothesis $C_1=C_3$ and $L_{|C_1}=L_{|C_3}$, hence we can find coordinates $(x'_1,\ldots,x'_{r_1+1})$ in $\P^{r_1}$ such that for each $i=1,\ldots,s_1+1$
$$
{x'_i}_{|C_1}={x_i}_{|C_3}
$$
where we identify 
$\langle C_1\subset \P^{r_1}\rangle$ with 
$\langle C_3\subset \P^{r_3}\rangle$. If $\rho'_1$ is a one-parameter subgroup diagonalized by $(x'_1,\ldots,x'_{r_1+1})$ with weights
$$
w'_{i}=
\left\{
\begin{array}{ll}
w_{i} & \text{if }i\leq r_1\\
0 & \text{if }i\geq r_1+1\\
\end{array}
\right.
$$
then $w(\rho)=w(\rho')$ and
$$
e_{X_1,\rho'_1}=e_{C_1,\rho'_1}=e_{C_3,\rho_1}=e_{X_3,\rho_1},
$$
since the embeddings $C_1\inj \langle C_1\subset \P^{r_1}\rangle$ and $C_3\inj \langle C_3\subset \P^{r_3}\rangle$ are the same (see Remark \ref{rmk:replweight}).
Notice that the equalities $\nu_1=\nu_2=\nu_3$ are equivalent to
$$
\frac{d_1}{r_1+1}=\frac{d_2}{r_2+1}=\frac{d_3}{r_3+1}.
$$
Since $[X_1\subset \P^{r_1}]$ is Chow semistable (resp. stable) by assumption, the Hilbert-Mumford numerical criterion (Fact \ref{Chow-crit}) gives that
$$
e_{X_1,\rho'_1}\leq \frac{2d_1}{r_1+1}\,w(\rho'_1)\quad\text{(resp. $<$)}.
$$
Combining this inequality with the previous relations, we obtain
$$
e_{X_3,\rho_1}=e_{X_1,\rho'_1}\leq \frac{2d_1}{r_1+1}\,w(\rho'_1)=\frac{2d_3}{r_3+1}\,w(\rho_1)\quad\text{(resp. $<$)},
$$
i.e. that  $[X_3\subset \P^{r_3}]$ is Chow semistable (resp. stable) with respect to $\rho_1$. Analogously, using that $[X_2\subset \P^{r_2}]$ is Chow semistable (resp. stable), we obtain that
$[X_3\subset \P^{r_3}]$ is Chow semistable (resp. stable) with respect to $\rho_2$.

It remains to prove the polystability. Suppose that $[X_3\subset \P^{r_3}]$ is strictly Chow semistable, so that there exists a one-parameter subgroup, for example $\rho_1$ as above, such that
$$
e_{X_3,\rho_1}=\frac{2d_3}{r_3+1}\,w(\rho_1)
$$
and consider the one-parameter subgroup $\rho'_1$ as above. Let $I_1$ and $I_3$ be the ideals of $C_1$ and $C_3$ respectively in $\P^{r_1}$ and $\P^{r_3}$. It is easy to check that $I_1\cap k[x_1,\ldots,x_{s_1+1}]=I_3\cap k[x_1,\ldots,x_{s_3+1}]$,
hence $\In_{\prec_{\rho_1'}}(I_1)=\In_{\prec_{\rho_1}}(I_3)$. Since $[X_1\subset \P^{r_1}]$ is Chow polystable by hypothesis, there exists $A=(a_{ij})\in \GL_{r_1+1}$ such that
$$
\lim_{t\ra 0}\rho'_1(t)[X_1\subset \P^{r_1}]=A[X_1\subset \P^{r_1}].
$$
Define the matrix $A'=(a'_{ij})\in \GL_{r_3+1}$ as follows:
$$
a'_{ij}=
\left\{
\begin{array}{lll}
a_{ij} & \text{if }1\leq i\leq s_1\text{ and }1\leq j\leq s_1,\\
1 & \text{if }(s_1+1\leq i\leq r_3+1\text{ or }s_1+1\leq j\leq r_3+1)\text{ and }i=j,\\
0 & \text{if }(s_1+1\leq i\leq r_3+1\text{ or }s_1+1\leq j\leq r_3+1)\text{ and }i\neq j.\\
\end{array}
\right.
$$
Now, we notice that $\rho_1$ fixes each point of $D_3\subset \P^{r_3}$. Moreover,  the actions of $\rho_1$ and $\rho'_1$ on $C_3\subset \P^{r_3}$ coincide, hence by Lemma \ref{lem:flatlimitcode} and Corollary \ref{C:flatlimit} we get
$$
\lim_{t\ra 0}\rho_1(t)[X_3\subset \P^{r_3}]=A'[X_3\subset \P^{r_1}].
$$
This implies that $[X_3\subset \P^{r_3}]$ is Chow polystable with respect to $\rho_1$.
Analogously, using that $[X_2\subset \P^{r_2}]$ is Chow polystable, we obtain that   $[X_3\subset \P^{r_3}]$ is Chow polystable with respect to $\rho_2$, and we are done.

\end{proof}

\section{Elliptic tails and Tacnodes with a line}\label{S:ell-tails}

According to Corollary \ref{C:quasi-wp-stable}\eqref{C:quasi-wp-stable1}, if $[X\subset \P^r]\in \Hilb_d$ is Chow semistable  with $X$ connected and $2(2g-2)<d \leq 4(2g-2)$, then $X$ is quasi-wp-stable. The aim of this section is to investigate whether $X$ can have elliptic tails or tacnodes with a line.

Our first result concerns special elliptic tails (in the sense of Definition \ref{D:elltails-types}).


\begin{thm}\label{T:spec-ell}
Let $[X\subset \P^r] \in  \Hilb_d$ with $X$ connected and $2(2g-2)<d$. Assume that one of the following two conditions is satisfied, namely:
\begin{enumerate}[(i)]
\item \label{T:spec-ell1} $d<4(2g-2)$ and $[X\subset \P^r] \in \Ch^{-1}(\Chow_d^{ss})$;
\item \label{T:spec-ell2} $d=4(2g-2)$ and $[X\subset \P^r] \in \Hilb_d^{ss}$.
\end{enumerate}
Then $X$ does not have any special elliptic tails.
\end{thm}
\begin{proof}
According to the hypothesis on $[X\subset \P^r]$, we get that $X$ is quasi-wp-stable
by Corollary \ref{C:quasi-wp-stable}\eqref{C:quasi-wp-stable2} and that $L:=\OO_X(1)$ is very ample, non-special and balanced of degree $d$
by the Potential pseudo-stability Theorem \ref{teo-pstab}.

Suppose that $X$ has a special elliptic tail, i.e., $X=F\cup C$ where $F\subseteq X$ is an irreducible subcurve of arithmetic genus $1$, $C\subseteq X$ is a connected subcurve of
arithmetic genus $g-1$ and $p:=F\cap C$ is a nodal point of $X$ which is a smooth point of both $F$ and $C$, and
$L_{|F}=\OO_F(\nu p)$ for some $\nu\in \N$.
We want to show, by contradiction,  that $[X\subset \P^r]\not\in \Ch^{-1}(\Chow_d^{ss})$ (resp. $\Hilb_d^{ss}$) if \eqref{T:spec-ell1} (resp. \eqref{T:spec-ell2}) holds.
Since $\Ch^{-1}(\Chow_d^{ss})$ and $\Hilb_d^{ss}$ are open in $\Hilb_d$, we can assume that $F$ is a smooth elliptic tail.

Note that the basic inequality \eqref{E:basineq-multideg} applied to $F$ gives that $\nu:=\deg L_{|F}\leq 4$ if \eqref{T:spec-ell1} holds and $\nu=4$ if \eqref{T:spec-ell2} holds.

Consider the linear spans $V_F:=\langle F\rangle $ of $F$ and $V_C:=\langle C\rangle$ of $C$ on $\P^{r}=\P(V)$. It follows from Riemann-Roch theorem,
using that $L$ (hence $L_{|C}$ and $L_{|F}$) is non-special, that $V_F$ has dimension $\nu-1$ and $V_C$ has dimension $d-\nu-(g-1)=r-\nu+1$.
Therefore, we can choose a system of coordinates $\{x_1,\ldots, x_{r+1}\}$ of type \eqref{eq:coorcoda}, i.e. such that
\begin{equation}\label{eq:coorcodaspec}
V_F=\bigcap_{i=\nu+1}^{r+1}\{x_i=0\}\quad \text{and}\quad V_C=\bigcap_{i=1}^{\nu-1}\{x_i=0\}.
\end{equation}
Hence $p$ is the point where all the $x_i$'s vanish except $x_{\nu}$. For $1\leq i\leq \nu$, we will identify  $x_i$ with the section of
$H^0(F, L_{|F})$ it determines, and we will denote by $\ord_{p}(x_i)$
the order of vanishing of $x_i$ at $p$. By Riemann-Roch theorem applied to
the line bundles $L_{|F}(-ip)=\OO_F((\nu-i)p)$ for $i=0, \ldots, \nu$, we may choose the first $\nu$ coordinates $\{x_1,\ldots,x_{\nu}\}$ of $V$ so
that
\begin{equation}\label{E:order2}
\ord_{p}(x_i)=\begin{cases}
\nu &\text{ if } i=1, \\
\nu-i &\text{ if } 2\leq i\leq \nu. \\
\end{cases}
\end{equation}
Consider the one-parameter subgroup $\rho: \Gm \to \GL(V)$ which, in the above coordinates, has the diagonal form
$\rho(t)\cdot x_i = t^{w_i}x_i \:  \text{ for } i=1,\ldots, r+1,$ with weights $w_i$ such that
\begin{equation}\label{E:weights2}
\begin{sis}
& w_1=w_{\rho}(x_1)=0 & \\
& w_i=w_{\rho}(x_i)=i & \:\text{ for } 2\leq i\leq \nu, \\
& w_j=w_{\rho}(x_j)=\nu& \: \: \text{ for } j\geq \nu+1.
\end{sis}
\end{equation}
The proof of \cite[Lemma 1]{HMo} extends verbatim to our case and gives that
$$W_{X,\rho}(m)=m^2\left[d\nu-\frac{\nu^2}{2}\right]+m\left[\frac{3\nu}{2}-g\nu\right]-1\text{ for any } m\geq 2.$$
In particular, the normalized leading coefficient of $W_{X,\rho}$ is equal to
\begin{equation}\label{E:lead-coeff}
e_{X,\rho}=2d\nu-\nu^2.
\end{equation}
From \eqref{E:weights2}, it is easy to compute that the total weight of $\rho$ is equal to
\begin{equation}\label{E:tot-weight2}
w(\rho)=\sum_{i=2}^{\nu} i +\nu(r+1-\nu)=\nu(r+1)+\frac{-\nu^2+\nu-2}{2}.
\end{equation}
If \eqref{T:spec-ell1} holds, i.e. if $v:=\frac{d}{2g-2}<4$, combining
\eqref{E:lead-coeff}, \eqref{E:tot-weight2} and the fact that $r=d-g$, we get that
\begin{equation*}\label{E:equiv-ineq}
2d\frac{w(\rho)}{r+1}=2d\nu+\frac{2v}{2v-1}(-\nu^2+\nu-2)<2 d\nu+\frac{8}{7}(-\nu^2+\nu-2)=2d\nu-\nu^2-\frac{(\nu-4)^2}{7}\leq
e_{X,\rho}.
\end{equation*}
This implies that $[X\subset \P^r]\not \in \Ch^{-1}(\Chow_d^{ss})$ by Fact \ref{Chow-crit}.

On the other hand, if \eqref{T:spec-ell2} holds, i.e. if $v:=\frac{d}{2g-2}=4$ (hence $\nu=4$), then
$$W_{X,\rho}(m)=(4d-8)m^2+(6-4g)m-1> (4d-8)m^2+(5-4g)m=\frac{w(\rho)}{r+1}mP(m) \:\: \text{�for } \: m\gg 0.$$
This implies that $[X\subset \P^r]\not \in \Hilb_d^{ss}$ by Fact \ref{Hilb-crit}.

\end{proof}

As a corollary, if $d=4(2g-2)$ and $X\subset\P^r$ admits a special elliptic tail, then $[X\subset \P^r]$ is strictly Chow semistable with respect to the 1ps $\rho$ as in \eqref{E:weights2}. It would be interesting to find the equations of $F$ in its linear span $\langle F\rangle$ in order to determine the flat limit
$$
[X_0\subset \P^r]=\lim_{t\ra 0}\rho(t)[X\subset \P^r]
$$
by using Corollary \ref{C:flatlimit}. This is not very difficult to do (we leave it to the reader as an exercise). One obtains that $X_0$ is given by the union of $C$ and a special cuspidal elliptic tail, which we denote by $F_0$. Here we do not use this fact, we consider directly $[X_0\subset \P^r]\in \Hilb_d$ where $X_0=F_0\cup C$ and $F_0$ is cuspidal and special. Using the same system of coordinates $\{x_1,\ldots,x_{r+1}\}$ in $\P^{r}$ as in Theorem \ref{T:spec-ell}, we can parameterize $F_0$ by
$$
[s,t]\in\P^1\mapsto [s^4,s^2t^2,st^3,t^4,0,\ldots,0],
$$
so $F$ is special since $\ord_p(x_1)=4$, the cusp $q$ is the point $[1,0,0,\ldots,0]$ and $\rho$ stabilizes $[X_0\subset \P^r]$. We now compute the basins of attraction (see \eqref{Sec:bas-attra}) of $[X_0\subset \P^r]$ with respect to $\rho$ and to $\rho^{-1}$.


\begin{thm}\label{T:basin-cusps}
Let $[X_0\subset \P^r]\in \Hilb_d$ as above and let $\rho$ as in \eqref{E:weights2}. Then $A_{\rho}([X_0\subset \P^r])$ (resp. $A_{\rho^{-1}}([X_0\subset \P^r])$) contains smoothings of the cusp (resp. separating node), but not smoothings of the separating node (resp. cusp).
\end{thm}
\begin{proof}
The proof is similar to \cite[Lemma 4]{HMo} and uses
the same techniques used to prove CLAIM 3 in Theorem \ref{T:spec-clos-orb}. We have that the tangent space $T_{X,q}$ is given by $\langle x_2,x_3\rangle$, so that the completion of the local ring $\OO_{X,q}$ is given by $k[[u,v]]/(u^2-v^3)$, where $u=x_3/x_1$ and $v=x_2/x_1$. Since $\rho(t)\cdot x_1=x_1$, $\rho(t)\cdot x_2=t^2x_2$ and $\rho(t)\cdot x_3=t^3x_3$, we obtain that $\rho(t)\cdot u=t^3 u$ and $\rho(t)\cdot v=t^2 v$. We recall that $\Def_{(X,q)}$ has a semiuniversal ring equal to $k[[a,b]]$ with universal family $k[[u,v,a,b]]/(u^2-v^3-av-b)$. This implies that $\rho(t)\cdot a=t^4a$ and $\rho(t)\cdot b=t^6b$, so that $A_{\rho}([X_0\subset \P^r])$ contains smoothings of $q$. If we consider the action of $\rho$ on the universal family of the separating node $p$, we obtain non-positive weights, hence $A_{\rho^{-1}}([X_0\subset \P^r])$ does not contain smoothings of $p$. The converse result holds if we consider $A_{\rho^{-1}}([X_0\subset \P^r])$.
\end{proof}

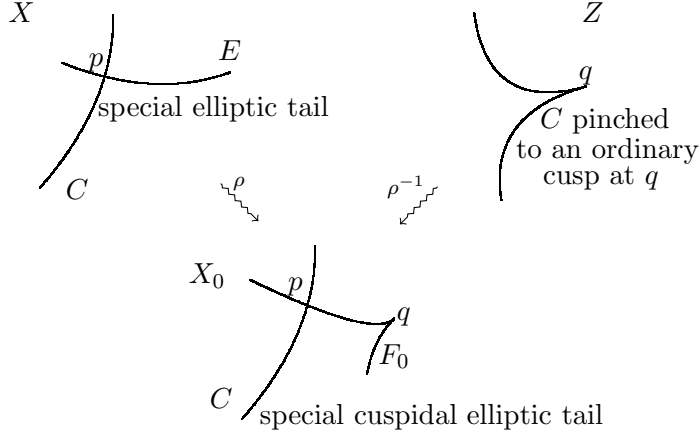
\begin{figure}[h]
\begin{center}
\unitlength .5mm 
\linethickness{0.4pt}
\ifx\plotpoint\undefined\newsavebox{\plotpoint}\fi 
\begin{picture}(172,110)(25,80)
\qbezier(81.75,119.5)(114.625,103.625)(120,109.25)
\qbezier(120,109.25)(114.625,104.125)(112.75,94.5)
\qbezier(79.5,82.5)(99.25,105)(99,128.5)
\qbezier(25.75,144)(45.5,166.5)(45.25,190)
\qbezier(31.75,177.25)(56.5,167.5)(76.25,174.75)
\qbezier(141.25,190.5)(144.875,163.75)(171,171)
\qbezier(171,171)(144.75,163.625)(148.5,140.75)
\put(21,190.75){\makebox(0,0)[cc]{$X$}}
\put(172.75,190.75){\makebox(0,0)[cc]{$Z$}}
\put(70.25,120.75){\makebox(0,0)[cc]{$X_0$}}
\put(76.25,180.75){\makebox(0,0)[cc]{$E$}}
\put(72,165){\makebox(0,0)[cc]{special elliptic tail}}
\put(40.75,177.25){\makebox(0,0)[cc]{$p$}}
\put(122.5,109.75){\makebox(0,0)[cc]{$q$}}
\put(171,174){\makebox(0,0)[cc]{$q$}}
\put(130,83){\makebox(0,0)[cc]{special cuspidal elliptic tail}}
\put(35.75,143.75){\makebox(0,0)[cc]{$C$}}
\put(176,162){\makebox(0,0)[cc]{$C$ pinched}}
\put(178,154){\makebox(0,0)[cc]{to an ordinary }}
\put(175,146.75){\makebox(0,0)[cc]{cusp at $q$}}
\put(120,99.25){\makebox(0,0)[cc]{$F_0$}}
\put(93.75,117.75){\makebox(0,0)[cc]{$p$}}
\put(74,88.25){\makebox(0,0)[cc]{$C$}}
\put(75,141){\makebox(0,0)[cc]{$\xxrsquigarrow{\phantom{aaa}\rho}$}}
\put(122.5,140.75){\makebox(0,0)[cc]{$\xxlsquigarrow{\phantom{a}\rho^{-1}}$}}
\end{picture}
\caption{The basin of attraction of a curve $X_0$ with a special cuspidal elliptic tail $F_0$.}
\label{F:basin-cusptail}
\end{center}
\end{figure}

Our second result concerns tacnodes with a line (in the sense of \ref{Con:singu}).

\begin{thm}\label{T:tac-line}
If $\frac{7}{2}(2g-2)<d$ and $[X\subset \P^r] \in \Ch^{-1}(\Chow_d^{ss})\subset \Hilb_d$ with $X$ connected, then
$X$ does not have tacnodes with a line.
\end{thm}
\begin{proof}
This follows from \cite[Prop. 1.0.6, Case 2]{Gie}; however, for the reader's convenience and also because we will need
it later, we give a sketch of the proof.

Using the hypothesis on $[X\subset \P^r]$, we get that $X$ is quasi-wp-stable
by Corollary \ref{C:quasi-wp-stable}\eqref{C:quasi-wp-stable2} and $L:=\OO_X(1)$ is very ample, non-special and balanced of degree $d$ by the Potential pseudo-stability Theorem \ref{teo-pstab}.
Suppose that $X$ has a tacnode with a line, i.e. we can write $X=Y\cup E$ with $E\cong \P^1$, $\{p\}= E\cap Y$ is a tacnode
of $X$ and $\deg L_{|E}=1$. We want to show, by contradiction, that $[X\subset \P^r]\not \in \Ch^{-1}(\Chow_d^{ss})$
if $\frac{7}{2}(2g-2)<d$.

Since $E$ and $Y$ are tangent in $p\in E\cap Y$, we can choose coordinates $\{x_1,\ldots, x_{r+1}\}$ of $H^0(X, L)$
so that
$$\begin{sis}
& \ord_p({x_i}_{|E})\geq 2 \: \text{ and } \: \ord_p({x_i}_{|Y})\geq 2 & \text{ for any } 1\leq i\leq r-1, \\
& \ord_p({x_r}_{|E})=\ord_p({x_r}_{|Y})= 1,\\
& \ord_p({x_{r+1}}_{|E})=\ord_p({x_{r+1}}_{|Y})= 0,\\
\end{sis}$$
where ${x_i}_{|E}$ (resp. ${x_i}_{|Y}$) denotes the image of $x_i\in H^0(X,L)$ via the restriction map
$H^0(X,L)\to H^0(E, L_{|E})$ (resp. $H^0(X,L)\to H^0(Y,L_{|Y})$), and $\ord_p$ denotes the order of vanishing of a section
at the point $p$ (considered as a smooth point of $E$ and of $Y$).

Consider now the one-parameter subgroup $\rho: \Gm \to \GL(V)$ which, in the above coordinates, has the diagonal form
$\rho(t)\cdot x_i = t^{w_i}x_i \:  \text{ for } i=1,\ldots, r+1,$ with weights $w_i$ such that
\begin{equation}\label{E:weights0}
\begin{sis}
& w_i=w_{\rho}(x_i)=0 \:  \: \text{ for } 1\leq i\leq r-1, \\
& w_r=w_{\rho}(x_r)=1, \\
& w_{r+1}=w_{\rho}(x_{r+1})=2.
\end{sis}
\end{equation}
Clearly, the total weight of $\rho$ is equal to $w(\rho)=3.$
The proof of \cite[Prop. 1.0.6, Case 2]{Gie} gives that
\begin{equation}\label{E:Gie-peso}
e_{X,\rho}\geq 7.
\end{equation}
Therefore, if $\frac{7}{2}<v:=\frac{d}{2g-2}$ then we have that
$$2w(\rho) \frac{d}{r+1}= 6\frac{2v}{2v-1}<6\cdot \frac{7}{6}\leq e_{X,\rho}.$$
which implies that $[X\subset \P^r]\not \in \Ch^{-1}(\Chow_d^{ss})$ by Fact \ref{Chow-crit}.
\end{proof}

Combining  Corollary \ref{C:quasi-wp-stable}\eqref{C:quasi-wp-stable1} with Theorem \ref{T:tac-line} and Theorem \ref{T:spec-ell}, we get the following

\begin{coro}\label{C:quasi-wp-inter}
Let $[X\subset \P^r] \in  \Hilb_d$ with $X$ connected and $2(2g-2)<d$. Assume that one of the following two conditions is satisfied
\begin{enumerate}[(i)]
\item  $\frac{7}{2}(2g-2)<d<4(2g-2)$ and $[X\subset \P^r] \in \Ch^{-1}(\Chow_d^{ss})$;
\item  $d=4(2g-2)$ and $[X\subset \P^r] \in \Hilb_d^{ss}$.
\end{enumerate}
Then $X$ is a quasi-wp-stable curve without tacnodes nor special elliptic tails.
\end{coro}

We turn now our attention to the stability of arbitrary elliptic tails.

\begin{rmk}\label{rmk:code3}
Suppose that $[X\subset \P^r]$ has an elliptic tail, i.e. we can write $X=Y\cup F$ where $F\subseteq X$ is a connected subcurve of arithmetic genus $1$, $Y\subseteq X$ is a connected subcurve of arithmetic genus $g-1$ and $F\cap Y=\{p\}$ where $p$ is a nodal point of $X$ which is smooth in $F$ and $Y$. Our goal is to determine under which hypothesis $[X\subset \P^r]$ is Hilbert or Chow semistable. Using Corollary \ref{C:quasi-wp-stable}\eqref{C:quasi-wp-stable1} and the Potential pseudo-stability Theorem \ref{teo-pstab}, we can assume that $X$ is quasi-wp-stable and $L:=\OO_X(1)$ is very ample, non-special and balanced of degree $d$.

Let $\nu:=\deg L_{|F}$. Since $L$ (and hence $L_{|F}$) is very ample by construction, we must have $\nu\geq 3$. On the other hand, by applying the basic inequality
\eqref{E:basineq-multideg} to the subcurve $F\subseteq X$ we get
\begin{equation*}\label{E:d-nu}
\left|\nu-\frac{d}{2g-2}\right|\leq \frac{1}{2},
\end{equation*}
so that
$$
\left\{
\begin{array}{ll}
\nu \leq 3 & \text{if } d<\frac{7}{2}(2g-2),\\
\nu = 3,4 & \text{if } d=\frac{7}{2}(2g-2),\\
\nu \geq 4 & \text{if } d>\frac{7}{2}(2g-2).\\
\end{array}
\right.
$$
If $\nu=4$ then there exists an isotrivial specialization $(X,L)\sp (X', L')$ (in the sense of Definition \ref{D:speciali}), where $X'=Y\cup E\cup F$ is obtained from $X$ by blowing
up the node $p$ (i.e. inserting an exceptional component $E\cong \P^1$ meeting $Y$ and $F$ in one point) and $L'$ is a properly balanced line bundle on $X'$
such that $\deg L'_{|Y}=\deg L_{|Y}= d-4$, $\deg L'_{|E}=1$ and  $\deg L'_{|F}=3$. Using Theorem \ref{bal-pos} from the Appendix, it is easy to see that $L'$ is non-special and
very ample; therefore there exists $[X'\subset \P^r]\in \Hilb_d$ such that $\OO_{X'}(1)=L'$. Thus the basic inequality \eqref{E:basineq-multideg} and Theorem \ref{T:spec-clos-orb} imply
\begin{enumerate}
	\item $[X'\subset \P^r]\in \overline{\Orb([X\subset \P^r])}$;
	\item if $d=\frac{7}{2}(2g-2)$ then $[X\subset \P^r]\in \Hilb_d^{ss}$ (resp. $[X\subset \P^r]\in\Ch^{-1}(\Chow_d^{ss})$) if and only if $[X'\subset \P^r]\in \Hilb_d^{ss}$ (resp. $[X\subset \P^r]\in\Ch^{-1}(\Chow_d^{ss})$).
\end{enumerate}
\end{rmk}


\begin{thm}\label{T:ell-curves}
Let $[X\subset \P^r] \in  \Hilb_d$ with $X$ connected and assume that one of the following conditions is satisfied
\begin{enumerate}[(i)]
\item \label{T:ell-curves1} $2(2g-2)<d<\frac{7}{2}(2g-2)$ and $[X\subset \P^r] \in \Ch^{-1}(\Chow_d^{ss})$;
\item \label{T:ell-curves2} $d=\frac{7}{2}(2g-2)$ and $[X\subset \P^r] \in \Hilb_d^{ss}$.
\end{enumerate}
Then $X$ does not have elliptic tails, i.e. $X_{\rm ell}=\emptyset$.
\end{thm}
\begin{proof}
Denote by $L:=\OO_X(1)$. We want to show, by contradiction, that
$[X\subset \P^r]\not\in \Ch^{-1}(\Chow_d^{ss})$ (resp. $[X\subset \P^r]\not\in \Hilb_d^{ss}$) if \eqref{T:ell-curves1} (resp. \eqref{T:ell-curves2}) holds.
Note that since $\Ch^{-1}(\Chow_d^{ss})$ and $\Hilb_d^{ss}$ are open in $\Hilb_d$, we can assume that $F$ is a generic connected curve of arithmetic genus one, and in particular that it is a
smooth elliptic curve. Moreover,  by Remark \ref{rmk:code3} we can assume that $\deg\,L_{|F}=3$, so that we can write
\begin{equation}\label{E:rest-F}
L_{|F}=\OO_F(2p+q)
\end{equation}
for some (uniquely determined) $q\in F$. By our generic assumption on $F$, we can assume that $q\neq p$.

Consider now the linear spans $V_F:=\langle F\rangle $ of $F$ and $V_Y:=\langle Y\rangle$ of $Y$ on $\P^{r}=\P(V)$. It follows from Riemann-Roch theorem,
using that $L$ (hence $L_{|Y}$ and $L_{|F}$) is non-special,  that  $V_F$ has dimension $2$ and $V_Y$ has dimension $d-3-(g-1)=r-2$.
Therefore, we can choose coordinates $\{x_1,\ldots, x_{r+1}\}$ of $V$ such that
$$
V_F=\bigcap_{i=4}^{r+1}\{x_i=0\}\quad\text{,}\quad V_Y=\bigcap_{i=1}^{2}\{x_i=0\}
$$
and $p$ is the point where all the $x_i$'s vanish except $x_{3}$. For $1\leq i\leq 3$, we will identify  $x_i$ with the section of $H^0(F, L_{|F})$ it determines and we will denote by $\ord_p(x_i)$ the order
of vanishing of $x_i$ at $p$. By Riemann-Roch theorem applied to
the line bundles $L_{|F}(-ip)$ for $i=0, \ldots, 3$ and using \eqref{E:rest-F} with $q\neq p$, we may choose the first three coordinates $\{x_1,\ldots,x_{3}\}$ of $V$ so
that
\begin{equation}\label{E:order}
\ord_p(x_i)=3-i \text{ for } 1\leq i\leq 3. \\
\end{equation}
Consider the one-parameter subgroup $\rho: \Gm \to \GL(V)$ which, in the above coordinates, has the diagonal form
$\rho(t)\cdot x_i = t^{w_i}x_i \:  \text{ for } i=1,\ldots, r+1,$ with weights $w_i$ such that
\begin{equation}\label{E:weights}
\begin{sis}
& w_1=w_{\rho}(x_1)=-2, \\
& w_2=w_{\rho}(x_2)=-1, \\
& w_j=w_{\rho}(x_j)=0& \: \: \text{ for } j\geq 3.
\end{sis}
\end{equation}
The total weight of  $\rho$ is equal to
\begin{equation}\label{E:tot-weight}
w(\rho)=-2-1=-3.
\end{equation}
We want now to compute the polynomial $W_{X,\rho}(m)$. By Lemma \ref{lem:sumhilb}
$$
W_{X,\rho}(m)=W_{F,\rho}(m)+W_{Y,\rho}(m)
$$
since $w_3=0$. Moreover,  each coordinate of $V_Y=\{x_1=x_{2}=0\}$ has weight 0, hence $W_{Y,\rho}(m)=0$ and
\begin{equation}\label{E:form-W}
W_{X,\rho}(m)=W_{F,\rho}(m)
\end{equation}

In order to compute the polynomial $W_{F,\rho}(m)$, consider the embedding of $F$ as a cubic
curve in $\P^2=\P(H^0(F, L_{|F})^{\vee})$ given by the complete linear system $|L_{|F}|$ .
Let $f\in k[x_1,x_2,x_3]_3$ be a homogenous polynomial of degree $3$ defining $F$.
The conditions \eqref{E:order} on the order at $p$ of the coordinates $\{x_1,x_2,x_3\}$ translate directly into
conditions on the polynomial $f$. More specifically, the point $p$ has coordinates $(0,0,1)$ and
$p\in F$ if and only if the coefficient of $x_3^3$ in $f$ is equal to zero. The condition that $\ord_p x_1\geq 2$
says that the tangent space of $F$ at $p$ must have equation $\{x_1=0\}$ which translates into the fact that the coefficient of $x_3^2x_2$ in $f$ is zero while the coefficient of $x_3^2x_1$  is not zero. Finally, we have that $\ord_p(x_1)=2$ (i.e. $p$ is not a flex point of $F$) if and only if the coefficient of $x_2^2x_3$ in $f$ is not zero.
Summing up, every polynomial $f$ such that the coordinates $\{x_1,x_2,x_3\}$ satisfy \eqref{E:order}
is of the form
\begin{equation}\label{E:pol-f}
f=a_{300} x_1^3+a_{210}x_1^2x_2+a_{201}x_1^2x_3+a_{120}x_1x_2^2+a_{102}x_1x_3^2+a_{111}x_1x_2x_3+
a_{030}x_2^3+a_{021}x_2^2x_3,
\end{equation}
where $a_{102}\neq 0$ and $a_{021}\neq 0$.

Because of the choice \eqref{E:weights} of the one-parameter subgroup $\rho$, it is easy to see that the monomial $x_2^2 x_3$ has the maximal $\rho$-weight among all
the monomials appearing in the above equation \eqref{E:pol-f} of $f$. Moreover, the same monomial
appears with non-zero coefficient in $f$. Therefore, a collection of $3m$ monomials that compute the polynomial
$W_{F,\rho}(m)$
is represented by the monomials which are not divisible
by $x_2^2x_3$, namely
$$\left\{\{x_1^{m-k}x_3^k\}_{0\leq k\leq m}, \{x_2 x_1^{m-1-h}x_3^h\}_{0\leq h\leq m-1},
\{x_2^2 x_1^{m-2-j}x_2^j\}_{0\leq j\leq m-2}\right\}.$$
We get
\begin{equation*}
W_{F,\rho}(m)=\sum_{k=0}^m [w_1(m-k)+kw_3] +\sum_{h=0}^{m-1} [w_2+(m-1-h)w_1+hw_3]+\sum_{j=0}^{m-2}[(j+2)w_2+(m-2-j)w_1]=
\end{equation*}
\begin{equation}\label{E:expl-WF}
=\left[\frac{3}{2}w_1+\frac{1}{2}w_2+w_3\right]m^2 +\left[-\frac{3}{2}w_1+\frac{3}{2}w_2 \right] m +[w_1-w_2]=-\frac{7}{2}\,m^2 +\frac{3}{2}\,m-1.
\end{equation}Combining \eqref{E:expl-WF} with \eqref{E:form-W}, we get
\begin{equation}\label{E:expl-W}
W_{X,\rho}(m)=-\frac{7}{2}\,m^2 +\frac{3}{2}\,m-1.
\end{equation}
In particular, the normalized leading coefficient of $W_{X,\rho}(m)$ is equal to
\begin{equation}\label{E:3-nlm-WF}
e_{X, \rho}=-7.
\end{equation}

Let us first assume that condition \eqref{T:ell-curves1}� holds, and in particular that $\displaystyle v:=\frac{d}{2g-2}<\frac{7}{2}$.
The right hand side of the numerical criterion for Chow (semi)stability (see Fact \ref{Chow-crit}) can be bounded above as follows:
\begin{equation}\label{E:3-RHS}
2d\,\frac{w(\rho)}{r+1}=-\frac{6d}{r+1}=-6\,\frac{d}{d-g+1}=-6\,\frac{2v}{2v-1}<-7.
\end{equation}
From \eqref{E:3-nlm-WF} and \eqref{E:3-RHS}, we deduce that the chosen 1ps $\rho$ satisfies
$$
e_{X,\rho}>2d\,\frac{w(\rho)}{r+1}.
$$
In other words, $\rho$ violates the numerical criterion for Chow semistability of $[X\subset \P^r]$
(see Fact \ref{Chow-crit}); hence
$[X\subset \P^r]\not\in \Ch^{-1}(\Chow_d^{ss})$ which is the desired contradiction.

Finally, let us assume that condition \eqref{T:ell-curves2}� holds, namely $\displaystyle v:=\frac{d}{2g-2}=\frac{7}{2}$. One of the two polynomials appearing in
the numerical criterion for Hilbert (semi)stability (see Fact \ref{Hilb-crit}) is equal to
\begin{equation}\label{E:expl-Pm}
\frac{w(\rho)}{r+1}\,mP(m)=-\frac{3}{r+1}\,m(md+1-g)=
 -\frac{3d}{r+1}\,m^2+\frac{3}{r+1}\,(g-1)m= \frac{7}{2}\,m^2+\frac{1}{2}\,m.
\end{equation}
From \eqref{E:expl-WF} and \eqref{E:expl-Pm}, it follows that
$$\frac{w(\rho)}{r+1}\,mP(m)-W_{X,\rho}(m)<0 \: \: \text{�for }�\: m \gg 0,
$$
which implies that $[X\subset \P^r]\not \in \Hilb_d^{ss}$ by Fact \ref{Hilb-crit}.
\end{proof}

Combining the previous Theorem \ref{T:ell-curves} with Corollary \ref{C:quasi-wp-stable}\eqref{C:quasi-wp-stable1} and Definition \ref{D:quasi-wp-stable}\eqref{D:quasi2}, we get the following

\begin{coro}\label{C:quasi-p-stable}
Let $[X\subset \P^r] \in  \Hilb_d$ with $X$ connected and assume that one of the following two conditions is satisfied
\begin{enumerate}[(i)]
\item  $2(2g-2)<d<\frac{7}{2}(2g-2)$ and $[X\subset \P^r] \in \Ch^{-1}(\Chow_d^{ss})$;
\item  $d=\frac{7}{2}(2g-2)$ and $[X\subset \P^r] \in \Hilb_d^{ss}$.
\end{enumerate}
Then $X$  is a quasi-p-stable curve.
\end{coro}

The Chow semistable locus for $\displaystyle d=\frac{7}{2}(2g-2)$ is very interesting. In the proof of the last theorem we said that if $[X\subset \P^r]$ admits an elliptic tail $F$ that satisfies \eqref{E:rest-F} (we use the same notation of the last proof) and $\rho:\Gm\ra \GL_{r+1}$ is the one-parameter subgroup defined above with weights \eqref{E:weights}, then
$$
e_{X,\rho}=\frac{7}{3}\,w(\rho)
$$
Thus $[X\subset \P^r]$ is Chow strictly semistable with respect to $\rho$. In the next theorem, we determine
$$
[X_0\subset \P^r]:=\lim_{t\ra 0}\rho(t)\cdot [X\subset \P^r]
$$
and we study the basins of attraction (see \eqref{Sec:bas-attra}) of $[X_0\subset \P^r]$ with respect to $\rho$ and $\rho^{-1}$.


\begin{thm}\label{T:basintacn}
Let $X$ and $\rho$ be as above. Then $X_0=F_0\cup Y$, where $F_0$ is a tacnodal elliptic tail (in particular $[X\subset \P^r]\in A_{\rho}([X_0\subset \P^r])$). Moreover,  $A_{\rho^{-1}}([X_0\subset \P^r])$ contains smoothings of the separating node, but not smoothings of the tacnode.
\end{thm}
\begin{proof}
Denote by $I$ the homogeneous ideal defined by $X\subset \P^r$. In the proof of Theorem \ref{T:ell-curves}, we said that $F\subset V_F$ satisfies the equation
$$
f=a_{300} x_1^3+a_{210}x_1^2x_2+a_{201}x_1^2x_3+a_{120}x_1x_2^2+a_{102}x_1x_3^2+a_{111}x_1x_2x_3+
a_{030}x_2^3+a_{021}x_2^2x_3=0,
$$
where $a_{102}\neq 0$ and $a_{021}\neq 0$. Let us consider the $\rho$-weighted graded order $\In_{\prec_{\rho}}$ as in \S \ref{S:limGrob} .We obtain that
$g:=\In_{\prec_{\rho}}(f)=a_{102}x_1x_3^2+a_{021}x_2^2x_3$, which is the equation of an elliptic tacnodal curve $F_0$ in $V_F\cong \P^2$.
Denote by $I_F$ and $I_Y$ respectively the ideals of $F$ and $C$, respectively.
Suppose that $h_1,\ldots,h_n\in k[x_3,x_4\ldots,x_{r+1}]$ is a Gr\"obner basis for $I(Y)\cap k[x_3,\ldots,x_{r+1}]$: by Lemma \ref{lem:flatlimitcode} we deduce that
$$
\{f,h_1,\ldots,h_n,x_ix_j\,|\,1\leq i\leq 2\text{ and }4\leq j\leq r+1\}
$$
is a Gr\"obner basis for $I=I_Y\cap I_F$. Applying the definition we get
$$
\In_{\prec_{\rho}}(I)=\langle g,h_1,\ldots,h_n,x_ix_j\,|\,1\leq i\leq 2\text{ and }4\leq j\leq r+1\rangle,
$$
which is exactly the ideal defining the variety $X_0=F_0\cup Y\subset\P^r$. Now, it is enough to apply Corollary \ref{C:flatlimit}.

In order to show the last part of the theorem, we use the same techniques used to prove CLAIM 3 in the proof of Theorem \ref{T:spec-clos-orb}. Consider the separating node
$\{p\}=\{[0,0,1,0,\ldots,0]\}=V_{F_0}\cap V_Y$. We can assume that the tangent space $T_{X,p}$ is given by $\langle x_2,x_4\rangle$, hence the completion of the local ring $\OO_{X,p}$ is given by
$k[[u,v]]/(uv)$, where $u=x_2/x_3$ and $v=x_4/x_3$. Since $\rho(t)^{-1}\cdot x_2=tx_2$ and $\rho(t)^{-1}\cdot x_4=x_4$, we get $\rho(t)^{-1}\cdot u=tu$ and $\rho(t)^{-1}\cdot v=v$. We recall that
$\Def_{(X,p)}$ has a semiuniversal ring equal to $k[[a]]$ with universal family $k[[u,v,a]]/(uv-a)$. This implies that $\rho(t)^{-1}\cdot a=ta$ and $A_{\rho^{-1}}([X_0\subset \P^r])$ contains smoothings
of $p$. If we consider the action of $\rho$ on the universal family of the tacnode, we obtain non-positive weights, hence $A_{\rho^{-1}}([X_0\subset \P^r])$ does not contain any smoothings of the tacnode.
\end{proof}

\begin{figure}[h!]
\begin{center}
\unitlength .5mm 
\linethickness{0.35pt}
\ifx\plotpoint\undefined\newsavebox{\plotpoint}\fi 
\begin{picture}(176,93)(0,91)
\qbezier(77.5,87.75)(97.25,110.25)(97,133.75)
\qbezier(25.75,144)(45.5,166.5)(45.25,190)
\qbezier(31.75,177.25)(56.5,167.5)(76.25,174.75)
\put(21,183.75){\makebox(0,0)[cc]{$X$}}
\put(172.75,183.75){\makebox(0,0)[cc]{$Z$}}
\put(74.25,186){\makebox(0,0)[cc]{$F$}}
\put(80,166){\makebox(0,0)[cc]{non-special elliptic tail}}
\put(40.75,177.25){\makebox(0,0)[cc]{$p$}}
\put(35.75,143.75){\makebox(0,0)[cc]{$Y$}}
\put(91.75,123){\makebox(0,0)[cc]{$p$}}
\put(72,93.5){\makebox(0,0)[cc]{$Y$}}
\put(73,142){\makebox(0,0)[cc]{$\xxrsquigarrow{\phantom{aaa}\rho}$}}
\put(120.5,144){\makebox(0,0)[cc]{$\xxlsquigarrow{\rho^{-1}}$}}
\qbezier(133.25,184.25)(176,165.75)(128.75,149.25)
\put(153.5,186.25){\line(0,-1){39}}
\put(187,169.25){\makebox(0,0)[cc]{tacnode with a line}}
\put(105,94){\makebox(0,0)[cc]{$F_0$}}
\put(117.5,130){\makebox(0,0)[cc]{$E$}}
\put(151,110.5){\makebox(0,0)[cc]{tacnodal elliptic tail}}
\put(161,148.25){\makebox(0,0)[cc]{$E$}}
\put(134.75,90.75){\makebox(0,0)[cc]{$X_0$}}
\qbezier(82.25,125.75)(90.625,117.25)(101.5,120.75)
\qbezier(101.5,120.75)(113.875,124.375)(114.75,116.5)
\qbezier(114.75,116.5)(116.125,109.625)(107,107.25)
\qbezier(107,107.25)(99,106.375)(97,103)
\put(115.25,127.5){\line(0,-1){28.25}}
\end{picture}
\caption{The basin of attraction of a curve $X_0$ with a tacnodal elliptic tail $F_0$.}
\end{center}
\end{figure}
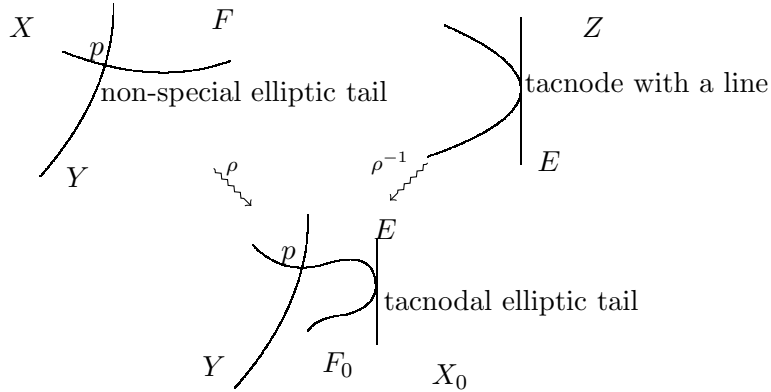

\section{A stratification of  the semistable locus}\label{S:stratifica}

Consider the following sublocus of $\Ch^{-1}(\Chow_d^{ss})\subset \Hilb_d$:
\begin{equation}\label{Hd}
\Ch^{-1}(\Chow_d^{ss})^o:=\{[X\subset \P^r]\in \Ch^{-1}(\Chow_d^{ss})\subset \Hilb_d \: :\: X \text{ is connected}\}.
\end{equation}
If $d>2(2g-2)$, the condition of being connected is both closed and open in
$\Ch^{-1}(\Chow_d^{ss})$: it is closed because of its natural interpretation as a topological condition;
it is open because $[X\subset \P^r] \in \Ch^{-1}(\Chow_d^{ss})$ is a reduced curve by the Potential pseudo-stability Theorem \ref{teo-pstab} and therefore $X$ is connected if and only if $h^0(X,\OO_X)=1$,
which is an open condition by upper-semicontinuity.
Thus, $\Ch^{-1}(\Chow_d^{ss})^o$ is both open and closed in $\Ch^{-1}(\Chow_d^{ss})$; or, in other words, it is
a disjoint union of connected components of $\Ch^{-1}(\Chow_d^{ss})$.

Inspired by \cite[Sec. 5]{Cap}, we introduce in this section an $\SL_{r+1}$-invariant stratification of $\Ch^{-1}(\Chow_d^{ss})^o$  and we establish some properties of it.


Recall that for any $[X\subset \P^r] \in \Ch^{-1}(\Chow_d^{ss})^o$ so that $d>2(2g-2)$, the curve $X$ is quasi-wp-stable and  $\OO_X(1)$ is properly balanced
by Corollary \ref{C:quasi-wp-stable}\eqref{C:quasi-wp-stable1}.
Recall also that $B_X^d$ denotes the set of multidegrees of properly balanced  line bundles on $X$ of total degree $d$ (see Definition \ref{D:prop-bal}).

Following \cite[Sec. 5.1]{Cap}, consider, for any  quasi-wp-stable curve $X$ of genus $g$ and any $\un d\in B_X^d$, the (locally closed) stratum of $\Ch^{-1}(\Chow_d^{ss})^o$:
\begin{equation}\label{E:strata}
M_X^{\un d}:=\{[Y\subset \P^r]\in \Ch^{-1}(\Chow_d^{ss})^o\: : \: \exists \text{ an isomorphism } \phi: X\to Y \text{ such that } \un{\deg}\: \phi^*\cO_Y(1)=\un d\}.
\end{equation}
Note, in particular, that the isomorphism $\phi$ between the abstract curve $X$ and the embedded curve $Y$ is not specified. However, with a slight abuse of notation, we will often represent points of $M_X^{\un d}$ by
$[X\subset \P^r]$.

Each stratum $M_X^{\un d}$ is $\SL_{r+1}$-invariant since $\SL_{r+1}$ acts on $\Ch^{-1}(\Chow_d^{ss})^o$ by changing the embedding of $X$ inside $\P^r$ and thus it preserves $X$ and the multidegree $\un d$.
 Note that $M_X^{\un d}$ may be empty for certain pairs $(X, \un d)$ as above.

\subsection{Specializations of strata}\label{SS:specia-str}

The aim of this subsection is to describe all  pairs $(X',\un d')$ with $X'$ quasi-wp-stable of genus $g$ and $\un d'\in B_{X'}^d$ such that $M_{X'}^{\un d'} \subseteq \ov{M_{X}^{\un d}}$.

Generalizing the refinement relation of \cite[Sec. 5.2]{Cap}, we define an order relation on the set of pairs $(X,\un d)$, where $X$ is a quasi-wp-stable curve of genus $g$ and
$\un d\in B_X^d$.

\begin{defi}\label{D:order-rela} Let $(X',d')$ and $(X'',d'')$ be such that $X'$ and $X''$ are two quasi-wp-stable curves of genus $g$ and $\un d'\in B^d_{X'}$, $\un d''\in B^d_{X''}$.
\noindent
We say that $(X'',\un d'')\preceq (X',\un d')$   if $(X'',d'')$ can be obtained from $(X',d')$ via a sequence of elementary operations as depicted in Figures \ref{blowUpNode}, \ref{blowUpCusp},  \ref{CuspidalCode}, \ref{CuspidalTail}, \ref{TacnodalCode} and \ref{TacnodalTail} below.
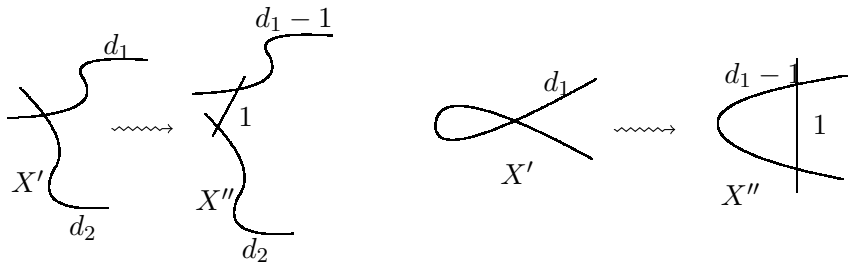
\begin{figure}[!h]
\begin{tabular}{cc}
\unitlength .3mm 
\linethickness{0.4pt}
\ifx\plotpoint\undefined\newsavebox{\plotpoint}\fi 
\begin{picture}(157.5,72)(20,100)
\qbezier(16.75,148.5)(59,150)(50.25,165.5)
\qbezier(98.75,159.25)(141,160.75)(132.25,176.25)
\qbezier(50.25,165.5)(41.75,176.625)(78.25,174.25)
\qbezier(132.25,176.25)(123.75,187.375)(160.25,185)
\put(31.75,152.75){\makebox(0,0)[cc]{}}
\qbezier(22,161.75)(46,138.25)(37,125.75)
\qbezier(104,150.25)(128,126.75)(119,114.25)
\qbezier(37,125.75)(27,106.25)(61,108.75)
\qbezier(119,114.25)(109,94.75)(143,97.25)
\multiput(107.25,140.25)(.0336879433,.0626477541){423}{\line(0,1){.0626477541}}
\put(65,180.25){\makebox(0,0)[cc]{$d_1$}}
\put(49.75,100){\makebox(0,0)[cc]{$d_2$}}
\put(126.25,90){\makebox(0,0)[cc]{$d_2$}}
\put(143.25,192.5){\makebox(0,0)[cc]{$d_1-1$}}
\put(109,227){\line(-1,0){.25}}
\put(122,149){\makebox(0,0)[cc]{$1$}}
\put(25.5,120.5){\makebox(0,0)[cc]{$X'$}}
\put(108.5,112.5){\makebox(0,0)[cc]{$X''$}}
\put(76,146.75){\makebox(0,0)[cc]{$\xrsquigarrow{\phantom{aaaa}}$}}
\end{picture}
\unitlength .4mm 
\linethickness{0.4pt}
\ifx\plotpoint\undefined\newsavebox{\plotpoint}\fi 
\begin{picture}(155,75.25)(0,115)
\qbezier(68.25,164.25)(15.375,134.625)(15,148.5)
\qbezier(15,148.5)(15,166.375)(67,137.75)
\qbezier(150.5,131)(108.375,139.125)(108.75,149.75)
\qbezier(108.75,149.75)(108.75,158.875)(151.75,165.5)
\put(135.25,171.25){\line(0,-1){44.75}}
\put(55.5,163){\makebox(0,0)[cc]{$d_1$}}
\put(140.5,146.25){$1$}
\put(111.028,163){$d_1-1$}
\put(116.5,126){\makebox(0,0)[cc]{$X''$}}
\put(42.25,133.25){\makebox(0,0)[cc]{$X'$}}
\put(84.5,149.75){\makebox(0,0)[cc]{$\xrsquigarrow{\phantom{aaaa}}$}}
\end{picture}
\end{tabular}
\caption{Bubbling of a node: external and internal cases.}
\label{blowUpNode}
\end{figure}
\begin{figure}[!h]
\begin{center}
\unitlength .35mm 
\linethickness{0.4pt}
\ifx\plotpoint\undefined\newsavebox{\plotpoint}\fi 
\begin{picture}(151.75,30.25)(10,110)
\qbezier(59.5,149.75)(46.5,134.375)(16.5,131.5)
\qbezier(16.5,131.5)(44.5,126.25)(55.5,109)
\qbezier(139.75,154.25)(98.125,128.375)(143,101)
\put(119.5,108.75){\line(0,1){41.5}}
\put(35.75,144){\makebox(0,0)[cc]{$d$}}
\put(109,129.75){\makebox(0,0)[cc]{$1$}}
\put(146.75,144.25){\makebox(0,0)[cc]{$d-1$}}
\put(24.75,112.5){\makebox(0,0)[cc]{$X'$}}
\put(155,110.5){\makebox(0,0)[cc]{$X''$}}
\put(81,129){\makebox(0,0)[cc]{$\xrsquigarrow{\phantom{aaaa}}$}}
\end{picture}
\end{center}
\caption{Bubbling of a cusp.}
\label{blowUpCusp}
\end{figure}
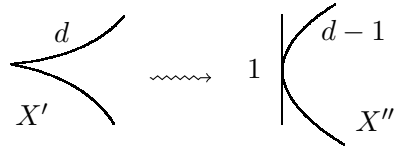
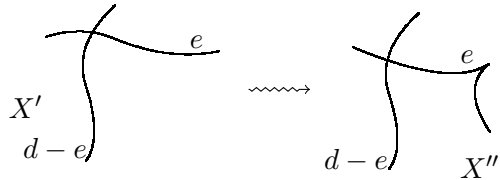
\begin{figure}[!h]
\begin{center}
\unitlength .4mm 
\linethickness{0.4pt}
\ifx\plotpoint\undefined\newsavebox{\plotpoint}\fi 
\begin{picture}(202.25,35)(0,130)
\qbezier(23.25,167)(35.625,170.75)(45.5,166.5)
\qbezier(45.5,166.5)(64.875,158.625)(80.75,162.25)
\put(73.25,165.25){\makebox(0,0)[cc]{$e$}}
\put(26.25,129.75){\makebox(0,0)[cc]{$d-e$}}
\put(126.25,126){\makebox(0,0)[cc]{$d-e$}}
\qbezier(36.5,125.75)(42.125,133.5)(36.25,151.25)
\qbezier(137.25,123)(142.875,130.75)(137,148.5)
\qbezier(36.25,151.25)(32.25,164.625)(46.25,177.5)
\qbezier(137,148.5)(133,161.875)(147,174.75)
\put(167.5,123.75){\makebox(0,0)[cc]{$X''$}}
\put(16.5,143.75){\makebox(0,0)[cc]{$X'$}}
\put(100.75,151.75){\makebox(0,0)[cc]{$\xrsquigarrow{\phantom{aaaa}}$}}
\qbezier(125.25,163.5)(158.125,149.75)(170.5,158)
\qbezier(170.5,158)(160.875,150.375)(170.75,135.25)
\put(163.25,160.5){\makebox(0,0)[cc]{$e$}}
\end{picture}
\end{center}
\caption{Replacing an elliptic tail by a cuspidal elliptic tail.}
\label{CuspidalCode}
\end{figure}
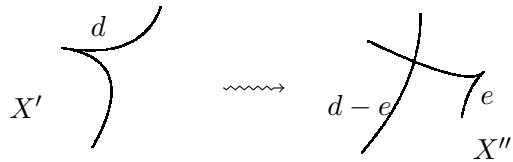
\begin{figure}[!h]
\begin{center}
\unitlength .4mm 
\linethickness{0.4pt}
\ifx\plotpoint\undefined\newsavebox{\plotpoint}\fi 
\begin{picture}(202.25,30.5)(0,150)
\qbezier(131.75,175.75)(164.625,159.875)(170,165.5)
\qbezier(170,165.5)(164.625,160.375)(162.75,150.75)
\put(42,180.75){\makebox(0,0)[cc]{$d$}}
\put(171.25,157){\makebox(0,0)[cc]{$e$}}
\put(18,154){\makebox(0,0)[cc]{$X'$}}
\put(173,140.25){\makebox(0,0)[cc]{$X''$}}
\qbezier(40,140.5)(56.5,169.5)(30,173.5)
\qbezier(30,173.5)(56.125,169.125)(62.75,187.25)
\qbezier(129.5,138.75)(149.25,161.25)(149,184.75)
\put(128.75,154.25){\makebox(0,0)[cc]{$d-e$}}
\put(93.5,162.5){\makebox(0,0)[cc]{$\xrsquigarrow{\phantom{aaaa}}$}}
\end{picture}
\end{center}
\caption{Replacing a cuspidal singularity by a cuspidal elliptic tail.}
\label{CuspidalTail}
\end{figure}
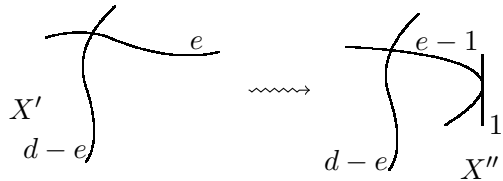
\begin{figure}[!h]
\begin{center}
\unitlength .4mm 
\linethickness{0.4pt}
\ifx\plotpoint\undefined\newsavebox{\plotpoint}\fi 
\begin{picture}(202.25,35)(0,133)
\qbezier(23.25,167)(35.625,170.75)(45.5,166.5)
\qbezier(45.5,166.5)(64.875,158.625)(80.75,162.25)
\put(73.25,165.25){\makebox(0,0)[cc]{$e$}}
\put(26.25,129.75){\makebox(0,0)[cc]{$d-e$}}
\put(126.25,126){\makebox(0,0)[cc]{$d-e$}}
\put(156.5,165.75){\makebox(0,0)[cc]{$e-1$}}
\put(172.75,138.25){\makebox(0,0)[cc]{$1$}}
\qbezier(36.5,125.75)(42.125,133.5)(36.25,151.25)
\qbezier(137.25,123)(142.875,130.75)(137,148.5)
\qbezier(36.25,151.25)(32.25,164.625)(46.25,177.5)
\qbezier(137,148.5)(133,161.875)(147,174.75)
\qbezier(123,164)(192,160)(156,138)
\put(168.25,137.75){\line(0,1){23.5}}
\put(167.5,123.75){\makebox(0,0)[cc]{$X''$}}
\put(16.5,143.75){\makebox(0,0)[cc]{$X'$}}
\put(100.75,151.75){\makebox(0,0)[cc]{$\xrsquigarrow{\phantom{aaaa}}$}}
\end{picture}
\end{center}
\caption{Replacing an elliptic tail by a tacnodal elliptic tail.}
\label{TacnodalCode}
\end{figure}
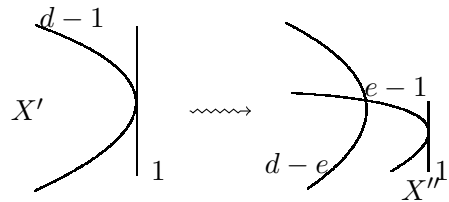
\begin{figure}[!h]
\begin{center}
\unitlength .4mm 
\linethickness{0.4pt}
\ifx\plotpoint\undefined\newsavebox{\plotpoint}\fi 
\begin{picture}(202.25,42.5)(10,133)
\put(126.25,132){\makebox(0,0)[cc]{$d-e$}}
\put(159.25,157.25){\makebox(0,0)[cc]{$e-1$}}
\put(174.5,129.75){\makebox(0,0)[cc]{$1$}}
\qbezier(124.75,155.5)(193.75,151.5)(157.75,129.5)
\put(170,129.25){\line(0,1){23.5}}
\put(167.5,123.75){\makebox(0,0)[cc]{$X''$}}
\put(100.75,151.75){\makebox(0,0)[cc]{$\xrsquigarrow{\phantom{aaaa}}$}}
\qbezier(39.5,178)(106.125,154)(39.25,123)
\put(73,177.5){\line(0,-1){49.5}}
\put(36.75,150.25){\makebox(0,0)[cc]{$X'$}}
\put(80.25,129.75){\makebox(0,0)[cc]{$1$}}
\put(51.5,180.5){\makebox(0,0)[cc]{$d-1$}}
\qbezier(130,123.75)(172.125,153.25)(122.75,178.75)
\end{picture}
\end{center}
\caption{Replacing a tacnode with a line by a tacnodal elliptic tail.}
\label{TacnodalTail}
\end{figure}
\end{defi}




\begin{rmk}\label{D:order-rela1}
Given two quasi-wp-stable curves $X'$ and $X''$ (not necessarily endowed with any multidegree), we can also say that $X''\preceq X'$   if $X''$ can be obtained from $X'$ via a sequence of the elementary operations depicted in Figures \ref{blowUpNode}, \ref{blowUpCusp}, \ref{TacnodalCode}, \ref{CuspidalCode}, \ref{CuspidalTail} and \ref{TacnodalTail}, ignoring the degrees.
\end{rmk}

From the above description it is easy to see that there is a relation between the isotrivial specialization introduced in Definition \ref{D:speciali} and the order relation $\preceq$. More precisely, the following holds:

\begin{rmk}
Let $(X',L')$ and $(X'',L'')$ be two pairs consisting of a quasi-wp-stable  curve of genus $g$ and a properly balanced line bundle of degree $d$.
If $(X',L')\sp (X'',L'')$, then $(X'',\un{\deg}L'')\preceq (X', \un{\deg}L')$.
\end{rmk}

The following elementary property of the order relation $\preceq$ will be used in what follows.

\begin{lemma}\label{L:order-rela}
Notation as in Definition \ref{D:order-rela}.
If $X''\preceq X'$ and $\un d''\in B_{X''}^d$ then there exists $\un d'\in B_{X'}^d$
    such that $(X'',\un d'')\preceq (X',\un d')$.
\end{lemma}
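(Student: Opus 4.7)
The plan is to argue by induction on the number of elementary blow-ups needed to pass from $X'$ to $X''$, thereby reducing to the case where $\sigma : X'' \to X'$ contracts a single exceptional component $E$ of $X''$. By Corollary \ref{C:quasi-vs-wp} together with Lemma \ref{L:rela-step}, the point $p := \sigma(E) \in X'$ is then either a cusp (case (a)), a self-node of some component $C'$ (case (b)), or an external node at which two distinct components $C_1, C_2$ of $X'$ meet (case (c)). In cases (a) and (b) the unique adjacent component $C'$ is automatically non-exceptional, since exceptional components are smooth $\P^1$'s and so carry neither cusps nor self-nodes.

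In each case I will define $\un d'$ by ``pushing down'' $\un d''$: set $\un d'_C := \un d''_{\widetilde C}$ for every component $C$ of $X'$ not adjacent to $p$, and distribute the extra unit $\un d''_E = 1$ among the components adjacent to $p$. In cases (a) and (b) the assignment is forced onto $C'$. In case (c) I will first show that both $C_1$ and $C_2$ must be non-exceptional in $X'$: otherwise, say if $C_1$ were exceptional, then $\widetilde{C_1}$ and $E$ would form two adjacent rational exceptional components of $X''$, and the basic inequality applied to the subcurve $\widetilde{C_1} \cup E$ (which has $\omega_{X''}$-degree zero and $k = 2$) would force $\un d''_{\widetilde{C_1} \cup E} \leq 1$, contradicting $\un d''_{\widetilde{C_1}} = \un d''_E = 1$. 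With this ruled out, either assignment in case (c) is admissible.

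To verify that the resulting $\un d'$ lies in $B_{X'}^d$ I will proceed in two steps. First, every exceptional $E' \subset X'$ has $\widetilde{E'} \subset X''$ still exceptional and receives no extra unit by the rules above, so $\un d'_{E'} = \un d''_{\widetilde{E'}} = 1$. Second, for an arbitrary subcurve $Y' \subseteq X'$ I will lift it to $Y'' \subseteq X''$ by taking strict transforms of the components of $Y'$ and including $E$ precisely when the component of $X'$ to which $E$ was assigned lies in $Y'$. A short case analysis (on whether the component or components of $X'$ adjacent to $p$ belong to $Y'$) will show simultaneously that $\sigma(Y'') = Y'$, that $\un d''_{Y''} = \un d'_{Y'}$, and crucially that $g_{Y''} = g_{Y'}$ and $k_{Y''} = k_{Y'}$; combining the last two we get $\deg_{Y''} \omega_{X''} = \deg_{Y'} \omega_{X'}$, so the basic inequality for $\un d'$ at $Y'$ will follow directly from that for $\un d''$ at $Y''$. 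These same lifts will witness $(X'', \un d'') \preceq (X', \un d')$, and transitivity of $\preceq$ will then complete the inductive argument.

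The main obstacle will be the bookkeeping underlying the genus and $k$-invariant identities in this case analysis. The key geometric input is the formula $p_a(Z \cup E) = p_a(Z) + |Z \cap E| - 1$ applied either along a node ($|Z \cap E| = 1$, so $p_a$ is unchanged when $E$ is added) or along a tacnode ($|Z \cap E| = 2$, so the change of $+1$ precisely matches the genus contribution of the cusp of $X'$ that $E$ replaces). An analogous accounting will show that the $k$-invariant merely redistributes (a single node of $X'$ crossing the boundary of $Y'$ is replaced in $X''$ by a single node on the opposite side of $E$) but is preserved in total, so that the translation of the basic inequality between $Y'$ and $Y''$ is lossless.
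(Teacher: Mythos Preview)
Your proposal is correct and follows essentially the same approach as the paper: reduce via Lemma~\ref{L:rela-step} to a single blow-up, push the extra unit $\un d''_E=1$ down onto an adjacent component to define $\un d'$, and then verify the basic inequality for each subcurve $Y'\subseteq X'$ by exhibiting a lift $Y''\subseteq X''$ with the same degree, genus, and $k$-invariant. Your write-up is in fact more careful than the paper's in two respects: you explicitly check the \emph{properly} balanced condition (by arguing that the component receiving the extra unit is never exceptional), and you spell out the cusp and self-node cases that the paper leaves to the reader. One small remark: your basic-inequality argument in case~(c) that $C_1$ cannot be exceptional is valid, but an alternative is simply to observe that if $C_1$ were exceptional then $\widetilde{C_1}\cup E$ would be a connected genus-zero subcurve of $X''$ with $k=2$ not isomorphic to $\P^1$, contradicting the standing hypothesis that $X''$ is quasi-wp-stable.
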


\begin{proof}
Start by assuming that $X''$ is obtained from $X'$ by bubbling an external node $n$, as in the picture on the left of Figure \ref{blowUpNode}.

Denote by $\{C_1', \ldots , C_{\gamma}'\}$ the irreducible components of $X'$, by
$\{C_1'',\ldots,C''_{\gamma}\}$ their proper transforms in $X''$ and by $E$ the exceptional component that is contracted to the node $n$ by the map $\sigma: X''\to X'$. Assume that $C_1'$ and $C_2'$ are the two irreducible components of $X'$ that contain the node $n$.
Define a multidegree $\un d'$ on $X'$ in the following way:
\begin{equation*}
\un d'_{C_i'}:=
\begin{sis}
&\un d''_{C_i''} &\mbox{ for } i\neq 1,\\
&\un d''_{C_1''}+1 &\mbox{ for } i=1.
\end{sis}
\end{equation*}
It is clear that $|\underline d'|=d$, so we must check that $\underline d'$ satisfies the basic inequality \eqref{E:basineq-multideg}. Given a subcurve $Z'$ of $X'$, we denote by $Z''$ the subcurve of $X''$ that is the proper transform of $Z'$
under the bubbling map $X''\to X'$. Define $W_{Z'}$ to be the subcurve  of $X''$ such that $W_{Z'}=Z''$ if $C_1'\subsetneq Z'$ and  $W_{Z'}=Z''\cup E$ if $C_1'\subseteq Z'$. Then it is easy to see that
$$\begin{sis}
& d'_{Z'}=d''_{W_{Z''}},\\
& g_{Z'}=g_{W_{Z''}},\\
& k_{Z'}=k_{W_{Z''}}.
\end{sis}$$
 Hence the basic inequality \eqref{E:basineq-multideg} for $\un d'$ relative to the subcurve
$Z'$ is the same as the basic inequality for $\un d''$ relative to the subcurve $W_{Z''}$.
We conclude that if $d''\in B^{d}_{X''}$ then $d'\in B^{d}_{X'}$.

The remaining cases are similar (and easier) and are therefore left to the reader.
\end{proof}

We will now prove that the above order relation $\preceq$  determines the inclusion relations among the closures of the strata $M_X^{\un d}\subset \Ch^{-1}(\Chow_d^{ss})^o$ of \eqref{E:strata}.
The following result is a generalization of \cite[Prop. 5.1]{Cap}.

\begin{prop}\label{P:deg-strata}
 Assume that $d>2(2g-2)$ and moreover that $g\geq 3$ if $d\leq 4(2g-2)$.
 Let $X'$ and $X''$ be two quasi-wp-stable curves of genus $g$ and let $\un d'\in B_{X'}^d$ and $\un d''\in B_{X''}^d$.
 Assume that $M_{X''}^{\un d''}\neq \emptyset$. Then
$$M_{X''}^{\un d''} \subseteq \ov{M_{X'}^{\un d'}} \Longleftrightarrow (X'',\un d'')\preceq (X',\un d').$$
\end{prop}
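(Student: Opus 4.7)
The plan is to prove the two directions separately, adapting Caporaso's arguments for quasi-stable curves (\cite[Prop.~5.1]{Cap}) to the quasi-wp-stable setting that also involves cusps and tacnodes with a line.

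For the direction $(\Leftarrow)$, by transitivity of $\preceq$ together with Lemma~\ref{L:rela-step}, it is enough to treat a single elementary blow-up of $X'$ at a node or a cusp. For each of the three elementary cases (external node, internal node, cusp), I would construct a one-parameter family $\cX\to\spec R$ over a DVR together with a line bundle $\cL$ on $\cX$ whose generic fiber is $(X',\cO_{X'}(1))$ and whose special fiber is $(X'',L'')$ with $\un\deg L''=\un d''$. In the two node cases, $\cX$ is obtained as the blow-up of the trivial family $X'\times\spec R$ at the surface singularity sitting over the node; this introduces an exceptional $\P^1$ in the central fiber, and a suitable twist of $\pi^*\cO_{X'}(1)$ by a divisor supported on the central fiber realizes the degree shift recorded in Figure~\ref{blowUpNode}. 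The cusp case is treated by an analogous birational modification, now producing a tacnode with a line in the central fiber and hence the picture of Figure~\ref{blowUpCusp}. Non-speciality of $\cO_{X''}(1)$ (Theorem~\ref{teo-pstab}) together with the positivity results for properly balanced line bundles from the Appendix allows embedding the family into $\P^r\times\spec R$, yielding a morphism $\spec R\to H_d$ whose generic point lies in $M_{X'}^{\un d'}$ and whose closed point lies in $M_{X''}^{\un d''}$. By $\SL_{r+1}$-invariance of both strata, this gives the inclusion $M_{X''}^{\un d''}\subseteq \ov{M_{X'}^{\un d'}}$.

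For the direction $(\Rightarrow)$, I would fix $[X''\hookrightarrow \P^r]\in M_{X''}^{\un d''}$ and, via standard specialization techniques inside $\ov{M_{X'}^{\un d'}}\subseteq H_d$, produce a DVR $R$ and a morphism $\spec R\to H_d$ sending the closed point to $[X''\hookrightarrow \P^r]$ and the generic point into $M_{X'}^{\un d'}$. Pulling back the universal family yields, after an étale base change, a flat family $\cX\to \spec R$ of quasi-wp-stable curves with generic fiber $X'$, special fiber $X''$, and a properly balanced relative line bundle $\cL$ of degree $d$. The heart of the proof is to establish $X:=\wps(X')=\wps(X'')$ and to extract from the family a surjective morphism $\sigma:X''\to X'$ commuting with the two wp-stable reductions. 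For the first assertion one applies the p-stable reduction of Definition~\ref{D:p-stab} and the map $\Phi^{\rm ps}:\Pdg\to \Mgps$ from Theorem~\ref{T:comp-Pic}\eqref{T:comp-Pic1}: in the subrange $d>4(2g-2)$ the curves in $H_d$ are quasi-stable by Corollary~\ref{C:quasi-wp-stable}\eqref{C:quasi-wp-stable2}, so $\wps$ coincides with ordinary stable reduction and the separatedness of $\ov{\cM}_g$ forces $\wps(X')=\wps(X'')$; in the subrange $2(2g-2)<d<\tfrac{7}{2}(2g-2)$ there are no elliptic tails by Theorem~\ref{T:ell-curves}, so $\wps=\ps$ and the same separatedness argument applies using $\Mgps$. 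The morphism $\sigma$ is then produced by tracking limits of irreducible components: every exceptional component of $X'$ specializes, via its flat closure in $\cX$, to an exceptional component of $X''$ (possibly further blown-up), while the additional exceptional components of $X''$ arise precisely at nodes or cusps of $X$ that are left unexceptional in $X'$. Finally, the multidegree compatibility demanded by Definition~\ref{D:order-rela}\eqref{D:order-rela2} follows from flatness of $\cL$: for any subcurve $Y'\subseteq X'$, setting $Y'':=\sigma^{-1}(Y')\subseteq X''$, the relative degree of $\cL$ along the flat closure of $Y'$ in $\cX$ is locally constant, so $\un d'_{Y'}=\un d''_{Y''}$.

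The principal obstacle is the cusp case of $(\Leftarrow)$: engineering a flat family with a cuspidal generic fiber and a tacnode-with-a-line special fiber, while simultaneously controlling the twist of the line bundle to hit the prescribed multidegree of Figure~\ref{blowUpCusp}, is genuinely new relative to Caporaso's nodal treatment and requires careful use of the versal deformation of the cuspidal singularity. A secondary delicate point in $(\Rightarrow)$ is upgrading the mere specialization relation in $\Mgps$ to the equality $\wps(X')=\wps(X'')$ required by Definition~\ref{D:order-rela}, which must be handled range by range using the earlier stability results.
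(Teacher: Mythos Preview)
For $\Rightarrow$ your strategy matches the paper's, but the range-by-range analysis is both unnecessary and incomplete: you omit $\tfrac{7}{2}(2g-2)\le d\le 4(2g-2)$. The paper uses the continuous map $\phi^{\rm ps}:H_d\to\Mgps$ of Theorem~\ref{T:comp-Pic} uniformly: since $M_{X'}^{\un d'}$ lies in a single fiber of $\phi^{\rm ps}$, so does its closure, whence $\ps(X')=\ps(X'')$ in all ranges at once. The equality $\wps(X')=\wps(X'')$ is then automatic once $\sigma:X''\to X'$ is produced as a contraction of exceptional components, so there is no ``secondary delicate point'' to worry about.

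For $\Leftarrow$ there is a genuine gap. You propose to build $\cL$ as a twist of $\pi^*\cO_{X'}(1)$, but there is no given $\cO_{X'}(1)$: the hypothesis only supplies a point of $M_{X''}^{\un d''}$. Starting from an arbitrary $L'\in\Pic^{\un d'}(X')$ and twisting lands you at \emph{some} embedding of $X''$ by a line bundle of multidegree $\un d''$, and you must then argue that this embedding lies in $H_d$. That statement is exactly Corollary~\ref{C:semist-num}, which depends on Proposition~\ref{P:completeness} and hence on the present proposition---a circularity. The paper runs the construction in the opposite direction: it starts from the given $[X''\subset\P^r]\in M_{X''}^{\un d''}$ with $L'':=\cO_{X''}(1)$, builds the family $\cX\to B$ with $\cX_{b_0}\cong X''$, and extends $L''$ to a line bundle $\cL$ on $\cX$ using smoothness of the relative Picard scheme $\Pic_{\cX/B}\to B$ (the section through $L''$ being chosen with generic multidegree $\un d'$). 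Then $f(b_0)=[X''\subset\P^r]\in H_d$ by construction, and openness of the semistable locus forces $f(b)\in H_d$ for $b$ near $b_0$. With this device the cusp case is no harder than the node case: once the degenerating family $\cX\to B$ is written down, the Picard-scheme argument is identical, so your ``principal obstacle'' evaporates.
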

\begin{proof}

$\Longleftarrow$
We will start by showing that if $X''\preceq X'$ then there is a family $u:\mathcal X\to B$ over a smooth curve $B$ whose geometric fiber $\cX_b$ over a point $b\in B$ is such that $\mathcal X_b\cong X'$ for all $b\neq b_0$ and $\mathcal {X}_{b_0}\cong X''$.

Start by assuming that $X''$ is obtained from $X'$ by bubbling  a node, say $n$.
Let $B$ be a smooth curve and consider the trivial family $X'\times B$ over $B$. By blowing up the surface $X\times B$ at the node $n$ belonging to the fiber over a point $b_0\in B$, we get a family $u:\mathcal X\to B$ whose geometric fiber $\cX_b$ over a point $b\in B$ is such that $\mathcal X_b\cong X'$ for all $b\neq b_0$ and $\mathcal {X}_{b_0}\cong X''$ as in the figure below (where we have depicted an external node, but the case of an internal node is completely similar).

\begin{center}
\begin{picture}(154.75,70)(0,105)
\put(32,117.75){\line(1,0){108.5}}
\multiput(74.25,152.25)(.0337030717,.0379692833){586}{\line(0,1){.0379692833}}
\multiput(74.75,142.25)(.0337389381,-.0409292035){452}{\line(0,-1){.0409292035}}
\put(78.5,166.25){\line(0,-1){33.5}}
\multiput(106.75,150)(.0336812144,.0422201139){527}{\line(0,1){.0422201139}}
\multiput(36,149.75)(.0336812144,.0422201139){527}{\line(0,1){.0422201139}}
\multiput(106.25,157.5)(.0337186898,-.0447976879){519}{\line(0,-1){.0447976879}}
\multiput(35.5,157.25)(.0337186898,-.0447976879){519}{\line(0,-1){.0447976879}}
\put(154.5,156.75){\makebox(0,0)[cc]{$\mathcal X$}}
\put(154.75,117.5){\makebox(0,0)[cc]{$B$}}
\put(78.25,117.75){\circle*{.5}}
\put(78,110){\makebox(0,0)[cc]{$b_0$}}
\put(78,118){\circle*{1.414}}
\put(154.25,150.75){\vector(0,-1){25.5}}
\put(77.75,117.75){\circle*{2.121}}
\end{picture}

\end{center}
When $X''$ is obtained from $X'$ by bubbling  a cusp, we proceed in the same way as in the previous case: we consider the trivial family $X'\times B$ over $B$ and by blowing up the surface $X\times B$ on the cusp $p$ belonging to the fiber over a point $b_0\in B$, we get a family $u:\mathcal X\to B$ whose geometric fiber $\cX_b$ over a point $b\in B$ is such that $\mathcal X_b\cong X'$ for all $b\neq b_0$ and $\mathcal {X}_{b_0}\cong X''$ as in the figure below.

\begin{center}
\begin{picture}(141.5,60)(0,115)
\put(33.25,122.75){\line(1,0){96}}
\qbezier(121.75,165.75)(114.375,151)(98.5,150.25)
\qbezier(61,165)(53.625,150.25)(37.75,149.5)
\qbezier(120.5,134.5)(114.5,151)(98.5,150.5)
\qbezier(59.75,133.75)(53.75,150.25)(37.75,149.75)
\qbezier(90.25,166.25)(67.125,150.5)(91.5,132.75)
\multiput(78.75,136)(.03125,3.5){8}{\line(0,1){3.5}}
\put(141.5,160.75){\makebox(0,0)[cc]{$\mathcal X$}}
\put(141,122){\makebox(0,0)[cc]{$B$}}
\put(140.75,153.5){\vector(0,-1){23}}
\put(79.25,123){\circle*{2.062}}
\put(78.75,114.25){\makebox(0,0)[cc]{$b_0$}}
\end{picture}
\end{center}

The cases when $X''$ is a cuspidal elliptic tail as in Figures \ref{CuspidalCode} and \ref{CuspidalTail} are direct consequences of Remark \ref{R:3stacks}\eqref{R:3stacks1}.
The cases depicted in Figures \ref{TacnodalCode} and \ref{TacnodalTail} can then be obtained from these as follows. Let $u:\mathcal X\to B$ be a family such that, for $b\neq b_0$, $\mathcal X_b\cong X'$ is a curve with an elliptic tail and such that $\mathcal X_{b_0}$ is a curve with a cuspidal elliptic tail as in Figure \ref{CuspidalCode}.  By bubbling the surface $\mathcal X$ at the cusp $p$ of the central fiber $\cX_{b_0}$, we get a new family $u':\mathcal X'\to B$ such that $\mathcal X'_b\cong X'$ is a curve with an elliptic tail as before and such that $\mathcal X'_{b_0}$ has a tacnodal elliptic tail as in Figure \ref{TacnodalCode}.
Finally, in order to deal with the situation depicted in Figure \ref{TacnodalTail}, we consider an isotrivial family $u:\mathcal X\to B$ where for $b\neq b_0$, $\mathcal X_b$ is a curve with a cuspidal singularity and such that $\mathcal X_{b_0}$ has a cuspidal tail, as in Figure \ref{CuspidalTail}.
The locus in $\cX$ corresponding to the cusp in each fiber $\mathcal X_b$ of $u$ over $B$ is a Weil divisor on the surface $\cX$; by blowing up this divisor, we get a new family $u':\mathcal X'\to B$ such that, for $b\neq b_0$, $\mathcal X'_b\cong X'$ is a curve having a tacnode with a line while
$\mathcal X'_{b_0}$ has a tacnodal elliptic tail, as in Figure \ref{TacnodalTail}.

Consider now the relative Picard scheme
$\pi:\Pic_{\cX/B}\to B$ of the family $u:\cX\to B$, which exists by a well-known result of Mumford
(see \cite[Sec. 8.2, Thm. 2]{BLR}). Since $H^2(\cX_b, \OO_{\cX_b})=0$ for any $b\in B$ because $\cX_b$ is a curve,
we get that $\pi:\Pic_{\cX/B}\to B$ is smooth by \cite[Sec. 8.4, Prop. 2]{BLR}.

Let now $[X''\subset \P^r=\P(V)]\in M_{X''}^{\un d''}$ and set $L''=\OO_{X''}(1)\in \Pic^{\un d''}(X'')$. Note that the embedding $X''\subset \P^r$ defines an isomorphism $\phi:H^0(X'', L'')\stackrel{\cong}{\to} V$.

We can view $L''$ as a geometric point of
$(\Pic_{\cX/B})_{b_0}\cong \Pic(X'')$. Since the morphism $\pi: \Pic_{\cX/B}\to B$ is smooth, up to shrinking $B$ (i.e., replacing it with  an \'etale open neighborhood of $b_0$), we can find a section $\sigma$ of $\pi$ such that $\sigma(b_0)=L''$.
Moreover, by definition of the order relation $\preceq$ (see Figures \ref{blowUpNode}, \ref{blowUpCusp}, \ref{CuspidalCode}, \ref{CuspidalTail}, \ref{TacnodalCode} and \ref{TacnodalTail} above), it is clear that we can choose the section $\sigma$ so that $\sigma(b)$ is a line bundle of multidegree $\un d'$ on $\cX_{b}\cong X'$ for every $b\neq b_0$.

Up to shrinking $B$ again, we can assume that the section $\sigma$ corresponds to a line bundle $\mathcal L$ over $\mathcal X$ such that  $\mathcal L_{|{\mathcal X}_{b_0}}\cong L''$ and  $\mathcal L_{|{\mathcal X}_{b}}$ has multidegree $\un d'$ for $b\neq b_0$. Since $L''$ is very ample and non-special
and these conditions are open, up to shrinking $B$ once more, we can assume that
$\mathcal L$ is relatively very ample and we can fix an isomorphism $\Phi:
u_*\mathcal L\stackrel{\cong}{\to} \OO_B\otimes V$ of sheaves on $B$ such that $\Phi_{|b_0}=\phi$.
Via the isomorphism $\Phi$, the relatively very ample line bundle $\mathcal{L}$ defines an embedding
$$\xymatrix{
\mathcal X \ar@{^{(}->}[rr]^{i}\ar[dr]_u & & \P(\OO_B\otimes V)=\P^r_B\ar[ld],\\
& B &
}
$$
whose restriction over $b_0\in B$ is the embedding $X''\subset \P^r$.
The family $u:\mathcal X\to B$ together with the embedding $i$, defines a morphism $f:B\to \Ch^{-1}(\Chow_d^{ss})^o$ such that $f(b_0)=[X''\subset \P^r]\in M_{X''}^{\un d''}$ and $f(b)\in M_{X'}^{\un d'}$ for every $b\neq b_0$,
so we conclude that $M_{X''}^{\un d''} \subseteq \ov{M_{X'}^{\un d'}}$.

$\Longrightarrow$
Suppose now that $M_{X''}^{\un d''}\subseteq \ov{M_{X'}^{\un d'}}$.
Then we can find a smooth curve $B$ and a morphism $f:B\to \Ch^{-1}(\Chow_d^{ss})^o$  such that
$f(b_0)\in M_{X''}^{\un d''}$ for some $b_0\in B$ and $f(b)\in M_{X'}^{\un d'}$ for every $b_0\neq b\in B$.
By pulling back the universal family above $\Ch^{-1}(\Chow_d^{ss})^o$ along the morphism $f$, we get a family
$$\xymatrix{
\cX \ar@{^{(}->}[r] \ar[d]^u & B\times \P^r\\
B&
}$$
such that $\cX_{b_0}=X''$ and $\cX_{b}=X'$ for every $b\neq b_0$.
In particular, $u$ yields an isotrivial specialization of $X'$ into $X''$.
Let $\ov\cX\to B$ be the wp-stable reduction of $u$; for $b\neq b_0$, $\ov X':=\ov\cX_b$ is the wp-stable reduction of $X'$ while $\ov X'':=\ov\cX_{b_0}$ is the wp-stable reduction of $X''$.
According to Remark \ref{R:3stacks}\eqref{R:3stacks1}, $\ov X'$ and $\ov X''$ may differ by replacing elliptic tails by cuspidal elliptic tails or by replacing cuspidal singularities by cuspidal elliptic tails as in Figures \ref{CuspidalCode} and \ref{CuspidalTail}, so $\ov X''\preceq \ov X'$.
Then, as $\cX$ is a family of quasi-wp-stable curves, it is obtained from $\ov\cX$ in two steps: first by blowing up the surface $\ov\cX$ on the locus of some nodal or cuspidal singularities along all the fibers of $\ov\cX$ giving rise to a new family $\widetilde\cX$, and then by further blowing up $\widetilde\cX$ on nodal or cuspidal singularities of the fiber over $b_0$.
Denote $\widetilde X':=\widetilde\cX_{b}$ for $b\neq b_0$ and $\widetilde X'':=\widetilde\cX_{b_0}$. Then it is easy to see that $\widetilde X''\preceq \widetilde X'$: the only situation, which needs some care, is the blow up $\ov\cX$ at a cuspidal singularity when $\ov X''$ is obtained from $\ov X'$ by replacing a cuspidal singularity by a cuspidal elliptic tail as in Figure \ref{CuspidalTail}. But in this case, we easily see that we get a situation as described on Figure \ref{TacnodalTail}, so $\widetilde X''\preceq \widetilde X'$.
Finally, by blowing up $\widetilde\cX$ on nodes or cusps of $\widetilde\cX_{b_0}$ we get the situations described in Figures \ref{blowUpNode}, \ref{blowUpCusp} and \ref{TacnodalCode}, so $X''\preceq X'$.



Consider now the line bundles $L_{b_0}:=\OO_{\cX}(1)_{|\cX_{b_0}}\in \Pic^{\un d''}(X'')$ and $L_b:=\OO_{\cX}(1)_{|\cX_{b}}\in \Pic^{\un d'}(X')$  for any $b_0\neq b\in B$.
Let $Y'\subseteq X'$ be a subcurve
of $X'$. Consider the subcurve $Y''\subseteq X''=\cX_{b_0}$ given by the union of all the irreducible components $C_i$ of $X''$ for which there exists a section $s$ of $u:\cX\to B$ such that
$s(b_0)\in C_i$ and $s(b)\in Y'\subseteq X'=\cX_{b'}$ for every $b\neq b_0$. By construction, we get that
$\un d'_{Y'}=\deg_{Y'} L_{b}=\deg_{Y''}L_{b_0}=\un d''_{Y''}$. According to Definition \ref{D:order-rela}, this yields that $(X'',\un d'')\preceq (X',\un d')$.

\end{proof}

\subsection{A completeness result}\label{SS:complete}

Given any  quasi-wp-stable curve $X$ of genus $g$ and a multidegree $\un d\in B_X^d$, consider the subgroup of the automorphism group $\Aut(X)$ of $X$ given by
\begin{equation}\label{E:aut-d}
\Aut^{\un d}(X)=\{\phi\in \Aut(X)\: : \: \phi^*L\in \Pic^{\un d}(X) \: \text{ for any }
L\in \Pic^{\un d}(X)\}.
\end{equation}
Note that given a point $[Y\stackrel{i}{\hookrightarrow} \P^r]$ belonging to the stratum $M_X^{\un d}$ as in \eqref{E:strata}, the line bundle $\phi^*\OO_Y(1)\in \Pic^{\un d}(X)$ is only well-defined up to the action
of $\Aut^{\un d}(X)$. We denote by $[\OO_X(1)]$ the class of this line bundle in the quotient
$\Pic^{\un d}(X)/\Aut^{\un d}(X)$.
Therefore, we have a well-defined (set-theoretic) map
\begin{equation}\label{E:map-strata-Pic}
\begin{aligned}
p: M_X^{\un d} & \to \Pic^{\un d}(X)/\Aut^{\un d}(X) \\
[X\subset \P^r] & \mapsto [\OO_X(1)].
\end{aligned}
\end{equation}
Note that the fibers of the map $p$ are exactly the $\SL_{r+1}$-orbits on $M_X^{\un d}$.
The image of $p$ can be nicely described using the following useful result about the relation between the automorphism group of $X$ and the stability of $[X\subset \P^r]$.

\begin{lemma}\label{L:autstable}
Let $[X\subset \P^r]\in \Hilb_d$ where $X$ is non-degenerate and linearly normal in $\P^r$. Set $L=\OO_X(1)$. If $\phi\in \Aut(X)$, then $[X\subset \P^r]$ is Chow semistable (resp. stable) if and only if $[X \stackrel{|\phi^*L|}{\hookrightarrow} \P^{r}]$ is Chow semistable (resp. stable). The same holds for the Hilbert (semi)stability.
\end{lemma}
\begin{proof}
For $m\gg 0$ we have the commutative diagram
$$\xymatrix{
S^m H^0(X, L) \ar@{->}[r]^{\phi^*} \ar@{->>}[d] & S^m H^0(X, \phi^*L) \ar@{->>}[d]\\
H^0(X, L^m)\ar@{->}[r]^{\phi^*} & H^0(X,\phi^*L^m),  \\
}
$$
which allows us to identify the  monomial bases of $H^0(X,L^m)$ and of $H^0(X, \phi^* L^m)$. More precisely, if we fix a system of coordinates $\{x_1,\ldots,x_{r+1}\}$ and $\{B_1,\ldots,B_{P(m)}\}$ is a monomial basis of $H^0(X, L^m)$ with respect to $\{x_1,\ldots,x_{r+1}\}$, then $\{\phi^*(B_1),\ldots,$ $\phi^*(B_{P(m)})\}$ is a monomial basis of $H^0(X, \phi^*L^m)$ with respect to the system of coordinates $\{\phi^*(x_1), \ldots,\phi^*(x_{r+1})\}$. Now, let $\rho$ be a one-parameter subgroup diagonalized by $\{x_1,\ldots,x_{r+1}\}$ with weights $w_1,\ldots,w_{r+1}$ and define another one-parameter subgroup $\rho^*$ diagonalized by $\{\phi^*(x_1),\ldots,\phi^*(x_{r+1})\}$ with the same ordered weights. By the identification of monomial bases, we have that $W_{X,\rho}(m)=W_{X,\rho^*}(m)$ for $m\gg 0$\footnote{Here there is an abuse of notation: $W_{X,\rho}(m)$ is referred to $[X\subset \P^r]$, while $W_{X,\rho^*}(m)$ is referred to $[X \stackrel{|\phi^*L|}{\hookrightarrow} \P^{r}]$.}. Now, it suffices to apply the Hilbert-Mumford criterion (Fact \ref{Hilb-crit}) and our statement is proved.
\end{proof}

\begin{coro}\label{C:Im-p}
Let $X$ be a quasi-wp-stable curve and $\un d\in B_X^d$. Let $L\in \Pic^{\un d}(X)$ and assume that $L$ is very ample and non-special.
Consider the point $[X\stackrel{|L|}{\hookrightarrow}\P^r] \in \Hilb_d$, which is well-defined up to the action of $\SL_{r+1}$.
Then
$$[L]\in \Im(p) \Leftrightarrow [X\stackrel{|L|}{\hookrightarrow}\P^r]\in \Ch^{-1}(\Chow_d^{ss})^o.$$
\end{coro}

The aim of this subsection is to prove the following completeness result, which generalizes \cite[Prop. 5.2]{Cap}.

\begin{prop}\label{P:completeness}
Let $X$ be a quasi-wp-stable curve and $\un d\in B_X^d$. Assume that one of the following conditions is satisfied:
\begin{enumerate}[(i)]
	\item \label{compl1} $d>4(2g-2)$;
	\item \label{compl2} $2(2g-2)<d<\frac{7}{2}(2g-2)$ and $g\geq 3$;
	\item \label{compl3} $X$ is quasi-p-stable, $\frac{7}{2}(2g-2)<d<4(2g-2)$ and $g\geq 3$.
\end{enumerate}
Then either $M_X^{\un d}=\emptyset$ or $p: M_X^{\un d}\to \Pic^{\un d}(X)/\Aut^{\un d}(X)$ is surjective.
\end{prop}
 \begin{proof}

Note that the statement of the Proposition is equivalent to the fact that either $\theta^{-1}(\Im(p))=\emptyset$ or $\theta^{-1}(\Im(p))=\Pic^{\un d}(X)$, where
$\theta:\Pic^{\un d}(X)\to \Pic^{\un d}(X)/\Aut^{\un d}(X)$ is the projection to the quotient.
We first make the following two reductions.

\un{Reduction 1:} We can assume that if $d<\frac{5}{2}(2g-2)$ then $X$ does not contain  elliptic tails;  in this case every $L\in \Pic^{\un d}(X)$ is non-special and very ample.

Indeed, according to Theorem \ref{bal-pos}\eqref{bal-ns}, $L\in \Pic^{\un d}(X)$ is non-special since $X$ is quasi-wp-stable, hence $G$-semistable, and
$\deg L=d>2(2g-2)>2g-2$ (recall that $g\geq 2$).   Now, if $d<\frac{5}{2}(2g-2)$ and $X$ contains some elliptic tail $F$, then from the basic inequality it follows easily that $\un d_F =2$.
But no line bundle of degree $2$ on a curve of genus $1$ is very ample, hence no
line bundle of multidegree $\un d$ on $X$  can be very ample;
otherwise, since any $L\in \Pic^{\un d}(X)$ is ample by
Remark \ref{R:propbal-ample}, it follows from Theorem \ref{bal-pos}\eqref{bal-va} that $L$ is very ample, q.e.d.

\un{Reduction 2:} We can assume that  $\un d$ is strictly balanced.

Indeed, suppose the proposition is true for all strictly balanced line bundles on quasi-wp-stable curves and let us show that
it is true for our multidegree $\un d$ on $X$, assuming that $\un d$ is not strictly balanced. Let $L\in \Pic^{\un d}(X)$. Since $\un d$ is not strictly balanced, by Lemma \ref{L:specia}\eqref{L:specia2}
there exists an isotrivial specialization $(X,L)\sp (X',L')$ such that $\un d':=\deg L'$ is a strictly balanced multidegree on $X'$. Moreover, from the proof of the cited Lemma, it follows easily that
the curve $X'$ and the multidegree $\un d'$ depend only on $X$ and $\un d$ and not on $L\in \Pic^{\un d}(X)$.
Note that, since $X'$ is obtained from $X$ by bubbling  some nodes of $X$, then $X$ has some elliptic tails
if and only if $X'$ has some elliptic tails. Therefore, according to Reduction 1, $L$ and $L'$ are non-special and very ample.
Up to the choice of a basis of $H^0(X,L)$ and of $H^0(X',L')$, we get two points of $\Hilb_d$, namely
$[X\stackrel{|L|}{\hookrightarrow} \P^r]$ and $[X'\stackrel{|L'|}{\hookrightarrow} \P^r]$.
These two points are indeed well-defined only up to the action of the group $\SL_{r+1}$. From Corollary \ref{C:Im-p}, we get that
$[L]\in \Im(M_X^{\un d}\stackrel{p}{\to} \Pic^{\un d}(X)/\Aut^{\un d}(X))$ if and only if $[X\stackrel{|L|}{\hookrightarrow} \P^r]\in \Ch^{-1}(\Chow_d^{ss})^o$, and similarly that
$[L']\in \Im(M_{X'}^{\un d'}\stackrel{p'}{\to} \Pic^{\un d'}(X')/\Aut^{\un d'}(X'))$ if and only if $[X'\stackrel{|L'|}{\hookrightarrow} \P^r]\in \Ch^{-1}(\Chow_d^{ss})^o$.
Therefore, Theorem \ref{T:spec-clos-orb}\eqref{T:spec-clos-orb2} gives that $[L]\in \Im(p)$ if and only if $[L']\in \Im(p')$.
In other words, we have defined a set-theoretic map
$$\begin{aligned}
\Upsilon: \Pic^{\un d}(X) &\to \Pic^{\un d'}(X') \\
L & \mapsto L'
\end{aligned}
$$
such that $\Upsilon^{-1}({\theta'}^{-1}(\Im(p')))=\theta^{-1}(\Im(p))$., where $\theta:\Pic^{\un d}(X)\to \Pic^{\un d}/\Aut^{\un d}(X)$ and $\theta':\Pic^{\un d'}(X')\to \Pic^{\un d'}/\Aut^{\un d'}(X')$ are the projection maps.
The proposition for $\un d'$ is equivalent to the fact that either
${\theta'}^{-1}(\Im(p'))=\emptyset$ or ${\theta'}^{-1}(\Im(p'))=\Pic^{\un d'}(X')$. Using the above map $\Upsilon$,
it is easy to see that the above properties hold also for $\un d$,  q.e.d.

\vspace{.1cm}
We now prove the proposition for a pair $(X,\un d)$ satisfying the properties of Reduction 1 and Reduction 2 and one of the conditions \eqref{compl1}, \eqref{compl2} and \eqref{compl3}.
Assume that $M_X^{\un d}\neq \emptyset$, for otherwise there is nothing to prove. Let us first prove the following

\underline{CLAIM:} $\theta^{-1}(\Im(p))\subset \Pic^{\un d}(X)$  is open and dense.

Consider a Poincar\'e line bundle $\cP$ on $X\times \Pic^{\un d}(X)$, i.e., a line bundle $\cP$ such that $\cP_{|X\times \{L\}}\cong L$ for every
$L\in \Pic^{\un d}(X)$ (see \cite[Ex. 4.3]{Kle}). By Reduction 1, it follows that
$\cP$ is relatively very ample with respect to the projection $\pi_2: X\times \Pic^{\un d}(X)\to \Pic^{\un d}(X)$ and that
$(\pi_2)_*(\cP)$ is locally free of rank equal to $r+1=d-g$ .
We can therefore find a Zariski open cover $\{U_i\}_{i\in I}$ of $\Pic^{\un d}(X)$ such that $(\pi_2)_*(\cP)_{|U_i}\cong \OO_{U_i}^{\oplus (r+1)}$ and the line bundle $\cP$ induces an embedding
$$\xymatrix{
X\times U_i \ar@{^{(}->}[rr]^{\eta_i}\ar[dr]_{\pi_2} & & \P(\OO_{U_i}^{r+1})=\P^r_{U_i}\ar[ld],\\
& U_i &
}
$$
The above embedding corresponds to a map $f_i:U_i\to \Hilb_d$ and, using Corollary \ref{C:Im-p}, we get  that
$$\theta^{-1}(\Im(p)) =\bigcup_i f_i^{-1}(\Ch^{-1}(\Chow_d^{ss})^o).$$
Since $\Ch^{-1}(\Chow_d^{ss})^o$ is open inside $\Chow^{-1}(\Chow_d^{ss})$ (by the discussion at the beginning of Section \ref{S:stratifica}) and  $\Ch^{-1}(\Chow_d^{ss})$ is open in $\Hilb_d$
(because any GIT-semistability condition is open),
it follows that $f_i^{-1}(\Ch^{-1}(\Chow_d^{ss})^o)$ is open inside $U_i$; hence $\theta^{-1}(\Im(p)) $ $\subseteq \Pic^{\un d}(X)$ is open  as well.
Moreover, since $\Pic^{\un d}(X)$ is irreducible and $M_X^{\un d}\neq \emptyset$, we get that $\theta^{-1}(\Im(p))\subseteq \Pic^{\un d}(X)$ is also dense, q.e.d.

\vspace{0.1cm}

In order to finish the proof, it remains to show that $\theta^{-1}(\Im(p))\subseteq \Pic^{\un d}(X)$ is closed.
Since $\theta^{-1}(\Im(p))$ is open by the CLAIM, it is enough to prove that $\theta^{-1}(\Im(p))$ is closed under specializations (see \cite[Ex. II.3.18(c)]{Har}), i.e.,
if $B\subseteq \Pic^{\un d}(X)$ is a smooth curve such that $B\setminus \{b_0\}\subseteq \theta^{-1}(\Im(p))$ then $b_0\in \theta^{-1}(\Im(p))$.
The same construction as in the proof of the Claim gives, up to shrinking $B$ around $b_0$, a map $f:B\to \Hilb_d$ such that $f(B\setminus \{b_0\})\subset \Ch^{-1}(\Chow_d^{ss})^o\subseteq \Ch^{-1}(\Chow_d^{ss})$. We denote by $\mathcal L$ the relatively ample line bundle on $\pi_1:\cX:=X\times B\to B$ which gives the embedding into $\P^r_B$.

We can now apply a fundamental result in GIT, called \emph{polystable replacement property} (see e.g. \cite[Thm. 4.5]{HH2}), which implies that, up to replacing $B$ with a finite cover ramified over $b_0$, we can find two maps
$g:B\to \Ch^{-1}(\Chow_d^{ss})^o$ and $h:B\setminus \{b_0\}\to \SL_{r+1}$ such that
\begin{equation}\label{E:2fam-equiv}
f(b)=h(b)\cdot g(b)  \text{ for every } b_0\neq b\in B,
\end{equation}
\begin{equation}\label{E:poly-limit}
g(b_0) \text{ is Chow polystable. }
\end{equation}
We denote by $\pi_2: \cY\to B$ the pull-back of the universal family over $\Ch^{-1}(\Chow_d^{ss})^o$ via the map $g$ and by $\cM$ the line bundle on $\cY$ which is the pull-back of the universal line bundle via $g$. Property \eqref{E:2fam-equiv} implies that
$X\cong \cY_b$ and $\un{\deg}\cM_{|\cY_b}=\un d$ for every $b_0\neq b\in B$. Moreover, if we set
$Y:=\cY_{b_0}$, $M:=\cM_{|Y_0}$ and $\un d':=\un{\deg} M$, then Proposition \ref{P:deg-strata} implies that
$(Y,\un d')\preceq (X, \un d)$. Observe also that \eqref{E:poly-limit} together with Corollary \ref{C:poly-strictly-bal} imply that
$M$ is strictly balanced.

\vspace{0.1cm}
Assume now that \eqref{compl1} holds. By Corollary \ref{C:quasi-wp-stable}\eqref{C:quasi-wp-stable2}, $X$ and $Y$ are quasi-stable curves. Therefore, $(Y,\un d')$ is obtained from
$(X,\un d)$ via a sequence of bubbling  of nodes, as depicted in Figure \ref{blowUpNode}. In particular, there exists
a surjective map $\sigma:Y\to X$ that contracts the new exceptional components produced by bubbling  some of the nodes of $X$.
Hence there exists a map $\Sigma:\cY\to \cX$ over $B$ which is an isomorphism away from the fiber over $b_0$ and whose restriction over $b_0\in B$ is the contraction map $\sigma:Y\to X$.
Consider the line bundle
$\w{\cL}:=\Sigma^*(\cL)$ on $\cY$ and set $\w{L}:=\w{\cL}_{|\cY_{b_0}}=\sigma^*(L)$ and $\w{\un d}=\un{\deg}(\w{L})$.
 Property \eqref{E:2fam-equiv} implies that, up to shrinking $B$ around $b_0$, $\w{\cL}$ and $\cM$ are isomorphic away
from the central fiber $\cY_{b_0}=Y$; hence, by Lemma \ref{L:twister}, we can find a Cartier divisor $T$ on $\cY$ supported on the central fiber $\cY_{b_0}=Y$ such that
\begin{equation}\label{E:twist}
\w{\cL}=\cM\otimes \OO_{\cY}(T).
\end{equation}
This implies that the multidegrees $\un d'$ and $\w{\un d}$ on $Y$ are equivalent in the sense of Definition \ref{D:equiv-mult}.
Since $\un d$ is strictly balanced by Reduction 1,
we can now apply Lemma \ref{L:equiv-lb} (with $Z=Y$ and $\sigma'=\id$) in order to conclude that $X=Y$ or,
equivalently, $\cX=\cY$.
Since we have already observed that $(Y,\un d')\preceq (X,\un d)$, we must have that $\un d=\un d'$. Combining this with \eqref{E:twist}, we get that
$L:=\cL_{\cX_{b_0}}=\cM_{\cX_{b_0}}=M$. We deduce that $b_0=L=M\in \theta^{-1}(\Im(p))$ by combining Corollary \ref{C:Im-p} with \eqref{E:poly-limit}, q.e.d.

\vspace{0.1cm}
Next, assume that \eqref{compl2} holds. By Corollary \ref{C:quasi-p-stable}, $X$ and $Y$ are quasi-p-stable. Therefore, $(Y,\un d')$ is obtained from
$(X,\un d)$ via a sequence of bubbling  of nodes and cusps, as depicted in Figures \ref{blowUpNode} and \ref{blowUpCusp}.
  Thus we get again a map $\Sigma:\cY\to \cX$ over $B$ with the same properties as in case \eqref{compl1} and the argument is completely analogous to the previous one.

\vspace{0.1cm}
Finally, assume  that \eqref{compl3} holds. In this case, the curve $X$ does not contain elliptic tails by assumption, whereas the curve $Y$ might contain some elliptic tails.
 Denote by $M'$ the line bundle on $Y$ such that $M'_{|Y_{\text{ell}}^c}=M_{|Y_{\text{ell}}^c}$ and $M'_{|F}$ is special for each elliptic tail $F$ of $Y$. As in the proof of ($\Leftarrow$) in
 Proposition \ref{P:deg-strata}, up to shrinking $B$ around $b_0$, we can find a relatively very ample line bundle $\cM'$ on $\cY$ so that  $\cM'_{|Y}=M'$ and $\cM'_{|\cY_b}$ has the same
 multidegree as $\cM_{|\cY_b}$ for each $b\neq b_0$. Let $F_1,\ldots,F_n$ be the elliptic tails that compose the elliptic locus $Y_{\text{ell}}$, denote by $p_i$ the intersection point of $F_i$ with
 $(F_i)^c$ and define the line bundle $\cN$ as follows:
$$
\cN=\cM'\otimes \OO_{\cY}(4F_1+\ldots+4F_n).
$$
Since $\OO_{\cY}(4F_1+\ldots+4F_n)_{|F_i}=\OO_{F_i}(-4p_i)$ and $\cM'_{|F_i}=\OO_{F_i}(4p_i)$, we deduce that $N:=\cN_{|Y}$ is trivial on each $F_i$.
Therefore, the line bundle $\cN$ is relatively globally generated and the induced map
$$\xymatrix{
\cY \ar[rr]^{\phi_{|\cN|}}\ar[dr]_{\pi_2} & & \P^r_B\ar[ld],\\
& B &
}
$$
embeds $\pi_2^{-1}(B\setminus \{b_0\})$ in $\P^r_B$ and contracts exactly $Y_{\text{ell}}\subset Y$ over $b_0$. Denote by $\cZ$ the image of $\cY$ in $\P^r_B$ via $\phi_{|\cN|}$ and
$\pi_3:\cZ\to B$ the restriction of $\P^r_B\to B$ to $\cZ\subset \P^r_B$. Setting $Z=\cZ_{b_0}$ and $\un d'':=\un{\deg}\, \OO_{\P^r_B}(1)_{|Z}$, Proposition \ref{P:deg-strata} implies that
$(Z,\un d'')\preceq (X,\un d)$. Since $Z$ does not contain elliptic tails, $(Z,\un d'')$ is obtained from
$(X,\un d)$ via a sequence of bubbling  of nodes and cusps, as depicted in Figures \ref{blowUpNode} and \ref{blowUpCusp}. Hence, as in part \eqref{compl1},
there exists a map $\Sigma:\cZ\to \cX$ which is an isomorphism away from the central fiber and whose restriction to the central fiber is the contraction of the exceptional components of $Z$
produced by blowing up some nodes and cusps of $X$. Summing up, we have the commutative diagram
$$\xymatrix{
\cY \ar[rr]^{\phi_{|\cN|}}\ar[rdr]_{\pi_2} & & \cZ \ar[rr]^{\Sigma} \ar[d]^{\pi_3} & & \cX \ar[ldl]^{\pi_1},\\
& & B & &
}
$$
Composing $\phi_{|\cN|}$ with $\Sigma$, we obtain a map $\Sigma':\cY\to \cX$ over $B$, whose restriction over $b_0$ contracts $Y_{\text{ell}}$ and possibly some exceptional components of $Y$.
As above, consider the line bundle $\w{\cL}:=(\Sigma')^*\cL$ and let $T$ be the Cartier divisor on $\cY$, supported on $\cY_{b_0}=Y$, such that
\begin{equation}\label{eq:MLT}
\w{\cL}=\cM\otimes \OO_{\cY}(T).
\end{equation}
If we prove that $Y_{\text{ell}}=\emptyset$, we are done since $\Sigma':\cY \ra \cX$ contracts only exceptional components as in the case  \eqref{compl1} and \eqref{compl2}. Suppose,
by contradiction, that $Y$ admits an elliptic tail, which we denote by $F$. Set $C=F^c$ and denote by $p$ the intersection point of $F$ with $F^c$. Note that
 $\OO_{\cY}(T)_{|F}$ is equal to the line bundle associated to some multiple of $p$, i.e. $F$ is special with respect to $\OO_{\cY}(T)_{|F}$ (see Definition \ref{D:elltails-types}).
On the other hand, since  $F$ is contracted by $\Sigma'$ and $\w{\cL}$ is the pull-back of $\cL$ via $\Sigma'$, we have that
$\w{\cL}_{|F}=\OO_{F}$. Therefore, using \eqref{eq:MLT}, we deduce that $F$ is also special with respect to $M$.
This is absurd because the point $[Y\stackrel{|M|}{\inj} \P^r]$ is Chow semistable and  cannot have special elliptic tails by Theorem \ref{T:spec-ell}.

\end{proof}

The following well-known Lemma  (see e.g. the proof of \cite[Prop. 6.1.3]{Ray}) was used in the above proof of Proposition \ref{P:completeness}.


\begin{lemma} \label{L:twister}
Let $B$ be a smooth curve and let $f:\mathcal X\to B$ be a flat and proper morphism. Fix a point $b_0\in B$ and set $B^*=B\setminus
\{b_0\}$.
Let $\cL$ and $\cM$ be two line bundles on $\cX$ such that $\cL_{|f^{-1}(B^*)}= \cM_{|f^{-1}(B^*)}$.
Then
$$\cL=\cM\otimes \cO_{\cX}(D),$$
 where $D$ is a Cartier divisor on $\cX$ supported on $f^{-1}(b_0)$.
 \end{lemma}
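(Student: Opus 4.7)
The plan is to reduce to showing that the line bundle $\mathcal{N}:=\cL\otimes\cM^{-1}$, which is trivial on the open $U:=f^{-1}(B^*)$, is of the form $\cO_{\cX}(D)$ for a Cartier divisor $D$ supported on $f^{-1}(b_0)$. The trivialization of $\mathcal{N}|_U$ provides a nowhere-vanishing section $s\in\Gamma(U,\mathcal{N})$, so the task becomes: extend $s$ to a rational section of $\mathcal{N}$ on all of $\cX$ and take its associated Cartier divisor.

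The key technical input is that $f^{-1}(b_0)$ is an effective Cartier divisor on $\cX$. Indeed, since $B$ is a smooth curve, $b_0$ is locally cut out by a uniformizer $t\in\cO_{B,b_0}$, and the flatness of $f$ guarantees that $f^*t$ is a non-zero-divisor in the local rings of $\cX$ along $f^{-1}(b_0)$. Working locally, I would choose an affine open cover $\{U_i=\Spec A_i\}$ of $\cX$ on which $\mathcal{N}$ is trivialized by some generator $e_i$ and on which $f^{-1}(b_0)$ is cut out by an element $u_i\in A_i$, and write $s|_{U_i\cap U}=h_i\,e_i$ with $h_i\in A_i[1/u_i]^\times$. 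Because $u_i$ is a non-zero-divisor, the localization $A_i[1/u_i]$ embeds into the total quotient ring of $A_i$, so each $h_i$ defines a section of $\mathcal{K}_{\cX}^\times$ over $U_i$. The $h_i$'s automatically agree on overlaps since they assemble to the single section $s$ on $U$, so the collection $(h_i)$ defines a Cartier divisor $D$ on $\cX$ with $\cO_{\cX}(D)\cong \mathcal{N}$ via the standard correspondence between line bundles with a nonzero rational section and Cartier divisors.

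It remains to check that $D$ is supported on $f^{-1}(b_0)$. Since $s$ is a regular, nowhere-vanishing section of $\mathcal{N}$ over $U$, each $h_i$ restricts to a unit of $\cO_{\cX}$ on $U_i\cap U$, so $\operatorname{Supp}(D)\cap U=\emptyset$, i.e.\ $\operatorname{Supp}(D)\subseteq f^{-1}(b_0)$, as required.

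The main subtlety is that $\cX$ is not assumed to be integral, normal, or even reduced, so one cannot appeal to Weil divisors or to the naive theory of rational sections on an integral scheme. The flatness of $f$ over the smooth curve $B$ is what replaces these assumptions: it ensures that $f^{-1}(b_0)$ is locally cut out by a non-zero-divisor, which is precisely what the Cartier-divisor formalism requires to identify the line bundle $\mathcal{N}$ with $\cO_{\cX}(D)$.
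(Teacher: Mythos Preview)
The paper does not actually prove this lemma; it simply cites it as well-known, referring to the proof of \cite[Prop.~6.1.3]{Ray}. Your argument is correct and supplies precisely the standard proof: the essential point, which you isolate clearly, is that flatness of $f$ over the smooth curve $B$ makes the pullback of a uniformizer at $b_0$ a non-zero-divisor, so that $f^{-1}(b_0)$ is an effective Cartier divisor and the trivializing section of $\cN=\cL\otimes\cM^{-1}$ over $U$ extends to a meromorphic section in the honest sense (i.e.\ in $\mathcal{K}_\cX^\times$), whose associated Cartier divisor is then automatically supported on $f^{-1}(b_0)$. Your care in avoiding any integrality or normality hypothesis on $\cX$ is appropriate, since in the applications of the lemma in the paper the total space $\cX$ is typically singular (and a priori possibly non-reduced) along the special fiber.
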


\begin{rmk}\label{R:poly-completeness}
Since in the proof of Proposition \ref{P:completeness} we applied the \emph{polystable replacement property}, a stronger result holds: if $[X\subset \P^r]$ is Hilbert (resp. Chow) semistable, $\OO_X(1)$ is strictly balanced and one of the conditions of Proposition \ref{P:completeness} is satisfied, then $[X\subset \P^r]$ is Hilbert (resp. Chow) polystable. This result may be viewed as a partial converse to Corollary \ref{C:poly-strictly-bal}.
\end{rmk}

\begin{rmk}
\noindent
\begin{enumerate}[(i)]
  \item Proposition \ref{P:completeness} is false in the case $\frac{7}{2}(2g-2)\leq d<4(2g-2)$ if the curve $X$ is not assumed to be quasi-p-stable
  (see Remark \ref{rmk:code} and also Theorem \ref{T:semistable3}\eqref{T:semistable3c} and Theorem \ref{T:semistable4}).
	\item The careful reader will have noticed that we do not say anything if $d=\frac{7}{2}(2g-2)$ nor $4(2g-2)$. Actually, Proposition \ref{P:completeness} can be extended to the cases
	$d=4(2g-2)$ (for $X$ quasi-wp-stable) and $d=\frac{7}{2}(2g-2)$ (for $X$ quasi-p-stable).
	In our presentation, we will only use the extension to $d=4(2g-2)$, but we are not ready yet to prove it. Its proof requires the analysis of stability of elliptic tails and will be dealt with later (see Proposition \ref{P:completeness3}).
\end{enumerate}
\end{rmk}

The following result is an immediate consequence of Proposition \ref{P:completeness}.

\begin{coro}\label{C:semist-num}
Let $[X\stackrel{i}{\hookrightarrow} \P^r], [X\stackrel{i'}{\hookrightarrow} \P^r ]$ points in $\Hilb_d$. Assume that one of the conditions of Proposition \ref{P:completeness} is satisfied and
$\un \deg \: i^*\OO_{\P^r}(1)=\un \deg \: {i'}^*\OO_{\P^r}(1)$. Then $[X\stackrel{i}{\hookrightarrow} \P^r]$ belongs to $\Ch^{-1}(\Chow_d^{ss})$ (resp. $\Hilb_d^{ss}$) if and only if
$[X\stackrel{i'}{\hookrightarrow} \P^r]$ belongs to $\Ch^{-1}(\Chow_d^{ss})$ (resp. $\Hilb_d^{ss}$).

\end{coro}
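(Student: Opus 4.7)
My plan is to deduce the corollary directly from Proposition \ref{P:completeness}. Set $L := i^*\OO_{\P^r}(1)$ and $L' := (i')^*\OO_{\P^r}(1)$; both lie in $\Pic^{\un d}(X)$ by the equal-multidegree hypothesis. By symmetry it suffices to prove the ``only if'' direction, so suppose $[X, i]$ is Chow (resp.\ Hilbert) semistable. Then $[X, i]$ lies in the stratum $M_X^{\un d}$ of \eqref{E:strata}, which is therefore nonempty.

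I would then apply Proposition \ref{P:completeness}, whose hypotheses match those of the corollary exactly, to conclude that the natural map $p\colon M_X^{\un d} \to \Pic^{\un d}(X)$ is surjective. This produces a point $[X \stackrel{j}{\hookrightarrow} \P^r] \in M_X^{\un d}$ with $j^*\OO_{\P^r}(1) \cong L'$; by construction $[X, j]$ is Chow (resp.\ Hilbert) semistable.

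It remains to identify $[X, i']$ with a translate of $[X, j]$ under the $\SL(V)$-action, at which point $\SL(V)$-invariance of the Chow-semistable (resp.\ Hilbert-semistable) locus will deliver the conclusion. The fibers of $p$ are exactly the $\SL(V)$-orbits on $M_X^{\un d}$ (as noted just after \eqref{E:map-strata-Pic}), so once $i'$ is seen to be non-degenerate (and hence linearly normal, since $h^0(X, L') = r+1$ by non-speciality of balanced line bundles via Theorem \ref{bal-pos}, matching $\dim H^0(\P^r,\OO_{\P^r}(1))$), the embeddings $[X, i']$ and $[X, j]$ differ by a change of basis of $H^0(X, L')$ and lie in the same $\SL(V)$-orbit. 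The only delicate point is therefore this non-degeneracy: if $i'$ were degenerate, the potential pseudo-stability Theorem \ref{teo-pstab} would immediately force $[X, i']$ out of $\Ch^{-1}(\Chow_d^{ss})$ and $\Hilb_d^{ss}$, in which case the biconditional would force the opposite implication as well, to be handled by swapping the roles of $i$ and $i'$. Thus all the genuine content sits in Proposition \ref{P:completeness}; the rest is a formal reduction from embeddings to their pullback line bundles via the complete linear system.
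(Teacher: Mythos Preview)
Your approach matches the paper's exactly: invoke Proposition~\ref{P:completeness} to produce $[X,j]\in M_X^{\un d}$ with $j^*\OO_{\P^r}(1)\cong L'$, then claim $[X,i']$ lies in its $\SL_{r+1}$-orbit. The paper asserts this orbit equality without comment; you rightly flag that it needs $i'$ to be non-degenerate, but your fix is circular --- you cannot invoke ``the biconditional'' while you are in the middle of proving it, and swapping $i,i'$ does not break the circle. In fact the corollary as literally stated is false when $i'$ is degenerate: take $X$ smooth and $L$ very ample of degree $d>4(2g-2)$; the complete linear series gives a Chow-stable $[X,i]$ by Fact~\ref{F:stab-smooth}, while a generic projection of $i(X)$ to a hyperplane, re-included into $\P^r$, gives a degenerate $[X,i']$ with the same pullback $\OO_X(1)=L$, and this is not Chow-semistable by Theorem~\ref{teo-pstab}. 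So the result tacitly requires both embeddings to be non-degenerate (equivalently linearly normal, since $h^0(L')=r+1$); this is how it is actually used at the end of the proof of Theorem~\ref{T:semistable}, where after the Reduction step the embedding in play is by a complete linear system.

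A second, minor point: the strata $M_X^{\un d}$ of \eqref{E:strata} sit inside $H_d\subseteq\Ch^{-1}(\Chow_d^{ss})$, so membership in $M_X^{\un d}$ only gives Chow semistability of $[X,j]$, and your ``by construction $[X, j]$ is Chow (resp.\ Hilbert) semistable'' is unjustified in the Hilbert case. The paper handles this by introducing parallel strata $\w{M}_X^{\un d}\subseteq\Hilb_d^{ss}$ and remarking that Propositions~\ref{P:deg-strata} and~\ref{P:completeness} go through verbatim for them; you should say this rather than hide it in a parenthetical.
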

\begin{proof}
Let us first prove the statement for the Chow semistability.
Assume that $[X\stackrel{i}{\hookrightarrow} \P^r]\in \Ch^{-1}(\Chow_d^{ss})$. This is equivalent to saying that $[X\stackrel{i}{\hookrightarrow} \P^r]\in M_X^{\un d}$
where $\un d:=\un \deg \: i^*\OO_{\P^r}(1)=\un \deg \: {i'}^*\OO_{\P^r}(1)$. In particular, $M_X^{\un d}\neq \emptyset$; hence, from Proposition \ref{P:completeness} and Corollary \ref{C:Im-p},
we deduce that  $[X\stackrel{i'}{\hookrightarrow} \P^r]\in M_X^{\un d}$, or in other words $[X'\stackrel{i'}{\hookrightarrow} \P^r]\in \Ch^{-1}(\Chow_d^{ss})$, q.e.d.


The proof for the Hilbert semistability is similar: we can define a stratification of
$$\Hilb_d^{ss,o}:=\{[X\subset \P^r]\in \Hilb_d^{ss}\: :\: X \text{ is connected}\},$$
whose strata are given by
$$\w{M}_X^{\un d}=\{[X\subset \P^r]\in \Hilb_d^{ss,o}\: : \:\un{\deg} \: \OO_X(1)=\un d\}\subseteq M_X^{\un d}.$$
It is clear that Propositions \ref{P:deg-strata} and \ref{P:completeness} remain valid if we substitute $M_X^{\un d}$ with $\w{M}_X^{\un d}$. Therefore,
the above proof for the Chow semistability extends verbatim to the Hilbert semistability.

\end{proof}


\section{Semistable, polystable and stable points (part I)}\label{S:semistab1}

The aim of this section is to describe the points of $\Hilb_d$ that are Hilbert or Chow semistable, polystable and stable for
$$
2(2g-2)<d\leq \frac{7}{2}(2g-2) \quad \text{and}\quad d>4(2g-2).
$$
The range $\frac{7}{2}(2g-2)<d\leq 4(2g-2)$ will be investigated later.

Let us begin with the semistable points.
\begin{thm}\label{T:semistable}
Consider a point $[X\subset \P^r]\in \Hilb_d$ and assume that $X$ is connected.
\begin{enumerate}
\item \label{T:semistable1} If $d>4(2g-2)$ then the following conditions are equivalent:
\begin{enumerate}[(i)]
\item \label{T:semistable1i} $[X\subset \P^r]$ is Hilbert semistable;
\item \label{T:semistable1ii} $[X\subset \P^r]$ is Chow semistable;
\item \label{T:semistable1iii} $X$ is quasi-stable, non-degenerate and linearly normal in $\P^r$ and $\OO_X(1)$ is properly balanced and non-special;
\item \label{T:semistable1iiibis} $X$ is quasi-stable and $\OO_X(1)$ is properly balanced;
\item \label{T:semistable1iv} $X$ is quasi-stable and $\OO_X(1)$ is balanced.
\end{enumerate}
\item \label{T:semistable2} If $2(2g-2)<d<\frac{7}{2}(2g-2)$ and $g\geq 3$ then the following conditions are equivalent:
\begin{enumerate}[(i)]
\item \label{T:semistable2i} $[X\subset \P^r]$ is Hilbert semistable;
\item \label{T:semistable2ii} $[X\subset \P^r]$ is Chow semistable;
\item \label{T:semistable2iii} $X$ is quasi-p-stable, non-degenerate and linearly normal in $\P^r$ and $\OO_X(1)$ is properly balanced and non-special;
\item \label{T:semistable2iiibis} $X$ is quasi-p-stable and $\OO_X(1)$ is properly balanced;
\item  \label{T:semistable2iv} $X$ is quasi-p-stable and $\OO_X(1)$ is balanced.
\end{enumerate}
\end{enumerate}
\end{thm}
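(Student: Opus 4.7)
The proof is organized as a cyclic chain of implications $(i) \Rightarrow (ii) \Rightarrow (iii) \Rightarrow (iiibis) \Rightarrow (iv) \Rightarrow (i)$, with parts (1) and (2) treated in parallel. The implication $(i) \Rightarrow (ii)$ follows from Fact \ref{HilbtoChow}; the implications $(iii) \Rightarrow (iiibis)$ and $(iiibis) \Rightarrow (iv)$ are trivial from the definitions. For $(ii) \Rightarrow (iii)$, I would combine the Potential pseudo-stability Theorem \ref{teo-pstab} (giving pre-wp-stability of $X$, non-degeneracy and linear normality of the embedding, and non-specialness and balancedness of $\OO_X(1)$) with Corollary \ref{C:quasi-wp-stable}\eqref{C:quasi-wp-stable2} in part (1) or Corollary \ref{C:quasi-p-stable} in part (2), which upgrades pre-wp-stability to quasi-stability (resp. quasi-p-stability); since $\OO_X(1)$ is very ample (hence ample), Remark \ref{R:propbal-ample} then promotes balanced to properly balanced.

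The crux of the proof is $(iv) \Rightarrow (i)$. Given $(X, \OO_X(1))$ satisfying $(iv)$, very ampleness of $\OO_X(1)$ together with Remark \ref{R:propbal-ample} gives proper balancedness; set $\un d := \un{\deg}\,\OO_X(1) \in B_X^d$. By Corollary \ref{C:semist-num} (semistability depends only on the multidegree for a quasi-wp-stable $X$) combined with Proposition \ref{P:completeness} (non-emptiness of a stratum $M_X^{\un d}$ forces surjectivity onto $\Pic^{\un d}(X)$), it suffices to exhibit one Hilbert-semistable embedding of $X$ of multidegree $\un d$. I would reduce this to the strictly balanced case via Lemma \ref{L:specia}\eqref{L:specia2}: there is an isotrivial specialization $(X, \OO_X(1)) \sp (X', L')$ with $L'$ strictly balanced of multidegree $\un d' \in B_{X'}^d$; by Theorem \ref{bal-pos} of the Appendix $L'$ is non-special and very ample, yielding a point $[X' \subset \P^r] \in \Hilb_d$, and Theorem \ref{T:spec-clos-orb}\eqref{T:spec-clos-orb2} reduces the problem to producing a semistable embedding for the strictly balanced pair $(X', L')$.

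The existence step for strictly balanced pairs is the main obstacle. I would proceed via smoothing and semistable replacement: construct a one-parameter family $(\cX', \cL') \to B$ with central fiber $(X', L')$ and smooth generic fiber carrying a degree-$d$ line bundle, which is possible because $X'$ has only LCI singularities so $\Def_{X'}$ is unobstructed and $\Pic_{\cX'/B}$ is smooth. By Theorem \ref{bal-pos} and Fact \ref{F:stab-smooth}, $\cL'$ is relatively very ample near the central fiber and the generic embedded fiber is Chow-stable, hence in $\Hilb_d^{ss}$. Properness of the GIT quotient $\Hilb_d^{ss}/SL_{r+1}$ then yields (after a suitable base change and $SL$-modification) a Hilbert-polystable limit $[X'' \subset \P^r]$ with $\wps(X'') = \wps(X')$ and $\OO_{X''}(1)$ strictly balanced by Corollary \ref{C:poly-strictly-bal}. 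Since the multidegrees of $L'$ and $\OO_{X''}(1)$ pull back to equivalent multidegrees on a common blow-up coming from the limit data, the uniqueness of strictly balanced multidegrees within an equivalence class (Lemmas \ref{L:equiv-lb} and \ref{L:equiv-strict}) forces $(X'', \OO_{X''}(1)) \cong (X', L')$, showing that $[X' \subset \P^r]$ itself is Hilbert-semistable and so $M_{X'}^{\un d'} \neq \emptyset$. Part (2) runs on the same machinery with Corollary \ref{C:quasi-p-stable} in place of Corollary \ref{C:quasi-wp-stable}\eqref{C:quasi-wp-stable2}, using the $g \geq 3$ hypothesis wherever results on p-stable curves (in particular Theorem \ref{T:auto-grp}) are invoked.
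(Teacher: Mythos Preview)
Your proposal follows essentially the same route as the paper's proof: the easy implications are handled identically, and for the hard direction you perform the same reduction to the strictly balanced case via Lemma~\ref{L:specia}\eqref{L:specia2} and Theorem~\ref{T:spec-clos-orb}, then the same smoothing plus semistable replacement argument, comparing the two families via a common quasi-stable model and the twister Lemma~\ref{L:twister}. The one slip is in your final identification: Lemmas~\ref{L:equiv-lb} and~\ref{L:equiv-strict} yield only $X'' \cong X'$ and $\un{\deg}\,\OO_{X''}(1) = \un d'$, not $\OO_{X''}(1) \cong L'$ as line bundles. The paper closes this gap exactly by invoking Corollary~\ref{C:semist-num} once more (semistability depends only on the multidegree), which you already cited earlier, so your argument is complete once you apply it here rather than asserting an isomorphism of pairs. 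Also note that your early appeal to Proposition~\ref{P:completeness} is redundant: Corollary~\ref{C:semist-num} alone (whose proof already uses Proposition~\ref{P:completeness}) suffices for the reduction.
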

\begin{proof}
Let us first prove part \eqref{T:semistable1}.

\eqref{T:semistable1i} $\Longrightarrow$ \eqref{T:semistable1ii} follows from  Fact \ref{HilbtoChow}.

\eqref{T:semistable1ii} $\Longrightarrow$ \eqref{T:semistable1iii} follows from the potential stability theorem (see Fact \ref{F:GM2}) and Corollary \ref{C:quasi-wp-stable}\eqref{C:quasi-wp-stable2}.

\eqref{T:semistable1iii} $\Longrightarrow$ \eqref{T:semistable1iiibis} is clear.

\eqref{T:semistable1iiibis} $\Longleftrightarrow$ \eqref{T:semistable1iv} follows from Remark \ref{R:propbal-ample}, using that $\OO_X(1)$ is ample.

\eqref{T:semistable1iiibis} $\Longrightarrow$ \eqref{T:semistable1i} First of all, we make the following

\un{Reduction: } We can assume that $\OO_X(1)$ is strictly balanced.

Indeed, by Lemma \ref{L:specia}\eqref{L:specia2}, there exists an isotrivial specialization $(X,\OO_X(1))\sp (X',L')$ such that $X'$ is quasi-stable and $L'$ is a strictly balanced
line bundle on $X'$ of total degree $d$. According to Theorem \ref{bal-pos} and using that $d>4(2g-2)$, we conclude that $L'$ is very ample and non-special.
Therefore, by choosing a basis of $H^0(X',L')$, we get a point $[X'\stackrel{|L'|}{\hookrightarrow}\P^r]\in \Hilb_d$. According to Theorem \ref{T:spec-clos-orb}, $[X\subset \P^r]\in \Hilb_d^{ss}$
if and only if $[X'\subset \P^r]\in \Hilb_d^{ss}$. Therefore, up to replacing $X$ with $X'$, we can assume that $\OO_X(1)$ is strictly balanced, q.e.d.

Now, since $X$ is quasi-stable, we can find a smooth curve $B\stackrel{f}{\hookrightarrow} \Hilb_d$ and a point $b_0\in B$
such that, if we denote by $\P^r \times B \stackrel{i}{\hookleftarrow} \cX \stackrel{\pi}{\to} B$ the pull-back via $f$ of the universal family over $\Hilb_d$ and we set
$\cL:=i^*(\OO_{\P^r}(1)\boxtimes \OO_B)$,
then $[\cX\stackrel{i}{\hookrightarrow} \P^r\times B]_{b_0}=[X\subset \P^r]$  and $\cX_{|\pi^{-1}(b)}$ is a connected smooth curve for every $b\in B\setminus \{b_0\}$.
Note that, by construction, $\pi$ is a family of
quasi-stable curves of genus $g$. As in the proof of Proposition \ref{P:completeness},
we can now apply the \emph{semistable replacement property}, which implies that, up to replacing $B$ with a finite cover ramified over $b_0$, we can find two maps
$g:B\to \Hilb_d$ and $h:B\setminus \{b_0\}\to \SL_{r+1}$ such that
\begin{equation}\label{E:2fam-equivbis}
f(b)=h(b)\cdot g(b)  \text{ for every } b_0\neq b\in B,
\end{equation}
\begin{equation}\label{E:poly-limitbis}
g(b_0) \text{ is Hilbert polystable. }
\end{equation}
We denote by $\P^r \times B \stackrel{i'}{\hookleftarrow} \cY \stackrel{\pi'}{\to} B$ the pull-back via $g$  of the universal family over $\Hilb_d$ and we set
$\cM:=(i')^*(\OO_{\P^r}(1)\boxtimes \OO_B)$.
Property \eqref{E:2fam-equivbis} implies that, up to shrinking again $B$ around $b_0$, we have that
\begin{equation}\label{E:iso-fam}
(\cX,\cL)_{|\pi^{-1}(B\setminus \{b_0\})}\cong (\cY,\cM)_{|(\pi')^{-1}(B\setminus \{b_0\})}.
\end{equation}
Note that this fact together with \eqref{E:poly-limitbis} and the potential stability Theorem (Fact \ref{F:GM2}) implies that $\pi'$ is also a family of quasi-stable curves of genus $g$.

Consider now the stable reductions $\s(\pi):\s(\cX)\to B$ of $\pi:\cX\to B$ and $\s(\pi'):\s(\cY)\to B$ of  $\pi':\cY\to B$ (see Remark \ref{R:wp-stab}). From \eqref{E:iso-fam}, it follows that
$\s(\pi)$ and $\s(\pi')$ are two families of stable curves, which are isomorphic away from the fibers over $b_0$. Since the stack $\MMg$ of stable curves is separated, we conclude that
\begin{equation}\label{E:iso-stab}
\xymatrix{
\s(\cX)\ar[rr]^{\cong}\ar[rd]_{\s(\pi)}& & \s(\cY)\ar[ld]^{\s(\pi')}\\
& B &
}
\end{equation}
Therefore, $\pi$ and $\pi'$ are two families of quasi-stable curves with the same stable reduction (from now on, we identify $\s(\cX)\stackrel{\s(\pi)}{\longrightarrow} B$ and
$\s(\cY)\stackrel{\s(\pi')}{\longrightarrow} B$ via the above isomorphism). If we blow-up all the nodes of the fiber over $b_0$ of the stable reduction $\s(\pi)=\s(\pi')$, we get
a new family of  quasi-stable curves $\w{\pi}:\cZ\to B$ with the same stable reduction as that of $\pi$ and of $\pi'$, which moreover dominates $\pi$ and $\pi'$,
i.e., such that there exists a commutative diagram
\begin{equation}\label{E:dom-family}
\xymatrix{
& \cZ \ar[dd]^{\w{\pi}} \ar[dr]^{\Sigma'}\ar[dl]_{\Sigma}& \\
\cX\ar[dr]_{\pi} & & \cY \ar[dl]^{\pi'}\\
& B &
}
\end{equation}
where the morphisms $\Sigma$ and $\Sigma'$ induce an isomorphism of the corresponding  stable reductions. Equivalently,
the maps $\Sigma$ and $\Sigma'$ are obtained by contracting some of the exceptional components of the fiber of $\cZ$ over $b_0$.
If we set $\w{\cL}:=\Sigma^*(\cL)$ and $\w{\cM}:=(\Sigma')^*(\cM)$, then \eqref{E:iso-fam} gives that
$$\w{\cL}_{\w{\pi}^{-1}(B\setminus b_0)}\cong \w{\cM}_{\w{\pi}^{-1}(B\setminus b_0)}.$$
Lemma \ref{L:twister} now gives that there exists a Cartier divisor $D$ on $\cZ$ supported on $\w{\pi}^{-1}(b_0)$ such that
\begin{equation}\label{E:twist-Z}
\w{\cL}=\w{\cM}\otimes \OO_{\cZ}(D).
\end{equation}
We now set $(X,L):=(\cX, \cL)_{b_0}$ and $\un d:=\un{\deg} L$, $(Y, M):=(\cY, \cM)_{b_0}$ and $\un d':=\un{\deg} M$, $Z:=\cZ_{b_0}$, $\w{L}:=\w{\cL}_{b_0}$ and $\w{\un d}:=\deg \w{L}$,
$\w{M}:=\w{\cM}_{b_0}$ and $\w{\un d'}:=\deg \w{M}$. Equation \eqref{E:twist-Z} gives that $\w{\un d}$ and $\w{\un d'}$ are equivalent on $Z$. Moreover,
$\un d$ is strictly balanced by the above Reduction and \un d' is strictly balanced by the assumption \eqref{E:poly-limitbis} together with
Corollary \ref{C:poly-strictly-bal}. Therefore, we can apply Lemma \ref{L:equiv-lb} twice  to conclude that $\cX=\cY$.
Now, the relation \eqref{E:iso-fam} together with the Lemma \ref{L:twister} imply that there exists a Cartier divisor $D'$ on $\cX=\cY$ supported on $\pi^{-1}(b_0)$ such that
\begin{equation}\label{E:twist-XY}
\cL=\cM\otimes \OO_{\cX}(D').
\end{equation}
In particular, we get that $\un d$ is equivalent to $\un d'$. Since $\un d$ and $\un d'$ are strictly balanced, Lemma \ref{L:equiv-strict} implies that $\un d=\un d'$.
Since $[\cY\stackrel{i'}{\hookrightarrow} \P^r\times B]_{b_0}=[Y\stackrel{}{\hookrightarrow} \P^r]\in \Hilb_d^{ss}$ by assumption \eqref{E:poly-limitbis}, Corollary \ref{C:semist-num} gives that
$[X\subset \P^r]\in \Hilb_d^{ss}$, q.e.d.

The proof of part \eqref{T:semistable2} is similar: it is enough to replace quasi-stable curves by quasi-p-stable curves (using Corollary \ref{C:quasi-p-stable}), to replace the stable
reduction by the p-stable reduction and using the fact that the stack $\MMgp$ of p-stable curves of genus $g\geq 3$ is separated.

\end{proof}

From the above Theorem \ref{T:semistable}, we can deduce a description of the Hilbert and Chow polystable and stable points of $\Hilb_d$.

\begin{coro}\label{C:polystable}
Consider a point $[X\subset \P^r]\in \Hilb_d$ and assume that $X$ is connected.
\begin{enumerate}
\item \label{C:polystable1} If $d>4(2g-2)$ then the following conditions are equivalent:
\begin{enumerate}[(i)]
\item \label{C:polystable1i} $[X\subset \P^r] $ is Hilbert polystable;
\item \label{C:polystable1ii} $[X\subset \P^r]$ is Chow polystable;
\item \label{C:polystable1iii} $X$ is quasi-stable, non-degenerate and linearly normal in $\P^r$ and $\OO_X(1)$ is strictly balanced and non-special;
\item \label{C:polystable1iv} $X$ is quasi-stable and $\OO_X(1)$ is strictly balanced.
\end{enumerate}
\item \label{C:polystable2} If $2(2g-2)<d<\frac{7}{2}(2g-2)$ and $g\geq 3$ then the following conditions are equivalent:
\begin{enumerate}[(i)]
\item \label{C:polystable2i} $[X\subset \P^r] $ is Hilbert polystable;
\item \label{C:polystable2ii} $[X\subset \P^r]$ is Chow polystable;
\item \label{C:polystable2iii} $X$ is quasi-p-stable, non-degenerate and linearly normal in $\P^r$ and $\OO_X(1)$ is strictly balanced and non-special;
\item  \label{C:polystable2iv} $X$ is quasi-p-stable and $\OO_X(1)$ is strictly balanced.
\end{enumerate}
\end{enumerate}
\end{coro}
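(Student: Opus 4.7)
The plan is to close the cycle (i) $\Leftrightarrow$ (ii) $\Rightarrow$ (iii) $\Rightarrow$ (iv) $\Rightarrow$ (i). First, Theorem \ref{T:semistable} gives that, on $H_d$, the Hilbert and Chow semistable loci coincide, so the notions of Hilbert and Chow polystability (``semistable plus orbit closed in the semistable locus'') coincide as well; this yields (i) $\Leftrightarrow$ (ii). For (i) $\Rightarrow$ (iii), observe that polystability implies semistability, and Theorem \ref{T:semistable} then gives that $X$ is quasi-(p-)stable, non-degenerate and linearly normal in $\P^r$, and $\OO_X(1)$ is properly balanced and non-special; Corollary \ref{C:poly-strictly-bal} upgrades ``properly balanced'' to ``strictly balanced''. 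The implication (iii) $\Rightarrow$ (iv) is trivial.

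The substantive step is (iv) $\Rightarrow$ (i). Assume $X$ is quasi-(p-)stable and $\OO_X(1)$ is strictly balanced. Theorem \ref{T:semistable} already gives Hilbert semistability of $[X\subset \P^r]$, so the task is to show that its $\SL(V)$-orbit is closed inside $\Hilb_d^{ss}$. The plan is to pick a Hilbert polystable point $[Y\subset \P^r]$ whose $\SL(V)$-orbit is contained in $\ov{\Orb_{\SL(V)}([X\subset \P^r])}\cap \Hilb_d^{ss}$ (a standard GIT existence statement) and show that necessarily $[Y\subset \P^r]\in \Orb_{\SL(V)}([X\subset \P^r])$. Note that, by the already proven (i) $\Rightarrow$ (iii) applied to $[Y\subset\P^r]$, the curve $Y$ is also quasi-(p-)stable and $\OO_Y(1)$ is strictly balanced.

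Next I would invoke the semistable replacement property to obtain a smooth pointed curve $(B,b_0)$ and a family $(\cY,\cM)\to B\subset \P^r\times B$ whose central fiber is $(Y,\OO_Y(1))$ and whose generic fibers lie in the $\SL(V)$-orbit of $(X,\OO_X(1))$; after a finite base change we trivialize $\cY$ on $B\setminus\{b_0\}$, so that it agrees there with the constant family $(\cX,\cL):=(X\times B,\OO_X(1)\boxtimes\OO_B)$. Applying the (p-)stable reduction to both families and invoking the separatedness of $\Mgb$ (resp.~$\Mgps$, which holds for $g\geq 3$) we obtain $\wps(X)\cong\wps(Y)$. Let $Z$ be a common quasi-(p-)stable model dominating both $X$ and $Y$ via contractions $\sigma_X:Z\to X$ and $\sigma_Y:Z\to Y$. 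Blowing up the central fibers of $\cX$ and of $\cY$ suitably yields a family $\cZ\to B$ of quasi-(p-)stable curves with central fiber $Z$, carrying two line bundles that coincide over $B\setminus\{b_0\}$; Lemma \ref{L:twister} writes their ratio as a Cartier divisor supported on the central fiber, so the multidegrees of $\sigma_X^*\OO_X(1)$ and $\sigma_Y^*\OO_Y(1)$ on $Z$ are equivalent. Applying Lemma \ref{L:equiv-lb}\eqref{L:equiv-lb2} in both directions (which is legal since both $\OO_X(1)$ and $\OO_Y(1)$ are strictly balanced) produces mutually inverse morphisms $X\to Y$ and $Y\to X$, so $X\cong Y$; then Lemma \ref{L:equiv-strict} forces $\OO_X(1)\cong\OO_Y(1)$ under this identification. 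Since both embeddings arise from the complete non-special linear series of the same polarized curve, they differ by an element of $\SL(V)$, whence $[Y\subset \P^r]\in \Orb_{\SL(V)}([X\subset\P^r])$.

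The main obstacle will be the careful two-family comparison in the previous paragraph: organizing the base change, the trivialization over $B\setminus\{b_0\}$, and the blow-ups producing the common model $\cZ$ in a way that enables the clean application of Lemmas \ref{L:twister}, \ref{L:equiv-lb}, and \ref{L:equiv-strict}. Fortunately the template for this is already present in the final portion of the proof of Theorem \ref{T:semistable}; indeed, that very argument, when read under the strict balancedness hypothesis, already produces an identification of the polystable limit with the starting point, so for the present corollary little more than a reinterpretation of what is there is required.
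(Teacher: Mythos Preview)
Your cycle (i)$\Leftrightarrow$(ii)$\Rightarrow$(iii)$\Rightarrow$(iv) is exactly the paper's, and your idea for (iv)$\Rightarrow$(i)---pass to a one-parameter degeneration inside the semistable locus and show the limit lies in the original orbit---is also the paper's. Two points deserve comment.

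First, there is a genuine gap at the end of your argument. Lemma~\ref{L:equiv-strict} is a statement about \emph{multidegrees}, not line bundles: once you know $X\cong Y$ and that the pulled-back multidegrees on $Z$ are equivalent, Lemma~\ref{L:equiv-strict} only yields $\underline{\deg}\,\OO_X(1)=\underline{\deg}\,\OO_Y(1)$, not $\OO_X(1)\cong\OO_Y(1)$. The sentence ``Lemma~\ref{L:equiv-strict} forces $\OO_X(1)\cong\OO_Y(1)$'' is therefore unjustified as written. The paper closes this gap by a separatedness argument: once $X=Y$ and the multidegrees agree, the family corresponds to a map $f\colon B\to M_X^{\underline d}$ landing, for $b\neq b_0$, in the fibre $p^{-1}(\OO_X(1))$ of the morphism $p\colon M_X^{\underline d}\to \Pic^{\underline d}(X)$ of \eqref{E:map-strata-Pic}; since fibres of $p$ are closed, $f(b_0)\in p^{-1}(\OO_X(1))$, whence $\OO_Y(1)\cong\OO_X(1)$. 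Equivalently, you can argue by separatedness of $\Pic^{\underline d}(X)$. Either way you need one more sentence.

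Second, your route for (iv)$\Rightarrow$(i) is more elaborate than necessary. You invoke the full machinery of the proof of Theorem~\ref{T:semistable} (semistable replacement, a dominating common model $\cZ$, separatedness of $\Mgb$/$\Mgps$). The paper observes instead that, since all fibres over $B\setminus\{b_0\}$ are already isomorphic to $(X,\OO_X(1))$, the argument from the proof of Proposition~\ref{P:deg-strata} shows directly that $\cY\to B$ is obtained from the \emph{constant} family $X\times B$ by blowing up some nodes of the central fibre; this immediately gives the contraction $\Sigma\colon\cY\to X\times B$ to which Lemmas~\ref{L:twister} and~\ref{L:equiv-lb} apply, without passing through a common roof $Z$. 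Your approach is correct (modulo the gap above) but the paper's is shorter.
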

\begin{proof}
Let us prove part \eqref{C:polystable1}.

\eqref{C:polystable1i} $\Longleftrightarrow$ \eqref{C:polystable1ii}:  from Theorem \ref{T:semistable}\eqref{T:semistable1} we get that the Hilbert semistable locus inside $\Hilb_d$ is equal to the
Chow  semistable locus. Since a point of $\Hilb_d$ is Hilbert (resp. Chow) polystable if and only if it is Hilbert (resp. Chow) semistable and its orbit is closed inside the
Hilbert (resp. Chow)  semistable locus, we conclude that also the locus of Hilbert polystable points is equal to the locus of Chow polystable points.

\eqref{C:polystable1ii} $\Longrightarrow$ \eqref{C:polystable1iii} follows from the potential stability theorem (see Fact \ref{F:GM2}), Corollary \ref{C:quasi-wp-stable}\eqref{C:quasi-wp-stable2}
and Corollary \ref{C:poly-strictly-bal}.

\eqref{C:polystable1iii} $\Longrightarrow$ \eqref{C:polystable1iv} is obvious.

\eqref{C:polystable1iv} $\Longrightarrow$ \eqref{C:polystable1i} follows from Theorem \ref{T:semistable} and Remark \ref{R:poly-completeness}.
\end{proof}

\begin{coro}\label{C:stable}
Consider a point $[X\subset \P^r]\in \Hilb_d$ and assume that $X$ is connected.
\begin{enumerate}
\item \label{C:stable1} If $d>4(2g-2)$ then the following conditions are equivalent:
\begin{enumerate}[(i)]
\item \label{C:stable1i} $[X\subset \P^r]$ is Hilbert stable;
\item \label{C:stable1ii} $[X\subset \P^r]$ is Chow stable;
\item \label{C:stable1iii} $X$ is quasi-stable, non-degenerate and linearly normal in $\P^r$ and $\OO_X(1)$ is stably balanced and non-special;
\item \label{C:stable1iv} $X$ is quasi-stable and $\OO_X(1)$ is stably balanced.
\end{enumerate}
\item \label{C:stable2} If $2(2g-2)<d<\frac{7}{2}(2g-2)$ and $g\geq 3$ then the following conditions are equivalent:
\begin{enumerate}[(i)]
\item \label{C:stable2i} $[X\subset \P^r]$ is Hilbert stable;
\item \label{C:stable2ii} $[X\subset \P^r]$ is Chow stable;
\item \label{C:stable2iii} $X$ is quasi-p-stable, non-degenerate and linearly normal in $\P^r$ and $\OO_X(1)$ is stably balanced and non-special;
\item  \label{C:stable2iv} $X$ is quasi-p-stable and $\OO_X(1)$ is stably balanced.
\end{enumerate}
\end{enumerate}
\end{coro}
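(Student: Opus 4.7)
The plan is to adapt the reasoning behind Corollary \ref{C:polystable} by combining polystability with a finite-stabilizer criterion: recall that for a linearly reductive group acting linearly on a projective scheme, a semistable point is stable if and only if it is polystable \emph{and} its stabilizer is $0$-dimensional. For part \eqref{C:stable1}, I would establish the cycle (i) $\Rightarrow$ (iv) $\Rightarrow$ (ii) $\Rightarrow$ (i), and then read off (ii) $\Rightarrow$ (iii) $\Rightarrow$ (iv) from the earlier structural results.

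For (i) $\Rightarrow$ (iv): Hilbert stability implies Hilbert semistability, so Theorem \ref{T:semistable}\eqref{T:semistable1} yields that $X$ is quasi-stable, and Theorem \ref{T:stab-bal} yields directly that $\OO_X(1)$ is stably balanced. The main implication (iv) $\Rightarrow$ (ii) is where the finite-stabilizer criterion enters. Under hypothesis (iv), Lemma \ref{compa-bal} gives that $\OO_X(1)$ is strictly balanced and that $\w{X}$ is connected. Quasi-stability together with strict balancedness puts us in situation \eqref{C:polystable1iv} of Corollary \ref{C:polystable}\eqref{C:polystable1}, hence $[X\subset \P^r]$ is Chow polystable. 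For the stabilizer, Lemma \ref{L:aut-stab} identifies $\Stab_{\PGL_{r+1}}([X\subset \P^r])$ with $\ov{\Aut(X,\OO_X(1))}$, and Corollary \ref{C:aut-grp} gives $\Stab_{\GL_{r+1}}([X\subset \P^r])^0 \cong \Gm^{\gamma(\w{X})}$; since $\w{X}$ is connected, $\gamma(\w{X})=1$, so after quotienting by the diagonal $\Gm$ the identity component of $\Stab_{\PGL_{r+1}}([X\subset \P^r])$ is trivial, i.e. the stabilizer is finite. Polystability plus finite stabilizer gives Chow stability, proving (ii).

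The implication (ii) $\Rightarrow$ (i) is immediate from Fact \ref{HilbtoChow}. The equivalence (ii) $\Leftrightarrow$ (iii) then follows from Theorem \ref{T:semistable}\eqref{T:semistable1}: Chow stability forces Chow semistability, which already provides quasi-stability, non-degeneracy, linear normality, and non-speciality of $\OO_X(1)$; the stably-balanced upgrade comes from the chain (ii) $\Rightarrow$ (i) $\Rightarrow$ (iv) just established. Finally (iii) $\Rightarrow$ (iv) is tautological.

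Part \eqref{C:stable2} proceeds in complete parallel: I replace ``quasi-stable'' by ``quasi-p-stable'' throughout and feed in Theorem \ref{T:semistable}\eqref{T:semistable2}, Corollary \ref{C:polystable}\eqref{C:polystable2}, Corollary \ref{C:quasi-p-stable}, the assumption $g\geq 3$ needed to apply Theorem \ref{T:auto-grp} in the quasi-p-stable case (and hence Corollary \ref{C:aut-grp}), together with Theorem \ref{T:stab-bal} (which is stated uniformly for $d>2(2g-2)$). I do not anticipate a genuine obstacle: all of the heavy machinery — the potential pseudo-stability theorem, the orbit-closure analysis underlying Corollary \ref{C:polystable}, the computation of connected components of stabilizers, and the algebraic relationship between stably balanced and strictly balanced multidegrees — has already been set up. The only subtle point to double-check is the translation of ``finite stabilizer'' from $\GL_{r+1}$ to $\PGL_{r+1}$, which is precisely what the passage $\Gm^{\gamma(\w X)} \rightsquigarrow \Gm^{\gamma(\w X)-1}$ accomplishes and which forces the equivalence of stability with the numerical condition $\gamma(\w X)=1$ encoded in the stably-balanced hypothesis via Lemma \ref{compa-bal}.
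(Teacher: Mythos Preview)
Your proposal is correct and follows essentially the same approach as the paper: both arguments hinge on upgrading ``stably balanced'' to ``strictly balanced $+$ $\w{X}$ connected'' via Lemma \ref{compa-bal}, invoking Corollary \ref{C:polystable} to obtain polystability, and then using Corollary \ref{C:aut-grp} with $\gamma(\w{X})=1$ to force a finite stabilizer in $\PGL_{r+1}$, which converts polystability into stability. The only difference is cosmetic: the paper runs the cycle (ii) $\Rightarrow$ (i) $\Rightarrow$ (iii) $\Rightarrow$ (iv) $\Rightarrow$ (ii), whereas you run (i) $\Rightarrow$ (iv) $\Rightarrow$ (ii) $\Rightarrow$ (i) and recover (iii) afterwards, but the substantive inputs are identical.
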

\begin{proof}
Let us prove part \eqref{C:stable1}.

\eqref{C:stable1ii} $\Longrightarrow$ \eqref{C:stable1i} follows from Fact \ref{HilbtoChow}.

\eqref{C:stable1i} $\Longrightarrow$ \eqref{C:stable1iii} follows from the Potential stability theorem (see Fact \ref{F:GM2}) and Theorem \ref{T:stab-bal}.

\eqref{C:stable1iii} $\Longrightarrow$ \eqref{C:stable1iv} is obvious.

\eqref{C:stable1iv} $\Longrightarrow$ \eqref{C:stable1ii}: from Corollary \ref{C:polystable}\eqref{C:polystable1}, we get that $[X\subset \P^r]$ is Chow polystable.
Lemma \ref{compa-bal} gives that $\w{X}:=\ov{X\setminus \exc}$ is connected; hence, combining Lemma \ref{L:aut-stab} and Theorem \ref{T:auto-grp},
we deduce that $\Stab_{\PGL_{r+1}}([X\subset \P^r])$
 is a finite group. This implies that $[X\subset \P^r]\in \Ch^{-1}(\Chow_d^s)$ since a point of $\Hilb_d$ is Hilbert (resp. Chow) stable if and only if it is Hilbert (resp. Chow) polystable
 and it has finite stabilizers with respect to the action of $\PGL_{r+1}$.

The proof of part \eqref{C:stable2} is similar, using the Potential pseudo-stability Theorem \ref{teo-pstab} and Corollary \ref{C:polystable}\eqref{C:polystable2}.
\end{proof}

The characterization of the GIT semistable locus for $\frac{7}{2}(2g-2)\leq d\leq 4(2g-2)$ is a bit more intricate and requires other arguments, as the following Remark points out.


\begin{rmk}\label{rmk:code}
Let $X=C\cup E$ be a curve of genus $g\geq 3$ whose only singularity is a tacnode with the line $E$ and let us fix a balanced line bundle $L$ of degree $\frac{7}{2}(2g-2)\leq d\leq 4(2g-2)$. Consider a point $[X\subset \P^r]\in \text{Hilb}_d$ with $\OO_X(1)=L$ and let us try going over again the argument of the proof of Theorem \ref{T:semistable}\eqref{T:semistable1}. Using the same notation, since $X$ is quasi-p-stable, we can find a polarized family $(\cX\ra B,\cL)$ over a smooth curve $B\stackrel{f}{\hookrightarrow} \Hilb_d$ such that $[\cX\stackrel{i}{\hookrightarrow} \P^r\times B]_{b_0}=[X\subset \P^r]$  and $\cX_{|\pi^{-1}(b)}$ is a connected smooth curve for every $b\in B\setminus \{b_0\}$. We apply the \emph{polystable replacement property} and we obtain a new polarized family $(\cY\ra B,\cM)$. Consider now the p-stable reductions $\ps(\pi):\ps(\cX)\ra B$ and $\ps(\pi):\ps(\cY)\ra B$. Up to shrinking $B$ around $b_0$ we have
\begin{equation}\label{E:iso-fam2}
(\cX,\cL)_{|\pi^{-1}(B\setminus \{b_0\})}\cong (\cY,\cM)_{|(\pi')^{-1}(B\setminus \{b_0\})}
\end{equation}
so that $\ps(\pi):\ps(\cX)\ra B$ and $\ps(\pi):\ps(\cY)\ra B$ are isomorphic away from the fibers over $b_0$, hence isomorphic everywhere since the stack $\MMgp$ of p-stable curves
is separated for $g\geq 3$. In particular $\ps(Y)\cong\ps(X)$. There are three cases:
\begin{enumerate}
	\item $Y\cong X$;
	\item $Y\cong \wps(X)$, or, in other words, $Y$ is irreducible with only a cusp and no nodes;
	\item $Y$ admits an elliptic tail.
\end{enumerate}
We claim that only (3) occurs. Indeed, case (1) is absurd because $[Y\stackrel{|M|}{\inj}\P^r]$ is Chow polystable by construction but the tacnodes with a line are not Chow polystable for
$d=\frac{7}{2}(2g-2)$ by Theorem \ref{T:basintacn}, and they are Chow unstable for $d>\frac{7}{2}(2g-2)$ by Theorem \ref{T:tac-line}. Suppose, by contradiction, that (2) occurs.
We have a map
\begin{equation}\label{E:iso-tacncusp}
\xymatrix{
\cX\ar[rr]^{\text{$\wps$}}\ar[rd]_{\pi}& & \cY\ar[ld]^{\pi'}\\
& B &
}
\end{equation}
Denote by $\w{\cL}$ the pull-back of $\cM$ via $\wps$, $L'=\w{\cL}_{|b_0}$, $\un d=\un \deg{L}=(\deg_C L,\deg_E L)$ and $\un d'=\un \deg{L'}$. By Lemma \ref{L:twister}, there exists a Cartier divisor $T$ on $\cX$ such that $\w{\cL}=\cL\otimes \OO_{\cX}(T)$. This implies that
$$
(d-1,1)=\un d\equiv \un d'=(d,0),
$$
which is absurd since $|C\cap E|=2$. We conclude that in $\Hilb_d$ there are examples of Chow semistable points that admit elliptic tails.
This fact is the origin of some new difficulties in the range $\frac{7}{2}(2g-2)\leq d\leq 4(2g-2)$.
So far, our techniques worked well since the stacks $\MMg$ and $\MMgp$ (for $g\geq 3$) are separated, but for $\frac{7}{2}(2g-2)\leq d\leq 4(2g-2)$ they are not enough to determine the semistable
locus of $\Hilb_d$ because we have to work with the stack $\MMgwp$ of wp-stable curves, which is not separated. Notice also that,
if we could use the same techniques  in the range $\frac{7}{2}(2g-2)\leq d\leq 4(2g-2)$ successfully,
we would prove, for instance, the completeness result of Proposition \ref{P:completeness} for every quasi-wp-stable curve, which is false since special elliptic curves are Chow unstable by
Theorem \ref{T:spec-ell}.
\end{rmk}

To conclude this section we study the extremal case $d=\frac{7}{2}(2g-2)$, a very interesting case, because the semistable loci with respect to Hilbert stability and Chow stability are different.
\begin{thm}\label{T:semistable3}
Consider a point $[X\subset \P^r]\in \Hilb_d$ with $d=\frac{7}{2}(2g-2)$ and $g\geq 3$ and assume that $X$ is connected.
\begin{enumerate}
\item \label{T:semistable3h} The following conditions are equivalent:
\begin{enumerate}[(i)]
\item \label{T:semistable3hi} $[X\subset \P^r]$ is Hilbert semistable;
\item \label{T:semistable3hiii} $X$ is quasi-p-stable, non-degenerate and linearly normal in $\P^r$ and $\OO_X(1)$ is properly balanced and non-special;
\item \label{T:semistable3hiiibis} $X$ is quasi-p-stable and $\OO_X(1)$ is properly balanced;
\item \label{T:semistable3hiv} $X$ is quasi-p-stable and $\OO_X(1)$ is balanced.
\end{enumerate}
\item \label{T:semistable3c} The following conditions are equivalent:
\begin{enumerate}[(i)]
\item \label{T:semistable3ci} $[X\subset \P^r]$ is Chow semistable;
\item \label{T:semistable3cii} $X$ is quasi-wp-stable without special elliptic tails, non-degenerate and linearly normal in $\P^r$ and $\OO_X(1)$ is properly balanced and non-special;
\item \label{T:semistable3ciibis} $X$ is quasi-wp-stable without special elliptic tails and $\OO_X(1)$ is properly balanced;
\item  \label{T:semistable3ciii} $X$ is quasi-wp-stable without special elliptic tails and $\OO_X(1)$ is balanced.
\end{enumerate}
\end{enumerate}
\end{thm}
\begin{proof}
The proof of (\ref{T:semistable3h}) is analogous to Theorem \ref{T:semistable}\eqref{T:semistable2} since for $d=\frac{7}{2}(2g-2)$ the elliptic tails are Hilbert unstable by Theorem \ref{T:ell-curves}. Let us prove (\ref{T:semistable3c}).

(\ref{T:semistable3ci}) $\Longrightarrow$ (\ref{T:semistable3cii}) follows from the Potential pseudo-stability Theorem \ref{teo-pstab} and Theorem \ref{T:spec-ell}.

(\ref{T:semistable3cii}) $\Longrightarrow$ (\ref{T:semistable3ciibis}) is clear.

(\ref{T:semistable3ciibis}) $\Longrightarrow$ (\ref{T:semistable3ciii}) is obvious.

(\ref{T:semistable3ciii}) $\Longrightarrow$ (\ref{T:semistable3ci}). By Theorem \ref{T:basintacn} and Theorem \ref{T:spec-clos-orb} we can assume that
\begin{enumerate}[(a)]
	\item \label{T:semistable3c-ass1} each elliptic tail $F$ of degree $4$ contains an elliptic tail $F'$ of degree 3 as a subcurve;
	\item \label{T:semistable3c-ass2} each elliptic tail $F$ of degree $3$ is tacnodal and $F^c$ consists of the union of subcurves $C$ and $E\cong \P^1$, where $E$ meets $C$ and $F$ in one point;
\end{enumerate}
Let $n$ be the number of elliptic tails of degree 3. We prove our statement by induction on $n$. If $n=0$, then $[X\subset \P^r]$ is Chow semistable by (\ref{T:semistable3h}). Suppose that $n>0$. Consider an elliptic tail $F$ of degree 3
and the 1ps $\rho$ as in \eqref{E:weights} which, as observed before Theorem \ref{T:basintacn}, satisfies $\displaystyle e_{X,\rho}=2 d\frac{w(\rho)}{r+1}=\frac{7}{3}w(\rho)$.
 By Theorem \ref{T:basintacn} there exists $[Y\subset \P^r]\in A_{\rho^{-1}}([X\subset \P^r])$ that satisfies (\ref{T:semistable3ciii}) and contains $n-1$ elliptic tail. By induction $[Y\subset \P^r]$ is Chow semistable and Fact \ref{F:weight-basin} implies that also $[X\subset \P^r]$ is Chow semistable.
\end{proof}
\begin{coro}\label{C:polystable3}
Consider a point $[X\subset \P^r]\in \Hilb_d$ with $d=\frac{7}{2}(2g-2)$ and $g\geq 3$ and assume that $X$ is connected.
\begin{enumerate}
\item \label{C:polystable3h} The following conditions are equivalent:
\begin{enumerate}[(i)]
\item \label{C:polystable3hi} $[X\subset \P^r] $ is Hilbert polystable;
\item \label{C:polystable3hii} $X$ is quasi-p-stable, non-degenerate and linearly normal in $\P^r$ and $\OO_X(1)$ is strictly balanced and non-special;
\item \label{C:polystable3hiii} $X$ is quasi-p-stable and $\OO_X(1)$ is strictly balanced.
\end{enumerate}
\item \label{C:polystable3c} The following conditions are equivalent:
\begin{enumerate}[(i)]
\item \label{C:polystable3ci} $[X\subset \P^r]$ is Chow polystable;
\item \label{C:polystable3cii} $X$ is quasi-wp-stable, each elliptic tail of $X$ is tacnodal, each tacnode is contained in an elliptic tail, $X$ is non-degenerate and linearly normal in $\P^r$, $\OO_X(1)$ is strictly balanced and non-special;
\item \label{C:polystable3ciii} $X$ is quasi-wp-stable, each elliptic tail is tacnodal, each tacnode is contained in an elliptic tail and $\OO_X(1)$ is strictly balanced.
\end{enumerate}
\end{enumerate}
\end{coro}
\begin{proof}
Since for $d=\frac{7}{2}(2g-2)$ the elliptic tails are Hilbert unstable by Theorem \ref{T:ell-curves}, the argument of Corollary \ref{C:polystable}\eqref{C:polystable2} goes through for (\ref{C:polystable3h}). Let us prove (\ref{C:polystable3c}).

(\ref{C:polystable3ci}) $\Longrightarrow$ (\ref{C:polystable3cii}) is implied by Theorem \ref{T:basintacn}.

(\ref{C:polystable3cii}) $\Longrightarrow$ (\ref{C:polystable3ciii}) is clear.

(\ref{C:polystable3ciii}) $\Longrightarrow$ (\ref{C:polystable3ci}).
Let $X$ and $L:=\OO_X(1)$ be as in (\ref{C:polystable3ciii}). By Theorem \ref{T:basintacn} and Theorem \ref{T:spec-clos-orb} we have to work under the assumptions \eqref{T:semistable3c-ass1} and \eqref{T:semistable3c-ass2} of the proof of Theorem \ref{T:ell-curves}. Let $n$ be the number of elliptic tails of degree 3. We prove our statement by induction over $n$. (For a sketch of the proof strategy, see Construction \ref{C:replellcode}.) If $n=0$, $[X\subset \P^r]$ is Hilbert polystable by (\ref{C:polystable3h}), hence also Chow polystable. Suppose now that $n>0$. Consider an elliptic tail $F$ of degree 3 and denote by $C_1=F^c$ and $\{p\}=F\cap C_1$. Let $C_2$ be a smooth curve of genus $g$, $q$ a point of $C_2$ and $L'_{C_2}\in \text{Pic}^{d+3}(C_2)$. Denote by $(X',L')$ the couple consisting of a curve $X'$ of genus $g'$ and a line bundle $L'$ on $X'$ obtained from $(X,L)$ by replacing $F$ with $(C_2,q,L'_{C_2})$, as in Definition \ref{D:replace}. The line bundle $L'$ has degree $d'=2d$ and is very ample, hence we can consider the point $[X'\subset\P^{r'}]\in \text{Hilb}_{d',g'}$ with $\OO_{X'}(1)=L'$. We notice that
\begin{equation}\label{eq:nunu2}
\nu':=\frac{d'}{2g'-2}=\frac{d}{2g-2}=:\nu.
\end{equation}
Now, we claim that $L'$ is strictly balanced. As we noticed in Remark \ref{R:bala}, it suffices to check the basic inequality \eqref{E:basineq-multideg} for each connected subcurves such that its complementary is connected. Let $D\subset X'$ a connected subcurve. If $D=C_2$ then obviously the basic inequality \eqref{E:basineq-multideg} is satisfied. Otherwise, up to replacing $D$ with $D^c$, we can assume that $D$ does not contain $C_2$ as a subcurve. This implies that $D$ can be seen as a subcurve of $X$. Since $\deg L_{|D}=\deg L'_{|D}$ and $|D\cap \overline{X\setminus D}|=|D\cap \overline{X'\setminus D}|$, the basic inequality \eqref{E:basineq-multideg} is satisfied by (\ref{eq:nunu2}). Now, the point $[X'\subset\P^{r'}]$ admits $n-1$ elliptic tails, hence it is Chow polystable. Consider now $[Y\subset \P^r]\in \Hilb_d$ such that $Y=F\cup E\cup C$, where $C$ is smooth, $E\cong \P^1$, $E$ meets $F$ and $C$ in one point and $\OO_Y(1)$ is balanced. By Theorem \ref{T:semistable3} this point is Chow semistable. Let $[Y'\subset \P^r]\in \overline{\Orb([Y\subset P^r])}\cap \Ch^{-1}(\Chow_d^{ss})$. Denoting by $\un d$ and $\un d'$ the multidegrees of $\OO_Y(1)$ and $\OO_{Y'}(1)$ respectively, by Proposition \ref{P:deg-strata} we get that $(Y',\un d')\preceq (Y,\un d)$, so that $Y\cong Y'$ and $\dim( \Stab_{\PGL_{r+1}}([Y\subset \P^r]))=\dim (\Stab_{\PGL_{r+1}}([Y\subset \P^r]))$. This implies that $[Y\subset \P^r]$ is Chow polystable. Since \eqref{eq:nunu2} holds and $(X,L)$ can be obtained again from $(X',L')$ by replacing $C_2$ with $(F,p,L|_F)$, $[X\subset \P^r]$ is Chow polystable by Corollary \ref{C:stab-repl} and we are done.
\end{proof}
\begin{coro}\label{C:stable3}
Consider a point $[X\subset \P^r]\in \Hilb_d$ with $d=\frac{7}{2}(2g-2)$ and $g\geq 3$ and assume that $X$ is connected.
\begin{enumerate}
\item \label{C:stable3h} The following conditions are equivalent:
\begin{enumerate}[(i)]
\item \label{C:stable3hi} $[X\subset \P^r]$ is Hilbert stable;
\item \label{C:stable3hiii} $X$ is quasi-p-stable, non-degenerate and linearly normal in $\P^r$ and $\OO_X(1)$ is stably balanced and non-special;
\item \label{C:stable3hiv} $X$ is quasi-p-stable and $\OO_X(1)$ is stably balanced.
\end{enumerate}
\item \label{C:stable3c} The following conditions are equivalent:
\begin{enumerate}[(i)]
\item \label{C:stable3ci} $[X\subset \P^r]$ is Chow stable;
\item \label{C:stable3cii} $X$ is quasi-p-stable without tacnodes, non-degenerate and linearly normal in $\P^r$ and $\OO_X(1)$ is stably balanced and non-special;
\item  \label{C:stable3ciii} $X$ is quasi-p-stable without tacnodes and $\OO_X(1)$ is stably balanced.
\end{enumerate}
\end{enumerate}
\end{coro}
\begin{proof}
Since for $d=\frac{7}{2}(2g-2)$ the elliptic tails are Hilbert unstable by Theorem \ref{T:ell-curves}, the argument of Corollary \ref{C:stable}\eqref{C:stable2} goes through for (\ref{C:stable3h}).
Let us prove (\ref{C:stable3c}).

(\ref{C:stable3ci}) $\Longrightarrow$ (\ref{C:stable3cii}) follows from Corollary \ref{C:polystable3} and Theorem \ref{T:ell-curves}.

(\ref{C:stable3cii}) $\Longrightarrow$ (\ref{C:stable3ciii}) is clear.

(\ref{C:stable3ciii}) $\Longrightarrow$ (\ref{C:stable3ci}):  from Corollary \ref{C:polystable3}\eqref{C:polystable3c}, we get that $[X\subset \P^r]$ is Chow polystable.
Since $\OO_X(1)$ is stably balanced,  Lemma \ref{compa-bal} gives that $\w{X}:=\ov{X\setminus \exc}$ is connected; hence, combining Lemma \ref{L:aut-stab} and Theorem \ref{T:auto-grp},
we deduce that $\Stab_{\PGL_{r+1}}([X\subset \P^r])$ is a finite group.
This implies that $[X\subset \P^r]\in \Ch^{-1}(\Chow_d^s)$ since a point of $\Hilb_d$ is Hilbert (resp. Chow) stable if and only if it is Hilbert (resp. Chow) polystable and it has finite stabilizers
with respect to the action of $\PGL_{r+1}$.
\end{proof}

\section{Stability of elliptic tails}\label{S:stab-elltails}

In this section, we will use the criterion of stability for tails (Proposition \ref{prop:stab-tail}) in order to study the stability of elliptic curves for $\frac{7}{2}(2g-2)<d\leq 4(2g-2)$.
We notice that in this range - by the basic inequality  \eqref{E:basineq-multideg} - it suffices to consider the elliptic curves of degree 4. In particular if $F$ is an elliptic curve of $[X\subset \P^r]$, then $r_1:=h^0(F,\OO_X(1)|_F)-1=3$.
\begin{lemma}\label{lem:complcode}
Let $\frac{7}{2}(2g-2)<d\leq 4(2g-2)$ and let $[X\subset\P^r]\in \rm{Hilb}_{d}$ such that $X=F\cup C$ where $F$ is an elliptic tail (smooth, nodal, cuspidal or reducible nodal). Denote by $\{p\}=F\cap C$ and
$$
\OO_X(1)=(\OO_X(1)|_F,L_2:=\OO_X(1)|_C)\in \emph{Pic}^4(F)\times \emph{Pic}^{d-4}(C).
$$
Let $(F',q)$ be a pointed elliptic curve and denote by $X'$ be the curve obtained from $X$ by replacing $F$ with $(F',q)$, as in Definition \ref{D:replace}.
\begin{enumerate}
\item If $[X\subset\P^r]$ is Hilbert semistable (resp. stable), then $[X'\stackrel{|L|}{\inj}\P^r]$ is Hilbert semistable (resp. stable) for each properly balanced line bundle
  $$
  L\in (\emph{Pic}^4(F')\setminus \{\OO_{F'}(4p)\})\times \{L_2\}
  $$
\item If $[X\subset\P^r]$ is Chow semistable (resp. stable) then
\begin{enumerate}[(i)]
  \item If $\frac{7}{2}(2g-2)<d<4(2g-2)$, then $[X'\stackrel{|L|}{\inj}\P^r]$ is Chow semistable (resp. stable) for each properly balanced line bundle
  $$
  L\in (\emph{Pic}^4(F')\setminus \{\OO_{F'}(4p)\})\times \{L_2\}
  $$
  \item If $d=4(2g-2)$, then $[X'\stackrel{|L|}{\inj}\P^r]$ is Chow semistable for each properly balanced line bundle
  $$
  L\in \emph{Pic}^4(F')\times \{L_2\}
  $$
  (resp. Chow stable if $F'$ is not cuspidal and $L\in (\emph{Pic}^4(F')\setminus \{\OO_{F'}(4p)\})\times \{L_2\}$).
\end{enumerate}
\end{enumerate}
\end{lemma}
\begin{proof}
Consider $X'$ and a properly balanced line bundle $L=(L_1,L_2)\in \text{Pic}^4(F')\times \text{Pic}^{d-4}(C)$. By Theorem \ref{bal-pos}\eqref{bal-va1} the line bundle $L$ is very ample and non-special, hence we can consider the point $[X\stackrel{|L|}{\inj}\P^r]\in \Hilb_d$.
Let $\rho_1$ and $\rho_2$ be two one-parameter subgroups diagonalized by a system of coordinates $\{x_1,\ldots,x_{r+1}\}$ of type (\ref{eq:coorcoda}), i.e. such that
$$
\langle F'\rangle=\bigcap_{i=5}^{r+1}\{x_i=0\}\quad \text{and}\quad \langle C\rangle=\bigcap_{i=1}^{3}\{x_i=0\},
$$
and having weights
\begin{equation}\label{eq:rho1rho2}
\rho_1(t)\cdot x_i=
\left\{
\begin{array}{ll}
t^{w_{i}}x_i & \text{if }i\leq 3,\\
x_i & \text{if }i\geq 4,\\
\end{array}
\right.
\quad\text{and}\quad
\rho_2(t)\cdot x_i=
\left\{
\begin{array}{ll}
x_i & \text{if }i\leq 4,\\
t^{w_{i}}x_i & \text{if }i\geq 5.\\
\end{array}
\right.
\end{equation}
By Proposition \ref{prop:stab-tail}, it suffices to prove that $[X'\subset \P^r]$ is Chow or Hilbert  (semi-)stable with respect to any such $\rho_1$ and $\rho_2$.

By Remark \ref{rmk:replweight} and the Hilbert-Mumford criterion (see Fact \ref{Hilb-crit} and Fact \ref{Chow-crit}), if $[X\subset\P^r]$ is Hilbert semistable (resp. Chow semistable) we have
$$
W_{X',\rho_2}(m)=W_{X,\rho_2}(m)\leq\frac{w(\rho_2)}{r+1}\,mP(m)\quad \bigg(\text{resp. }e_{X',\rho_2}= e_{X,\rho_2}\leq\frac{2d}{r+1}\,w(\rho_2)\bigg),
$$
while if $[X\subset\P^r]$ is Hilbert stable (resp. Chow stable) then
$$
W_{X',\rho_2}(m)=W_{X,\rho_2}(m)<\frac{w(\rho_2)}{r+1}\,mP(m)\quad \bigg(\text{resp. }e_{X',\rho_2}=e_{X,\rho_2}<\frac{2d}{r+1}\,w(\rho_2)\bigg).
$$
This proves the Hilbert or Chow (semi-)stability of $[X'\subset \P^r]$ with respect to $\rho_2$.

The Hilbert or Chow (semi-)stability of $[X'\subset \P^r]$ with respect to $\rho_1$ will follow from the next lemma, that completes our proof.
\end{proof}

\begin{lemma}\label{lem:code}
Let $[X\subset\P^r]\in \rm{Hilb}_{d}$ such that $X=F\cup C$ where $F$ is an elliptic tail (smooth, nodal, cuspidal or reducible nodal) and the line bundle
$$
L:=\OO_X(1)=(L_1:=L_{|F},L_2:=L_{|C})\in \emph{Pic}^4(F)\times \emph{Pic}^{d-4}(C)
$$
is properly balanced. Let $\rho_1$ be a one-parameter subgroup as in (\ref{eq:rho1rho2}). Then
\begin{enumerate}[(i)]
	\item if $\frac{7}{2}(2g-2)<d<4(2g-2)$ and $L_1\in \text{\emph{Pic}}^4(F)\setminus \{\OO_F(4p)\}$ then
	$$
	e_{F,\rho_1}<\frac{2d}{r+1}\,w(\rho_1)
	$$
	\item if $d=4(2g-2)$ then
	$$
	e_{F,\rho_1}\leq\frac{2d}{r+1}\,w(\rho_1)=\frac{16}{7}\,w(\rho_1).
	$$
	Moreover,  if $L_1\in \text{\emph{Pic}}^4(F)\setminus \{\OO_F(4p)\}$ then
	\begin{eqnarray}\label{eq:efwcusp}
	e_{F,\rho_1}<\frac{16}{7}\,w(\rho_1)\quad & \text{if} & \text{$F$ is not cuspidal,}\\
	W_{F,\rho_1}(m)<\frac{w(\rho_1)}{7}\,mP(m)\: \text{ for }�m \gg 0\quad & \text{if} & \text{$F$ is cuspidal.}\nonumber
	\end{eqnarray}
\end{enumerate}
\end{lemma}
\begin{proof}
Since $e_{F,\rho_1}$ does not depend on $C$, we can prove these two claims by considering $F$ as an elliptic tail of polarized curves whose semistability is known.

Firstly assume that $F$ is smooth, nodal or reducible nodal. Let $C$ be a smooth curve of genus 2 and consider the new curve $X=F\cup C$ with $\{p\}=F\cap C$ embedded in $\P^{11}$ via a properly balanced line bundle $M=(M_1,M_2)$ (indeed $M$ is very ample and non-special by Theorem \ref{bal-pos}\eqref{bal-va1}) with $M_1\in \text{Pic}^4(F)\setminus \{\mathcal{O}_F(4p)\}$, $\deg\,M_{|F}=4$ and $\deg\,M_{|C}=10$. Since the curve $X$ is quasi-wp-stable, $\displaystyle \frac{d}{2g-2}=\frac{7}{2}$
and $M$ is properly balanced, by Theorem \ref{T:semistable3} we know that $[X\subset\P^{11}]$ with $M=\OO_X(1)$ is Chow semistable; hence
\begin{eqnarray}\label{eq:ellineq1}
e_{F,\rho_1}\leq \frac{2d}{r+1}w(\rho_1)=\frac{7}{3}\,w(\rho_1)
\end{eqnarray}
by the Hilbert-Mumford numerical criterion (Fact \ref{Chow-crit}). In the same way we can consider another properly balanced line bundle $M'$ such that
$\deg\,M'_{|F}=4$ and $\deg\,M'_{|C}=13$. Since the curve $X$ is quasi-stable, $\displaystyle 4<\frac{d}{2g-2}=\frac{17}{4} <4,5$
and $M'$ is stably balanced, by Corollary \ref{C:stable}\eqref{C:stable1} we know that
$[X\subset\P^{14}]$ with $M'=\OO_X(1)$ is Chow stable; hence
\begin{eqnarray}\label{eq:ellineq2}
e_{F,\rho_1}< \frac{2d}{r+1}w(\rho_1)=\frac{34}{15}\,w(\rho_1).
\end{eqnarray}
Now, consider a point $[X\subset \P^r]\in \Hilb_d$ that satisfies the hypothesis of our lemma. Assume that $\frac{7}{2}(2g-2)<d\leq 4(2g-2)$ and $L_1\in \text{Pic}^4(F)\setminus \{\OO_F(4p)\}$. Since $\displaystyle \frac{17}{15}<\frac{8}{7}\leq \frac{d}{r+1}<\frac{7}{6}$, combining (\ref{eq:ellineq1}) and (\ref{eq:ellineq2}) we deduce that
$$
\text{if }w(\rho_1)\geq 0\text{, then }e_{F,\rho_1}<\frac{34}{15}\,w(\rho_1)\leq \frac{2d}{r+1}\,w(\rho_1),
$$
and
$$
\text{if }w(\rho_1)<0\text{, then }e_{F,\rho_1}\leq \frac{7}{3}\,w(\rho_1)< \frac{2d}{r+1}\,w(\rho_1),
$$
so that (i) and (ii) are proved for smooth, nodal and reducible nodal elliptic tails under the hypothesis that $L_1\neq \OO_{F}(4p)$.

Let $X=F\cup C$ be a curve as above, with $F$ an irreducible elliptic tail (smooth, nodal, or cuspidal). By \cite[Proposition 6]{HMo}, we know that $[X\subset \P^{13}]$ with $\OO_{X}(1)=\omega_X^{\otimes 4}$ is strictly Chow semistable. Hence if $d=4(2g-2)$ and $L_1=\OO_F(4p)=(\omega_X^{\otimes 4})_{|F}$ we get
$$
e_{F,\rho_1}\leq\frac{2d}{r+1}\,w(\rho_1)=\frac{16}{7}\,w(\rho_1),
$$
and the first part of (ii) is proved.

It remains to prove (i) and (ii) for the cuspidal case. Suppose that $\frac{7}{2}(2g-2)<d<4(2g-2)$ and $F$ is cuspidal. In order to prove (i), it suffices to exhibit a non-special line bundle $L_1$ for which the inequality \eqref{eq:efwcusp} is satisfied. Indeed, $\Aut(F,p)$ acts transitively on $\text{Pic}^4(F)\setminus \{\OO_{F}(4p)\}$ and we can apply Lemma  \ref{L:autstable}. Consider the Chow semistable point $[Y\subset \P^r]\in \Hilb_d$ obtained in Remark \ref{rmk:code} and denote by $F$ its elliptic tail. Since the semistability is an open condition, up to smoothing arbitrarily $Y$, we can assume that $F$ is smooth. Now, let $B\subseteq \Pic^{\un d}(Y)$ be a smooth curve such that
$$
B\setminus \{b_0\}\subseteq (\text{Pic}^4(F)\setminus \{\OO_{F}(4p)\})\times \{\OO_{C}(1)\} \quad\text{and}\quad b_0=\{\OO_{F}(4p)\}\times \{\OO_{C}(1)\}.
$$
Consider the trivial family $\cY=Y\times B \ra B$ and denote by $\cL$ the Poincar\'e bundle $\cP$ on $Y\times \text{Pic}^{\un d}(Y)$ restricted to $\cY$. As in the proof of Proposition \ref{P:completeness}, up to shrinking $B$ around $b_0$, we obtain an embedding $\cY \hookrightarrow \P^r_B$, which yields a map $f:B\to \Hilb_d$ such that $f(B\setminus \{b_0\})\subset \Ch^{-1}(\Chow_d^{ss})^o$. Now, apply the \emph{polystable replacement property}. Up to replacing $B$ with a finite cover ramified over $b_0$, we get a polarized family $(\cZ\ra B,\cM)$ such that, denoting $Z:=\cZ_{b_0}$ and $M:=\cM_{|Z}$, the point $[Z\subset\P^r]$ with $M=\OO_Z(1)$ is Chow polystable. Denote by $F'$ the elliptic tail of $Z$. Since $\cZ$ is an isotrivial family of curves over $B$, either $Z\cong Y$ or $F'$ is cuspidal. If $Z\cong Y$, then $\OO_{Z}(1)_{|F}=\OO_{F}(4p)$, which is absurd by Theorem \ref{T:spec-ell}, hence the second case occurs. Since $F'\subset Z$ is not special, $\Stab_{\PGL_{r+1}}([Z\subset\P^r])$ is finite by Theorem \ref{T:auto-grp} and Lemma \ref{L:aut-stab}, hence $[Z\subset\P^r]$ is Chow stable. This proves the inequality
$$
e_{F,\rho_1}<\frac{2d}{r+1}\,w(\rho_1)
$$
if $F$ is cuspidal. The last inequality of (ii) can be proved in the same way by applying the \emph{polystable replacement property} for the Hilbert stability.
\end{proof}

\begin{coro}\label{C:esistcode}
Let $X=F\cup C$ be a connected curve where $F$ is an elliptic tail (smooth, nodal, cuspidal or reducible nodal) and $C$ is smooth. Denote by $p$ the intersection point of $F$ with $C$ and consider a properly balanced line bundle $L\in \text{\emph{Pic}}^d(X)$ with $\frac{7}{2}(2g-2)<d\leq 4(2g-2)$. Then there exists $M\in \text{\emph{Pic}}^{d-4}(C)$ such that
\begin{enumerate}
	\item \label{C:esistcode1} if $\frac{7}{2}(2g-2)<d<4(2g-2)$, $L_{|C}=M$ and $L_{|F}\in \text{\emph{Pic}}^4(F)\setminus\{\OO_F(4p)\}$, then $[X\stackrel{|L|}{\hookrightarrow} \P^r]$ is Chow stable;
	\item \label{C:esistcode2} if $d=4(2g-2)$, $L_{|C}=M$, $L_{|F}\in \text{\emph{Pic}}^4(F)\setminus\{\OO_F(4p)\}$ and $F$ is cuspidal (resp. not cuspidal), then $[X\stackrel{|L|}{\hookrightarrow} \P^r]$ is Hilbert (resp. Chow) stable; moreover if $\OO_{X}(1)_{|F}=\OO_F(4p)$ and $F$ is cuspidal, then $[X\stackrel{|L|}{\hookrightarrow} \P^r]$ is Chow polystable.
\end{enumerate}
\end{coro}
\begin{proof}
For \eqref{C:esistcode1} and the first statement of \eqref{C:esistcode2}, it suffices to consider the curve $Y$ obtained in Remark \ref{rmk:code} by applying the \emph{polystable replacement property} to a quasi-wp-stable curve $X'=C\cup E$, where $C$ and $E\cong \P^1$ meet together in a tacnode (in this case $p\in C$) and apply Lemma \ref{lem:complcode}. For the last statement of \eqref{C:esistcode2}, we notice that $X$ is a closed point in the stack $\MMgwp$ by Remark \ref{R:3stacks}\eqref{R:3stacks1}. Hence, if $\rho$ is a one-parameter subgroup such that
$$
e_{X,\rho}=\frac{16}{7}\,w(\rho)
$$
then, setting $[X_0\subset \P^r]=\lim_{t\ra 0}\rho(t)[X\subset \P^r]$, we have that $X\cong X_0$ and $$\dim\Stab_{\PGL_{r+1}}([X\subset \P^r])\geq \dim\Stab_{\PGL_{r+1}}([X_0\subset \P^r])$$
by Theorem \ref{T:auto-grp} and Lemma \ref{L:aut-stab}. This implies that $[X_0\subset \P^r]\in \Orb([X\subset \P^r])$ and we are done.
\end{proof}

Now, we are ready to extend the completeness result of Proposition \ref{P:completeness} to the case $d=4(2g-2)$.
\begin{prop}\label{P:completeness3}
Let $X$ be a quasi-wp-stable curve and $\un d\in B_X^d$. Assume that $d=4(2g-2)$. Then either $M_X^{\un d}=\emptyset$ or the map
$p:M_X^{\un d}\to \emph{Pic}^{\un d}(X)/\emph{Aut}(X)$ is surjective.
\end{prop}
\begin{proof}
Assume that $M_X^{\un d}\neq \emptyset$, for otherwise there is nothing to prove.
According to Corollary \ref{C:Im-p}, the surjectivity of $p$ is equivalent to the fact
 that $[X\stackrel{|L|}{\inj} \P^r]$ is Chow semistable for every $L\in \text{Pic}^{\un d}(X)$. To this aim, let $E=\{F_1,\ldots,F_k\}$ be the set of the elliptic tails of $X$, set $C=X_{\rm ell}^c$ and denote by $p_i$ the intersection point of $F_i$ with $(F_i)^c$ for each $i=1,\ldots,k$.
By standard arguments of basins of attraction, we can assume that:
\begin{enumerate}
  \item the multidegree $\un d$ is strictly balanced (same proof as that of \un{Reduction 2} in Proposition \ref{P:completeness});
	\item each elliptic tails $F$ with $\OO_X(1)|_F=\OO_F(4p)$ is cuspidal (by Theorem \ref{T:basin-cusps} and Fact \ref{F:weight-basin});
	\item each cusp is contained in an elliptic curve (same proof as that of (2)).
\end{enumerate}
Therefore, we have a curve like in the picture below:
\begin{center}
\unitlength 0.6mm 
\linethickness{0.4pt}
\ifx\plotpoint\undefined\newsavebox{\plotpoint}\fi 
\begin{picture}(135,55)(0,118)
\qbezier(19.75,156.25)(26.625,163.5)(43,160.75)
\qbezier(43,160.75)(55,159)(70,163.25)
\qbezier(70,163.25)(74.375,164.375)(95.25,165)
\qbezier(95.25,165)(108.375,164.75)(120,168.5)
\qbezier(36.5,168.75)(23.5,148.125)(32.5,121)
\qbezier(81,174)(70.75,139.125)(81.5,141.75)
\qbezier(81.5,141.75)(74.125,140.625)(76.25,125)
\qbezier(102.25,174)(92.25,99.5)(117.25,141)
\multiput(116,129.75)(-.0786516854,.0337078652){267}{\line(-1,0){.0786516854}}
\qbezier(55,170)(46.375,138.75)(54.25,135.5)
\qbezier(54.25,141.5)(49.5,140.5)(47.75,136.5)
\qbezier(47.75,136.5)(45.125,124.75)(50,119)
\qbezier(76.25,125)(76.375,121.125)(77,121.75)
\qbezier(54.25,135.5)(58.375,136.375)(58,138.75)
\qbezier(58,138.75)(57.75,142)(54.5,141.25)
\put(44,121.25){\makebox(0,0)[cc]{$F_2$}}
\put(70.5,125.75){\makebox(0,0)[cc]{$F_3$}}
\put(97,125.75){\makebox(0,0)[cc]{$F_4$}}
\put(26,124.75){\makebox(0,0)[cc]{$F_1$}}
\put(119.25,173.25){\makebox(0,0)[cc]{$C$}}
\put(30.5,163.5){\makebox(0,0)[cc]{$p_1$}}
\put(49.5,163.25){\makebox(0,0)[cc]{$p_2$}}
\put(75.75,166.75){\makebox(0,0)[cc]{$p_3$}}
\put(98,168.25){\makebox(0,0)[cc]{$p_4$}}
\end{picture}

\end{center}
Let $F=E_1\cup E_2$ be a reducible nodal elliptic curve where $E_1$ and $E_2$ are two smooth rational curves. Consider a smooth point $p\in E_1\subset F$ and a line bundle $M\in \text{Pic}^4(F)$ such that $\un \deg\,M=(\deg\,M_{|E_1},\deg\,M_{|E_2})=(3,1)$. By Lemma \ref{lem:complcode}, if we replace each elliptic tail $F_i$ with a pointed polarized curve $(F'_i,p'_i,M_i)\cong (F,p, M)$, we obtain a new curve $X'$ (see the picture below)
\begin{center}
\unitlength .6mm 
\linethickness{0.4pt}
\ifx\plotpoint\undefined\newsavebox{\plotpoint}\fi 
\begin{picture}(130.25,55)(0,115)
\qbezier(19.75,156.25)(26.625,163.5)(43,160.75)
\qbezier(43,160.75)(55,159)(70,163.25)
\qbezier(70,163.25)(74.375,164.375)(95.25,165)
\qbezier(95.25,165)(108.375,164.75)(120,168.5)
\qbezier(102.25,174)(92.25,99.5)(117.25,141)
\qbezier(27.5,168.5)(17.5,94)(42.5,135.5)
\qbezier(50.25,166.75)(40.25,92.25)(65.25,133.75)
\qbezier(75,170.25)(65,95.75)(90,137.25)
\multiput(116,129.75)(-.0786516854,.0337078652){267}{\line(-1,0){.0786516854}}
\put(90,133){\line(0,1){0}}
\put(20.5,135.75){\line(4,-1){23}}
\put(43.5,130){\line(0,1){0}}
\multiput(45,136.75)(.117875648,-.033678756){193}{\line(1,0){.117875648}}
\multiput(70,138.75)(.132621951,-.033536585){164}{\line(1,0){.132621951}}
\put(121.25,171.75){\makebox(0,0)[cc]{$C$}}
\put(97.75,127.75){\makebox(0,0)[cc]{$F_4'$}}
\put(69.5,125.25){\makebox(0,0)[cc]{$F_3'$}}
\put(45,122){\makebox(0,0)[cc]{$F_2'$}}
\put(22,125){\makebox(0,0)[cc]{$F_1'$}}
\put(24.25,161.5){\makebox(0,0)[cc]{$p_1'$}}
\put(47,162.75){\makebox(0,0)[cc]{$p_2'$}}
\put(71.5,166.5){\makebox(0,0)[cc]{$p_3'$}}
\put(98.5,167.75){\makebox(0,0)[cc]{$p_4'$}}
\end{picture}

\end{center}
and a multidegree $\un d'$ for which there exists a properly balanced line bundles $L'\in \text{Pic}^{\un d'}(X')$ such that $[X'\stackrel{|L'|}{\inj} \P^r]$ is Chow semistable. We notice that
$X'$ is quasi-stable and each Chow semistable isotrivial specialization is again a quasi-stable curve, so that, by the proof of Proposition \ref{P:completeness} (case $d>4(2g-2)$), our
statement is true for $X'$ and $\un d'$. In order to complete the proof, it is enough to replace again each $F'_i$ with $(F_i,p_i,\OO_X(1)_{|F_i})$ and to apply Lemma \ref{lem:complcode}.
\end{proof}

\section{Semistable, polystable and stable points (part II)}\label{S:semistab2}

The aim of this section is to describe the points of $\Hilb_d$ that are Hilbert or Chow semistable, polystable and stable for
$$
\frac{7}{2}(2g-2)<d\leq 4(2g-2) \quad \text{�and }�\quad g\geq 3.
$$
The GIT analysis in this range is based on a nice numerical trick that uses the following

\begin{cons}\label{C:replellcode}
Given a quasi-wp-stable curve $[X\subset \P^{d-g}]\in \Hilb_d$ which admits a non-special elliptic tail $F$, we define a new polarized curve $X'$ by replacing the polarized subcurve $F$ with a polarized smooth curve $Y$ of genus $g$ and degree $d-d_F$ so that $Y$ and $X \setminus F$ intersect again in one node, as in Definition \ref{D:replace}. If we denote by $d'$ and $g'$ the degree of the new line bundle $L'$ and the genus of $X'$ respectively, one can consider the Hilbert point $[X'\subset \P^{d'-g'}]\in \Hilb_{d'}$. We easily check that
$$
\nu':=\frac{d'}{2g'-2}=\frac{2d}{2(2g-1)-2}=\frac{d}{2g-2}=:\nu.
$$
Moreover, we claim that
$$
\OO_X(1)\text{ is balanced }\Longleftrightarrow \OO_{X'}(1) \text{ is balanced.}
$$
Let us prove the implication $\Longrightarrow$. As we noticed in Remark \ref{R:bala}, it suffices to check the basic inequality \eqref{E:basineq-multideg} for each connected subcurve such that its complementary subcurve is connected. Let $D\subset X'$ be a connected subcurve. If $D=Y$, then obviously the basic inequality \eqref{E:basineq-multideg} is satisfied. Otherwise, up to replacing $D$ with $D^c$, we can assume that $D$ does not contain $Y$ as a subcurve. This implies that $D$ can be seen as a subcurve of $X$. Since $\nu'=\nu$, $\deg L_{|D}=\deg L'_{|D}$ and $|D\cap \overline{X\setminus D}|=|D\cap \overline{X'\setminus D}|$, the basic inequality \eqref{E:basineq-multideg} is satisfied. The proof of the reverse implication $\Longleftarrow$ is analogous.

We notice that from $X$ to $X'$ the number of non-special elliptic tails decreases by 1. Applying the results about elliptic tails of the previous section and Corollary \ref{C:stab-repl}, one proves that
$$
[X'\subset \P^{d'-g'}]\in \text{ is semistable} \Longrightarrow [X\subset \P^{d-g}]\text{ is semistable},
$$
so that the GIT analysis can be completed by an induction argument on the number of non-special elliptic tails of $X$. (For $d=4(2g-2)$ the induction argument will be on the number of all elliptic tails of $X$). Applying arguments based on specializations of strata (Proposition \ref{P:deg-strata}) and results of completeness (Proposition \ref{P:completeness} and Proposition \ref{P:completeness3}), one can prove the basis of the induction as well. Notice that we have already used this construction in the proof of Corollary \ref{C:polystable3}.
\end{cons}

Let us begin with the case $\frac{7}{2}(2g-2)<d<4(2g-2)$.

\begin{thm}\label{T:semistable4}
Consider a point $[X\subset \P^r]\in \Hilb_d$ with $\frac{7}{2}(2g-2)<d<4(2g-2)$ and $g\geq 3$ and assume that $X$ is connected. The following conditions are equivalent.
\begin{enumerate}[(i)]
\item \label{T:semistable4i} $[X\subset \P^r]$ is Hilbert semistable;
\item \label{T:semistable4ii} $[X\subset \P^r]$ is Chow semistable;

\item \label{T:semistable4iii} $X$ is quasi-wp-stable without tacnodes nor special elliptic tails, non-degenerate and linearly normal in $\P^r$ and $\OO_X(1)$ is properly balanced and non-special;
\item \label{T:semistable4iv} $X$ is quasi-wp-stable without tacnodes nor special elliptic tails and $\OO_X(1)$ is properly balanced;
\item \label{T:semistable4v} $X$ is quasi-wp-stable without tacnodes nor special elliptic tails and $\OO_X(1)$ is balanced.
\end{enumerate}
\end{thm}
\begin{proof}
The implications (\ref{T:semistable4i}) $\Rightarrow$ (\ref{T:semistable4ii}), (\ref{T:semistable4iii}) $\Rightarrow$ (\ref{T:semistable4iv}) and (\ref{T:semistable4iv}) $\Rightarrow$ (\ref{T:semistable4v}) are clear.

(\ref{T:semistable4ii}) $\Rightarrow$ (\ref{T:semistable4iii}) follows from Corollary \ref{C:quasi-wp-stable}\eqref{C:quasi-wp-stable1} and Corollary \ref{C:quasi-wp-inter}.

(\ref{T:semistable4v}) $\Rightarrow$ (\ref{T:semistable4i}). The proof is based on Construction \ref{C:replellcode}. Let $X$ and $L:=\OO_X(1)$ be as in (\ref{T:semistable4v}) and let $n$ be the number of elliptic tails of $X$. We will prove our statement by induction over $n$.

Assume first that each cusp of $X$ is contained in an elliptic tail of $X$. If $n=0$, then $X$ is quasi-stable without elliptic tail and the same argument used to prove Theorem \ref{T:semistable} (case $d>4(2g-2)$) goes through. Suppose that $n>0$. Consider an elliptic tail $F$ (which  is non-special by assumption) and denote by $C_1=F^c$ and $\{p\}=F\cap C_1$. Let $C_2$ be a smooth curve of genus $g$, $q$ a point of $C_2$ and $L'_{C_2}\in \text{Pic}^{d+4}(C_2)$. Denote by $(X',L')$ the couple consisting of a curve $X'$ of genus $g'$ and a line bundle $L'$ on $X'$ obtained from $(X,L)$ by replacing $F$ with $(C_2,q,L'_{C_2})$. The line bundle $L'$ is ample of degree $d'=2d$, moreover we have
\begin{equation}\label{eq:nunu}
\nu':=\frac{d'}{2g'-2}=\frac{2d}{2(2g-1)-2}=\frac{d}{2g-2}=:\nu.
\end{equation}
By the same argument used in the proof of Corollary \ref{C:polystable3} and Construction \ref{C:replellcode}, $L'$ is properly balanced, therefore $L'$ is non-special and very ample by Theorem \ref{bal-pos}; hence we can consider the point $[X'\subset\P^{r'}]\in \text{Hilb}_{d',g'}$ with $\OO_{X'}(1)=L'$. Now, $X'$ contains $n-1$ elliptic tails and $L'$ is balanced, hence by induction $[X'\subset\P^{r'}]$ is Hilbert semistable. By Corollary \ref{C:esistcode}\eqref{C:esistcode1},
 there exists a Hilbert semistable point
$[Y\subset \P^r]\in \Hilb_d$ such that $Y$ admits the elliptic tail $F$ with $\OO_Y(1)_{|F}=L_{|F}$. Since \eqref{eq:nunu} holds and $(X,L)$ can be obtained again from $(X',L')$ by replacing $C_2$ with
$(F,p,L_{|F})$, $[X\subset \P^r]$ is Hilbert semistable by Corollary \ref{C:stab-repl}.

Consider, now, the general case, where $X$ can have cusps that not contained in an elliptic tail.
As before, we prove our statement by induction over $n$. If $n=0$, then $X$ is quasi-p-stable and, by Corollary \ref{C:semist-num}, it is enough to prove that for each balanced multidegree $\underline{d}$ there exists a line bundle $L$ of multidegree $\underline{d}$ such that $[X \subset \P^r]$ is Hilbert semistable with $\OO_X(1)=L$. By Proposition \ref{P:deg-strata}, the curve $X$ specializes isotrivially to a curve $X'$ such that each cusp in contained in an elliptic tail. Let $F_1,\ldots,F_m$ be the elliptic tails of $X'$ and denote by $p_i$ the intersection point of $F_i$ with $F_i^c$. Replacing each cuspidal elliptic tail $F_i$ with a pointed reducible nodal one $(F'_i,q_i)$, we obtain a quasi-stable curve $X''$, which is Hilbert semistable for each balanced polarization $L''$ by the argument above. If we replace again each reducible nodal elliptic tail $F'_i$ with the pointed polarized curve $(F_i,p_i,L_{|F_i})$, by Lemma \ref{lem:complcode} $[X'\subset \P^r]$ is Hilbert semistable. The semistability is an open condition, so that the theorem is true for a generic element of $\text{Pic}^{\underline{d}}(X)$. If $n>0$, we apply the same argument based on replacement of elliptic tails used above and the proof is complete.
\end{proof}

\begin{coro}\label{C:polystable4}
Consider a point $[X\subset \P^r]\in \Hilb_d$ with $\frac{7}{2}(2g-2)<d<4(2g-2)$ and $g\geq 3$ and assume that $X$ is connected.
The following conditions are equivalent:
\begin{enumerate}[(i)]
\item \label{C:polystable4i} $[X\subset \P^r] $ is Hilbert polystable;
\item \label{C:polystable4ii} $[X\subset \P^r]$ is Chow polystable;
\item \label{C:polystable4iii} $X$ is quasi-wp-stable without tacnodes nor special elliptic tails, non-degenerate and linearly normal in $\P^r$ and $\OO_X(1)$ is strictly balanced and non-special;
\item \label{C:polystable4iv} $X$ is quasi-wp-stable without tacnodes nor special elliptic tails and $\OO_X(1)$ is strictly balanced.
\end{enumerate}
\end{coro}
\begin{proof}
The implications (\ref{C:polystable4i}) $\Rightarrow$ (\ref{C:polystable4ii}) $\Rightarrow$ (\ref{C:polystable4iii}) are proved in the same way as in Corollary \ref{C:polystable}.

(\ref{C:polystable4iii}) $\Rightarrow$ (\ref{C:polystable4iv}) is obvious.

(\ref{C:polystable4iv}) $\Rightarrow$ (\ref{C:polystable4i}): the proof of this implication is based on Construction \ref{C:replellcode} and is very similar that used to prove Corollary \ref{C:polystable3}\eqref{C:polystable3c}.
Denote by $L=\OO_X(1)$ and $n$ the number of elliptic tails of $X$. We will prove our corollary by induction on $n$. If $n=0$, then $X$ is quasi-p-stable and the same argument used to prove Proposition \ref{P:completeness} shows that $[X\subset \P^r]$ is Hilbert polystable. Suppose that $n>0$. Consider an elliptic tail $F$ and denote by $C_1=F^c$ and $\{p\}=F\cap C_1$. As in the proof of Corollary \ref{C:polystable3}, let $C_2$ be a smooth curve of genus $g$, $q$ a point of $C_2$ and $L'_{C_2}\in \text{Pic}^{d+4}(C_2)$. Denote by $(X',L')$ the couple consisting of a curve $X'$ of genus $g'$ and an ample line bundle $L'$ of degree $d'=2d$ on $X'$ obtained from $(X,L)$ by replacing $F$ with $(C_2,q,L'_{C_2})$. By construction we have that
\begin{equation}\label{eq:same-nu}
\nu':=\frac{d'}{2g'-2}=\frac{2d}{2(2g-1)-2}=\frac{d}{2g-2}=:\nu.
\end{equation}
By the same argument used in the proof of Corollary
\ref{C:polystable3} and Construction \ref{C:replellcode}, the line bundle is strictly balanced. Therefore, the line bundle $L'$ is very ample and non-special by Theorem \ref{bal-pos}, and we can consider the point  �
$[X'\subset\P^{r'}]\in \text{Hilb}_{d',g'}$, with $\OO_{X'}(1)=L'$. Now, $[X'\subset\P^{r'}]$ admits $n-1$ elliptic tails, hence it is Hilbert polystable by induction. By Corollary \ref{C:esistcode}, there exists a Hilbert stable (hence polystable) point $[Y\subset \P^r]\in \Hilb_d$ such that $Y$ admits the elliptic tail $F$ with $\OO_Y(1)|_F=L|_F$. Since  \ref{eq:same-nu} holds and $(X,L)$ can be obtained from $(X',L')$ by replacing $C_2$ with $(F,p,L|_F)$, we get that  $[X\subset \P^r]$ is Hilbert polystable by Corollary \ref{C:stab-repl}.
\end{proof}

\begin{coro}\label{C:stable4}
Consider a point $[X\subset \P^r]\in \Hilb_d$ with $\frac{7}{2}(2g-2)<d<4(2g-2)$ and $g\geq 3$ and assume that $X$ is connected. The following conditions are equivalent:
\begin{enumerate}[(i)]
\item \label{C:stable4i} $[X\subset \P^r]$ is Hilbert stable;
\item \label{C:stable4ii} $[X\subset \P^r]$ is Chow stable;

\item \label{C:stable4iii} $X$ is quasi-wp-stable without tacnodes and special elliptic tails, non-degenerate and linearly normal in $\P^r$ and $\OO_X(1)$ is stably balanced and non-special;
\item \label{C:stable4iv} $X$ is quasi-wp-stable without tacnodes and special elliptic tails and $\OO_X(1)$ is stably balanced.
\end{enumerate}
\end{coro}
\begin{proof}
The implications (\ref{C:stable4ii}) $\Rightarrow$ (\ref{C:stable4i}) and (\ref{C:stable4iii}) $\Rightarrow$ (\ref{C:stable4iv}) are clear.

(\ref{C:stable4i}) $\Rightarrow$ (\ref{C:stable4iii}) follows from Theorem \ref{T:semistable4} and Theorem \ref{T:stab-bal}.

(\ref{C:stable4iv}) $\Rightarrow$ (\ref{C:stable4ii}). By Corollary \ref{C:polystable4}, $[X\subset \P^r]$ is Chow polystable; hence it suffices to prove that
 ${\rm Stab}_{\PGL_{r+1}}([X\subset \P^r])$ is a finite group.
 Since the line bundle $\OO_X(1)$ is stably balanced,
Lemma \ref{compa-bal} gives that $\w{X}:=\ov{X\setminus \exc}$ is connected; hence, combining Lemma \ref{L:aut-stab} and Theorem \ref{T:auto-grp},
we deduce that $\Stab_{\PGL_{r+1}}([X\subset \P^r])$ is a finite group
\end{proof}

To conclude this section, we study the extremal case $d=4(2g-2)$, where the Chow semistable locus differs from the Hilbert semistable locus.

\begin{thm}\label{T:semistable5}
Consider a point $[X\subset \P^r]\in \Hilb_d$ with $d=4(2g-2)$ and $g\geq 3$ and assume that $X$ is connected.
\begin{enumerate}
\item \label{T:semistable5h} The following conditions are equivalent:
\begin{enumerate}[(i)]
\item \label{T:semistable5hi} $[X\subset \P^r]$ is Hilbert semistable;
\item \label{T:semistable5hiii} $X$ is quasi-wp-stable without tacnodes nor special elliptic tails, non-degenerate and linearly normal in $\P^r$ and $\OO_X(1)$ is properly balanced and non-special;
\item \label{T:semistable5hiiibis} $X$ is quasi-wp-stable without tacnodes nor special elliptic tails and $\OO_X(1)$ is properly balanced;
\item \label{T:semistable5hiv} $X$ is quasi-wp-stable without tacnodes nor special elliptic tails and $\OO_X(1)$ is balanced.
\end{enumerate}
\item \label{T:semistable5c} The following conditions are equivalent:
\begin{enumerate}[(i)]
\item \label{T:semistable5ci} $[X\subset \P^r]$ is Chow semistable;
\item \label{T:semistable5cii} $X$ is quasi-wp-stable without tacnodes, non-degenerate and linearly normal in $\P^r$ and $\OO_X(1)$ is properly balanced and non-special.
\item \label{T:semistable5ciibis} $X$ is quasi-wp-stable without tacnodes and $\OO_X(1)$ is properly balanced;
\item  \label{T:semistable5ciii} $X$ is quasi-wp-stable without tacnodes and $\OO_X(1)$ is balanced.
\end{enumerate}
\end{enumerate}
\end{thm}
\begin{proof}
The proof of \eqref{T:semistable5h} is the same as the proof of Theorem \ref{T:semistable4}, using the fact that Corollary \ref{C:quasi-wp-inter} does hold true also in the present case.
Let us prove \eqref{T:semistable5c}.

\eqref{T:semistable5ci} $\Rightarrow$ \eqref{T:semistable5cii} follows from Theorem \ref{teo-pstab}, Corollary \ref{C:quasi-wp-stable} and Theorem \ref{T:tac-line}.

\eqref{T:semistable5cii} $\Rightarrow$ \eqref{T:semistable5ciibis} $\Rightarrow$ \eqref{T:semistable5ciii} are clear.

\eqref{T:semistable5ciii} $\Rightarrow$ \eqref{T:semistable5ci} is proved with the same argument used to prove the implication \eqref{T:semistable4v}$\Rightarrow$\eqref{T:semistable4i}:
the only difference is that we do not assume that the elliptic tails are non-special and we use Corollary \ref{C:esistcode}\eqref{C:esistcode2} instead of
Corollary \ref{C:esistcode}\eqref{C:esistcode1}.
\end{proof}

\begin{coro}\label{C:polystable5}
Consider a point $[X\subset \P^r]\in \Hilb_d$ with $d=4(2g-2)$ and $g\geq 3$ and assume that $X$ is connected.
The following conditions are equivalent:
\begin{enumerate}
\item \label{C:polystable5h} The following conditions are equivalent:
\begin{enumerate}[(i)]
\item \label{C:polystable5hi} $[X\subset \P^r] $ is Hilbert polystable;
\item \label{C:polystable5hii} $X$ is quasi-wp-stable without tacnodes and special elliptic tails, non-degenerate and linearly normal in $\P^r$ and $\OO_X(1)$ is strictly balanced and non-special;
\item \label{C:polystable5hiii} $X$ is quasi-wp-stable without tacnodes and special elliptic tails and $\OO_X(1)$ is strictly balanced.
\end{enumerate}
\item \label{C:polystable5c} The following conditions are equivalent:
\begin{enumerate}[(i)]
\item \label{C:polystable5ci} $[X\subset \P^r]$ is Chow polystable;
\item \label{C:polystable5cii} $X$ is quasi-wp-stable without tacnodes, each special elliptic tail of $X$ is cuspidal, each cuspidal elliptic tail of $X$ is special, each cusp of $X$ is contained in an elliptic tail, $X$ is non-degenerate and linearly normal in $\P^r$, $\OO_X(1)$ is strictly balanced and non-special;
\item \label{C:polystable5ciii} $X$ is quasi-wp-stable without tacnodes, each special elliptic tail of $X$ is cuspidal, each cuspidal elliptic tail of $X$ is special, each cusp of $X$ is contained in an elliptic tail and $\OO_X(1)$ is strictly balanced.
\end{enumerate}
\end{enumerate}
\end{coro}
\begin{proof}
The same argument of Corollary \ref{C:polystable4} proves (1). Let us prove (2).

(\ref{C:polystable5ci}) $\Rightarrow$ (\ref{C:polystable5cii}) follows from Theorem \ref{T:semistable5}\eqref{T:semistable5c}, Theorem \ref{teo-pstab} and Theorem \ref{T:basin-cusps}.

(\ref{C:polystable5cii}) $\Rightarrow$ (\ref{C:polystable5ciii}) is clear.

(\ref{C:polystable5ciii}) $\Rightarrow$ (\ref{C:polystable5ci}). We use Construction \ref{C:replellcode} again, as in the proof of Corollary \ref{C:polystable4}. Let $n$ be the number of elliptic tails of $X$. If $n=0$, then $X$ is quasi-stable and the proof of Corollary \ref{C:polystable} goes through. Suppose that $n>0$ and consider the point $[X'\subset\P^{r'}]\in \Hilb_{d',g'}$ obtained from $(X,L)$ by replacing an elliptic tail $F$ with a smooth curve $C_2$ of genus $g$ and degree $d+4$.
The line bundle $L':=\OO_{X'}(1)$ is strictly balanced on $X'$ and the point $[X'\subset \P^{r'}]$ is Chow polystable since $X$ admits $n-1$ elliptic tails. By Corollary \ref{C:esistcode}\eqref{C:esistcode2}, there exists a Chow polystable point $[Y\subset \P^r]$ such that $Y$ admits $F$ as an elliptic tail and $\OO_Y(1)_{|F}=\OO_X(1)_{|F}$. Finally, applying Corollary \ref{C:stab-repl} to the points $[X'\subset \P^{r'}]$, $[Y\subset \P^r]$ and $[X \subset\P^{r}]$, we deduce that $[X \subset\P^{r}]$ is Chow polystable.
\end{proof}

\begin{coro}\label{C:stable5}
Consider a point $[X\subset \P^r]\in \Hilb_d$ with $d=4(2g-2)$ and $g\geq 3$ and assume that $X$ is connected.
\begin{enumerate}
\item \label{C:stable5h} The following conditions are equivalent:
\begin{enumerate}[(i)]
\item \label{C:stable5hi} $[X\subset \P^r]$ is Hilbert stable;
\item \label{C:stable5hii} $X$ is quasi-wp-stable without tacnodes and special elliptic tails, non-degenerate and linearly normal in $\P^r$ and $\OO_X(1)$ is stably balanced and non-special;
\item \label{C:stable5hiii} $X$ is quasi-wp-stable without tacnodes and special elliptic tails and $\OO_X(1)$ is stably balanced.
\end{enumerate}
\item \label{C:stable5c} The following conditions are equivalent:
\begin{enumerate}[(i)]
\item \label{C:stable5ci} $[X\subset \P^r]$ is Chow stable;
\item \label{C:stable5cii} $X$ is quasi-stable without special elliptic tails, non-degenerate and linearly normal in $\P^r$ and $\OO_X(1)$ is stably balanced and non-special.
\item  \label{C:stable5ciii} $X$ is quasi-stable without special elliptic tails and $\OO_X(1)$ is stably balanced.
\end{enumerate}
\end{enumerate}
\end{coro}
\begin{proof}
The same argument of Corollary \ref{C:stable4} proves \eqref{C:stable5h}.

Let us now prove \eqref{C:stable5c}. Note that  $[X\subset \P^r]$ is Chow stable if and only if it is Chow polystable and its stabilizer $\Stab_{\PGL_{r+1}}([X\subset \P^r])$ is a finite group.
Lemma \ref{L:aut-stab} and Theorem \ref{T:auto-grp} give that a Chow polystable point $[X\subset \P^r]$ as in
Corollary \ref{C:polystable5}\eqref{C:polystable5c} has finite stabilizer subgroup if and only if
\begin{itemize}
\item $X$ does not have special cuspidal elliptical tails;
\item  $\w{X}:=\ov{X\setminus \exc}$ is connected.
\end{itemize}
The first condition is equivalent to
the fact that $X$ does not have cusps (hence it is quasi-stable) nor special elliptic tails. The second condition
is equivalent to the fact that $\OO_X(1)$ is stably balanced by Lemma \ref{compa-bal}.
Part \eqref{C:stable5c} follows now from this fact together with
 Corollary \ref{C:polystable5}\eqref{C:polystable5c}.

\end{proof}


\section{Geometric properties of the GIT quotient}
\label{S:map-pstable}

For any $d>2(2g-2)$, consider the open and closed subscheme $\Ch^{-1}(\Chow_d^{ss})^o$ of the Chow-semistable locus $\Ch^{-1}(\Chow_d^{ss})\subset \Hilb_d$ consisting of connected curves, see \eqref{Hd}.
From now on, in order to shorten the notation, we set
\begin{equation}\label{E:Hd2}
H_d:=\Ch^{-1}(\Chow_d^{ss})^o\subset \Hilb_d
\end{equation}
and we call $H_d$ the \emph{main component} of the Chow-semistable locus. Similarly, the locus
\begin{equation}\label{E:Hdt}
\wt{H}_d:=\Hilb_d^{ss,o}:=\{[X\subset \P^r]\in \Hilb_d^{ss}\: :\: X \text{ is connected}\}
\end{equation}
is an open and closed subscheme of $\Hilb_d^{ss}$, that we call the \emph{main component} of the Hilbert semi-stable locus.
Note that $\wt{H}_d$ is an open subset of $H_d$ by Fact \ref{HilbtoChow}
 and that $\wt{H}_d=H_d$ if and only if $d \not \in \{\frac 7 2(2g-2), 4(2g-2)\}$ by Theorems \ref{T:semistable}, \ref{T:semistable3},
\ref{T:semistable4} and \ref{T:semistable5}. The name ``main component" is justified by the fact that
 $H_d$ (resp. $\wt{H}_d$) is an irreducible component of
$\Ch^{-1}(\Chow_d^{ss})$ (resp. $\Hilb_d^{ss}$), as we will prove in Corollary \ref{C:irr-quot},
together with the fact that for some values of $d$ and $g$ there might exist other irreducible components of $\Chow_d^{ss}$ (resp. $\Hilb_d^{ss}$) made of non-connected curves
 (see Section \ref{sec:extra}).

Since $H_d$ is clearly an $\SL_{r+1}$-invariant closed and open subscheme of $\Ch^{-1}(\Chow_d^{ss})$, GIT
tells us that there exists a projective scheme
\begin{equation}\label{E:Chowquot}
\ov{Q}_{d,g}^c:=H_d/\!\!/\SL_{r+1}
\end{equation}
which is a good categorical quotient of $H_d$ by $\SL_{r+1}$ (see e.g. \cite[Sec. 6.1]{Dol}). Similarly, there exists a projective scheme
\begin{equation}\label{E:Hilbquot}
\ov{Q}_{d,g}^h:=\wt{H}_d/\!\!/\SL_{r+1}
\end{equation}
which is a good categorical quotient of $\wt{H}_d$ by $\SL_{r+1}$. Moreover,  since $\wt{H}_d\subseteq H_d$, there exists a projective morphism
\begin{equation}\label{E:HilbChowquot}
\Xi: \ov{Q}_{d,g}^h=\wt{H}_d/\!\!/\SL_{r+1}\to H_d/\!\!/\SL_{r+1}=\ov{Q}_{d,g}^c.
\end{equation}
If $d \not \in \{\frac{7}{2}(2g-2), 4(2g-2)\}$ then $\wt{H}_d=H_d$ (as observed before), which implies that $\Xi$ is an isomorphism. We will therefore set
\begin{equation}\label{E:eq-quot}
\ov{Q}_{d,g}:=\ov{Q}_{d,g}^h=\ov{Q}_{d,g}^c \: \text{ if } d \not \in \left\{\frac{7}{2}(2g-2), 4(2g-2)\right\}.
\end{equation}
Indeed, we will prove that $\Xi$ is an isomorphism if $d=\frac{7}{2}(2g-2)$ (see Proposition \ref{P:fib3.5}\eqref{P:fib3.5A}), whereas it is not an isomorphism if $d=4(2g-2)$ (see Proposition \ref{P:fib4}\eqref{P:fib4A}).

\begin{rmk}\label{R:closedpts}
By the well-known properties of GIT quotients (see \cite[Cor. 6.1]{Dol}), it follows that the closed points
of $\ov{Q}_{d,g}^h=\wt{H}_d/\!\!/\SL_{r+1}$ (resp. $\ov{Q}_{d,g}^c=H_d/\!\!/\SL_{r+1}$) correspond bijectively to orbits of Hilbert polystable points $[X\subset \P^r]$ in $\wt{H}_d$ (resp. Chow polystable points in $H_d$).
Moreover, note that the orbit of a point $[X\subset \P^r]\in \Hilb_d$ only determines the curve $X$ and the line bundle $\OO_X(1)$ up to automorphisms of $X$ (compare with the discussion at the beginning of \S\ref{SS:complete}).

\end{rmk}

\vspace{0.1cm}

We now focus on the geometric properties of $\ov{Q}_{d,g}^h$ and $\ov{Q}_{d,g}^c$. We begin with the following result, which says that the singularities of $\ov{Q}_{d,g}^h$ and $\ov{Q}_{d,g}^c$ are not too bad.

\begin{prop}\label{P:sing-GITquot}
Assume that $d>2(2g-2)$  and, moreover, that $g\geq 3$ if $d\leq 4(2g-2)$.
Then:
\begin{enumerate}[(i)]
 \item \label{T:comp-Pic4} $H_d$ (resp. $\wt{H}_d$) is non-singular of pure dimension $r(r+2)+4g-3$.
 \item \label{T:comp-Pic5} $\ov{Q}_{d,g}^c$ (resp. $\ov{Q}_{d,g}^h$) is reduced and normal of pure dimension $4g-3$. Moreover, if ${\rm char}(k)=0$, then $\ov{Q}_{d,g}^c$ (resp. $\ov{Q}_{d,g}^h$) has rational singularities, hence it is Cohen-Macauly.
\end{enumerate}
\end{prop}
\begin{proof}
Part \eqref{T:comp-Pic4}: it is enough to prove the statement for $H_d$, since $\wt H_d\subseteq H_d$ is an open subset. Consider a point $[X\subset \P^r]\in H_d$ and let $N_{X/\P^r}={\mathcal Hom}(\cI_X/\cI_X^2, \cO_X)$ be its normal sheaf, where $\cI_X$ is the ideal sheaf of $X$ inside $\P^r$.
 By the Potential pseudo-stability theorem \ref{teo-pstab}, $X$ is a (reduced) curve with locally complete intersection singularities, so that $X\subset \P^r$ is regular embedding.   Therefore, the tangent space of $H_d$ at $[X\subset \P^r]$ is $H^0(X, N_{X/\P^r})$ by \cite[Prop. 3.2.1]{Ser} and an obstruction for the local Hilbert functor of $X$ inside $\P^r$ is $H^1(X, N_{X/Y})$ by \cite[Prop. 3.2.6]{Ser}.
Dualizing the exact sequence (which is exact on the left since $X\subset \P^r$ is a regular embedding)
$$0\to \cI_X/\cI_X^2\to (\Omega^1_{\P^r})_{|X}\to \Omega^1_X\to 0,$$
we get the sequence
$$0\to T_X={\mathcal Hom}(\Omega_X^1,\cO_X)\to (T_{\P^r})_{|X}\to N_{X/\P^r}\to T_X^1:=
{\mathcal Ext}^1(\Omega_X^1, \cO_X)\to 0.$$
Since $T_X^1$ is a skyscraper sheaf (supported on the singular locus of the reduced curve $X$), we get a surjection
$$H^1(X, (T_{\P^r})_{|X})\twoheadrightarrow H^1(X, N_{X/\P^r}).$$
This, together, with the Euler sequence for $\P^r$ gives that
$$h^1(X, N_{X/\P^r})\leq h^1(X, (T_{\P^r})_{|X})\leq h^1(X, \cO_X(1)^{\oplus r+1})=0,$$
where we have used in the last equality that $\cO_X(1)$ is non-special, as it follows from the Potential pseudo-stability Theorem \ref{teo-pstab}. From this, we deduce that $H_d$ is smooth at $[X\subset \P^r]$ of dimension equal to
$$H^0(X,n_{X/\P^r})=\chi(X, N_{X/\P^r})= r(r+2)+4g-3,$$
as it follows by applying Riemann-Roch to the locally free sheaf $N_{X/\P^r}$.

Part \eqref{T:comp-Pic5}: $\ov{Q}_{d,g}^c$ is reduced and normal because $H_d$ is such (see e.g. \cite[Prop. 3.1]{Dol}). The dimension of $\ov{Q}_{d,g}^c$ is $4g-3$ because $H_d$ has dimension
$r(r+2)+4g-3$, the group $\SL_{r+1}$ has dimension $r(r+2)$ and the action of $\SL_{r+1}$ has generically finite stabilizers. If ${\rm char}(k)=0$, then $\ov{Q}_{d,g}^c$ has rational singularities
by  \cite{Bou}, using that $H_d$ is smooth. This implies that $\ov{Q}_{d,g}^c$ is Cohen-Macauly since, in characteristic zero, a variety having rational singularities is Cohen-Macauly (see \cite[Lemma 5.12]{KM}).
Alternatively, the fact that $\ov{Q}_{d,g}^c$ is Cohen-Macauly follows from \cite{HR}, using the fact that $H_d$ is smooth. The same argument works for $\ov{Q}_{d,g}^h$.
\end{proof}

We mention that, if ${\rm char}(k)=0$, $d>4(2g-2)$ and $g\geq 4$, then $\ov{Q}_{d,g}$ is known to have canonical singularities (see \cite{BFV} in the case where $\gcd(d+1-g,2g-2)=1$ and \cite{CMKV2} in the general case).
This result has been used in loc. cit. to compute the Kodaira dimension and the Iitaka fibration of $\ov{Q}_{d,g}$.

\vspace{0.1cm}

The GIT quotient $\ov{Q}_{d,g}^c$ admits a modular morphism to the moduli space $\Mgp$ of p-stable curves.

\begin{thm}\label{T:comp-Pic}
Assume that $d>2(2g-2)$  and, moreover, that $g\geq 3$ if $d\leq 4(2g-2)$.
 Then the following hold: 
\begin{enumerate}[(i)]
\item \label{T:comp-Pic1} There exists a surjective natural map $\Phi^{\rm ps}: \ov{Q}_{d,g}^c\to \Mgp$.
\item \label{T:comp-Pic2} If $d>4(2g-2)$ then the above map $\Phi^{\rm ps}$ factors as
$$\Phi^{\rm ps}: \ov{Q}_{d,g}^c \stackrel{\Phi^{\rm s}}{\longrightarrow} \Mg \stackrel{T}{\longrightarrow} \Mgp,$$
where $T$ is the map of Remark \ref{R:contr-HH}.
\item \label{T:comp-Pic3} We have that
$$(\Phi^{\rm ps})^{-1}(M_g^o)\cong J_{d,g}^o,$$
where $M_g^o$ is the open subset of $M_g$ parametrizing curves without non-trivial
automorphisms and $J_{d,g}^o$ is the degree $d$ universal Jacobian   over $M_g^o$.
In particular, $(\Phi^{\rm ps})^{-1}(C)\cong \Pic^d(C)$
for every geometric point $C\in M_g^o\subset  \Mgp$.

If $d>4(2g-2)$ then the same conclusions hold for the morphism $\Phi^{\rm s}$.
\end{enumerate}
\end{thm}
\begin{proof}
The proof is an adaptation of the ideas from \cite[Sec. 2]{Cap}.

Part \eqref{T:comp-Pic1}:
consider the restriction to $H_d$ of the universal family over $\Hilb_d$ and denote it by
$$\xymatrix{
\cC_d \ar@{^{(}->}[r] \ar[d]_{u_d} & H_d\times \P^r\\
H_d &
}$$
The morphism $u_d$ is flat, proper and its geometric fibers are quasi-wp-stable curves by Corollary \ref{C:quasi-p-stable}\eqref{C:quasi-wp-stable2}.
Consider the p-stable reduction of $u_d$ (see Definition \ref{D:p-stab}):
$$\xymatrix{
\cC_d\ar[rr]\ar[rd]_{u_d} && \ps(\cC_d) \ar[ld]^{\ps(u_d)} \\
& H_d &
}$$
The morphism $\ps(u_d)$ is flat, proper and its geometric fibers are p-stable curves of genus $g$. Therefore, by the modular properties of $\Mgp$,
the family $\ps(u_d)$ induces a modular map $\phi^{\rm ps}:H_d\to \Mgp$. Since the group $\SL_{r+1}$ acts on the family $\cC_d$ by only changing the
embedding of the fibers of $u_d$  into $\P^r$, the map $\phi^{\rm ps}$ is $\SL_{r+1}$-invariant and therefore it factors via a map $\Phi^{\rm ps}:\ov{Q}_{d,g}^c\to \Mgp$.

Let us show that $\Phi^{\rm ps}$ is surjective.
Let $C$ be any connected smooth curve over $k$ of genus $g\geq 2$ and $L$ be any line bundle on $C$ of degree $d>2(2g-2)$. Note that $d=\deg L\geq 2g+1$ since $g\geq2$. Hence $L$ is very ample and non-special and therefore it  embeds $C$ in $\P^r=\P^{d-g}$. By Fact \ref{F:stab-smooth}, the corresponding point $\displaystyle [C\stackrel{|L|}{\hookrightarrow} \P^r ]\in \Hilb_d$ belongs to $H_d$ and it is clearly mapped to $C\in M_g\subset \Mgp$ by $\Phi^{\rm ps}$.
We conclude that the image of $\Phi^{\rm ps}$ contains the open
dense subset $M_g\subset \Mgp$. Moreover, $\Phi^{\rm ps}$ is projective since  $\ov{Q}_{d,g}^c$ is projective. Therefore, being projective and dominant, $\Phi^{\rm ps}$ has to be surjective.
This finishes the proof of part \eqref{T:comp-Pic1}.

Now, consider Part \eqref{T:comp-Pic2}. If $d>4(2g-2)$, then the potential stability Theorem (see Fact \ref{F:GM2})  says that the geometric fibers of the morphism $u_d$ are quasi-stable curves. From Definition \ref{D:p-stab} and Proposition \ref{P:wp-stab}, it follows that the p-stable reduction $\ps(u_d)$ of $u_d$ factors through the wp-stable reduction
 $\wps(u_d)$ of $u_d$ and that the latter one is a family of stable curves.
This implies that the map $\Phi^{\rm ps}:\ov{Q}_{d,g}^c\to \Mgp$ factors via a map $\Phi^{\rm s}:\ov{Q}_{d,g}^c \to \Mg$ followed  by the contraction map $T:\Mg\to \Mgp$.

Part \eqref{T:comp-Pic3}: let us prove the result for $\Phi^{\rm ps}$, the case of $\Phi^{\rm s}$ being analogous. Observe that the open subset $(\Phi^{\rm ps})^{-1}(M_g^o)\subset \ov{Q}_{d,g}^c=H_d/\!\!/\SL_{r+1}$ is isomorphic to $H_d^o/\!\!/\SL_{r+1}$, where
$$H_d^o:=\{[X\subset \P^r]\in H_d\: :\: X \in M_g^o\}.$$
Over $H_d^o$ there is a universal family $\cC_d^o\to H_d^o$, obtained by restriction of the universal family over $\Hilb_d$, which is endowed with a line bundle $\OO_{\cC_d^o}(1)$ of relative degree $d$ coming from the natural embedding $\cC_d^o\subset H_d^o\times \P^r$. By the universal property of $J_{d,g}^o$,
 we get a morphism $H_d^o\to J_{d,g}^o$ which is clearly invariant under the action of $\SL_{r+1}$, hence it descends to a morphism
$F:(\Phi^{\rm ps})^{-1}(M_g^o)=H_d^o/\!\!/\SL_{r+1}\to J_{d,g}^o$. In order to show that $F$ is an isomorphism, we will construct an inverse of it. Let $f:\cF\to J_{d,g}^o$ be the universal family over $J_{d,g}^o$ endowed with a universal line bundle $\cL$ of relative degree $d$. Since $d>2(2g-2)\geq 2g-2$, we have that $R^1f_*(\cL)=0$ and $f_*(\cL)$ is a locally free sheaf on $J_{d,g}^o$ of rank $d+1-g$.
Moreover, $\cL$ is relatively very ample so that we have an embedding $\eta: \cF\hookrightarrow \P(f_*(\cL))$. We can choose an open covering $J_{d,g}^o=\bigcup_i U_i$ together with trivializations $f_*(\cL)_{|U_i}\cong \cO_{U_i}^{\oplus d-g+1}$ such that the restriction of the embedding $\eta$ to $\cF_{|U_i}:=f^{-1}(U_i)$ becomes equal to $\eta_{|U_i}:\cF_{|U_i}\hookrightarrow \P(f_*(\cL)_{|U_i})\cong U_i\times \P^{d-g}$. By the universal property of $H_d^o\subset \Hilb_d$, we get a morphism $g_i: U_i\to H_d^o$. The composition $\ov g_i:U_i\stackrel{g_i}{\longrightarrow} H_d^o\to H_d^o/\!\!/\SL_{r+1}$ is independent of the chosen trivialization. Therefore, the morphisms $\{\ov g_i\}_i$ glue together to a morphism $G:J_{d,g}^o\to H_d^o/\!\!/\SL_{r+1}=(\Phi^{\rm ps})^{-1}(M_g^o)$, which by construction is an inverse of $F$.

\end{proof}


We now determine the dimension of the fibers of the morphisms $\Phi^{\rm s}$ and $\Phi^{\rm ps}$, starting from the cases $d\not \in \{\frac{7}{2}(2g-2), 4(2g-2)\}$.


\begin{prop}\label{P:irreduci}
\noindent
\begin{enumerate}[(i)]
\item \label{P:irreduci1} Assume that $d>4(2g-2)$. The morphism $\Phi^{\rm s}: \ov{Q}_{d,g}
\to \Mg$ has equidimensional fibers of dimension $g$ and, if ${\rm char}(k)=0$, $\Phi^{\rm s}$ is flat over the smooth locus of $\Mg$.
\item \label{P:irreduci2} Assume that $2(2g-2)<d<\frac{7}{2}(2g-2)$ and $g\geq 3$.
The morphism $\Phi^{\rm ps}: \ov{Q}_{d,g}
\to \Mgp$ has equidimensional fibers of dimension $g$ and, if ${\rm char}(k)=0$, $\Phi^{\rm ps}$ is flat over the smooth locus of $\Mgp$.
\item \label{P:irreduci3} Assume that $\frac{7}{2}(2g-2)<d$, $d\neq 4(2g-2)$ and $g\geq 3$. The fiber of
$\Phi^{\rm ps}:\ov{Q}_{d,g}\to \Mgp$ over a p-stable curve $X$ has
dimension equal to the sum of $g$ and the number of cusps of $X$.
\end{enumerate}
\end{prop}
\begin{proof}

The flatness assertions in \eqref{P:irreduci1} and \eqref{P:irreduci2} follow from  the equidimensionality of the fibers and the fact that $\ov{Q}_{d,g}$ is Cohen-Macauly if ${\rm char}(k)=0$ (see
Theorem \ref{T:comp-Pic}\eqref{T:comp-Pic5}) by using the following well-know flatness's criterion.

\un{Fact} (see \cite[Cor. of Thm 23.1, p. 179]{Mat}):
Let $f:X\to Y$ be a dominant morphism between irreducible varieties. If $X$ is Cohen-Macauly, $Y$ is smooth and $f$ has equidimensional fibers of the same dimension, then
$f$ is flat.

\vspace{0.1cm}

Let us now prove the statements about the dimension of the fibers.

\vspace{0.1cm}

Assume first that $d>4(2g-2)$.
 By Corollary \ref{C:quasi-wp-stable}\eqref{C:quasi-wp-stable2}, the fiber of the morphism
$$\displaystyle \phi^{\rm s}:H_d\to \ov{Q}_{d,g} \stackrel{\Phi^{\rm s}}{\to} \Mg,$$ over a stable curve $X\in \Mg$, is equal to
\begin{equation*}
(\phi^{\rm s})^{-1}(X)=\bigcup_{\stackrel{\s(X')=X}{\un d'\in B_{X'}^d}} M_{X'}^{\un d'}
\end{equation*}
where the union runs over the \emph{quasi-stable} curves $X'$ whose stable reduction $\s(X')=\wps(X')$ is equal to $X$
and $\un d'\in B_{X'}^d$. Since every such $X'$ is obtained from $X$ by bubbling some of the nodes of $X$, we have that $X'\preceq X$ (see Remark \ref{D:order-rela1}).
Therefore, Lemma \ref{L:order-rela} implies that, for every pair $(X',\un d')$ appearing in the above decomposition, there exists $\un d\in B_X^d$ such that $(X',\un d')\preceq(X, \un d)$.
This implies that
$$(\phi^{\rm s})^{-1}(X)=\ov{\bigcup_{\un d\in B_X^d} M_X^{\un d}}\cap H_d.$$
We deduce that the fiber $(\Phi^{\rm s})^{-1}(X)$ contains an open dense subset isomorphic to
$$\left(\bigcup_{\un d\in B_X^d} M_X^{\un d}\right) / SL_{r+1}= \bigcup_{\un d\in B_X^d} M_X^{\un d}/ \SL_{r+1}.$$
For any $\un d\in B_X^d$ the  map $p:M_X^{\un d}\to \Pic^{\un d}(X)/\Aut^{\un d}(X)$ of \eqref{E:map-strata-Pic} is surjective by Theorem \ref{T:semistable}\eqref{T:semistable1}, and its fibers are exactly the $\SL_{r+1}$-orbits on $M_X^{\un d}$.  Therefore, we have
$$\dim M_X^{\un d}/SL_{r+1}= {\dim \Pic^{\un d}(X)/\Aut^{\un d}(X)}= g,$$
where we used that $\Aut^{\un d}(X)\subseteq \Aut(X)$ is a finite group because $X$ is a stable curve.
We conclude that $(\Phi^{\rm s})^{-1}(X)$ is of pure dimension $g$, i.e. part \eqref{P:irreduci1} is proved.

\vspace{0.1cm}

Assume now that $2(2g-2)<d<\frac{7}{2}(2g-2)$ and $g\geq 3$. By Corollary \ref{C:quasi-p-stable}, the fiber of the morphism
$$\displaystyle \phi^{\rm ps}:H_d\to \ov{Q}_{d,g} \stackrel{\Phi^{\rm ps}}{\to} \Mgp,$$
over a p-stable curve  $X\in \Mgp$, is given by
\begin{equation*}
(\phi^{\rm ps})^{-1}(X)=\bigcup_{\stackrel{\wps(X')=X}{\un d'\in B_{X'}^d}} M_{X'}^{\un d'},
\end{equation*}
where the union is over the possible \emph{quasi-p-stable} curves $X'$ whose wp-stable reduction $\wps(X')$
(which coincides with the p-stable reduction $\ps(X')$) is equal to $X$
and $\un d'\in B_{X'}^d$.  Since every such $X'$ is obtained from $X$ by bubbling  some nodes or cusps of $X$, we have that $X'\preceq X$ (see Remark \ref{D:order-rela1}).
Therefore, Lemma \ref{L:order-rela} implies that, for every pair $(X',\un d')$ appearing in the above decomposition, there exists $\un d\in B_X^d$ such that $(X',\un d')\preceq(X, \un d)$.
This implies that
$$(\phi^{\rm ps})^{-1}(X)=\ov{\bigcup_{\un d\in B_X^d} M_X^{\un d}}\cap H_d.$$
We now conclude the proof of part \eqref{P:irreduci2} arguing as before (using Theorem \ref{T:semistable}\eqref{T:semistable2}).

\vspace{0.1cm}

Assume finally that $\frac{7}{2}(2g-2)< d$, $d\neq 4(2g-2)$ and $g\geq 3$. By Corollary \ref{C:quasi-wp-stable}\eqref{C:quasi-wp-stable1}, the fiber of the morphism
$$\displaystyle \phi^{\rm ps}:H_d\to \ov{Q}_{d,g} \stackrel{\Phi^{\rm ps}}{\to} \Mgp,$$
over a p-stable curve  $X\in \Mgp$, is given by
\begin{equation}\label{E:fib-phi}
(\phi^{\rm ps})^{-1}(X)=\bigcup_{\stackrel{\ps(X')=X}{\un d'\in B_{X'}^d}} M_{X'}^{\un d'},
\end{equation}
where the union is over the possible \emph{quasi-wp-stable} curves $X'$ whose p-stable reduction $\ps(X')$ is equal to $X$ and $\un d'\in B_{X'}^d$. Every such a curve $X'$ is obtained from $X$ by bubbling  some of the nodes or cusps of $X$ and by replacing some of the cusps of $X$ by elliptic tails.

We want, now, to rewrite \eqref{E:fib-phi} in a more convenient way.
With this in mind, let us introduce some notation.
Let $\{c_1,\cdots, c_l\}$ be the cusps of $X$. For any subset $\emptyset \subseteq S\subseteq [l]:=\{1,\cdots, l\}$, consider the family of wp-stable curves $\eta^S: \cX^S\to V^S:=(\ov{M}_{1,1})^{S}$ such the fiber of $\eta^S$ over a point $(F_i,p_i)_{i\in S}\in (\ov{M}_{1,1})^{S}$ is the wp-stable curve obtained
from $X$ by replacing the cusp $c_i$ with the $1$-pointed stable elliptic tail $(F_i,p_i)$ for every $i\in S$.
Note that $\eta^S: \cX^S\to V^{S}$ is a family of wp-stable curves whose p-stabilization
is the trivial family $X\times V^{S}$. For a point $t\in V^S$, set $\cX^S_t:=(\eta^S)^{-1}(t)$.
We can canonically identify the properly balanced multidegrees of total degree $d$ on $\cX^S_t$ as $t$ varies in $V^S$; we therefore set $^{S}B^d:=B_{\cX^S_t}^d$ for any $t\in V_S$. Moreover, for any given $\un d\in {}^S B^d$, we consider the locally closed subset of $H_d$ given by
$${}^S M^d=\bigcup_{t\in V^S} M_{\cX_t^S}^{\un d}\subset H_d.$$

From Definition \ref{D:order-rela} it follows that, among the quasi-wp-stable curves appearing in
\eqref{E:fib-phi}, the maximal curves with respect to the order relation $\preceq$ (see Remark \ref{D:order-rela1})
are those of type $\cX^S_t:=(\eta^S)^{-1}(t)$ for some $t\in V^S$
with $\emptyset \subseteq S \subseteq [l]$. Using this and Lemma \ref{L:order-rela}, we can rewrite \eqref{E:fib-phi}
as
\begin{equation*}\label{E:fib-phi2}
(\phi^{\rm ps})^{-1}(X)=\ov{\bigcup_{\stackrel{\emptyset\subseteq S\subseteq [l]}{\un d\in {}^S B^d}}
{}^S M^{\un d}}\cap H_d,
\end{equation*}
from which it follows that
\begin{equation}\label{E:fiber-Phi}
\bigcup_{\stackrel{\emptyset\subseteq S\subseteq [l]}{\un d\in {}^S B^d}}
{}^S M^{\un d}/SL_{r+1} \text{ is open and dense in } (\Phi^{\rm ps})^{-1}(X).
\end{equation}
Using the map $p$ of \eqref{E:map-strata-Pic} and the fact that $\Aut(\cX^S_t)$ is a finite group since $\cX^S_t$ is wp-stable, we get for any $\un d\in {}^S B^d$ and any $t\in V^S$:
 \begin{equation}\label{E:estim1}
 \dim M_{\cX^S_t}^{\un d}/SL_{r+1}\leq  {\dim \Pic^{\un d}(\cX^S_t)/\Aut^{\un d}(\cX^S_t)}= g.
 \end{equation}
We deduce that
 \begin{equation}\label{E:estim2}
 \dim {}^S M^{\un d}/SL_{r+1}\leq \dim V^S+g= |S|+ g,
 \end{equation}
which by \eqref{E:fiber-Phi} implies that $\dim (\Phi^{\rm ps})^{-1}(X)\leq g+l$.

Consider, now, the special case where $S=[l]$. In this case, the curves $\cX_t^{[l]}$ are stable for any $t\in V^{(l)}$ and, for a generic $L_t\in \Pic^{\un d}(\cX_t^{[l]})$,
any element of the form $[\cX_t^{[l]}\stackrel{|L_t|}{\hookrightarrow} \P^r]$  is Chow (or equivalently Hilbert) semistable by Theorems
\ref{T:semistable}\eqref{T:semistable1} and \ref{T:semistable4}.
Therefore, for $S=[l]$ equality does hold in \eqref{E:estim1} and \eqref{E:estim2} and we deduce that $\dim (\Phi^{\rm ps})^{-1}(X)= g+l$.
\end{proof}

Let us study the dimension of the fibers of the morphisms $\Xi:\ov{Q}_{d,g}^h\to \ov{Q}_{d,g}^c$  and
$\Phi^{\rm ps}: \ov{Q}_{d,g}^c \to \Mgp$, as well as of their composition, in the two special cases $d \not \in \{\frac{7}{2}(2g-2), 4(2g-2)\}$.

\begin{prop}\label{P:fib3.5}
Assume that $d=\frac{7}{2}(2g-2)$ and that $g\geq 3$.
\begin{enumerate}[(i)]
\item \label{P:fib3.5A} The morphism $\Xi: \ov{Q}_{d,g}^h\to \ov{Q}_{d,g}^c$ is an isomorphism.
\item \label{P:fib3.5B} The morphisms $\Phi^{\rm ps}: \ov{Q}_{d,g}^c\to \Mgp$ and $\Phi^{\rm ps}\circ \Xi: \ov{Q}_{d,g}^h\to \Mgp$ have equidimensional fibers of dimension $g$ and,
if ${\rm char}(k)=0$, then $\Phi^{\rm ps}$ and $\Phi^{\rm ps}\circ \Xi$ are flat over the smooth locus of $\Mgp$.
\end{enumerate}
\end{prop}
\begin{proof}

In order to prove part \eqref{P:fib3.5A}, by applying the Zariski's main theorem in the form \cite[(4.4.9)]{EGAIII1}, it is enough to check that $\ov{Q}_{d,g}^c$ is reduced and normal  and that $\Xi$ is
birational and injective.

The fact that $\ov{Q}_{d,g}^c$ is reduced and normal follows from Proposition \ref{P:sing-GITquot}.

Consider the open and dense $\SL_{r+1}$-invariant subset $\Hilb_d^{s}\cap \wt{H}_d\subseteq \wt{H}_d$.  GIT tells us that there exists a good geometric quotient
$(\Hilb_d^{s}\cap \wt{H}_d)/\SL_{r+1}$ (in the sense of \cite[Sec. 6.1]{Dol}) which is an open subset  of $\wt{H}_d/\!\!/\SL_{r+1}=\ov{Q}_{d,g}^h$. Moreover, since $\Ch^{-1}(\Chow_d^s)\cap H_d$ is an open
and dense $\SL_{r+1}$-invariant subset of $\Hilb_d^{s}\cap \wt{H}_d$ by Fact \ref{HilbtoChow},  the properties of the good geometric quotients (see \cite[Sec. 6.1]{Dol}) ensure that there exists
a good geometric quotient $(\Ch^{-1}(\Chow_d^s)\cap H_d)/\SL_{r+1}$ which is an open and dense subset of $(\Hilb_d^{s}\cap \wt{H}_d)/\SL_{r+1}$.
Clearly, $\Xi$ is an isomorphism over $(\Ch^{-1}(\Chow_d^s)\cap H_d)/\SL_{r+1}$, which shows that $\Xi$ is birational.

Finally, let us show that $\Xi$ is injective, which will conclude the proof.
Consider a point of $\ov{Q}_{d,g}^h$ represented by the orbit of an Hilbert polystable point
$[X\subset \P^r]\in \wt{H}_d$ (see Remark \ref{R:closedpts}).
According to Corollary \ref{C:polystable3}\eqref{C:polystable3h}, this is equivalent to the fact that $X$
is quasi-p-stable and that $\OO_X(1)$ is strictly balanced.
From Theorem \ref{T:basintacn} and Corollary \ref{C:polystable3}\eqref{C:polystable3c} it follows that
$\Xi([X\subset \P^r])$ is represented by the orbit of any Chow polystable
point $[Y\subset \P^r]$ of $H_d$ such that:
\begin{itemize}
\item Let $\{q_1,\ldots,q_n\}$ be the tacnodes with a line of $X$; denote by $E_i$ the line contained in $X$ and passing through $q_i$ (for any $i=1,\ldots, n$) and let $\wh{X}$ be the complement of
the lines $E_i$ in $X$. Then $Y$ is obtained from $\wh{X}$ by gluing at each point $q_i$ an elliptic tail $F_i=F_i^1\cup F_i^2\cup F_i^3$, where $F_i^j\cong \P^1$ (for each $j=1,2,3$), $F_i^1$ is
joined nodally to $\wh{X}$ in $q_i$ and to $F_i^2$ while $F_i^2$ and $F_i^3$ meets in a tacnode. Note that $F_i^2\cup F_i^3$ is a tacnodal elliptic tail for each $i$.
\item $\OO_Y(1)$ is a strictly balanced line bundle on $Y$ such that $\OO_Y(1)_{|F_i^1}=\OO_{F_i^1}(1)$, $\OO_Y(1)_{|F_i^2}=\OO_{F_i^2}(2)$, $\OO_Y(1)_{|F_i^3}=\OO_{F_i^3}(1)$
and $\OO_Y(1)_{|\wh{X}}=\OO_X(1)_{|\wh{X}}$.
\end{itemize}
Note that two line bundles $\OO_Y(1)$ as above differ by an automorphism of $Y$ (as it follows from the proof of Theorem \ref{T:auto-grp}), so that the orbit of $[Y\subset \P^r]$
is well-defined (see Remark \ref{R:closedpts}).

From this explicit description it follows that the curve $X$ and the restriction $\OO_X(1)_{|\wt{X}}$ are uniquely determined by the orbit of the Chow-polystable point $[Y\subset \P^r]\in H_d$.
Since the line bundle $\OO_X(1)$ is uniquely determined by its restriction $\OO_X(1)_{|\wt{X}}$ up to automorphisms of $X$, we can recover the orbit of $[X\subset \P^r]$ from the orbit of
$[Y\subset \P^r]$ (see Remark \ref{R:closedpts}), which shows the injectivity of $\Xi$, q.e.d.

Part \eqref{P:fib3.5B}: using part \eqref{P:fib3.5A}, it is enough to prove the result for the morphism  $\Phi^{\rm ps}\circ \Xi: \ov{Q}_{d,g}^h\to \Mgp$. The proof of the statement for
$\Phi^{\rm ps}\circ \Xi$ is exactly the same as the proof of Proposition \ref{P:irreduci}\eqref{P:irreduci2} replacing Theorem \ref{T:semistable}\eqref{T:semistable2} by
Theorem \ref{T:semistable3}\eqref{T:semistable3h}.
\end{proof}

\begin{prop}\label{P:fib4}
Assume that $d=4(2g-2)$  and that $g\geq 3$.
\begin{enumerate}[(i)]
\item \label{P:fib4A} The fiber of the morphism $\Xi: \ov{Q}_{d,g}^h\to  \ov{Q}_{d,g}^c$ over the orbit of a Chow polystable point $[X\subset \P^r]\in H_d$ is equal to the number of cuspidal elliptic tails of $X$ that are special with respect to $\OO_X(1)$.
\item \label{P:fib4B} The fiber of the morphism  $\Phi^{\rm ps}:\ov{Q}_{d,g}^c\to \Mgp$ (resp. $\Phi^{\rm ps}\circ \Xi: \ov{Q}_{d,g}^h\to \Mgp$) over a p-stable curve $X$ has
dimension equal to the sum of $g$ and the number of cusps of $X$.
\end{enumerate}
\end{prop}
\begin{proof}
Part \eqref{P:fib4A}: consider the point of $\ov{Q}_{d,g}^c$ represented by the orbit of the Chow polystable point
$[X\subset \P^r]\in H_d$ (see Remark \ref{R:closedpts}). By Corollary \ref{C:polystable5}\eqref{C:polystable5c}, $X$ is quasi-wp-stable without tacnodes, $\OO_X(1)$ is strictly balanced and all the cusps of $X$ are contained in special cuspidal elliptic tails of $X$ which, furthermore, are the unique special elliptic tails or cuspidal elliptic tails of $X$.

If $X$ does not have special cuspidal elliptic tails (hence it does not have special elliptic tails at all),
then  $[X\subset \P^r]$ is also Hilbert polystable by Corollary \ref{C:polystable5}\eqref{C:polystable5h} and its orbit represents the unique point of $\Xi^{-1}([X\subset \P^r])$ and we are done.

In the general case, let $\{F_1, \ldots, F_n\}$ be the special cuspidal elliptic tails of $X$.
Set $q_i:=F_i\cap \wh{X}$ and note that $\deg \OO_X(1)_{|F_i}=4$ by the
basic inequality \eqref{E:basineq-multideg}.
By Corollary \ref{C:polystable5}\eqref{C:polystable5h} and Theorem \ref{T:basin-cusps}, any Hilbert polystable point
$[Y\subset \P^r]\in \wt{H}_d$ such that $\Xi([Y\subset \P^r])=[X\subset \P^r]$ is of the following form
(for some $\emptyset \subseteq S\subseteq [n]=\{1,\ldots, n\}$):

\un{Type $S$}:  $Y=Y_S$ is obtained from $X$ by contracting to a cuspidal point $q_i'$ all the tails $F_i$ such that
$i\in S$; in particular, there is a natural morphism $\nu_S:\wh{X}_S:=(\cup_{i\in S}F_i)^c\to Y$ which is the partial normalization of $Y$ at the cusps $q_i'$ (with $i\in S$).
Moreover, the line bundle $\OO_{Y}(1)$ is such that $\nu_S^*\OO_Y(1)=\OO_X(1)_{|\wh{X}_S}(4\cdot \sum_{i\in S} q_i)$
and each of the cuspidal elliptic tails $F_i\subset Y$ with $i\not\in S$ is non-special with respect to $\OO_Y(1)$.
Set $\un d_S$ equal to the strictly balanced multidegree of such a line bundle $\OO_Y(1)$.

From Definition \ref{D:order-rela} (and in particular Figure \ref{CuspidalTail}),  it follows that if
$\emptyset\subseteq T\subseteq S\subseteq [n]$ then $(Y_T,\un d_T)\preceq (Y_S,\un d_S)$ which then implies that
$M_{Y_T}^{\un d_T}\subseteq \ov{M_{Y_S}^{\un d_S}}$ by Proposition \ref{P:deg-strata}. In other words, inside the fiber $\Xi^{-1}([X\subset \P^r])$, the points of Type $S=[n]$ are dense.

Observe now that for points $[Y\subset \P^r]\in \wt{H}_d$ of Type $S=[n]$, the line bundle $\OO_Y(1)$ is specified up to the choice of the gluing data for each of the cusps $q_i'$. Since each of the cusps give a one-dimensional space of gluing conditions for $\OO_Y(1)$, points of Type $S=[n]$ form an irreducible $n$-dimensional family sitting in the fiber $\Xi^{-1}([X\subset \P^r])$. This shows that the dimension of $\Xi^{-1}([X\subset \P^r])$ is equal to $n$, which was
the number of special cuspidal elliptic tails of $X$, q.e.d.

Part \eqref{P:fib4B}: the same proof of Proposition \ref{P:irreduci}\eqref{P:irreduci3} works in this case by
replacing Theorems \ref{T:semistable}\eqref{T:semistable1} and \ref{T:semistable4} with Theorem \ref{T:semistable5}.

\end{proof}

Using the above Proposition, we can prove the irreducibility of $\ov{Q}_{d,g}^c$ and $\ov{Q}_{d,g}^h$ (and hence of $H_d$ and $\wt{H}_d$).

\begin{coro}\label{C:irr-quot}
Assume that $d > 2(2g-2)$ and, moreover, that $g \geq  3$ if $d\leq 4(2g-2)$. Then $\ov{Q}_{d,g}^c$ and $\ov{Q}_{d,g}^h$ are irreducible. In particular, $H_d$ and $\wt{H}_d$ are also irreducible.
\end{coro}
\begin{proof}

Let us first prove the irreducibility of $\ov{Q}_{d,g}^c$.

 In the case $d\leq 4(2g-2)$ (and $g\geq 3$), look at the surjective morphism  $\Phi^{\rm ps}:\ov{Q}_{d,g}^c\to \Mgp$. Since $\Mgp$ is irreducible by Theorem \ref{T:stacks-curves}\eqref{T:stacks-curves3} and the generic fiber of
$\Phi^{\ps}$ is irreducible by Theorem \ref{T:comp-Pic}\eqref{T:comp-Pic3}, we get that there exists a unique irreducible component of $\ov{Q}_{d,g}^c$ that dominates $\Mgp$.
Assume, by contradiction, that there is another irreducible component of $\ov{Q}_{d,g}^c$, call it $Z$, that does not dominate $\Mgp$. Let $W:=\Phi^{\rm ps}(Z)\subsetneq \Mgp$ and denote by
$l\geq 0$ the number of cusps of the generic point $X\in W$. Since each cusp will increase the codimension of $W$ in $\Mgp$ by two, we get that
\begin{equation}\label{E:dim-W}
\dim W\leq \min\{3g-4, 3g-3-2l\}.
\end{equation}
Propositions \ref{P:irreduci}\eqref{P:irreduci2}, \ref{P:irreduci}\eqref{P:irreduci3}, \ref{P:fib3.5}\eqref{P:fib3.5B},
\ref{P:fib4}\eqref{P:fib4B} imply that the generic fiber of the map $Z\twoheadrightarrow W$ has dimension less than or equal to $g+l$. Using this and \eqref{E:dim-W}, we get
\begin{equation}\label{E:dim-Z}
\dim Z\leq \min\{4g-4+l, 4g-3-l\}<4g-3.
\end{equation}
This however contradicts the fact that $\ov{Q}_{d,g}$ is of pure dimension equal to $4g-3$ by Proposition \ref{P:sing-GITquot}\eqref{T:comp-Pic5}, q.e.d.

The case $d>4(2g-2)$ is dealt with in a similar (and easier) way by considering the map $\Phi^{\rm s}:\ov{Q}_{d,g}\to \Mg$ and using Proposition \ref{P:irreduci}\eqref{P:irreduci1}.

From the irreducibility of $\ov{Q}_{d,g}^c$ it follows that: $H_d$ is connected (hence irreducible because of its smoothness, see Proposition \ref{P:sing-GITquot}\eqref{T:comp-Pic4})
because $\ov{Q}_{d,g}^c$ is the good categorial quotient of $H_d$ by the connected algebraic group $\SL_{r+1}$; $\wt{H}_d$ is irreducible because it is an open subset of $H_d$; $\ov{Q}_{d,g}^h$ is irreducible because it is the good categorical quotient of $\wt{H}_d$ by $\SL_{r+1}$.

\end{proof}

\section{Extra components of the GIT quotient}
\label{sec:extra}

So far we have considered the action of $\GL_{r+1}$ over $\Hilb_d$, and we have restricted our attention to $\Ch^{-1}(\Chow_d^{ss})^o$ and $\Hilb_d^{ss,o}$, the Chow or Hilbert semistable loci consisting of connected curves.  It is very natural to ask if there are Chow or Hilbert semistable points $[X\subset \P^r]\in \Hilb_d$ with $X$ not connected. In this section we will  answer to this question.

First of all, as a corollary of the Potential pseudo-stability Theorem \ref{teo-pstab}, we can prove the following result.

\begin{coro}\label{cor:nonconn}
Let $[X\subset \P^r]\in \Ch^{-1}(\Chow_d^{ss})$ (resp. $\in \Hilb_d^{ss}$) where $X=X_1\cup\ldots\cup X_n$ and each $X_i$ is a connected component of $X$. Suppose that $d>2(2g-2)$, set $d_i:=\deg\OO_{X}(1)_{|X_i}$, $r_i:=\dim\langle X_i\rangle$ (where $\langle X_i\rangle$ is the linear span of $X_i$) and denote by $g_i$ the genus of $X_i$. Then the following hold:
\begin{enumerate}
	\item $h^0(X_i,\OO_{X_i}(1))=d_i-g_i+1=r_i+1$, $h^1(X_i,\OO_{X_i}(1))=0$ and
  $$
  h^0(\P^r,\OO_{\P^r}(1))=h^0(X,\OO_{X}(1))=\sum_{i=1}^n h^0(X_i,\OO_{X_i}(1)).
  $$
  In particular, $\langle X_i\rangle\cap \langle X_j\rangle=\emptyset$ for every $i\neq j$.
  \item For each $i$
  $$
  \frac{d_i}{2g_i-2}=\frac{d}{2g-2}\quad\bigg(\text{i. e.}\quad \frac{d_i}{r_i+1}=\frac{d}{r+1}\bigg).
  $$
  In particular, if $n\geq 2$, then $\gcd(d,g-1)\neq 1$.
  \item For each $i$, $[X_i\subset \langle X_i\rangle]\in \Hilb_{d_i,g_i}$ is Chow (resp. Hilbert) semistable.
  \item If $n\geq 2$, $[X\subset \P^r]$ is Chow (resp. Hilbert) strictly semistable.
\end{enumerate}
\end{coro}
\begin{proof}
(1) follows easily from Theorem \ref{teo-pstab}\eqref{teo-pstab2}. (2) is an easy consequence of the basic inequality applied to $\OO_X(1)$, which holds by Theorem \ref{teo-pstab}\eqref{teo-pstab3}.
 Indeed, if $X_i$ is a connected component of $X$, we have $k_{X_i}=0$, hence
$$
d_i=\frac{d}{2g-2}(2g_i-2)
$$
and we are done.
If $X$ is not connected and, by contradiction, $\gcd(d,g-1)=1$, the ratio $\displaystyle \frac{d}{g-1}$ is reduced, hence for each connected component $X_i\subset X$, we have $d_i=d$, which is a contradiction. Let us prove (3). Consider a 1ps $\rho:\Gm\ra \GL_{r_1+1}$ diagonalized by a system of coordinates $\{x_1,\ldots,x_{r_1+1}\}$ in $\langle X_1\rangle$ and denote by $w_1,\ldots,w_{r_1+1}$ the weights of $\rho$. Let $\{y_1,\ldots,y_{r+1}\}$ be a system of coordinates in $\P^r$ such that ${y_i}_{|X_1}={x_i}_{|X_1}$ and
\begin{eqnarray*}
\langle X_1\rangle=\bigcap_{i=r_1+2}^{r+1}\{y_i=0\}\quad \text{and}\quad \langle X_1^c\rangle=\bigcap_{i=1}^{r_1+1}\{y_i=0\}.
\end{eqnarray*}
Now consider a 1ps $\rho':\Gm\ra \GL_{r+1}$ diagonalized by $\{y_1,\ldots,y_{r+1}\}$ with weights $w_1',\ldots,w_{r+1}'$ such that
$$
w_i'=
\left\{
\begin{array}{ll}
w_i & \text{if }1 \leq i\leq r_1+1\\
0 & \text{if }i\geq r_1+2.\\
\end{array}
\right.
$$
By Proposition \ref{prop:stab-tail}, we get
$$
e_{X_1,\rho}=e_{X\rho'}\leq \frac{2d}{r+1}\,w(\rho')=\frac{2d_1}{r_1+1}\,w(\rho),
$$
so that $[X_1\subset \langle X_1\rangle]\in \Hilb_{d_1,g_1}$ is Chow semistable (the Hilbert semistability is proved in the same way). In order to prove (4), it suffices to consider $\rho$ and $\rho'$ as above with $w_i=1$ for $i=1,\ldots,r_1+1$. We get
$$
e_{X,\rho'}=e_{X_1,\rho}=2d_1=\frac{2d_1}{r_1+1}\,(r_1+1)=\frac{2d}{r+1}\,w(\rho')
$$
and we are done.
\end{proof}

Next, we are going to show that each point $[X\subset \P^r]\in \Hilb_d$,
which satisfies (1), (2) and (3) of Corollary \ref{cor:nonconn}, is Chow (resp. Hilbert) semistable.

Suppose that $d>2(2g-2)$ and let $[X\inj \P^{r}]\in \Hilb_{d}$, where $X$ is the disjoint union of two curves (possibly non connected) $X_1$ and $X_2$ (of degrees $d_1,d_2$ and genus $g_1,g_2$ respectively). Under the hypothesis that $h^1(X,\OO_X(1))=0$, we have $h^0(X,\OO_{X}(1))=h^0(X_1,\OO_{X_1}(1))+h^0(X_2,\OO_{X_2}(1))$, hence there exists a system of coordinates $\{x_1,\ldots,x_{r+1}\}$ such that
\begin{eqnarray}\label{eq:coor-nonconn}
\langle X_1\rangle=\bigcap_{i=r_1+2}^{r+1}\{x_i=0\}\quad \text{and}\quad \langle X_2\rangle=\bigcap_{i=1}^{r_1+1}\{x_i=0\}.
\end{eqnarray}
We have the following criterion (very similar to Proposition \ref{prop:stab-tail}).
\begin{prop}\label{prop:stab-comp}\textbf{\emph{(Criterion of stability for non-connected curves.)}}
Let $[X\subset \P^{r}]\in \Hilb_{d}$ as above. The following conditions are equivalent:
\begin{enumerate}
	\item $[X\subset \P^{r}]$ is Hilbert semistable (resp. polystable, stable);
	\item $[X\subset \P^{r}]$ is Hilbert semistable (resp. polystable, stable)
	with respect to any one-parameter subgroup $\rho:\Gm\ra \GL_{r+1}$ diagonalized by coordinates of type (\ref{eq:coor-nonconn});
	\item $[X\subset \P^{r}]$ is Hilbert semistable (resp. polystable, stable)
	with respect to any one-parameter subgroup $\rho:\Gm\ra \GL_{r+1}$ diagonalized by coordinates of type (\ref{eq:coor-nonconn}) with weights $w_1,\ldots,w_{r+1}$ such that
$$
w_{1}=w_{2}=\ldots=w_{r_1+1}=0\quad\text{or}\quad w_{r_1+2}=w_{r_1+3}=\ldots=w_{r+1}=0.
$$
\end{enumerate}
The same holds for the Chow semistability (resp. polystability, stability).
\end{prop}
\begin{proof}
It is analogous to the proof of Proposition \ref{prop:stab-tail}.
\end{proof}

As a corollary of the above Proposition, we have that the converse of Corollary \ref{cor:nonconn} holds true.

\begin{coro}\label{cor:critnonconn}
Let $[X\subset \P^{r}]\in \Hilb_{d}$ where $X=X_1\cup\ldots\cup X_n$ and each $X_i$ is a connected component of $X$. Set $d_i:=\deg\OO_{X}(1)_{|X_i}$ and denote by $g_i$ the genus of $X_i$. If $[X\subset \P^{r}]\in \Hilb_{d}$ satisfies (1) and (2) of Corollary \ref{cor:nonconn}, then the following conditions are equivalent:
\begin{enumerate}
	\item $[X\subset \P^{r}]$ is Hilbert semistable (resp. polystable);
	\item $[X_i\subset \langle X_i\rangle]\in \Hilb_{d_i,g_i}$ is Hilbert semistable (resp. polystable).
\end{enumerate}
The same holds for the Chow semistability (resp. polystability).
\end{coro}

Thus, the semistable locus and the polystable locus of $\Hilb_d$ for $d>2g-2$ are completely determined by applying the previous corollary and the results of Section \ref{S:semistab1} and
Section \ref{S:semistab2} about the stability of connected curves.

We are now able to determine the connected components of $\Hilb_d^{ss}$ and $\Chow_d^{ss}$ for $d>2(2g-2)$. Set
$$
d':=\frac{d}{\gcd(d,g-1)}\quad\text{and}\quad g':=\frac{g-1}{\gcd(d,g-1)}+1.
$$
Let $\mathcal{H}$ be a connected component of $\Hilb_d^{ss}$ and consider $[X\subset \P^r]\in \mathcal{H}$. Using the same notation as in Corollary \ref{cor:nonconn}, suppose that
$d_1\geq d_2\geq\ldots\geq d_n$.
 We get a well-defined
(integral) partition $\displaystyle \left(\frac{d_1}{d'},  \ldots,\frac{d_n}{d'}\right)$ of $\gcd(d,g-1)$.
Define the function
$$
\begin{aligned}
\phi: \{\text{\rm{connected components of }} \Hilb_d^{ss}\} & \longrightarrow \{\text{\rm{partitions of }}\gcd(d,g-1)\} \\
\mathcal{H} & \stackrel{}{\longmapsto} \left(\frac{d_1}{d'},  \ldots,\frac{d_n}{d'}\right).\\
\end{aligned}
$$
Conversely, let $(k_1,\ldots,k_n)$ be a partition of $\gcd(d,g-1)$. For each $i=1,\ldots,n$ consider a smooth curve $X_i$ of genus $g_i=g'k_i+1$ and a line bundle $L_i$ on $X_i$ of degree $d_i=d'k_i$. Define the curve $\displaystyle X=\bigsqcup_{i=1}^n X_i$
and consider the line bundle $L$ on $X$ such that $L_{|X_i}=L_i$. Using the assumption that $d>2(2g-2)$, it is easy to see that $d_i\geq 2g_i+1$, so that $L_i$ is very ample, $[X\stackrel{|L_i|}{\hookrightarrow}\P^{d_i-g_i}]\in \Hilb_{d_i,g_i}$ is Hilbert stable (notice that $g_i\geq 2$ for every $i$) and $[X\stackrel{|L|}{\hookrightarrow}\P^r]\in \Hilb_d$ is Hilbert semistable by Corollary \ref{cor:critnonconn}. Let $\mathcal{K}$ the connected component which contains $[X\stackrel{|L|}{\hookrightarrow}\P^r]$. Now define the function
$$
\begin{aligned}
\psi: \{\text{\rm{partitions of }}\gcd(d,g-1)\} & \longrightarrow \{\text{\rm{connected components of }} \Hilb_d^{ss}\} \\
(k_1,\ldots,k_n) & \stackrel{}{\longmapsto} \mathcal{K}.\\
\end{aligned}
$$
It is easy to check that $\phi\circ\psi=\id$ and $\psi\circ\phi=\id$.
Summing up, we obtain that
$$
\Hilb_d^{ss}=\bigsqcup_{\pi\text{ part. of }\gcd(d,g-1)}\psi(\pi).
$$
The same arguments works for $\Ch^{-1}(\Chow_d^{ss})$ giving  a bijection
$$
\phi': \{\text{\rm{connected components of }} \Ch^{-1}(\Chow_d^{ss}) \}  \longrightarrow \{\text{\rm{partitions of }}\gcd(d,g-1)\}.
$$
We have proved the following

\begin{thm}\label{T:conn-comp}
There is a commutative diagram
$$\xymatrix{
\{\text{\rm{connected components of }} \Hilb_d^{ss}\} \ar[r]^{\phi}\ar[d]_{\eta} & \{\text{\rm{partitions of }}\gcd(d,g-1)\}\\
\{\text{\rm{connected components of }} \Ch^{-1}(\Chow_d^{ss})\}
\ar[ru]_{\phi'}&
}$$
where all the maps are one-to-one correspondences and $\eta$
is induced by the inclusion  $\Hilb_d^{ss}\subseteq \Ch^{-1}(\Hilb_d^{ss})$.
\end{thm}

\section{Compactifications of the universal Jacobian}
\label{S:comp-Jac}

Fix integers $d$ and $g\geq 2$.
Consider the stack $\cJ_{d,g}$, called  the \emph{universal Jacobian stack} of genus $g$ and degree $d$, whose section over a scheme $S$ is the groupoid of families
of smooth curves of genus $g$ over $S$ together with a line bundle of relative degree $d$.
We denote by $J_{d,g}$ its coarse moduli space, and we call it the \emph{universal Jacobian variety} (or simply the universal Jacobian) of degree $d$ and genus $g$\footnote{In \cite{Cap}, this variety is called the universal Picard variety and it is denoted by $P_{d,g}$. We prefer to use the name universal Jacobian, and therefore the symbol $J_{d,g}$, because the word Jacobian variety is used only for curves while the word Picard variety is used also for varieties of higher dimensions and therefore it is more ambiguous. Accordingly, we will denote the Caporaso's
compactified universal Jacobian by $\ov{J}_{d,g}$ instead of $\ov{P}_{d,g}$ as in \cite{Cap} (see Fact \ref{F:Old-Comp-Jac}).}. The aim of this section is to show how we can use the GIT analysis carried out in the previous sections in order to obtain three different modular compactifications of the universal Jacobian stack $\cJ_{d,g}$ and of the universal Jacobian variety $J_{d,g}$.

\subsection{Caporaso's compactification}\label{S:capcomp}

The first compactification of $\cJ_{d,g}$ and of $J_{d,g}$ was constructed by L. Caporaso as an output of the GIT analysis carried out in \cite{Cap}. In this subsection, we review this compactification.

Denote by $\JJst$ the category fibered in groupoids over the category of schemes whose section over a scheme $S$ is the groupoid of families of quasi-stable curves over $S$ of genus $g$
endowed with a line bundle whose restriction to each geometric fiber is a properly balanced line bundle of degree $d$.
We summarize the main properties of $\JJst$ into the following

\begin{fact}\label{F:Old-Comp-Jac}
Let $g\geq 2$ and $d\in \Z$.
\begin{enumerate}
\item \label{Old1} $\JJst$ is a smooth,  irreducible, universally closed Artin stack of finite type over $k$, having dimension $4g-4$ and containing $\cJ_{d,g}$ as an open substack.

\item \label{Old2} $\JJst$  admits an adequate moduli space $\Jst$ (in the sense of \cite{alper2}), which is a normal irreducible projective variety  of dimension $4g-3$
containing $J_{d,g}$ as an open subvariety.

\item \label{Old3} There exists a commutative digram
$$\xymatrix{
\JJst \ar[r] \ar[d]_{\Psi^{\rm s}} & \Jst \ar[d]^{\Phi^{\rm s}}\\
\ov{\mathcal M}_g \ar[r] & \Mg
}$$
where $\Psi^{\rm s}$ is universally closed and surjective and $\Phi^{\rm s}$ is projective, surjective with equidimensional fibers of dimension $g$.

\item \label{Old4} If ${\rm char}(k)=0$, then for any $X\in \Mg$ we have that
$$ {(\Phi^{\rm s})}^{-1}(X)\cong \ov{\Jac_d}(X)/ \Aut(X),$$
 where $\ov{\Jac_d}(X)$ is the canonical compactified Jacobian of $X$ in degree $d$,
parametrizing  rank-$1$, torsion-free sheaves on $X$  that are slope-semistable with respect to $\omega_X$ (see Remark \ref{R:Gor-ss}\eqref{R:Gor-ss2}).

\item  \label{Old5} If $4(2g-2)< d $ then we have that
$$\begin{sis}
& \JJst\cong [H_d/GL(r+1)],\\
& \Jst\cong H_d/\!\!/GL(r+1)=\ov{Q}_{d,g},\\
\end{sis}$$
where $H_d\subset \Hilb_d$ is the open subset consisting of points
$[X\subset \P^r]\in \Hilb_d$ such that $X$ is connected and $[X\subset \P^r]$ is Chow semistable (or equivalently, Hilbert semistable).

\end{enumerate}
\end{fact}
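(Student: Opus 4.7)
The plan is to prove the five parts of Fact \ref{F:Old-Comp-Jac} by combining deformation theory, GIT (as developed in the present paper in the range $d>4(2g-2)$), and a twisting argument to reduce to that range. I would proceed as follows.

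First, for part \eqref{Old1}, I would show that $\JJst$ is an Artin stack by realizing it as an open substack of the stack of all pairs $(X\to S, \mathcal L)$ with $X\to S$ a family of genus $g$ curves and $\mathcal L$ a relatively degree $d$ line bundle; the latter is algebraic since both the stack of all curves (\`a la Hall) and the relative Picard functor over it are so. The conditions of being quasi-stable and of having properly balanced restriction to geometric fibers are both open. Smoothness follows exactly as in the proof of Proposition \ref{P:p-stab}: quasi-stable curves have locally complete intersection singularities, hence $\mathrm{Ext}^2(\Omega^1_X,\mathcal O_X)=0$ and the deformation functor of $X$ is unobstructed by \cite[Prop. 2.4.8]{Ser}; moreover the Picard functor is smooth over the base since $H^2(X,\mathcal O_X)=0$ for a curve. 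Irreducibility follows because the open substack $\mathcal J_{d,g}$ of pairs with smooth curve is irreducible (being a $\mathrm{Pic}^d$-bundle over $\mathcal M_g$) and dense: indeed, quasi-stable curves are smoothable since they are reduced l.c.i. curves, and a properly balanced line bundle extends to a nearby smoothing. Dimension count: $\dim \mathcal M_g + \dim \mathrm{Pic}^d = 3g-3 + g = 4g-3$, but $\JJst$ carries the generic $\Gm$-stabilizer of scalar automorphisms of line bundles, so the stack has dimension $4g-4$. Universal closedness is the content of Caporaso's completeness theorem: given a DVR $R$ and a section over the generic point, one takes a stable model over $\mathrm{Spec}(R)$, extends the line bundle after a base change, and then performs blow-ups of the special fiber following the combinatorial recipe to land inside the properly balanced locus—precisely the construction alluded to in \cite[Important Remark 5.1.1]{Cap} that was already used in the proof of Lemma \ref{L:specia}.

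For part \eqref{Old5}, the potential stability theorem (Fact \ref{F:GM2}) together with Theorem \ref{T:semistable}\eqref{T:semistable1} shows that, for $d>4(2g-2)$, a point $[X\subset\mathbb P^r]\in H_d$ is exactly a properly balanced, non-special, linearly normal embedding of a connected quasi-stable curve. Since such a line bundle $L$ has no higher cohomology and is very ample by Theorem \ref{bal-pos}, the standard argument (every family of quasi-stable curves with properly balanced line bundle is, \'etale-locally, of the form $(\pi_* \mathcal L$ trivialized, giving a map to $H_d$) produces a $GL(r+1)$-torsor $H_d\to \JJst$ inducing the isomorphism $\JJst\cong [H_d/GL(r+1)]$; passing to GIT quotients gives $\Jst = H_d/\!/GL(r+1) = \ov Q_{d,g}$. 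From this and Theorem \ref{T:comp-Pic} one reads off parts \eqref{Old2} and \eqref{Old3} in the range $d>4(2g-2)$: normality, projectivity, dimension $4g-3$, surjectivity onto $\ov M_g$, equidimensional fibers of dimension $g$ (by Proposition \ref{P:irreduci}\eqref{P:irreduci1}). To extend to arbitrary $d$, I would use the twisting isomorphism $\JJst \xrightarrow{\sim} \ov{\mathcal J}_{d+n(2g-2),g}$ sending $(X,\mathcal L)\mapsto (X,\mathcal L\otimes \omega_{X/S}^{\otimes n})$ for $n\gg 0$, which preserves the properly balanced condition (since twisting by $\omega_X$ shifts both sides of \eqref{E:basineq-multideg} equally) and is evidently an isomorphism of stacks. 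This reduces every assertion to the case $d>4(2g-2)$, at which point an adequate moduli space exists by GIT in the sense of \cite{alper2}.

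Part \eqref{Old4}, the identification of geometric fibers of $\Phi^{\rm s}$ with $\ov{\mathrm{Jac}_d}(X)/\mathrm{Aut}(X)$, is the step I expect to be the main obstacle. The key construction is the Caporaso--Esteves correspondence: given a stable curve $X$ and a quasi-stable model $Y\to X$ blowing up a subset $\mathcal N$ of nodes of $X$, the pushforward $\nu_*L$ of a properly balanced line bundle $L$ on $Y$ is a rank-one torsion-free sheaf on $X$ which fails to be locally free exactly at $\mathcal N$, and which is $\omega_X$-semistable in Simpson's sense; this follows from the basic inequality \eqref{E:basineq-multideg} translated via $\deg_Z(\nu_*L)=\deg_{\nu^{-1}(Z)}L - |\nu^{-1}(Z)\cap E|$ where $E$ runs over blown-up exceptional components. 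Conversely, every Simpson semistable sheaf arises this way from its partial normalization at its non-locally-free points. The subtle part is matching $S$-equivalence on the Simpson side with the identification of polystable orbits on the GIT side provided by Corollary \ref{C:polystable} (strict balancedness), together with the fact that $\Phi^{\rm s}$ factors the natural map $\JJst\to\ov{\mathcal M}_g\to\ov M_g$ through the $\mathrm{Aut}(X)$-quotient on fibers. The characteristic zero hypothesis enters to ensure linear reductivity of $\mathrm{Aut}(X)$ so that the adequate moduli space on fibers agrees with the ordinary quotient. With these ingredients assembled—an essentially bookkeeping exercise modulo the translation between $\omega_X$-slope semistability and the basic inequality—parts \eqref{Old2} through \eqref{Old4} follow in the general degree range via the twisting reduction above.
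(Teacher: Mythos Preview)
Your sketch is correct, but it is worth noting that the paper does not actually prove this statement: it is presented as a \emph{Fact}, i.e.\ a summary of known results. Parts \eqref{Old1}--\eqref{Old3} are attributed to Caporaso \cite{Cap}, \cite{capneron} and Melo \cite{melo1}; part \eqref{Old4} to Alexeev \cite[Sec.~1.8]{ale}; and part \eqref{Old5} to the same sources for $d\geq 10(2g-2)$, with the extension to $d>4(2g-2)$ and to Chow semistability coming from Theorem~\ref{T:semistable}\eqref{T:semistable1} of the present paper. So the paper's ``proof'' is a citation, and the only genuinely new input is the characterization of the semistable locus in the extended degree range.

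That said, your self-contained sketch is essentially the argument that the cited references carry out, and it exactly parallels what the present paper does for the p-stable analogue (Theorem~\ref{T:New-Comp-Jac}): the stack description via quotient by $\GL_{r+1}$ is Theorem~\ref{geomdesc}, the twisting reduction is Lemma~\ref{tensor}, the smoothness/irreducibility/dimension count is Proposition~\ref{stackprop}, the fiber description via pushforward of line bundles to the stable model is Lemma~\ref{sheaf-linebun} and Theorem~\ref{T:fiber-Simp}. So your approach is not different from the paper's---it simply spells out for the quasi-stable case what the paper spells out for the quasi-p-stable case and otherwise delegates to the literature. The one place to be slightly careful is part \eqref{Old4}: the matching of $S$-equivalence on the Simpson side with GIT-polystability, and the passage to the $\Aut(X)$-quotient, requires the co-representability argument of Theorem~\ref{T:fiber-Simp} (Claims 1--3 there), which in turn uses that the GIT quotient is a \emph{universal} categorical quotient---this is where the characteristic-zero hypothesis enters, exactly as you say.
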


Parts \eqref{Old1}, \eqref{Old2}, \eqref{Old3} follow by combining the work of Caporaso (\cite{Cap}, \cite{capneron}) and that of Melo (\cite{melo1}).
Part \eqref{Old5} follows as well from the previous cited manuscripts if $d\geq 10(2g-2)$ and working with Hilbert semistability. The extension to $d>4(2g-2)$ and to the Chow semistability
follows straightforwardly from our Theorem \ref{T:semistable}\eqref{T:semistable1}. Part \eqref{Old4} was observed by Alexeev in \cite[Sec. 1.8]{ale}
(see also \cite[Sec. 2.9]{CMKV} for a related discussion and in particular for a discussion about the need for the assumption ${\rm char}(k)=0$).

We call $\JJst$ (resp. $\Jst$) the \emph{Caporaso's compactified universal Jacobian stack} (resp. \emph{Caporaso's compactified universal Jacobian variety}) of genus $g$ and degree $d$.

\subsection{Two new compactifications of the universal Jacobian stack $\cJ_{d,g}$}\label{S:2newcomp}

The aim of this subsection is to define and study two new compactifications of the universal Jacobian stack $\cJ_{d,g}$, one  over the stack $\MMgp$ of p-stable curves of genus $g$ and the other over the stack $\MMgwp$ of wp-stable curves of genus $g$.

\begin{defi}\label{D:2comp}
Fix two integers $d$ and $g\geq 3$.
\begin{enumerate}[(i)]
\item \label{D:2comp1} Let $\JJps$ be the category fibered in groupoids over the category of $k$-schemes whose sections over a $k$-scheme $S$ are pairs $(f:\mathcal X\to S,\mathcal L)$ where $f$ is a family of quasi-p-stable curves of genus $g$ and $\mathcal L$
is a line bundle on $\mathcal X$ of relative degree $d$ that is properly balanced on the geometric fibers of $f$. Arrows between such pairs are given by cartesian diagrams
\begin{equation*}
\xymatrix{
\ar@{}[dr]|{\square}{\mathcal X} \ar[d]_{f} \ar[r]^{h} & {\mathcal X'} \ar[d]^{f'} \\
{S} \ar[r] & {S'}
}
\end{equation*}
together with a specified isomorphism $\mathcal L\stackrel{\cong}{\longrightarrow} h^*\mathcal L'$ of line bundles over $\cX$.

\item \label{D:2comp2} Let $\JJwp$ be the category fibered in groupoids over the category of $k$-schemes whose sections over a $k$-scheme $S$ are pairs $(f:\mathcal X\to S,\mathcal L)$ where $f$ is a family of quasi-wp-stable curves of genus $g$ and $\mathcal L$ is a line bundle on $\mathcal X$ of relative degree $d$ that is properly balanced on the geometric fibers of $f$ and such that the geometric fibers of $f$ do not contain tacnodes with a line nor special elliptic tails relative to $\cL$. Arrows between such pairs are given as in \eqref{D:2comp1} above.
\end{enumerate}
\end{defi}

The aim of this subsection is to prove that $\JJps$ and $\JJwp$ are algebraic stacks and to study their properties.
Let us first show that $\JJps$ and $\JJwp$ are periodic w.r.t. $d$ with period $2g-2$.

\begin{lemma}\label{tensor}
For any integer $n$, there are natural isomorphisms
$$\JJps\cong \ov{\mathcal J}^{\rm ps}_{d+n(2g-2),g} \hspace{0.2cm} \text{ and } \hspace{0.2cm}
 \JJwp\cong \ov{\mathcal J}^{\rm wp}_{d+n(2g-2),g}
$$
of categories fibered in groupoids.
\end{lemma}

\begin{proof}
Note that a line bundle $L$ on a quasi-wp-stable curve $X$ is properly balanced if and only if $L\otimes\omega_X^n$ is properly balanced; moreover an elliptic tail $F$ of $X$ is special with respect to $L$ if and only if $F$ is special with respect to $L\otimes\omega_X^n$.
The required isomorphisms will then consist in associating to any section $(f:\mathcal X\to S,\mathcal L)\in  \JJps(S)$ (resp. $ \JJwp(S)$) the section $(f:\mathcal X\to S,\mathcal L\otimes
\omega_f^n)\in \ov{\mathcal J}^{\rm ps}_{d+n(2g-2),g}(S)$ (resp. $\ov{\mathcal J}^{\rm ps}_{d+n(2g-2),g}(S)$),
where by $\omega_f$ we denote the relative dualizing sheaf of the morphism $f$.
\end{proof}

Moreover, the stacks $\JJps$ and $\JJwp$ are invariant by changing the sign of degree.

\begin{lemma}\label{L:invdeg}
There are natural isomorphisms
$$\JJps\cong \ov{\mathcal J}^{\rm ps}_{-d,g} \hspace{0.2cm} \text{ and } \hspace{0.2cm}
 \JJwp\cong \ov{\mathcal J}^{\rm wp}_{-d,g},
$$
of categories fibered in groupoids.
\end{lemma}

The proof of this Lemma will be given later (after Theorem \ref{T:new-descr}), when an alternative description of $\JJps$ and $\JJwp$ will be available.

\vspace{0.1cm}

We will now show that if $2(2g-2)<d\leq \frac{7}{2}(2g-2)$ (resp. $\frac{7}{2}(2g-2)< d \leq 4(2g-2)$) then $\JJps$
(resp. $\JJwp$) is isomorphic to the quotient stack $[\wt{H}_d/\GL_{r+1}]$, where
\begin{equation}
\wt{H}_d:=\Hilb_d^{ss,o}:=\{[X\subset \P^r]\in \Hilb_d^{ss}\: :\: X \text{ is connected}\}
\end{equation}
is the \emph{main component} of the Hilbert semi-stable locus and the action of $\GL_{r+1}$ on $\wt{H}_d$ is induced
by the natural action of $\GL_{r+1}$ on $\P^r$.
Note that, according to Fact \ref{HilbtoChow}, $\wt{H}_d$ is contained in the main component $H_d$ of the Chow-semistable locus defined in \eqref{E:Hd2}; moreover, if $d>2(2g-2)$ then $\wt{H}_d=H_d$  if and only if $d\neq \frac{7}{2}(2g-2)$ and
$d\neq 4(2g-2)$ (see Theorems \ref{T:semistable}, \ref{T:semistable3}, \ref{T:semistable4}, \ref{T:semistable5}).

 Recall that, given a scheme $S$,  $[\wt{H}_d/\GL_{r+1}](S)$ consists of $\GL_{r+1}$-principal bundles $\phi:E\to S$ with a $\GL_{r+1}$-equivariant morphism
 $\psi:E\to \wt{H}_d$. Morphisms are given by pullback diagrams which are compatible with the morphism to $\wt{H}_d$.

\begin{thm}\label{geomdesc}
Let $g\geq 3$.
\begin{enumerate}[(i)]
\item \label{geomdesc1} If $2(2g-2)< d\leq \frac{7}{2}(2g-2)$  then $\JJps$ is isomorphic to the quotient stack
    $[\wt{H}_d /\GL_{r+1}]$.
\item \label{geomdesc2} If $\frac{7}{2}(2g-2)\leq  d\leq 4(2g-2)$  then $\JJwp$ is isomorphic to the quotient stack
    $[\wt{H}_d /\GL_{r+1}]$.
\end{enumerate}
\end{thm}
\begin{proof}
To shorten the notation, we set $G:=\GL_{r+1}$.

Let us first prove \eqref{geomdesc1}.
We must show that, for every $k$-scheme $S$, the groupoids $\JJps(S)$ and $[\wt{H}_d/G](S)$ are equivalent. Our proof  goes along the lines of the proof of
 \cite[Thm. 3.1]{melo1}, so we will explain here the main steps and refer to loc. cit. for further details.

Given $(f:\mathcal X\to S,\mathcal L)\in \JJps(S)$, we must produce a principal $G$-bundle $E$ on $S$ and a $G$-equivariant morphism $\psi:E\to \wt{H}_d$. Notice that since $d>2(2g-2)$,
Theorem \ref{bal-pos}(i)  implies that $H^1(\mathcal X_s,\mathcal L_{|\mathcal X_s})=0$ for any geometric fiber $\mathcal X_s$ of $f$, so $f_*(\mathcal L)$ is locally free of rank $r+1=d-g+1$. We
can then consider its frame bundle $E$, which is a principal $G$-bundle: call it $E$.
To find the $G$-equivariant morphism to $\wt{H}_d$, consider the family $\mathcal X_E:=\mathcal X\times_S E$ of quasi-p-stable curves together with
 the pullback of $\mathcal L$ to $\mathcal X_E$, call it $\mathcal L_E$, whose restriction to the geometric fibers is properly balanced.

By definition of frame bundle, $f_{E*}(\mathcal L_E)$ is isomorphic to $\mathbb{A}_k^{r+1}\times_k E$. Moreover, the line bundle $\mathcal L_E$ is relatively ample by Remark
\ref{R:propbal-ample}; hence it is relatively very ample by Theorem \ref{bal-pos}\eqref{bal-va}. Therefore,
$\mathcal L_E$ gives an embedding over $E$ of $\mathcal X_E$ in $\P^r\times E$.
By the universal property of the Hilbert scheme $\Hilb_d$, this family determines a map $\psi:E\to \Hilb_d$ whose image is contained in $\wt{H}_d$ by Theorems \ref{T:semistable}\eqref{T:semistable2} and \ref{T:semistable3}\eqref{T:semistable3h}. It follows immediately from the construction that $\psi$ is a $G$-equivariant map.


\begin{equation*}
\xymatrix{
{\mathcal X} \ar[d]_{f}& {\mathcal X_E:=\mathcal X\times_SE} \ar[d]^{f_E}  \ar[l] \\
{S} & {E}\ar[l] \ar[r]^{\psi} & {\Hilb_d}\\
}
\end{equation*}

Let us check that isomorphisms in  $\JJps(S)$ lead canonically to isomorphisms in $[\wt{H}_d/G](S)$.
Consider an isomorphism between two pairs $(f:\mathcal X\to S,
\mathcal L)$ and $(f':\mathcal X'\to S,\mathcal
L')$ , i.e.,
%
an isomorphism $h:\mathcal X\to
\mathcal X'$ over $S$ and an isomorphism of line bundles
$\mathcal L\stackrel{\cong}{\to} h^*\mathcal L'$.
Since $f'h=f$, we get a unique isomorphism between the vector bundles  $f_*(\mathcal L)$ and
$f'_*(\mathcal L')$.
Since taking the frame bundle gives an equivalence between the category of vector bundles of rank $r+1$ over $S$ and the category of principal $G$-bundles over $S$, the isomorphism $f_*(\mathcal L)\stackrel{\cong}{\rightarrow} f_*'(\mathcal L')$ leads to a unique isomorphism between their frame bundles, call them $E$ and $E'$ respectively. It is clear that this isomorphism is compatible with the $G$-equivariant morphisms $\psi:E\to \wt{H}_d$ and $\psi': E' \to \wt{H}_d$.

Conversely, given a section $(\phi:E\to S,\psi:E\to \wt{H}_d)$ of $[\wt{H}_d/G]$ over a $k$-scheme $S$, let us construct a family of quasi-$p$-stable curves of genus $g$ over $S$ and a line bundle whose
restriction to the geometric fibers is properly balanced of degree $d$.

Let $\mathcal C_d$ be the restriction to $\wt{H}_d$ of the universal family on $\Hilb_d$. By Theorem \ref{T:semistable}\eqref{T:semistable2}, the pullback of $\mathcal C_d$ by $\psi$ gives a family $
\mathcal C_E$ on $E$ of quasi-$p$-stable curves of genus $g$ and a line bundle $\mathcal L_E$ on $\mathcal C_E$ whose restriction to the geometric fibers is properly balanced.
As $\psi$ is $G$-invariant and $\phi$ is a $G$-bundle, the family $\mathcal C_E$ descends to a family $\mathcal C_S$ over $S$, where $\mathcal C_S=\mathcal C_E/G$. In fact, since $\mathcal
C_E$ is flat over $E$ and $E$ is faithfully flat over $S$, $\mathcal C_S$ is flat over $S$ too.

Now, since $G=\GL_{r+1}$,  the action of $G$ on $\mathcal C_d$ is naturally linearized.
Therefore, the action of $G$ on $E$ can also be linearized to an action on $\mathcal L_E$,  yielding descent data for $\mathcal L_E$.
Since $\mathcal L_E$ is relatively very ample and $\phi$ is a principal $G$-bundle, a standard descent argument
shows that $\mathcal L_E$ descends to a relatively very ample line bundle on $\mathcal C_S$, call it $\mathcal L_S$, whose restriction to the geometric fibers of $\mathcal C_S\to S$
is properly balanced by construction.

It is straightforward to check that an isomorphism on $[\wt{H}_d/G](S)$ leads to an unique isomorphism in $\JJps(S)$.

We leave to the reader the task of checking that the two functors between the groupoids $[\wt{H}_d/G](S)$ and $\JJps(S)$ that we have constructed are one the inverse of the other, which
concludes the proof of part \eqref{geomdesc1}.

The proof of part \eqref{geomdesc2} proceeds along the same lines using Theorems \ref{T:semistable4} and \ref{T:semistable5}\eqref{T:semistable5h}.
\end{proof}


From Theorem \ref{geomdesc} and Lemmas \ref{tensor} and \ref{L:invdeg}, we deduce the following consequences for $\JJps$ and $\JJwp$.

\begin{thm}\label{stackprop}
Let $g\geq 3$ and $d$ any integer.
\begin{enumerate}[(i)]
 \item \label{stackprop1} $\JJps$ is a smooth and irreducible universally closed Artin stack of finite type over $k$ and of dimension $4g-4$, endowed with a universally closed
morphism $\Psi^{\rm ps}$ onto the moduli stack of p-stable curves $\MMgp$.
\item  \label{stackprop2} $\JJwp$ is a smooth and irreducible universally closed Artin stack of finite type over $k$ and of dimension $4g-4$, endowed with a universally closed
morphism $\Psi^{\rm wp}$ onto the moduli stack of wp-stable curves $\MMgwp$.
\end{enumerate}
\end{thm}
\begin{proof}
Let us first prove part \eqref{stackprop1}. Using Lemma \ref{tensor}, we can assume that $2(2g-2)<d\leq \frac{7}{2}(2g-2)$ and hence that $\JJps\cong [\wt{H}_d/\GL_{r+1}]$ by Theorem \ref{geomdesc}\eqref{geomdesc1}.
The fact that $\JJps$ is a universally closed Artin stack of finite type over $k$ follows from Theorem \ref{geomdesc} and general properties of stacks coming from GIT problems.
$\JJps$ is smooth and irreducible since $\wt{H}_d\subseteq H_d$ is smooth by Theorem \ref{T:comp-Pic}\eqref{T:comp-Pic4} and irreducible by Proposition \ref{P:irreduci}.
Using again Theorem  \ref{T:comp-Pic}\eqref{T:comp-Pic4}, we can compute the dimension of $\JJps$ as follows:
$$\dim  \JJps=\dim \wt{H}_d-\dim \GL_{r+1}=r(r+2)+4g-3-(r+1)^2=4g-4.$$
Now, given $(f:\mathcal X\to S,\mathcal L)\in\JJps(S)$, we get an element of $\ov{\mathcal M}_g^{\rm \ps}(S)$ by forgetting $\mathcal L$ and by considering the p-stable reduction
$\ps(f):\ps(\cX)\to S$ of $f$ (see Definition \ref{D:p-stab}). This defines a  morphism of stacks $\Psi^{\rm ps}:\JJps\to \MMgp$, which is universally closed since
$\JJps$ is.

Let us now prove part \eqref{stackprop2}. Using Lemmas \ref{tensor} and  \ref{L:invdeg}, we can assume that $\frac{7}{2}(2g-2)< d \leq 4(2g-2)$ and hence that $\JJwp\cong [\wt{H}_d/\GL_{r+1}]$ by
Theorem \ref{geomdesc}\eqref{geomdesc2}. Now, the proof proceeds as in part \eqref{stackprop1}. Note that the
morphism $\Psi^{\rm wp}:\JJwp\to \MMgwp$ send $(f:\mathcal X\to S,\mathcal L)\in\JJwp(S)$ into the wp-stable reduction
$\wps(f):\wps(\cX)\to S$ of $f$ (see Proposition \ref{P:wp-stab}).

\end{proof}

Note that $\Gm$ acts on $\JJps$ (resp. $\JJwp$) by scalar multiplication on the line bundles and leaving the curves fixed. Thus, $\Gm$ is contained in the stabilizers of any section of $\JJps$ (resp. $\JJwp$). This implies that $\JJps$ (resp. $\JJwp$) are never DM (= Deligne-Mumford)  stacks. However, we can quotient out $\JJps$ (resp. $\JJwp$) by the action of $\Gm$ using the rigidification procedure
defined by Abramovich, Corti and Vistoli in \cite{acv}: denote the rigidified stack by $\JJps\fatslash \Gm$
(resp. $\JJwp\fatslash \Gm$).

From the modular description of $\JJps$ (resp. $\JJwp$) it follows that
the stack $\JJps\fatslash \Gm$ (resp. $\JJwp\fatslash \Gm$) is the stackification of the prestack whose sections over a scheme $S$ are given by pairs $(f : \mathcal X \to S, \mathcal L)\in \JJps$ (resp. $\JJwp$)
and whose arrows between two such pairs are given by a cartesian diagram
\begin{equation*}
\xymatrix{
\ar@{}[dr]|{\square}{\mathcal X} \ar[d]_{f} \ar[r]^{h} & {\mathcal X'} \ar[d]^{f'} \\
{S} \ar[r] & {S'}
}
\end{equation*}
together with an isomorphism $\cL\stackrel{\cong}{\to} h^*\cL'\otimes f^*M$, for some $M\in\Pic  (S)$. We refer to \cite[Sec. 4]{melo1} for more details.

From Theorem \ref{geomdesc} it follows that
$\JJps\fatslash  \Gm$ (resp. $\JJwp\fatslash \Gm$) is isomorphic to the quotient stack $[\wt{H}_d/\PGL_{r+1}]$ if $2(2g-2)<d\leq\frac{7}{2}(2g-2)$ (resp. if $\frac{7}{2}(2g-2)< d\leq 4(2g-2)$).
Note that, using Theorem \ref{stackprop}, we get
$$\begin{sis}
& \dim \JJps\fatslash  \Gm= \dim \JJps+1=4g-3,\\
& \dim \JJwp\fatslash  \Gm= \dim \JJwp+1=4g-3.
\end{sis}
$$
Moreover,  the morphisms $\Psi^{\rm ps}:\JJps\to \MMgp$ and $\Psi^{\rm wp}:\JJwp\to \MMgwp$ of Theorem \ref{stackprop} factor as
\begin{equation}\label{E:factrig}
\begin{sis}
& \Psi^{\rm ps}: \JJps \to \JJps\fatslash \Gm \stackrel{\wh{\Psi}^{\rm ps}}{\longrightarrow} \MMgp,\\
& \Psi^{\rm wp}: \JJwp \to \JJwp\fatslash \Gm \stackrel{\wh{\Psi}^{\rm wp}}{\longrightarrow} \MMgwp,
\end{sis}
\end{equation}

\noindent We can now determine when the stacks $\JJps\fatslash \Gm$ and $\JJwp\fatslash \Gm$ are DM-stacks.

\begin{prop}\label{P:DM-stack}
Let $g\geq 3$ and $d$ be any integers.
\begin{enumerate}
\item \label{P:DM-stackA} The following conditions are equivalent:
\begin{enumerate}[(i)]
\item \label{P:DM-stackA1} $\gcd(d+1-g,2g-2)=1$;
\item \label{P:DM-stackA2} For any  $d'\equiv \pm d \mod 2g-2$ with $2(2g-2)<d'\leq \frac{7}{2}(2g-2)$, the GIT quotient $\wt{H}_{d'}/\PGL_{r+1}$ is geometric, i.e., there are no strictly semistable points;
\item \label{P:DM-stackA3} The stack $\JJps\fatslash \Gm$  is a DM-stack;
\item \label{P:DM-stackA4} The stack $\JJps\fatslash \Gm$  is proper;
\item \label{P:DM-stackA5} The morphism $\wh{\Psi}^{\rm ps}: \JJps\fatslash \Gm\to \MMgp$  is representable.
\end{enumerate}
\item \label{P:DM-stackB} The following conditions are equivalent:
\begin{enumerate}[(i)]
\item \label{P:DM-stackB1} $\gcd(d+1-g,2g-2)=1$;
\item \label{P:DM-stackB2} For any  $d'\equiv \pm d \mod 2g-2$ with
$\frac{7}{2}(2g-2)<d'\leq 4(2g-2)$, the GIT quotient $\wt{H}_{d'}/\PGL_{r+1}$ is geometric, i.e., there are no strictly semistable points;
\item \label{P:DM-stackB3} The stack $\JJwp\fatslash \Gm$ is a DM-stack;
\item \label{P:DM-stackB4} The stack  $\JJwp\fatslash \Gm$ is proper;
\item \label{P:DM-stackB5} The morphism $\wh{\Psi}^{\rm wp}: \JJwp\fatslash \Gm\to \MMgwp$ is representable.
\end{enumerate}
\end{enumerate}
\end{prop}
\begin{proof}
Let us first prove part \eqref{P:DM-stackA}.

 \eqref{P:DM-stackA1} $\Longleftrightarrow$ \eqref{P:DM-stackA2}: the GIT quotient $\wt{H}_{d'}/\PGL_{r+1}$ is geometric if and only if every Hilbert polystable point is also Hilbert stable.
 From Corollaries \ref{C:polystable}\eqref{C:polystable2}, \ref{C:stable}\eqref{C:stable2}, \ref{C:polystable3}\eqref{C:polystable3h} and \ref{C:stable3}\eqref{C:stable3h}, this happens if and only if, given a quasi-p-stable curve $X$ of genus $g$ and a line bundle $L$ on $X$ of degree $d'$, $L$ is stably balanced whenever it is strictly balanced. Lemma \ref{L:coprime} says that this happens  precisely when
 $\gcd(d'+1-g,2g-2)=1$. We conclude since $\gcd(d+1-g,2g-2)=\gcd(d'+1-g, 2g-2)$ for any $d\equiv \pm d'\mod 2g-2$.


For the remainder of the proof, using Lemma \ref{tensor}, we can (and will) assume that $2(2g-2)<d\leq \frac{7}{2}(2g-2)$.

Let us now show that the conditions \eqref{P:DM-stackA2}, \eqref{P:DM-stackA3} and \eqref{P:DM-stackA5} are equivalent.
From Theorem \ref{T:auto-grp} and its proof, we get that for any quasi-p-stable curve $X$ of genus $g\geq 3$ and any
properly balanced line bundle $L$ on $X$
we have an exact sequence
\begin{equation}\label{E:aut-seq}
0\to \Gm^{\gamma(\w{X})-1}\to \ov{\Aut(X,L)}
\to \Aut(\ps(X)),
\end{equation}
where $\gamma(\w{X})$ denotes, as usual, the connected components of the non-exceptional subcurve $\w{X}$ of $X$.
Note that $\ov{\Aut(X,L)}$ is the automorphism group of $(X,L)\in (\JJps\fatslash \Gm)(k)$ by the definition of the $\Gm$-rigidification.

We claim that each of the conditions \eqref{P:DM-stackA2}, \eqref{P:DM-stackA3} and \eqref{P:DM-stackA5} is equivalent to the condition
\begin{equation*}
\gamma(\w{X})=1 \text{ for any } [X\subset \P^r]\in \wt{H}_d \text{ or, equivalently, for any } (X,L)\in (\JJps\fatslash \Gm)(k). \tag{*}
\end{equation*}
Indeed:
\begin{itemize}
\item  Condition \eqref{P:DM-stackA2} is equivalent to (*) by Lemma \ref{compa-bal}.
\item Condition \eqref{P:DM-stackA3} implies (*) because the geometric points of a DM-stack have a finite automorphism group scheme. Conversely, if (*) holds then
$\ov{\Aut(X,L)}\subset \Aut(\ps(X))$, which is a finite and reduced group scheme since $\ov{\mathcal M}_g^{\rm ps}$ is a DM-stack if $g\geq 3$.
Therefore, also $\ov{\Aut(X,L)}$ is a finite and reduced group scheme, which implies that $\JJps\fatslash \Gm$ is a DM-stack.
\item  Condition \eqref{P:DM-stackA5} is equivalent to the injectivity of the map $\ov{\Aut(X,L)}\to \Aut(\ps(X))$ for any $(X,L)\in (\JJps\fatslash \Gm)(k)$.
This is equivalent to condition (*) by the exact sequence \eqref{E:aut-seq}.
\end{itemize}

\eqref{P:DM-stackA2} $\Longrightarrow$ \eqref{P:DM-stackA4}: this follows from the well-known fact that the quotient stack associated to a geometric projective GIT quotient is a proper
stack.

\eqref{P:DM-stackA4} $\Longrightarrow$ \eqref{P:DM-stackA2}: the automorphism group schemes of the geometric points of a proper stack are complete group schemes.
From \eqref{E:aut-seq}, this is only possible if $\gamma(\w{X})=1$ for any  $(X,L)\in (\JJps\fatslash \Gm)(k)$, or equivalently if condition (*) is satisfied.
This implies that \eqref{P:DM-stackA2} holds by what proved above.

Let us now prove part \eqref{P:DM-stackB}.

\eqref{P:DM-stackB1} $\Longleftrightarrow$ \eqref{P:DM-stackB2}:  the proof is similar to the proof of the equivalence
\eqref{P:DM-stackA1} $\Longleftrightarrow$ \eqref{P:DM-stackA2}, using Corollaries \ref{C:polystable4}, \ref{C:stable4}, \ref{C:polystable5}\eqref{C:polystable5h}, \ref{C:stable5}\eqref{C:polystable5h} and Lemma \ref{L:coprime}.

For the remainder of the proof, using Lemmas \ref{tensor} and \ref{L:invdeg}, we can (and will) assume that $\frac{7}{2}(2g-2)< d\leq 4(2g-2)$.

Note that for any quasi-wp-stable curve $X$ of genus $g\geq 3$ and any properly balanced line bundle $L$ on $X$ such that $X$ does not have tacnodes nor special elliptic tails with respect to $L$, Theorem \ref{T:auto-grp} and its proof
provides an exact sequence
\begin{equation}\label{E:aut-seq2}
0\to \Gm^{\gamma(\w{X})-1}\to \ov{\Aut(X,L)} \to \Aut(\wps(X)).
\end{equation}
Now, the equivalences \eqref{P:DM-stackB2} $\Longleftrightarrow$ \eqref{P:DM-stackB3} $\Longleftrightarrow$  \eqref{P:DM-stackB4} $\Longleftrightarrow$ \eqref{P:DM-stackB5} are proved as in part \eqref{P:DM-stackA} using
\eqref{E:aut-seq2} instead of \eqref{E:aut-seq}.

\end{proof}

\begin{rmk}
Notice that even if the existence of strictly semistable points in $\wt{H}_d$ for $2(2g-2)<d\leq \frac{7}{2}(2g-2)$
(resp. $\frac{7}{2}(2g-2)< d\leq 4(2g-2)$) prevents $\JJps\fatslash \Gm$ (resp. $\JJwp\fatslash \Gm$) from being separated
when $\gcd(d+1-g,2g-2)\neq 1$, the fact that $\JJps\fatslash \Gm$ and $\JJwp\fatslash \Gm$
  can be realized as a GIT quotients imply that their non-separatedness is, in some sense, quite mild. Indeed, according to the recent work of Alper, Smyth and van der Wick in \cite{ASvdW}, we have that the stacks $\JJps\fatslash \Gm$
  and $\JJwp\fatslash \Gm$ are weakly separated, which roughly means that sections of $\JJps\fatslash \Gm$ (resp.
  of $\JJwp\fatslash \Gm$)  over a punctured disc have  unique completions that are closed in $\JJps\fatslash \Gm$
  (resp. $\JJwp\fatslash \Gm$); see \cite[Definition 2.1]{ASvdW} for the precise statement. Since both
$\JJps\fatslash \Gm$ and $\JJwp\fatslash \Gm$ are also universally closed, then according to loc. cit. we get that they are weakly proper.
A similar argument implies that the morphisms $\wh{\Psi}^{\rm ps}: \JJps\fatslash \Gm\to \MMgp$
  and $\wh{\Psi}^{\rm wp}: \JJwp\fatslash \Gm\to \MMgwp$ are weakly proper.

\end{rmk}

\subsection{Existence of moduli spaces for $\JJps$ and $\JJwp$}\label{S:2comp-var}

The aim of this subsection is to define (adequate or good) moduli spaces for the stacks $\JJps$ and $\JJwp$.

We start by observing that, since from Theorem \ref{geomdesc} above we have that, for $2(2g-2)<d\leq \frac{7}{2}(2g-2)$ (resp. $\frac{7}{2}(2g-2)<d\leq 4(2g-2)$), the stack  $\JJps$ (resp. $\JJwp$)
is isomorphic to the quotient stack $[\wt{H}_d/\GL_{r+1}]$; moreover, there are natural morphisms
\begin{equation}\label{E:mod-space}
\JJps\to \ov{Q}_{d,g}^h:=\wt{H}_d/\!\!/\GL_{r+1} \text{ for any } 2(2g-2)<d\leq \frac{7}{2}(2g-2),
\end{equation}
\begin{equation}\label{E:mod-spacebis}
\JJwp\to \ov{Q}_{d,g}^h:=\wt{H}_d/\!\!/\GL_{r+1} \text{ for any } \frac{7}{2}(2g-2)<d\leq 4(2g-2).
\end{equation}
From the work of Alper (see \cite{alper} and \cite{alper2}), we deduce that the morphism \eqref{E:mod-space} (resp. \eqref{E:mod-spacebis}) realizes $\ov{Q}_{d,g}^h$ as
 the \emph{adequate} moduli space of $\JJps$ (resp. $\JJwp$) and even as its \emph{good} moduli space if  the characteristic of our base field $k$ is equal to zero or bigger than the order
 of the automorphism group of every p-stable (rep. wp-stable) curve of genus $g$ (because in this case, all the stabilizers are linearly reductive subgroups of $\GL_{r+1}$,
 as it follows from Lemma \ref{L:aut-stab} and the proof of Theorem \ref{T:auto-grp}).
We do not recall here the definition of an adequate or a good moduli space (we refer to \cite{alper} and \cite{alper2} for details). We limit ourselves to point out some consequences
of the fact that \eqref{E:mod-space} and \eqref{E:mod-spacebis} is an adequate moduli space, namely:
\begin{itemize}
\item The morphisms \eqref{E:mod-space} and \eqref{E:mod-spacebis} are surjective and universally closed (see \cite[Thm. 5.3.1]{alper2});
\item The morphism  \eqref{E:mod-space} (resp. \eqref{E:mod-spacebis}) is universal for morphisms from $\JJps$ (resp. $\JJwp$) to locally separated algebraic spaces (see \cite[Thm. 7.2.1]{alper2});
\item For any algebraically closed field $k'$ containing $k$, the morphisms \eqref{E:mod-space} and \eqref{E:mod-spacebis} induce bijections
$$\JJps( k')_{/\sim} \stackrel{\cong}{\longrightarrow} \ov{Q}_{d,g}^h(k') \: \text{ if }�\: 2(2g-2)<d\leq \frac{7}{2}(2g-2),$$
$$\JJwp( k')_{/\sim} \stackrel{\cong}{\longrightarrow} \ov{Q}_{d,g}^h(k') \: \text{ if }�\: \frac{7}{2}(2g-2)<d\leq 4(2g-2),$$
where we say that two points $x_1,x_2\in \JJps( k')$ (resp. $\JJwp(k')$) are equivalent, and we write $x_1\sim x_2$, if $\ov{\{x_1\}}\cap \ov{\{x_2\}}\neq \emptyset$ in $\JJps\times_k k'$
(resp. $\JJwp\times_k k'$); see \cite[Thm. 5.3.1]{alper2}.
\end{itemize}

\noindent Moreover, if the GIT-quotient is geometric, which occurs if and only if
$\gcd(d-g+1,2g-2)=1$ by Proposition \ref{P:DM-stack},
then it follows from the work of Keel-Mori (see \cite{KeMo}) that actually $\ov{Q}_{d,g}^h$ is the \emph{coarse} moduli space for $\JJps$ (resp. $\JJwp$), which means that the morphism
\eqref{E:mod-space} (resp. \eqref{E:mod-spacebis}) is universal for morphisms of $\JJps$ (resp. $\JJwp$) into algebraic spaces and moreover that \eqref{E:mod-space}
(resp. \eqref{E:mod-spacebis}) induces bijections
$$\JJps(k')\stackrel{\cong}{\longrightarrow} \ov{Q}_{d,g}^h(k') \: (\text{resp. }�\JJwp(k')\stackrel{\cong}{\longrightarrow} \ov{Q}_{d,g}^h(k'))$$
for any algebraically close field $k'$ containing $k$.



From the above universal properties of the morphism \eqref{E:mod-space}, it follows that if $2(2g-2)<d, d'\leq \frac{7}{2}(2g-1)$ are such that $\JJps\cong \ov{\mathcal J}^{\rm ps}_{d',g}$ then
$\ov{Q}_{d,g}^h\cong \ov{Q}_{d',g}^h$. Similarly, if $\frac{7}{2}(2g-2)<d, d'\leq 4(2g-1)$ are such that $\JJwp\cong \ov{\mathcal J}^{\rm wp}_{d',g}$ then
$\ov{Q}_{d,g}^h\cong \ov{Q}_{d',g}^h$. By using this fact together with Lemmas \ref{tensor} and \ref{L:invdeg}, the following definition is well-posed.

\begin{defi}\label{D:comp-Pic}
Fix $d\in \bZ$ and $g\geq 3$.
\begin{enumerate}[(i)]
\item Set $\Jps:=\ov{Q}_{d',g}^h=\wt{H}_{d'}/\!\!/\GL_{r+1}$ for any $d'\equiv \pm d \mod 2g-2$ such that $2(2g-2)< d'\leq \frac{7}{2} (2g-2)$.
\item Set $\Jwp:=\ov{Q}_{d',g}^h=\wt{H}_{d'}/\!\!/\GL_{r+1}$ for any $d'\equiv \pm d \mod 2g-2$ such that $\frac{7}{2}(2g-2)< d'\leq 4 (2g-2)$.
\end{enumerate}
\end{defi}
Note that for any $d\in \bZ$, we have natural morphisms
\begin{equation}\label{E:mod-space2}
\JJps\to \Jps \: \text{ and }�\: \JJwp\to \Jwp
\end{equation}
which are adequate moduli spaces in general and coarse moduli spaces if (and only if) $\gcd(d-g+1,2g-2)=1$.

The projective varieties $\Jps$ and $\Jwp$ are two compactifications of the universal Jacobian variety $J_{d,g}$.
We collect some of their properties  in the following theorem.

\begin{thm}\label{T:prop-Pic}
Let $g\geq 3$ and $d\in \bZ$.
\begin{enumerate}
\item \label{T:prop-Pic-ps} The variety $\Jps$ satisfies the following properties:
\begin{enumerate}[(i)]
\item \label{T:prop-Pic-ps1}
$\Jps$ is a normal integral projective variety  of dimension $4g-3$ containing $J_{d,g}$ as a dense open subset.
 Moreover, if ${\rm char}(k)=0$, then $\Jps$ has rational singularities, hence it is Cohen-Macauly.
\item \label{T:prop-Pic-ps2} There exists a surjective map $\Phi^{\rm ps}: \Jps\to \Mgp$ whose geometric fibers are equidimensional of dimension $g$.
Moreover,
if ${\rm char}(k)=0$, then $\Phi^{\rm ps}$ is flat over the smooth locus of $\Mgp$.
\item \label{T:prop-Pic-ps3} The $k$-points of $\Jps$ are in natural bijection with isomorphism classes of pairs $(X,L)$ where $X$ is a quasi-p-stable curve of genus $g$ and $L$ is a
strictly balanced line bundle of degree $d$ on $X$.
\end{enumerate}
\item \label{T:prop-Pic-wp} The variety $\Jwp$ satisfies the following properties:
\begin{enumerate}[(i)]
\item \label{T:prop-Pic-wp1}
$\Jwp$ is a normal irreducible projective variety  of dimension $4g-3$ containing $J_{d,g}$ as a dense open subset.
Moreover, if ${\rm char}(k)=0$, then $\Jwp$ has rational singularities, hence it is Cohen-Macauly.
\item \label{T:prop-Pic-wp2} There exists a surjective map $\Phi^{\rm wp}: \Jwp\to \Mgp$ whose geometric fiber over a p-stable curve $X$ has dimension equal to the sum of $g$
and the number of cusps of $X$.
\item \label{T:prop-Pic-wp3} The $k$-points of $\Jwp$ are in natural bijection with isomorphism classes of pairs $(X,L)$ where $X$ is a quasi-wp-stable curve of genus $g$ without tacnodes
and $L$ is a strictly balanced line bundle of degree $d$ on $X$ such that $X$ does not have special elliptic tails with respect to $L$.
\end{enumerate}
\end{enumerate}
\end{thm}
\begin{proof}
Let us first prove \eqref{T:prop-Pic-ps}.  Clearly, the above properties are preserved by the isomorphisms  of  Lemmas \ref{tensor} and \ref{L:invdeg}.
Therefore, we can assume that $2(2g-2)<d\leq \frac{7}{2} (2g-2)$ so that $\Jps=\ov{Q}_{d,g}^h=\wt{H}_d/\GL_{r+1}$ by Definition \ref{D:comp-Pic}.

Part \eqref{T:prop-Pic-ps1}  follows by combining Proposition  \ref{P:sing-GITquot} and Corollary \ref{C:irr-quot}.

Part \eqref{T:prop-Pic-ps2} follows from Theorem \ref{T:comp-Pic}, Propositions \ref{P:irreduci}\eqref{P:irreduci2} and \ref{P:fib3.5}\eqref{P:fib3.5B}.

Part \eqref{T:prop-Pic-ps3} follows from Remark \ref{R:closedpts} together with Corollaries \ref{C:polystable}\eqref{C:polystable2} and \ref{C:polystable3}\eqref{C:polystable3h}.

Let us first prove \eqref{T:prop-Pic-wp}.  Clearly, the above properties are preserved by the isomorphisms  of  Lemmas \ref{tensor} and \ref{L:invdeg}.
Therefore, we can assume that $\frac{7}{2}(2g-2)<d\leq 4 (2g-2)$ so that $\Jwp=\ov{Q}_{d,g}^h=\wt{H}_d/\GL_{r+1}$ by Definition \ref{D:comp-Pic}.

Part \eqref{T:prop-Pic-wp1}  follows by combining Proposition  \ref{P:sing-GITquot} and Corollary \ref{C:irr-quot}.

Part \eqref{T:prop-Pic-wp2} follows from Theorem \ref{T:comp-Pic}, Propositions \ref{P:irreduci}\eqref{P:irreduci3} and \ref{P:fib4}\eqref{P:fib4B}.

Part \eqref{T:prop-Pic-wp3} follows from Remark \ref{R:closedpts} together with Corollaries \ref{C:polystable4} and \ref{C:polystable5}\eqref{C:polystable5h}.

\end{proof}


\subsection{An alternative description of $\JJst$, $\JJps$ and $\JJwp$}\label{S:sheaves}

The aim of this subsection is to provide an alternative description of the stack $\JJst$ (resp. $\JJps$, resp. $\JJwp$) in terms of certain torsion-free rank-$1$ sheaves on stable (resp. p-stable,
resp. wp-stable) curves rather than line bundles on quasi-stable (resp. quasi-p-stable, resp. quasi-wp-stable) curves.
Indeed, the results of this subsection are inspired by the work of Pandharipande in \cite[Sec. 10]{Pan}, where he reinterprets Caporaso's compactified universal Jacobian variety $\Jst$
as the moduli space of  slope-semistable torsion-free, rank-$1$ sheaves of degree $d$ on stable curves of genus $g$, and by the work of  Esteves-Pacini \cite{estpac}, which give
a similar reinterpretation for the  Caporaso's compactified universal Jacobian stack   $\JJst$.

Let us first introduce the sheaves we will be working with.

\begin{defi}\label{S:sheaves-wp}
Let $X$ be a (reduced) curve and let $I$ be a coherent sheaf on $X$.
\begin{enumerate}[(i)]
\item We say that $I$ is \emph{torsion-free}
if the support of $I$ is equal to $X$  and $I$ does not have non-zero subsheaves whose support has dimension zero.
\item We say that $I$ is of \emph{rank-1} if $I$ is invertible on a dense open subset of $X$.
\item The \emph{degree}� of $I$ is equal to $\deg(I):=\chi(I)-\chi(\cO_X)$.
\end{enumerate}
Given a family of curves $f:\cX\to S$, a \emph{relative} �torsion-free rank-$1$ sheaf of degree $d$ is
a coherent sheaf $\cI$ on $\cX$, flat over $S$, such that its restriction $\cI_s$ to every geometric fiber $\cX_s:=f^{-1}(s)$ of $f$
is a torsion-free rank-$1$ sheaf of degree $d$  on $\cX_s$.
\end{defi}

Observe that a torsion-free rank-$1$ sheaf can be non locally-free only at the singular points of $X$. Clearly,
every line bundle on $X$ is a torsion-free, rank-$1$ sheaf on $X$.

For each subcurve $Y$ of $X$, let $I_Y$ be the restriction $I_{|Y}$ of $I$ to $Y$ modulo torsion.
If $I$ is a torsion-free (resp. rank-$1$)
sheaf on $X$, so is $I_Y$ on $Y$.
We let $\deg_Y (I)$ denote the degree of $I_Y$, that is, $\deg_Y(I) := \chi(I_Y )-\chi(\mathcal O_Y)$.

\begin{defi}\label{sheaf-ss-qs}
Let $X$ be a Gorenstein curve of arithmetic genus $g\geq 2$ and $I$ a rank-1 torsion-free sheaf of degree $d$ on $X$.
We say that $I$ is $\omega_X$-\emph{semistable} if,
for every proper subcurve $Z$ of $X$, we have that
\begin{equation}\label{multdeg-sh1}
\deg_Z(I)\geq  d\frac{\deg_Z(\omega_X)}{2g-2}-\frac{k_Z}{2}
\end{equation}
where $k_Z$ denotes, as usual, the length of the scheme-theoretic intersection $Z\cap Z^c$ of $X$.
\end{defi}

\begin{rmk}\label{R:Gor-ss}
Let $X$ be a Gorenstein curve such that $\omega_X$ is ample.
\begin{enumerate}[(i)]
\item \label{R:Gor-ss1} A torsion-free rank-$1$ sheaf $I$ on $X$ is $\omega_X$-semistable in the sense of Definition \ref{sheaf-ss-qs} if and only if it is  slope-semistable with respect to the
polarization $\omega_X$: the proof of this fact for stable curves in
\cite[Sec. 2.9]{CMKV} extends to the general case.

\item \label{R:Gor-ss2} Consider the controvariant functor
\begin{equation}\label{E:fun-J}
\ov{\mathcal J}_{d,X} :\SCH\to \SET
\end{equation}
which associates to a scheme $T$ the set of $T$-flat coherent sheaves on $X\times T$ which are rank-1 torsion-free sheaves  and $\omega_X$-semistable on the geometric fibers $X\times\{t\}$ of the second projection morphism $X\times T\to T$.
The functor $\ov{\cJ}_{d,X}$ is co-represented by a projective variety $\ov{\Jac_d}(X)$, called the \emph{canonical compactified Jacobian} of $X$ in degree $d$;
see \cite[Section 2]{CMKV} for a detailed discussion on the different constructions of compactified Jacobians available in the literature.

\end{enumerate}

\end{rmk}

\begin{rmk}\label{R:nodext}
Assume that $X$ is a Gorenstein curve such that all its singular points lying on more than one irreducible component are nodes (e.g. $X$ is a wp-stable curve).
Then a torsion-free, rank-$1$ sheaf $I$ is $\omega_X$-semistable if and only if, for any subcurve $Y\subseteq X$, we have that
\begin{equation}\label{E:strong-ss}
d\frac{\deg_Y(\omega_X)}{2g-2}-\frac{k_Y}{2}\leq \deg_Y(I) \leq  d\frac{\deg_Y(\omega_X)}{2g-2}+\frac{k_Y}{2}-|Y\cap Y^c\cap \Sing(I)|,
\end{equation}
where $\Sing(I)$ denotes the set of singular points of $X$, and $I$ is not locally free.

Indeed, under the above assumptions on $X$, we have the exact sequence
 \begin{equation}\label{E:seqI}
0\to I_{Y^c}(-[Y\cap Y^c\setminus \Sing(I)])\to I\to I_Y\to 0.
\end{equation}
From \eqref{E:seqI}, by using that $\deg(I):=\chi(I)-\chi(\cO_X)$ by definition (and the analogous formulas for  $I_{Y}$ and $I_{Y^c}$), the additivity of the Euler characteristic and the formula
$\chi(\cO_X)=\chi(\cO_Y)+\chi(\cO_{Y^c})-|Y\cap Y^c|$, we get
\begin{equation}\label{E:add-deg}
\deg(I)=\deg_Y(I)+\deg_{Y^c}(I)+|Y\cap Y^c\cap \Sing(I)|.
\end{equation}
By substituting \eqref{E:add-deg} in \eqref{multdeg-sh1} for $Y^c$, we get the right inequality in \eqref{E:strong-ss}, q.e.d.

\end{rmk}

Torsion-free, rank-$1$ sheaves on a wp-stable curve $X$ can be described via certain line bundles on quasi-wp-stable models of $X$. First of all, starting with a suitable line bundle on a quasi-wp-stable model of $X$, we obtain a rank-$1$
torsion-free sheaf of the same degree on $X$ by taking the push-forward.

\begin{lemma}\label{L:sh-lb}
Let $X$ be a wp-stable curve. For any set $S\subset X_{\rm sing}$, denote by $\wh{X}_S$ the quasi-wp-stable curve obtained from $X$ by bubbling the nodes and cusps of
$X$ belonging to $S$ and  set $\phi^S:\wh{X}_S\to X$ equal to the wp-stable reduction (as in Proposition \ref{P:wp-stab}).
Let $L$ be a line bundle on $\wh{X}_S$ such that for every exceptional component $E$ of $\wh{X}_S$ we have that $\deg_E L\in \{-1, 0, 1\}$. Then
\begin{enumerate}
\item \label{L:sh-lb1i} $R^1 \phi^S_*(L)=0$  and $\phi^{S}_*(L)$ is a torsion-free rank-$1$ sheaf on $X$ such that $\deg \: \phi^S_*(L)=\deg \: L$.
\item \label{L:sh-lb1ii} $\phi^S_*(L)$ is $\omega_X$-semistable if and only if $L$ is balanced.
\end{enumerate}
\end{lemma}
\begin{proof}
In order to simplify the notation, set $Y:=\wh{X}_S$ and $\phi:=\phi^S$.
As in Definition \ref{D:quasi-wp-stable}, write $Y=Y_{\rm exc}\cup \wt{Y}$, where $Y_{\rm exc}$ is given by the union of all the exceptional subcurves of $Y$
and $\w{Y}=Y_{\rm exc}^c$ is the   non-exceptional subcurve of $Y$. Let $D_{\rm exc}:=Y_{\rm exc}\cap \w{Y}$, which we can view as a Cartier divisor on both $Y_{\rm exc}$
and $\wt{Y}$. The restrictions of $L$ to $\wt{Y}$ and to $Y_{\rm exc}$ give rise to the following two exact sequences of sheaves:
\begin{equation}\label{E:resseq1}
\begin{sis}
& 0\to L_{|Y_{\rm exc}}(-D_{\rm exc})\to L \to L_{|\wt{Y}}\to 0,\\
& 0\to L_{|\wt{Y}}(-D_{\rm exc})\to L \to L_{|Y_{\rm exc}}\to 0.
\end{sis}
\end{equation}
By taking the push-forward of \eqref{E:resseq1} via $\phi$, we get the two exact sequences of vector spaces
\begin{equation}\label{E:resseq1bis}
\begin{sis}
& 0\to \phi_*( L_{|Y_{\rm exc}}(-D_{\rm exc}))\to \phi_*(L)\to \phi_*(L_{|\wt{Y}}),\\
& R^1\phi_*( L_{|\wt{Y}}(-D_{\rm exc}))\to R^1\phi_*(L)\to R^1\phi_*(L_{|Y_{\rm exc}})\to 0.\\
\end{sis}
\end{equation}
Since the restriction of $\phi$ to $\w{Y}$ is a finite birational morphism onto $X$, the
sheaf  $\phi_*(L_{|\wt{Y}})$ is torsion-free and of rank $1$ on $X$ and $R^1\phi_*(L_{|\w{Y}}(-D_{\rm exc}))=0$.
On the other hand, the sheaves $\phi_*( L_{|Y_{\rm exc}}(-D_{\rm exc}))$ and $R^1\phi_*( L_{|Y_{\rm exc}})$ are torsion sheaves supported on
$\phi(Y_{\rm exc})$. For every exceptional component $E\cong \P^1$ of $Y$, we have that $\deg_E L_{|Y_{\rm exc}}\geq -1$ and
$\deg_E \left(L_{|Y_{\rm exc}}(-D_{\rm exc})\right)=\deg_E L-\deg_E\OO_E(-D_{\rm exc})\leq 1-2=-1$,
which implies that
$$\begin{sis}
& \phi_*( L_{|Y_{\rm exc}}(-D_{\rm exc}))_{\phi(E)}=H^0(E, L_{|Y_{\rm exc}}(-D_{\rm exc}))=0, \\
& R^1\phi_*( L_{|Y_{\rm exc}})_{\phi(E)}=H^1(E, L_{|Y_{\rm exc}})=0. \\
\end{sis}
$$
Therefore, using \eqref{E:resseq1bis}, we deduce that  $\phi_*(L)\subseteq \phi_*(L_{|\wt{Y}})$ is torsion-free and of rank $1$ on $X$ and $R^1\phi_*(L)=0$.
Moreover, we have that $\chi(L)=\chi(\phi_*(L))-\chi(R^1\phi_*(L))=\chi(\phi_*(L))$, which, together with the fact that $Y$ and $X$ have the same arithmetic genus, implies
that $\deg \,L=\deg\,\phi_*(L)$. Part \eqref{L:sh-lb1i} is now proved.

Let us now prove part \eqref{L:sh-lb1ii}.
Assume first that $L$ is properly balanced. Let $Z$ be a subcurve of $X$ and let $\wh{Z}$ be the subcurve of $Y$ obtained from the subcurve $\phi^{-1}(Z)$ by removing the exceptional
subcurves $E\subset \phi^{-1}(Z)$ such that  $E\cap \phi^{-1}(Z)^c\neq \emptyset$ and $\deg_E L=1$.
From the definition of $\wh{W}$, it is easy to check that
\begin{equation}\label{E:hat-curve}
\begin{sis}
& k_{\wh{Z}}=k_{Z}, \\
& p_a(\wh{Z})=p_a(Z).
\end{sis}
\end{equation}

\noindent
\un{CLAIM}: $\deg_{\wh{Z}}(L)=\deg_Z(\phi_*(L))$.

Indeed, first of all, by the projection formula, we get
\begin{equation}\label{E:res-Z}
\phi_*(L_{|\phi^{-1}(Z)})=\phi_*(L\otimes \cO_{\phi^{-1}(Z)})=\phi_*(L\otimes \phi^*(\cO_Z))=\phi_*(L)\otimes \cO_Z=\phi_*(L)_{|Z}.
\end{equation}
Let $\cE$ be the union of the exceptional subcurves of $Y$ contained in $\phi^{-1}(Z)\cap \phi^{-1}(Z^c)$ and set
$\mathring{Z}$ to be equal to the complement of $\cE$ inside $\phi^{-1}(Z)$.
The morphism $\phi: \mathring{Z}\to Z$ is the bubbling of $Z$ at the singular points $S\setminus (Z\cap Z^c)$.  Therefore, by what proved in \eqref{L:sh-lb1i}, we get that
\begin{equation}\label{E:push-Zo}
\begin{sis}
& \phi_*(L_{|\mathring{Z}}) \: \text{�is a torsion-free, rank-$1$ sheaf on }�Z,\\
& R^1\phi_*(L_{|\mathring{Z}})=0.
\end{sis}
\end{equation}
We have the following two  exact sequences of sheaves on $\phi^{-1}(Z):$
\begin{equation*}
\begin{sis}
& 0\to L_{|\cE}(-\cE\cap \mathring{Z}) \to L_{|\phi^{-1}(Z)}\to L_{|\mathring{Z}}\to 0, \\
& 0\to L_{|\mathring{Z}}(-\cE\cap \mathring{Z}) \to L_{|\phi^{-1}(Z)}\to L_{|\cE}\to 0.
\end{sis}
\end{equation*}
By taking the push-forward via $\phi$ and using \eqref{E:push-Zo} and the analogous vanishing $R^1\phi_*(L_{|\mathring{Z}}(-\cE\cap \mathring{Z})) =0$,
we get the following two exact sequence of sheaves
\begin{equation}\label{E:2seq-oZ-wZ}
\begin{sis}
0\to \phi_*(L_{|\cE}(-\cE\cap \mathring{Z})) \to  \phi_*(L_{|\phi^{-1}(Z)})\to \phi_*(L_{|\mathring{Z}}), \\
 0 \to R^1\phi_*(L_{|\phi^{-1}(Z)}) \to R^1\phi_*(L_{|\cE})\to 0.
\end{sis}
\end{equation}
The sheaves $\phi_*(L_{|\cE}(-\cE\cap \mathring{Z}))$ and $R^1\phi_*(L_{|\cE})$ are
torsion sheaves supported at $\phi(\cE^1)$ and for any $\P^1\cong E\subseteq \cE$ we get
\begin{equation}\label{E:tor-sh}
\begin{sis}
&  \phi_*(L_{|\cE}(-\cE\cap \mathring{Z}))_{\phi(E)}=H^0(E, L_{|E}(-E\cap \mathring{Z}))=
\begin{cases}
k & \text{ if }�\deg_E L=1, \\
0 & \text{�if }�\deg_E L=-1, 0,
\end{cases}\\
& R^1\phi_*(L_{|\cE^1})_{\phi(E)}=H^1(E, L_{|E})=0,\\
\end{sis}�
\end{equation}
since $\deg_E L=-1, 0, 1$ and $E$ intersects $\mathring{Z}$ in (exactly) one point.
The first equation in \eqref{E:2seq-oZ-wZ} together with \eqref{E:push-Zo}   imply that $ \phi_*(L_{|\cE}(-\cE\cap \mathring{Z}))$ is the biggest torsion subsheaf of
$ \phi_*(L_{|\phi^{-1}(Z)})$. Taking into account \eqref{E:hat-curve}, we get that �
\begin{equation}\label{E:quot-tor}
\phi_*(L)_Z=\phi_*(L_{|\phi^{-1}(Z)})/\phi_*(L_{|\cE}(-\cE\cap \mathring{Z})).
\end{equation}
In order to compute the degree of $\phi_*(L)_Z$, notice first of all that from the first equation in \eqref{E:tor-sh} it follows that �$\phi_*(L_{|\cE}(-\cE\cap \mathring{Z}))$
is a torsion sheaf of length equal to the number of exceptional components $E\subseteq \cE$ such
that $\deg_E L=1$, which is also equal to $\deg_{\phi^{-1}(Z)}(L)-\deg_{\wh{Z}}(L)$. Moreover, from the second equations in \eqref{E:tor-sh} and in \eqref{E:2seq-oZ-wZ}  it follows that �
$R^1\phi_*(L_{|\cE})=R^1\phi_*(L_{|\phi^{-1}(Z)})=0$ which implies that  $\chi(L_{|\phi^{-1}(Z)})= \chi(\phi_*(L_{|\phi^{-1}(Z)}))$. Now, we can compute the degree of $\phi_*(L)_Z$ using
\eqref{E:quot-tor}:
\begin{align*}\label{E:deg-pZ}
\deg_Z(\phi_*(L))= &\deg \, \phi_*(L)_Z= \chi(\phi_*(L)_Z)-\chi(\cO_Z)=\\
=&\chi(\phi_*(L_{|\phi^{-1}(Z)}))-\chi(\phi_*(L_{|\cE}(-\cE\cap \mathring{Z})))-\chi(\cO_Z)=\\
=&\chi(L_{|\phi^{-1}(Z)})-\deg_{\phi^{-1}(Z)}(L)+\deg_{\wh{Z}}(L)-\chi(\cO_{\phi^{-1}(Z)})=\\
=&\deg_{\wh{Z}}(L),\
\end{align*}
which concludes the proof of the CLAIM.

\vspace{0.1cm}

By using the above CLAIM and \eqref{E:hat-curve},  the basic inequality \eqref{E:basineq-multideg} for $L$ and the subcurve $\wh{Z}\subseteq Y$ translates into the inequality
\eqref{multdeg-sh1} for $\phi_*(L)$ and the subcurve  $Z\subseteq X$; hence $\phi_*(L)$ is $\omega_X$-semistable.

Assume next that $\phi_*(L)$ is $\omega_X$-semistable.
Let $W$ be a connected subcurve of $Y$. We want to compare the degree of $L$ on $W$ with its degree on the subcurve $\wh{\phi(W)}\subseteq \phi^{-1}(\phi(W))$ defined above.
With this aim,  set
\begin{itemize}
\item $\cE_W^0$ to be the collection of the  exceptional subcurves  contained in $W$ but not in $\wh{\phi(W)}$ (or equivalently, contained in $W$, intersecting $\phi^{-1}(\phi(W))^c$
and having degree $1$ with respect to $L$);
\item $\cE_W^1$ to be the collection of the exceptional subcurves contained in $\wh{\phi(W)}\cap \phi^{-1}(\phi(W)^c)$ but not in $W$.
\item $\cE_W^2$ to be the collection of the  exceptional subcurves   contained in $\wh{\phi(W)}\setminus \phi^{-1}(\phi(W)^c)$ but not in $W$.
\end{itemize}
Moreover, set $e_W^i$ to be equal to the cardinality of $\cE_W^i$ (for $i=0,1,2$).
By construction, we have that
\begin{equation}\label{E:diff-W-hW}
\wh{\phi(W)}\coprod \left[\bigcup_{E\in \cE_W^0} E\right]= W\coprod \left[\bigcup_{E\in \cE_W^1} E \right] \coprod \left[\bigcup_{E\in \cE_W^2} E\right].
\end{equation}
Moreover, the degree of $L$ on the exceptional components belonging to $\cE_W^i$ can assume the following values:
\begin{equation}\label{E:deg-E}
\deg_E L=
\begin{cases}
 1 & \text{�if }�E\in \cE_W^0, \\
 -1, 0 & \text{�if }�E\in \cE_W^1, \\
 -1,0, 1 & \text{�if }�E\in \cE_W^2. \\
\end{cases}
\end{equation}
Using \eqref{E:diff-W-hW} and \eqref{E:deg-E}, together with the above CLAIM, we get that
\begin{equation}\label{E:degW-L}
 \deg_{\phi(W)}(\phi_*(L))= \deg_{\wh{\phi(W)}}L\leq \deg_{\wh{\phi(W)}}L+e_W^0\leq \deg_W L+e_W^2.
\end{equation}
Moreover, by the definition of $\cE_W^i$ together with \eqref{E:hat-curve}, it is easily checked that
\begin{equation}\label{E:W-curve}
\begin{sis}
& k_W=k_{\wh{\phi(W)}}+2e_W^2= k_{\phi(W)}+2e_W^2, \\
& p_a(W)=p_a(\wh{\phi(W)})-e_W^2=p_a(\phi(W))-e_W^2.
\end{sis}
\end{equation}
By applying the inequality \eqref{multdeg-sh1} to the sheaf $\phi_*(L)$ �and the subcurve $\phi(W)$ and using \eqref{E:degW-L} and \eqref{E:W-curve}, we get
$$\begin{aligned}
 \deg_{W}(L)\geq  \deg_{\phi(W)}(\phi_*(L))-e_W^2\geq & d\,\frac{2[p_a(\phi(W))-2]+k_{\phi(W)}}{2g-2}-\frac{k_{\phi(W)}}{2}-e_W^2=\\
& = d\,\frac{2[p_a(W)-2]+k_{W}}{2g-2}-\frac{k_{W}}{2},
\end{aligned}$$
which shows that $L$ satisfies the basic inequality \eqref{E:basineq-multideg} with respect to the subcurve $W\subseteq Y$; hence $L$ is balanced.
\end{proof}

We now want to prove that every rank-$1$ torsion-free sheaf $I$ on a quasi-wp-stable curve $X$ is obtained in a unique way from some line bundle on some quasi-wp-stable model of $X$ via the construction of previous Lemma \ref{L:sh-lb}.

First of all, we show how to construct a quasi-stable model of $X$ starting from the sheaf $I$. This is based on the following general construction.
Given a scheme $Z$ and a quasi-coherent sheaf $\mathcal F$ on $Z$, we define (following \cite[Def. 4.1.1]{EGAII}) the projective bundle on $Z$ associated to $\mathcal F$
\begin{equation}\label{projbundle}
\mathbb P_Z(\mathcal F):=\proj(\Sym(\mathcal F))
\end{equation}
where $\Sym(\mathcal F)=\oplus_{n\geq 0}\Sym^n(\mathcal F)$ is the symmetric algebra associated to $\mathcal F$. It comes with a natural projective morphism $\pi:\mathbb P_Z(\mathcal F)\to Z$, called the structure morphism, and a tautological  invertible sheaf $\mathcal O_{\mathbb P_Z(\mathcal F)}(1)$ such that $\pi^*(\mathcal F)\to \mathcal O_{\mathbb P_Z(\mathcal F)}(1)$ is surjective.


\begin{lemma}\label{L:ProjWP-stab}
Let $X$ be a curve with only nodal or cuspidal singularities and $I$ a torsion-free rank-1 sheaf on $X$. Set $Y:=\mathbb P_X(I)$ and let $\pi:Y\to X$ be the structure map. Then $Y$ is a pre-wp-stable curve, which is obtained from $X$ by bubbling  (in the sense of Corollary \ref{C:quasi-vs-wp}) the nodes and the cusps of $X$ at which  $I$ is not invertible. In particular, when $X$ is wp-stable, then $Y$ is quasi-wp-stable and $\pi:Y\to X$ is the wp-stable reduction.
\end{lemma}

\begin{proof}
Clearly $\pi$ is an isomorphism over the points where $\mathcal I$ is invertible, so it suffices to analyze $\pi$ on a neighborhood of the nodes and of the cusps of $X$ where $\mathcal I$ is not invertible.
The case of nodes was dealt with in the proof of \cite[Prop. 5.2]{estpac}, we include it here for the reader's convenience.
Let $P$ be a nodal point of $X$, where $\mathcal I$ is not invertible and consider the base change of $\pi$ to the spectrum of the completion of $\mathcal O_X$ at $P$, $$\widehat{\mathcal O}_{X,P}\cong \frac{k[[x,y]]}{(y^2-x^2)}.$$
Since $\mathcal I$ fails to be invertible at $P$, then $\widehat{\mathcal I}_P\cong \mathfrak m_P$ (see \cite[p. 75]{Yos}), where $\mathfrak m_P=(x,y)$ is the maximal ideal of $\widehat{\mathcal O}_{X,P}$. We have the following presentation of the maximal ideal $\mathfrak m_P$ as a $\widehat{\mathcal O}_{X,P}$-module
$$\mathfrak m_P\cong \frac{(\widehat{\mathcal O}_{X,P}).e\oplus (\widehat{\mathcal O}_{X,P}). f}{(xe-yf, ye-xf)}.$$
We then get that
$$\Sym(\mathfrak m_P)\cong \frac{k[x,y,e,f]}{(y^2-x^2,xe-yf,ye-xf)}. $$

So, locally analytically, $Y$ is the subscheme of $\mathbb A^2\times \mathbb P^1$ defined by the equations $\{y^2=x^2, xe=yf, ye=xf\}$, where $x$ and $y$ are the coordinates of $\mathbb A^2$ and $e,f$ are the homogeneous coordinates of $\mathbb P^1$. Thus, $Y$ has two components: one is given by the projective line of equation $x=y=0$ and the other by the smooth curve of equations
$\{ye=xf, e^2=f^2\}$. Moreover, the two components intersect transversally at the two points $((0,0),[1:1])$ and $((0,0),[1:-1])$. From this description, it follows that $Y$, locally analytically around $\pi^{-1}(P)$, is isomorphic to
 the total transform of $X$ in  the blow-up of $\mathbb A^2$ at the origin, equipped with the reduced structure, i.e., to the bubbling of $X$ at the node $P$.

Let us now consider the case of cusps.
Let $P$ be a cuspidal point of $X$ where $\mathcal I$ is not invertible and consider the base change of $\pi$ to the spectrum of the completion of $\mathcal O_X$ at $P$, $$\widehat{\mathcal O}_{X,P}\cong \frac{k[[x,y]]}{(y^2-x^3)}.$$
Since $\mathcal I$ fails to be invertible at $P$, then $\widehat{\mathcal I}_P\cong \mathfrak m_P$ (see \cite[p. 39]{Yos}), where $\mathfrak m_P=(x,y)$ is the maximal ideal of $\widehat{\mathcal O}_{X,P}$.
We have the following presentation of the maximal ideal $\mathfrak m_P$ as a $\widehat{\mathcal O}_{X,P}$-module
$$\mathfrak m_P\cong \frac{(\widehat{\mathcal O}_{X,P}).e\oplus (\widehat{\mathcal O}_{X,P}). f}{(yf-x^2e,ye-xf)}.$$
We then get that
$$\Sym(\mathfrak m_P)\cong \frac{k[x,y,e,f]}{(y^2-x^3, yf-x^2e,ye-xf)}. $$

So, locally analytically, $Y$ is the subscheme of $\mathbb A^2\times \mathbb P^1$ defined by the equations $\{y^2=x^3, yf=x^2e, ye=xf\}$, where $x$ and $y$ are the coordinates of $\mathbb A^2$ and $e,f$ are the homogeneous coordinates of $\mathbb P^1$. So, $Y$ has two components: one is given by the projective line of equation $x=y=0$ and the other by the smooth curve of equation
$\{ye=xf, xe^2=f^2\}$. Moreover, the two components intersect  at the point $((0,0),[1:0])$, where the projective line is tangent to the second component.
From this description, it follows that $Y$, locally analytically around $\pi^{-1}(P)$, is isomorphic to
 the total transform of $X$ in  the blow-up of $\mathbb A^2$ at the origin, endowed with the reduced structure, i.e., to the bubbling of $X$ at the cusp $P$.

\end{proof}

The construction of Lemma \ref{L:sh-lb} works well in families and it is compatible with the (relative) projective bundle construction of Lemma \ref{L:ProjWP-stab}.

\begin{prop}\label{P:lb-tf}
Let $f:\mathcal X\to S$ be a family of quasi-wp-stable curves and $\mathcal L$ an invertible sheaf on $\mathcal X$ of relative degree $d$  such that $\deg_E(\mathcal L)=1$ for every exceptional component $E$ of every geometric fiber $\mathcal X_s:=f^{ -1}(s)$ of $f$. Let $\wps(f):\wps(\mathcal X)\to S$ be the wp-stable reduction of $f$ and $\phi:\mathcal X\to \wps(\mathcal X)$ the $S$-morphism between $f$ and $\wps(f)$. Then
\begin{enumerate}
 \item \label{P:lb-tf1} $\mathcal I:=\phi_*(\mathcal L)$ is a relative  torsion-free rank-1 sheaf of degree $d$ on $\wps(\mathcal X)/S$ and its formation commutes with base-change.

 \item \label{P:lb-tf2} The canonical map $e:\phi^*\mathcal I\to \mathcal L$ is surjective.
 \item \label{P:lb-tf3} There is an isomorphism $u:\mathcal X\to \mathbb P_{\mathcal X}(\mathcal I)$ over $\wps(\mathcal X)$ such that $u^*\mathcal O_{\mathbb P_{\mathcal X}(\mathcal I)}(1)\cong \mathcal L$.

 \end{enumerate}
\end{prop}

\begin{proof}
The proof is a generalization  \cite[Proposition 5.4]{estpac}.

 Statement \eqref{P:lb-tf1} follows from Lemma \ref{L:sh-lb}\eqref{L:sh-lb1i} above together with  \cite[Cor. 1.5]{knudsen}, which ensures that $\mathcal I$ is flat and its formation commutes with base-change.

To prove statement \eqref{P:lb-tf2}, we will argue as in \cite[Theorem 3.1(3)]{estpac}. First of all, observe that it is enough to prove the surjectivity of $e$ on each geometric fiber $\cX_s=f^{-1}(s)$ of $f$.
Fix then a geometric fiber $\mathcal X_s$ of $f$ and let $E_1,\dots,E_n$ be the exceptional components of $\mathcal X_s$ that are contracted by $\phi_{|\mathcal X_s}$. Denote by $P_1,\dots, P_n$ the points of $\wps(\mathcal X_s)$ that correspond to the image of $E_1,\dots,E_n$ and by $\widetilde{\mathcal X}_s$ the complement of $E_1\cup\dots\cup E_n$ in $\mathcal X_s$. Since the restriction of $e$ to ${\phi^*\mathcal I}_{|\widetilde{\mathcal X}_s}$
is an isomorphism, it suffices to check that the restriction of $e$ to ${\phi^*\mathcal I}_{|E_i}$, which is a map $e_i:H^0(P_i,\mathcal I_{|P_i})\otimes \mathcal O_{E_i}\to\mathcal L_{|E_i}$,
is surjective for each $i=1,\dots,n$. Notice that the morphism $e_i$ is the composition of the base-change map
$v_i':\phi_*(\mathcal L)_{|P_i}(=\mathcal I_{|P_i})\to \phi_*(\mathcal L_{|E_i})$ with the evaluation map $v_i'':H^0(E_i,\mathcal L_{|E_i})\otimes \mathcal O_{E_i}\to \mathcal L_{|E_i}$, so it suffices to show that each of them is surjective. The surjectivity of $v_i''$ follows from the fact that, since $\deg \mathcal L_{|E_i}=1$, $\mathcal L_{|E_i}$ is globally generated. In order to show that $v_i'$ is surjective, consider the exact sequence
\begin{equation}\label{E:blow-up}
0\to \mathcal L_{|\widetilde{\mathcal X}_s}(-\sum P_{i,j})\to \mathcal L\to \mathcal L_{|\{E_1\cup\dots \cup E_n\}}\to 0
\end{equation}
where $\phi^{-1}(P_i)=\{P_{i,1}, P_{i,2}\}$, for $i=1,\dots,n$ (notice that $P_{i,1}=P_{i,2}$ in the case when $P_i$ is a cusp). By applying $\phi_*$ to \eqref{E:blow-up} and using the fact that $\phi_{|\widetilde{\mathcal X}_s}$ is a finite map, we get that  $\phi_*(\mathcal L)\to \phi_*(\mathcal L_{|\{E_1\cup\dots \cup E_n\}})$ is surjective. So, for all $i=1,\dots,n$, the map $\phi_*(\mathcal L)\to \phi_*(\mathcal L_{|E_i})$ is surjective and thus $v_i'$ is surjective as well.

Let us prove statement \eqref{P:lb-tf3}. The existence of a morphism $u:\mathcal X\to \mathbb P_{\mathcal X}(\mathcal I)$ over $\wps(\mathcal X)$ such that $u^*\mathcal O_{\mathbb P_{\mathcal X}(\mathcal I)}(1)\cong \mathcal L$ follows from statement \eqref{P:lb-tf2}. Now, since both $\mathcal X$ and $\mathbb P_{\mathcal X}(\mathcal I)$ are $S$-flat and the formation of  $\mathcal I$ commutes with base-change, it suffices to prove that $u_s$ is an isomorphism for all geometric fibers $\mathcal X_s$ of $f$. Since $u$ is an $\mathcal X$-morphism, it suffices to show that $u_{|E}$ is an isomorphism for all exceptional component $E\subseteq \mathcal X_s$. But this follows from the fact that $u_{|E}$ is given by the surjection $e_{|E}$, which is the evaluation map of the degree-1 sheaf $\mathcal L_{|E}$.

\end{proof}

Finally, we show that every  torsion-free rank-$1$ sheaf on a wp-stable curve $X$ is the push-forward of a line bundle on a  quasi-wp-stable model of $X$, and that this works also in families.

\begin{prop}\label{P:tf-lb}
Let $f:\mathcal X\to S$ be a family of curves with only nodal or cuspidal singularities  and let $\mathcal I$ be a relative torsion-free, rank-1 sheaf of degree $d$ on $\mathcal X/S$.
Consider the relative projective bundle $g:\mathbb P_{\mathcal X}(\mathcal I)\to S$ associated with $\cI$, which is endowed with a natural projective $S$-morphism $\pi: \P_{\cX}(\cI)\to \cX$. Then
\begin{enumerate}
\item \label{P:tf-lb1} $g:\mathbb P_{\mathcal X}(\mathcal I)\to S$ is a family of pre-wp-stable curves which is obtained from the family $f:\mathcal X\to S$ by fiberwise bubbling  the nodes and the cusps of the geometric fibers of $f$
where the restriction of $\cI$ is not locally free.

In particular, if $f$ if a family of wp-stable curves, then $g$ is a family of quasi-wp-stable curves and $\pi: \P_{\cX}(\cI)\to \cX$ is the wp-stable reduction morphism.

\item \label{P:tf-lb2} The tautological invertible sheaf $\mathcal O_{\mathbb P_{\mathcal X}(\mathcal I)}(1)$ has relative degree $d$ on $\mathbb P_{\mathcal X}(\mathcal I)/S$ and degree $1$ on every exceptional component of the geometric fibers of $g:\mathbb P_{\mathcal X}(\mathcal I)\to S$ contracted by $\pi$.

\item \label{P:tf-lb3} We have an isomorphism of sheaves $\mathcal I\cong \pi_*(\mathcal O_{\mathbb P_{\mathcal X}(\mathcal I)}(1)) $.

\end{enumerate}

\end{prop}

\begin{proof}
The proof is a generalization of \cite[Prop. 5.5]{estpac}.

We start by observing that Lemma \ref{L:estklei} below can be applied to $\mathcal I$ since the associated points of $\mathcal X$ are generic points of certain fibers of $f:\cX\to S$, where $\mathcal I$ is invertible, and since $\mathcal I$ is everywhere locally generated by two sections because the fibers of $f$ have only nodal and cuspidal singularities. So, we get that $\mathbb P_{\mathcal X}(\mathcal I)$ is $S$-flat and also that $\mathcal I=\pi_*(\mathcal L)$, which proves statement
\eqref{P:tf-lb3}.

Moreover, since the formation of $\mathbb P_{\mathcal X}(\mathcal I)$ commutes with base-change, statement \eqref{P:tf-lb1} follows, by restricting to the geometric fibers of $g$,  from Lemma \ref{L:ProjWP-stab}.

Since $\mathcal O_{\mathbb P_{\mathcal X}(\mathcal I)}(1)$ is the tautological line bundle, its restriction to each exceptional component contracted by $\pi$ is also tautological, hence it has degree one.
Moreover, the fact that $\mathcal O_{\mathbb P_{\mathcal X}(\mathcal I)}(1)$ has relative degree $d$ from  Proposition \ref{P:lb-tf}\eqref{P:lb-tf1}; this concludes the proof of Statement \eqref{P:tf-lb2}.

\end{proof}


\begin{lemma}\label{L:estklei}
Let $p:X\to S$ be a flat morphism and $F$ an $S$-flat coherent sheaf on $X$ which is invertible at associated points of $X$ and everywhere locally generated by two sections. Let $W:=\mathbb P_X(F)\to S$ the relative projective bundle
associated to $F$ and let $w:W\to X$ be the natural projective $S$-morphism. Then $W$ is $S$-flat and the natural graded $\mathcal O_X$-algebra homomorphism
$$\Sym(F)\to \bigoplus_{n\geq 0}w_*\mathcal O_W(n)$$
is an isomorphism.
\end{lemma}
\begin{proof}
See \cite[Lemma 3.1]{estklei} and \cite[Lemma 5.3]{estpac}.
\end{proof}

We will need one last definition, namely the concept of special elliptic tail with respect to a torsion-free, rank-$1$ sheaf, generalizing Definition \ref{D:elltails-types}.

\begin{defi}\label{D:specell-sh}
Let $X$ be a quasi-wp-stable curve and let $I$ be a torsion-free, rank-$1$ sheaf on $X$.
Let $F$ be an irreducible elliptic tail of $X$  and let $p$ denote the intersection point between $F$ and the complementary subcurve $F^c$. Denote, as usual,
by $I_{F}$ the restriction of $I$ to $F$ modulo the torsion subsheaf.
We say that $F$ is \emph{special} with respect to $I$ if $I_F$ is a line bundle and $I_F=\OO_F(d_F\cdot p)$, where $d_F:=\deg(I_F)$.
Otherwise, we say that $F$ is \emph{non-special} with respect to $I$.
\end{defi}

We can now introduce three new categories fibered in groupoids over the category of schemes parametrizing certain torsion-free, rank-$1$ sheaves on stable
(resp. p-stable, resp. wp-stable) curves.

\begin{defi}\label{D:stack-sh}
Fix two integers $d$ and $g\geq 2$.
\begin{enumerate}[(i)]
\item  \label{D:stack-sh1} Let $\SSst$ be the category fibered in groupoids over the category of $k$-schemes whose sections over a $k$-scheme $S$ are pairs $(f:\mathcal X\to S,\mathcal I)$, where
$f$ is a family of stable curves of genus $g$ and $\mathcal I$ is a coherent sheaf on $\cX$, flat over $S$, such that its restriction $\cX_s$ to every geometric fiber $\cX_s:=f^{-1}(s)$ of
$f$ is a torsion-free, rank-$1$, $\omega_{\cX_s}$-semistable sheaf of degree $d$.
 Arrows between such pairs are given by cartesian diagrams
\begin{equation*}
\xymatrix{
\ar@{}[dr]|{\square}{\mathcal X} \ar[d]_{f} \ar[r]^{h} & {\mathcal X'} \ar[d]^{f'} \\
{S} \ar[r] & {S'}
}
\end{equation*}
together with a specified isomorphism $\mathcal I\stackrel{\cong}{\longrightarrow} h^*\mathcal I'$ of coherent sheaves over $\cX$.

\item \label{D:stack-sh2}  Let $\SSps$ be the category fibered in groupoids over the category of $k$-schemes whose sections over a $k$-scheme $S$ are pairs $(f:\mathcal X\to S,\mathcal I)$, where
$f$ is a family of  p-stable curves of genus $g$ and $\mathcal I$ is a coherent sheaf on $\cX$, flat over $S$, such that its restriction $\cI_s$ to every geometric fiber $\cX_s:=f^{-1}(s)$ of $f$
is a torsion-free, rank-$1$, $\omega_{\cX_s}$-semistable sheaf of degree $d$. Arrows between such pairs are given as in \eqref{D:stack-sh1} above.

\item \label{D:stack-sh3} Let $\SSwp$ be the category fibered in groupoids over the category of $k$-schemes whose sections over a $k$-scheme $S$ are pairs $(f:\mathcal X\to S,\mathcal I)$, where
$f$ is a family of wp-stable curves of genus $g$ and $\mathcal I$ is a coherent sheaf on $\mathcal X$, flat over $S$, such that its restriction $\cI_s$ to every geometric fiber
$\cX_s:=f^{-1}(\cX_s)$ of $f$ is torsion-free, rank-$1$, $\omega_{\cX_s}$-semistable with the property that $\cI_s$ is locally free at the cusps of $\cX_s$ and each elliptic tail of $\cX_s$
is non-special with respect to  $\cI_s$. Arrows between such pairs are given as in \eqref{D:stack-sh1} above.
\end{enumerate}
\end{defi}

We can now prove that the stacks $\SSst$, $\SSps$ and $\SSwp$  are isomorphic to, respectively, the stacks $\JJst$, $\JJps$ and $\JJwp$; thus, they provide an alternative modular description of them.

\begin{thm}\label{T:new-descr}
Fix two integers $d$ and $g\geq 2$.
There are isomorphisms of categories fibered in groupoids
\begin{equation*}
\begin{sis}
& \Phi:\JJst \stackrel{\cong}{\longrightarrow} \SSst \\
& \Phi^{\rm ps}:\JJps \stackrel{\cong}{\longrightarrow} \SSps \\
& \Phi^{\rm wp}:\JJwp \stackrel{\cong}{\longrightarrow} \SSwp \\
\end{sis}
\end{equation*}
which are obtained by sending $(f:\cX\to S, \cL)\in \JJst(S)$ (resp. $\JJps(S)$, resp. $\JJwp(S)$) into $(\wps(f): \wps(\cX)\to S, \phi_*(\cL))\in \SSst(S)$ (resp. $\SSps(S)$, resp. $\SSwp(S)$),
where $\phi:\cX\to \wps(\cX)$ is the morphism between the family $f:\cX\to S$ and its wp-stable reduction $\wps(f):\wps(\cX)\to S$ (as in Proposition \ref{P:wp-stab}).

The inverses of the above isomorphisms are given by
sending $(f:\mathcal Y\to S, \mathcal I)\in \SSst(S)$ (resp. $\SSps(S)$, resp. $\SSwp(S)$) to $(\mathbb P_{\mathcal Y}(\mathcal I)\to S, \mathcal O_{\mathbb P_{\mathcal Y}(\mathcal I)}(1))\in \JJst(S)$ (resp. $\JJps(S)$, resp. $\JJwp(S)$).
\end{thm}
\begin{proof}
First of all, the fact that the above defined fibered functors
$\Phi^{\star}: \JJst^{\star} \longrightarrow\SSst^{\star}$ (for $\star=\emptyset, {\rm ps}, {\rm wp}$)
are well-defined follows from Proposition \ref{P:lb-tf}\eqref{P:lb-tf1}
together with Lemma \ref{L:sh-lb}\eqref{L:sh-lb1ii}; moreover the extra-properties of the geometric fibers of $\phi_*(\cL)$ for elements of $\SSwp(S)$
as in Definition \ref{D:stack-sh}\eqref{D:stack-sh3} follow from the analogous extra-properties of the geometric fibers of $\cL$ for elements of $\JJwp(S)$
as in  Definition \ref{D:2comp}\eqref{D:2comp2}.

Consider now the fibered functor $\Psi: \SSst^{\star} \longrightarrow\JJst^{\star}$ (for $\star=\emptyset, {\rm ps}, {\rm wp}$) obtained by sending $(f:\mathcal Y\to S, \mathcal I)\in \SSst^{\star}(S)$
into $(\mathbb P_{\mathcal Y}(\mathcal I)\to S, \mathcal O_{\mathbb P_{\mathcal Y}(\mathcal I)}(1))\in \JJst^{\star}(S)$. The fact that $\Psi^{\star}$ is well-defined follows from Proposition \ref{P:tf-lb} together with
Lemma \ref{L:sh-lb}\eqref{L:sh-lb1ii}.



Finally, from Proposition \ref{P:tf-lb}\eqref{P:tf-lb3} we get that $\Phi^\star \circ \Psi^\star\cong \id_{\SSst{^\star}}$, whereas Proposition \ref{P:lb-tf}\eqref{P:lb-tf3} gives that $\Psi^\star \circ \Phi^\star\cong \id_{\JJst{^\star}}$. Therefore, we  conclude that $\Phi^\star$  is an isomorphism with inverse equal to $\Psi^\star$  (for $\star=\emptyset, {\rm ps}, {\rm wp}$), q.e.d.
\end{proof}

Using Theorem \ref{T:new-descr}, we can now prove Lemma \ref{L:invdeg}.

\begin{proof}[Proof of Lemma \ref{L:invdeg}]
For every scheme $S$ (which we can assume to be of finite type over $k$) and for every $\star=\emptyset, {\rm ps}, {\rm wp}$, consider the natural transformation of functors
\begin{equation}\label{E:invS}
\begin{aligned}
\Lambda_S^{\star}: \SSst^{\star}(S) & \longrightarrow \ov{\mathcal S}^{\star}_{-d,g}(S) \\
(f:\cX\to S, \cI) & \mapsto (f:\cX\to S, \cI^{\vee}:={\mcl RHom}(\cI, {\mathbb D}_{\cX}\otimes \omega_f ^{-1})),
\end{aligned}
\end{equation}
where $ {\mathbb D}_{\cX}$ is the dualizing complex of $\cX$ and $\omega_f$ is the relative dualizing sheaf of $f$
(which is a line bundle because the fibers of $f$ are Gorenstein curves).

Let us check that $\Lambda_S^{\star}$ is well-defined and an equivalence of groupoids.
The coherent sheaf $\cI$ is flat over $S$ by assumption
and its fibers are Cohen-Macaulay sheaves (because a torsion-free sheaf on a curve is automatically Cohen-Macaulay).
Therefore, standard results for families of Cohen-Macaulay sheaves (see e.g. \cite[Lemma 2.1]{arin}) show that the coherent sheaf $\cI^{\vee}$ is  flat over $S$ and that
$$(\cI^{\vee})_{|f^{-1}(s)}={\mcl RHom}(\cI_{|f^{-1}(s)}, ({\mathbb D}_{\cX}\otimes \omega_f ^{-1})_{|f^{-1}(s)})=
{\mcl Hom}(\cI_{|f^{-1}(s)}, \cO_{f^{-1}(s)})=(\cI_{|f^{-1}(s)})^{\vee}.
$$
Using this and Lemma \ref{L:dual-I} below, we get that $\Lambda_S^{\star}$ is well-defined.
Similarly, we have that $(\cI^{\vee})^{\vee}=\cI$ which implies that $\Lambda_S^{\star}\circ \Lambda_S^{\star}={\rm id}$, hence $\Lambda_S^{\star}$ is an an equivalence of groupoids.

Therefore, we get that
$$\SSst^{\star}  \cong \ov{\mathcal S}^{\star}_{-d,g},$$
which, together with Theorem \ref{T:new-descr}, concludes the proof of the Lemma.

\end{proof}

\begin{lemma}\label{L:dual-I}
Let $X$ be a Gorenstein curve and let $I$ be a rank-$1$, torsion-free sheaf on $X$ of degree $d$. Then the following hold:
\begin{enumerate}[(i)]
\item \label{L:dual-I1} The dual $I^{\vee}:={\mcl Hom}(I,\OO_X)$ of $I$ is  a rank-$1$, torsion-free sheaf on $X$ of degree $-d$.
\item \label{L:dual-I2} $I$ is reflexive, i.e. $(I^{\vee})^{\vee}=I$.
\item \label{L:dual-I3} If moreover $X$ is wp-stable, then $I$ is $\omega_X$-semistable if and only if $I^{\vee}$ is $\omega_X$-semistable.
\end{enumerate}
\end{lemma}
\begin{proof}
Part \eqref{L:dual-I2} follows from \cite[Prop. 1.6]{Har2}.

Let us now prove \eqref{L:dual-I1}. Rank-$1$, torsion-free (or equivalently reflexive by \eqref{L:dual-I2}) sheaves of degree $d$ on $X$ are in bijection with generalized divisors of degree $d$
on $X$ up to linear equivalence, see \cite[Prop. 2.8]{Har2}. Taking the dual of such a sheaf correspond to taking the dual of the corresponding linear equivalence class of generalized divisors by
\cite[Prop. 2.8(d)]{Har2}. Therefore, $I^{\vee}$ is a rank-$1$, reflexive (hence torsion-free) sheaf of degree $-d$ on $X$.

Part \eqref{L:dual-I3}: by \eqref{L:dual-I2}, it is enough to prove the only if part. So assume that $I$ is $\omega_X$-semistable and let us show that $I^{\vee}$ is $\omega_X$-semistable.
From Proposition \ref{P:tf-lb}, it follows that, setting $\wh{X}:=\wt{X}_{\Sing(I)}$ and $\phi:=\phi^{\Sing(I)}$, there exists a properly balanced bundle $L$ on $\wh{X}$ such that
$\phi_*(L)=I$.
The line bundle $L^{-1}$ is also balanced (although not necessarily properly balanced!) since, given a proper subcurve $Z\subseteq X$, we have that
\begin{equation*}
\left|\un d_Z-\frac{d}{2g-2}\deg_Z \omega_X\right|\leq \frac{k_Z}{2} \Leftrightarrow \left|-\un d_Z-\frac{-d}{2g-2}\deg_Z \omega_X\right|\leq \frac{k_Z}{2}.
\end{equation*}
Moreover, by the definitions of ${\mcl Hom}(-, -)$ and $\phi_*$ and using that $\phi_*(L)=I$ and $\phi_*(\cO_{\wh{X}})=\cO_X$, we have that
\begin{equation*}
\phi_* (L^{-1})=\phi_*{\mcl Hom}(L,\cO_{\wh{X}})={\mcl Hom}(\phi_*(L),\phi_*(\cO_{\wh{X}}))={\mcl Hom}(I,\cO_X)=I^{\vee}.
\end{equation*}
We conclude that $I^{\vee}$ is $\omega_X$-semistable by Lemma \ref{L:sh-lb}\eqref{L:sh-lb1ii}.
\end{proof}

From Theorem \ref{T:new-descr}, using Fact \ref{F:Old-Comp-Jac}, Theorem \ref{stackprop} and the arguments in \S\ref{S:2comp-var}, we deduce the following corollary.

\begin{coro}\label{C:alg-stacks}
Let $d\in \Z$ and $g\geq 2$ (resp. $g\geq 3$, resp. $g\geq 3$).
\begin{enumerate}[(i)]
\item \label{C:alg-stacks1} $\SSst$ (resp. $\SSps$, resp. $\SSwp$)  is a smooth and irreducible universally closed Artin stack of finite type over $k$ and of dimension $4g-4$, endowed with a
universally closed morphism $\Psi^{\rm s}$ (resp. $\Psi^{\rm ps}$, resp. $\Psi^{\rm wp}$) onto the moduli stack of stable (resp. p-stable, resp. wp-stable) curves $\MMg$ (resp. $\MMgp$, resp.
$\MMgwp$).
\item \label{C:alg-stacks2} The projective variety $\Jst$ (resp. $\Jps$, resp. $\Jwp$)  is an adequate moduli space (and even a good moduli space if ${\rm char}(k)=0$) for
$\SSst$ (resp. $\SSps$, resp. $\SSwp$).
\end{enumerate}
\end{coro}






Another corollary of Theorem \ref{T:new-descr} is  a modular description of the fibers of the map $\Phi^{\rm ps}: \Jps\to \ov M_g^{\rm ps}$ in terms of
canonical compactified Jacobians (see Remark \ref{R:Gor-ss}\eqref{R:Gor-ss2}),
extending the description of the fibers of the map $\Phi^{\rm s}:\Jst\to \Mg$ given in Fact \ref{F:Old-Comp-Jac}\eqref{Old4}.

\begin{coro}\label{C:fiber-ps}
Let $g\geq 3$ and $d\in \Z$. Assume that ${\rm char}(k)=0$.
Then the fiber ${(\Phi^{\rm ps})}^{-1}(X)$ of  the morphism $\Phi^{\rm ps}: \Jps\to \ov M_g^{\rm ps}$ over $X\in \Mgp$
is isomorphic to  $\ov{\Jac_d}(X)/\Aut(X)$.
\end{coro}
\begin{proof}
The proof is the same as the proof of the analogous result for the morphism $\Phi^{\rm s}: \Jst\to \Mg$ (see Fact \ref{F:Old-Comp-Jac}\eqref{Old4})
using that $\Jst$ is a good moduli space for $\SSst$ by Corollary
\ref{C:alg-stacks}\eqref{C:alg-stacks2}; see e.g. \cite[Proof of Fact 2.6(3)]{CMKV} for more details.
\end{proof}

It would be interesting to know if the above Corollary \ref{C:fiber-ps} is true regardless of the characteristic of the base field $k$.
This would follow if one could prove that  $\Jps$ is a good moduli scheme for the stack $\JJps$ (or equivalently for the stack $\SSps$) also in positive characteristic.

\section{Appendix: Positivity properties of balanced line bundles}\label{S:appendix}

The aim of this Appendix is to investigate positivity properties of balanced line bundles of sufficiently high degree on (reduced) Gorenstein curves. The results obtained here are applied in this manuscript only for quasi-wp-stable curves. However we decided to present these results in the Gorenstein case for two reasons: firstly, we think that these results are interesting in their own (in particular we will generalize several results of \cite{Capseries} and \cite[Sec. 5]{melo} in the case of nodal curves); secondly,
our proof extends without any modifications to the Gorenstein case.

So, throughout this Appendix, we let $X$ be a connected (reduced) Gorenstein curve
of genus $g\geq 2$
and $L$ be a balanced line bundle on $X$ of degree $d$, i.e., a line bundle $L$ of degree $d$ satisfying the basic
inequality
\begin{equation}\label{bas-ineq-bis}
\left|\deg_Z L -\frac{d}{2g-2}\deg_Z \omega_X\right|\leq \frac{k_Z}{2},
\end{equation}
for any (connected) subcurve $Z\subseteq X$, where $k_Z$ is as usual the length of the scheme-theoretic intersection of
$Z$ with the complementary subcurve $Z^c:=\ov{X\setminus Z}$ and $\omega_X$ is the dualizing invertible (since $X$ is Gorenstein) sheaf.

The following definitions are natural generalizations to the Gorenstein case of the familiar concepts for nodal curves.

\begin{defi}\label{D:G-sing}
Let $X$ be a connected  Gorenstein curve of genus $g\geq 2$. We say that
\begin{enumerate}[(i)]
\item $X$ is \emph{G-semistable}\footnote{The letter G stands for Gorenstein to suggest that these notions are the natural generalizations of the usual notions from nodal to Gorenstein curves.}
if $\omega_X$ is nef, i.e. $\deg_Z\omega_X\geq 0$ for any (connected) subcurve $Z$.
The connected subcurves $Z$ such that $\deg_Z \omega_X=0$ are called \emph{exceptional} subcurves.
\item $X$ is \emph{G-quasistable} if $X$ is G-semistable and every exceptional subcurve $Z$ is isomorphic to $\P^1$.
\item $X$ is \emph{G-stable} if $\omega_X$ is ample, i.e.  $\deg_Z\omega_X> 0$
for any (connected) subcurve $Z$.
\end{enumerate}
\end{defi}

Note that G-semistable (resp. G-stable) curves are called semi-canonically positive (resp. canonically positive) in \cite[Def. 0.1]{Cat}.
The terminology G-stable was introduced in \cite[Def. 2.2]{CCE}. We refer to \cite[Sec. 1]{Cat} for more details on G-stable and G-semistable curves.

Observe also that quasi-wp-stable, quasi-p-stable and quasi-stable curves are G-quasistable; similarly wp-stable, p-stable and stable curves are G-stable.

\begin{rmk}\label{rmk:adj-form}
Given a subcurve \footnote{Note that a subcurve of Gorenstein curve need not to be Gorenstein. For example, the curve $X$ given by the union of $4$ generic lines through the origin in
$\mathbb A_k^3$ is Gorenstein, but each subcurve of $X$ given by the union of three lines is not Gorenstein.} $i:Z\subseteq X$
with complementary subcurve $Z^c$, consider the exact sequence
$$0\to \omega_X\otimes I_{Z^c}\to \omega_X\to (\omega_X)_{|Z^c}\to 0,$$
where $I_{Z^c}$ is the ideal sheaf of $Z^c$ in $X$.
By the definition of the dualizing sheaf $\omega_Z$ of $Z$, it is easy to check that $i_*(\omega_Z)=\omega_X\otimes I_{Z^c}$ which, by restricting to $Z$, gives
$$\omega_Z=(\omega_X\otimes I_{Z^c})_{|Z}=(\omega_X)_{|Z}\otimes I_{Z\cap Z^c/Z},$$
where $I_{Z\cap Z^c/Z}$ is the ideal sheaf of the scheme theoretic intersection $Z\cap Z^c$ seen as a subscheme of $Z$.
By taking degrees, we get the adjunction formula
\begin{equation}\label{E:adj-form}
\deg_Z\omega_X=2g_Z-2+k_Z.
\end{equation}
Using the above adjunction formula and  recalling that $g_Z\geq 0$ if $Z$ is connected, it is easy
to see that:
\begin{enumerate}[(i)]
\item $X$ is $G$-semistable if and only if for any connected subcurve $Z$ such that $g_Z=0$ we have that $k_Z\geq 2$.
\item $X$ is $G$-stable if and only if  for any connected subcurve $Z$ such that $g_Z=0$ we have that $k_Z\geq 3$.
\end{enumerate}
\end{rmk}

Our first result says when a balanced line bundle of sufficiently high degree is nef or ample.

\begin{prop}\label{nefample}
Let $X$ be a connected  Gorenstein curve of genus $g\geq 2$ and let $L$ be a balanced line bundle on $X$ of degree $d$.
The following is true:
\begin{enumerate}[(i)]
\item \label{nef} If $d>\frac{1}{2}(2g-2)=g-1$ then $L$ is nef if and only if $X$ is G-semistable and for every exceptional subcurve $Z$ it holds that $\deg_ZL=0$ or $1$.
\item \label{ample} If $d>\frac{3}{2}(2g-2)=3(g-1)$ then $L$ is ample  if and only if $X$ is G-quasistable and for every exceptional subcurve $Z$ it holds that $\deg_Z L=1$.

\end{enumerate}
\end{prop}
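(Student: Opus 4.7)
The plan is to reduce both parts to componentwise checks via a Nakai–Moishezon type observation (on a curve, nef and ample are equivalent to the corresponding positivity on every irreducible component), and then analyze $\deg_C L$ on each irreducible $C$ using the basic inequality together with adjunction $\deg_C \omega_X = 2g_C - 2 + k_C$.

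For the $(\Leftarrow)$ direction of part (i), I will verify $\deg_C L \geq 0$ on every irreducible $C$. Exceptional components are handled by hypothesis. For non-exceptional $C$ one has $\deg_C \omega_X \geq 1$; if moreover $g_C \geq 1$, then $\deg_C \omega_X \geq k_C$, and the lower half of the basic inequality combined with $d/(2g-2) > 1/2$ forces $\deg_C L > 0$. The remaining case $g_C = 0$ forces $k_C \geq 3$ by $G$-semistability, and the basic inequality yields the lower bound $\frac{d(k_C-2)}{2g-2} - \frac{k_C}{2}$, which is strictly greater than $-1$ whenever $d > g-1$; integrality then forces $\deg_C L \geq 0$. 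For the $(\Rightarrow)$ direction, nefness of $L$ in particular gives $\deg_Z L \geq 0$ on every connected subcurve. If some connected $Z$ had $\deg_Z \omega_X < 0$, adjunction would force $g_Z = 0$ and $k_Z \leq 1$: the case $k_Z = 0$ is excluded since $g \geq 2$, and for $k_Z = 1$ the upper half of the basic inequality with $d > g-1$ yields $\deg_Z L \leq -\frac{d}{2g-2} + \frac{1}{2} < 0$, contradicting nefness. Hence $X$ is $G$-semistable; and for exceptional $Z$ (so $g_Z = 0$, $k_Z = 2$) the basic inequality bounds $|\deg_Z L| \leq 1$, and nefness selects $\deg_Z L \in \{0,1\}$.

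For part (ii) the same component-by-component analysis works in the $(\Leftarrow)$ direction, but the stronger hypothesis $d > 3(g-1)$ makes the lower bound $\frac{d}{2g-2}\deg_C\omega_X - \frac{k_C}{2}$ strictly positive even in the worst case $g_C = 0$, $k_C = 3$ (where it becomes $\frac{d}{2g-2} - \frac{3}{2} > 0$), so $\deg_C L \geq 1$ on every non-exceptional $C$; exceptional components contribute $1 > 0$ by hypothesis, and Nakai–Moishezon yields ampleness.

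For the $(\Rightarrow)$ direction of part (ii), ampleness implies nefness, so part (i) already yields $G$-semistability and $\deg_Z L \in \{0,1\}$ on each exceptional subcurve. The main obstacle is upgrading $G$-semistability to $G$-quasistability, i.e.\ showing every connected exceptional subcurve $Z$ is irreducible (hence $\cong \P^1$, since an integral projective curve of arithmetic genus zero is $\P^1$). The argument will be edge-counting on the dual graph of $Z$: a connected reducible exceptional $Z$ with irreducible components $C_1,\dots,C_r$ ($r \geq 2$) is necessarily a tree of rational curves by $g_Z = 0$, and $\sum_i k_{C_i} = 2(r-1) + k_Z = 2r$; combined with $k_{C_i} \geq 2$ from $G$-semistability this forces $k_{C_i} = 2$ for every $i$, so each $C_i$ is itself an exceptional irreducible component. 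Ampleness would then require $\deg_{C_i} L \geq 1$, giving $\deg_Z L \geq r \geq 2$, contradicting $\deg_Z L \leq 1$. Thus each exceptional subcurve is $\cong \P^1$, and ampleness upgrades $\deg_Z L \in \{0,1\}$ to $\deg_Z L = 1$. The main technical hurdle is exactly this irreducibility step, though the edge-counting makes it tractable; the rest is careful bookkeeping with the two halves of the basic inequality.
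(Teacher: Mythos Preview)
Your proof is correct and takes essentially the same approach as the paper: both control $\deg_Z L$ via the basic inequality and adjunction, and both dispatch the key irreducibility step in part~(ii) by showing that every irreducible component of a connected exceptional subcurve is itself exceptional, then summing degrees to contradict ampleness. Your reduction to irreducible components via Nakai--Moishezon (rather than working with all connected subcurves as the paper does) is a mild reorganization. One small streamlining: the edge-counting identity $\sum_i k_{C_i} = 2r$ in your irreducibility argument is most cleanly obtained as $\sum_i \deg_{C_i}\omega_X = \deg_Z\omega_X = 0$ with each summand $\geq 0$ by G-semistability, which yields $\deg_{C_i}\omega_X = 0$ (hence $g_{C_i}=0$, $k_{C_i}=2$) directly---this bypasses the ``tree of rational curves'' phrasing, which is a bit loose for general Gorenstein singularities.
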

\begin{proof}
Let us first prove part \eqref{nef}. Let $Z\subseteq X$ be a connected subcurve of $X$. If $Z=X$ then $\deg_Z L=\deg L=d>(g-1)>0$ by assumption. So we can assume that $Z\subsetneq X$. Notice that, since $X$ is connected, this implies that $k_Z\geq 1$.

If $\deg_Z \omega_X=2g_Z-2+k_Z>0$ then, using the basic inequality  \eqref{bas-ineq-bis} and the assumption
$d>\frac{1}{2}(2g-2)$, we get
$$\deg_ZL\geq d\cdot \frac{2g_Z-2+k_Z}{2g-2}-\frac{k_Z}{2}> \frac{2g_Z-2+k_Z}{2}-\frac{k_Z}{2}\geq
\begin{sis}
0 & \text{\: if } g_Z\geq 1, \\
-1 & \text{\: if } g_Z=0,
 \end{sis}$$
hence $\deg_ZL\geq 0$. If $g_Z=0$ and $k_Z=1$ then,
using the basic inequality and the assumption on $d$, we get that
\begin{equation*}\label{Gss}
\deg_Z L\leq \frac{d}{2g-2}(-1)+\frac{1}{2}<0.
\end{equation*}
Therefore, if $L$ is nef then $X$ must be G-semistable. Finally, if $Z$ is any exceptional subcurve of $X$, then the basic inequality gives
\begin{equation}\label{Gexc}
\left|\deg_ZL\right|\leq 1,
\end{equation}
from which we deduce that if $L$ is nef then $\deg_Z L=0$ or $1$. Conversely, it is also clear that if $X$ is G-semistable and
$\deg_Z L=0$ or $1$ for every exceptional subcurve $Z$ of $X$ then $L$ is nef.

Let us now prove part \eqref{ample}. Let $Z\subseteq X$ be a connected subcurve of $X$. If $Z=X$ then $\deg_Z L=\deg L=d>3(g-1)>0$ by assumption. So we can assume that $Z\subsetneq X$. Notice that, since $X$ is connected, this implies that $k_Z\geq 1$.

If $\deg_Z\omega_X=2g_Z-2+k_Z>0$ then, using the basic inequality  \eqref{bas-ineq-bis} and the
inequality $d>\frac{3}{2}(2g-2)$, we get
$$\deg_ZL\geq d\cdot \frac{2g_Z-2+k_Z}{2g-2}-\frac{k_Z}{2}> \frac{3(2g_Z-2+k_Z)}{2}-\frac{k_Z}{2}\geq
\begin{sis}
k_Z\geq 1 & \text{ if } g_Z\geq 1, \\
\frac{2k_Z-6}{2}\geq 0 & \text{ if } g_Z=0 \text{ and } k_Z\geq 3,
\end{sis}$$
hence $\deg_Z L>0$. From part \eqref{nef} and equation \eqref{Gexc}, we get that if $L$ is ample then $X$ is G-semistable and for every exceptional subcurve $Z$ we have that $\deg_Z L=1$. Note that every exceptional subcurve $Z$ of $X$
is a chain of $\P^1$. Assume that this chain has
length $l\geq 2$ and denote by $W_i$ (for $i=1,\dots, l$) the irreducible components of $Z$. Then each of the $W_i$'s
is an exceptional subcurve of $X$. Therefore, the same inequality as before gives that if $L$ is ample then
$\deg_{W_i}L=1$. This is a contradiction since $1=\deg_ZL =\sum_i \deg_{W_i}L=l>1$.
 Hence $Z\cong \P^1$ and $X$ is G-quasistable. Conversely, it is clear that  if $X$ is G-semistable and
$\deg_Z L=1$ for every exceptional subcurve $Z$ of $X$ then $L$ is ample.

\end{proof}

We next investigate when a balanced line bundle on a Gorenstein curve is non-special, globally generated, very ample or normally generated.
To this aim, we will use the following criteria, due to Catanese-Franciosi \cite{CF}, Catanese-Franciosi-Hulek-Reid \cite{CFHR} and  Franciosi-Tenni \cite{FT}
(see also \cite{Fra1} and \cite{Fra2}) which generalize the classical criteria for smooth curves.


\begin{fact}\label{numcrit} (\cite{CF}, \cite{CFHR}, \cite{FT})
Let $L$ be a line bundle on a  Gorenstein curve $X$. Then the following holds:
\begin{enumerate}[(i)]
\item \label{numcrit1} If $\deg_ZL>2g_Z-2$ for all (connected) subcurves $Z$ of $X$, then $L$ is non-special, i.e.,
    $H^1(X, L)=0$.
\item \label{numcritbis} If $\deg_ZL> 2g_Z-1$ for all (connected) subcurves $Z$ of $X$, then $L$ is globally generated;
\item \label{numcrit2} If $\deg_ZL> 2g_Z$ for all (connected) subcurves $Z$ of $X$, then $L$ is very ample.
\item \label{numcrit4} If $\deg_ZL> 2g_Z$ for all (connected) subcurves $Z$ of $X$, then $L$ is normally generated, i.e. the multiplication maps
$$\rho_k: H^0(X, L)^{\otimes k}\to H^0(X, L^k)$$
are surjective for every $k\geq 2$.
\end{enumerate}
\end{fact}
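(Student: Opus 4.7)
The plan is to prove part \eqref{numcrit1} first, as parts \eqref{numcritbis}, \eqref{numcrit2}, \eqref{numcrit4} all reduce to cohomology vanishing statements of the same flavor applied to suitable ``twists'' of $L$. The natural tool throughout is Serre duality for Gorenstein curves, which gives $H^1(X,L)^\vee \cong H^0(X,\omega_X\otimes L^{-1})$, and the strategy is a d\'evissage along subcurves.

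For part \eqref{numcrit1}, I would argue by induction on the number $\gamma$ of irreducible components of $X$. For $\gamma=1$ the curve is integral Gorenstein and $\deg(\omega_X\otimes L^{-1})=2g-2-d<0$ forces $H^0(\omega_X\otimes L^{-1})=0$. For the inductive step, I would pick a suitable irreducible component $C\subseteq X$ and set $D:=\ov{X\setminus C}$. Using the canonical identification $i_{D*}\omega_D=\omega_X\otimes I_{C/X}$ (from the adjunction formula \eqref{E:adj-form}), one has the short exact sequence
\begin{equation*}
0\to (\omega_D\otimes L^{-1}_{|D})\to \omega_X\otimes L^{-1}\to (\omega_X\otimes L^{-1})_{|C}\to 0.
\end{equation*}
It suffices to show that the outer terms have no global sections. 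The quotient is handled by choosing $C$ such that $\deg_Z(\omega_X\otimes L^{-1})_{|C}<0$ for every subcurve $Z\subseteq C$, which forces vanishing in the integral case for each component of $C$. For the kernel, one needs to verify that the numerical hypothesis transfers: using the identity $\deg_Z\omega_D=\deg_Z\omega_X-|Z\cap C|$ and the fact that every subcurve $Z\subseteq D$ is also a subcurve of $X$, the strict inequality $\deg_Z L>2g_Z-2$ on $X$ combined with the choice of $C$ yields $\deg_Z L_{|D}>2g_Z-2+|Z\cap C|-|Z\cap C|=2g_Z-2=2g_Z^D-2+|Z\cap C|-|Z\cap C|$, and induction applies.

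For parts \eqref{numcritbis} and \eqref{numcrit2}, the standard criterion says that global generation (resp. very ampleness) is equivalent to the surjectivity of $H^0(X,L)\to L\otimes k(p)$ for every $p$ (resp. of $H^0(X,L)\to L\otimes(\mathcal O_X/\mathcal I_{Z})$ for every length-$2$ subscheme $Z$). By the long exact sequence in cohomology, these conditions become $H^1(X,\mathcal I_pL)=0$ and $H^1(X,\mathcal I_Z L)=0$, respectively. Using Hartshorne's generalized divisor theory on Gorenstein curves, $\mathcal I_pL$ and $\mathcal I_Z L$ can be treated like line bundles of degree $\deg L - 1$ and $\deg L - 2$, respectively, so the hypotheses $\deg_ZL>2g_Z-1$ and $\deg_ZL>2g_Z$ reduce us to part \eqref{numcrit1}. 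For part \eqref{numcrit4}, I would proceed by induction on $k\geq 2$, using the Castelnuovo-Mumford style exact sequence
\begin{equation*}
0\to M_L\otimes L^{k-1}\to H^0(L)\otimes L^{k-1}\to L^k\to 0
\end{equation*}
where $M_L:=\ker(H^0(L)\otimes\mathcal O_X\to L)$; surjectivity of $\rho_k$ follows if $H^1(X,M_L\otimes L^{k-1})=0$, which is obtained by computing the cohomology of $M_L$ twisted by $L^{k-1}$ using part \eqref{numcrit1} (the degree hypothesis $\deg_Z L>2g_Z$ ensures the required numerical bounds survive the tensor product).

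The main obstacle will be the combinatorial lemma inside the inductive step for \eqref{numcrit1}: showing that given the hypothesis $\deg_Z L>2g_Z-2$ on every subcurve of $X$, one can always find an irreducible component $C$ so that simultaneously (a) the quotient $(\omega_X\otimes L^{-1})_{|C}$ has negative degree on every subcurve $Z\subseteq C$, and (b) the induction hypothesis transfers to $(D,L_{|D})$. This is the technical heart of the argument and is exactly what is carried out in \cite{CFHR} and \cite{CF}; the choice of $C$ typically exploits an extremal component of $X$ with respect to the ratio $\deg_C L/\deg_C\omega_X$.
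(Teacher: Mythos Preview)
The paper does not actually prove this Fact: it records the reduction from ``all subcurves'' to ``connected subcurves'' (via $g_{Z_1\cup Z_2}=g_{Z_1}+g_{Z_2}-1$ for disjoint $Z_1,Z_2$) and then cites \cite[Lemma 2.1]{CF} for \eqref{numcrit1}, \cite[Thm. 1.1]{CFHR} for \eqref{numcritbis}--\eqref{numcrit2}, and \cite[Thm. 4.2]{FT} for \eqref{numcrit4}. So there is no proof to compare against beyond those references; your sketch is attempting to reproduce what is in them.

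Your outline for \eqref{numcrit1} is in the right spirit (Serre duality plus d\'evissage along subcurves is indeed the CF/CFHR method). Note that the transfer of the hypothesis to $D$ is automatic: for $Z\subseteq D$ the genus $g_Z$ is intrinsic and $\deg_Z L_{|D}=\deg_Z L$, so your computation there is unnecessary. The genuine work, as you say, is choosing $C$; in practice the cited references do not pick a component but rather argue by contradiction: assume a nonzero $s\in H^0(\omega_X\otimes L^{-1})$, take $Z$ to be the subcurve on which $s$ does not vanish identically, and derive $\deg_Z L\le \deg_Z\omega_X=2g_Z-2$.

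For \eqref{numcritbis}--\eqref{numcrit2} there is a real gap in your reduction. When $p$ is a singular point (or the length-$2$ scheme $S$ meets $\Sing X$), the ideal $\mathcal I_p$ (resp.\ $\mathcal I_S$) is not invertible, so $\mathcal I_pL$ is not a line bundle and part \eqref{numcrit1} does not apply to it. Hartshorne's generalized divisors do not rescue this directly, because the relevant sheaf is torsion-free but not locally free. CFHR handle this by a separate argument for clusters supported at singular points, again via Serre duality and a direct analysis of $\Hom(\mathcal I_S L,\omega_X)$; the reduction you propose only works at smooth points.

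For \eqref{numcrit4} the kernel-bundle approach is standard, but your last sentence hides the whole difficulty: $M_L$ has rank $h^0(L)-1$, and bounding $\deg_Z(M_L\otimes L^{k-1})$ in terms of subcurve data is not a matter of ``the hypothesis surviving the tensor product''. The argument in \cite{FT} (and \cite{Fra1}, \cite{Fra2}) is considerably more involved and does not proceed by a direct numerical check on $M_L$.
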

Recall that if $Z$ is a subcurve that is a disjoint union of two subcurves $Z_1$ and $Z_2$
then $g_Z=g_{Z_1}+g_{Z_2}-1$. From this, it is easily checked that if the numerical assumptions of \eqref{numcrit1}, \eqref{numcritbis}, \eqref{numcrit2} and \eqref{numcrit4}
are satisfied for all connected subcurves $Z$ then they are satisfied for all subcurves $Z$.
With this in mind, part \eqref{numcrit1} follows from \cite[Lemma 2.1]{CF}.
Note that in loc. cit. this result is only stated for a curve $C$  embedded  in a smooth surface; however, a closer inspection of the proof reveals that the same result is true for any Gorenstein curve
$C$. Parts \eqref{numcritbis} and \eqref{numcrit2} follow from \cite[Thm. 1.1]{CFHR}. Part \eqref{numcrit4} follows from \cite[Thm. 4.2]{FT}, which generalizes the  previous results of Franciosi
(see \cite[Thm. B]{Fra1} and \cite[Thm. 1]{Fra2}) for reduced curves with locally planar singularities.



Using the above criteria, we can now investigate when balanced line bundles are non-special, globally generated, very ample or normally generated.

\begin{thm}\label{bal-pos}
Let $L$ be a balanced line bundle of degree $d$ on a connected  Gorenstein curve $X$ of genus $g\geq 2$.
Then the following properties hold:
\begin{enumerate}[(i)]
\item \label{bal-ns} If $X$ is G-semistable  and $d>2g-2$
then $L$ is non-special.
\item \label{bal-gg} Assume that $L$ is nef. If $d>\frac{3}{2}(2g-2)=3(g-1)$ then  $L$ is globally generated.
\item \label{bal-va} Assume that $L$ is ample. Then:
\begin{enumerate}
\item \label{bal-va1} If  $d>\frac{5}{2}(2g-2)=5(g-1)$ then $L$ is very ample and normally generated.
\item \label{bal-va2} If $d>\max\{\frac{3}{2}(2g-2)=3(g-1), 2g\}$ and $X$ does not have elliptic tails (i.e., connected subcurves $Z$ such that $g_Z=1$ and $k_Z=1$)
then $L$ is very ample and normally generated.
\end{enumerate}
\end{enumerate}
\end{thm}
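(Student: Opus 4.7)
The plan is to derive each conclusion by verifying, in each case, the degree hypothesis of Fact \ref{numcrit} for every connected subcurve $Z\subseteq X$. The basic inequality \eqref{bas-ineq-bis} combined with the adjunction formula \eqref{E:adj-form} gives the uniform lower bound
\[
\deg_Z L \;\geq\; \frac{d}{2g-2}\,(2g_Z-2+k_Z)\;-\;\frac{k_Z}{2},
\]
and the strategy is simply to plug in the hypothesis on $d$ and split into the cases $Z=X$, $g_Z\geq 1$, and $g_Z=0$ (with the exceptional/non-exceptional dichotomy). Throughout I will use that $X$ connected forces $k_Z\geq 1$ whenever $Z\subsetneq X$, and that G-semistability forces $k_Z\geq 2$ as soon as $g_Z=0$, so that $2g_Z-2+k_Z\geq 0$.

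For part \eqref{bal-ns}, I would apply Fact \ref{numcrit}\eqref{numcrit1}: the case $Z=X$ is immediate from $d>2g-2$, while for proper $Z$ the coefficient $\frac{d}{2g-2}>1$ upgrades the basic inequality to $\deg_Z L>(2g_Z-2+k_Z)-k_Z/2=2g_Z-2+k_Z/2$, which is at least $2g_Z-2$ (and strictly greater when $2g_Z-2+k_Z>0$); the degenerate case $g_Z=0,\,k_Z=2$ is handled by hand since then $\deg_Z L\geq -1>-2$.

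For part \eqref{bal-gg}, I would apply Fact \ref{numcrit}\eqref{numcritbis}. The hypothesis $d>3(g-1)$ makes $\frac{d}{2g-2}>\frac{3}{2}$, so for any proper $Z$ with $g_Z\geq 1$ the basic inequality gives $\deg_Z L>3g_Z-3+k_Z\geq 2g_Z-1$ (using $g_Z,k_Z\geq 1$); for $g_Z=0$ non-exceptional (so $k_Z\geq 3$ by Proposition \ref{nefample}\eqref{nef}) the bound becomes $\deg_Z L>k_Z-3\geq 0>-1$; and on an exceptional subcurve the nefness of $L$ together with Proposition \ref{nefample}\eqref{nef} gives $\deg_Z L\geq 0>-1=2g_Z-1$. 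The case $Z=X$ is direct.

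For part \eqref{bal-va}, I would apply Fact \ref{numcrit}\eqref{numcrit2} and \eqref{numcrit4} (both have the same numerical hypothesis $\deg_Z L>2g_Z$, so they follow together). Note that ampleness of $L$ forces $X$ to be G-quasistable with $\deg_Z L=1$ on exceptional subcurves by Proposition \ref{nefample}\eqref{ample}, so the exceptional case $g_Z=0,\,k_Z=2$ gives $\deg_Z L=1>0=2g_Z$ automatically. In case (a), with $\frac{d}{2g-2}>\frac{5}{2}$, the basic inequality yields for any proper $Z$ the bound $\deg_Z L>\frac{5}{2}(2g_Z-2+k_Z)-\frac{k_Z}{2}=5g_Z-5+2k_Z$, which exceeds $2g_Z$ as soon as $3g_Z+2k_Z\geq 5$, covering $g_Z\geq 1$ (since $k_Z\geq 1$) and $g_Z=0,\,k_Z\geq 3$. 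In case (b), with $\frac{d}{2g-2}>\frac{3}{2}$, one obtains $\deg_Z L>3g_Z-3+k_Z$: for $g_Z\geq 2$ this is $\geq 2g_Z$; for $g_Z=1$ the hypothesis that $X$ has no elliptic tails forces $k_Z\geq 2$, again giving $>2g_Z$; for $g_Z=0$ non-exceptional one uses $k_Z\geq 3$ and the integrality of $\deg_Z L$ to conclude $\deg_Z L\geq 1>0$. The $Z=X$ condition is exactly $d>2g$, which is part of the hypothesis.

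The only mildly delicate point is the genus-one case in part \eqref{bal-va2}: without the elliptic-tail hypothesis a connected subcurve with $g_Z=1,\,k_Z=1$ would give $\deg_Z L>3g_Z-3+k_Z=1=2g_Z-1$, which is only enough for global generation and not for very ampleness. This is precisely why the absence of elliptic tails is imposed, and it is the only obstacle requiring a separate argument; everything else is a uniform interval analysis in $k_Z$ and $g_Z$.
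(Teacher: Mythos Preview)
Your proposal is correct and follows essentially the same route as the paper: verify the numerical hypotheses of Fact~\ref{numcrit} on every connected subcurve by combining the basic inequality with the adjunction formula, splitting into the cases $Z=X$, $g_Z\geq 1$, and $g_Z=0$. The only cosmetic difference is that for the $g_Z=0$ case in parts \eqref{bal-gg} and \eqref{bal-va} the paper uses nefness (resp.\ ampleness) of $L$ directly to get $\deg_Z L\geq 0$ (resp.\ $>0$) rather than splitting into exceptional and non-exceptional subcurves as you do; also, do not forget to record $d>5(g-1)>2g$ for the $Z=X$ check in case~\eqref{bal-va1}.
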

\begin{proof}
In order to prove part \eqref{bal-ns}, we apply Fact \ref{numcrit}\eqref{numcrit1}.
Let $Z\subseteq X$ be a connected subcurve. If $Z=X$ then $\deg_Z L=d > 2g-2$ by assumption. Assume now that $Z\subsetneq X$
(hence that $k_Z\geq 1$). Since $X$ is G-semistable, we have that $\deg_Z(\omega_X)=2g_Z-2+k_Z\geq 0$.
If $\deg_Z(\omega_X)>0$ then the basic inequality \eqref{bas-ineq-bis} together with the hypothesis on $d$ gives that
$$\deg_Z L\geq \frac{d}{2g-2}(2g_Z-2+k_Z)-\frac{k_Z}{2}>2g_Z-2+\frac{k_Z}{2}>2g_Z-2.$$
If $\deg_Z(\omega_X)=0$ (which happens if and only if $Z$ is exceptional, i.e., $g_Z=0$ and $k_Z=2$) then the basic inequality gives that
$$\deg_Z L\geq \frac{d}{2g-2}(2g_Z-2+k_Z)-\frac{k_Z}{2}=-1 > -2=2g_Z-2.$$
In order to prove part \eqref{bal-gg}, we apply Fact \ref{numcrit}\eqref{numcritbis}.  Let $Z\subseteq X$ be a connected subcurve. If $Z=X$ then
 we have that  $\deg_Z L=d > 3(g-1)\geq 2g-1$ by the assumption on $d$.
Assume now that $Z\subsetneq X$ (hence that $k_Z\geq 1$). If $g_Z=0$ then $\deg_Z L>-1=2g_Z-1$ since $L$ is nef. Therefore, we can assume that $g_Z\geq 1$. By applying the basic inequality \eqref{bas-ineq-bis} and using our
assumption on $d$, we get that
$$\deg_ZL\geq \frac{d}{2g-2}(2g_Z-2+k_Z)-\frac{k_Z}{2}> \frac{3}{2}(2g_Z-2+k_Z)-\frac{k_Z}{2}=3(g_Z-1)+k_Z
\geq 2g_Z-1.$$
In order to prove parts \eqref{bal-va1} and \eqref{bal-va2}, we apply Facts \ref{numcrit}\eqref{numcrit2} and \ref{numcrit}\eqref{numcrit4}.  Let $Z\subseteq X$ be a connected subcurve. If $Z=X$ then, in each of the cases \eqref{bal-va1} and
\eqref{bal-va2}, we have that  $\deg_Z L=d > 2g$ by the assumption on $d$ (note that $5(g-1)>2g$ since $g\geq 2$).
Assume now that $Z\subsetneq X$ (hence that $k_Z\geq 1$). If $g_Z=0$ then $\deg_Z L>0=2g_Z$ since $L$ is ample. Therefore, we can assume that $g_Z\geq 1$.

In the first case \eqref{bal-va1}, by applying the basic inequality \eqref{bas-ineq-bis} and the numerical
assumption on $d$, we get that
$$\deg_ZL\geq \frac{d}{2g-2}(2g_Z-2+k_Z)-\frac{k_Z}{2}> \frac{5}{2}(2g_Z-2+k_Z)-\frac{k_Z}{2}=5(g_Z-1)+2k_Z
\geq 2g_Z.$$

In the second case \eqref{bal-va2}, from the basic inequality \eqref{bas-ineq-bis} and the numerical
assumption on $d$, we get that
$$\deg_ZL\geq \frac{d}{2g-2}(2g_Z-2+k_Z)-\frac{k_Z}{2}> \frac{3}{2}(2g_Z-2+k_Z)-\frac{k_Z}{2}=3(g_Z-1)+k_Z
\geq 2g_Z,$$
where in the last inequality we used that $g_Z, k_Z\geq 1$ and $(g_Z,k_Z)\neq (1,1)$ because $X$ does not contain
elliptic tails.

\end{proof}


\begin{rmk}
Theorem \ref{bal-pos}\eqref{bal-ns} recovers \cite[Thm. 2.3(i)]{Capseries} in the case of nodal curves.
Theorem \ref{bal-pos}\eqref{bal-gg} combined with Proposition \ref{nefample}\eqref{nef} recovers and improves
\cite[Thm. 2.3(iii)]{Capseries} in the case of nodal curves. Theorem \ref{bal-pos}\eqref{bal-va} improves \cite[Cor. 5.11]{melo} in the case of nodal curves.
See also \cite{Bal}, where the author gives some criteria for the global generation and very ampleness
of balanced line bundles on quasi-stable curves.
\end{rmk}

The previous results can be applied to study the positivity properties of powers of the canonical line bundle on a  Gorenstein curve, which is clearly a balanced line bundle.

\begin{coro}\label{pospropdualizing}
Let $X$ be a connected Gorenstein curve of genus $g\geq 2$. Then the following holds:
\begin{enumerate}[(i)]
\item \label{can-gg} If $X$ is G-semistable then $\omega_X^i$ is non-special and globally generated for all $i\geq 2$;
\item \label{can-va} If $X$ is G-stable then $\omega_X^i$ is very ample for all $i\geq 3$;
\item \label{can-ng} If $X$ is G-quasistable then $\omega_X^i$ is normally generated for all $i\geq 3$.
\end{enumerate}
\end{coro}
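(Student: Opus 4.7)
The plan is to apply Theorem \ref{bal-pos} to $L = \omega_X^i$, which is trivially balanced of degree $d = i(2g-2)$: the basic inequality \eqref{bas-ineq-bis} holds as an equality since $\frac{d}{2g-2}\deg_Z\omega_X = i\deg_Z\omega_X = \deg_Z\omega_X^i$ for every subcurve $Z\subseteq X$. For part \eqref{can-gg}, the G-semistability of $X$ means that $\omega_X$, and hence $\omega_X^i$, is nef; for $i\geq 2$ and $g\geq 2$ one has $d > 2g-2$ and $d > \tfrac{3}{2}(2g-2)$, so Theorem \ref{bal-pos}\eqref{bal-ns} yields non-speciality and Theorem \ref{bal-pos}\eqref{bal-gg} yields global generation. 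For part \eqref{can-va}, the G-stability of $X$ means that $\omega_X^i$ is ample; for $i\geq 3$ one has $d > \tfrac{5}{2}(2g-2)$, and Theorem \ref{bal-pos}\eqref{bal-va1} gives very ampleness (and indeed normal generation as well).

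Part \eqref{can-ng} is more delicate, since on each exceptional component $E\cong \P^1$ of the G-quasistable curve $X$ one has $\omega_X^i|_E = \OO_E$, so $\omega_X^i$ is not ample and Theorem \ref{bal-pos}\eqref{bal-va} does not apply directly. The idea is to reduce to the G-stable case by contracting the exceptional components. Since $X$ is Gorenstein and each exceptional $E$ meets $E^c$ in a length-$2$ scheme, the Gorenstein hypothesis forces this scheme to consist either of two distinct nodes of $X$ or of one tacnode, and contracting each such $E$ to a node or a cusp (respectively) defines a birational morphism $\pi : X\to Y$ onto a Gorenstein G-stable curve $Y$ of the same arithmetic genus. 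A local analysis around the contracted components (based on the adjunction formula \eqref{E:adj-form} and the explicit form of the dualizing sheaf at nodes, tacnodes, and cusps) shows that $\pi_*\OO_X = \OO_Y$ and that $\omega_X \cong \pi^*\omega_Y$. The projection formula then identifies $H^0(X,\omega_X^j)$ with $H^0(Y,\omega_Y^j)$ compatibly with multiplication maps, and since $\omega_Y^i$ is normally generated for $i\geq 3$ by part \eqref{can-va} applied to $Y$, the same follows for $\omega_X^i$.

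The main technical obstacle is the rigorous verification of the identities $\pi_*\OO_X = \OO_Y$ and $\omega_X \cong \pi^*\omega_Y$ in the Gorenstein setting; these reduce to routine but careful local computations around each contracted $\P^1$, using the standard description of the dualizing sheaves of the nodes and cusps of $Y$ in terms of meromorphic differentials on the normalization.
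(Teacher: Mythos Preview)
Your treatment of parts \eqref{can-gg} and \eqref{can-va} is correct and matches the paper exactly: $\omega_X^i$ is trivially balanced, and the numerical thresholds of Theorem \ref{bal-pos} are met for the stated ranges of $i$.

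For part \eqref{can-ng} your overall strategy---contract the exceptional $\P^1$'s to obtain a G-stable curve $Y$, then pull back normal generation of $\omega_Y^i$ via the identification of global sections---is the same as the paper's. The difference lies in how the contraction is produced. Rather than constructing $\pi$ abstractly and then verifying $\pi_*\OO_X=\OO_Y$ and $\omega_X\cong\pi^*\omega_Y$ by local computations, the paper uses the morphism $q:X\to\P(H^0(X,\omega_X^i)^\vee)$ afforded by the globally generated line bundle $\omega_X^i$ (already available from part \eqref{can-gg}) and takes $Y:=q(X)$. This has the advantage that $q^*\OO_Y(1)=\omega_X^i$ is tautological and $q_*\OO_X=\OO_Y$ follows from connectedness of the fibers, so most of the ``main technical obstacle'' you flag dissolves into the construction itself; what remains is to see that $\OO_Y(1)=\omega_Y^i$, which comes from $q$ being an isomorphism off the exceptional locus.

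One point to correct: your assertion that the Gorenstein hypothesis forces the scheme $E\cap E^c$ to consist of nodes or a tacnode of $X$ is not valid for arbitrary reduced Gorenstein curves---the singularities of $X$ at those points can be more complicated than that. The length-$2$ dichotomy (two distinct points versus one doubled point) is correct, but the local analytic type of $X$ there is not constrained to the nodal/tacnodal case. Fortunately this claim is not needed for the argument: what matters is only that $Y$ is G-stable and that the section identifications hold, and the paper's construction via $|\omega_X^i|$ delivers this without needing to name the singularities.
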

\begin{proof}
Part \eqref{can-gg} follows from Theorem \ref{bal-pos}\eqref{bal-ns} and Theorem \ref{bal-pos}\eqref{bal-gg}.

Part \eqref{can-va} follows from Theorem \ref{bal-pos}\eqref{bal-va1}.


Let us now prove part \eqref{can-ng}. If $X$ is G-stable, then this follows from Theorem  \ref{bal-pos}\eqref{bal-va1}. In the general case, since $\omega_X^i$ is globally generated
by part \eqref{can-gg}, it defines a  morphism
$$q:X\to \mathbb P:=\mathbb P(H^0(X,\omega_X^i)^{\vee}),$$
whose image we denote by $Y:=q(X)$. Since $X$ is $G$-quasistable, the degree of $\omega_X^i$ on a connected subcurve $Z$ of $X$ is zero if and only if $Z=E$ is an exceptional
subcurve, i.e., if $E\cong \P^1$ and $k_E=2$.
The map $q$ will contract such an exceptional subcurve $E$ to a node if $E$ meets the complementary subcurve $E^c$ in two distinct points and to a cusp if $E$ meets $E^c$
in one point with multiplicity two. Moreover, using Fact \ref{numcrit}\eqref{numcrit2}, it is easy to check that $\omega_X$ is very ample on $X\setminus \cup E$, where the union runs over all
exceptional subcurves $E$ of $X$. We deduce that  $Y$ is G-stable. By what proved above, $\omega_Y^i$ is normally generated.
Clearly, $q^*\omega_Y^i=\omega_X^i$ and moreover, since $q$ has connected fibers, we have that $q_*\OO_X=\OO_Y$. This implies that $H^0(X,(\omega_X^i)^k)=H^0(Y,(\omega_Y^i)^k)$
from which we deduce that $\omega_X^i$ is normally generated.
\end{proof}

\begin{rmk}
Part \eqref{can-gg} of the above Corollary \ref{pospropdualizing}  recovers \cite[Thm. A and p. 68]{Cat}, while part \eqref{can-va} recovers \cite[Thm B]{Cat}. Part \eqref{can-ng}
was proved for nodal curves in \cite[Cor. 5.9]{melo}.

A closer inspection of the proof reveals that parts \eqref{can-va} and \eqref{can-ng}  continue to hold for $\omega_X^2$ if,
moreover, $g\geq 3$ and $X$ does not have elliptic tails (see also \cite[Thm. C]{Cat} and \cite[Thm. C]{Fra1}).
\end{rmk}

Let us end this Appendix by mentioning that it is possible to generalize the above results in order to prove that a balanced line bundle of sufficiently high degree
is $k$-very ample in the sense of Beltrametti-Francia-Sommese (\cite{BFS}). Recall first the definition of $k$-very ampleness.

\begin{defi}\label{k-va}
Let $L$ be a line bundle on $X$ and let $k\geq 0$ be a integer. We say that $L$ is \emph{$k$-very ample} if for any $0$-dimensional subscheme $S\subset X$ of length at most $k+1$ we have that the natural restriction map
$$H^0(X, L)\to H^0(S, L_{|S})$$
is surjective. In particular $0$-very ample is equivalent to being globally generated and $1$-very ample is equivalent
to being very ample.
\end{defi}

The proof of the following Theorem is very similar to the proof of the Theorem \ref{bal-pos} above, using again \cite[Thm. 1.1]{CFHR}, and therefore we omit it.

\begin{thm}\label{bal-k-va}
Let $k\geq 2$ and assume that $X$ is G-stable. Then:
\begin{enumerate}[(i)]
\item \label{bal-k-va1} If  $d>\frac{2k+3}{2}(2g-2)=(2k+3)(g-1)$ then $L$ is $k$-very ample.
\item \label{bal-k-va2} If $d>\frac{2k+1}{2}(2g-2)=(2k+1)(g-1)$ and $X$ does not have elliptic tails then $L$ is $k$-very ample.
\end{enumerate}
\end{thm}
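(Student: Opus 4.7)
\medskip

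\noindent\textbf{Proof proposal.} The plan is to imitate the structure of the proof of Theorem \ref{bal-pos}, replacing the $k=1$ numerical criterion of Catanese--Franciosi--Hulek--Reid with its general $k$-very ample version. Specifically, \cite[Thm.~1.1]{CFHR} gives that a line bundle $L$ on a reduced Gorenstein curve $X$ is $k$-very ample provided
\[
\deg_Z L \geq 2 g_Z + k \quad \text{for every (connected) subcurve } Z \subseteq X,
\]
and as in Fact \ref{numcrit} the statement for connected $Z$ implies the one for disconnected $Z$ via $g_{Z_1 \sqcup Z_2}=g_{Z_1}+g_{Z_2}-1$. So the entire argument reduces to verifying this inequality via the basic inequality \eqref{bas-ineq-bis}, exactly as in the proof of Theorem \ref{bal-pos}\eqref{bal-va}.

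For part \eqref{bal-k-va1}, I would first dispose of $Z=X$: the assumption $d>(2k+3)(g-1)$ gives $d\geq (2k+3)(g-1)+1\geq 2g+k$ for all $g\geq 2$ and $k\geq 2$ (since $(2k+1)g-(3k+3)\geq k-1\geq 1$). For a proper connected subcurve $Z\subsetneq X$ (so $k_Z\geq 1$), I split on $g_Z$. Since $X$ is $G$-stable, if $g_Z=0$ then $k_Z\geq 3$, and the basic inequality combined with $d/(2g-2)>(2k+3)/2$ yields
\[
\deg_Z L > \tfrac{2k+3}{2}(k_Z-2) - \tfrac{k_Z}{2} = (k+1)k_Z - (2k+3) \geq 3(k+1)-(2k+3) = k,
\]
hence $\deg_Z L \geq k+1 \geq 2g_Z+k$. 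If $g_Z\geq 1$, the basic inequality gives
\[
\deg_Z L > (2k+3)(g_Z-1) + (k+1)k_Z,
\]
and since $(2k+3)(g_Z-1)+(k+1)k_Z-(2g_Z+k-1) = (2k+1)(g_Z-1)+(k+1)(k_Z-1) \geq 0$, we conclude $\deg_Z L\geq 2g_Z+k$.

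For part \eqref{bal-k-va2}, the verification is identical in spirit but with the coefficient $\tfrac{2k+1}{2}$ in place of $\tfrac{2k+3}{2}$. The case $Z=X$ works because $(2k+1)(g-1)\geq 2g+k-1$ for $g\geq 2$, $k\geq 2$ (one checks $3g-6\geq 0$ for $k=2$ and the gap only grows with $k$). The case $g_Z=0$ with $k_Z\geq 3$ goes through since $\tfrac{2k+1}{2}(k_Z-2)-\tfrac{k_Z}{2}=k\, k_Z-(2k+1)\geq 3k-(2k+1)=k-1$. For $g_Z\geq 1$ and $Z\subsetneq X$ with $(g_Z,k_Z)\neq(1,1)$, the basic inequality gives
\[
\deg_Z L > (2k+1)(g_Z-1) + k\, k_Z,
\]
and the quantity $(2k+1)(g_Z-1)+k\, k_Z - (2g_Z+k-1) = (2k-1)(g_Z-1) + k(k_Z-1) - 1$ is non-negative precisely because the elliptic-tail case $(g_Z,k_Z)=(1,1)$, which would make this expression equal to $-1$, is excluded by hypothesis.

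There is no real obstacle: the only substantive point is to track which extremal pair $(g_Z,k_Z)$ saturates each bound. In part \eqref{bal-k-va1} the sharp case is $(g_Z,k_Z)=(1,1)$ (which forces the coefficient $\tfrac{2k+3}{2}$), and in part \eqref{bal-k-va2} it is $(g_Z,k_Z)=(0,3)$ or $(1,2)$ or $(2,1)$, all of which then just barely allow the coefficient $\tfrac{2k+1}{2}$. The assumption that $X$ has no elliptic tails is exactly what one needs to throw out the otherwise obstructing case.
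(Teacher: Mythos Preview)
Your proposal is correct and follows exactly the approach the paper indicates: the paper omits the proof, stating only that it is ``very similar to the proof of Theorem \ref{bal-pos} above, using again \cite[Thm.~1.1]{CFHR}'', which is precisely what you do by invoking the $k$-very ample numerical criterion $\deg_Z L \geq 2g_Z+k$ and verifying it subcurve by subcurve via the basic inequality. (One trivial slip: in your parenthetical for $Z=X$ in part \eqref{bal-k-va1}, the expression should be $(2k+1)g-(3k+2)$ rather than $(2k+1)g-(3k+3)$, but the conclusion is unaffected.)
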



\end{document}